\newtheorem{theorem}{Theorem} [section]
\newtheorem{corollary}[theorem]{Corollary} 
\newtheorem{lemma}[theorem]{Lemma}
\newtheorem{conjecture}[theorem]{Conjecture}
\newtheorem{open}{Open Question}
\newcounter{defno}
\newcommand{\degrees}{^\circ}
\long\def\void#1{}
\begin{document}
International Journal of  Computer Discovered Mathematics (IJCDM) \\
ISSN 2367-7775 \copyright IJCDM \\
Volume 7, 2022, pp. xx--yy  \\
Received xx January 2022. Published on-line xx mmm 2022 \\ 
web: \url{http://www.journal-1.eu/} \\

\copyright The Author(s) This article is published 
with open access.\footnote{This article is distributed under the terms of the Creative Commons Attribution License which permits any use, distribution, and reproduction in any medium, provided the original author(s) and the source are credited.} \\
\bigskip
\bigskip

\begin{center}
	{\Large \textbf{The Shape of Central Quadrilaterals}} \\
	\medskip
	\bigskip
        \bigskip

	\textsc{Stanley Rabinowitz$^a$ and Ercole Suppa$^b$} \\

	$^a$ 545 Elm St Unit 1,  Milford, New Hampshire 03055, USA \\
	e-mail: \href{mailto:stan.rabinowitz@comcast.net}{stan.rabinowitz@comcast.net}\footnote{Corresponding author} \\
	web: \url{http://www.StanleyRabinowitz.com/} \\
	
	$^b$ Via B. Croce 54, 64100 Teramo, Italia \\
	e-mail: \href{mailto:ercolesuppa@gmail.com}{ercolesuppa@gmail.com} \\
	web: \url{https://www.esuppa.it} \\

\bigskip

\end{center}
\bigskip
\bigskip

\textbf{Abstract.} 
The diagonals of a quadrilateral form four component triangles (in two ways).
For each of various shaped quadrilaterals, we examine 1000 triangle centers
located in these four component triangles.
Using a computer, we determine when the four
centers form a special quadrilateral, such as a rhombus or a cyclic quadrilateral.
A typical result is the following.
The diagonals of an equidiagonal quadrilateral divide the quadrilateral
into four nonoverlapping triangles. Then the Nagel points
of these four triangles form an orthodiagonal quadrilateral.

\medskip
\textbf{Keywords.} triangle centers, quadrilaterals, computer-discovered mathematics, Euclidean geometry. Geometric Explorer.

\medskip
\textbf{Mathematics Subject Classification (2020).} 51M04, 51-08.

\def\T{^{\rm T}}


\bigskip
\bigskip
%
\section{Introduction}
\label{section:introduction}

\vspace{-4pt}
Consider the following two results.
\vspace{-3pt}
\begin{center}
\fbox{
\begin{minipage}{2.55in}{
Result 1
\begin{center}
{\includegraphics[width=0.8\linewidth]{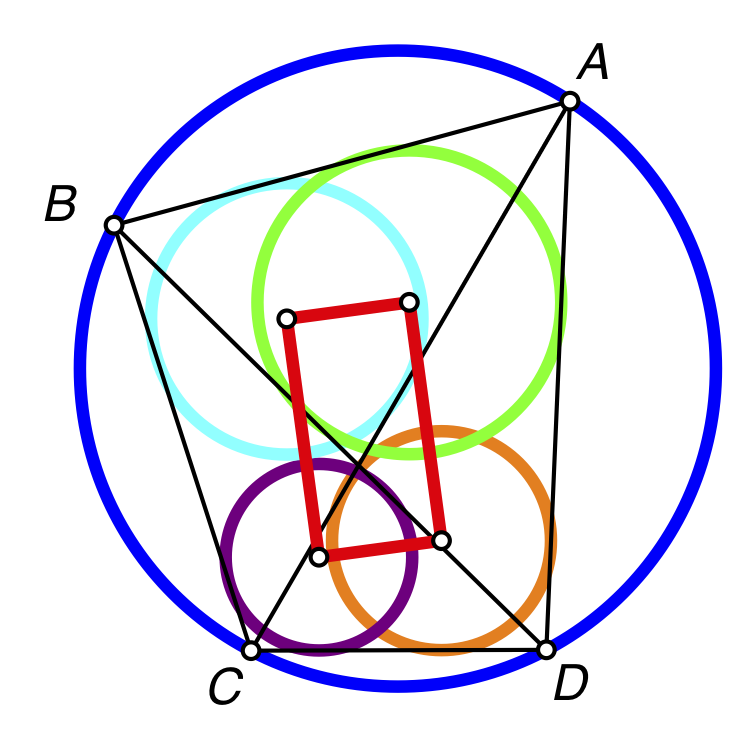}}\\
\end{center}
\hrule
\vspace*{4pt}
Let $ABCD$ be a cyclic quadrilateral.\\
Then the incenters of $\triangle BCD$, $\triangle ACD$, $\triangle ABD$, and $\triangle ABC$
form a rectangle.
}
\end{minipage}
}%
\quad
\fbox{
\begin{minipage}{2.55in}{
Result 2
\begin{center}
{\includegraphics[width=0.782\linewidth]{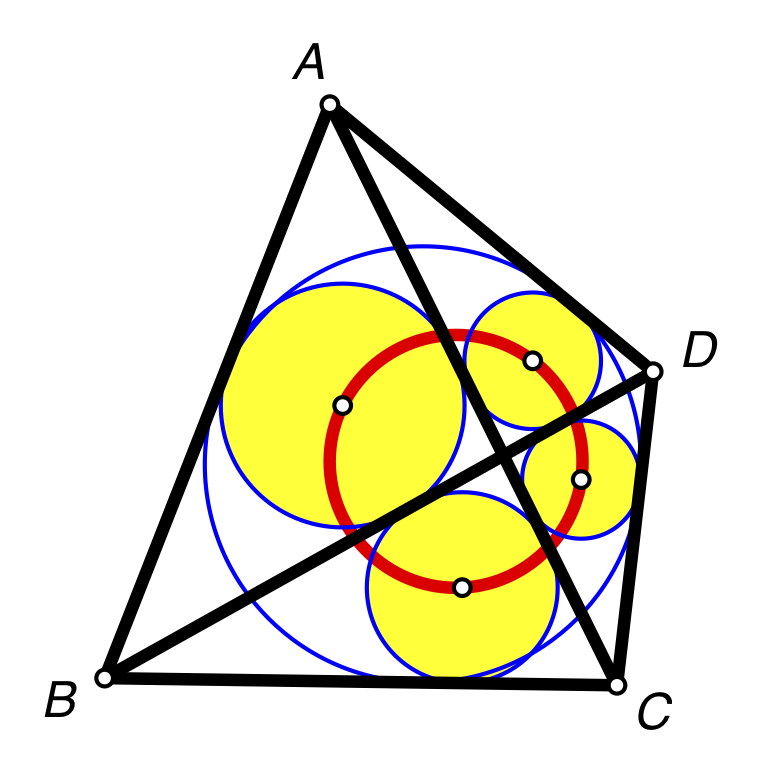}}\\
\end{center}
\hrule
\vspace*{4pt}
Let $ABCD$ be a tangential quadrilateral.
Then the incenters of the 4 nonoverlapping triangles formed by
the diagonals lie on a circle.
}
\end{minipage}
}
\end{center}
\vspace{-3pt}

At first glance, these two results seem unrelated. However, the following
table points out their similarities.

\begin{center}
\begin{tabular}{|c|l|l|l|l|}
\hline
&&component&type of&central\\
Result&quadrilateral&triangles&center&quadrilateral\\
\hline
1&cyclic&half triangles&incenter&rectangle\\
\hline
2&tangential&quarter triangles&incenter&cyclic quadrilateral\\
\hline
\end{tabular}
\end{center}

The following definitions are used.
A \emph{cyclic quadrilateral} is a quadrilateral with a circumcircle.
A \emph{tangential quadrilateral} is a quadrilateral with an incircle.
The diagonals of a convex quadrilateral divide it into four overlapping triangles
each bounded by two sides of the quadrilateral and a diagonal.
These are called \emph{half triangles}.
The diagonals of a convex quadrilateral divide it into four nonoverlapping triangles
each bounded by a side of the quadrilateral and parts of the two diagonals.
These are called \emph{quarter triangles}.
The \emph{incenter} of a triangle is the center of the circle inscribed in the triangle.
The four incenters form a quadrilateral called the \emph{central quadrilateral}.
The original quadrilateral is called the \emph{reference quadrilateral}.
Information about result 1 can be found in \cite[p.~133]{Altshiller-Court}.
Information about result 2 can be found in \cite{Josefsson2011}.

Looking at the table, we note the similarities. In both cases, we started with
a quadrilateral with a special shape. We then considered four component triangles
associated with the quadrilateral (half triangles or quarter triangles).
Within each component triangle, we located a center, namely the incenter.
We concluded that the four centers form the vertices of a quadrilateral
that also have a special shape (rectangle or cyclic quadrilateral).

In this paper, we use a computer to discover similar results. In each case,
we will start with a reference quadrilateral that has a special shape. We then form
four component triangles related to that quadrilateral. Then we locate
a triangle center in each component triangle. These four centers form a quadrilateral
known as the \emph{central quadrilateral}. Finally, we check to see
if the central quadrilateral has a special shape.
\goodbreak
\section{Types of Quadrilaterals Studied}
\label{section:quadrilaterals}

We are only interested in quadrilaterals that have a certain amount of symmetry.
For example, we excluded bilateral quadrilaterals (those with two equal sides),
bisect-diagonal quadrilaterals (where one diagonal bisects another), right kites,
right trapezoids, and golden rectangles.
The types of quadrilaterals we studied are shown in the following table.
The sides of the quadrilateral, in order, have lengths $a$, $b$, $c$, and $d$.
The measures of the angles of the quadrilateral, in order, are $A$, $B$, $C$, and $D$.

\begin{center}
\footnotesize
\begin{tabular}{|l|l|l|}\hline
\multicolumn{3}{|c|}{\small\textbf{\large \strut Types of Quadrilaterals Considered}}\\ \hline
Quadrilateral Type&Geometric Definition&Algebraic Condition\\ \hline
general&convex&none\\ \hline
cyclic&has a circumcircle&$A+C=B+D$\\ \hline
tangential&has an incircle&$a+c=b+d$\\ \hline
extangential&has an excircle&$a+b=c+d$\\ \hline
parallelogram&opposite sides parallel&$a=c$, $b=d$\\ \hline
equalProdOpp&product of opposite sides equal&$ac=bd$\\ \hline
equalProdAdj&product of adjacent sides equal&$ab=cd$\\ \hline
orthodiagonal&diagonals are perpendicular&$a^2+c^2=b^2+d^2$\\ \hline
equidiagonal&diagonals have the same length&\\ \hline
Pythagorean&equal sum of squares, adjacent sides&$a^2+b^2=c^2+d^2$\\ \hline
kite&two pair adjacent equal sides&$a=b$, $c=d$\\ \hline
trapezoid&one pair of opposite sides parallel&$A+B=C+D$\\ \hline
rhombus&equilateral&$a=b=c=d$\\ \hline
rectangle&equiangular&$A=B=C=D$\\ \hline
Hjelmslev&two opposite right angles&$A=C=90^\circ$\\ \hline
isosceles trapezoid&trapezoid with two equal sides&$A=B$, $C=D$\\ \hline
APquad&sides in arithmetic progression&$d-c=c-b=b-a$\\ \hline
\end{tabular}
\end{center}

The following combinations of entries in the above list were also considered:
bicentric quadrilaterals (cyclic and tangential), exbicentric quadrilaterals (cyclic and extangential),
bicentric trapezoids, cyclic orthodiagonal quadrilaterals, equidiagonal orthodiagonal kites,
equidiagonal orthodiagonal quadrilaterals, equidiagonal orthodiagonal trapezoids,
harmonic quadrilaterals (cyclic and equalProdOpp), orthodiagonal trapezoids, tangential trapezoids,
and squares (equiangular rhombi).

So, in addition to the general convex quadrilateral, a total of 27 other types of quadrilaterals
were considered in this study. When checking to see if a central quadrilateral has one of these
shapes, we also considered the following two degenerate quadrilaterals: a line segment (the four vertices
of the quadrilateral are collinear) and a point (the four vertices coincide).

A graph of the types of quadrilaterals considered is shown in Figure \ref{fig:quadShapes}.
An arrow from A to B means that any quadrilateral of type B is also of type A.
For example: all squares are rectangles and all kites are orthodiagonal.

\begin{figure}[h!t]
\centering
\scalebox{1}[1.5]{\includegraphics[width=1\linewidth]{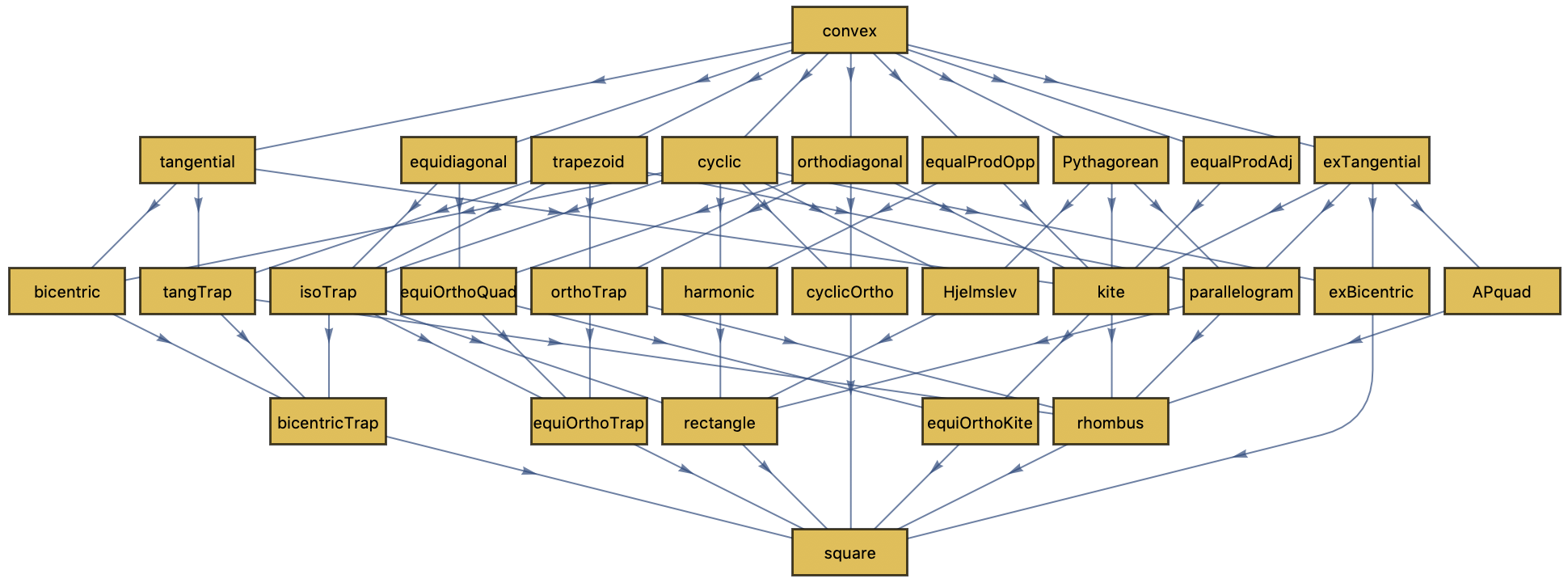}}
\caption{Quadrilateral Shapes}
\label{fig:quadShapes}
\end{figure}

\section{Centers}
\label{section:centers}

In this study, we will place triangle centers in the four component
triangles. We use Clark Kimberling's definition of a triangle center \cite{KimberlingA}.

A \emph{center function} is a nonzero function $f(a,b,c)$
homogeneous in $a$, $b$, and $c$ and symmetric in $b$ and $c$.
Homogeneous in $a$, $b$, and $c$ means that
$$f(ta,tb,tc)=t^nf(a,b,c)$$
for some nonnegative integer $n$,
all $t>0$, and all positive real numbers $(a,b,c)$ satisfying $a<b+c$, $b<c+a$, and $c<a+b$.
Symmetric in $b$ and $c$ means that
$$f(a,c,b)=f(a,b,c)$$
for all $a$, $b$, and $c$.

A \emph{triangle center} is an equivalence class $x:y:z$ of ordered triples $(x,y,z)$
given by
$$x=f(a,b,c),\quad y=f(b,c,a),\quad z=f(c,a,b).$$

Tens of thousands of interesting triangle centers have been cataloged in the Encyclopedia of Triangle Centers \cite{ETC}. We use $X_n$ to denote the nth named center in this encyclopedia.

\void{
The diagonals of a quadrilateral (the reference quadrilateral) form four small triangles. Each contains two adjacent
vertices of the quadrilateral plus the point where the diagonals intersect.
We will call these component triangles of the quadrilateral. We will construct triangle
centers of various types in each of these component triangles and then examine
the four points obtained. These four points determine another quadrilateral (the central quadrilateral).
}

\section{Methodology}
\label{section:methodology}

We used a computer program called GeometricExplorer to determine the shape
of the central quadrilateral. Starting with each type of quadrilateral listed in
Figure~\ref{fig:quadShapes} for
the reference quadrilateral, and for each $n$ from 1 to 1000, we placed center $X_n$
in each of the component triangles of the reference quadrilateral.
The program then analyzes the central quadrilateral formed by these four centers
and reports if the central quadrilateral has a special shape.
GeometricExplorer uses numerical coordinates (to 15 digits of precision) for locating
all the points. This does not constitute a proof that the result is correct,
but gives us compelling evidence for the validity of the result.

We then examine the center functions associated with each group of results and use
that to guess at a pattern. Other center functions satisfying that pattern are then checked
and if they too form the same shape central quadrilateral, then we list the result
as a theorem in this paper.
We then used exact symbolic computation using Mathematica to try and
give a formal computer proof of the result.
If a proof is found, we include it with the supplementary material
associated with this paper.

For example, using an arbitrary quadrilateral as the reference quadrilateral,
when we place the center $X_n$ in each of the quarter triangles formed by the diagonals,
the resulting central quadrilateral appears to be a parallelogram for
$n=$2, 3, 4, 5, 20, 140, 376, 382, 546, 547, 548, 549, 550, 631, and 632.

We then compiled a list of the center functions corresponding to each of these centers.
There are multiple possibilities for each center function, since the center function
is not unique.
It can be scaled or expressed in various ways using either algebraic or trigonometric terms.
The list is shown in Table \ref{table:patterns}.

Examining the entries in Table \ref{table:patterns}, we can make several conjectures.
For example, we can conjecture that whenever the center function is of the form
$$\cos B\cos C+k\cos A$$
for some constant $k$, then the central quadrilateral will be a parallelogram.
We test this conjecture by testing the center function $\cos B\cos C+\frac79\cos A$ and
GeometricExplorer returns the fact that the central quadrilateral is a parallelogram.
We now have strong evidence that the result is true.

In this case, a formal computer proof was found
and we list the result later on in this paper as Theorem \ref{thm:general}.

\begin{table}[ht!]
\caption{}
\label{table:patterns}
\begin{center}
\begin{tabular}{|r|l|}
\hline
$n$&center function\\
\hline
2&$\cos A+\cos (B-C)$\\
\hline
3&$\cos A$\\
\hline
4&$\cos A-\cos (B-C)$\\
\hline
5&$\cos (B-C)$\\
\hline
20&$\cos A-\cos B\cos C$\\
\hline
140&$2\cos A+\cos (B-C)$\\
\hline
376&$5\cos A+\cos (B-C)$\\
\hline
381&$2\cos (B-C)-\cos A$\\
\hline
382&$\cos A-4\cos B\cos C$\\
\hline
546&$3\cos B\cos C-2\cos A$\\
\hline
547&$5\cos B\cos C+2\cos A$\\
\hline
548&$6\cos A-\cos (B-C)$\\
\hline
549&$\cos(B-C)+4\cos A$\\
\hline
550&-$\cos(B-C)+4\cos A$\\
\hline
631&$2\cos A+\cos B\cos C$\\
\hline
632&$7\cos A+6\cos B\cos C$\\
\hline
\end{tabular}
\end{center}
\end{table}

If a theorem in this paper is accompanied by a figure, this means that the
figure was drawn using either Geometer's Sketchpad or GeoGebra.
In either case, we used the drawing program to dynamically vary the points
in the figure. Noticing that the result remains true as the points vary offers
further evidence that the theorem is true.


\section{Results for Quarter Triangles}
\label{section:quarterTriangles}

In this configuration, the reference quadrilateral is named $ABCD$.
The two diagonals of the quadrilateral divide that quadrilateral
into four small nonoverlapping triangles called quarter triangles.
The point of intersection of the diagonals
will be called $E$. The four triangles (numbered 1 to 4) are shown
in Figure \ref{fig:quarterTriangles}.

\begin{figure}[h!t]
\centering
\includegraphics[width=0.4\linewidth]{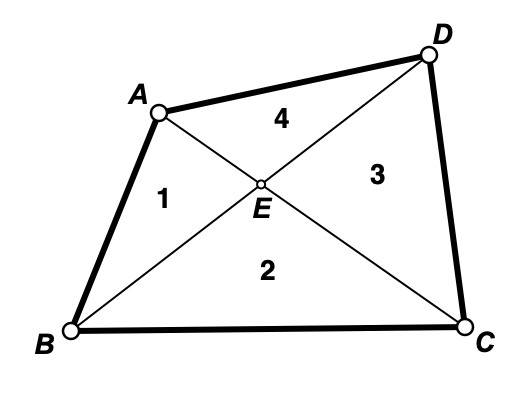}
\caption{Quarter Triangles}
\label{fig:quarterTriangles}
\end{figure}

The triangles have been numbered in a counterclockwise order starting with side $AB$:
$\triangle ABE$, $\triangle BCE$, $\triangle CDE$, $\triangle DAE$.
Triangle centers are selected in each triangle.
In order, their names are $F$, $G$, $H$, and $I$.

The raw data collected can be
found in Appendix \ref{appendix:diagonalPointData}.
Looking for patterns in the raw data, we found a number of theorems and
conjectures which are presented below.

\subsection{General Quadrilaterals}

\void{
\begin{theorem}
\label{thm:general}
If the reference quadrilateral is a general quadrilateral, the central quadrilateral is a parallelogram whenever the chosen center lies on the Euler line.
\end{theorem}

The \emph{Euler line} of a triangle is the line joining the centroid and the circumcenter.
According to \cite{ETC}, a center function for the centroid is $\cos B\cos C+\cos A$
and a center function for the circumcenter is $\cos A$.

If $P$ and $Q$ are two distinct points, then any point on the line joining $P$ and $Q$
is of the form $P+k(Q-P)$ where $k$ is a real parameter. Letting
$P=\cos A$ and $Q=\cos B\cos C+\cos A$, we see that any point on the Euler line
of a triangle is of the form $\cos A+k\cos B\cos C$ for some constant $k$.

In other words, every point on the Euler line is given by the trilinear coordinates
$$\bigl(\cos A+k\cos B\cos C:\cos B+k\cos C\cos A:\cos C+k\cos A\cos B\bigr)$$
for some constant $k$.

Thus, Theorem \ref{thm:general} is equivalent to the following theorem.
}

\begin{theorem}
\label{thm:general}
If the reference quadrilateral is a general quadrilateral, the central quadrilateral is a parallelogram whenever the chosen center has center function of the form
$$\cos B\cos C+k\cos A$$
\index{$\cos B\cos C+k\cos A$}
for some constant $k$.
\end{theorem}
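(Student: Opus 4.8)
The plan is to show that in \emph{every} triangle the center with center function $\cos B\cos C+k\cos A$ is one and the same affine combination of the triangle's three vertices and its circumcenter, and then to combine this uniform formula with two elementary facts about the quarter triangles. (For a convex reference quadrilateral the four quarter triangles are non-degenerate, so every point below is well-defined.)

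\textbf{Step 1: a closed form for the center.} Let $A,B,C$ be the angles of a triangle opposite sides $a,b,c$, let $P_A,P_B,P_C$ be its vertices (as position vectors), and let $O$ be its circumcenter. The center in question has trilinears $\bigl(\cos B\cos C+k\cos A:\cos C\cos A+k\cos B:\cos A\cos B+k\cos C\bigr)$; multiplying coordinatewise by $(a:b:c)=(\sin A:\sin B:\sin C)$ gives its barycentrics, and separating the $k$-free and $k$-linear parts, the center is
$$X_k=\frac{\sum_{\mathrm{cyc}}\sin A\cos B\cos C\cdot P_A+k\sum_{\mathrm{cyc}}\sin A\cos A\cdot P_A}{\sum_{\mathrm{cyc}}\sin A\cos B\cos C+k\sum_{\mathrm{cyc}}\sin A\cos A}.$$
Now $\sum_{\mathrm{cyc}}\sin A\cos B\cos C=\cos C\sin(A+B)+\sin C\cos A\cos B=\sin C(\cos C+\cos A\cos B)=\sin A\sin B\sin C$, and $\sum_{\mathrm{cyc}}\sin A\cos A=\tfrac12\sum\sin 2A=2\sin A\sin B\sin C$ (both valid when $A+B+C=\pi$), so the two scalar sums in the denominator are in the \emph{triangle-independent} ratio $1:2$. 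Dividing through by $\sin A\sin B\sin C$ gives $X_k=\dfrac{X_0+2k\,O}{2k+1}$, where $O$ is the circumcenter (barycentrics $(\sin 2A:\sin 2B:\sin 2C)$) and $X_0$, the $k=0$ center, is the orthocenter (barycentrics $(\sin A\cos B\cos C:\cdots)\propto(\tan A:\tan B:\tan C)$). Using the Euler relation $X_0=P_A+P_B+P_C-2O$, we obtain the uniform formula
$$X_k=\frac{P_A+P_B+P_C+2(k-1)\,O}{2k+1}\qquad(k\neq-\tfrac12),$$
an affine combination whose coefficients sum to $1$ and do not depend on the triangle. (When $k=-\tfrac12$ the center lies at infinity and the statement is vacuous.)

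\textbf{Step 2: reduction to the circumcenters.} Let $T_1=\triangle ABE$, $T_2=\triangle BCE$, $T_3=\triangle CDE$, $T_4=\triangle DAE$ be the quarter triangles with circumcenters $O_1,O_2,O_3,O_4$. By Step 1, $F=\dfrac{A+B+E+2(k-1)O_1}{2k+1}$, and $G,H,I$ are given by the same formula with the data of $T_2,T_3,T_4$. Hence
$$(2k+1)(F+H-G-I)=\bigl[(A+B+E)+(C+D+E)-(B+C+E)-(D+A+E)\bigr]+2(k-1)(O_1+O_3-O_2-O_4),$$
and the bracketed vertex combination vanishes identically. So it remains only to show $O_1+O_3=O_2+O_4$.

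\textbf{Step 3: the four circumcenters form a parallelogram.} Both $O_1$ and $O_2$ lie on the perpendicular bisector of segment $EB$, and both $O_3$ and $O_4$ lie on the perpendicular bisector of $ED$; since $B,E,D$ lie on diagonal $BD$, these two lines are each perpendicular to $BD$, hence parallel, so $O_1O_2\parallel O_3O_4$. Symmetrically $O_1,O_4$ lie on the perpendicular bisector of $EA$ and $O_2,O_3$ on that of $EC$, both perpendicular to diagonal $AC$, so $O_1O_4\parallel O_2O_3$. Thus $O_1O_2O_3O_4$ is a parallelogram, $O_1+O_3=O_2+O_4$, and with Step 2 this gives $F+H=G+I$: the diagonals of the central quadrilateral bisect each other, so it is a parallelogram. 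The whole weight of the argument sits on Step 1, and within it on the fact that the two trigonometric identities force the ``vertex part'' and the ``circumcenter part'' of the barycentric combination into a fixed ratio, the same in all four quarter triangles; without this uniformity the displacement $F+H-G-I$ would not collapse to a vanishing vertex sum plus a multiple of $O_1+O_3-O_2-O_4$, and no such clean argument would be available.
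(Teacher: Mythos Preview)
Your proof is correct and substantially more conceptual than the paper's. The paper sets up Cartesian coordinates, converts the center function to barycentrics via the Law of Cosines, computes explicit coordinates for $F,G,H,I$ in terms of the vertex coordinates and $k$, and then checks (with Mathematica) that opposite sides have equal slope. It is a direct symbolic verification.

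Your route is different in kind. Step~1 is the key observation the paper never makes explicit: by the two identities $\sum\sin A\cos B\cos C=\sin A\sin B\sin C$ and $\sum\sin A\cos A=2\sin A\sin B\sin C$, the center $X_k$ is always the \emph{same} affine combination $\dfrac{P_A+P_B+P_C+2(k-1)O}{2k+1}$ of the vertices and the circumcenter, with coefficients independent of the triangle. This immediately reduces the four-center problem to two ingredients: the trivial cancellation $(A+B+E)+(C+D+E)=(B+C+E)+(D+A+E)$, and the circumcenter case $O_1+O_3=O_2+O_4$. The latter is exactly the paper's Corollary~\ref{thm:genCircumcenters}, and your perpendicular-bisector argument for it matches the paper's own geometric proof there. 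So in effect you have shown that Theorem~\ref{thm:general} \emph{follows from} Corollary~\ref{thm:genCircumcenters} plus the Euler-line structure, whereas the paper proves the theorem independently by computation and only afterwards derives the circumcenter case as a corollary. Your approach explains the phenomenon rather than merely verifying it, and it handles the exceptional value $k=-\tfrac12$ (point at infinity on the Euler line) cleanly.
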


\begin{proof}

Let $ABCD$ be the quadrilateral. Let $E$ be the intersection of the diagonals.
Without loss of generality, we can place $B$ at the origin
and $C$ at the point with Cartesian coordinates $(1,0)$. Then let the coordinates of $A$, $D$, and $E$ be as shown in Figure \ref{fig:dpCoordinates}.

\begin{figure}[h!t]
\centering
\includegraphics[width=0.4\linewidth]{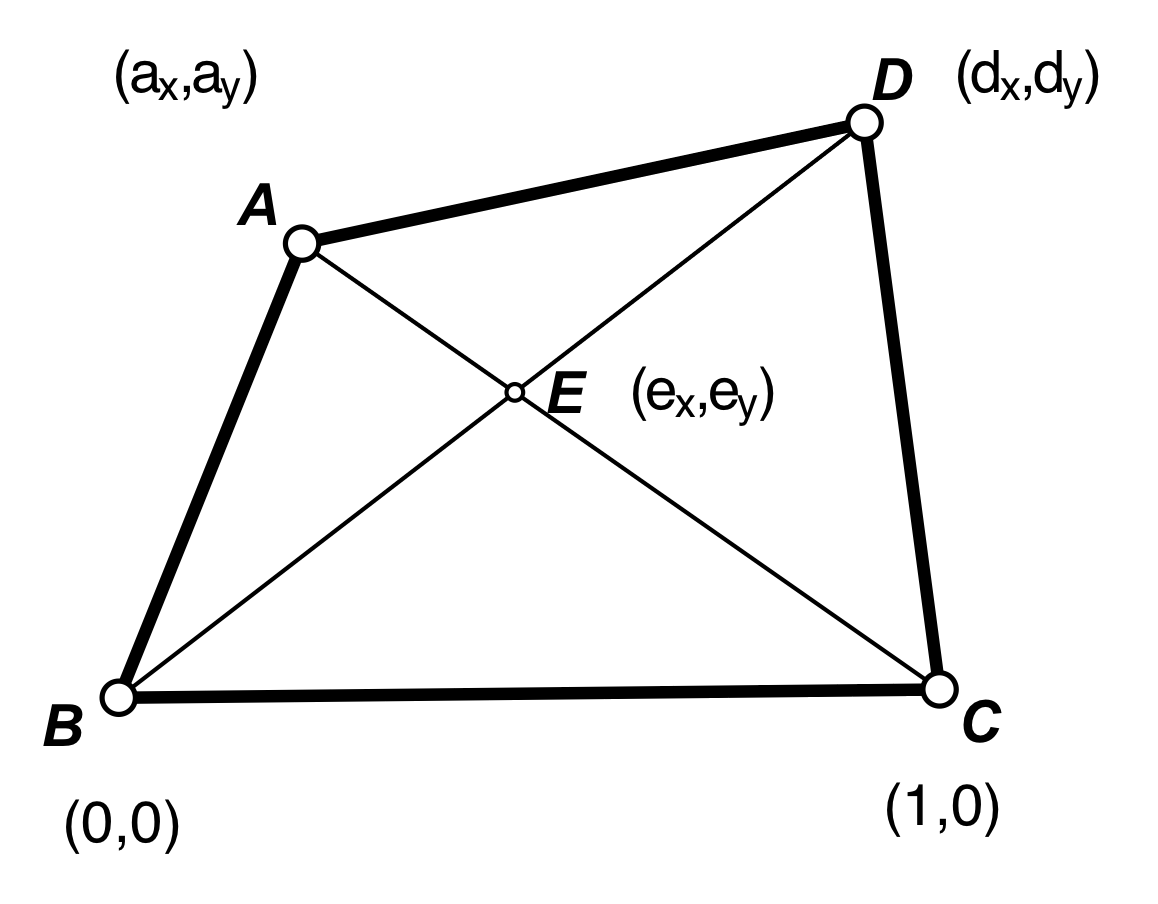}
\caption{Cartesian coordinates for a general quadrilateral}
\label{fig:dpCoordinates}
\end{figure}

Using the equations for lines $AC$ and $BD$, we can find the coordinates of their intersection point, $E$.
We get
$$E=\left(\frac{a_yd_x}{a_yd_x+d_y-a_xd_y},\frac{a_yd_y}{a_yd_x+d_y-a_xd_y}\right).$$

Using the Law of Cosines, we can replace the cosines in the center function $\cos B\cos C+k\cos A$ to get an equivalent function in terms of $a$, $b$, and $c$.
This gives us the 1st trilinear coordinate for the triangle center
determined by this center function in terms of $a$, $b$, and $c$. Multiplying by $a$ gives the 1st barycentric
coordinate. We can clear fractions by multiplying the barycentric coordinates by the cyclic function $4abc$.
We then find that the 1st barycentric coordinate for a point given by this center function is
$$a^4 (1-2 k)+2 a^2 k \left(b^2+c^2\right)-\left(b^2-c^2\right)^2.$$

Using the formula for the distance between two points in Cartesian coordinates, we can find the lengths of
$AB$, $BC$, $CD$, $DA$, $EA$, $EB$, $EC$, and $ED$ in terms of $a_x$, $a_y$, $d_x$, and $d_y$.
For example,
$$DE=\frac{\bigl(a_y(d_x-1)+d_y-a_xd_y\bigr)\sqrt{d_x^2+d_y^2}}{a_yd_x+d_y-a_xd_y}.$$
If the lengths of the sides of a triangle are $L_1$, $L_2$, and $L_3$, and the coordinates of the
vertices are $A=(a_x,a_y)$, $B=(b_x,b_y)$, and $C=(c_x,c_y)$, then the point with barycentric coordinates
$(t_1:t_2:t_3)$ has Cartesian coordinates
$$(t_1A+t_2B+t_3C)/(t_1+t_2+t_3)$$
where $tA=(ta_x,ta_y)$ and similarly for $B$ and $C$ (the change of coordinates formula).

We can thus find the Cartesian coordinates for point $F$, the center of $\triangle ABE$ corresponding to
the given center function. We find that the coordinates of $F$ are
$$\Bigl(\frac{-a_x^2 d_y k+a_x (a_yd_x+d_yk)+a_y(d_xk+a_yd_y(1-k))}{(2k+1) (a_yd_x+d_y-a_xd_y)},$$
$$\frac{a_x^2 d_x (k-1)+a_x (-a_yd_y-d_xk+d_x)+a_y (a_yd_xk+d_y(k+1))}{(2k+1) (a_yd_x+d_y-a_xd_y)}\Bigr).$$

In the same manner, we can find the coordinates for points $G$, $H$, and $I$, the centers of
triangles $\triangle BCE$, $\triangle CDE$, and $\triangle DAE$.

Using the point slope formula, we can find the slopes of $FG$, $GH$, $HI$, and $IF$.
A little algebra shows that the slope of $FG$ is the same as the slope of $HI$
and the slope of $GH$ is the same as the slope of $IF$.
Thus $FG\parallel HI$ and $GH\parallel IJ$. Hence, the central quadrilateral $FGHI$ is a parallelogram.
\end{proof}

These computations are straightforward, but very tedious, so best left to Mathematica.
In the remainder of this paper, if a theorem is stated without a proof,
we refer the reader to the Mathematica notebooks
distributed as supplementary material to this paper.
These notebooks contain the full proofs for many of the theorems in this paper.

\textbf{Special Cases.}

Common centers of the form given in Theorem \ref{thm:general} include the centroid ($X_2$),
the circumcenter ($X_3$), the orthocenter ($X_4$), the nine-point center ($X_5$), and the de~Longchamps point ($X_{20}$). For some of these, simple geometric proofs can be found.
\index{$X_2$}
\index{$X_3$}
\index{$X_4$}
\index{$X_5$}
\index{$X_{20}$}

\begin{corollary}
\label{thm:genCentroids}
Let $ABCD$ be a convex quadrilateral with diagonal point $E$.
Let $F$, $G$, $H$, and $I$ be the centroids of triangles $\triangle ABE$, $\triangle BCE$, $\triangle CDE$,
and $\triangle DAE$, respectively.
Then $FGHI$ is a parallelogram  (Figure \ref{fig:genCentroids}).
\end{corollary}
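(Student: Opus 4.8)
The plan is to sidestep the general machinery of Theorem \ref{thm:general} and give a one-line vector computation, exploiting the fact that the centroid of a triangle is the average of its three vertices. First I would introduce position vectors $A,B,C,D$ for the vertices of the reference quadrilateral and $E$ for the diagonal point, so that (using the cyclic labelling of the quarter triangles fixed in Figure \ref{fig:quarterTriangles}) the four centroids are $F=(A+B+E)/3$, $G=(B+C+E)/3$, $H=(C+D+E)/3$, and $I=(D+A+E)/3$. Then I would form the side vectors: $\vec{FG}=G-F=(C-A)/3$ and $\vec{IH}=H-I=(C-A)/3$, so $\vec{FG}=\vec{IH}$, which says $FG$ and $HI$ are parallel and of equal length; similarly $\vec{GH}=H-G=(D-B)/3=\vec{FI}$ takes care of the other pair of opposite sides. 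Hence $FGHI$ is a parallelogram.

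Equivalently — and perhaps this is the cleanest phrasing to present — one observes that $F+H=G+I=(A+B+C+D+2E)/3$, so the diagonals $FH$ and $GI$ have the common midpoint $(A+B+C+D+2E)/6$, and a quadrilateral whose diagonals bisect each other is a parallelogram. Either way the computation is an affine identity; convexity of $ABCD$ is used only to ensure that $E$ lies in the interior so that the four quarter triangles are genuine (nondegenerate) triangles.

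There is essentially no obstacle in this argument: the only point requiring a moment's care is the bookkeeping of the cyclic labelling, i.e.\ that consecutive centers $F,G,H,I$ are attached to consecutive sides $AB,BC,CD,DA$ of $ABCD$, which is exactly what makes the telescoping differences collapse to $(C-A)/3$ and $(D-B)/3$. Finally, for consistency with the rest of the paper I would note that this corollary is in any case an instance of Theorem \ref{thm:general}, since a center function for the centroid is $\cos B\cos C+\cos A$, which has the form $\cos B\cos C+k\cos A$ with $k=1$.
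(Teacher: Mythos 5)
Your proof is correct, and it takes a different route from the one in the paper. The paper argues synthetically: letting $AI$ and $AF$ meet $BD$ at $P$ and $Q$, the 2:1 division of the medians $AP$ and $AQ$ by the centroids gives $AI=\tfrac23 AP$ and $AF=\tfrac23 AQ$, whence $FI\parallel BD$ by the intercept theorem, and similarly $GH\parallel BD$ and $FG\parallel IH\parallel AC$. Your version replaces this with the affine identity $F=(A+B+E)/3$, etc., from which $G-F=(C-A)/3=H-I$ and $H-G=(D-B)/3=I-F$; this is shorter, needs no auxiliary points, and — unlike the paper's proof as stated — delivers the quantitative facts $FG=IH=\tfrac13 AC$ and $FI=GH=\tfrac13 BD$ for free. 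Those length relations are exactly what the paper has to extract separately later (in the proof of Corollary \ref{cor:equi-rhombusM}, where $FI=\tfrac13 BD$ and $FG=\tfrac13 AC$ upgrade the parallelogram to a rhombus in the equidiagonal case), so your computation subsumes that step as well. The alternative midpoint phrasing $F+H=G+I$ is equally valid, and your closing remark correctly situates the corollary as the $k=1$ instance of Theorem \ref{thm:general}. The only caveat, which you already flag, is the labelling convention tying $F,G,H,I$ to the sides $AB,BC,CD,DA$; with that fixed, nothing is missing.
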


\begin{figure}[h!t]
\centering
\includegraphics[width=0.5\linewidth]{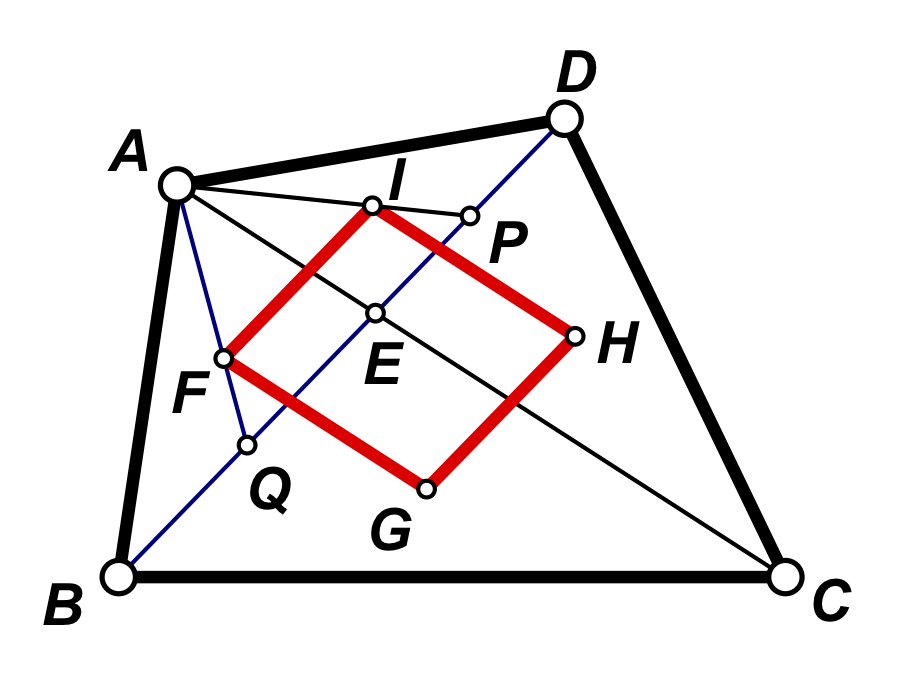}
\caption{General quadrilateral: centroids $\implies$ parallelogram}
\label{fig:genCentroids}
\end{figure}

\begin{proof}
Let $AI$ meet $BD$ at $P$ and let $AF$ meet $BD$ at $Q$.
Then $AP$ is a median of $\triangle ADE$ and $AQ$ is a median of $\triangle ABE$.
Thus $AI=\frac23AP$ and $AF=\frac23AQ$ which implies $FI\parallel BD$. Similarly, $GH\parallel BD$.
So $FI\parallel GH$. In the same way, $FG\parallel IH$. Hence, $FGHI$ is a parallelogram.
\end{proof}

\begin{corollary}
\label{thm:genCircumcenters}
Let $ABCD$ be a convex quadrilateral with diagonal point $E$.
Let $F$, $G$, $H$, and $I$ be the circumcenters of triangles $\triangle ABE$, $\triangle BCE$, $\triangle CDE$,
and $\triangle DEA$, respectively.
Then $FGHI$ is a parallelogram (Figure \ref{fig:genCircumcenters}).
\end{corollary}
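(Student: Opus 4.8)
The plan is to give a short synthetic proof, as hinted by the remark preceding the corollary, using only the fact that the circumcenter of a triangle lies on the perpendicular bisector of each of its sides. The crucial structural observation is that two consecutive quarter triangles always share an edge, and that edge lies along one of the diagonals of $ABCD$.

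First I would treat the pairs of triangles meeting along the diagonal $BD$. The triangles $\triangle ABE$ and $\triangle BCE$ have the common side $BE$, so their circumcenters $F$ and $G$ both lie on the perpendicular bisector of $BE$. Since $B$, $E$, $D$ are collinear on the diagonal $BD$, that perpendicular bisector is perpendicular to line $BD$, and hence the line $FG$ is perpendicular to $BD$. In exactly the same way, $\triangle CDE$ and $\triangle DEA$ share the side $DE$, so $H$ and $I$ both lie on the perpendicular bisector of $DE$, which is again perpendicular to $BD$; thus $HI \perp BD$. Therefore $FG \parallel HI$.

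Next I would repeat the argument along the diagonal $AC$. The triangles $\triangle BCE$ and $\triangle CDE$ share the side $CE$, so $G$ and $H$ both lie on the perpendicular bisector of $CE$; because $A$, $E$, $C$ are collinear on $AC$, this gives $GH \perp AC$. Likewise $\triangle DEA$ and $\triangle ABE$ share the side $AE$, so $I$ and $F$ lie on the perpendicular bisector of $AE$, whence $IF \perp AC$. Hence $GH \parallel IF$. Having shown that both pairs of opposite sides of $FGHI$ are parallel, we conclude that $FGHI$ is a parallelogram.

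The only point needing care is non-degeneracy: one should check that $F \neq G$ and $G \neq H$ so that the lines $FG$ and $GH$ are genuinely defined (and similarly for the other vertices). This is automatic for a proper convex quadrilateral, since $F = G$ would force the common point to be equidistant from $A$, $B$, $C$, $E$, making these four points concyclic — impossible, as $A$, $E$, $C$ are three distinct collinear points. I do not anticipate any real obstacle here; the whole argument is just a two-diagonal application of the perpendicular-bisector characterization of the circumcenter, and it avoids entirely the coordinate computation needed for Theorem \ref{thm:general}.
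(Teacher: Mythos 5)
Your argument is correct and is essentially the paper's own proof: the paper phrases it as "the line of centers of two intersecting circles is perpendicular to the common chord," which is the same observation that consecutive circumcenters lie on the perpendicular bisector of the shared segment along a diagonal. Your added non-degeneracy remark (that $F=G$ would force $A$, $E$, $C$ concyclic, which is impossible for collinear points) is a small but welcome refinement not present in the paper.
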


\begin{figure}[h!t]
\centering
\includegraphics[width=0.4\linewidth]{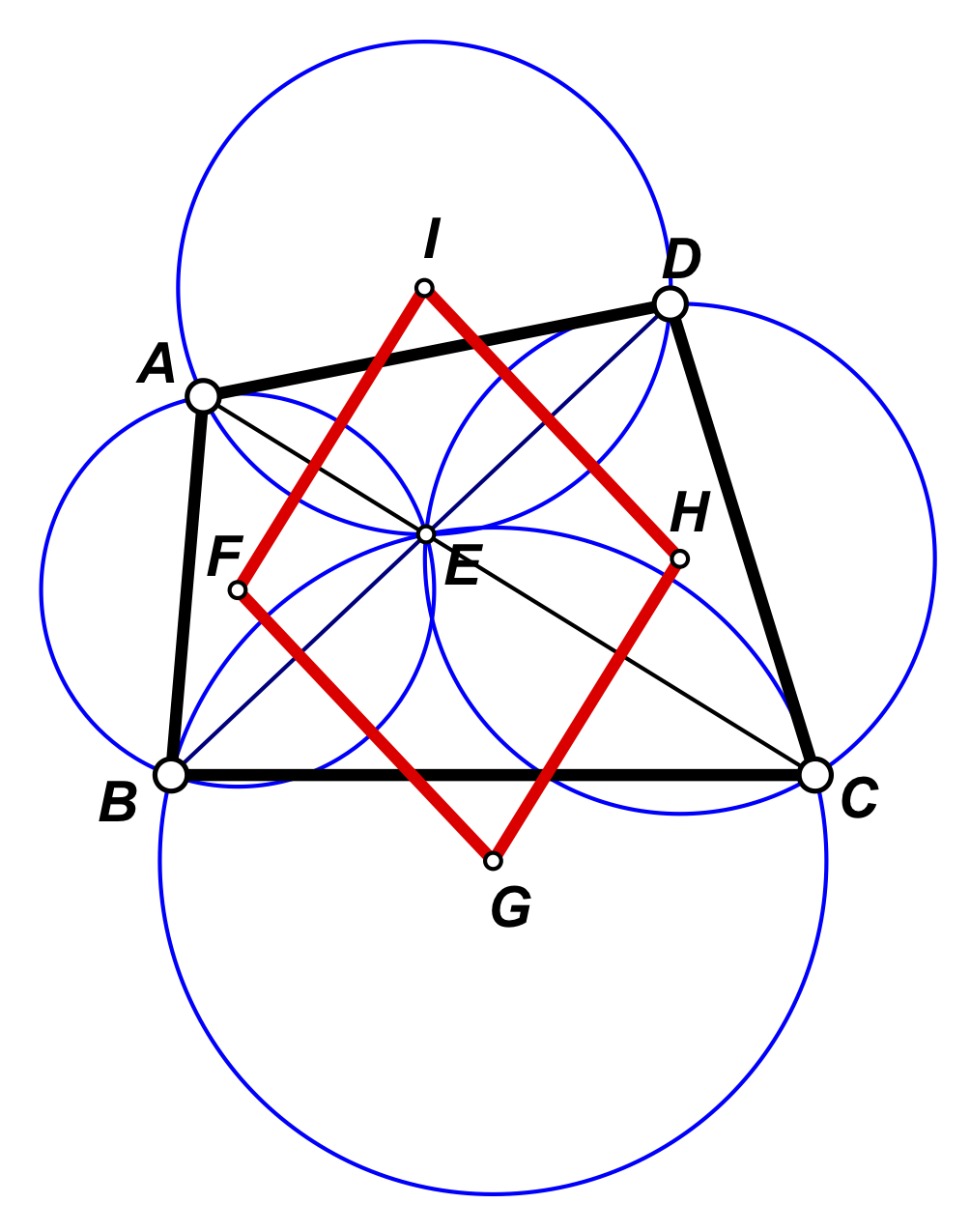}
\caption{General quadrilateral: circumcenters $\implies$ parallelogram}
\label{fig:genCircumcenters}
\end{figure}

\begin{proof}
The line of centers of two intersecting circles is perpendicular to the common chord.
Thus $FG\perp BE$ and $HI\perp ED$. Thus $FG\parallel HI$.
In the same manner, $GH\parallel FI$.
Hence, $FGHI$ is a parallelogram.
\end{proof}

\begin{corollary}
Let $ABCD$ be a convex quadrilateral with diagonal point $E$.
Let $F$, $G$, $H$, and $I$ be the orthocenters of triangles $\triangle ABE$, $\triangle BCE$, $\triangle CDE$,
and $\triangle DAE$, respectively.
Then $FGHI$ is a parallelogram (Figure \ref{fig:genOrthocenters}).
\end{corollary}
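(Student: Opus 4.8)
The plan is to mimic the circumcenter argument of Corollary~\ref{thm:genCircumcenters}, exploiting the fact that for the orthocenter the relevant altitude lines are forced to be perpendicular to the diagonals of $ABCD$. The key observation is that each quarter triangle has $E$ as a vertex, so its two sides emanating from $E$ lie along the diagonals $AC$ and $BD$ of the reference quadrilateral; hence the altitude of a quarter triangle drawn from the \emph{other} endpoint of a diagonal is perpendicular to that diagonal.

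First I would record the sides explicitly: $\triangle ABE$ has sides $EA\subset AC$ and $EB\subset BD$; $\triangle BCE$ has sides $EC\subset AC$ and $EB\subset BD$; and cyclically for $\triangle CDE$ and $\triangle DAE$. Consequently the orthocenter $F$ of $\triangle ABE$ lies on the altitude from $B$, which is the line through $B$ perpendicular to $AC$; and the orthocenter $G$ of $\triangle BCE$ lies on the altitude from $B$ in that triangle, which is again the line through $B$ perpendicular to $AC$. So $F$ and $G$ lie on one common line and $FG\perp AC$. Applying the same reasoning at vertex $D$ shows that $H$ and $I$ both lie on the line through $D$ perpendicular to $AC$, so $HI\perp AC$. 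Therefore $FG\parallel HI$.

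Next I would repeat the argument with the roles of the two diagonals interchanged: the orthocenters $G$ (of $\triangle BCE$) and $H$ (of $\triangle CDE$) both lie on the line through $C$ perpendicular to $BD$, while $F$ (of $\triangle ABE$) and $I$ (of $\triangle DAE$) both lie on the line through $A$ perpendicular to $BD$; hence $GH\parallel FI$. With both pairs of opposite sides parallel, $FGHI$ is a parallelogram.

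I do not expect a real obstacle; the only thing to watch is degeneracy. If $ABCD$ is orthodiagonal then every quarter triangle is right-angled at $E$, all four orthocenters coincide with $E$, and $FGHI$ collapses to a point, which we count as a degenerate parallelogram; if instead a quarter triangle is right-angled at a vertex other than $E$, the altitude lines are still well defined and the argument is unaffected. Alternatively, one may simply observe that a center function for the orthocenter $X_4$ is $\cos B\cos C$, i.e. the case $k=0$ of Theorem~\ref{thm:general}, so the corollary is already subsumed by that theorem; the geometric proof above is worth including only because it is short and transparent.
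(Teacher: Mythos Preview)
Your argument is correct and is essentially the same as the paper's: both proofs observe that adjacent orthocenters lie on a common altitude line through a vertex of $ABCD$ perpendicular to one of the diagonals (you start with the altitudes from $B$ and $D$ perpendicular to $AC$, the paper starts with the altitudes from $C$ and $A$ perpendicular to $BD$), and then conclude that opposite sides of $FGHI$ are parallel. Your added remarks on the orthodiagonal degeneracy and on deducing the corollary directly from Theorem~\ref{thm:general} with $k=0$ are fine embellishments but not part of the paper's proof.
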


\begin{figure}[h!t]
\centering
\includegraphics[width=0.4\linewidth]{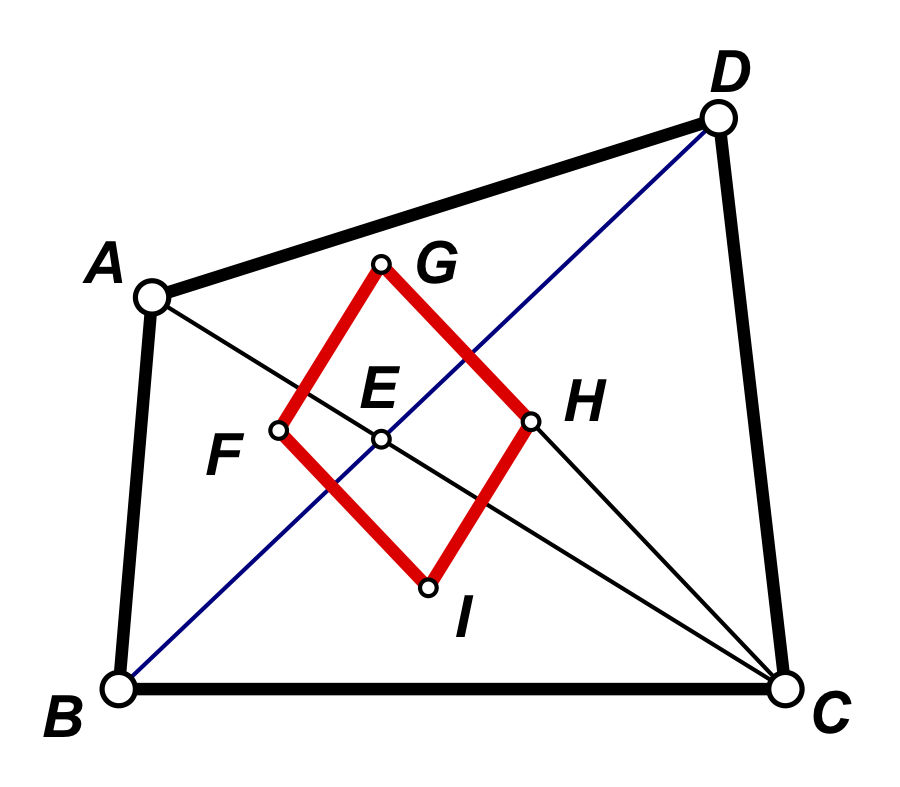}
\caption{General quadrilateral: orthocenters $\implies$ parallelogram}
\label{fig:genOrthocenters}
\end{figure}

\begin{proof}
Since $CH$ is an altitude of $\triangle CED$, we must have $CH\perp ED$.
Since $CG$ is an altitude of $\triangle BEC$, we must have $CH\perp BE$.
Thus, $GH\perp BD$. Similarly, $FI\perp BD$. Hence $GH\parallel FI$.
In the same manner, $FG\parallel IH$.
Therefore, $FGHI$ is a parallelogram.
\end{proof}

\begin{theorem}
\label{thm:EulerLine}
All the points with center function of the form 
$$\cos B\cos C+k\cos A$$
lie on the Euler line.
\end{theorem}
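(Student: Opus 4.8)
The plan is to recognize the one‑parameter family of center functions $\cos B\cos C+k\cos A$ as linear combinations of the center functions of two classical centers already known to lie on the Euler line — the circumcenter and the orthocenter — and then to use the fact that trilinear coordinates are homogeneous (projective) coordinates, so that a linear combination of representatives of two points always represents a point of the line they span.

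First I would recall two standard trilinear representations: the circumcenter $X_3$ has trilinears $(\cos A:\cos B:\cos C)$, and the orthocenter $X_4$ has trilinears $(\sec A:\sec B:\sec C)$, which, after clearing denominators by the factor $\cos A\cos B\cos C$, may equally be written $(\cos B\cos C:\cos C\cos A:\cos A\cos B)$. After substituting the Law of Cosines, both $\cos A$ and $\cos B\cos C$ become functions of $a,b,c$ that are homogeneous of degree $0$ and symmetric in $b$ and $c$; hence $\cos A$ is a legitimate center function for $X_3$, $\cos B\cos C$ is one for $X_4$, and so is $\cos B\cos C+k\cos A$ for each real $k$ (it is homogeneous of degree $0$, symmetric in $b$ and $c$, and not identically zero).

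Next I would observe that, by the definition of a triangle center, the center with center function $\cos B\cos C+k\cos A$ is the point with trilinear coordinates
$$\bigl(\cos B\cos C+k\cos A\;:\;\cos C\cos A+k\cos B\;:\;\cos A\cos B+k\cos C\bigr),$$
and this triple is precisely $(\cos B\cos C:\cos C\cos A:\cos A\cos B)+k\,(\cos A:\cos B:\cos C)$, i.e. the coordinate‑wise sum of the chosen representative of $X_4$ and $k$ times the chosen representative of $X_3$. Because trilinear coordinates are homogeneous coordinates for the projective plane, the points of the form $\mathbf{u}+t\,\mathbf{v}$, as $t$ ranges over $\mathbb{R}$, all lie on the line joining the points represented by $\mathbf{u}$ and $\mathbf{v}$. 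Taking $\mathbf{u},\mathbf{v}$ to be the representatives of $X_4$ and $X_3$ just described, every center with center function of the form $\cos B\cos C+k\cos A$ lies on the line $X_3X_4$, which is the Euler line. This proves the theorem.

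I do not expect a genuine obstacle here; the only points needing care are bookkeeping. One must combine representatives of $X_3$ and $X_4$ that are homogeneous of the same degree, which is why one first rewrites $(\sec A:\sec B:\sec C)$ as $(\cos B\cos C:\cos C\cos A:\cos A\cos B)$ before forming the combination; and one should note the harmless fact that $X_3$ itself corresponds to the ``$k=\infty$'' limit, attained by no finite $k$, but of course is also on the Euler line. A purely computational alternative is to substitute $\alpha=\cos B\cos C+k\cos A$ together with its cyclic images into the known trilinear equation $\sum\sin 2A\,\sin(B-C)\,\alpha=0$ of the Euler line and to verify, using $A+B+C=\pi$, that the left‑hand side vanishes identically in $k$; that verification is routine and well suited to Mathematica.
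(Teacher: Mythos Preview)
Your proof is correct and follows essentially the same approach as the paper's: both recognize the center as a linear combination of two fixed centers on the Euler line and invoke the projective fact that such a combination lies on the line they span. The only cosmetic difference is the choice of anchors---you use the circumcenter $X_3$ and the orthocenter $X_4$ (with center functions $\cos A$ and $\cos B\cos C$), whereas the paper uses the circumcenter and the centroid (with center functions $\cos A$ and $\cos B\cos C+\cos A$) and phrases the collinearity via the vanishing of the $3\times 3$ determinant whose rows are the three trilinear triples; in either case the first row is a linear combination of the other two.
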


\begin{proof}
The \emph{Euler line} of a triangle is the line joining the centroid and the circumcenter.
According to \cite{ETC}, a center function for the centroid $G$ is $\cos B\cos C+\cos A$
and a center function for the circumcenter $O$ is $\cos A$.

From \cite[p.~28]{KimberlingB},
the equation for the line joining $(x_1:y_1:z_1)$ and $(x_2:y_2:z_2)$ is
\begin{equation}
\left|
\begin{array}{ccc}
x&y&z\\
x_1&y_1&z_1\\
x_2&y_2&z_2
\end{array}
\right|=0.
\end{equation}

If $P$ is the point with center function $\cos B\cos C+k\cos A$, then $P$ lies on line $OG$
because it is easy to verify that
$$
\left|
\begin{array}{ccc}
\cos B\cos C+k\cos A&\cos C\cos A+k\cos B&\cos A\cos B+k\cos C\\
\cos B\cos C+\cos A&\cos C\cos A+\cos B&\cos A\cos B+\cos C\\
\cos A&\cos B&\cos C
\end{array}
\right|=0
$$
because row 1 of the determinant is a linear combination of rows 2 and 3.
\end{proof}

Using the same reasoning, we get the following result.

\begin{theorem}
\label{thm:lines}
The center with center function $p$ lies on the line joining the centers
with center functions $q$ and $r$ if $p$ is a linear combination of $q$ and $r$.
\end{theorem}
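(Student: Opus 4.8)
The plan is to repeat, almost verbatim, the argument used for Theorem~\ref{thm:EulerLine}, with the two specific center functions there replaced by arbitrary ones. First I would recall the dictionary between center functions and trilinear coordinates from Section~\ref{section:centers}: the center with center function $f$ has trilinears $\bigl(f(a,b,c):f(b,c,a):f(c,a,b)\bigr)$, the second and third entries being obtained from the first by the cyclic substitution $(a,b,c)\mapsto(b,c,a)\mapsto(c,a,b)$. Abbreviate $q_1=q(a,b,c)$, $q_2=q(b,c,a)$, $q_3=q(c,a,b)$, and similarly for $r$ and $p$. Then I would invoke the collinearity criterion quoted in the proof of Theorem~\ref{thm:EulerLine} (from \cite[p.~28]{KimberlingB}): the three centers with center functions $p$, $q$, $r$ are collinear exactly when
\[
\left|\begin{array}{ccc} p_1 & p_2 & p_3 \\ q_1 & q_2 & q_3 \\ r_1 & r_2 & r_3 \end{array}\right| = 0 .
\]

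The key step is then immediate. Suppose $p=\lambda q+\mu r$ as center functions, i.e.\ $p(a,b,c)=\lambda\, q(a,b,c)+\mu\, r(a,b,c)$ for some constants $\lambda,\mu$ and all admissible $(a,b,c)$. Applying the same cyclic substitution twice gives $p_1=\lambda q_1+\mu r_1$, $p_2=\lambda q_2+\mu r_2$, $p_3=\lambda q_3+\mu r_3$; in other words, the first row of the determinant above is $\lambda$ times the second row plus $\mu$ times the third. A determinant one of whose rows is a linear combination of the other two is zero, so the determinant vanishes, and hence the center with center function $p$ lies on the line joining the centers with center functions $q$ and $r$. This is exactly the observation ``row~1 is a linear combination of rows~2 and~3'' that closed the proof of Theorem~\ref{thm:EulerLine}; the de~Longchamps/centroid/circumcenter instance there is the special case $q=\cos A$, $r=\cos B\cos C+\cos A$.

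I do not expect a genuine obstacle; the content is entirely the linearity of the determinant in its rows together with the fact that cyclic substitution is a ring homomorphism, so it commutes with forming linear combinations. The only points worth a sentence of care are the hypotheses tacit in the statement: ``the line joining'' the two centers is defined only when $q$ and $r$ represent distinct centers (nonproportional trilinears), and ``$p$ is a linear combination of $q$ and $r$'' should be read as asserting that $\lambda q+\mu r$ is itself a legitimate center function — which it is, provided $q$ and $r$ share a common degree of homogeneity and $\lambda q+\mu r$ is not identically zero. Under those mild provisos the proof is the three lines above.
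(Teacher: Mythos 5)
Your proposal is correct and is exactly the argument the paper intends: the paper gives no separate proof but states that Theorem~\ref{thm:lines} follows ``using the same reasoning'' as Theorem~\ref{thm:EulerLine}, namely the Kimberling determinant criterion for collinearity together with the observation that the row for $p$ is a linear combination of the rows for $q$ and $r$. Your added remarks about the tacit hypotheses (distinct centers, $\lambda q+\mu r$ not identically zero and of consistent homogeneity) are sensible but not part of the paper's treatment.
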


\begin{theorem}
\label{thm:general2}
If the reference quadrilateral is a general quadrilateral, the central quadrilateral is a parallelogram whenever the chosen center has center function of the form
$$\cos(B-C)+k\cos A$$
for some constant $k$.
\end{theorem}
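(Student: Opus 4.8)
The plan is to reduce Theorem~\ref{thm:general2} directly to Theorem~\ref{thm:general} by a trigonometric identity valid in every triangle, so that no new coordinate computation is required. The key observation is that $\cos(B-C)+k\cos A$ and $\cos B\cos C+k'\cos A$ are, for suitable constants, the same center function up to a harmless scalar factor.

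First I would record the relevant identity. In any triangle $A+B+C=\pi$, so
$$\cos A = -\cos(B+C) = \sin B\sin C - \cos B\cos C,$$
and therefore
$$\cos(B-C) = \cos B\cos C + \sin B\sin C = 2\cos B\cos C + \cos A.$$
Consequently, for every constant $k$,
$$\cos(B-C)+k\cos A = 2\cos B\cos C + (k+1)\cos A = 2\left(\cos B\cos C + \tfrac{k+1}{2}\,\cos A\right).$$

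Next I would invoke the definition of a triangle center recalled in Section~\ref{section:centers}: a center function is determined only up to a nonzero homogeneous scalar, so multiplying it by the constant $2$ yields the same equivalence class $x:y:z$ and hence the very same triangle center. Thus the center with center function $\cos(B-C)+k\cos A$ coincides with the center with center function $\cos B\cos C + \tfrac{k+1}{2}\cos A$. Applying Theorem~\ref{thm:general} with the constant $k'=\tfrac{k+1}{2}$, the chosen center has a center function of the admissible form $\cos B\cos C+k'\cos A$, so the central quadrilateral $FGHI$ is a parallelogram, which is the assertion of Theorem~\ref{thm:general2}.

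I do not anticipate a real obstacle: the only point needing care is that the spurious factor $2$ may legitimately be discarded, which is immediate from the homogeneity built into the notion of a center function. One could alternatively argue via Theorems~\ref{thm:EulerLine} and~\ref{thm:lines} that both families of center functions parametrize points of the Euler line, but the substitution above is cleaner, since Theorem~\ref{thm:general} is stated in terms of a specific algebraic form of the center function rather than membership on the Euler line.
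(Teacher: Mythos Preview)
Your proposal is correct and is essentially the same argument as the paper's own proof: both rewrite $\cos(B-C)+k\cos A$ as $2\bigl(\cos B\cos C+\tfrac{k+1}{2}\cos A\bigr)$ using $A+B+C=\pi$, discard the scalar factor $2$, and then invoke Theorem~\ref{thm:general} with $k'=(k+1)/2$. The paper also pauses, just as you do, to note that the factor $2$ is harmless because equivalent center functions determine the same triangle center.
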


\begin{proof}
From the trigonometric identity
$$2\cos B\cos C=\cos(B-C)+\cos(B+C),$$
we see that
$$
\begin{aligned}
\cos(B-C)+k\cos A&=2\cos B\cos C-\cos(B+C)+k\cos A\\
&=2\cos B\cos C-\cos(180\degrees-A)+k\cos A\\
&=2\cos B\cos C+\cos A+k\cos A\\
&=2(\cos B\cos C+k'\cos A)\\
\end{aligned}
$$
where $k'=(k+1)/2$. The result then follows from Theorem \ref{thm:general}.
\end{proof}

A function $f(a,b,c)$ is a \emph{cyclic function} in $a$, $b$, and $c$ if
$$f(a,b,c)=f(b,c,a)=f(c,a,b)$$
for all $a$, $b$, and $c$.\label{cyclicSymmetric}
Two center functions are said to be \emph{equivalent} if their ratio is a cyclic function in $a$, $b$, and $c$.
In particular, two center functions are equivalent if their ratio is a constant.
The triangle centers corresponding to equivalent center functions coincide.

The proof of Theorem \ref{thm:general2} shows that the center functions
$\cos B\cos C+\frac{k+1}{2}\cos A$ and $\cos(B-C)+k\cos A$ are equivalent because
$$\frac{\cos B\cos C+\frac{k+1}{2}\cos A}{\cos(B-C)+k\cos A}=\frac12$$
is identically true when $A$, $B$, and $C$ are the angles of a triangle.

\begin{conjecture}
\label{conjecture:general}
If the reference quadrilateral is a general quadrilateral, the central quadrilateral is a parallelogram if and only if the center function is equivalent to $\cos B\cos C+k\cos A$ for some constant $k$.
\end{conjecture}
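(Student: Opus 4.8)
The forward implication is precisely Theorem~\ref{thm:general}, so only the converse needs an argument: assuming the central quadrilateral $FGHI$ is a parallelogram for \emph{every} convex reference quadrilateral $ABCD$, we must show $f$ is equivalent to $\cos B\cos C+k\cos A$. The plan is to combine the parallelogram condition, which in the labelling of Theorem~\ref{thm:general} reads $F+H=G+I$ (equivalently $FG\parallel HI$ and $GH\parallel IF$), with a ``sliding vertex'' device. Fix three vertices and the diagonal point $E$, and slide the fourth vertex along the diagonal it lies on: moving $A$ along line $AC$ (keeping $B,C,D,E$ fixed) leaves the quarter triangles $\triangle BCE$ and $\triangle CDE$ unchanged, so $G$ and $H$ do not move and $F+H=G+I$ forces $F-I=G-H$ to be a \emph{fixed vector} throughout the slide. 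Carrying this out for each of $A,B,C,D$ produces four statements, each asserting that the difference of two specified quarter-triangle centers is independent of the parameter being slid. These four ``constancy'' relations are what we solve.

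To make them explicit, put $E$ at the origin, let $\mathbf u,\mathbf v$ be unit vectors along the two diagonals meeting at angle $\theta$, and set $EA=p$, $EB=q$, $EC=r$, $ED=s$, so that $A=p\mathbf u$, $C=-r\mathbf u$, $B=q\mathbf v$, $D=-s\mathbf v$. For a triangle with vertices listed so that the opposite sides are $\alpha,\beta,\gamma$, the center determined by $f$ has barycentrics $\alpha f(\alpha,\beta,\gamma):\beta f(\beta,\gamma,\alpha):\gamma f(\gamma,\alpha,\beta)$, so each of $F,G,H,I$ becomes an explicit combination of $\mathbf u$ and $\mathbf v$ whose coefficients are built from values of $f$ at the quarter-triangle side lengths; for instance $\triangle ABE$ has sides $q,p,\ell$ with $\ell^2=p^2+q^2-2pq\cos\theta$, and $\triangle DAE$ has sides $p,s,m$ with $m^2=p^2+s^2+2ps\cos\theta$. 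Since $\mathbf u$ and $\mathbf v$ are independent, each constancy relation splits into two scalar equations; using homogeneity one may write $f(a,b,c)=a^{\,n}g(b/a,c/a)$ with $g$ symmetric, eliminating one variable. The result is a coupled system of functional equations for $g$ (and, after differentiating in the slid parameter, of differential equations), in which the pairing of ``vertical'' quarter triangles — $\triangle ABE$ and $\triangle CDE$ share the angle $\theta$ at $E$, while $\triangle BCE$ and $\triangle DAE$ share $\pi-\theta$ — is what couples values of $g$ at different arguments.

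Solving this system is the heart of the matter. Differentiating the four constancy relations and letting $p,q,r,s,\theta$ vary should pin $g$ down to a small finite-dimensional family; equivalently, after clearing denominators as in the proof of Theorem~\ref{thm:general}, the associated quartic $4abc\cdot af(a,b,c)$ is forced into a low-dimensional space of symmetric polynomials, and then evaluating at two or three explicit non-degenerate quadrilaterals identifies that space as the Euler-line pencil, i.e.\ (by Theorems~\ref{thm:general} and~\ref{thm:EulerLine}) exactly the center functions equivalent to $\cos B\cos C+k\cos A$. One structural shortcut is worth noting: whenever the barycentric sum $af(a,b,c)+bf(b,c,a)+cf(c,a,b)$ is proportional to $abc$ — which, by the standard identities $\sum\sin 2A=4\sin A\sin B\sin C$ and the like, holds for the whole pencil — the position of the center in each quarter triangle depends \emph{linearly} on $f$, so inside that class the ``good'' center functions form a genuine vector space and the problem collapses to computing its dimension.

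The principal obstacle is regularity. A Kimberling center function is only required to be nonzero, homogeneous, and symmetric in its last two arguments on the open cone carved out by the triangle inequalities; it need not be continuous, let alone rational, so the differentiations and $p\to 0$ limits above are not a priori justified. Realistically the theorem would be proved under a mild extra hypothesis — $f$ continuous, or $f$ a rational center function, which already covers every center in \cite{ETC} — leaving Conjecture~\ref{conjecture:general} in full generality open. The remaining obstacle is merely computational: the four sliding relations are tightly coupled and untangling them is heavy symbolic algebra, best handed to Mathematica exactly as for Theorem~\ref{thm:general}; but once $f$ is known to be, say, polynomial of bounded degree, what is left is finite-dimensional linear algebra over a handful of sample quadrilaterals.
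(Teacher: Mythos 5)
The statement you are proving is labelled a \emph{conjecture} in the paper, and the paper offers no proof of it: only the ``if'' direction is established (Theorem~\ref{thm:general}, together with the fact that equivalent center functions determine the same center), and the ``only if'' direction is left open. Your proposal does not close it either, and you are candid about this, but the gap deserves to be named precisely. The sliding-vertex setup is sound as far as it goes: moving $A$ along line $AC$ fixes $E$, $G$, and $H$, so the parallelogram identity $F+H=G+I$ does force $F-I$ to be constant along the slide, and the analogous relations for the other three vertices are correct. What is missing is the entire deduction from these four constancy relations to the conclusion that $f$ lies in the Euler-line pencil. That step is carried by the word ``should'' (``should pin $g$ down to a small finite-dimensional family''); no functional equation is actually solved, no dimension is computed, and no restricted case (even polynomial $f$ of bounded degree) is worked through. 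Since this is precisely the content of the converse, the proposal is a research plan rather than a proof.

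Two further points. First, the regularity obstruction you flag is not a technicality that can be waved at: Kimberling center functions need only be nonzero, homogeneous, and symmetric in the last two arguments, so your differentiations and limits are unjustified in exactly the generality the conjecture claims, and retreating to continuous or rational $f$ proves a different (weaker) statement. Second, the ``structural shortcut'' is circular for the purpose at hand: the hypothesis that $af(a,b,c)+bf(b,c,a)+cf(c,a,b)$ is proportional to $abc$ is a property of the pencil you are trying to characterize, not something you may assume of an arbitrary $f$ whose central quadrilateral happens to be a parallelogram, so it cannot collapse the converse to a dimension count. If you want to make progress, the honest target is the restricted claim for polynomial center functions of a fixed degree, where your four constancy relations really do become finite-dimensional linear conditions that a computer algebra system can check; that would be a worthwhile partial result, but it should be presented as such, not as a proof of Conjecture~\ref{conjecture:general}.
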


\begin{theorem}
\label{thm:general1}
If the reference quadrilateral is a general quadrilateral, the central quadrilateral is orthodiagonal
when the center is $X_1$. See Figure \ref{fig:diagPtIncenters}.

\end{theorem}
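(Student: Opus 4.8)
The plan is to exploit the fact that all four quarter triangles share the single vertex $E$, so that their incenters are controlled entirely by the two angle bisectors at $E$. First I would record that $A$, $E$, $C$ lie on the diagonal $AC$ and $B$, $E$, $D$ lie on the diagonal $BD$; consequently the angle $\angle AEB$ of $\triangle ABE$ is vertical to the angle $\angle CED$ of $\triangle CDE$, and the angle $\angle BEC$ of $\triangle BCE$ is vertical to the angle $\angle DEA$ of $\triangle DAE$.

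Next I would invoke the defining property of the incenter: $F$ lies on the internal bisector from $E$ in $\triangle ABE$, $H$ lies on the internal bisector from $E$ in $\triangle CDE$, while $G$ and $I$ lie on the internal bisectors from $E$ in $\triangle BCE$ and $\triangle DAE$ respectively. Because the internal bisector of an angle and the internal bisector of its vertical angle are the two opposite rays of a single line through the common vertex, the points $F$, $E$, $H$ are collinear and the points $G$, $E$, $I$ are collinear. In particular both diagonals of the central quadrilateral $FGHI$ pass through $E$, the diagonal point of the reference quadrilateral.

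Then I would measure the angle between these two lines. The line $FH$ bisects $\angle AEB$ and the line $GI$ bisects the adjacent supplementary angle $\angle BEC$; since $\angle AEB+\angle BEC=180^\circ$ and the two angles share the ray $EB$, the bisectors meet at angle $\tfrac12\angle AEB+\tfrac12\angle BEC=90^\circ$. Hence $FH\perp GI$, and since $FH$ and $GI$ are exactly the diagonals of $FGHI$, the central quadrilateral is orthodiagonal.

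I do not expect a serious obstacle; the one point needing care is the bookkeeping of which internal bisector ray enters which quarter triangle, so that $F$ and $H$ genuinely lie on opposite rays from $E$ (and likewise $G$ and $I$) rather than coinciding in direction. For a convex reference quadrilateral this is automatic, because the four quarter triangles fill the four angular sectors about $E$ in cyclic order, but I would state it explicitly. A purely computational alternative is also available: place Cartesian coordinates as in the proof of Theorem \ref{thm:general}, use the barycentric coordinates $a:b:c$ of the incenter in each quarter triangle to obtain the coordinates of $F$, $G$, $H$, $I$, and verify that $(H-F)\cdot(I-G)=0$ identically; this is the route of the supplementary Mathematica notebook, but the synthetic argument above is cleaner and also explains why the intersection of the central diagonals is $E$ itself.
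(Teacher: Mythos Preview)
Your proof is correct and is essentially identical to the paper's own argument: both show that $EF$ and $EH$ lie along the common bisector of the vertical pair $\angle AEB$, $\angle CED$ (so $F,E,H$ are collinear), that $EG$ and $EI$ lie along the bisector of the other vertical pair (so $G,E,I$ are collinear), and then compute $\angle FEG=\tfrac12\angle AEB+\tfrac12\angle BEC=90^\circ$. Your extra remarks about the convexity bookkeeping and the computational alternative are fine but not needed.
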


\begin{figure}[h!t]
\centering
\includegraphics[width=0.5\linewidth]{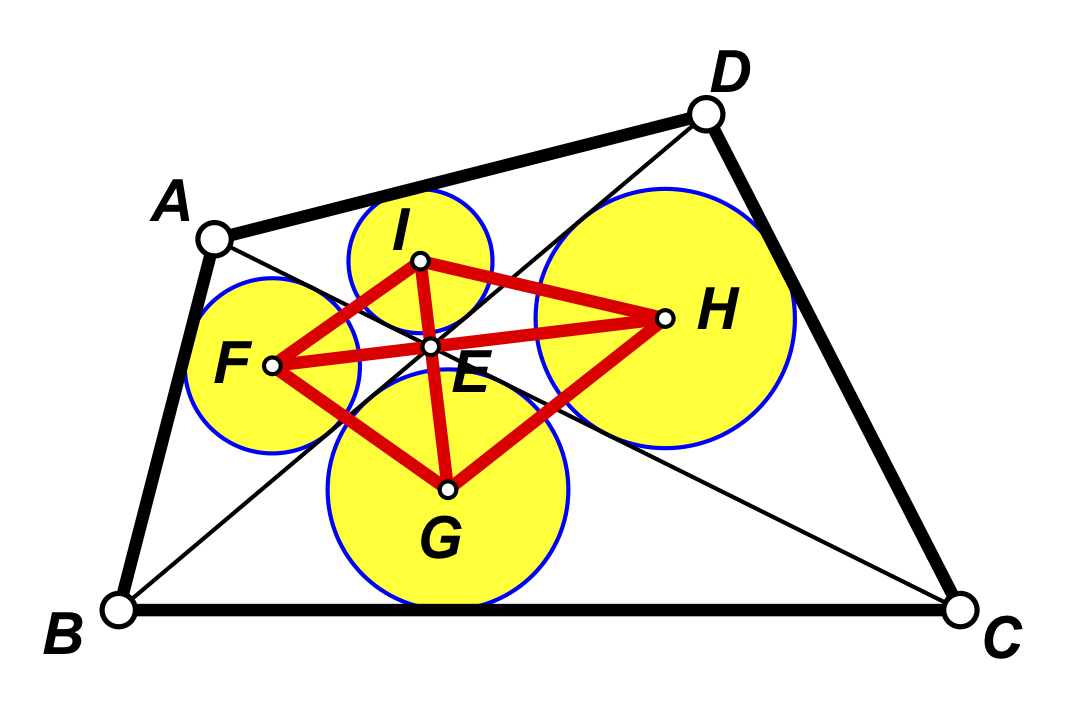}
\caption{Central Quadrilateral is orthodiagonal}
\label{fig:diagPtIncenters}
\end{figure}

\begin{proof}
Since $F$ is the incenter of $\triangle AEB$, this means that $EF$ bisects $\angle AEB$.
Similarly, $EH$ bisects $\angle DEC$. Thus, $FEH$ is a straight line.
Similarly, $IEG$ is a straight line. But $\angle FEG=\angle FEB+\angle BEG=\frac12\angle AEB
+\frac12\angle BEC=\frac12(180\degrees)=90\degrees$ and so $FH\perp GI$.
\end{proof}

\begin{conjecture}
\label{conjecture:general1}
If the reference quadrilateral is a general quadrilateral, the central quadrilateral is orthodiagonal
if and only if the center function is 1, that is, the center is $X_1$.
\end{conjecture}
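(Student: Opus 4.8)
The forward implication is exactly Theorem~\ref{thm:general1}: when the center is $X_1$, the central quadrilateral is orthodiagonal. So the plan is to establish the converse --- that if $FGHI$ is orthodiagonal for \emph{every} convex reference quadrilateral $ABCD$, then the center function $f$ is equivalent to the constant~$1$, i.e.\ the center is $X_1$. I would begin exactly as in the proof of Theorem~\ref{thm:general}: place $B$ at the origin and $C$ at $(1,0)$, let $A=(a_x,a_y)$ and $D=(d_x,d_y)$, compute $E$, use the Law of Cosines to re-express $f$ in terms of side lengths, and use the change-of-coordinates formula to write the Cartesian coordinates of $F$, $G$, $H$, $I$ in terms of $a_x,a_y,d_x,d_y$ and of the values of $f$ on the three side-length triples of each quarter triangle. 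The hypothesis then becomes the single scalar identity $\overrightarrow{FH}\cdot\overrightarrow{GI}=0$, required to hold for all $(a_x,a_y,d_x,d_y)$, and the whole problem is to show that this functional constraint forces $f$ to be a (cyclically) constant function.

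The key reduction I would use is to specialize the reference quadrilateral to a parallelogram first. Then $E$ is the common midpoint of the two diagonals, so the point reflection $X\mapsto 2E-X$ carries $\triangle ABE$ onto $\triangle CDE$ and $\triangle BCE$ onto $\triangle DAE$; since this map is a similarity and triangle centers transform naturally under similarities, it sends $F$ to $H$ and $G$ to $I$. Hence $F,E,H$ are collinear with midpoint $E$, and likewise $G,E,I$, so $FGHI$ is automatically a parallelogram centered at $E$, \emph{regardless of the center function}. A parallelogram is orthodiagonal precisely when it is a rhombus, i.e.\ when its diagonals are perpendicular, so the hypothesis forces $EF\perp EG$ for every parallelogram. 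Writing $\theta=\angle AEB$ and $\rho=|EA|/|EB|$ and letting $\psi(\theta,\rho)$ denote the angle that the cevian $EF$ makes with the side $EB$ in $\triangle ABE$ --- a quantity determined by $f$ --- this becomes the functional equation
$$\psi(\theta,\rho)+\psi(180\degrees-\theta,1/\rho)=90\degrees$$
for all admissible $\theta$ and $\rho$. I would solve this equation, crucially using that $\psi$ comes from a genuine center function and not an arbitrary cevian rule, to conclude that the cevian from $E$ to the center always bisects the angle at $E$; relabeling the vertices then gives the same at every vertex of every triangle, which identifies the center as the incenter.

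If the parallelogram family does not already pin $\psi$ down, I would bring in a second tractable family --- for instance the quadrilaterals obtained by fixing $B$, $C$, $D$ and $E$ and sliding $A$ along the line $CE$ (so that $\triangle BCE$ and $\triangle CDE$, hence $G$ and $H$, stay fixed while only $F$ and $I$ move) --- Taylor-expand $\overrightarrow{FH}\cdot\overrightarrow{GI}$ along the deformation, and read off the remaining constraints on $f$. Once $f$ is forced to agree with the incenter function on a dense set of side-length triples, continuity finishes the argument, since center functions are honest functions on the admissible region $a<b+c$, $b<c+a$, $c<a+b$.

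The main obstacle is precisely what makes this a conjecture rather than a theorem: unlike Theorem~\ref{thm:general}, where it sufficed to check that one explicit family of center functions closes up, here the space of center functions is infinite-dimensional, so no finite symbolic computation can settle the converse. One must genuinely turn ``orthodiagonal for every $ABCD$'' into a functional equation for $f$ and then solve that equation, and the delicate points are: (i) exploiting that $f$ is a bona fide center function, homogeneous and symmetric in $b$ and $c$, so that the functional equation has no exotic solutions; (ii) checking that the parallelogram family, together with enough non-parallelogram families, yields \emph{all} the constraints needed; and (iii) separately excluding the degenerate ``orthodiagonal'' central quadrilaterals allowed in this paper --- a line segment or a single point --- which could conceivably satisfy the hypothesis for some non-incenter~$f$ and would have to be ruled out.
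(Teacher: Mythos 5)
The statement you are asked to prove is labeled a \emph{conjecture} in the paper: the authors prove only the ``if'' direction (that is Theorem~\ref{thm:general1}, the angle-bisector argument at $E$) and offer no proof of the converse. Your treatment of the forward implication by citation to Theorem~\ref{thm:general1} is exactly right and matches the paper. For the converse, however, what you have written is a research plan rather than a proof, and the gap is located precisely where you suspect it is. The parallelogram reduction is sound as far as it goes --- for a parallelogram the central quadrilateral is automatically a parallelogram centered at $E$ (Theorem~\ref{thm:arbCenterParallelogram}), so orthodiagonality is equivalent to $EF\perp EG$ --- but the resulting functional equation
$$\psi(\theta,\rho)+\psi(180\degrees-\theta,1/\rho)=90\degrees$$
is very far from characterizing the incenter. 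Every function of the form $\psi(\theta,\rho)=\theta/2+g(\theta,\rho)$ with $g(\theta,\rho)=-g(180\degrees-\theta,1/\rho)$ satisfies it, and since $\psi$ is essentially an arbitrary function of the similarity class of $\triangle ABE$ (homogeneity of $f$ only says $\psi$ is scale-invariant, and the $b\leftrightarrow c$ symmetry only relates a triangle to its mirror image), this leftover family is infinite-dimensional. So the entire burden of the proof falls on the ``second tractable family / Taylor-expand and read off the remaining constraints'' step, which is not carried out and for which there is no evidence that the constraints close up. Your point (iii) about degenerate central quadrilaterals is also a real issue the paper leaves ambiguous.

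In short: the forward direction is correct and identical to the paper's; the converse is not proved, by you or by the authors --- which is why the statement is a conjecture. Your proposal is a reasonable and honest attack plan, but it should not be presented as a proof of the biconditional; at best it reduces the conjecture to solving an explicit (and still open) functional equation for the center function.
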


\subsection{Cyclic Quadrilaterals}

\begin{theorem}
\label{thm:cyclic110}
If the reference quadrilateral is cyclic,
then the central quadrilateral is an isosceles trapezoid if the center is $X_{110}$
(Figure \ref{fig:dpCyclicX110}).
\end{theorem}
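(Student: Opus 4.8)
The plan is to reduce the claim to an algebraic identity in the coordinates of $A,B,C,D$ and verify it with a computer algebra system, exactly in the style of the proof of Theorem~\ref{thm:general}. First I would put the circumcircle of the reference quadrilateral at the unit circle and parametrize the vertices rationally, $A=\left(\frac{1-t_A^2}{1+t_A^2},\frac{2t_A}{1+t_A^2}\right)$ and similarly for $B,C,D$; this encodes the cyclic hypothesis automatically and makes every squared distance a rational function of $t_A,t_B,t_C,t_D$. I would then locate $E=AC\cap BD$ by the usual chord--intersection formula, so each quarter triangle has two vertices on the circle and its third vertex at $E$.

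Next I would compute $F=X_{110}$ of $\triangle ABE$. The convenient handle on $X_{110}$ is its first barycentric coordinate $a^2/(b^2-c^2)$ (it is the Euler reflection point and lies on the circumcircle); since this is rational in the squared side lengths, and those are rational in the parameters, the barycentric-to-Cartesian change of coordinates recalled in the proof of Theorem~\ref{thm:general} produces $F$ as a rational point. Applying the cyclic relabeling $A\to B\to C\to D\to A$ gives $G,H,I$ in the same way.

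Finally I would verify the isosceles-trapezoid condition. A cyclic trapezoid is isosceles, so it is enough to show that one pair of opposite sides of $FGHI$ is parallel (a vanishing $2\times2$ determinant in the coordinate differences) and that the diagonals $FH$ and $GI$ are equal --- equivalently that $F,G,H,I$ are concyclic; after clearing denominators each of these becomes a polynomial identity in $t_A,t_B,t_C,t_D$ that can be checked symbolically. I expect the obstacle to be purely computational: the $b^2-c^2$ denominators in $X_{110}$ combined with the four parametrizations give bulky rational functions, and one must first pin down which pair of opposite sides is the parallel one --- the degenerate cases give no hint ($ABCD$ a rectangle collapses $F,G,H,I$ all to $E$; $ABCD$ an isosceles trapezoid collapses the two opposite central vertices lying on the axis to $E$), so a quick numerical experiment is the fastest way to fix the orientation before the symbolic run. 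Conceptually the result is driven by the fact that in a cyclic quadrilateral $\triangle ABE\sim\triangle DCE$ and $\triangle BCE\sim\triangle ADE$ through indirect similarities centered at $E$, which pair $F$ with $H$ and $G$ with $I$; converting that observation into a synthetic proof looks delicate, so the computer-algebra route is the one I would carry out.
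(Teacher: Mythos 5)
The paper offers no in-text proof of this theorem; it is one of the results deferred to the supplementary Mathematica notebooks, so your plan of reducing everything to a polynomial identity via a rational parametrization of the circumcircle and the barycentric coordinate $a^2/(b^2-c^2)$ for $X_{110}$ is methodologically the same as what the authors did.

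There is, however, one step in your verification that would fail as literally written. The paper notes immediately after the theorem statement that the central quadrilateral is $FHGI$, not $FGHI$: the four centers taken in the rotational order of the quarter triangles give a crossed quadrilateral, and the isosceles trapezoid only appears after transposing two of the vertices. Consequently the parallel pair is not a pair of opposite sides of $FGHI$, and the segments $FH$ and $GI$ are not the diagonals of the trapezoid but two of its sides (indeed its parallel sides), so they are not equal in general. The conditions you should feed to the computer are parallelism of $FH$ and $GI$ together with equality of $FG$ and $HI$ (equivalently, concyclicity of the four points, given the parallelism). Your proposed numerical pre-check would expose this relabeling before the symbolic run, and your own observation that the indirect similarities at $E$ pair $F$ with $H$ and $G$ with $I$ already points to it, but the checks as you wrote them (``one pair of opposite sides of $FGHI$ parallel'' and ``diagonals $FH=GI$'') would simply come back false. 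With that correction the computation goes through as you describe.
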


\begin{figure}[h!t]
\centering
\includegraphics[width=0.4\linewidth]{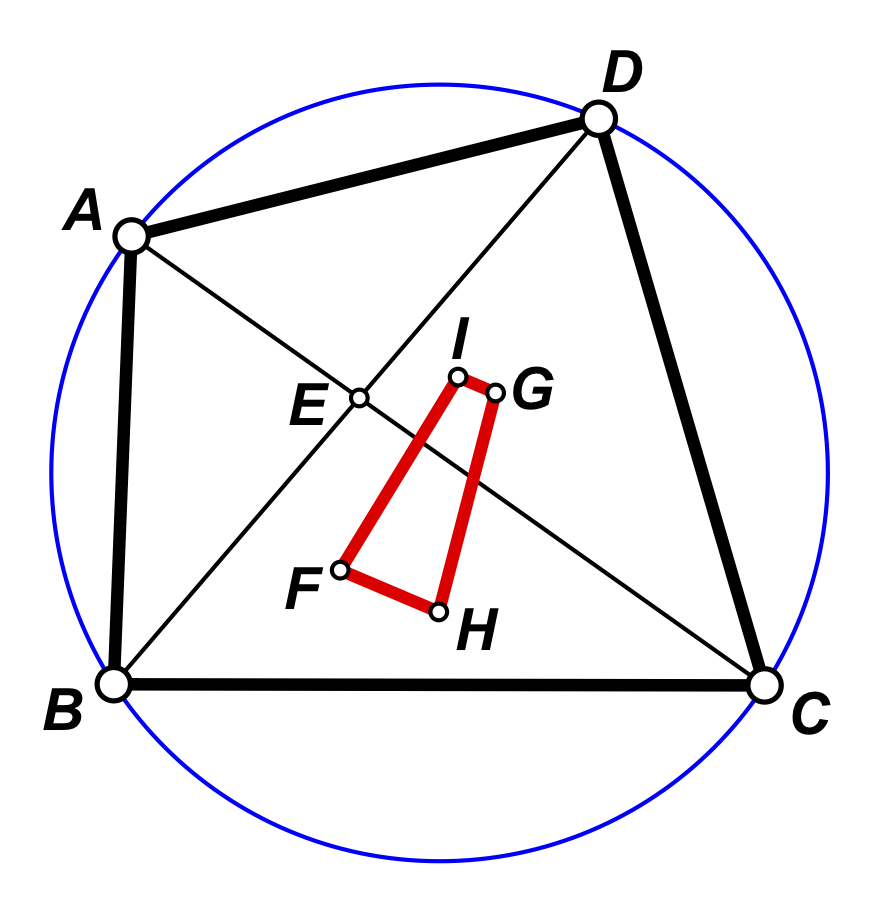}
\caption{$X_{110}$ points $\implies$ isosceles trapezoid}
\label{fig:dpCyclicX110}
\end{figure}

Note that in this case, the central quadrilateral is $FHGI$ as opposed to $FGHI$.

\begin{theorem}
\label{thm:cyclic485}
If the reference quadrilateral is cyclic,
then $FH\perp GI$ when the center is one of the Vecten points,
$X_{485}$ or $X_{486}$
(Figures \ref{fig:dpCyclicX485} and \ref{fig:dpCyclicX486}).
\end{theorem}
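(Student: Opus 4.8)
The plan is to use complex coordinates with the diagonal point $E$ placed at the origin. Since $ABCD$ is convex, $E$ lies strictly between $A$ and $C$ and strictly between $B$ and $D$, so we may write $C=-\lambda A$ and $D=-\rho B$ with real $\lambda,\rho>0$; the hypothesis that $ABCD$ is cyclic is then exactly the intersecting-chords relation $EA\cdot EC=EB\cdot ED$, that is, $\lambda\,|A|^{2}=\rho\,|B|^{2}$. The four quarter triangles $\triangle ABE$, $\triangle BCE$, $\triangle CDE$, $\triangle DAE$ all carry the same orientation, and $F,G,H,I$ are their Vecten points in this order; we must show that $(H-F)/(I-G)$ is purely imaginary. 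The inner and outer cases are carried along together through a sign $\epsilon=\pm1$: by \cite{ETC}, $X_{485}$ has barycentrics $\bigl(1/(S_A+S):1/(S_B+S):1/(S_C+S)\bigr)$ in Conway notation, and $X_{486}$ is obtained by replacing $S$ with $-S$.

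The engine of the proof is the pair of similar triangles that the diagonals of any cyclic quadrilateral create: $\triangle ABE\sim\triangle DCE$ (matching $A\mapsto D$, $B\mapsto C$, $E\mapsto E$) and $\triangle BCE\sim\triangle ADE$ (matching $B\mapsto A$, $C\mapsto D$, $E\mapsto E$). Because $ABCD$ is convex, both similarities reverse orientation, so each is an anti-similarity fixing $E=0$ and hence of the form $z\mapsto\mu\bar z$; write $\sigma_1(z)=\mu_1\bar z$ and $\sigma_2(z)=\mu_2\bar z$. From $\sigma_1(A)=D$ and $\sigma_2(B)=A$ one reads off $\mu_1=-\rho B/\bar A$ and $\mu_2=A/\bar B$, and the cyclic relation $\lambda|A|^2=\rho|B|^2$ then gives the clean identity $\mu_1=-\lambda\,\mu_2$. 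Now a Vecten point is a triangle center, so it is respected by \emph{every} similarity — an orientation-reversing similarity carries externally erected squares to externally erected squares, hence sends the outer Vecten point to the outer Vecten point (and inner to inner). Therefore $\sigma_1$ sends $F$ to $\mathrm{Vec}(\triangle CDE)=H$ and $\sigma_2$ sends $G$ to $I$, so that $H=-\lambda\,\mu_2\,\bar F$ and $I=\mu_2\,\bar G$. Substituting this into $(H-F)\,\overline{(I-G)}$ and using $\lambda|\mu_2|^2=\rho$, the statement $FH\perp GI$ reduces to the single bilinear identity
\[
(\rho-1)\,\Re(F\bar G)+(1-\lambda)\,\Re\bigl(\overline{\mu_2}\,FG\bigr)=0 ,
\]
which involves only $F$ and $G$. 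This halves the work: only $\triangle ABE$ and $\triangle BCE$, neither of which involves $\rho$, need be computed explicitly.

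It remains to compute $F=\mathrm{Vec}(\triangle ABE)$ and $G=\mathrm{Vec}(\triangle BCE)$ and verify the displayed identity. Converting the barycentric formula for $X_{485}$ (resp.\ $X_{486}$) to the complex coordinate $\sum x_j z_j/\sum x_j$, clearing denominators, and using the Conway identity $S_AS_B+S_BS_C+S_CS_A=S^2$, one gets closed forms for $F$ and $G$ as rational expressions in $A$, $B$, $\lambda$ and $\epsilon$; the only auxiliary data needed are $p:=\Re(A\bar B)$ and $\sigma:=-\Im(A\bar B)>0$ (twice the area of $\triangle ABE$), subject to $p^2+\sigma^2=|A|^2|B|^2$, with $\triangle BCE$ merely scaling the corresponding quantities by factors of $\lambda$. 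Feeding these into the identity and simplifying — invoking $p^2+\sigma^2=|A|^2|B|^2$ and, at the very end, the cyclic relation $\lambda|A|^2=\rho|B|^2$ — should make the left side vanish identically. I expect this last simplification to be the only real obstacle: the expressions for $F$ and $G$ are bulky rational functions, so the verification is best handed to a computer algebra system, and the cyclic relation must be imposed only \emph{after} $F$ and $G$ have been expanded, since with $\rho$ left free the identity is false. A final bookkeeping remark: one should make sure ``the Vecten point'' means the same choice ($X_{485}$ throughout, or $X_{486}$ throughout) in all four quarter triangles, which is precisely what the sign $\epsilon=\pm1$ records, the two cases running in complete parallel.
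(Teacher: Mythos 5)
Your proposal is correct, but it takes a genuinely different route from the paper's. The paper works in barycentric coordinates with $\triangle ABC$ as reference, parametrizes $D$ on the circumcircle, constructs all four Vecten points explicitly (via $90^{\circ}$ rotations, to avoid radicals), and then verifies the perpendicularity condition for the two diagonals by a direct Mathematica computation. You instead place $E$ at the origin of the complex plane and observe that cyclicity ($\lambda|A|^2=\rho|B|^2$) is precisely the condition under which the two standard anti-similarities $z\mapsto\mu_1\bar z$ (carrying $\triangle ABE$ to $\triangle DCE$) and $z\mapsto\mu_2\bar z$ (carrying $\triangle BCE$ to $\triangle ADE$) exist; since a Kimberling center is determined by side lengths alone, it is equivariant under orientation-reversing similarities, so $H=\mu_1\bar F$ and $I=\mu_2\bar G$ come for free. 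I checked your reduction: with $\mu_1=-\lambda\mu_2$ and $\lambda|\mu_2|^2=\rho$ one gets
\begin{equation*}
\Re\bigl[(H-F)\,\overline{(I-G)}\bigr]=(1-\rho)\,\Re(F\bar G)+(\lambda-1)\,\Re\bigl(\overline{\mu_2}\,FG\bigr),
\end{equation*}
which is the negative of your displayed expression, so the two perpendicularity criteria agree. What your approach buys: it isolates exactly where cyclicity enters, halves the symbolic work to computing $F$ and $G$ only, and---since the equivariance argument uses nothing specific to the Vecten points---gives a perpendicularity criterion valid for an arbitrary center, with $X_{485}$/$X_{486}$ appearing only in the final identity check. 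What it costs: that final check is still a nontrivial CAS verification which you describe but do not carry out; this matches the paper's own standard (it likewise delegates the closing computation to Mathematica), but the proof is not complete until that identity is actually confirmed with $\rho$ eliminated via $\rho=\lambda|A|^2/|B|^2$.
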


\textbf{Note.} Figure \ref{fig:dpCyclicX485} suggests that $FH$ and $GI$ pass through $E$.
This is not the case.

\begin{figure}[h!t]
\centering
\includegraphics[width=0.4\linewidth]{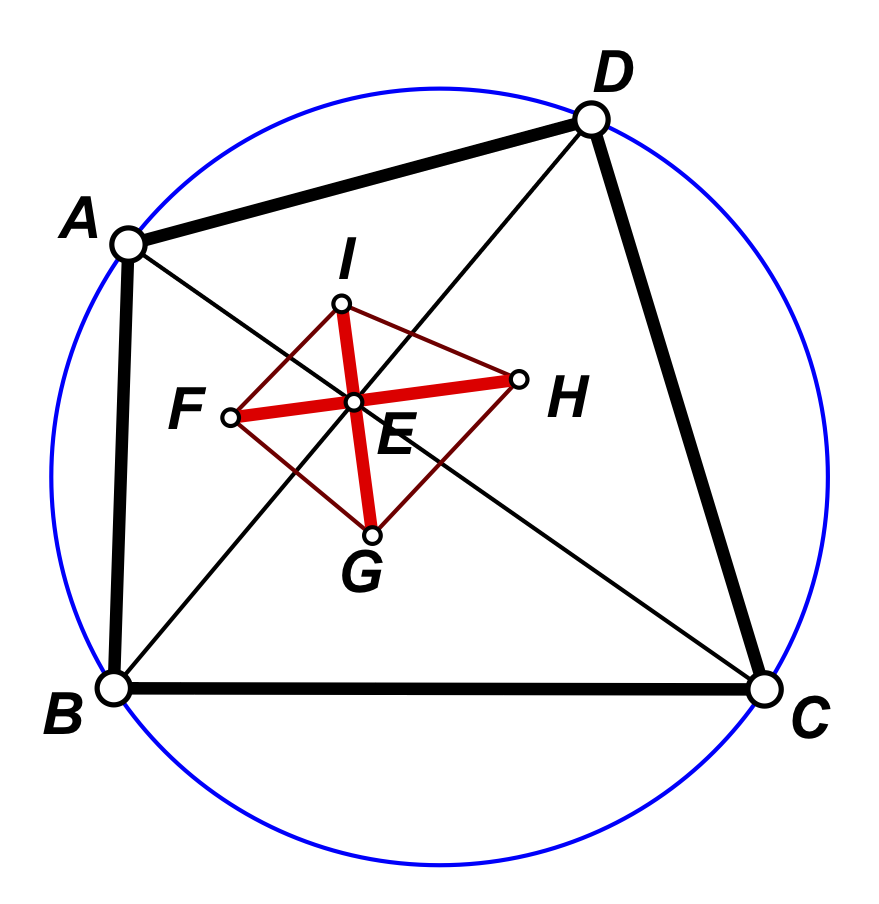}
\caption{$X_{485}$ points $\implies$ $FH\perp GI$}
\label{fig:dpCyclicX485}
\end{figure}

\begin{figure}[h!t]
\centering
\includegraphics[width=0.4\linewidth]{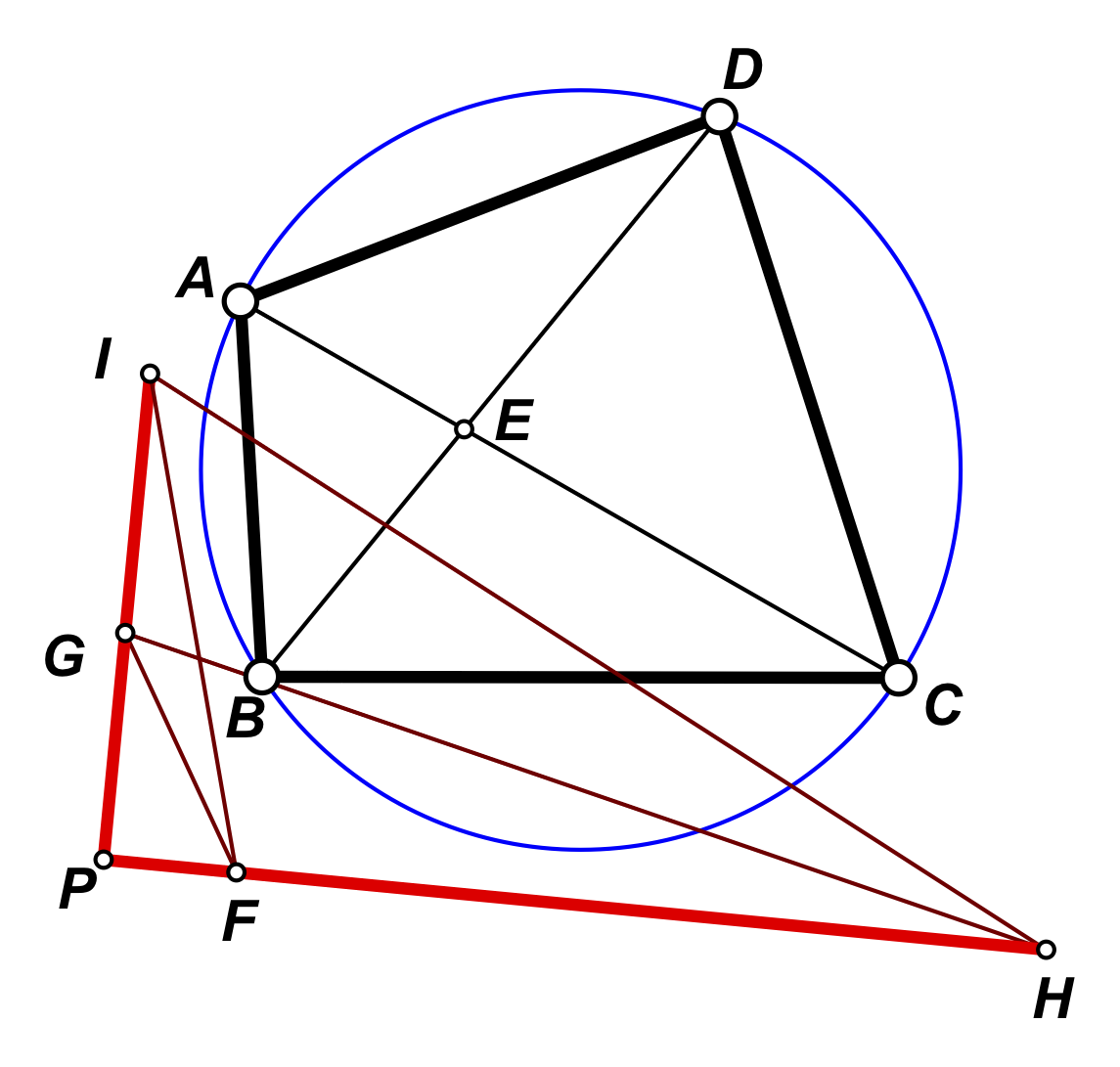}
\caption{$X_{486}$ points $\implies$ $FH\perp GI$}
\label{fig:dpCyclicX486}
\end{figure}

\begin{proof}
When dealing with cyclic quadrilaterals, it is more convenient to use barycentric
coordinates (rather than Cartesian coordinates) because the equation for the circumcircle
of a triangle in barycentric coordinates \cite[p.~63]{Yiu2012} is very simple, namely
\begin{equation}
\label{eq:circle}
a^2yz+b^2xz+c^2xy=0.
\end{equation}
We set up a barycentric coordinate system using $\triangle ABC$ as the reference triangle, so that $A=(1:0:0)$, $B=(0:1:0)$ and $C=(0:0:1)$.
Let $D$ have coordinates $(u:v:w)$. To make $ABCD$ convex, we will assume $u>0$, $v<0$, and $w>0$.
Line $AC$ has equation $y=0$ and line $BD$ has equation $wx=uz$.
Using the formula for the intersection of two lines, we can find the coordinates for point $E$,
the intersection of the diagonals. See \cite{Grozdev} for formulas involving barycentric coordinates.
We get $$E=(u:0:w).$$ 
The coordinate setup is shown in Figure \ref{fig:baryCoordinates}.

\begin{figure}[h!t]
\centering
\includegraphics[width=0.45\linewidth]{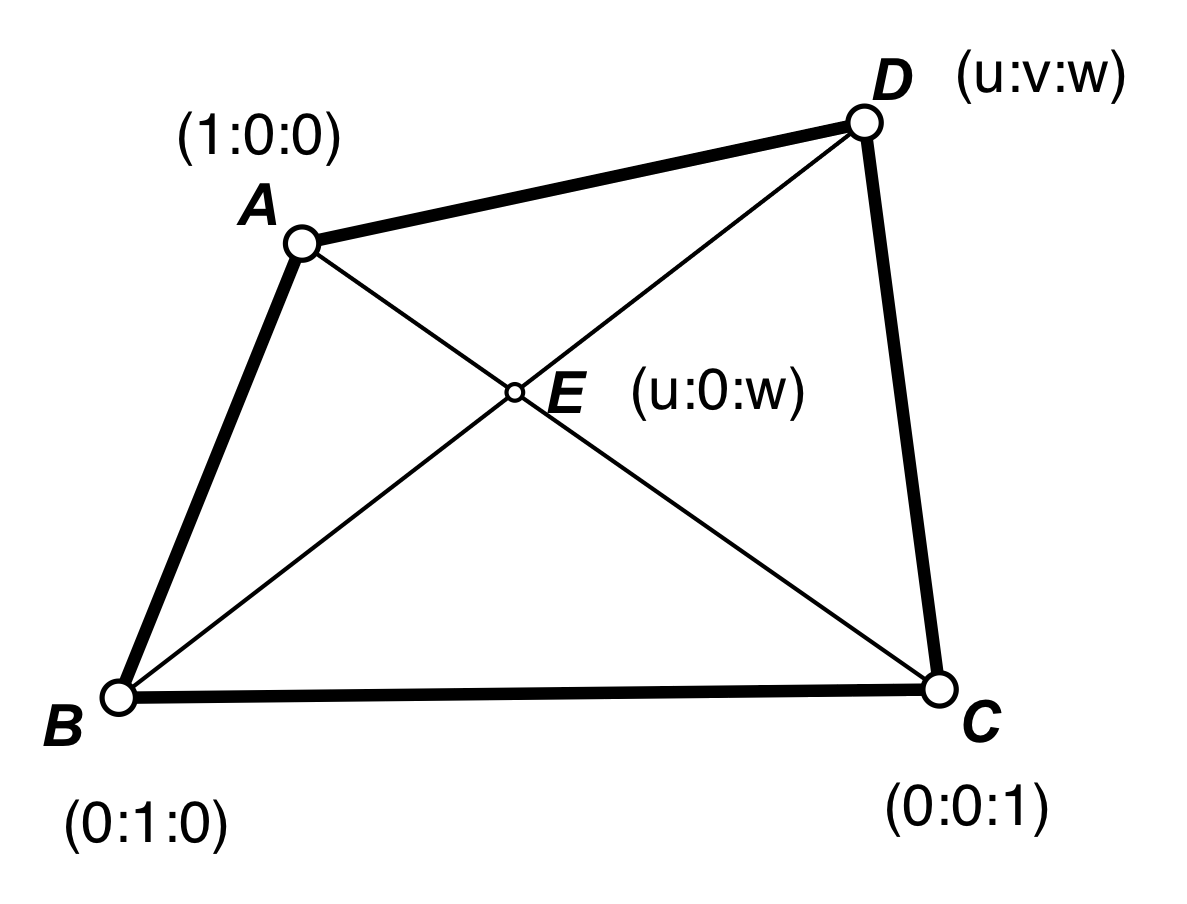}
\caption{Barycentric coordinates for a general quadrilateral}
\label{fig:baryCoordinates}
\end{figure}

When $D$ lies on the circumcircle of $\triangle ABC$, parametric equations for $D$ are
$$\begin{aligned}
u&=c^2+ta^2\\
v&=-b^2t\\
w&=t(c^2+ta^2)
\end{aligned}
$$
where $0<t<1$. This can be verified by substituting these values into Equation~
\ref{eq:circle}.

This gives us the coordinate setup as shown in Figure \ref{fig:baryCyclicCoordinates}.

\begin{figure}[h!t]
\centering
\includegraphics[width=0.5\linewidth]{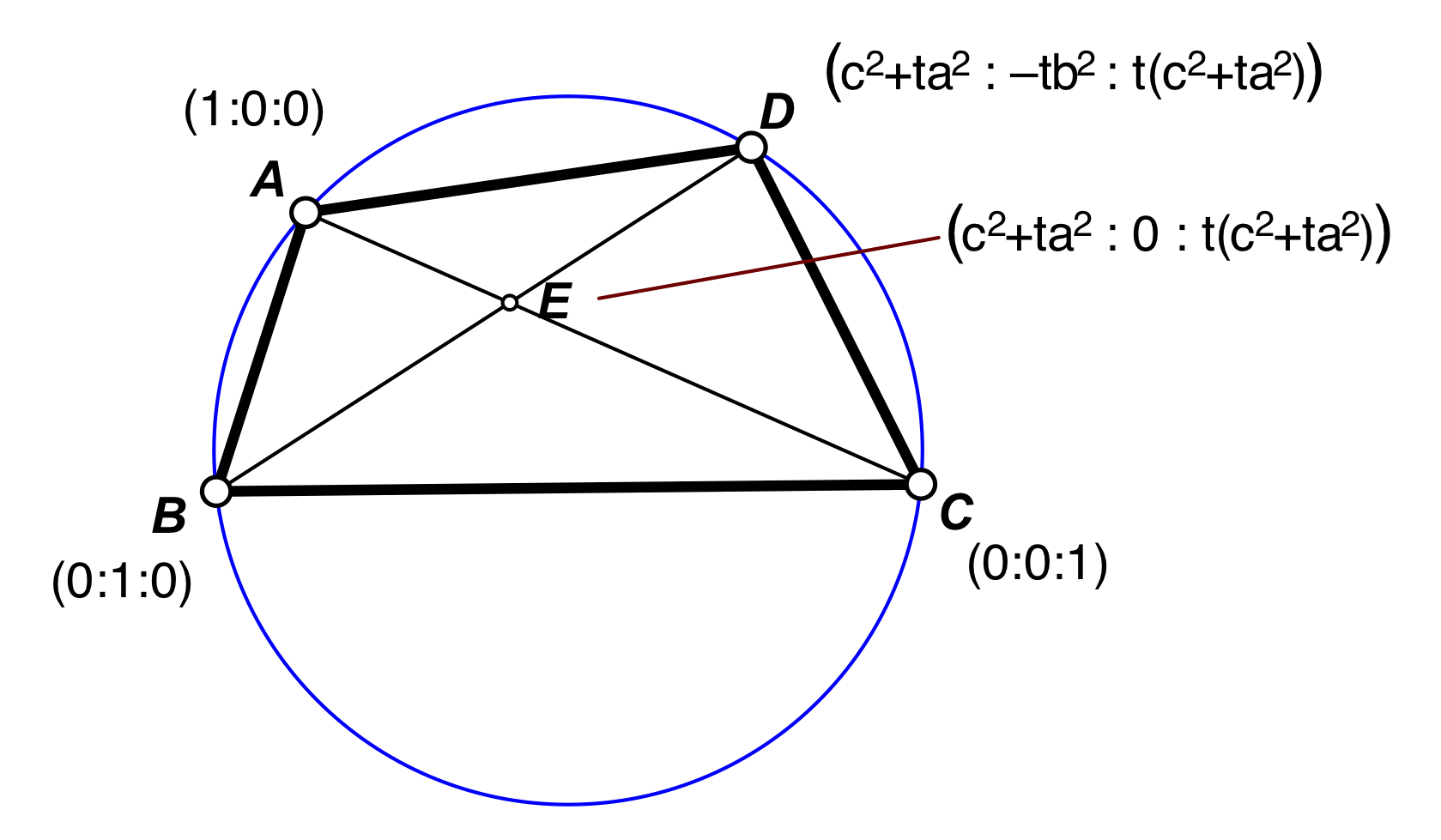}
\caption{Barycentric coordinates for a cyclic quadrilateral}
\label{fig:baryCyclicCoordinates}
\end{figure}

Now we find the coordinates for $F$, $G$, $H$, and $I$.
Instead of using the change of coordinates formula, we use the geometric
definition of the outer Vecten point, since this only requires us to do some
rotations through $90\degrees$ and some intersections of lines.
This avoids introducing square roots into our formulas.
We get
$$F=\Bigl(-c^2 \left(a^2 t (t+3)+b^2 t (2 t+3)+4 S (t+1)\right)+$$
$$t \left(a^4-a^2 \left(b^2(t+2)
+2 S (t+1)\right)+b^4 (t+1)+2 b^2 S\right)+c^4 t (t+2):$$
$$t \left(a^4 (t+1)-a^2
   \left(b^2 (t+2)+2 c^2 (t+1)\right)+b^4-b^2 t \left(c^2+2 S\right)+c^4 (t+1)\right):$$
$$t
   \left(a^4 t+a^2 \left(-2 b^2 t-c^2 (t-1)\right)+b^4 t-b^2 c^2 (t+1)-c^2 \left(c^2+2 S
   (t+1)\right)\right)\Bigr)$$
and similar expressions for $G$, $H$, and $I$, where $S$ is twice the area of $\triangle ABC$.
Finally, we use the formula that gives the condition for two lines to be perpendicular,
and we find that $FH\perp GI$.
These calculations are best done using Mathematica.
A similar calculation provides a proof for the inner Vecten point.
\end{proof}

\subsection{Tangential Quadrilaterals}

\begin{theorem}
If the reference quadrilateral is a tangential quadrilateral,
then the central quadrilateral is cyclic if the chosen center has
center function 1, i.e. the center is $X_1$.
(Figure \ref{fig:tangential-incircles}).
\end{theorem}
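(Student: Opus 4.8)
The plan is to turn the concyclicity of the four incenters $F,G,H,I$ into a power-of-a-point relation at the diagonal point $E$, then into an identity among half-angle tangents of the eight angles into which the diagonals of $ABCD$ cut its interior angles, and finally to read that identity off from the tangential condition $AB+CD=BC+DA$.

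First I would put $2\theta=\angle AEB$, so that $\angle CED=2\theta$ while $\angle BEC=\angle DEA=180\degrees-2\theta$. Since $EF$ and $EH$ bisect the vertical angles $\angle AEB$ and $\angle CED$ they are opposite rays, hence $F,E,H$ are collinear; likewise $G,E,I$ are collinear (indeed $FGHI$ is orthodiagonal, which is Theorem~\ref{thm:general1}, but only the collinearities are needed here). Because the diagonals of the convex quadrilateral $ABCD$ cross at an interior point, $E$ lies strictly between $F$ and $H$ and strictly between $G$ and $I$. Thus $FGHI$ is a quadrilateral whose diagonals meet at $E$, and by the power of $E$ the points $F,G,H,I$ are concyclic if and only if
\[ EF\cdot EH=EG\cdot EI . \]

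Next I would evaluate these distances. Using the elementary fact that the distance from a vertex of a triangle to its incenter equals the tangent length from that vertex divided by the cosine of half the angle there, and writing $e_1,e_2,e_3,e_4$ for the tangent lengths from $E$ to the incircles of $\triangle ABE,\triangle BCE,\triangle CDE,\triangle DAE$, one gets $EF=e_1/\cos\theta$, $EH=e_3/\cos\theta$, $EG=e_2/\cos(90\degrees-\theta)=e_2/\sin\theta$, $EI=e_4/\sin\theta$, so the relation above becomes $e_1e_3\sin^2\theta=e_2e_4\cos^2\theta$. A short computation with the law of sines in each quarter triangle (using $\angle BAE=\angle BAC$, $\angle ABE=\angle ABD$, and so on) gives
\[ e_1=\cos\theta\sqrt{EA\cdot EB\,\tan\tfrac12\angle BAC\,\tan\tfrac12\angle ABD}\, , \]
together with the three analogous formulas for $e_2,e_3,e_4$. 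On substituting these, the length factors $EA,EB,EC,ED$ cancel in the ratio $e_1e_3/(e_2e_4)$, and concyclicity becomes equivalent to the purely angular identity
\[ \begin{aligned}
&\tan\tfrac12\angle BAC\cdot\tan\tfrac12\angle ABD\cdot\tan\tfrac12\angle ACD\cdot\tan\tfrac12\angle BDC\\
&\qquad=\tan\tfrac12\angle CAD\cdot\tan\tfrac12\angle DBC\cdot\tan\tfrac12\angle BCA\cdot\tan\tfrac12\angle ADB .
\end{aligned} \]

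Finally I would obtain this identity from tangentiality. Cutting $ABCD$ along the diagonal $AC$, the law of sines in $\triangle ABC$ and in $\triangle ACD$ writes each of the four sides of $ABCD$ as $AC$ times a ratio of sines of base angles; substituting into $AB+CD=BC+DA$ and simplifying (sum-to-product, together with the angle sums in the two triangles, rewrites each side of the resulting equation in the form $(\tan x-\tan y)/(\tan x+\tan y)$) yields
\[ \tan\tfrac12\angle BAC\cdot\tan\tfrac12\angle ACD=\tan\tfrac12\angle CAD\cdot\tan\tfrac12\angle BCA , \]
and the same argument along the diagonal $BD$ gives $\tan\tfrac12\angle ABD\cdot\tan\tfrac12\angle BDC=\tan\tfrac12\angle DBC\cdot\tan\tfrac12\angle ADB$. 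Multiplying these two identities produces exactly the displayed angular identity, which completes the proof (and in particular recovers Result~2 of the Introduction). I do not expect an essential obstacle: the one genuinely fiddly point is the trigonometric bookkeeping in the middle step — especially choosing, for each $e_i$, the circumradius expression that makes the side-length factors cancel — while the last step is the classical computation characterising tangential quadrilaterals; alternatively one may simply verify $e_1e_3\sin^2\theta=e_2e_4\cos^2\theta$ in coordinates with Mathematica, in keeping with the methodology of this paper.
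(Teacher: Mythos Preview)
Your argument is correct. The paper itself gives no proof here at all: it simply observes that this is Result~2 of the Introduction and cites Theorem~20 of Grinberg's note for a geometric proof. So your write-up is not a rediscovery of the paper's argument but an independent, self-contained one.

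Briefly on the comparison: the paper defers to an external source, whereas you reduce concyclicity to the intersecting-chords relation $EF\cdot EH=EG\cdot EI$ at the diagonal point (legitimate because, as in Theorem~\ref{thm:general1}, $F,E,H$ and $G,E,I$ are collinear with $E$ between the endpoints), express each $e_i$ via the identity $e=r\cot(\tfrac12\angle)=4R\cos(\tfrac12\angle E)\sin(\tfrac12\angle_1)\sin(\tfrac12\angle_2)$, and then recognise the resulting half-angle tangent product as exactly what the Pitot condition $AB+CD=BC+DA$ yields when rewritten along each diagonal. Each of your three steps checks out (in particular the displayed formula for $e_1$ is equivalent to $e_1=4R_1\cos\theta\sin\tfrac12\angle BAC\sin\tfrac12\angle ABD$, and the $(\tan x-\tan y)/(\tan x+\tan y)$ rewriting of $(\sin X-\sin Y)/\sin(X+Y)$ does give $ps=qr$ as claimed). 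The payoff of your route is that it is elementary and entirely internal to the paper's framework; the payoff of citing Grinberg is brevity and a synthetic viewpoint. Your closing remark that one could alternatively verify $e_1e_3\sin^2\theta=e_2e_4\cos^2\theta$ symbolically is also in keeping with the paper's methodology.
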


\begin{figure}[h!t]
\centering
\includegraphics[width=0.3\linewidth]{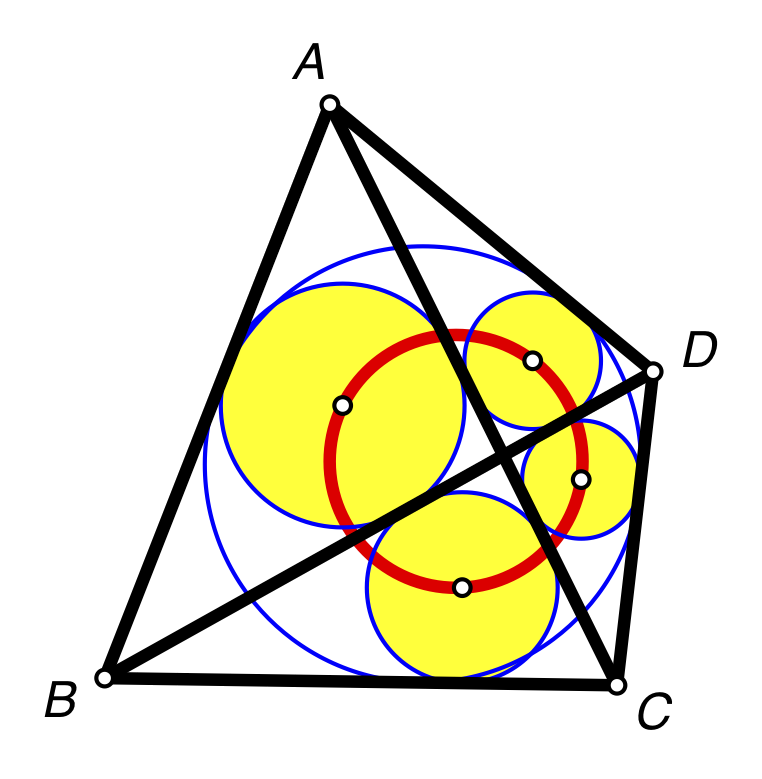}
\caption{Tangential quadrilateral: incenters $\implies$ cyclic}
\label{fig:tangential-incircles}
\end{figure}

\begin{proof}
This is Result 2 in the Introduction.
For a geometric proof, see Theorem 20 of \cite{Grinberg}. 
\end{proof}

\begin{conjecture}
If the reference quadrilateral is a tangential quadrilateral,
then the central quadrilateral is cyclic if and only if the center is $X_1$.
\end{conjecture}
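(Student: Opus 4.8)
The ``if'' direction is precisely the theorem immediately preceding this conjecture (equivalently, Result~2 of the Introduction), so a proof reduces to the ``only if'' direction: if the central quadrilateral $FGHI$ is cyclic for \emph{every} tangential reference quadrilateral, then the chosen center function is equivalent to the constant $1$. The plan is to convert concyclicity into a polynomial identity in the parameters of the reference quadrilateral and extract from it enough constraints to force the center function to be constant.

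\textbf{Setup and the identity.} A tangential quadrilateral is determined up to similarity by three parameters; the most convenient choice places the incircle at the origin with radius $1$ and takes the four sides tangent to it at $(\cos\theta_i,\sin\theta_i)$, $i=1,2,3,4$, with $\theta_1<\theta_2<\theta_3<\theta_4<\theta_1+2\pi$. Each vertex is the intersection of two consecutive tangent lines and the diagonal point $E$ the intersection of the two diagonals, all explicit rational functions of the $\theta_i$. From these I would read off the side lengths of the four quarter triangles $\triangle ABE,\triangle BCE,\triangle CDE,\triangle DAE$; a center with center function $f$ sits in a triangle with side-triple $(p,q,s)$ at barycentrics $(p\,f(p,q,s):q\,f(q,s,p):s\,f(s,p,q))$, and the change-of-coordinates formula used in the proof of Theorem~\ref{thm:general} then gives Cartesian coordinates for $F,G,H,I$ in terms of $(\theta_1,\dots,\theta_4)$ and of $f$. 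Finally I would impose that the $4\times4$ determinant with rows $(x^2+y^2,\;x,\;y,\;1)$ evaluated at $F,G,H,I$ vanishes; this expression must vanish identically in the three free parameters among the $\theta_i$, and equating each of its coefficients to zero yields a system of functional equations on $f$ (which enters only through its values on the various side-triples).

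\textbf{Extracting the center function, and the obstacle.} One must then show this system forces $f$ equivalent to a constant. Symmetric reference quadrilaterals are useless here: any tangential quadrilateral with an axis of symmetry (e.g.\ a kite, which is automatically tangential since $a=b$, $c=d$ forces $a+c=b+d$) already makes $FGHI$ an isosceles trapezoid, hence cyclic, for essentially \emph{every} center---so the conjecture can only be pinned down using generic, asymmetric sub-families, for instance two-parameter families obtained by fixing one of the gaps $\theta_{i+1}-\theta_i$ and varying the others. The genuine difficulty is that ``every triangle center'' ranges over an infinite-dimensional space of functions homogeneous in $a,b,c$ and symmetric in $b,c$, so---unlike the parallelogram characterizations earlier in the paper---the ``only if'' does not reduce to one finite computation. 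Realistically one can prove the statement outright only within a bounded class (center functions polynomial, or rational, in $a,b,c$ or in $\cos A,\cos B,\cos C$ of bounded degree), where the constraints above become finitely many polynomial equations in the coefficients and are solvable by Mathematica, supplemented by a direct check that none of $X_2,\dots,X_{1000}$ works. Turning it into a theorem would require either a structural reason why the concyclicity condition is ``linear enough'' in $f$ to force constancy, or a synthetic explanation---likely built on the same rigidity behind Grinberg's proof of the ``if'' direction, namely that in a tangential quadrilateral the single incircle is tangent to all four sides---of why the incenter alone does the job.
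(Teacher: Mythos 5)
This statement is labeled a \emph{conjecture} in the paper, and the paper offers no proof of it: the ``if'' direction is the theorem stated just above it (Result~2 of the Introduction, proved geometrically in Grinberg's work), while the ``only if'' direction is left open. Your proposal does not close that gap either, and to your credit you say so explicitly. The verification framework you describe --- parametrizing tangential quadrilaterals by four tangent angles on a unit incircle (three free parameters up to similarity), computing $F,G,H,I$ via barycentrics and the change-of-coordinates formula, and imposing the vanishing of the $4\times 4$ concyclicity determinant --- is a sound way to \emph{test} candidate center functions, and your observation that kites (which are automatically tangential) yield an isosceles trapezoid, hence a cyclic central quadrilateral, for \emph{every} center is a genuinely useful point: it shows the conjecture can only be attacked through asymmetric tangential quadrilaterals and explains why the quantifier ``for all tangential reference quadrilaterals'' is essential to its meaning.

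The genuine gap is the one you name yourself: the space of admissible center functions is not finite-dimensional, so the system of polynomial identities you would extract from the concyclicity determinant cannot be exhausted by any finite computation, and no structural argument is offered for why those identities force $f$ to be equivalent to a constant. Restricting to bounded-degree polynomial or rational center functions, or checking $X_1$ through $X_{1000}$, yields only evidence of the same kind the authors already gathered with GeometricExplorer --- it upgrades nothing to a proof. So your proposal should be read as a reasonable research programme consistent with the paper's own treatment of the statement (which is precisely why the authors list it as a conjecture rather than a theorem), not as a proof of it.
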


\subsection{Equidiagonal Quadrilaterals}

\begin{theorem}
\label{thm:equi-rhombus}
If the reference quadrilateral is equidiagonal, the central quadrilateral is a rhombus if the chosen center has center function of the form
$$\cos B\cos C+k\cos A$$
for some constant $k$.
\end{theorem}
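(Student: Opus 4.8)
The plan is to build on Theorem~\ref{thm:general}: since we already know that $FGHI$ is a parallelogram, it suffices to show that two \emph{adjacent} sides are equal, say $|FG|=|FI|$ --- for then all four sides have a common length and the parallelogram is a rhombus. The equidiagonal hypothesis $|AC|=|BD|$ is exactly what will produce this equality.

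The key ingredient is a structural fact about the family $\cos B\cos C+k\cos A$: \emph{in every triangle}, the center with this center function is the \emph{same} affine combination $\lambda\cdot(\text{centroid})+(1-\lambda)\cdot(\text{orthocenter})$, with $\lambda$ depending only on $k$. To see this, recall from the proof of Theorem~\ref{thm:general} that the first barycentric coordinate is $a^4(1-2k)+2a^2k(b^2+c^2)-(b^2-c^2)^2$; splitting it as $\gamma_1+k\,\delta_1$ displays $\gamma_1=(a^2+b^2-c^2)(a^2+c^2-b^2)$ and $\delta_1=2a^2(b^2+c^2-a^2)$, which in Conway's notation ($S_A=\tfrac12(b^2+c^2-a^2)$, etc.) are $4S_BS_C$ and $4a^2S_A$ --- i.e.\ four times the first barycentric coordinates of the orthocenter $(S_BS_C:S_CS_A:S_AS_B)$ and the circumcenter $(a^2S_A:b^2S_B:c^2S_C)$, respectively. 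A quick computation of the coordinate sums gives $\sum\delta_i=2\sum\gamma_i$, a ratio independent of the triangle; hence, after normalizing the barycentrics to sum to $1$, the point is a triangle-independent affine combination of the orthocenter and the circumcenter, and since the centroid lies one-third of the way from the circumcenter to the orthocenter on the Euler line, equivalently a triangle-independent affine combination of centroid and orthocenter (one finds $\lambda=\tfrac{3k}{1+2k}$).

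Applying this in each of the four quarter triangles, $FGHI=\lambda Q_1+(1-\lambda)Q_2$ vertex by vertex, where $Q_1$ is the parallelogram of \emph{centroids} of the quarter triangles (Corollary~\ref{thm:genCentroids}) and $Q_2$ is the parallelogram of their \emph{orthocenters}. Since the centroid of $\triangle ABE$ is $\tfrac13(A+B+E)$ and that of $\triangle BCE$ is $\tfrac13(B+C+E)$, the sides of $Q_1$ are $\tfrac13\overrightarrow{AC}$ and $\tfrac13\overrightarrow{BD}$, i.e.\ parallel to the diagonals; and because $A,E,C$ are collinear, the altitude from $B$ is perpendicular to line $AC$ in both $\triangle ABE$ and $\triangle BCE$, so the orthocenters of these two triangles both lie on the line through $B$ perpendicular to $AC$ --- hence the corresponding side of $Q_2$ is perpendicular to $AC$, and a short computation (place $E$ at the origin with the diagonals along rays meeting at angle $\phi$) gives its length as $|AC|\,|\cot\phi|$, and likewise the adjacent side of $Q_2$ is perpendicular to $BD$ with length $|BD|\,|\cot\phi|$. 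Therefore $\overrightarrow{FG}=\tfrac{\lambda}{3}\overrightarrow{AC}+(1-\lambda)\mathbf{w}$ with $\mathbf{w}\perp\overrightarrow{AC}$ and $|\mathbf{w}|=|AC|\,|\cot\phi|$; the two summands being orthogonal, $|FG|^2=|AC|^2\bigl(\tfrac{\lambda^2}{9}+(1-\lambda)^2\cot^2\phi\bigr)$, and the identical computation for the adjacent side yields $|FI|^2=|BD|^2\bigl(\tfrac{\lambda^2}{9}+(1-\lambda)^2\cot^2\phi\bigr)$ with the \emph{same} factor. Hence $|FG|/|FI|=|AC|/|BD|$, so the parallelogram $FGHI$ is a rhombus exactly when $|AC|=|BD|$, i.e.\ exactly when $ABCD$ is equidiagonal; this proves the theorem, and its converse as well. (In the one degenerate case --- the orthocenter, $\lambda=0$, in an orthodiagonal reference quadrilateral --- all four centers coincide and the central quadrilateral is the degenerate point-rhombus.)

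I expect the only real obstacle to be the structural step: verifying that the affine combination of centroid and orthocenter is genuinely \emph{triangle-independent}, which comes down to the coordinate-sum identity $\sum\delta_i=2\sum\gamma_i$. If one prefers to avoid the structural lemma, there is a direct route in the style of the proof of Theorem~\ref{thm:general}: retain the Cartesian setup there, adjoin the equidiagonal relation $(a_x-1)^2+a_y^2=d_x^2+d_y^2$, and let Mathematica confirm that $|FG|^2-|FI|^2$ is divisible by $(a_x-1)^2+a_y^2-d_x^2-d_y^2$ --- correct, but far less transparent.
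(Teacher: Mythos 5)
Your proof is correct, and it takes a genuinely different route from the paper, which for this theorem defers to a symbolic Mathematica computation in the style of the coordinate proof of Theorem~\ref{thm:general}. The structural lemma is sound: splitting the first barycentric coordinate as $4S_BS_C+4ka^2S_A$ does exhibit the point as an unnormalized combination of orthocenter and circumcenter, and the coordinate-sum identity you need is exactly $S_AS_B+S_BS_C+S_CS_A=S^2$ versus $a^2S_A+b^2S_B+c^2S_C=2S^2$, so the normalized point is $\bigl(\hat H+2k\hat O\bigr)/(1+2k)$, a triangle-independent affine combination, and your $\lambda=3k/(1+2k)$ checks out. The rest is a clean generalization of the paper's own geometric proof of Corollary~\ref{cor:equi-rhombusM}: where that argument gives $FG=\tfrac13 AC$ and $FI=\tfrac13 BD$ for centroids, you show that for the whole family the adjacent sides decompose into a component parallel to a diagonal (from the centroid parallelogram of Corollary~\ref{thm:genCentroids}) and an orthogonal component (from the orthocenter parallelogram), yielding $|FG|/|FI|=|AC|/|BD|$ with a common scale factor $\sqrt{\lambda^2/9+(1-\lambda)^2\cot^2\phi}$. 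What your approach buys is transparency and a bonus converse (the central parallelogram is a rhombus only for equidiagonal $ABCD$, outside the one degenerate case you flag); what the paper's approach buys is uniformity across its many theorems and no need for the structural observation. Two small caveats worth adding: the value $k=-\tfrac12$ must be excluded (the center is then the point at infinity $X_{30}$ on the Euler line, and $1+2k=0$ kills the normalization), and when the diagonals are perpendicular the $\cot\phi$ term vanishes, so the converse statement needs the qualification you already give for $\lambda=0$.
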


Theorem \ref{thm:equi-rhombus} is illustrated in Figure \ref{fig:equi-orthocenters} when the chosen center is the orthocenter.

\begin{figure}[h!t]
\centering
\includegraphics[width=0.3\linewidth]{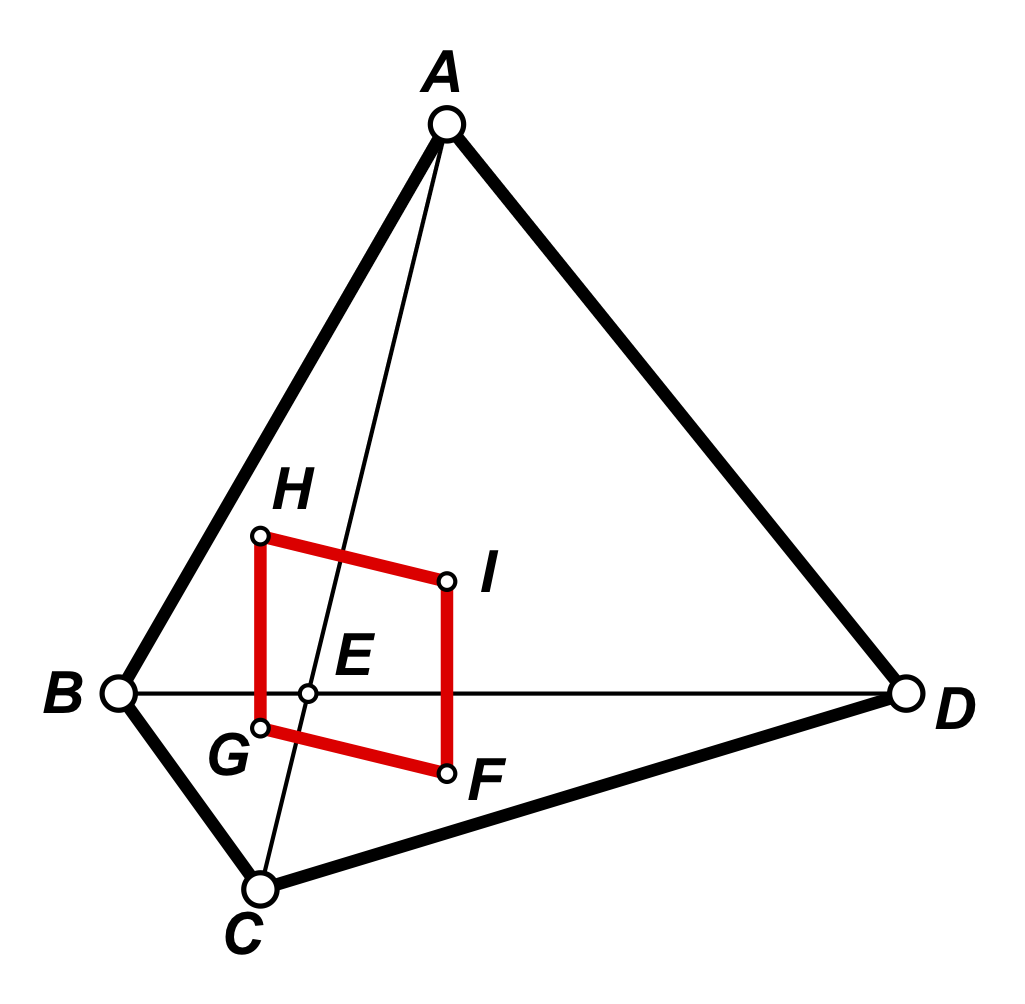}
\caption{$AC=BD$, orthocenters $\implies$ rhombus}
\label{fig:equi-orthocenters}
\end{figure}

\begin{corollary}
\label{cor:equi-rhombusH}
Let $ABCD$ be an equidiagonal quadrilateral with diagonal point $E$.
Let $F$, $G$, $H$, and $I$ be the orthocenters of triangles $\triangle ABE$, $\triangle BCE$, $\triangle CDE$,
and $\triangle DAE$, respectively.
Then $FGHI$ is a rhombus (Figure \ref{fig:equi-orthocenters}).
\end{corollary}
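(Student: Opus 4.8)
The plan is to obtain Corollary \ref{cor:equi-rhombusH} as the special case $k=0$ of Theorem \ref{thm:equi-rhombus}. Indeed, the orthocenter $X_4$ has trilinear coordinates $\sec A:\sec B:\sec C$, and multiplying through by $\cos A\cos B\cos C$ shows that a center function for it is $\cos B\cos C$, which is of the form $\cos B\cos C+k\cos A$ with $k=0$. So Theorem \ref{thm:equi-rhombus} applies directly and $FGHI$ is a rhombus. This one-line derivation is the proof I would record; but since the three earlier orthocenter corollaries were each given a short synthetic proof, I would also include a self-contained argument in the same spirit.

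For the synthetic proof, I would first recall (exactly as in the general-quadrilateral orthocenter corollary above) that $F$ lies both on the line through $B$ perpendicular to $AC$ and on the line through $A$ perpendicular to $BD$, with the analogous statements for $G$, $H$, $I$. Consequently the line $BFG$ and the line $DHI$ are both perpendicular to $AC$, the line $CGH$ and the line $AFI$ are both perpendicular to $BD$, and $FGHI$ is a parallelogram (it is the quadrilateral cut out by these four lines). To upgrade ``parallelogram'' to ``rhombus'' it then suffices to show that two adjacent sides are equal, say $FG=GH$.

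For the side lengths, let $\theta$ be the angle between the two diagonals. I would first dispose of the degenerate case $\theta=90\degrees$: then each quarter triangle is right-angled at $E$, so all four orthocenters coincide with $E$ and $FGHI$ collapses to a point, a limiting rhombus in the sense of the degenerate quadrilaterals allowed in this paper. Assuming $\theta\neq 90\degrees$, the segment $FG$ lies on the transversal ``line through $B$ perpendicular to $AC$'' and is cut off between the two parallel lines through $A$ and through $C$ perpendicular to $BD$; those parallel lines are $AC\,|\cos\theta|$ apart, and since the transversal meets their common normal direction at angle $90\degrees-\theta$, an elementary angle-chase gives $FG=AC\,|\cot\theta|$. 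The symmetric computation gives $GH=BD\,|\cot\theta|$. (Placing $E$ at the origin with the diagonals along two lines inclined at angle $\theta$ and reading off the four orthocenters directly is an equally painless alternative for this step.) Since $ABCD$ is equidiagonal we have $AC=BD$, hence $FG=GH$, and the parallelogram $FGHI$ is a rhombus.

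The only point needing any care is the bookkeeping in the length computation (and not forgetting the $\theta=90\degrees$ degeneracy); there is no real obstacle here, which is why this appears as a corollary rather than a theorem.
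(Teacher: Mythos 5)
Your primary derivation is exactly what the paper does: Corollary \ref{cor:equi-rhombusH} is stated there with no separate proof, as the instance of Theorem \ref{thm:equi-rhombus} obtained from the orthocenter's center function $\cos B\cos C$ (i.e.\ $k=0$), so on that route you and the authors agree completely. Your supplementary synthetic argument is something the paper does not supply (it gives a synthetic proof only for the centroid case, Corollary \ref{cor:equi-rhombusM}), and it checks out: $F$ and $G$ both lie on the perpendicular to $AC$ through $B$, $H$ and $I$ on the perpendicular to $AC$ through $D$, and likewise for the perpendiculars to $BD$ through $A$ and $C$, so $FGHI$ is the parallelogram cut out by two pairs of parallels; the transversal computation $FG=AC\,|\cot\theta|$, $GH=BD\,|\cot\theta|$ is right, and $AC=BD$ then forces a rhombus. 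You were also right to isolate $\theta=90\degrees$, where all four orthocenters collapse to $E$. This is a nice addition in the spirit of the authors' repeated requests for purely geometric proofs of special cases, and it costs essentially nothing beyond the parallelogram argument already used for Corollary \ref{thm:genCentroids}'s orthocenter analogue.
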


\begin{corollary}
\label{cor:equi-rhombusM}
Let $ABCD$ be an equidiagonal quadrilateral with diagonal point $E$.
Let $F$, $G$, $H$, and $I$ be the centroids of triangles $\triangle ABE$, $\triangle BCE$, $\triangle CDE$,
and $\triangle DAE$, respectively.
Then $FGHI$ is a rhombus.
\end{corollary}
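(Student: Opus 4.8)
The quickest route is to observe that the centroid has center function $\cos B\cos C+\cos A$, i.e.\ the case $k=1$ of Theorem~\ref{thm:equi-rhombus}, so the corollary is an immediate special case. For a self-contained elementary argument, I would instead argue with position vectors for the points $A,B,C,D,E$ in the plane (and in fact the argument never uses that $E$ is the diagonal point).

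First I would write the four centroids as averages of the vertices of their triangles: $F=\tfrac13(A+B+E)$, $G=\tfrac13(B+C+E)$, $H=\tfrac13(C+D+E)$, and $I=\tfrac13(D+A+E)$. Then a one-line subtraction gives $G-F=\tfrac13(C-A)=H-I$ and $H-G=\tfrac13(D-B)=I-F$. Hence $\vec{FG}=\vec{IH}$ with common length $\tfrac13\,AC$, and $\vec{GH}=\vec{FI}$ with common length $\tfrac13\,BD$; in particular $FGHI$ is a parallelogram, which recovers Corollary~\ref{thm:genCentroids}.

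Finally I would bring in the equidiagonal hypothesis $AC=BD$: the four sides of the parallelogram $FGHI$ then all have the common length $\tfrac13\,AC$, and a parallelogram with four equal sides is a rhombus. There is no real obstacle in this argument; the only point worth a word is nondegeneracy — for a genuine convex equidiagonal quadrilateral the vectors $C-A$ and $D-B$ are nonzero and nonparallel, so $FGHI$ is an honest (nondegenerate) rhombus rather than a collapsed segment.
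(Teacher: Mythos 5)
Your argument is correct, and in substance it is the same as the paper's: both proofs reduce the corollary to the facts that $FG=\frac13 AC$ and $FI=\frac13 BD$, so that the parallelogram of Corollary \ref{thm:genCentroids} acquires four equal sides when $AC=BD$. The difference is only in execution: the paper derives those length relations synthetically, from the $2{:}1$ division of the medians ($AI=\frac23 AP$, $IP=\frac13 AP$, etc.) together with the midsegment $QP=\frac12 BD$, whereas you get them in one line from the vector formula $F=\frac13(A+B+E)$ for the centroid. Your formulation buys a little extra: since the computation $G-F=\frac13(C-A)$ never uses that $E$ is the intersection of the diagonals, it proves the stronger statement that the four centroids form a rhombus for \emph{any} interior point $E$ of an equidiagonal quadrilateral --- which the paper records separately as Theorem \ref{thm:arbRhombus}. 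Your closing remark on nondegeneracy ($C-A$ and $D-B$ nonzero and nonparallel for a convex quadrilateral) is a point the paper leaves implicit.
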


For this corollary, a simple geometric proof can be given.

\begin{proof}
From the proof of Corollary \ref{thm:genCentroids} (Figure \ref{fig:genCentroids}), it is
easily seen that $IP=\frac13 AP$ and $FQ=\frac13 AQ$, so that $FI=\frac13BD$.
Similarly, $FG=\frac13AC$. But since $BD=AC$, this implies $FI=FG$
and parallelogram $FGHI$ becomes a rhombus.
\end{proof}

\begin{corollary}
\label{cor:equi-rhombusO}
Let $ABCD$ be an equidiagonal quadrilateral with diagonal point $E$.
Let $F$, $G$, $H$, and $I$ be the circumcenters of triangles $\triangle ABE$, $\triangle BCE$, $\triangle CDE$,
and $\triangle DAE$, respectively.
Then $FGHI$ is a rhombus (Figure \ref{fig:equi-circumcenters}).
\end{corollary}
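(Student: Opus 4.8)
The plan is to give a short synthetic proof, reusing the parallelogram already produced in Corollary~\ref{thm:genCircumcenters} and then pinning down the two side lengths by projecting onto the diagonals. One is tempted to deduce the statement directly from Theorem~\ref{thm:equi-rhombus}, but the circumcenter $X_3$ has center function $\cos A$, which is only the limiting member ($k\to\infty$) of the family $\cos B\cos C+k\cos A$ and is not literally of that form, so a self-contained argument is cleaner.

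First I would record the incidences among the four circumcenters. The circumcircles of $\triangle ABE$ and $\triangle BCE$ meet at $B$ and $E$, so $F$ and $G$ both lie on the perpendicular bisector $\ell_{BE}$ of $BE$; likewise $H,I\in\ell_{DE}$, $F,I\in\ell_{AE}$, and $G,H\in\ell_{CE}$, where $\ell_{XE}$ denotes the perpendicular bisector of segment $XE$. Hence $FGHI$ is precisely the quadrilateral cut out by the four lines $\ell_{AE},\ell_{BE},\ell_{CE},\ell_{DE}$, with $F=\ell_{AE}\cap\ell_{BE}$, $G=\ell_{BE}\cap\ell_{CE}$, $H=\ell_{CE}\cap\ell_{DE}$, and $I=\ell_{DE}\cap\ell_{AE}$. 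Since $\ell_{AE}$ and $\ell_{CE}$ are both perpendicular to the diagonal $AC$ they are parallel, and similarly $\ell_{BE}\parallel\ell_{DE}$ (both perpendicular to $BD$); this recovers the fact that $FGHI$ is a parallelogram and exhibits its two pairs of sides as lying on these two families of parallel lines.

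Next I would measure the two strips. The line $\ell_{AE}$ meets $AC$ at the midpoint of $AE$ and $\ell_{CE}$ meets $AC$ at the midpoint of $CE$; since convexity puts $E$ between $A$ and $C$, the distance between these midpoints measured along $AC$ is $\tfrac12(AE+EC)=\tfrac12 AC$, and as both lines are perpendicular to $AC$ this is exactly the distance between $\ell_{AE}$ and $\ell_{CE}$. Symmetrically, the distance between $\ell_{BE}$ and $\ell_{DE}$ is $\tfrac12 BD$. Letting $\alpha$ be the angle between the two families of lines (nonzero, since $AC\not\parallel BD$), a side crossing a strip of width $h$ at angle $\alpha$ has length $h/\sin\alpha$, so $FG=AC/(2\sin\alpha)$ and $GH=BD/(2\sin\alpha)$; hence $FG/GH=AC/BD$. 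In an equidiagonal quadrilateral $AC=BD$, so $FG=GH$ and the parallelogram $FGHI$ is a rhombus. There is no real obstacle here: the only delicate points are the bookkeeping of which pair of circumcircles shares which common chord (and hence which perpendicular bisector carries which pair of centers) and the use of convexity to place $E$ between the endpoints of each diagonal, so that the midpoint separation is $\tfrac12 AC$ rather than $\tfrac12|AE-EC|$. The same conclusion also follows from the coordinate computation in the accompanying Mathematica notebook.
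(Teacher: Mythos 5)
Your proof is correct, and it takes a genuinely different route from the paper, which states this corollary without any argument, leaving it as an instance of Theorem~\ref{thm:equi-rhombus} (itself delegated to the supplementary Mathematica notebooks). Your preliminary observation is also well taken: the circumcenter's center function $\cos A$ is not literally $\cos B\cos C+k\cos A$ for any finite $k$ (and is not equivalent to it in the paper's sense, since the ratio is not a cyclic function), so the corollary does not strictly follow from the theorem as stated --- it belongs to the projectivized family, i.e.\ the full Euler line. Your synthetic argument closes that small gap. It is also the natural companion to the paper's own geometric proof of Corollary~\ref{cor:equi-rhombusM}: there the authors show $FI=\tfrac13 BD$ and $FG=\tfrac13 AC$ for centroids; you show $FG=AC/(2\sin\alpha)$ and $GH=BD/(2\sin\alpha)$ for circumcenters by identifying the sides of $FGHI$ with the perpendicular bisectors $\ell_{AE},\ell_{BE},\ell_{CE},\ell_{DE}$ and measuring the two strips. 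The first step (each pair of adjacent circumcenters lies on the perpendicular bisector of the common chord through $E$) is exactly the mechanism in the paper's proof of Corollary~\ref{thm:genCircumcenters}; your contribution is the quantitative strip-width computation $\tfrac12(AE+EC)=\tfrac12 AC$, which correctly uses convexity to place $E$ between $A$ and $C$, and the fact that both sides cross their respective strips at the same angle $\alpha$ (the angle between the diagonals, nonzero since the diagonals meet at $E$). What your approach buys is a proof that is fully human-checkable and makes the ratio $FG/GH=AC/BD$ explicit, so it simultaneously re-proves the parallelogram statement and shows exactly when it degenerates to a rhombus; what the paper's approach buys is uniformity over the whole one-parameter family of Euler-line centers at the cost of symbolic computation.
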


\begin{figure}[h!t]
\centering
\includegraphics[width=0.3\linewidth]{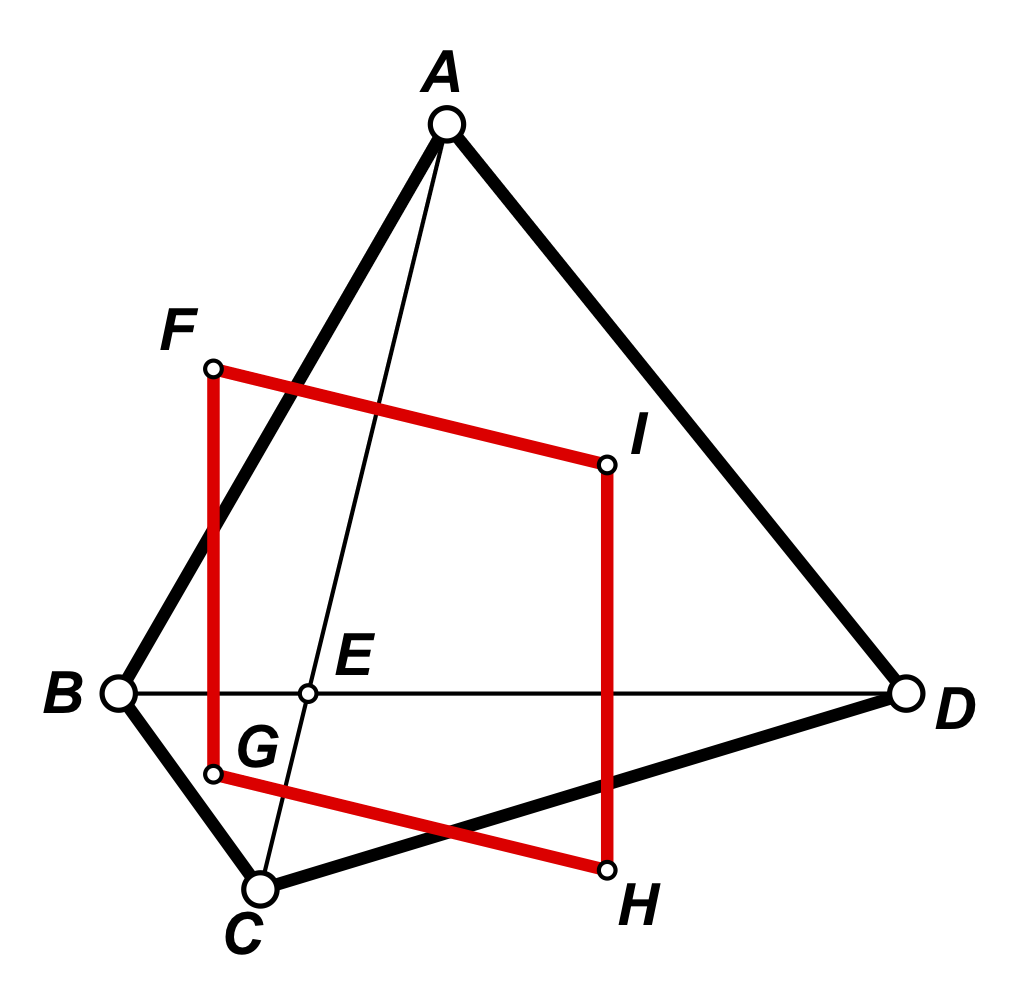}
\caption{$AC=BD$, circumcenters $\implies$ rhombus}
\label{fig:equi-circumcenters}
\end{figure}

\void{
\begin{theorem}
\label{thm:equi}
If the reference quadrilateral is equidiagonal, the central quadrilateral is orthodiagonal whenever the chosen center lies on the Nagel line.
\end{theorem}

The \emph{Nagel line} of a triangle is the line joining the incenter, the centroid,
the Nagel point, and the Spieker center of a triangle.
From \cite{ETC} we find that a center function for the Nagel point
is $P=(b+c)/a-1$ and a center function for the Spieker center is $Q=(b+c)/a$.
Therefore, any point on the Nagel line has first trilinear coordinate of the form
$$P+t(Q-P)=\frac{b+c}{a}-1+t.$$

Letting $t=k+1$ shows that Theorem \ref{thm:equi} is equivalent to the
following theorem.
}

\begin{theorem}
\label{thm:equi}
If the reference quadrilateral is equidiagonal, the central quadrilateral is orthodiagonal whenever the chosen center has center function of the form
$$\frac{b+c}{a}+k.$$
Also, the diagonals of the central quadrilateral are
parallel to the bimedians of the reference quadrilateral.
\end{theorem}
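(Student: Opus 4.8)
The plan is to run a short vector computation anchored at the diagonal point $E$. Place $E$ at the origin and write the two diagonals as lines through $E$ with \emph{unit} direction vectors $\mathbf{u}$ (along $AC$) and $\mathbf{v}$ (along $BD$), so that $A=\alpha\mathbf{u}$, $C=-\gamma\mathbf{u}$, $B=\beta\mathbf{v}$, $D=-\delta\mathbf{v}$ with $\alpha=EA$, $\gamma=EC$, $\beta=EB$, $\delta=ED$ all positive; this is the general position of a convex quadrilateral with respect to its diagonal point. The equidiagonal hypothesis is then simply $\alpha+\gamma=\beta+\delta=:L$.

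First I would convert the center function $\frac{b+c}{a}+k$ into a position formula. Relative to a triangle with vertices $P_1,P_2,P_3$ and opposite side lengths $\ell_1,\ell_2,\ell_3$, this center has barycentric coordinates $(\ell_2+\ell_3+k\ell_1:\ell_3+\ell_1+k\ell_2:\ell_1+\ell_2+k\ell_3)$, whose coordinate sum is $(k+2)(\ell_1+\ell_2+\ell_3)$; using $\ell_2+\ell_3+k\ell_1=(\ell_1+\ell_2+\ell_3)+(k-1)\ell_1$ shows (for $k\neq-2$) that the point is the affine combination $\frac{3\,G_{\triangle}+(k-1)\,I_{\triangle}}{k+2}$ of the centroid and incenter of that triangle — which is exactly the statement, already noted in the excerpt, that such a center lies on the Nagel line. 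Applied to the four quarter triangles $\triangle ABE$, $\triangle BCE$, $\triangle CDE$, $\triangle DAE$ this gives $F,G,H,I$ in closed form: each centroid is $\tfrac13$ times the sum of two of $\{\alpha\mathbf{u},\beta\mathbf{v},-\gamma\mathbf{u},-\delta\mathbf{v}\}$ (the vertex $E$ contributing $\mathbf{0}$), and since $E$ is at the origin only the two non-$E$ vertices contribute to each incenter, so e.g. $I_{\triangle ABE}=\frac{\alpha\beta}{\alpha+\beta+|AB|}(\mathbf{u}+\mathbf{v})$ and cyclically for the rest.

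Then I would form the two diagonals $F-H$ and $G-I$ of the central quadrilateral, each written as $\tfrac{1}{k+2}$ times (centroid part $+\,(k-1)\cdot$ incenter part). The incenter part of $F-H$ is automatically a scalar multiple of $\mathbf{u}+\mathbf{v}$ and that of $G-I$ a scalar multiple of $\mathbf{v}-\mathbf{u}$ — this is essentially Theorem~\ref{thm:general1} in disguise and uses no hypothesis on the quadrilateral. The centroid part of $F-H$ equals $\tfrac13\big((\alpha+\gamma)\mathbf{u}+(\beta+\delta)\mathbf{v}\big)$, which collapses to a multiple of $\mathbf{u}+\mathbf{v}$ \emph{precisely} when $\alpha+\gamma=\beta+\delta$; likewise the centroid part of $G-I$ becomes a multiple of $\mathbf{v}-\mathbf{u}$. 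Hence under the equidiagonal hypothesis $F-H\parallel(\mathbf{u}+\mathbf{v})$ and $G-I\parallel(\mathbf{v}-\mathbf{u})$, and $(\mathbf{u}+\mathbf{v})\cdot(\mathbf{v}-\mathbf{u})=|\mathbf{v}|^2-|\mathbf{u}|^2=0$ since $\mathbf{u},\mathbf{v}$ are unit vectors, so $FH\perp GI$ and $FGHI$ is orthodiagonal. For the second claim, the bimedian of $ABCD$ joining the midpoints of $AB$ and $CD$ has direction $\tfrac12\big((\alpha+\gamma)\mathbf{u}+(\beta+\delta)\mathbf{v}\big)=\tfrac{L}{2}(\mathbf{u}+\mathbf{v})$, parallel to $FH$, while the bimedian joining the midpoints of $BC$ and $DA$ has direction $\tfrac{L}{2}(\mathbf{v}-\mathbf{u})$, parallel to $GI$.

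I do not expect a genuine obstacle here: the only care needed is bookkeeping — keeping the side-length labels, the incenter weights, and the signs of $\mathbf{u}$ and $\mathbf{v}$ consistent across the four quarter triangles, and excluding $k=-2$ (for which the center function gives a point at infinity). The centroid/incenter splitting of a Nagel-line center is what makes everything transparent: it shows why the incenter case ($X_1$, Theorem~\ref{thm:general1}) and the centroid case ($X_2$, Corollary~\ref{cor:equi-rhombusM}) are simultaneous special cases, and why the equidiagonal hypothesis is needed only to tame the centroid part while the incenter part always behaves.
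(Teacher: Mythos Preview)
Your argument is correct and is genuinely different from the paper's own proof. The paper sets up explicit Cartesian coordinates with $E$ at the origin and $C=(1,0)$, solves for $a_x$ and $d_y$ to enforce the equidiagonal condition, then computes the coordinates of $F,G,H,I$ from the center function and hands the product of the slopes of $FH$ and $GI$ to Mathematica, which returns~$-1$. It is a verification, not an explanation, and the bimedian claim is stated but not argued.

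Your route is more conceptual. The key move is the affine decomposition $\frac{3G_\triangle+(k-1)I_\triangle}{k+2}$ of a Nagel-line center into a centroid piece and an incenter piece, which lets you treat the two parts of $F-H$ and $G-I$ separately. The incenter pieces are scalar multiples of $\mathbf u\pm\mathbf v$ for \emph{any} convex quadrilateral (this is exactly the mechanism behind Theorem~\ref{thm:general1}), while the centroid pieces become multiples of $\mathbf u\pm\mathbf v$ precisely when $\alpha+\gamma=\beta+\delta$. The orthogonality then comes for free from $|\mathbf u|=|\mathbf v|=1$, and the bimedian parallelism drops out of the same computation. This explains \emph{why} the equidiagonal hypothesis enters (only through the centroid part), unifies the $X_1$ and $X_2$ special cases, and gives the bimedian statement at no extra cost. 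The paper's brute-force computation obtains none of that structure. Your caveat about $k=-2$ (center at infinity) is also a real point the paper glosses over.
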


\textbf{Note.} A \emph{bimedian} of a quadrilateral is the line joining the
midpoints of two opposite sides.

\begin{proof}
When dealing with equidiagonal quadrilaterals, it is convenient to set up a
coordinate system where the diagonal point of the quadrilateral is at the origin
and one of the diagonals ($AC$) of the quadrilateral lies along the $x$-axis.
We can scale the figure so that $C$ lies at $(1,0)$.
Figure \ref{fig:equicoordinates} shows the coordinates we use, where
$a_x<0$, $b_x\leq 0$, $b_y<0$, $d_x\geq 0$, and $d_y>0$.

\begin{figure}[h!t]
\centering
\includegraphics[width=0.6\linewidth]{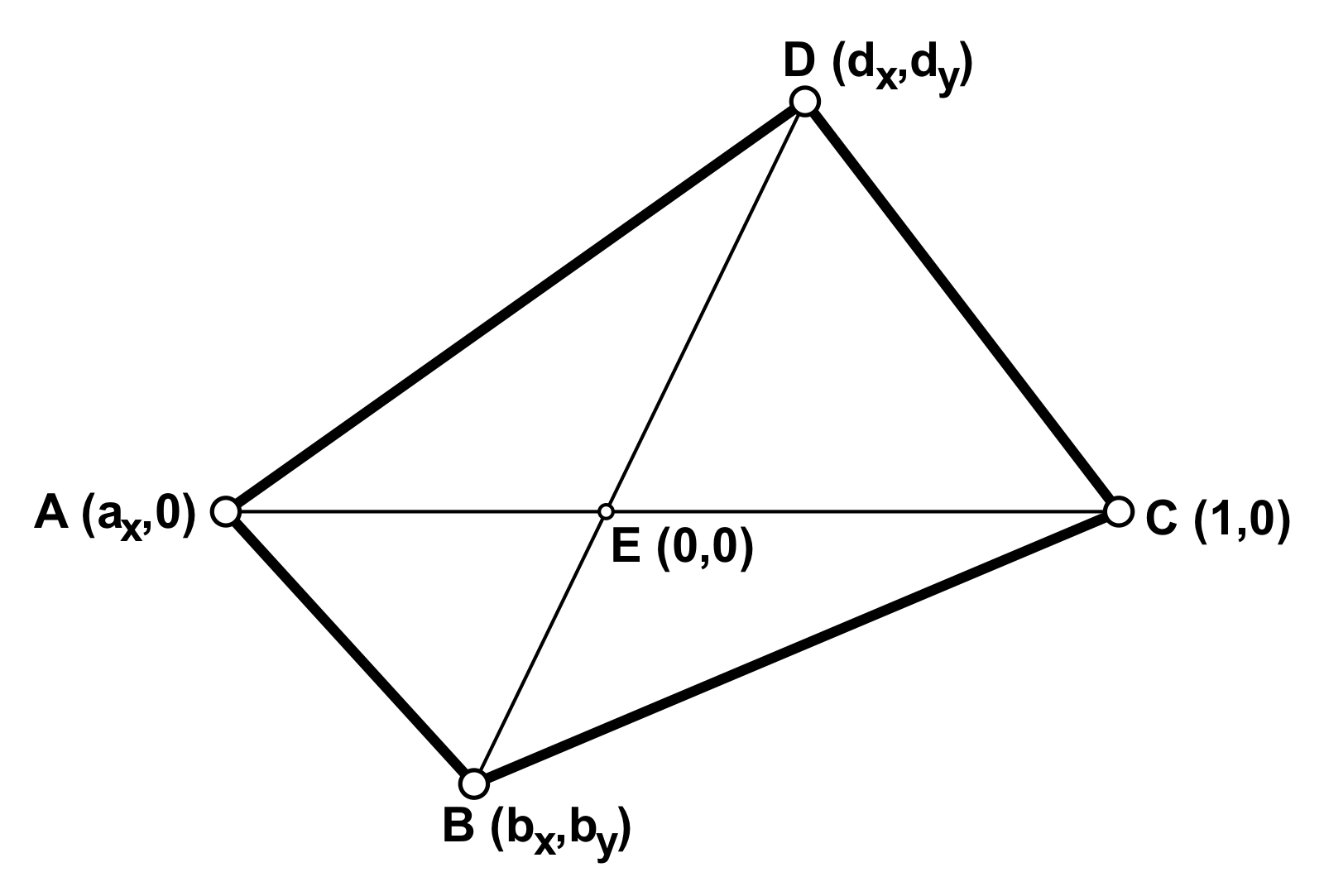}
\caption{Cartesian Coordinates for an equidiagonal quadrilateral}
\label{fig:equicoordinates}
\end{figure}

In order for $E$ to be at the origin, we need the slopes of $BE$ and $ED$
to be the same. We can ensure that this is the case by setting
$$d_y=\frac{d_x\cdot b_y}{b_x}.$$

In order for the quadrilateral to be equidiagonal, we need $AC=BD$
or equivalently $(AC)^2=(BD)^2$. This gives the equation
$$(1-a_x)^2=\frac{(b_x^2+b_y^2)(d_x-b_x)^2}{b_x^2}.$$
Solving for $a_x$ and taking the solution that makes $a_x$ negative,
we find that we can ensure that the quadrilateral is equidiagonal by setting
$$a_x=1+\frac{(d_x-b_x)\sqrt{b_x^2+b_y^2}}{b_x}.$$

Using $(b+c)/a+k$ as the center function,
we can compute the coordinates for $F$, $G$, $H$, and $I$.
Using the formula for the slope of a line through two points,
we find the slopes of $FH$ and $GI$.
Then (using Mathematica) we find and simplify the product of these slopes.
The result is $-1$, proving that $FGHI$ is orthodiagonal.
\end{proof}

\void{
\begin{theorem}
\label{thm:equi}
If the reference quadrilateral is equidiagonal, the central quadrilateral is orthodiagonal whenever the chosen center has center function equivalent to one of the following forms, for some constants $k$, $p$, and $q$.
$$\frac{b+c}{a}+k$$
$$\frac{b+c+k}{a}+1$$
$$\cos B+\cos C-1-k\cos A$$
$$\cos B+\cos C-1+p\cos A+q\cos B\cos C$$
$$(a+b+c)\sin B\sin C+k\frac{bc}{a}\sin A$$
$$\frac{r}{R}+k\cos A$$
$$\frac{r}{R}+k\sin B\sin C$$
Furthermore, the diagonals of the central quadrilateral are
parallel to the bimedians of the reference quadrilateral.
\end{theorem}
}

Since the Spieker center has center function $(b+c)/a$, it has the form
referenced by Theorem \ref{thm:equi} (with $k=0$),
so we have the following corollary.

\begin{corollary}
\label{thm:Spieker}
Let $ABCD$ be an equidiagonal quadrilateral with diagonal point $E$.
Let $F$, $G$, $H$, and $I$ be the Spieker centers of $\triangle ABE$, 
$\triangle BCE$, $\triangle CDE$, and $\triangle DAE$, respectively.
Then quadrilateral $FGHI$ is orthodiagonal. (Figure \ref{fig:diagPtSpiekerCenters})
\end{corollary}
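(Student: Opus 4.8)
The plan is to derive this directly from Theorem~\ref{thm:equi}. A center function for the Spieker center $X_{10}$ is $f(a,b,c)=(b+c)/a$, since its trilinear coordinates are $\bigl((b+c)/a : (c+a)/b : (a+b)/c\bigr)$; equivalently, its barycentric coordinates are $(b+c : c+a : a+b)$. This is precisely the $k=0$ member of the family $\dfrac{b+c}{a}+k$ covered by Theorem~\ref{thm:equi}. Hence, for an equidiagonal reference quadrilateral $ABCD$ with diagonal point $E$, the quadrilateral $FGHI$ formed by the Spieker centers of $\triangle ABE$, $\triangle BCE$, $\triangle CDE$, and $\triangle DAE$ satisfies $FH\perp GI$, i.e.\ it is orthodiagonal. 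In fact Theorem~\ref{thm:equi} gives more: $FH$ and $GI$ are parallel to the two bimedians of $ABCD$.

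If instead one wanted a self-contained argument, I would repeat the coordinate computation behind Theorem~\ref{thm:equi} with $k$ fixed to $0$: place $E$ at the origin with diagonal $AC$ along the $x$-axis and $C=(1,0)$, impose $d_y=d_x b_y/b_x$ so that $B$, $E$, $D$ are collinear, impose $a_x=1+(d_x-b_x)\sqrt{b_x^2+b_y^2}/b_x$ so that $AC=BD$, read off the three side lengths of each quarter triangle from these coordinates, locate each Spieker center via the trilinear-to-Cartesian change-of-coordinates formula applied to $f=(b+c)/a$, and finally verify that the slopes of $FH$ and $GI$ have product $-1$.

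The one genuine thing to get right — and the only place an error could creep in — is the bookkeeping in this second approach: for each of the four quarter triangles one must pair the side lengths with the correct opposite vertices before applying the trilinear center function, and one must track the orientation so that $F$, $G$, $H$, $I$ are taken in the cyclic order matching Figure~\ref{fig:quarterTriangles}. Beyond that the simplification is purely mechanical and is best left to Mathematica. Since this computation is nothing but the $k=0$ specialization of one already performed, no new difficulty arises, and the one-line deduction from Theorem~\ref{thm:equi} given above suffices.
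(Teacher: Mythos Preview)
Your one-line deduction from Theorem~\ref{thm:equi} with $k=0$ is correct and is exactly how the paper justifies this statement as a corollary. Your coordinate backup plan is likewise just the $k=0$ instance of the proof of Theorem~\ref{thm:equi}, so nothing is missing there.

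Where the paper does more is that it supplies, in addition, a purely synthetic proof specific to the Spieker center. It uses the characterization of the Spieker center as the incenter of the medial triangle: letting $W,X,Y,Z$ be the midpoints of the sides of $ABCD$ and $P,Q,R,S$ the midpoints of $EA,EB,EC,ED$, the equidiagonality forces $WXYZ$ to be a rhombus, and each Spieker center lies on an angle bisector at a vertex of that rhombus. Hence $F,H$ lie on the bimedian $WY$ and $G,I$ lie on the bimedian $XZ$, so the diagonals of $FGHI$ actually \emph{coincide with} the bimedians (a sharper conclusion than the parallelism you quote from Theorem~\ref{thm:equi}), and orthodiagonality follows from $WY\perp XZ$. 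Your approach buys generality (it is the same machinery that handles every $k$); the paper's geometric proof buys a stronger conclusion for this particular center and avoids coordinates entirely.
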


\begin{figure}[h!t]
\centering
\includegraphics[width=0.6\linewidth]{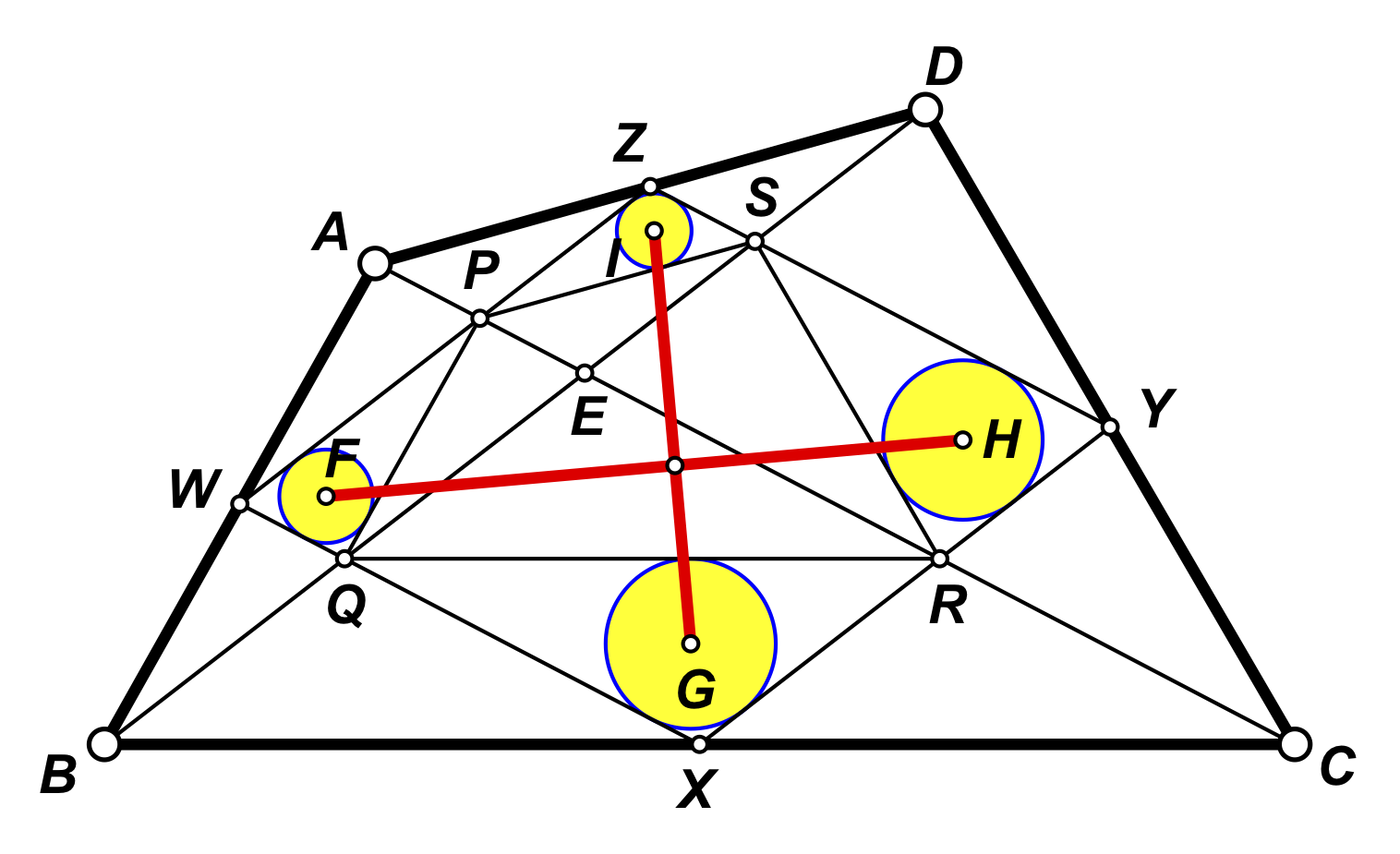}
\caption{Equidiagonal quadrilateral: Spieker centers $\implies$ orthodiagonal}
\label{fig:diagPtSpiekerCenters}
\end{figure}

In this case, we have a simple purely geometric proof.

\begin{proof}
Let the midpoints of the sides of the quadrilateral be $W$, $X$, $Y$, and $Z$ as shown.
Let the Spieker centers be $F$, $G$, $H$, and $I$. Let $P$, $Q$, $R$, and $S$ be the
midpoints of $AE$, $BE$, $CE$, and $DE$, respectively.
The Spieker center of a triangle is the incenter of its medial triangle.
Thus, $F$, $G$, $H$, and $I$ are the centers of the yellow circles in Figure \ref{fig:diagPtSpiekerCenters}.
Since $Y$, $S$, and $Z$ are midpoints, $YSZ$ is a straight line segment and $YZ=\frac12 AC$.
Similarly, $ZW=\frac12 BD$, $WX=\frac12 AC$, and $XY=\frac12 BD$.

Since quadrilateral $ABCD$ is equidiagonal, quadrilateral $WXYZ$ is a rhombus. The diagonals of a rhombus are perpendicular, so $WY\perp XZ$. The diagonals of a rhombus also bisect the angles at the vertices.
Since $H$ is the incenter of $\triangle SYR$, $YH$ is the angle bisector of $\angle SYR$.
Since $YW$ is also the angle bisector of $\angle SYR$, we can conclude that $WY$ passes through $H$ and $F$.
In the same manner, $ZX$ passes through $I$ and $G$. Since $WY\perp XZ$, we can conclude that
$GI\perp FH$ or that quadrilateral $FGHI$ is orthodiagonal.

Note that in this case, the diagonals of $FGHI$ coincide with the bimedians of $ABCD$.
\end{proof}

Since the Nagel point has center function $(b+c)/a-1$, we have the following corollary.

\begin{corollary}
\label{thm:Nagel}
Let $ABCD$ be an equidiagonal quadrilateral with diagonal point $E$.
Let $F$, $G$, $H$, and $I$ be the Nagel points of $\triangle ABE$, 
$\triangle BCE$, $\triangle CDE$, and $\triangle DAE$, respectively.
Then $FH\perp GI$. In other words, quadrilateral $FGHI$ is orthodiagonal.
Furthermore, $FH$ and $GI$ are parallel to the bimedians of quadrilateral $ABCD$.
(Figure \ref{fig:diagPtNagelPoints})
\end{corollary}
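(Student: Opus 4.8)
The quickest proof is simply to invoke Theorem~\ref{thm:equi}: a center function for the Nagel point is $(b+c)/a-1$, which is of the form $(b+c)/a+k$ with $k=-1$, so both conclusions — that $FGHI$ is orthodiagonal and that $FH$, $GI$ are parallel to the bimedians — follow at once. Since the paper prefers a self-contained synthetic argument whenever one is available (as for the Spieker and centroid cases), I would also supply the following vector proof.

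Place the diagonal point $E$ at the origin. Because $E$ lies between $A$ and $C$ and between $B$ and $D$, we may write $C=-\lambda A$ and $D=-\mu B$ with $\lambda,\mu>0$; set $p=|A|$ and $q=|B|$. Since $|AC|=(1+\lambda)p$ and $|BD|=(1+\mu)q$, the equidiagonal hypothesis is the single relation $(1+\lambda)p=(1+\mu)q$. I will use two standard facts about the Nagel point $N$ of a triangle: the barycentric formula $N=(s-a:s-b:s-c)$ gives $N=(\text{sum of the vertices})-2I$, where $I$ is the incenter; and $I$ lies on the internal bisector of every angle of the triangle. Applied to $\triangle ABE$ and $\triangle CDE$, whose common vertex $E$ is the origin, this yields $F=A+B-2I_{1}$ and $H=C+D-2I_{3}$, where $I_1$ is a positive multiple of $qA+pB$ (the direction of the bisector of $\angle AEB$) and $I_3$ is a multiple of the same vector $qA+pB$, since $\angle CED$ is vertical to $\angle AEB$ and hence has the same bisector line through $E$ — this last point is exactly the content of Theorem~\ref{thm:general1}.

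Subtracting, $\overrightarrow{FH}=H-F=-\bigl[(1+\lambda)A+(1+\mu)B\bigr]+t\,(qA+pB)$ for some scalar $t$. The vectors $A$ and $B$ are independent (they are two sides of a genuine triangle at $E$), and the equidiagonal relation $(1+\lambda)p=(1+\mu)q$ is precisely the condition that $(1+\lambda)A+(1+\mu)B$ and $qA+pB$ be parallel; hence $\overrightarrow{FH}$ is parallel to $(1+\lambda)A+(1+\mu)B$, which points along the bimedian joining the midpoints of $AB$ and $CD$. The hypothesis $AC=BD$ is symmetric under the cyclic shift $(A,B,C,D)\mapsto(B,C,D,A)$, so the same computation shows $\overrightarrow{GI}$ is parallel to the bimedian joining the midpoints of $BC$ and $DA$. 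Finally, the Varignon parallelogram of $ABCD$ has all four sides of length $\tfrac12|AC|=\tfrac12|BD|$, so it is a rhombus, and the diagonals of a rhombus — here the two bimedians — are perpendicular. Therefore $FH\perp GI$, i.e. $FGHI$ is orthodiagonal, and $FH$, $GI$ run parallel to the bimedians of $ABCD$.

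The step I expect to carry the weight is not any estimate or case analysis — there is none — but the recognition that ``$AC=BD$'' and ``$(1+\lambda)A+(1+\mu)B\parallel qA+pB$'' are literally the same statement; once that is spotted the argument is forced. The only genuine care required is bookkeeping: applying the incenter-on-the-bisector observation consistently in all four quarter triangles, and confirming that the cyclic relabeling really preserves the hypothesis, which it does because $AC=BD$ is symmetric in the two diagonals.
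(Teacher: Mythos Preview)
Your opening paragraph is exactly the paper's own argument: the corollary is stated immediately after Theorem~\ref{thm:equi} with the remark that the Nagel point has center function $(b+c)/a-1$, and nothing more is offered. So on that score you match the paper.

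Your vector argument, however, goes beyond what the paper does. In fact, immediately after this corollary the paper poses as an open question whether there is a simple geometric proof of the Nagel-point case, and whether one can show geometrically that $FH$ is parallel to a bimedian. Your computation supplies one: the identity $N=(P_1+P_2+P_3)-2I$ reduces everything to the direction of the angle bisector at $E$, and then the observation that $(1+\lambda)p=(1+\mu)q$ is simultaneously the equidiagonal hypothesis and the parallelism condition $(1+\lambda)A+(1+\mu)B\parallel qA+pB$ is exactly the ``key recognition'' you flag. The appeal to the Varignon rhombus to finish the perpendicularity is the standard move (the paper uses it too, in the Spieker-center proof). I checked the bookkeeping for $G$ and $I$ directly rather than via your cyclic-shift meta-argument, and it comes out the same way: $\overrightarrow{GI}$ is a combination of $-(1+\lambda)A+(1+\mu)B$ and $-qA+pB$, and those are parallel under the identical condition $(1+\lambda)p=(1+\mu)q$. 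So the argument is sound and, unlike the paper's Mathematica verification of Theorem~\ref{thm:equi}, it explains \emph{why} equidiagonality is the right hypothesis.
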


\begin{figure}[h!t]
\centering
\includegraphics[width=0.4\linewidth]{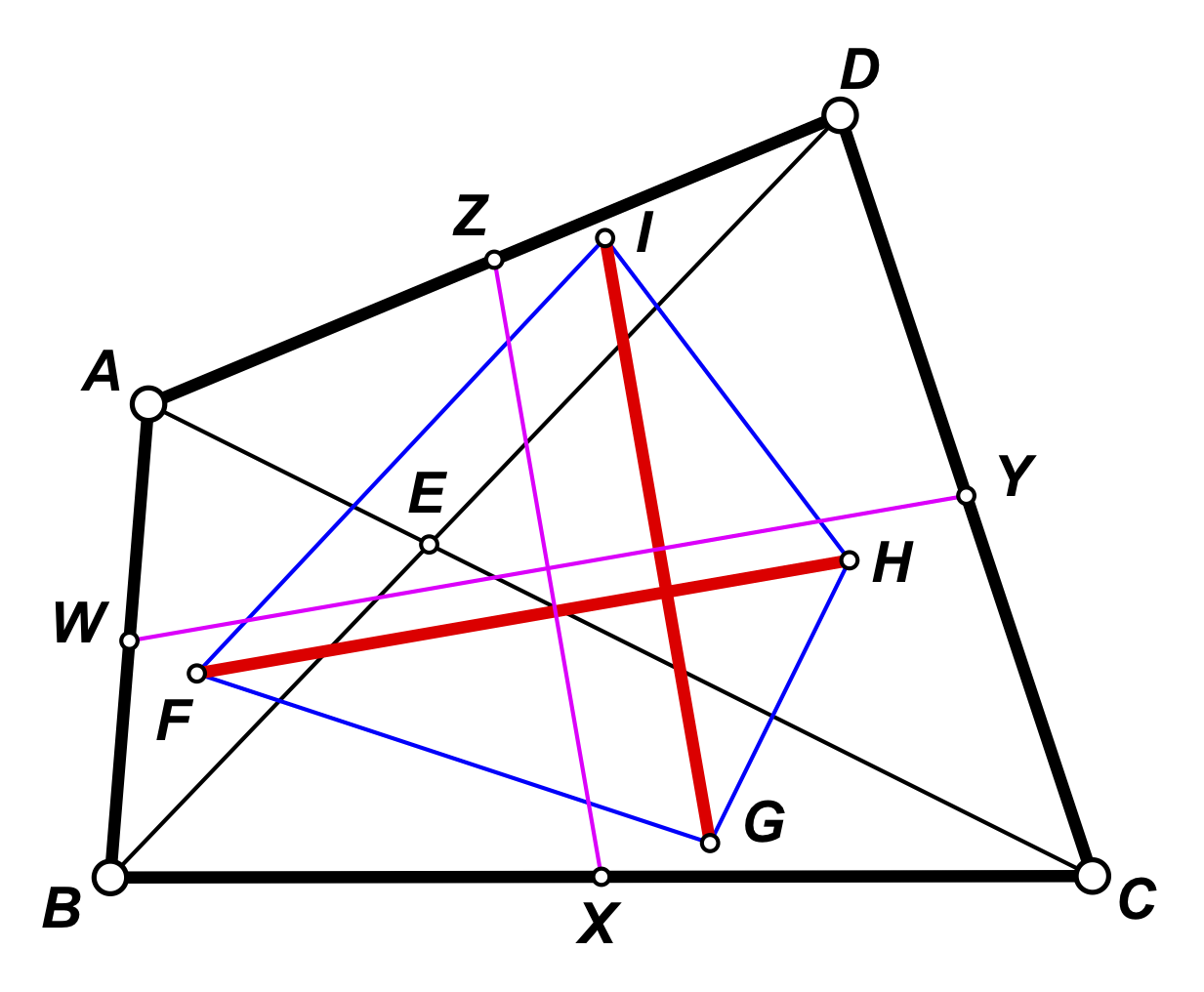}
\caption{Equidiagonal quadrilateral: Nagel points $\implies$ orthodiagonal}
\label{fig:diagPtNagelPoints}
\end{figure}

\begin{open}
Is there a simple geometric proof for the special case of Theorem \ref{thm:equi} when the
chosen center is the Nagel point? (Figure \ref{fig:diagPtNagelPoints})
Is there a geometrical proof that in Figure \ref{fig:diagPtNagelPoints}, $FH\parallel WY$?
\end{open}

\begin{theorem}
\label{thm:NagelLine}
All the points with center function of the form 
$$\frac{b+c}{a}+k$$
lie on the Nagel line.
\end{theorem}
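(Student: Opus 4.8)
The plan is to imitate the proof of Theorem~\ref{thm:EulerLine} verbatim, using the collinearity criterion~(1) of \cite[p.~28]{KimberlingB}. First I would recall that the \emph{Nagel line} of a triangle may be taken to be the line joining the incenter and the Spieker center (it also contains the centroid and the Nagel point, but two points determine the line). According to \cite{ETC}, a center function for the incenter is the constant $1$, so the incenter has trilinears $1:1:1$, and a center function for the Spieker center is $(b+c)/a$, so the Spieker center has trilinears $\bigl((b+c)/a:(c+a)/b:(a+b)/c\bigr)$.

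Next I would form the $3\times 3$ determinant whose first row is the trilinear triple of the point $P$ determined by the center function $(b+c)/a+k$, whose second row is the Spieker center, and whose third row is the incenter, namely
$$
\left|
\begin{array}{ccc}
\dfrac{b+c}{a}+k & \dfrac{c+a}{b}+k & \dfrac{a+b}{c}+k \\
\dfrac{b+c}{a} & \dfrac{c+a}{b} & \dfrac{a+b}{c} \\
1 & 1 & 1
\end{array}
\right| = 0.
$$
The key observation — exactly as in Theorem~\ref{thm:EulerLine} — is that row~1 is a linear combination of rows~2 and~3, here row~1 $=$ row~2 $+\,k\cdot$ row~3, so the determinant vanishes identically when $a$, $b$, $c$ are the sides of a triangle. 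By criterion~(1), $P$ lies on the line through the incenter and the Spieker center, i.e. on the Nagel line. Equivalently, one can invoke Theorem~\ref{thm:lines} directly: $(b+c)/a+k$ is the linear combination $q+k r$ of $q=(b+c)/a$ (the Spieker center) and $r=1$ (the incenter), hence the associated center lies on the line joining those two centers.

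I expect no real obstacle here; the proof is as routine as that of Theorem~\ref{thm:EulerLine}. The only points requiring a word of care are the bookkeeping that the quoted center functions $1$ and $(b+c)/a$ really do give the trilinear triples $1:1:1$ and $(b+c)/a:(c+a)/b:(a+b)/c$, and the standard fact that the incenter and the Spieker center both lie on the Nagel line, which is what licenses calling the line through them the Nagel line.
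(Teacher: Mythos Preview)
Your proof is correct and essentially identical to the paper's: both take the incenter (center function $1$) and the Spieker center (center function $(b+c)/a$) as the two defining points of the Nagel line, observe that $(b+c)/a+k$ is the linear combination $S+kI$, and conclude via Theorem~\ref{thm:lines}. The only difference is cosmetic---you write out the $3\times 3$ determinant explicitly before invoking Theorem~\ref{thm:lines}, whereas the paper goes straight to the linear-combination formulation.
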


\begin{proof}
The \emph{Nagel line} of a triangle is the line joining the incenter, the centroid,
the Nagel point, and the Spieker center of a triangle.
From \cite{ETC}, we find that a center function for the incenter
is $I=1$ and a center function for the Spieker center is $S=(b+c)/a$.
Let $P=(b+c)/a+k$.
Since $P=S+kI$, $P$ is a linear combination of $I$ and $S$ and
our result follows from Theorem~\ref{thm:lines}.
\end{proof}

\begin{theorem}
\label{thm:equiI}
Let $ABCD$ be an equidiagonal quadrilateral with diagonal point $E$.
Let $F$, $G$, $H$, and $I$ be the incenters of $\triangle ABE$, 
$\triangle BCE$, $\triangle CDE$, and $\triangle DAE$, respectively.
Then $FH$ and $GI$ are parallel to the bimedians of quadrilateral $ABCD$
(Figure \ref{fig:equi-incenters}).
\end{theorem}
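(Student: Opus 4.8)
The plan is to reduce everything to the elementary fact, already established in the proof of Theorem~\ref{thm:general1}, that when the chosen center is the incenter the two ``diagonals'' of the central quadrilateral both pass through $E$. Indeed $EF$ bisects $\angle AEB$ and $EH$ bisects $\angle CED$; since $\angle AEB$ and $\angle CED$ are vertical angles, their internal bisectors are opposite rays, so $F$, $E$, $H$ are collinear. Likewise $G$, $E$, $I$ are collinear. Thus it suffices to identify the direction of the line $FEH$ (the bisector of $\angle AEB$) and of the line $GEI$ (the bisector of $\angle BEC$), and to show these agree with the directions of the two bimedians of $ABCD$.

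To do this I would work with vectors based at $E$. Put $E$ at the origin, let $\mathbf{u}$ and $\mathbf{v}$ be the unit vectors pointing from $E$ toward $A$ and toward $B$, and write $A=\alpha\mathbf{u}$, $B=\beta\mathbf{v}$, $C=-\gamma\mathbf{u}$, $D=-\delta\mathbf{v}$ with $\alpha,\beta,\gamma,\delta>0$; here $AC=\alpha+\gamma$ and $BD=\beta+\delta$. The internal bisector of $\angle AEB$ at $E$ points along $\mathbf{u}+\mathbf{v}$, while that of $\angle CED$ points along $-(\mathbf{u}+\mathbf{v})$, so the line $FEH$ has direction $\mathbf{u}+\mathbf{v}$. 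In the same way the bisectors of $\angle BEC$ and $\angle DEA$ show that the line $GEI$ has direction $\mathbf{u}-\mathbf{v}$.

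Next I would write down the midpoints $W,X,Y,Z$ of $AB,BC,CD,DA$: for instance $W=\tfrac12(\alpha\mathbf{u}+\beta\mathbf{v})$ and $Y=-\tfrac12(\gamma\mathbf{u}+\delta\mathbf{v})$, so the bimedian $WY$ has direction $Y-W\propto(\alpha+\gamma)\mathbf{u}+(\beta+\delta)\mathbf{v}=(AC)\mathbf{u}+(BD)\mathbf{v}$, and similarly $XZ$ has direction $\propto(AC)\mathbf{u}-(BD)\mathbf{v}$. This is the only point at which the equidiagonal hypothesis enters: $AC=BD$ collapses these to $\mathbf{u}+\mathbf{v}$ and $\mathbf{u}-\mathbf{v}$ respectively. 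Hence $FH\parallel WY$ and $GI\parallel XZ$, which is the assertion. (As a bonus $(\mathbf{u}+\mathbf{v})\cdot(\mathbf{u}-\mathbf{v})=|\mathbf{u}|^2-|\mathbf{v}|^2=0$, recovering the orthodiagonality of $FGHI$ already guaranteed by Theorem~\ref{thm:general1}.)

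There is essentially no hard step here once Theorem~\ref{thm:general1} is invoked; the only things requiring care are bookkeeping. One must check that the internal bisectors of the two vertical angles really are opposite rays, so that $F,E,H$ are genuinely collinear rather than forming a broken line, and one must pair the correct bimedian with the correct bisector: $WY$ joins the midpoints of $AB$ and $CD$, matching the $\angle AEB$/$\angle CED$ pair, whereas $XZ$ matches the $\angle BEC$/$\angle DEA$ pair. Finally, the conditions $\alpha,\beta,\gamma,\delta>0$, i.e.\ $E$ interior to $ABCD$, are precisely what make the relevant angles honest angles of the four quarter triangles, so no degenerate configuration intervenes.
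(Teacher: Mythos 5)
Your argument is correct, and it is genuinely different from the paper's: Theorem~\ref{thm:equiI} is one of the results the authors leave to the supplementary Mathematica notebooks, where the proof is a coordinate computation in the style of the proof of Theorem~\ref{thm:equi} (explicit Cartesian coordinates for $F$, $G$, $H$, $I$, then slopes). You instead exploit a property special to the incenter — that it lies on the internal bisector from $E$ — so that, exactly as in the proof of Theorem~\ref{thm:general1}, $F$, $E$, $H$ are collinear along the bisector of $\angle AEB$ and $G$, $E$, $I$ along the bisector of $\angle BEC$. From there the unit-vector bookkeeping is right: with $A=\alpha\mathbf{u}$, $B=\beta\mathbf{v}$, $C=-\gamma\mathbf{u}$, $D=-\delta\mathbf{v}$ the bisector directions are $\mathbf{u}\pm\mathbf{v}$, the bimedian directions are $(\alpha+\gamma)\mathbf{u}\pm(\beta+\delta)\mathbf{v}=(AC)\mathbf{u}\pm(BD)\mathbf{v}$, and the hypothesis $AC=BD$ enters at precisely one point to collapse the latter onto the former; you also correctly pair $WY$ (midpoints of $AB$ and $CD$) with the $\angle AEB/\angle CED$ bisector. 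What your route buys is a transparent synthetic proof of the kind the authors explicitly ask for in the adjacent Open Question (posed there for the Nagel point); what it costs is generality — the collinearity of $F$, $E$, $H$ fails for other centers on the Nagel line, so unlike the paper's computation your argument does not extend to Theorem~\ref{thm:equi} itself. It is worth noting the contrast with the paper's geometric proof of Corollary~\ref{thm:Spieker}: there the diagonals of $FGHI$ coincide with the bimedians, whereas here they pass through $E$ and are merely parallel to them, which your setup makes clear.
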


\begin{figure}[h!t]
\centering
\includegraphics[width=0.45\linewidth]{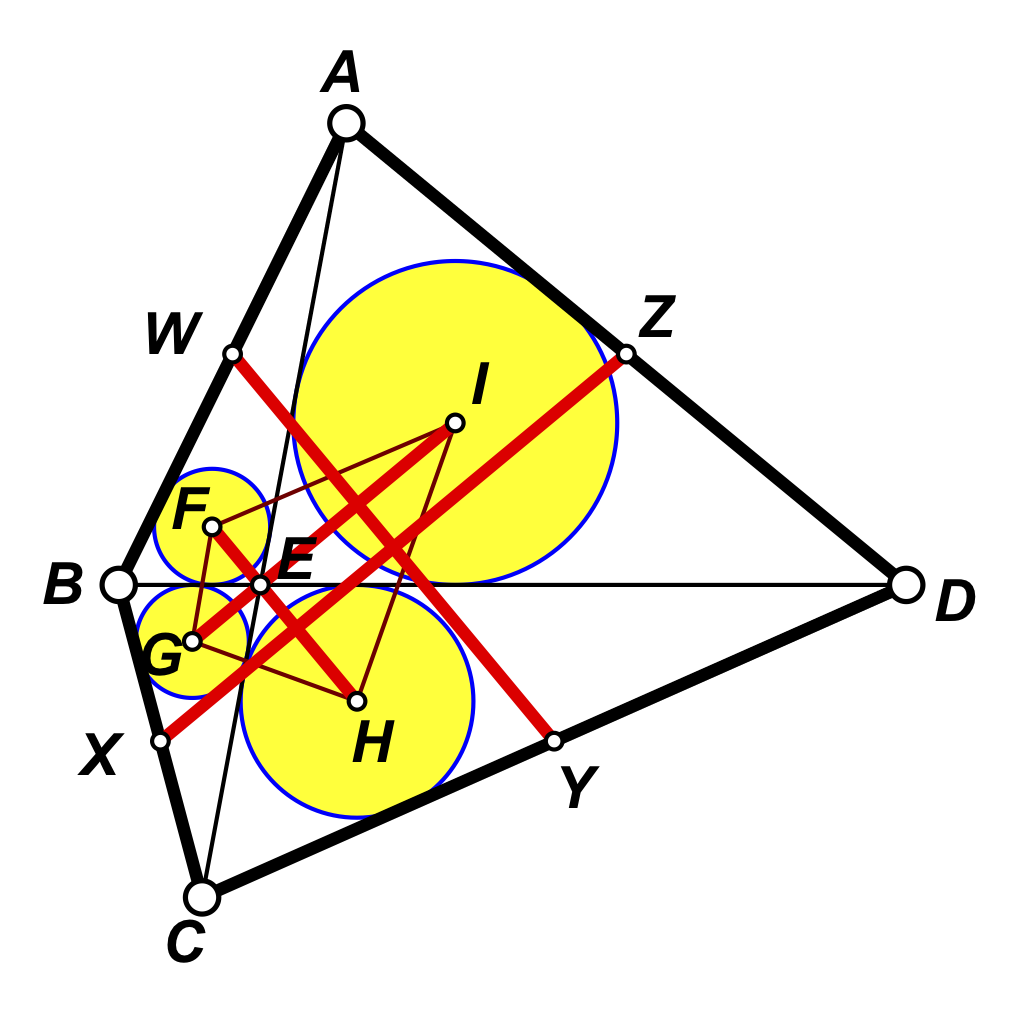}
\caption{$AC=BD$, incenters, bimedians $\implies$ $FH\parallel WY$}
\label{fig:equi-incenters}
\end{figure}

\begin{theorem}
\label{thm:equiy}
If the reference quadrilateral is equidiagonal, the central quadrilateral is orthodiagonal whenever the chosen center has center function of the form
$$\cos B+\cos C-1-k\cos A.$$
\end{theorem}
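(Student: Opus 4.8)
The plan is to prove the statement by an explicit Cartesian computation of the same type as the proof of Theorem~\ref{thm:equi}, after first noting why that theorem does not already apply. One might hope that $\cos B+\cos C-1-k\cos A$ is equivalent to a center function of the form $(b+c)/a+k'$, so that the claim would follow at once from Theorem~\ref{thm:equi}; it is not. By Theorem~\ref{thm:lines}, as $k$ varies the centers with center function $\cos B+\cos C-1-k\cos A$ fill out the line through the circumcenter $X_3$ (the limiting case $k\to\infty$, the center function then degenerating to $\cos A$) and the center with center function $\cos B+\cos C-1$. That line passes through the incenter $X_1$: taking $k=-1$ gives $\cos A+\cos B+\cos C-1=r/R$, which is symmetric in $a$, $b$, $c$ and hence equivalent to the constant center function $1$. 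But it is not the Nagel line, since $X_3$ is not on the Nagel line. So the case $k=-1$ is consistent with Theorem~\ref{thm:equi} via its limiting incenter case, and the case $k\to\infty$ is consistent with Corollary~\ref{cor:equi-rhombusO} since a rhombus is orthodiagonal, but the family as a whole requires a direct argument.

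To put the center function in algebraic form I would use the identity
$$\cos B+\cos C=\frac{b+c}{a}\,(1-\cos A),$$
which follows immediately from the Law of Cosines via $\cos B+\cos C=\frac{(b+c)(a^2-(b-c)^2)}{2abc}$ and $1-\cos A=\frac{a^2-(b-c)^2}{2bc}$. Multiplying $\cos B+\cos C-1-k\cos A$ by the symmetric function $2abc$ gives the polynomial
$$(b+c)\bigl(a^2-(b-c)^2\bigr)-2abc-ka\,(b^2+c^2-a^2),$$
and multiplying once more by $a$ produces the first barycentric coordinate of the corresponding triangle center as a polynomial in $a$, $b$, $c$.

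Next I would reuse the coordinate setup from the proof of Theorem~\ref{thm:equi}: place the diagonal point $E$ at the origin with diagonal $AC$ on the $x$-axis and $C=(1,0)$, write $A=(a_x,0)$, $B=(b_x,b_y)$, $D=(d_x,d_y)$, force $B$, $E$, $D$ collinear via $d_y=d_xb_y/b_x$, and force $AC=BD$ via $a_x=1+(d_x-b_x)\sqrt{b_x^2+b_y^2}/b_x$, under the sign conventions $a_x<0$, $b_x\le 0$, $b_y<0$, $d_x\ge 0$, $d_y>0$. For each quarter triangle $\triangle ABE$, $\triangle BCE$, $\triangle CDE$, $\triangle DAE$, compute its three side lengths in terms of $b_x$, $b_y$, $d_x$ (eliminating $a_x$ and $d_y$ through the two constraints), substitute them into the barycentric coordinates of the center in the cyclic order matching the vertices of that triangle, and apply the change-of-coordinates formula quoted in the proof of Theorem~\ref{thm:general} to get the Cartesian coordinates of $F$, $G$, $H$, $I$. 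Finally, compute the slopes of $FH$ and $GI$ and check that their product simplifies to $-1$, which is precisely the assertion that $FGHI$ is orthodiagonal.

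The hard part is this last simplification, and it is best left to Mathematica. Since $-k\cos A$ does not combine with $\cos B+\cos C$ into anything divisible by $b+c$, the barycentric coordinates here lack the partial factorization that kept the Nagel-line computation of Theorem~\ref{thm:equi} tractable, so the intermediate formulas for $F$, $G$, $H$, $I$ and for the slope product are markedly bulkier. The only genuinely non-mechanical points are (i) choosing the square roots that appear when side lengths are extracted with signs consistent with the above conventions, and (ii) keeping track, for each of the four triangles, of which side plays the role of $a$, $b$, and $c$ in the center function, since the triangles are taken in a fixed cyclic order around $E$.
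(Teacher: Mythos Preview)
Your approach is essentially the same as the paper's: the paper gives no in-text proof for Theorem~\ref{thm:equiy} and defers to its supplementary Mathematica notebooks, which, in light of the proof of Theorem~\ref{thm:equi}, means exactly the Cartesian setup and slope-product computation you describe. Your preliminary discussion---identifying the family as the $OI$ (Bevan) line rather than the Nagel line, via the identity $\cos B+\cos C=\tfrac{b+c}{a}(1-\cos A)$ and the special value $k=-1$ giving $r/R$---is a nice piece of context that the paper only supplies afterward in Theorem~\ref{thm:BevanLine}, and it correctly explains why Theorem~\ref{thm:equi} does not already cover this case.
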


Since the Bevan point has center function $\cos B+\cos C-1-\cos A$, we have the following corollary.

\begin{corollary}
\label{cor:equi-X40}
Let $ABCD$ be an equidiagonal quadrilateral with diagonal point $E$.
Let $F$, $G$, $H$, and $I$ be the Bevan points of $\triangle ABE$, 
$\triangle BCE$, $\triangle CDE$, and $\triangle DAE$, respectively.
Then $FH\perp GI$ (Figure \ref{fig:equi-X40}).
\end{corollary}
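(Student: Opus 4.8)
The plan is to deduce this corollary immediately from Theorem \ref{thm:equiy}, so the only thing that genuinely needs checking is that the Bevan point $X_{40}$ is one of the centers covered by that theorem, i.e., that it admits a center function of the form $\cos B+\cos C-1-k\cos A$ for some constant $k$. First I would recall from \cite{ETC} that the Bevan point, the circumcenter of the excentral triangle, has trilinear coordinates
$$\cos B+\cos C-\cos A-1:\cos C+\cos A-\cos B-1:\cos A+\cos B-\cos C-1,$$
so that $f(a,b,c)=\cos B+\cos C-1-\cos A$ serves as a center function for it. One should observe that this $f$ really is a center function in the sense of Section~\ref{section:centers}: it is symmetric in $b$ and $c$, nonzero, and (after substituting the Law of Cosines) homogeneous in $a,b,c$. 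It is visibly of the form appearing in Theorem~\ref{thm:equiy} with $k=1$.

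With that identification in place, Theorem~\ref{thm:equiy} applies verbatim: since $ABCD$ is equidiagonal and the chosen center is $X_{40}$, the central quadrilateral $FGHI$ is orthodiagonal, i.e. $FH\perp GI$. There is essentially no obstacle left, because all of the real work — setting up Cartesian coordinates with $E$ at the origin and a diagonal along the $x$-axis, imposing $AC=BD$ as in the proof of Theorem~\ref{thm:equi}, computing the coordinates of the four Bevan points $F,G,H,I$, and verifying (with Mathematica) that the product of the slopes of $FH$ and $GI$ equals $-1$ — is already carried out there. If a reader wanted a stand-alone computation, one could simply specialize that notebook to $k=1$, but this adds nothing new. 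Finally, I would note in passing, following the second assertion of Theorem~\ref{thm:equi}, that $FH$ and $GI$ are in fact parallel to the bimedians of $ABCD$, even though the corollary as stated claims only perpendicularity.
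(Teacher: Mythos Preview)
Your proposal is correct and follows exactly the paper's approach: the paper simply observes that the Bevan point has center function $\cos B+\cos C-1-\cos A$, which is the $k=1$ case of Theorem~\ref{thm:equiy}, and the corollary follows. One small caveat: your closing aside invokes Theorem~\ref{thm:equi} for the bimedian-parallelism claim, but that theorem concerns the family $(b+c)/a+k$ (the Nagel line), not $\cos B+\cos C-1-k\cos A$ (the $OI$ line), so it does not directly apply to the Bevan point; since the corollary itself does not assert this, it is harmless, but you should drop or reattribute that remark.
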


\begin{figure}[h!t]
\centering
\includegraphics[width=0.4\linewidth]{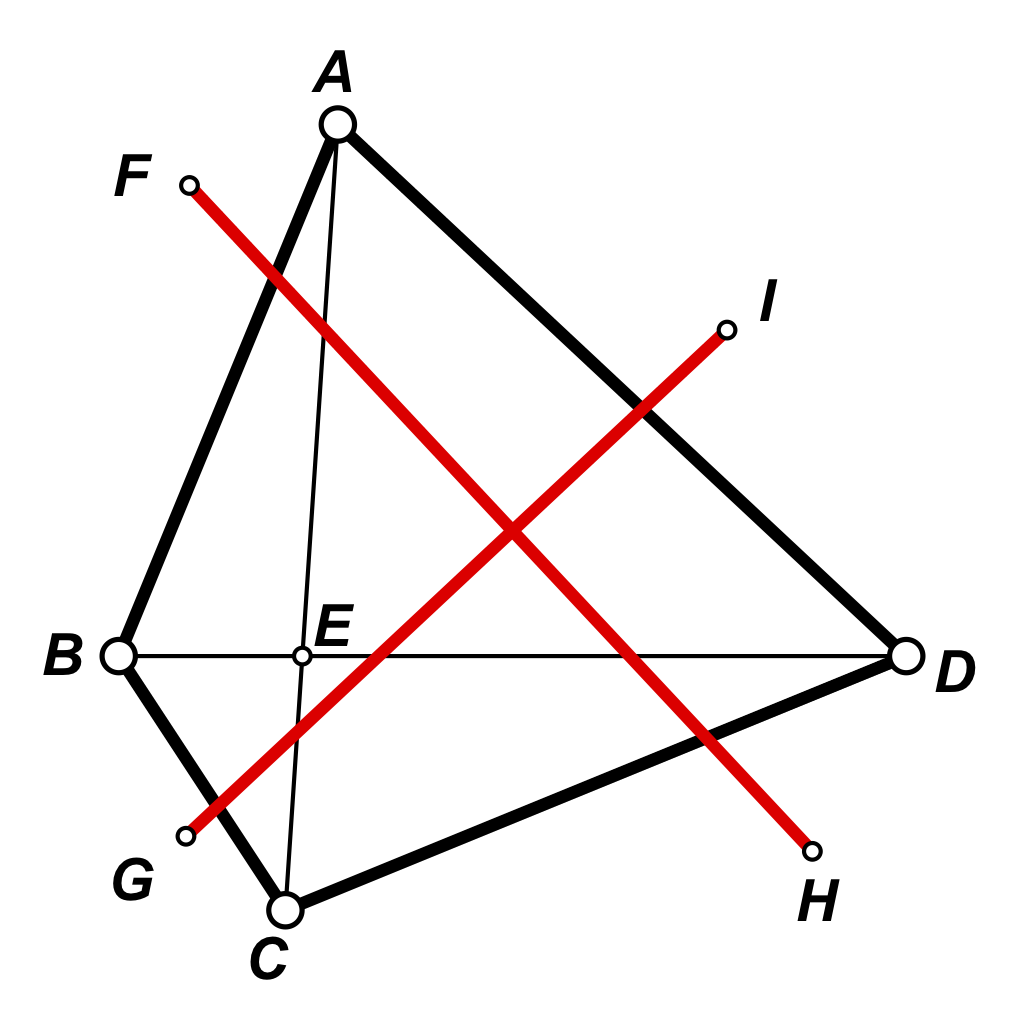}
\caption{$AC=BD$, Bevan points $\implies$ $FH\perp GI$}
\label{fig:equi-X40}
\end{figure}

\begin{theorem}
\label{thm:BevanLine}
All the points with center function of the form 
$$\cos B+\cos C-1-k\cos A$$
lie on the Bevan line.
\end{theorem}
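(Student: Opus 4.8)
The plan is to mimic the proofs of Theorem~\ref{thm:EulerLine} and Theorem~\ref{thm:NagelLine}. The Bevan line of a triangle is the line joining the incenter~$X_1$, the circumcenter~$X_3$, and the Bevan point~$X_{40}$ (these three are collinear because $X_{40}$ is the reflection of $X_1$ in $X_3$); from~\cite{ETC} a center function for $X_1$ is $1$ and a center function for $X_3$ is $\cos A$. So, exactly as in those theorems, it suffices to exhibit every member of the family $\cos B+\cos C-1-k\cos A$ as a linear combination of $1$ and $\cos A$, and then invoke Theorem~\ref{thm:lines}.

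The single algebraic step is to notice that $\lambda := \cos A+\cos B+\cos C-1$ is a fully symmetric — hence cyclic — function of the side lengths (by the classical identity it equals $r/R$). Then
$$\cos B+\cos C-1-k\cos A = \lambda - (k+1)\cos A,$$
so the triple of trilinear coordinates of the center $P_k$ with this center function is
$$\bigl(\lambda-(k+1)\cos A : \lambda-(k+1)\cos B : \lambda-(k+1)\cos C\bigr) = \lambda\,(1:1:1) - (k+1)(\cos A:\cos B:\cos C).$$
I would then finish as in the proof of Theorem~\ref{thm:EulerLine}: the $3\times 3$ determinant whose rows are the trilinear coordinates of $P_k$, of $X_1=(1:1:1)$, and of $X_3=(\cos A:\cos B:\cos C)$ is identically zero, since its first row equals $\lambda$ times its second row minus $(k+1)$ times its third row. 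Hence $P_k$ lies on line $X_1X_3$, the Bevan line. Taking $k=1$ recovers $\cos B+\cos C-1-\cos A$, the center function of the Bevan point, which is a reassuring consistency check; taking $k=-1$ gives $\lambda$ itself, so $P_{-1}=X_1$, again on the line.

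There is no real obstacle here — once the right two anchor centers ($X_1$ and $X_3$) are identified, the argument is as short as those for Theorems~\ref{thm:EulerLine} and \ref{thm:NagelLine}. The only point needing a word of care is that the coefficient $\lambda$ of the incenter term is not a numerical constant but a function of $a,b,c$, so if Theorem~\ref{thm:lines} is read as permitting only constant coefficients one should argue via the vanishing determinant directly (legitimate: for each fixed triangle $\lambda$ is a scalar, so the first row is then a genuine scalar combination of the other two), or else appeal to the paper's observation that multiplying a center function by a cyclic function leaves the center unchanged. Everything after the identity $\cos A+\cos B+\cos C=1+r/R$ is the same one-line determinant verification used for the Euler line.
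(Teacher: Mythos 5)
Your proof is correct, but it takes a genuinely different route from the paper's. The paper anchors the line on the circumcenter $O$ (center function $\cos A$) and the Bevan point itself (center function $\cos B+\cos C-1-\cos A$), observes that $P=B-(k-1)O$ is a linear combination with \emph{constant} coefficients, and cites Theorem~\ref{thm:lines} in one line. You instead anchor on the incenter and the circumcenter, writing the family as $\lambda\cdot 1-(k+1)\cos A$ with $\lambda=\cos A+\cos B+\cos C-1=r/R$. That decomposition is correct, and your consistency checks at $k=1$ and $k=-1$ both work, but it costs you exactly the complication you flag: $\lambda$ is not a numerical constant, so a literal reading of Theorem~\ref{thm:lines} does not apply and you must fall back on the vanishing-determinant argument (or on the paper's remark that equivalent center functions---those whose ratio is a cyclic function---give the same center, which lets you replace the incenter's function $1$ by $\lambda$). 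Your handling of that point is sound: for a fixed triangle $\lambda$ is a scalar, the first row of the determinant is $\lambda$ times the second minus $(k+1)$ times the third, and the symmetry of $\lambda$ in $a,b,c$ guarantees the same relation holds coordinatewise after cycling. What your version buys is a more transparent picture---it exhibits the whole family as the pencil through $X_1$ and $X_3$ and makes explicit why the Bevan line is the $OI$ line; what the paper's version buys is brevity, since choosing the Bevan point as an anchor keeps every coefficient constant and Theorem~\ref{thm:lines} applies verbatim.
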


\begin{proof}
The \emph{Bevan line} of a triangle (also known as the $OI$ line) is the line joining the incenter,
the circumcenter, and the Bevan point.
From \cite{ETC}, we find that a center function for the circumcenter
is $O=\cos A$ and a center function for the Bevan point is $B=\cos B+\cos C-1-\cos A$.
Let $P=\cos B+\cos C-1-k\cos A$.
Since $P=B-(k-1)O$, $P$ is a linear combination of $O$ and $P$ and
our result follows from Theorem~\ref{thm:lines}.
\end{proof}

\newpage
\begin{theorem}
\label{thm:equirR}
If the reference quadrilateral is equidiagonal, the central quadrilateral is orthodiagonal whenever the chosen center has center function of the form
$$\frac{r}{R}+k\cos A.$$
\end{theorem}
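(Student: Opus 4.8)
The plan is to reduce this theorem to Theorem~\ref{thm:equiy}, which already handles center functions of the form $\cos B+\cos C-1-k\cos A$, by means of the standard identity relating $r/R$ to the cosines of the angles. So the ``proof'' is really a one-line algebraic rewriting together with a check that it is legitimate at the level of center functions.

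First I would invoke Euler's identity $\cos A+\cos B+\cos C=1+r/R$, valid for the angles of any triangle (equivalently Carnot's theorem, or $r=4R\sin(A/2)\sin(B/2)\sin(C/2)$ combined with the product formula for the sum of cosines). Using it,
\[
\frac{r}{R}+k\cos A=(\cos A+\cos B+\cos C-1)+k\cos A=\cos B+\cos C-1+(k+1)\cos A .
\]
Before appealing to Theorem~\ref{thm:equiy} I would also observe that this is an identity of \emph{center functions}, not merely a numerical coincidence: $r/R$ is a fully symmetric function of $a,b,c$ (write $r=\mathrm{Area}/s$ and $R=abc/(4\,\mathrm{Area})$ in side lengths), and $\cos A=(b^2+c^2-a^2)/(2bc)$ is symmetric in $b$ and $c$, so $r/R+k\cos A$ is a genuine center function and the displayed rewriting is valid identically in $a,b,c$.

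The right-hand side is exactly the family treated in Theorem~\ref{thm:equiy}, with that theorem's parameter set equal to $-(k+1)$; since $k$ ranges over all constants, so does $-(k+1)$. Hence every center with center function $r/R+k\cos A$ already falls under Theorem~\ref{thm:equiy}, and therefore for an equidiagonal reference quadrilateral the central quadrilateral $FGHI$ is orthodiagonal, as claimed. (Equivalently, one may phrase this through Theorem~\ref{thm:BevanLine}: the same computation shows such a center lies on the Bevan/$OI$ line, whence orthodiagonality follows from the equidiagonal analysis of that line.)

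There is no substantive obstacle here; the only points requiring care are the bookkeeping of constants and the verification that $r/R=\cos A+\cos B+\cos C-1$ holds identically once the cosines are expressed via the Law of Cosines, so that the equivalence of the two center functions is exact rather than approximate.
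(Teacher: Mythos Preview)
Your proof is correct and follows essentially the same approach as the paper: both reduce to Theorem~\ref{thm:equiy} by showing that $\tfrac{r}{R}+k\cos A$ and $\cos B+\cos C-1-k'\cos A$ are equivalent center functions with $k'=-(k+1)$. The only difference is stylistic: you invoke Euler's identity $\cos A+\cos B+\cos C=1+r/R$ directly, whereas the paper substitutes the side-length formulas for $r$, $R$, and the cosines and verifies that the ratio of the two center functions simplifies to~$1$.
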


\begin{proof}
Using the rules
\begin{equation}
\label{rules}
\begin{aligned}
\cos A&=\frac{b^2 + c^2 - a^2}{2bc}\\
\cos B&=\frac{c^2 + a^2 - b^2}{2ca}\\
\cos C&=\frac{a^2 + b^2 - c^2}{2ab}\\
R&=\frac{abc}{4K}\\
r&=\frac{K}{s}\\
K&=\sqrt{s(s-a)(s-b)(s-c)}\\
s&=\frac{a+b+c}{2}
\end{aligned}
\end{equation}
we can remove $A$, $B$, $C$, $r$, and $R$ from the expression
$$\frac{\cos B+\cos C-1-k\cos A}{\frac{r}{R}+t\cos A}.$$
When $t=-(k+1)$, the expression simplifies to 1.
This means that $$\cos B+\cos C-1-k\cos A\quad \hbox{and}\quad\frac{r}{R}+t\cos A$$ are equivalent center functions.
Thus, this theorem follows from Theorem \ref{thm:equiy}.
\end{proof}

\begin{theorem}
\label{thm:equirR2}
If the reference quadrilateral is equidiagonal, the central quadrilateral is orthodiagonal whenever the chosen center has center function of the form
$$\frac{r}{R}+k\sin B\sin C.$$
\end{theorem}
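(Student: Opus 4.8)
The plan is to imitate the proof of Theorem \ref{thm:equirR}: I would show that every center function of the form $r/R+k\sin B\sin C$ is equivalent (in the paper's sense: ratio a cyclic function) to a center function already known, by an earlier theorem, to force the central quadrilateral to be orthodiagonal. The right target is the Spieker-line family $\frac{b+c}{a}+k$ of Theorem \ref{thm:equi}.

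The first step is to rewrite $r/R+k\sin B\sin C$ in terms of the side lengths $a$, $b$, $c$. Adjoining to the rules in (\ref{rules}) the Law of Sines $\sin A=\frac{a}{2R}$, $\sin B=\frac{b}{2R}$, $\sin C=\frac{c}{2R}$, one finds $\sin B\sin C=\frac{bc}{4R^2}=\frac{4K^2}{a^2bc}$ and $\frac{r}{R}=\frac{4K^2}{abc\,s}$, so that
$$\frac{r}{R}+k\sin B\sin C=\frac{4K^2}{abc}\left(\frac1s+\frac ka\right).$$
Since $\frac{4K^2}{abc}$ and $\frac1s$ are symmetric — hence cyclic — functions of $a$, $b$, $c$, this center function is equivalent to $1+\frac{ks}{a}$. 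Writing $s=\frac{a+b+c}{2}$ and assuming $k\neq 0$, this is $\frac k2\bigl(\frac{b+c}{a}+\frac{k+2}{k}\bigr)$, a nonzero constant multiple of $\frac{b+c}{a}+k'$ with $k'=\frac{k+2}{k}$. Equivalently, and in the form most convenient for a machine check, using (\ref{rules}) and the Law of Sines one clears $A$, $B$, $C$, $r$, $R$ from
$$\frac{\dfrac{r}{R}+k\sin B\sin C}{\dfrac{b+c}{a}+t}$$
and verifies that for $t=\frac{k+2}{k}$ it reduces to the cyclic function $\frac{2kK^2}{abc\,s}$. Then Theorem \ref{thm:equi} finishes the case $k\neq 0$.

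The value $k=0$ must be handled on its own, since the reduction above divides by $k$. But there the center function is just $r/R$, which by the classical identity $\cos A+\cos B+\cos C=1+\frac rR$ (itself derivable from (\ref{rules})) equals $\cos B+\cos C-1-(-1)\cos A$, i.e.\ the $k=-1$ instance of the family in Theorem \ref{thm:equiy}; so the central quadrilateral is orthodiagonal here too. (This is just the incenter $X_1$; alternatively one may quote Theorem \ref{thm:equiI} together with the fact that the bimedians of an equidiagonal quadrilateral are the perpendicular diagonals of its rhombic Varignon parallelogram.)

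The proof has no genuine obstacle — it is a short chain of equivalences of center functions plus one easy degenerate case. The only points requiring care are checking that each cancelled factor really is cyclic (here they are fully symmetric in $a$, $b$, $c$, so the manipulations respect the equivalence relation), getting the constant $k'=\frac{k+2}{k}$ right, and not forgetting $k=0$. As with the other equidiagonal theorems, the final symbolic verification is cleanest in Mathematica.
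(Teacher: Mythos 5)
Your proof is correct. The paper states Theorem \ref{thm:equirR2} without an in-text proof (deferring to its Mathematica notebooks), but your argument is exactly the template the authors use for the sibling Theorem \ref{thm:equirR}: rewrite the center function via the rules in (\ref{rules}), exhibit a cyclic-function ratio to a previously handled family, and cite the earlier theorem. The algebra checks out --- one gets $\frac{r}{R}+k\sin B\sin C=\frac{2kK^2}{abcs}\left(\frac{b+c}{a}+\frac{k+2}{k}\right)$ with a symmetric, hence cyclic, prefactor, so Theorem \ref{thm:equi} applies for $k\neq 0$ --- and you rightly separate the $k=0$ case, where the center degenerates to the incenter $X_1$ and the conclusion already follows from Theorem \ref{thm:general1} (or from Theorem \ref{thm:equiy} with $k=-1$ via $\cos A+\cos B+\cos C=1+\tfrac{r}{R}$).
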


\subsection{Orthodiagonal Quadrilaterals}

\begin{lemma}
\label{lemma:angleBisector}
The condition for a triangle center with center function $f(x,y,z)$ to lie
on the angle bisector at vertex $A$ in right triangle $ABC$ (with right angle at $A$) is
$$f(x,y,z)=f(y,x,z)$$
for all $x$, $y$, and $z$ satisfying $x^2+y^2=z^2$.
\end{lemma}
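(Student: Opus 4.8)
The plan is to translate the geometric condition ``the center lies on the bisector at $A$'' into a statement about trilinear coordinates and then unwind it using the symmetry built into the notion of a center function. Recall that if a point has trilinear coordinates $(\alpha:\beta:\gamma)$ — directed distances to the sides $BC$, $CA$, $AB$ respectively — then the angle bisector at vertex $A$ is the line with equation $\beta=\gamma$, since a point of that line is equidistant from the two sides $CA$ and $AB$ that meet at $A$. By the definition of a triangle center recalled in Section \ref{section:centers}, the center determined by the center function $f$ has trilinear coordinates $f(a,b,c):f(b,c,a):f(c,a,b)$, where $a$, $b$, $c$ are the side lengths opposite $A$, $B$, $C$. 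Hence this center lies on the bisector at $A$ exactly when $f(b,c,a)=f(c,a,b)$.

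Next I would exploit the fact that a center function is symmetric in its second and third arguments, so that $f(c,a,b)=f(c,b,a)$. The bisector condition then becomes $f(b,c,a)=f(c,b,a)$: the value of $f$ is unchanged when its first two arguments are interchanged while the third is held fixed. Writing $x=b$, $y=c$, $z=a$, this is precisely $f(x,y,z)=f(y,x,z)$. Moreover, the right angle at $A$ is equivalent to $a^2=b^2+c^2$, which under the same relabeling reads $z^2=x^2+y^2$, i.e. $x^2+y^2=z^2$. This produces exactly the form claimed in the lemma.

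Finally I would deal with the quantifier in both directions. If the center lies on the bisector at $A$ for every right triangle with its right angle at $A$, then the identity $f(x,y,z)=f(y,x,z)$ holds for every triple $(x,y,z)$ that is a side-length triple of such a triangle; every triple of positive reals with $x^2+y^2=z^2$ arises this way (as the two legs and hypotenuse of a right triangle), and homogeneity of $f$ handles the scaling, so the identity holds on the whole surface $x^2+y^2=z^2$. Conversely, if $f(x,y,z)=f(y,x,z)$ whenever $x^2+y^2=z^2$, then reading the chain of equalities above in reverse gives $f(b,c,a)=f(c,a,b)$ for every right triangle with right angle at $A$, so the center lies on the corresponding bisector. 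The computations here are trivial; the only thing demanding care is the bookkeeping — keeping track of which trilinear coordinate measures the distance to which side (so that the bisector at $A$ is $\beta=\gamma$ and not some other pair), that the symmetry of a center function is in its last two slots, and that the correct relabeling is $x=b$, $y=c$, $z=a$ so that the Pythagorean constraint lands on $z$.
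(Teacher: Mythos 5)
Your proof is correct and follows essentially the same route as the paper: express the center in trilinear coordinates, note that lying on the bisector at $A$ means the second and third coordinates are equal, invoke the symmetry of a center function in its last two arguments, and relabel $x=b$, $y=c$, $z=a$ so the right-angle condition becomes $x^2+y^2=z^2$. Your extra care with the quantifiers and the surjectivity of Pythagorean triples onto the constraint surface is a small refinement the paper leaves implicit, but the argument is the same.
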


\begin{proof}
Let $P$ be a point on the angle bisector of $\angle A$ in right triangle $ABC$
as shown in Figure \ref{fig:angleBisectorCondition}.
Suppose a center function for $P$ is $f(x,y,z)$ so that the trilinear coordinates of $P$ are
$$P=\bigl(f(a,b,c):f(b,c,a):f(c,a,b)\bigr).$$

\begin{figure}[h!t]
\centering
\includegraphics[width=0.6\linewidth]{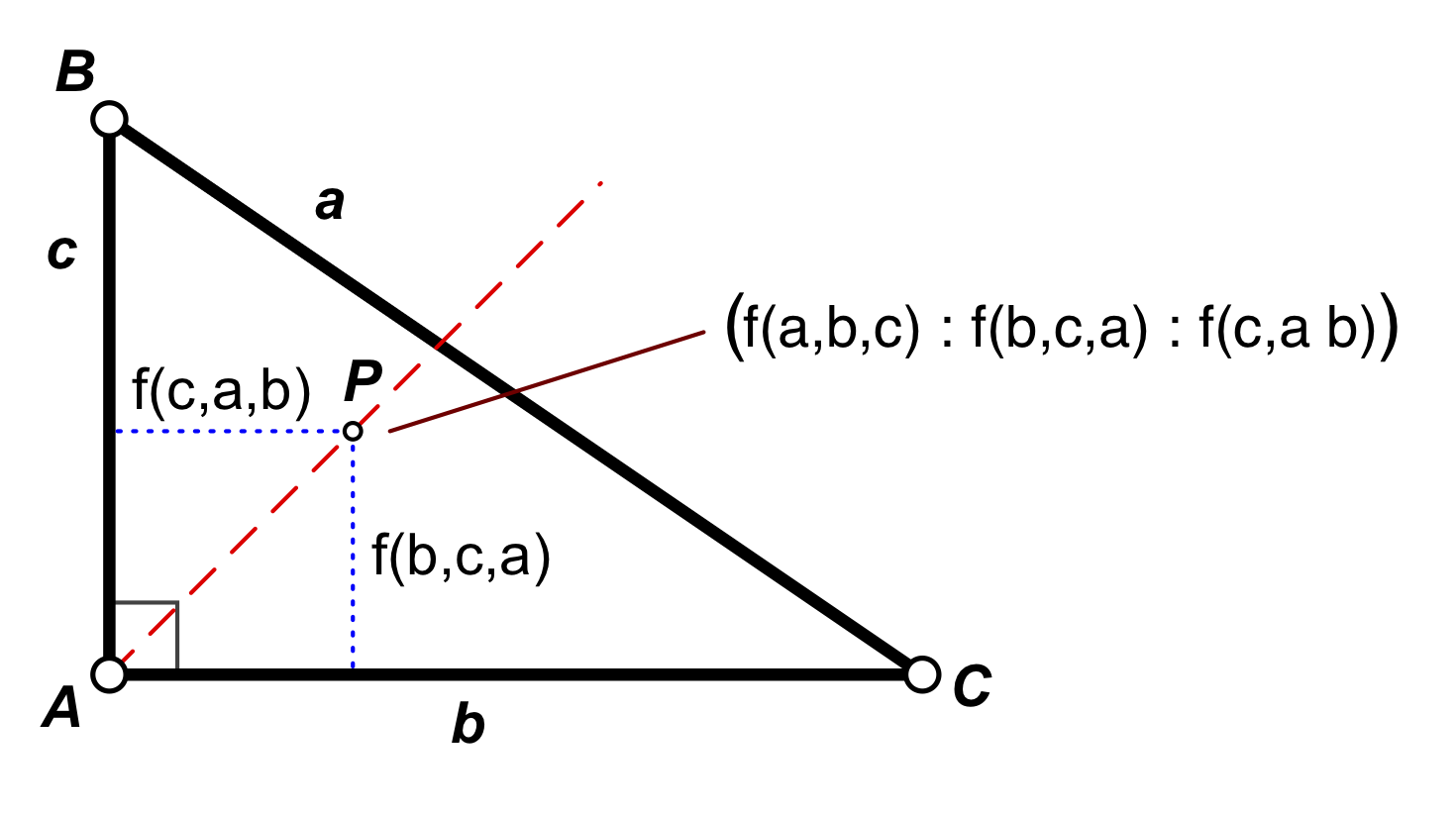}
\caption{$P$ lies on bisector of $\angle A$ in right triangle $ABC$}
\label{fig:angleBisectorCondition}
\end{figure}

The second trilinear coordinate is proportional to the distance from $P$ to $AC$
and the third trilinear coordinate is proportional to the distance from $P$ to $AB$.
Since $P$ is on the angle bisector of $\angle A$, it is equidistant from $AC$ and $AB$.
Thus, $$f(b,c,a)=f(c,a,b)$$ for all $a$, $b$, and $c$ satisfying $a^2=b^2+c^2$.
But, by definition, a center function is symmetric in its 2nd and 3rd arguments, so
$$f(c,a,b)=f(c,b,a)$$ for all $a$, $b$ and $c$. Therefore,
$$f(b,c,a)=f(c,b,a)$$
for all $a$, $b$ and $c$ satisfying $a^2=b^2+c^2$.
Switching notation from $a,b,c$ to $x,y,z$ gives that the condition is that
$f(x,y,z)=f(y,x,z)$
for all $x$, $y$, and $z$ satisfying $x^2+y^2=z^2$.
\end{proof}

\begin{theorem}
\label{thm:normal}
Let $f(x,y,z)$ be a polynomial center function such that
\begin{equation}
\label{cond:perp}
f(a,b,c)-f(b,a,c)\quad\hbox{has}\quad a^2+b^2-c^2\quad\hbox{as a factor}.
\end{equation}
If the reference quadrilateral is orthodiagonal, then the central quadrilateral
is orthodiagonal whenever the chosen center has center function $f$.
\end{theorem}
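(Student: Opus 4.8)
The plan is to give a synthetic argument in the spirit of the proof of Theorem~\ref{thm:general1}: I would show that each of the four centers $F$, $G$, $H$, $I$ lies on the (internal) bisector of one of the four angles at the diagonal point $E$, and then exploit two elementary facts about a pencil of lines through $E$ — that two vertical angles have a common bisector line, and that two adjacent angles which together make a straight angle have perpendicular bisectors.

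First I would note that, since $ABCD$ is orthodiagonal, the diagonals $AC$ and $BD$ meet at right angles, so each quarter triangle $\triangle ABE$, $\triangle BCE$, $\triangle CDE$, $\triangle DAE$ is right-angled at $E$. Next I would translate the hypothesis~\eqref{cond:perp} into the hypothesis of Lemma~\ref{lemma:angleBisector}: writing $f(a,b,c)-f(b,a,c)=(a^2+b^2-c^2)\,g(a,b,c)$ shows that $f(x,y,z)=f(y,x,z)$ whenever $x^2+y^2=z^2$. Applying Lemma~\ref{lemma:angleBisector} to $\triangle ABE$ — relabelled as $\triangle EAB$ so that the right angle sits at the first listed vertex, which is legitimate because a center function is symmetric in its last two arguments and the resulting triangle center is the same point — we conclude that $F$ lies on the internal bisector of $\angle AEB$. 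The same argument applied to the other three quarter triangles places $G$, $H$, $I$ on the internal bisectors of $\angle BEC$, $\angle CED$, $\angle DEA$ respectively.

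To finish, I would use that $A$, $E$, $C$ are collinear and $B$, $E$, $D$ are collinear. Then $\angle AEB$ and $\angle CED$ are vertical angles, so their internal bisectors are opposite rays and hence lie on a single line through $E$; this forces $F$, $E$, $H$ to be collinear, so the diagonal $FH$ of the central quadrilateral passes through $E$, and likewise $GI$ passes through $E$. Since $\angle AEB+\angle BEC=180\degrees$, the bisector of $\angle AEB$ and the bisector of $\angle BEC$ meet at $E$ at an angle $\tfrac12\angle AEB+\tfrac12\angle BEC=90\degrees$; therefore $FH\perp GI$, i.e. $FGHI$ is orthodiagonal.

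I do not expect a serious obstacle: unlike Theorem~\ref{thm:equi}, this statement has a clean purely geometric proof requiring no symbolic computation. The only points needing a little care are (i) invoking Lemma~\ref{lemma:angleBisector} at the correct vertex of each quarter triangle, which is handled by the relabelling remark above (one could alternatively restate the lemma for an arbitrary vertex), and (ii) the usual tacit non-degeneracy assumptions — the quarter triangles are genuine because $ABCD$ is convex, and one discards the measure-zero cases in which a center lands on $E$. It is worth remarking that orthodiagonality of $ABCD$ enters only to make each quarter triangle right-angled at $E$; the ``diagonals pass through $E$'' and ``bisectors are perpendicular'' steps hold for any convex reference quadrilateral.
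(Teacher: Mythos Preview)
Your proposal is correct and follows essentially the same route as the paper: invoke Lemma~\ref{lemma:angleBisector} to place each of $F,G,H,I$ on the bisector of the corresponding right angle at $E$, and then observe that these four bisector rays form two perpendicular lines through $E$. The paper's proof is terser --- it only looks at two adjacent quarter triangles and notes that each half-angle is $45\degrees$, hence the two rays from $E$ are perpendicular --- whereas you additionally spell out the collinearity of $F,E,H$ (and of $G,E,I$) via the vertical-angle argument, which the paper leaves implicit in the concluding remark that the two quadrilaterals share a diagonal point. Your closing observation that the ``bisectors are perpendicular'' step works for any convex quadrilateral is exactly the mechanism behind Theorem~\ref{thm:general1}; it is a nice way to isolate precisely where orthodiagonality is used.
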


\begin{proof}
If $Q$ is the center corresponding to $\triangle AEB$, then by Lemma \ref{lemma:angleBisector},
$EQ$ bisects $\angle AEB$. Similarly, if $P$ is the center corresponding to $\triangle AED$,
then $EP$ bisects $\angle AED$ (Figure \ref{fig:perpAngleBisectors}).

\begin{figure}[h!t]
\centering
\includegraphics[width=0.4\linewidth]{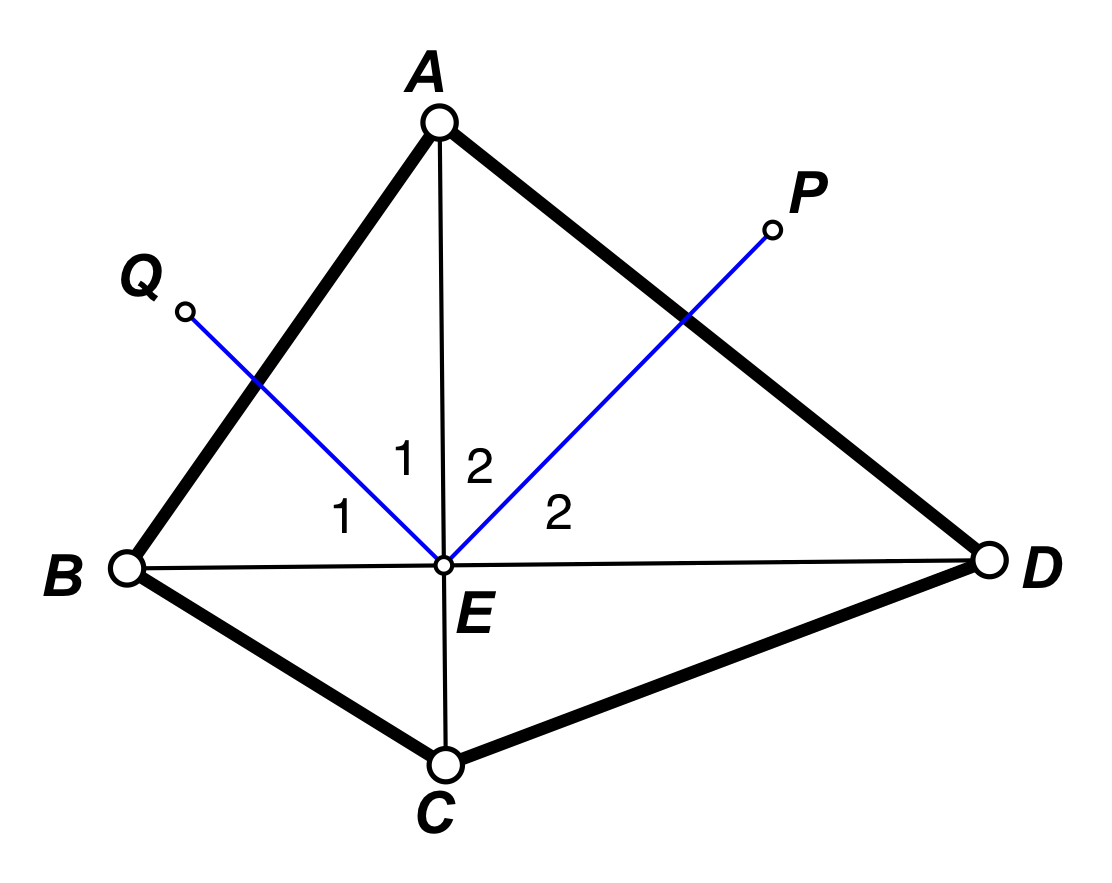}
\caption{orthodiagonal quadrilateral with two angle bisectors}
\label{fig:perpAngleBisectors}
\end{figure}

Then we see that angles 1 and 2 both have measure $45\degrees$, so $EQ\perp EP$.
Thus, the central quadrilateral is orthodiagonal and has the same diagonal point
as the reference quadrilateral.
\end{proof}

Let us call a function $f$ that has the form specified by Theorem \ref{thm:normal}
a \emph{normal center function}. We may also say that $f$ is \emph{normal}.

The following lemma should be obvious.

\begin{lemma}
\label{lemma:normal}
If $f$ is normal, then so is $f+k$, where $k$ is a constant.
If $f$ is normal, then so is $kf$, where $k$ is a constant.
If $f$ is normal, then so is $f^n$, where $n$ is a positive integer.
If $f$ and $g$ are normal, then so is $f+g$.
\end{lemma}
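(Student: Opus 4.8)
The statement is a purely formal consequence of the fact that the defining property of ``normal'' --- namely that $f(a,b,c)-f(b,a,c)$ is divisible by $a^2+b^2-c^2$ in the polynomial ring $\mathbb{R}[a,b,c]$ --- is stable under the relevant algebraic operations. The plan is to introduce, for a polynomial center function $f$, the auxiliary polynomial
$$\Delta f \;=\; f(a,b,c)-f(b,a,c),$$
so that $f$ is normal precisely when $a^2+b^2-c^2\mid\Delta f$.

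First I would observe that $f\mapsto\Delta f$ is $\mathbb{R}$-linear: $\Delta(f+g)=\Delta f+\Delta g$ and $\Delta(kf)=k\,\Delta f$ for any constant $k$. Consequently, if $a^2+b^2-c^2$ divides both $\Delta f$ and $\Delta g$ then it divides their sum $\Delta(f+g)$, and it divides $k\,\Delta f=\Delta(kf)$; this disposes of the claims for $f+g$ and for $kf$. For the constant shift $f+k$, the constant simply cancels in the difference, $\Delta(f+k)=\bigl(f(a,b,c)+k\bigr)-\bigl(f(b,a,c)+k\bigr)=\Delta f$, so $f+k$ is normal exactly when $f$ is.

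The only case needing a line of computation is $f^n$. Writing $u=f(a,b,c)$ and $v=f(b,a,c)$ as elements of $\mathbb{R}[a,b,c]$, the standard factorization gives
$$\Delta(f^n)\;=\;u^n-v^n\;=\;(u-v)\bigl(u^{n-1}+u^{n-2}v+\cdots+v^{n-1}\bigr)\;=\;\Delta f\cdot\bigl(u^{n-1}+\cdots+v^{n-1}\bigr).$$
Since $a^2+b^2-c^2$ divides $\Delta f$, and divisibility in a commutative ring is preserved under multiplication by an arbitrary element, $a^2+b^2-c^2$ divides $\Delta(f^n)$; hence $f^n$ is normal. (Combining these, any polynomial built from normal functions and constants by sums, scalar multiples, and powers is again normal.)

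I do not anticipate a real obstacle. The one point worth a remark is that ``has $a^2+b^2-c^2$ as a factor'' should be read as honest divisibility in $\mathbb{R}[a,b,c]$, so that the factor theorem manipulations above apply verbatim; this is harmless because $a^2+b^2-c^2$ is irreducible over $\mathbb{R}$ (its real zero set is a smooth, hence Zariski-dense, quadric), so the ideal it generates is prime and the condition of vanishing on the surface $a^2+b^2=c^2$ is equivalent to divisibility anyway.
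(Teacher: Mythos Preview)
Your proof is correct and follows essentially the same approach as the paper: both verify directly that the difference $f(a,b,c)-f(b,a,c)$ behaves well under each operation, using in particular the factorization $u^n-v^n=(u-v)(u^{n-1}+\cdots+v^{n-1})$ for the power case. Your introduction of the operator $\Delta f$ and the closing remark on irreducibility of $a^2+b^2-c^2$ are pleasant additions but not substantively different from what the paper does.
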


\begin{proof}
Let $g=f+k$. Then $g(a,b,c)-g(b,a,c)=f(a,b,c)-f(b,a,c)$.
Let $E=kf$. Then $E(a,b,c)-E(b,a,c)=k(f(a,b,c)-f(b,a,c))$.
If $h=f^n$, note that $h(a,b,c)-h(b,a,c)=f(a,b,c)^n-f(b,a,c)^n$ is 
divisible by $f(a,b,c)-f(b,a,c)$ which is divisible by $a^2+b^2-c^2$.
If $F=f+g$, note that $F(a,b,c)-F(b,a,c)=f(a,b,c)-f(b,a,c)+g(a,b,c)-g(b,a,c)$
which is divisible by $a^2+b^2-c^2$.
\end{proof}

\begin{theorem}
\label{thm:normal1}
If the reference quadrilateral is orthodiagonal, the central quadrilateral
is orthodiagonal whenever the chosen center has center function of the form
$$\cos B\cos C+k$$
for some constant $k$.
\end{theorem}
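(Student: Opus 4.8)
The plan is to realize the trigonometric center function $\cos B\cos C+k$ as an honest \emph{polynomial} center function and then appeal to Theorem~\ref{thm:normal}. By the Law of Cosines,
$$\cos B\cos C+k=\frac{(a^2+c^2-b^2)(a^2+b^2-c^2)}{4a^2bc}+k,$$
so the center in question has trilinear coordinates equal to this expression, followed by its two cyclic permutations. Multiplying all three coordinates by the symmetric (hence cyclic) function $4a^2b^2c^2$ — which does not change the center — converts them to polynomials; the first becomes
$$f(a,b,c)=bc\,(a^2+c^2-b^2)(a^2+b^2-c^2)+4k\,a^2b^2c^2,$$
a polynomial that is homogeneous of degree $6$ and symmetric in $b$ and $c$, hence a legitimate center function.

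Next I would verify that $f$ is \emph{normal}, i.e.\ that $f(a,b,c)-f(b,a,c)$ has $a^2+b^2-c^2$ as a factor, which is exactly condition~\eqref{cond:perp}. The monomial $4k\,a^2b^2c^2$ is fixed under the swap $a\leftrightarrow b$, so it cancels in the difference; and each of the two remaining terms already carries the factor $a^2+b^2-c^2$ (written $b^2+a^2-c^2$ in $f(b,a,c)$). Thus
$$f(a,b,c)-f(b,a,c)=c\,(a^2+b^2-c^2)\bigl[b(a^2+c^2-b^2)-a(b^2+c^2-a^2)\bigr],$$
so $f$ is a polynomial normal center function. Since the reference quadrilateral is orthodiagonal, Theorem~\ref{thm:normal} then gives at once that the central quadrilateral is orthodiagonal — in fact with the same diagonal point $E$ as the reference quadrilateral.

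There is no real obstacle here; the computation is short. The only thing worth remarking is why the factor $a^2+b^2-c^2$ turns up automatically: the polynomial form of $\cos C$ is proportional to $a^2+b^2-c^2$, which vanishes precisely on the locus where $c$ is the hypotenuse of a right triangle, so $\cos B\cos C$ — and therefore $f$ apart from its additive $4k\,a^2b^2c^2$ — vanishes there, forcing $f$ to be symmetric in its first two arguments on that locus. Equivalently, by Lemma~\ref{lemma:angleBisector} the center associated with $\cos B\cos C+k$ lies, in each quarter triangle, on the bisector of the right angle at $E$, which is precisely the input used in the proof of Theorem~\ref{thm:normal}.
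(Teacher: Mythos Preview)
Your proof is correct and follows essentially the same route as the paper: convert $\cos B\cos C+k$ to a polynomial center function via the Law of Cosines (the paper writes it as $bc\bigl(a^4-(b^2-c^2)^2\bigr)$, which is the same as your $bc(a^2+c^2-b^2)(a^2+b^2-c^2)$), check that $f(a,b,c)-f(b,a,c)$ carries the factor $a^2+b^2-c^2$, and invoke Theorem~\ref{thm:normal}. The only cosmetic difference is that the paper first strips off the additive constant $k$ by appealing to Lemma~\ref{lemma:normal}, whereas you keep the $4k\,a^2b^2c^2$ term and observe directly that it is symmetric in $a,b$ and hence cancels in the difference; both arguments yield the identical factorization $c(a-b)(a+b-c)(a+b+c)(a^2+b^2-c^2)$.
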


\begin{proof}
By Lemma \ref{lemma:normal}, it suffices to prove this for center function $\cos B\cos C$.
Applying transformation set (\ref{rules}), we find that 
$\cos B\cos C$ is equivalent to the center function
$$\frac{a^4-\left(b^2-c^2\right)^2}{4 a^2 b c}.$$
Multiplying this by the cyclic function $4a^2b^2c^2$ shows this to be equivalent to the 
center function
$$f(a,b,c)=bc(a^4-\left(b^2-c^2\right)^2).$$
A straightfoward calculation shows that
$$f(a,b,c)-f(b,a,c)=c (a-b) (a+b-c) (a+b+c) \left(a^2+b^2-c^2\right),$$
which has the factor $a^2+b^2-c^2$.
Thus, $f$ is normal and the central quadrilateral is orthodiagonal by Theorem \ref{thm:normal}.
\end{proof}

\void{
Some centers that have center functions of this form are $X_4$ ($\cos A\cos B$),
$X_{388}$ ($\cos A\cos B+1$), and $X_{497}$ ($\cos A\cos B-1$).
}

\begin{theorem}
\label{thm:normalPlus}
Let $f(x,y,z)$ be a polynomial center function such that
\begin{equation}
\label{cond:perp2}
f(a,b,c)+f(b,a,c)\quad\hbox{has}\quad a^2+b^2-c^2\quad\hbox{as a factor}.
\end{equation}
If the reference quadrilateral is orthodiagonal, then the central quadrilateral
is orthodiagonal whenever the chosen center has center function $f$.
\end{theorem}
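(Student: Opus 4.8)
The plan is to run the proof of Theorem~\ref{thm:normal} essentially verbatim, with every \emph{internal} angle bisector replaced by the corresponding \emph{external} one and a single sign flip carried through.

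First I would establish the external-bisector counterpart of Lemma~\ref{lemma:angleBisector}: a triangle center with center function $f(x,y,z)$ lies on the \emph{external} bisector at vertex $A$ of a right triangle $ABC$ with right angle at $A$ if and only if $f(x,y,z)=-f(y,x,z)$ for all $x,y,z$ with $x^2+y^2=z^2$. The proof copies that of Lemma~\ref{lemma:angleBisector} word for word except for one observation: a point $P$ on the external bisector of $\angle A$ is still equidistant from the lines $AB$ and $AC$, but now lies on opposite sides of them, so the second and third trilinear coordinates of $P$ (proportional to the signed distances to $AC$ and to $AB$) are negatives of each other. Hence $f(b,c,a)=-f(c,a,b)$ whenever $a^2=b^2+c^2$; combining with the symmetry $f(c,a,b)=f(c,b,a)$ of any center function and performing the same change of variables as in Lemma~\ref{lemma:angleBisector} yields $f(x,y,z)=-f(y,x,z)$ on $\{x^2+y^2=z^2\}$.

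Next I would observe that hypothesis~(\ref{cond:perp2}) says exactly that $f(x,y,z)+f(y,x,z)$ is divisible by $x^2+y^2-z^2$, hence that $f(x,y,z)=-f(y,x,z)$ whenever $x^2+y^2=z^2$ --- the hypothesis of the lemma just proved (only the easy, ``divisibility implies vanishing'' direction is needed). Now let $ABCD$ be the orthodiagonal reference quadrilateral with diagonal point $E$. Because the diagonals are perpendicular, each of the four quarter triangles $\triangle AEB$, $\triangle BEC$, $\triangle CED$, $\triangle DEA$ has its right angle at $E$, so by the lemma the chosen centers $F$, $G$, $H$, $I$ lie on the external bisectors of $\angle AEB$, $\angle BEC$, $\angle CED$, $\angle DEA$ respectively. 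The external bisectors of a pair of vertical angles coincide (each equals the internal bisector of either adjacent angle), so $F$ and $H$ lie on a common line $\ell$ through $E$ while $G$ and $I$ lie on a common line $\ell'$ through $E$; and since the external bisector of an angle is perpendicular to its internal bisector, $\ell\perp\ell'$. Therefore the diagonals $FH$ and $GI$ of the central quadrilateral are perpendicular and meet at $E$, so $FGHI$ is orthodiagonal with the same diagonal point as the reference quadrilateral --- in exact parallel with Theorem~\ref{thm:normal}.

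The only delicate point --- the ``hard part'', such as it is --- is the sign bookkeeping in the external-bisector lemma: I must verify that a point on the external bisector genuinely has equal-magnitude but \emph{opposite}-signed trilinear coordinates for the two sides through the vertex (so that the minus sign, and not some vacuous condition, is what appears), and keep the change of variables exactly consistent with Lemma~\ref{lemma:angleBisector} so that hypothesis~(\ref{cond:perp2}) matches on the nose. Everything after that lemma is a transcription of the proof of Theorem~\ref{thm:normal}.
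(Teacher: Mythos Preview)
Your proposal is correct and follows exactly the approach the paper indicates: the paper's own proof simply says ``The proof is the same as the proof of Theorem~\ref{thm:normal} except that a variant of Lemma~\ref{lemma:angleBisector} is used to show when a point lies on the external angle bisector of the right angle in a right triangle. The details are omitted.'' You have supplied precisely those omitted details --- the sign-flipped external-bisector lemma and the observation that external bisectors of vertical angles coincide --- so there is nothing to add.
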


\begin{proof}
The proof is the same as the proof of Theorem \ref{thm:normal} except that a variant of
Lemma \ref{lemma:angleBisector} is used to show when a point lies on the external angle bisector of
the right angle in a right triangle. The details are omitted.
\end{proof}

Using the same proof technique that was used in the proof of Theorem \ref{thm:normal1},
combined with either Theorem \ref{thm:normal} or Theorem \ref{thm:normalPlus},
we can prove the following theorem.

\begin{theorem}
\label{thm:normal2}
If the reference quadrilateral is orthodiagonal, the central quadrilateral
is orthodiagonal whenever the chosen center has center function of one of the 
following forms.
$$\sec A$$
$$\sec B$$
$$\tan A$$
$$\sin 2A$$
$$\cos B\cos C$$
$$\tan B+\tan C$$
$$\sec B+\sec C$$
$$\sec B-\sec C$$
$$\cos 2A$$
$$\cos B+\cos C-\cos A$$
$$1/(\cos B+\cos C-\cos A)$$
$$\sec 2A$$
$$\sin 4A$$
$$\tan B$$
$$\tan B-\tan C$$
\end{theorem}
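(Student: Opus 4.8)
The plan is to reduce the whole statement, via Theorems~\ref{thm:normal} and~\ref{thm:normalPlus}, to a short list of polynomial divisibility checks. For each of the fifteen center functions I would use the substitution rules~(\ref{rules}) to eliminate $A$, $B$, $C$, $r$, $R$, and $K$, obtaining a rational trilinear in $a$, $b$, $c$, and then multiply through by a suitable cyclic function — one of $abc$, $\cos A\cos B\cos C$, $(b^2+c^2-a^2)(c^2+a^2-b^2)(a^2+b^2-c^2)$, or $a^2b^2c^2$ — to clear denominators and arrive at an equivalent polynomial center function $f(a,b,c)$. This is exactly the device used in the proof of Theorem~\ref{thm:normal1}. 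One then checks which of $f(a,b,c)-f(b,a,c)$ and $f(a,b,c)+f(b,a,c)$ has $a^2+b^2-c^2$ as a factor: in the first case $f$ is normal and Theorem~\ref{thm:normal} finishes the case, in the second $f$ is anti-normal and Theorem~\ref{thm:normalPlus} finishes it.

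Several entries cost almost nothing. The case $\cos B\cos C$ is Theorem~\ref{thm:normal1} itself, and $\sec A$ is equivalent to it (multiply $\sec A:\sec B:\sec C$ by the cyclic function $\cos A\cos B\cos C$ to get $\cos B\cos C:\cos C\cos A:\cos A\cos B$), so it is normal too. Lemma~\ref{lemma:normal} already lets us add normal building blocks and scale or shift them by constants; before starting I would also record the parallel closure facts for the anti-normal property — a sum of anti-normal functions is anti-normal, a normal function times an anti-normal function is anti-normal, and the reciprocal of a normal (resp.\ anti-normal) center function is again normal (resp.\ anti-normal), since after clearing by the cyclic product of its three conjugates the reciprocal of $f$ becomes the product of the other two conjugates of $f$, to which the same factor test applies. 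With these in hand the reciprocal entry $1/(\cos B+\cos C-\cos A)$ follows from the entry $\cos B+\cos C-\cos A$, and $\sec 2A$ follows from $\cos 2A$.

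That leaves a handful of direct factorizations. I expect $\tan A$ and $\sin 2A$ to come out normal: their cleared forms are, up to a cyclic factor, $(a^2+c^2-b^2)(a^2+b^2-c^2)$ and $a^2(b^2+c^2-a^2)$, and in both cases $f(a,b,c)-f(b,a,c)$ factors as a constant times $(a^2-b^2)(a^2+b^2-c^2)$. I expect $\cos 2A$ (hence also $\sec 2A$ and $\sin 4A=2\sin 2A\cos 2A$) to come out anti-normal: the cleared form of $\cos 2A$ is $a^2(a^4+b^4+c^4-2a^2b^2-2a^2c^2)$, for which $f(a,b,c)+f(b,a,c)$ vanishes on $c^2=a^2+b^2$. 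The remaining combinations on the list are dispatched by exactly the same recipe — clear denominators by a cyclic multiplier, apply the $\pm$ factor test, and invoke Theorem~\ref{thm:normal} or Theorem~\ref{thm:normalPlus} accordingly.

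There is no conceptual obstacle: everything rests on Theorems~\ref{thm:normal} and~\ref{thm:normalPlus}, which are already available. The only real care is bookkeeping — choosing the cyclic multiplier so that it clears all three denominators without introducing an extraneous common polynomial factor that could spuriously create or destroy the $a^2+b^2-c^2$ factor in the relevant difference or sum, and correctly sorting each of the fifteen entries into the normal (use Theorem~\ref{thm:normal}) versus anti-normal (use Theorem~\ref{thm:normalPlus}) bucket. As with Theorem~\ref{thm:normal1}, these polynomial identities are routine but tedious and are best confirmed with Mathematica.
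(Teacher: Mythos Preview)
Your proposal is correct and follows essentially the same route as the paper: convert each trigonometric center function to a polynomial one via the rules~(\ref{rules}) and a cyclic multiplier, then apply the divisibility test of Theorem~\ref{thm:normal} or Theorem~\ref{thm:normalPlus}. The closure observations you record (reciprocals, products of normal with anti-normal) are pleasant organizational shortcuts that go slightly beyond what the paper spells out, but they are sound and reduce the case-by-case work without changing the underlying method.
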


Using these forms combined with Lemma \ref{lemma:normal} shows that
the following centers produce orthodiagonal central quadrilaterals
when the given quadrilateral is orthodiagonal.

\begin{center}
\begin{tabular}{|l|l|}\hline
\multicolumn{2}{|c|}{\textbf{A selection of centers that make the}}\\
\multicolumn{2}{|c|}{\textbf{central quadrilateral orthodiagonal}}\\ \hline
\textcolor{blue}{center}&\textcolor{blue}{center function}\\ \hline
$X_{46}$&$\cos B+\cos C-\cos A$\\ \hline
$X_{47}$&$\cos 2A$\\ \hline
$X_{48}$&$\tan B+\tan C$\\ \hline
$X_{73}$&$\sec B+\sec C$\\ \hline
$X_{90}$&$1/(\cos B+\cos C-\cos A)$\\ \hline
$X_{91}$&$\sec 2A$\\ \hline
$X_{223}$&$\sec B+\sec C-\sec A-1$\\ \hline
$X_{388}$&$\cos B\cos C+1$\\ \hline
$X_{497}$&$\cos B\cos C-1$\\ \hline
$X_{563}$&$\sin 4A$\\ \hline
$X_{610}$&$\tan B+\tan C-\tan A$\\ \hline
$X_{652}$&$\sec B-\sec C$\\ \hline
$X_{656}$&$\tan B-\tan C$\\ \hline
$X_{822}$&$(\sec B+\sec C)(\sec B-\sec C)$\\ \hline
\end{tabular}
\end{center}

\begin{theorem}
\label{thm:ortho151}
If the reference quadrilateral is orthodiagonal, the central quadrilateral is equidiagonal if the chosen center is $X_{151}$.
\end{theorem}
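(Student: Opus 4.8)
The plan is to follow the methodology of Section~\ref{section:methodology} with a coordinate system adapted to the orthodiagonal hypothesis. Place the diagonal point $E$ at the origin and let the two diagonals, which are perpendicular, lie along the coordinate axes: $A=(-p,0)$, $C=(q,0)$, $B=(0,-m)$, $D=(0,n)$ with $p,q,m,n>0$ and signs arranged so that $ABCD$ is convex (a scaling lets one set $q=1$, leaving three essential parameters). The point of this setup is that each quarter triangle $\triangle ABE$, $\triangle BCE$, $\triangle CDE$, $\triangle DAE$ is right-angled at $E$ with its legs along the axes, so its side lengths are immediate: for instance $\triangle ABE$ has legs $EA=p$ and $EB=m$ and hypotenuse $AB=\sqrt{p^2+m^2}$, and the other three are obtained by cycling $(p,m,q,n)$.

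Next I would take a center function for $X_{151}$ from \cite{ETC}, use the relations in~(\ref{rules}) to rewrite it purely in terms of side lengths, convert the resulting trilinear coordinates to barycentric coordinates by multiplying the $i$th entry by the $i$th side length, and apply the change-of-coordinates formula of Section~\ref{section:centers} to obtain the Cartesian coordinates of the $X_{151}$ point of each quarter triangle. Call these points $F$, $G$, $H$, $I$ in the order of the triangles around $ABCD$; here one must keep the vertex ordering of each triangle consistent under the rotation of the configuration, since an inconsistent choice would test $FHGI$ rather than $FGHI$. This produces explicit (if bulky) expressions for $F,G,H,I$ as functions of $p,m,n$ and the four radicals $\sqrt{p^2+m^2}$, $\sqrt{m^2+q^2}$, $\sqrt{q^2+n^2}$, $\sqrt{n^2+p^2}$. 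Finally, I would form $FH^2$ and $GI^2$ and check that their difference vanishes identically; this is exactly the statement that the diagonals of $FGHI$ are equal, i.e.\ that $FGHI$ is equidiagonal.

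Two remarks on how to make the final verification tractable, and where the difficulty lies. First, since the angle of each quarter triangle at $E$ is $90\degrees$, any trigonometric form of the $X_{151}$ center function can be pre-simplified using $\cos(\angle E)=0$ and $\sin(\angle E)=1$ before converting to coordinates, which removes many terms; and the cyclic substitution $(p,m,q,n)\mapsto(m,q,n,p)$, which rotates the quadrilateral, sends $F\mapsto G\mapsto H\mapsto I$, so $GI^2$ is obtained from $FH^2$ by this substitution and only one of the two need be computed from scratch. Second, the real obstacle is sheer algebraic bulk: $X_{151}$ has a many-term center function, so after clearing the four square roots the identity $FH^2=GI^2$ is a large polynomial identity in $p,m,n$, and its verification is best delegated to Mathematica, as with the other computational theorems in this paper. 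It is worth noting that, unlike the centers handled by Theorem~\ref{thm:normal}, $X_{151}$ is not a normal center function, so one should not expect $FH\perp GI$; only the equality of the two diagonals survives.
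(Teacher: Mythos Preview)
Your proposal is correct and matches the paper's own treatment: the paper does not print a proof of Theorem~\ref{thm:ortho151} in the text but defers it to the supplementary Mathematica notebooks, and your coordinate setup and verification strategy are exactly the orthodiagonal framework used in the proof of Theorem~\ref{thm:ortho6} (Figure~\ref{fig:orthoCoordinates}). One minor imprecision: the cyclic parameter substitution $(p,m,q,n)\mapsto(m,q,n,p)$ does not literally send the point $F$ to the point $G$ but rather to a $90^\circ$ rotate of $G$; since rotations preserve distances, your conclusion that $GI^2$ is obtained from $FH^2$ by this substitution is nonetheless valid.
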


\begin{theorem}
\label{thm:ortho6}
If the reference quadrilateral is orthodiagonal, the central quadrilateral is cyclic if the chosen center is the symmedian point ($X_{6})$.
\end{theorem}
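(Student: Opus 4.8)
The plan is to set up Cartesian coordinates adapted to the hypothesis: place the diagonal point $E$ at the origin, the diagonal $AC$ along the $x$-axis and $BD$ along the $y$-axis (legitimate precisely because $AC\perp BD$), so that $A=(a,0)$, $C=(c,0)$, $B=(0,b)$, $D=(0,d)$ and each of the four quarter triangles $\triangle ABE$, $\triangle BCE$, $\triangle CDE$, $\triangle DAE$ is right-angled at $E$. The key structural fact I would isolate first is a lemma about the symmedian point of a right triangle: if a triangle has its right angle at a vertex $V$, then the $V$-median and the $V$-altitude are isogonal, so the $V$-symmedian is the $V$-altitude; moreover the symmedian point divides the $V$-symmedian in the ratio (sum of squares of the legs)$\,:\,$(square of the hypotenuse)$\,=1:1$, so it is exactly the midpoint of $V$ and the foot of the altitude from $V$.

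Applying this lemma to the four quarter triangles, the symmedian point $F$ of $\triangle ABE$ is the midpoint of $E$ and the foot $F'$ of the perpendicular from $E$ to line $AB$, and similarly $G$, $H$, $I$ are the midpoints of $E$ with the feet $G'$, $H'$, $I'$ of the perpendiculars from $E$ to $BC$, $CD$, $DA$. Equivalently, $F$, $G$, $H$, $I$ are the images of $F'$, $G'$, $H'$, $I'$ under the homothety $h$ centred at $E$ with ratio $\tfrac12$. Since a homothety carries circles to circles, the theorem reduces to showing that the four pedal points $F'$, $G'$, $H'$, $I'$ are concyclic.

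For that I would use the four circles on diameters $EA$, $EB$, $EC$, $ED$: since $\angle EF'A=\angle EI'A=90\degrees$, the points $A,F',E,I'$ lie on the circle with diameter $EA$, and cyclically for the others. Chasing directed angles modulo $180\degrees$ through these four circles (at each vertex the line to $E$ lies on a diagonal and the line to a pedal point lies on a side), one gets
$$\angle(F'I',F'G')-\angle(H'I',H'G')\equiv 2\,\angle(BD,AC)\pmod{180\degrees},$$
which vanishes precisely because $AC\perp BD$. Hence $F'G'H'I'$ is cyclic, and therefore so is its homothetic image $FGHI$.

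I expect the concyclicity of the pedal points to be the main obstacle: the angle chase is short, but one must be careful about configuration and orientation, which is exactly why I would run it entirely with directed angles mod $180\degrees$. A purely computational fallback is available and short once the reduction to pedal points is in hand: in the coordinates above one has $F'=\dfrac{ab}{a^2+b^2}(b,a)$ and cyclic analogues, and substituting these into the $4\times4$ concyclicity determinant and simplifying shows it vanishes identically — no further algebraic relation is needed, since orthodiagonality is already built into the coordinate choice — the kind of routine check the paper elsewhere delegates to Mathematica.
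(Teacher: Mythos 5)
Your proposal is correct, but it takes a genuinely different — and more illuminating — route than the paper. The paper's proof is a direct coordinate computation: it places the diagonals on the axes, computes $F$, $G$, $H$, $I$ from the center function $a$ via the change-of-coordinates formula, finds the intersection $P$ of $FH$ and $GI$, and verifies $PF\cdot PH=PG\cdot PI$ by Mathematica. You instead isolate the structural reason the symmedian point works: in a right triangle the symmedian from the right-angle vertex $V$ is the $V$-altitude (median and altitude are isogonal at $V$), and since the symmedian point divides that cevian in ratio $(\text{legs}^2\ \text{sum}):(\text{hypotenuse}^2)=1:1$, it is the midpoint of $V$ and the altitude foot — a claim easily confirmed in barycentrics, since the midpoint of $(0:0:1)$ and $(a^2:b^2:0)$ is $(a^2:b^2:a^2+b^2)=(a^2:b^2:c^2)$ when $c^2=a^2+b^2$. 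This exhibits $FGHI$ as the image, under the homothety centred at $E$ with ratio $\tfrac12$, of the projections of $E$ onto the four sides, and their concyclicity is exactly the ``eight-point circle'' characterization of orthodiagonal quadrilaterals (Josefsson); your directed-angle identity, with $2\angle(BD,AC)\equiv 2\cdot 90^\circ\equiv 0 \pmod{180^\circ}$, is the standard way to prove it, and your explicit pedal coordinates $F'=\frac{ab}{a^2+b^2}(b,a)$ give a two-line determinant fallback if one distrusts the angle chase. What your approach buys is substantial: it essentially answers the paper's own Open Question asking for a purely geometric proof of this theorem, it identifies the circumcircle of $FGHI$ explicitly (the half-scale copy of the eight-point circle about $E$), and it explains \emph{why} $X_6$ is the right center, whereas the paper's computation certifies the result without explaining it.
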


\begin{proof}
When dealing with orthodiagonal quadrilaterals, it is convenient to set up a
coordinate system where the diagonal point of the quadrilateral is at the origin
and the diagonals of the quadrilateral lie along the $x$- and $y$-axes.
We can scale the figure so that vertex $D$ lies at $(1,0)$.
Figure \ref{fig:orthoCoordinates} shows the coordinates we use, where $a_y>0$,
$b_x<0$, and $c_y<0$.

\begin{figure}[h!t]
\centering
\includegraphics[width=0.5\linewidth]{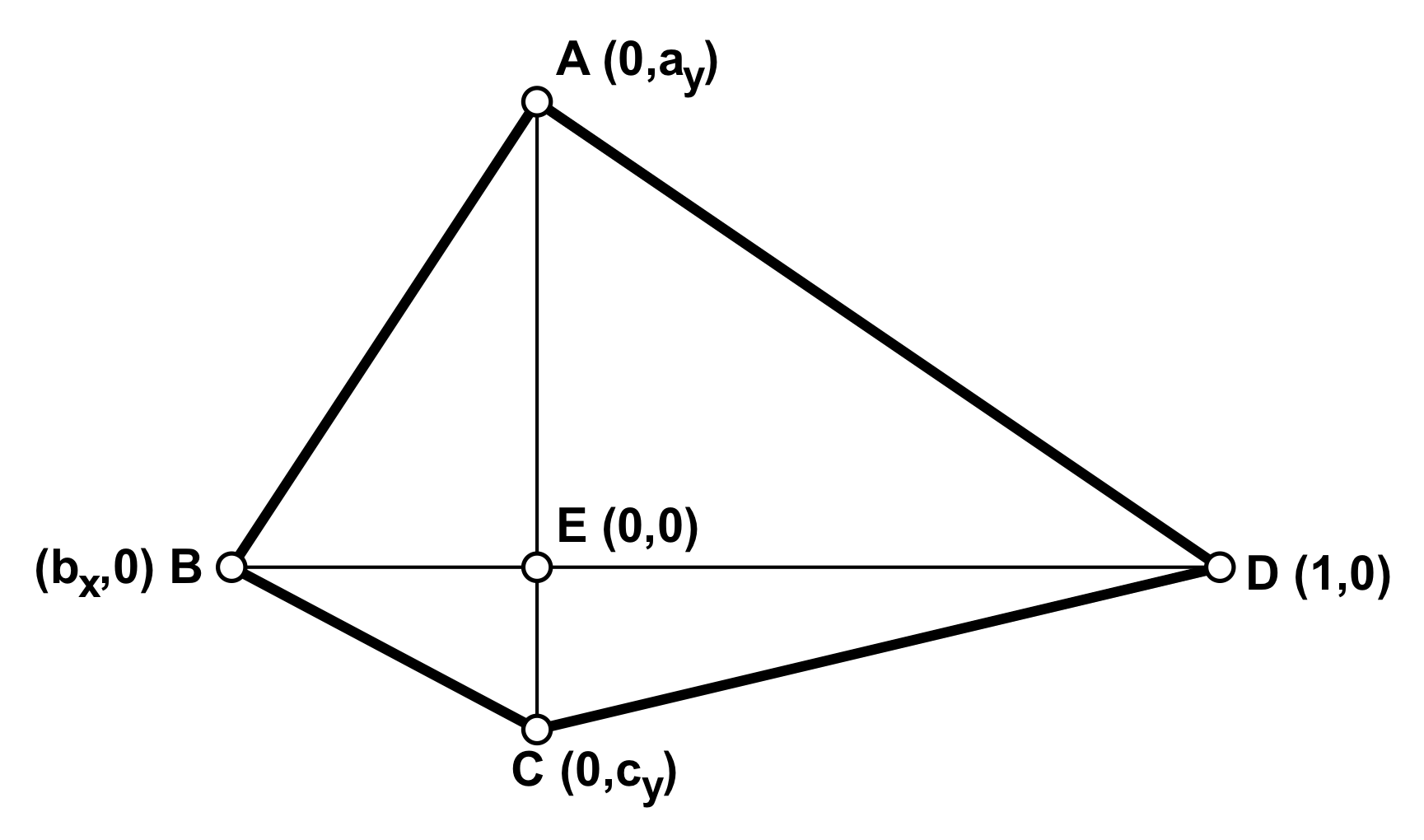}
\caption{Cartesian coordinates for an orthodiagonal quadrilateral}
\label{fig:orthoCoordinates}
\end{figure}

Using $a$ as the center function for $X_6$, we can compute the coordinates for
$F$, $G$, $H$, and $I$. We find that the coordinates for $F$ are
$$F=\left(\frac{b_xa_y^2}{2(a_y^2+2b_x^2)},\frac{a_yab_x^2}{2(a_y^2+2b_x^2)}\right).$$
Similar expressions are found for the coordinates for $G$, $H$, and $I$.
Then we find the coordinates for $P$, the intersection of $FH$ and $GI$.
We find
$$P=\left(\frac{b_xa_y^2c_y^2(b_x+1)}{2\left(b_x^2c_y^2+a_y^2c_y^2+a_y^2b_x^2(1+c_y^2)\right)},
\frac{a_yb_x^2c_y(a_y+c_y)}
{2\left(b_x^2c_y^2+a_y^2c_y^2+a_y^2b_x^2(1+c_y^2)\right)}\right).$$
Using the distance formula, we find the lengths of $PF$, $PG$, $PH$, and $PI$.
The points $F$, $G$, $H$, and $I$ will lie on a circle if $PF\cdot PH=PG\cdot PI$
(the converse of the intersecting chords theorem for a circle).
A straightforward computation (using Mathematica) shows this to be true.
Hence $FGHI$ is a cyclic quadrilateral.
\end{proof}

\begin{open}
\label{conj:symmedian}
\ \\Is there a purely geometric proof for Theorem \ref{thm:ortho6}?
(Figure~\ref{fig:dpOrthodiagonalSymmedian})
\end{open}

\begin{figure}[h!t]
\centering
\includegraphics[width=0.3\linewidth]{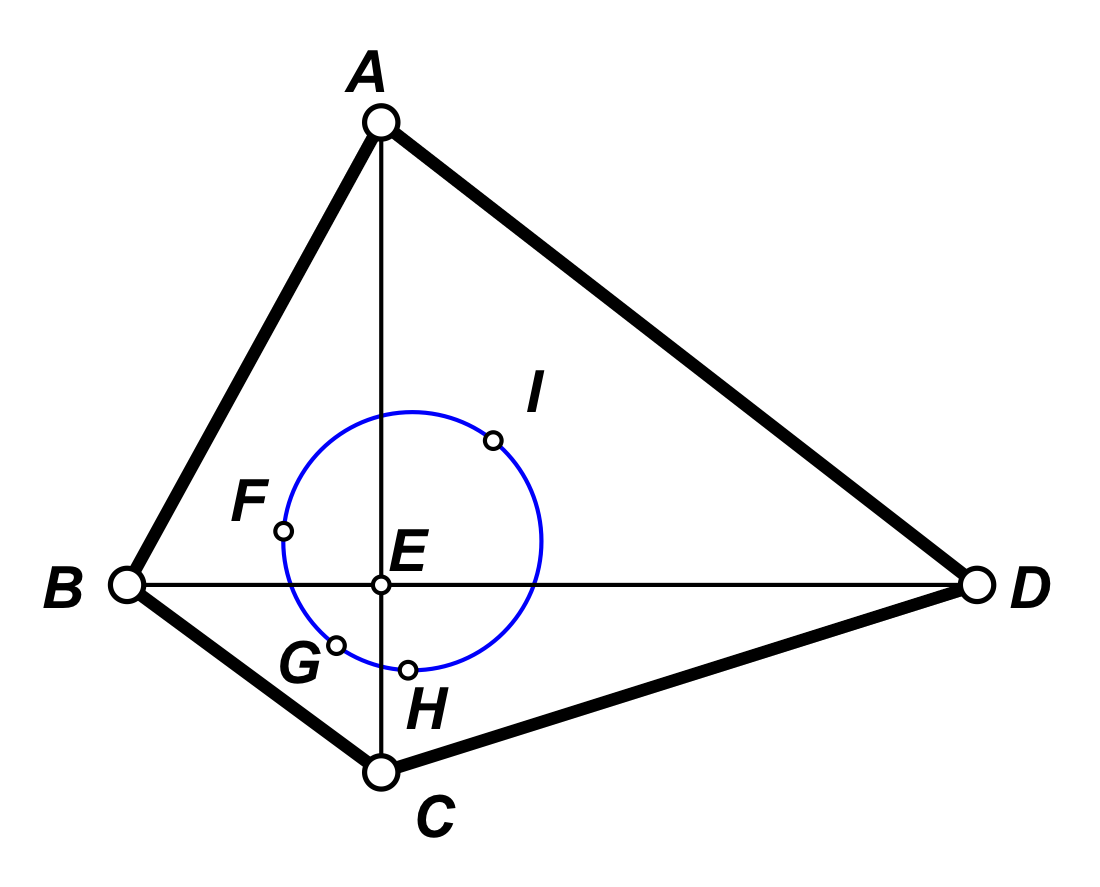}
\caption{Orthodiagonal quadrilateral: symmedian points $\implies$ cyclic}
\label{fig:dpOrthodiagonalSymmedian}
\end{figure}

\void{
\begin{theorem}
\label{thm:orthoSymmetric}
If the reference quadrilateral is orthodiagonal, the central quadrilateral is cyclic whenever the chosen center has center function of the form
$$f(a,b,c)\sin A$$
where $f$ is a cyclic function, $f(a,b,c)=f(b,c,a)=f(c,a,b)$.
\end{theorem}

\begin{proof}
We noted earlier (page \pageref{cyclicSymmetric}) that equivalent center functions
resulted in identical centers.

Since $f(a,b,c)$ is a cyclic function,
the center function $f(a,b,c)\sin A$ is equivalent to the center function $\sin A$.
It therefore suffices to prove that this theorem is true when the center function is
$\sin A$. We have already done that (Theorem \ref{thm:ortho6}) since according to \cite{ETC},
the symmedian point ($X_6$) has center function $\sin A$.
\end{proof}
}

\begin{theorem}
\label{thm:orthoCyclicTan}
If the reference quadrilateral is orthodiagonal, the central quadrilateral is cyclic whenever the chosen center has center function of the form
$$k\cos A+\sin A(\tan A+\tan B+\tan C)$$
for some constant $k$.
\end{theorem}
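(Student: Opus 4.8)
The plan is to reduce the statement to Theorem \ref{thm:ortho6}. On an arbitrary triangle, a center with center function $k\cos A+\sin A(\tan A+\tan B+\tan C)$ lies on the Brocard axis, the line $X_3X_6$: since $\tan A+\tan B+\tan C$ is a cyclic function, the trilinear coordinates of this center are a linear combination of the trilinear coordinates $\sin A:\sin B:\sin C$ of $X_6$ and $\cos A:\cos B:\cos C$ of $X_3$. In the orthodiagonal reference quadrilateral the diagonals meet at right angles at $E$, so each quarter triangle $\triangle ABE$, $\triangle BCE$, $\triangle CDE$, $\triangle DAE$ is a right triangle with its right angle at $E$. In a right triangle the tangent of the right angle is infinite, which drives this center to the $X_6$ end of the Brocard axis; the claim to establish is that in every right triangle the center is exactly the symmedian point $X_6$. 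Granting this, $F$, $G$, $H$, $I$ are the symmedian points of the four quarter triangles and the conclusion is immediate from Theorem \ref{thm:ortho6}.

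To make the claim precise, first pass to a representative of the center function that remains finite on right triangles. The identity $\tan A+\tan B+\tan C=\tan A\tan B\tan C$, together with multiplication by the cyclic function $\cos A\cos B\cos C$, shows that $k\cos A+\sin A(\tan A+\tan B+\tan C)$ is equivalent to
$$k\cos^2 A\cos B\cos C+\sin^2 A\sin B\sin C,$$
and, after applying the rules (\ref{rules}) together with $\tan A=4K/(b^2+c^2-a^2)$, to the polynomial center function
$$a\Bigl[\,k\,(b^2+c^2-a^2)^2(c^2+a^2-b^2)(a^2+b^2-c^2)+(2a^2b^2+2b^2c^2+2c^2a^2-a^4-b^4-c^4)^2\,\Bigr].$$
Suppose the triangle has its right angle at the vertex opposite side $c$, so $\cos C=0$ and $a^2+b^2-c^2=0$. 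Each of the three trilinear coordinates of the summand $k\cos^2 A\cos B\cos C$ has the product $\cos A\cos B\cos C$ as a factor — equivalently, each trilinear coordinate of the first summand of the polynomial form retains $a^2+b^2-c^2$ as a factor — so that summand contributes $0$ to all three coordinates. What survives is the remaining summand, whose trilinear coordinates are $\sin^2 A\sin B\sin C:\sin^2 B\sin C\sin A:\sin^2 C\sin A\sin B=\sin A:\sin B:\sin C$ (equivalently $a:b:c$), exactly the trilinear coordinates of the symmedian point. This proves the claim.

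The step needing the most care — indeed the only subtle point — is this passage to an equivalent finite center function before specialising to a right triangle: the literal expression $k\cos A+\sin A(\tan A+\tan B+\tan C)$ is infinite when one angle is a right angle, so the computation must be done with the representative $k\cos^2 A\cos B\cos C+\sin^2 A\sin B\sin C$ (or its polynomial form). This is legitimate because equivalent center functions determine the same triangle center, and that center extends continuously across the right-angle locus. With the claim in hand, applying it to $\triangle ABE$, $\triangle BCE$, $\triangle CDE$, $\triangle DAE$ identifies $F$, $G$, $H$, $I$ as the symmedian points of those four triangles, and Theorem \ref{thm:ortho6} then gives that $FGHI$ is cyclic; no additional coordinate computation of the kind used in the proof of Theorem \ref{thm:ortho6} is needed.
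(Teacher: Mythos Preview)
Your proof is correct, and it takes a more conceptual route than the paper does. The paper gives no inline argument for this theorem, deferring instead to the Mathematica notebooks; the implicit method is the direct coordinate computation of Theorem~\ref{thm:ortho6}, re-run with the new center function and checked against the intersecting-chords identity $PF\cdot PH=PG\cdot PI$. Your reduction---passing to the equivalent representative $k\cos^2A\cos B\cos C+\sin^2A\sin B\sin C$, noting that in each cyclic permutation the first summand carries the factor $\cos A\cos B\cos C$ and hence vanishes whenever one angle is right, and concluding that the surviving part has trilinears $\sin A:\sin B:\sin C$---shows that in every right triangle the entire one-parameter family collapses to the symmedian point $X_6$. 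This reduces Theorem~\ref{thm:orthoCyclicTan} to Theorem~\ref{thm:ortho6} in one stroke and explains \emph{why} the result holds: these Brocard-axis centers are driven to the $X_6$ endpoint exactly when a vertex is right-angled. The paper's computation verifies the conclusion but does not expose this structure; your argument also handles cleanly the apparent singularity of $\tan A+\tan B+\tan C$ at a right angle, which the coordinate approach must sidestep by working with a polynomial form from the outset.
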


\textbf{Note.} Examples of centers that have center functions of this form are $X_{389}$ and $X_{578}$.

\void{
\begin{theorem}
\label{thm:orthoOrthoF}
If the reference quadrilateral is orthodiagonal, the central quadrilateral is orthodiagonal whenever the chosen center has center function of the form
$$\bigl(f(a)+\tan B+\tan C\bigr)^n$$
where $n$ is a positive integer and $f$ is any function.
\end{theorem}

\begin{theorem}
\label{thm:orthoOrtho}
If the reference quadrilateral is orthodiagonal, the central quadrilateral is orthodiagonal whenever the chosen center has center function of the form
$$\cos B\cos C+k$$
where $k$ is a constant.
\end{theorem}

\begin{theorem}
\label{thm:orthoOrtho2}
If the reference quadrilateral is orthodiagonal, the central quadrilateral is orthodiagonal whenever the chosen center has center function of the form
$$(\cos B+\cos C-\cos A)^n$$
where $n$ is a positive integer.
\end{theorem}

\begin{theorem}
\label{thm:orthoOrtho3}
If the reference quadrilateral is orthodiagonal, the central quadrilateral is orthodiagonal whenever the chosen center has center function of the form
$$(\sec B+\sec C-\sec A)^n+k$$
where $k$ is a constant and $n$ is a positive integer.
\end{theorem}

\begin{theorem}
\label{thm:orthoOrtho4}
If the reference quadrilateral is orthodiagonal, the central quadrilateral is orthodiagonal whenever the chosen center has center function of the form
$$\cos^n (2A)$$
where $n$ is a positive integer.
\end{theorem}
}

\begin{theorem}
\label{thm:orthoRect}
If the reference quadrilateral is orthodiagonal, the central quadrilateral is a rectangle whenever the chosen center has center function of the form
$$\cos A+k\cos B\cos C$$
where $k$ is a constant.
\end{theorem}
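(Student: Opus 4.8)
The plan is to use the fact that for an orthodiagonal reference quadrilateral each of the four quarter triangles collapses to a right triangle, which makes the chosen center completely explicit and reduces the whole statement to a one-line vector computation.

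\textbf{Step 1 (geometry of the quarter triangles).} The diagonals $AC$ and $BD$ meet at $E$ and, by hypothesis, are perpendicular. Hence in each quarter triangle $\triangle ABE$, $\triangle BCE$, $\triangle CDE$, $\triangle DAE$ the angle at $E$ equals $90\degrees$; that is, all four quarter triangles are right triangles whose right angle is at the common vertex $E$.

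\textbf{Step 2 (locating the center in a right triangle).} I would show that in a triangle with its right angle at a vertex $V$, the point with center function $\cos A+k\cos B\cos C$ has barycentric coordinates in which $V$ carries weight $k$ and each of the other two vertices carries weight $1$; equivalently it is the point $\frac{1}{k+2}(P_1+P_2+kV)$, where $P_1,P_2$ are the acute-angled vertices. This is a short Law of Cosines computation: in a right triangle the cosine of an acute angle equals (adjacent leg)/(hypotenuse) and the cosine at $V$ is $0$, so each trigonometric term in the trilinear coordinates reduces to a ratio of legs, and the trilinear-to-barycentric conversion then gives $(1:1:k)$. (Sanity checks: $k=0$ gives the midpoint of the hypotenuse, i.e.\ the circumcenter; $k\to\infty$ gives $V$, the orthocenter; $k=2$ gives the nine-point center, which is their midpoint.) Observe that this description is symmetric in $P_1$ and $P_2$, hence independent of any labeling of the vertices, and that it is meaningful precisely when $k\neq-2$ (when $k=-2$ the center goes to infinity and the central quadrilateral degenerates).

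\textbf{Step 3 (the central quadrilateral).} Place $E$ at the origin. By Step 2, $F=\frac{A+B}{k+2}$, $G=\frac{B+C}{k+2}$, $H=\frac{C+D}{k+2}$, $I=\frac{D+A}{k+2}$. Then $\vec{FG}=\vec{IH}=\frac{1}{k+2}(C-A)$, so $FGHI$ is a parallelogram (this re-derives the relevant case of Theorem \ref{thm:general}), and $\vec{GH}=\frac{1}{k+2}(D-B)$, so $\vec{FG}\cdot\vec{GH}=\frac{1}{(k+2)^2}(C-A)\cdot(D-B)=0$ because $C-A$ points along the diagonal $AC$, $D-B$ points along the diagonal $BD$, and these are perpendicular. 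A parallelogram with a right angle is a rectangle, so $FGHI$ is a rectangle.

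There is essentially no hard obstacle once Step 1 is noticed; the only points needing care are the barycentric evaluation in Step 2 (checking that the special weight lands on the right-angle vertex and that the formula does not depend on which acute vertex is labeled first) and the exclusion of the degenerate value $k=-2$. Alternatively one could get the parallelogram part for free from Theorem \ref{thm:general} — after rescaling $\cos A+k\cos B\cos C$ by the nonzero constant $k$, with the circumcenter case $k=0$ handled by Corollary \ref{thm:genCircumcenters} — and then verify only the single perpendicularity $\vec{FG}\cdot\vec{GH}=0$; but the self-contained route above is shorter.
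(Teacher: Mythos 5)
Your proof is correct, and it is a genuinely different route from the paper's. The paper proves Theorem \ref{thm:orthoRect} for general $k$ only by symbolic computation in Mathematica, and supplies separate ad hoc geometric arguments for the three special cases $k=1$ (Corollary \ref{cor:orthoCentroid}, via medians), $k=0$ (Corollary \ref{cor:orthoCircumcenter}, via common chords), and $k=2$ (Corollary \ref{thm:ortho9point}, via nine-point circles), each of which leans on the corresponding general-quadrilateral parallelogram result. Your key observation --- that perpendicular diagonals force every quarter triangle to be right-angled at $E$, whereupon the trilinears $\bigl(\cos A:\cos B:k\cos A\cos B\bigr)$ collapse to barycentrics $(1:1:k)$ and the center becomes the explicit affine combination $\frac{1}{k+2}(P_1+P_2+kE)$ --- unifies all values of $k$ at once and reduces the theorem to the identity $\vec{FG}=\frac{1}{k+2}(C-A)$, $\vec{GH}=\frac{1}{k+2}(D-B)$ with $(C-A)\cdot(D-B)=0$. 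I verified the barycentric computation ($\cos C=0$, $\cos A=b/c$, $\cos B=a/c$ give $(ab/c:ab/c:kab/c)$), and your sanity checks at $k=0,2,\infty$ all match. Your exclusion of $k=-2$ is also exactly right: that value is the Euler infinity point $X_{30}$, for which the four centers are points at infinity and no central quadrilateral exists; the paper is silent on this degenerate case. What your approach buys is a uniform, human-readable proof that simultaneously yields the parallelogram conclusion of Theorem \ref{thm:general} for this family and explains \emph{why} the orthodiagonality of $ABCD$ transfers to a right angle of $FGHI$; what the paper's computational approach buys is that it extends mechanically to center families (such as $\frac{r}{R}+k\cos A$ for equidiagonal quadrilaterals) where no such collapse to a fixed barycentric triple occurs.
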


For some special cases of this theorem, a simpler geometrical proof is possible.

The centroid has center function $\cos A+\cos B\cos C$.
This is of the form referenced by Theorem \ref{thm:orthoRect} when $k=1$.
We thus have the following corollary.

\begin{corollary}
\label{cor:orthoCentroid}
If the reference quadrilateral is orthodiagonal, the central quadrilateral is a rectangle
when the chosen center is the centroid
(Figure \ref{fig:dpOrthoCentroid}).
\end{corollary}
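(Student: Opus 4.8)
The plan is to supply the ``simpler geometrical proof'' promised in the sentence preceding the corollary, by reusing the auxiliary construction from the proof of Corollary~\ref{thm:genCentroids}. The guiding idea is that, for centroids, every side of the central quadrilateral is parallel to one of the diagonals of the reference quadrilateral; orthodiagonality then upgrades the parallelogram supplied by Corollary~\ref{thm:genCentroids} to a rectangle.

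First I would fix the notation: $ABCD$ is orthodiagonal with diagonal point $E$, and $F$, $G$, $H$, $I$ are the centroids of $\triangle ABE$, $\triangle BCE$, $\triangle CDE$, $\triangle DAE$, respectively. As in the proof of Corollary~\ref{thm:genCentroids}, in $\triangle ABE$ let $Q$ be the midpoint of $BE$ and in $\triangle DAE$ let $P$ be the midpoint of $DE$, so that $AQ$ and $AP$ are medians; since a centroid trisects each median, $AF=\tfrac{2}{3}AQ$ and $AI=\tfrac{2}{3}AP$. Because $P$ and $Q$ both lie on line $BD$, the converse of the intercept theorem gives $FI\parallel PQ$, hence $FI\parallel BD$. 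Applying the same argument at vertex $B$ to triangles $\triangle ABE$ and $\triangle BCE$ (whose medians from $B$ reach the midpoints of $AE$ and $CE$, both lying on line $AC$) yields $FG\parallel AC$. By symmetry, $GH\parallel BD$ and $HI\parallel AC$ as well.

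It then remains only to observe that $FGHI$ is a parallelogram --- which is exactly Corollary~\ref{thm:genCentroids}, or follows directly from $FG\parallel HI$ and $GH\parallel IF$ --- and that, since $ABCD$ is orthodiagonal, $AC\perp BD$, so the adjacent sides $FG$ and $FI$ are perpendicular. A parallelogram with a right angle is a rectangle, hence $FGHI$ is a rectangle, as claimed. There is no real obstacle here: the argument is elementary once the parallelism facts are in hand. The only items deserving a remark are the degenerate configurations (a collapsing quarter triangle, or coincidences among $F$, $G$, $H$, $I$), which are handled as elsewhere in the paper, and the consistency check that this corollary is precisely the $k=1$ instance of Theorem~\ref{thm:orthoRect}.
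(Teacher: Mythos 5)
Your argument is correct and is essentially the paper's own proof: both invoke Corollary~\ref{thm:genCentroids} to get that $FGHI$ is a parallelogram with $FI\parallel BD$ and $FG\parallel AC$, then use $AC\perp BD$ to conclude that one angle of the parallelogram is right, hence it is a rectangle. The extra detail you supply on the intercept-theorem step is already contained in the proof of Corollary~\ref{thm:genCentroids}, so no new ideas are needed.
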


\begin{figure}[h!t]
\centering
\includegraphics[width=0.3\linewidth]{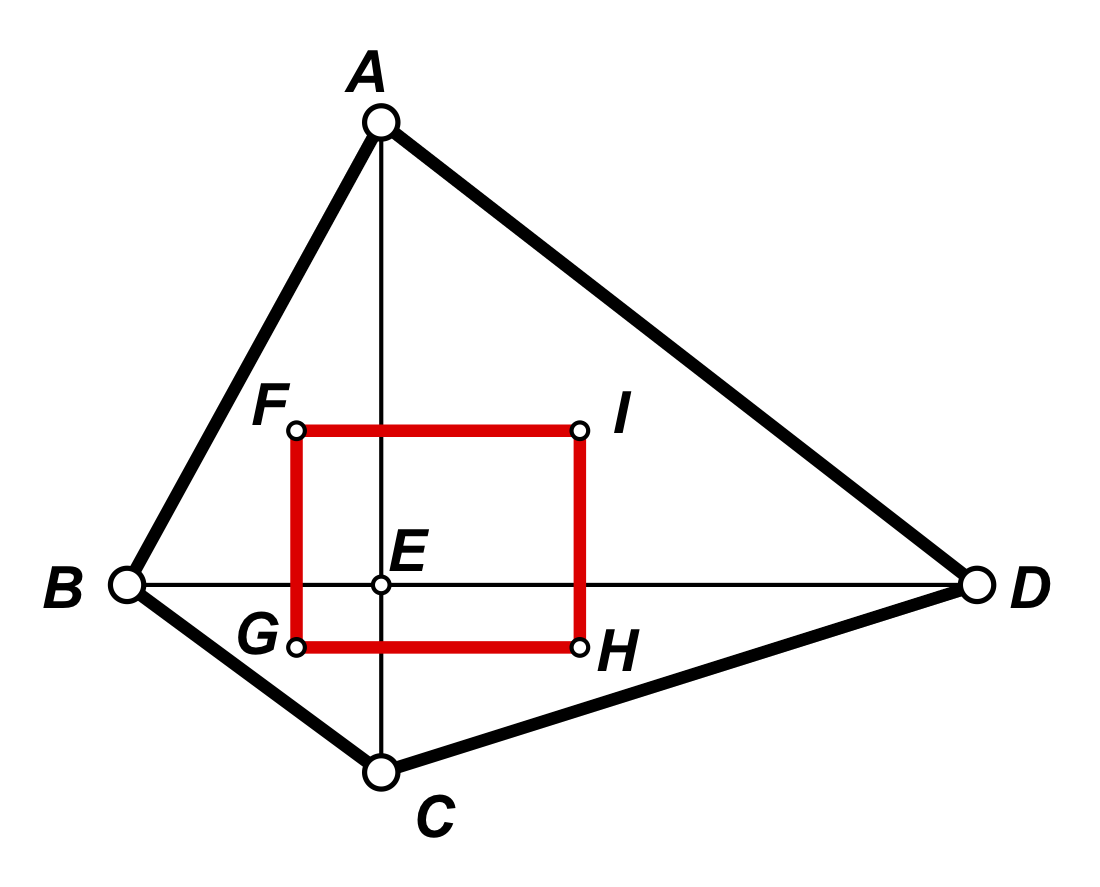}
\caption{Orthodiagonal quadrilateral: centroids $\implies$ rectangle}
\label{fig:dpOrthoCentroid}
\end{figure}

\begin{proof}
By Theorem \ref{thm:genCentroids}, $FGHI$ is a parallelogram.
From the proof of Theorem \ref{thm:genCentroids}, we see that $FI\parallel BD$
and $FG\parallel AC$. Since $BD\perp AC$, this implies $IF\perp FG$.
But a parallelogram having one angle a right angle must be a rectangle.
\end{proof}

The circumcenter has center function $\cos A$.
This is of the form referenced by Theorem \ref{thm:orthoRect} when $k=0$.
We thus have the following corollary.

\begin{corollary}
\label{cor:orthoCircumcenter}
If the reference quadrilateral is orthodiagonal, the central quadrilateral is a rectangle
when the chosen center is the circumcenter
(Figure \ref{fig:dpOrthoCircumcenter}).
\end{corollary}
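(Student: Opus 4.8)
The plan is to mirror the argument used for the centroid case (Corollary \ref{cor:orthoCentroid}), substituting Corollary \ref{thm:genCircumcenters} for Theorem \ref{thm:genCentroids}. First I would invoke Corollary \ref{thm:genCircumcenters}: for an arbitrary convex quadrilateral the four circumcenters $F$, $G$, $H$, $I$ of the quarter triangles $\triangle ABE$, $\triangle BCE$, $\triangle CDE$, $\triangle DAE$ already form a parallelogram. So it only remains to exhibit one right angle.

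Next I would extract the directional data buried in the proof of Corollary \ref{thm:genCircumcenters}. The operative fact there is that the line joining the circumcenters of two intersecting circles is perpendicular to their common chord. The circumcircles of $\triangle ABE$ and $\triangle BCE$ share the chord $BE$, which lies along the diagonal $BD$; hence $FG\perp BD$. The circumcircles of $\triangle BCE$ and $\triangle CDE$ share the chord $CE$, which lies along the diagonal $AC$; hence $GH\perp AC$.

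Then the orthodiagonal hypothesis finishes it: since $AC\perp BD$, the relations $FG\perp BD$ and $GH\perp AC$ give $FG\parallel AC$ and $GH\parallel BD$, so $FG\perp GH$. A parallelogram with a right angle at one vertex is a rectangle, so $FGHI$ is a rectangle.

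I do not anticipate any genuine obstacle. The only care needed is the bookkeeping of which common chord belongs to which pair of adjacent circumcenters, together with the (immediate) observation that each such chord is a segment of a full diagonal, since $E$ is the diagonal point and each quarter triangle has two of its vertices lying on the diagonals. As in the centroid corollary, the result is just the already-established parallelogram plus a single perpendicularity supplied by orthodiagonality.
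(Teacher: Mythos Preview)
Your proof is correct and essentially identical to the paper's: both invoke Corollary~\ref{thm:genCircumcenters} to get a parallelogram, extract from its proof that two adjacent sides of $FGHI$ are perpendicular to the two diagonals of $ABCD$, and then use $AC\perp BD$ to produce one right angle. The only difference is cosmetic---you work with the pair $FG,GH$ while the paper uses $FI,FG$.
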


\begin{figure}[h!t]
\centering
\includegraphics[width=0.3\linewidth]{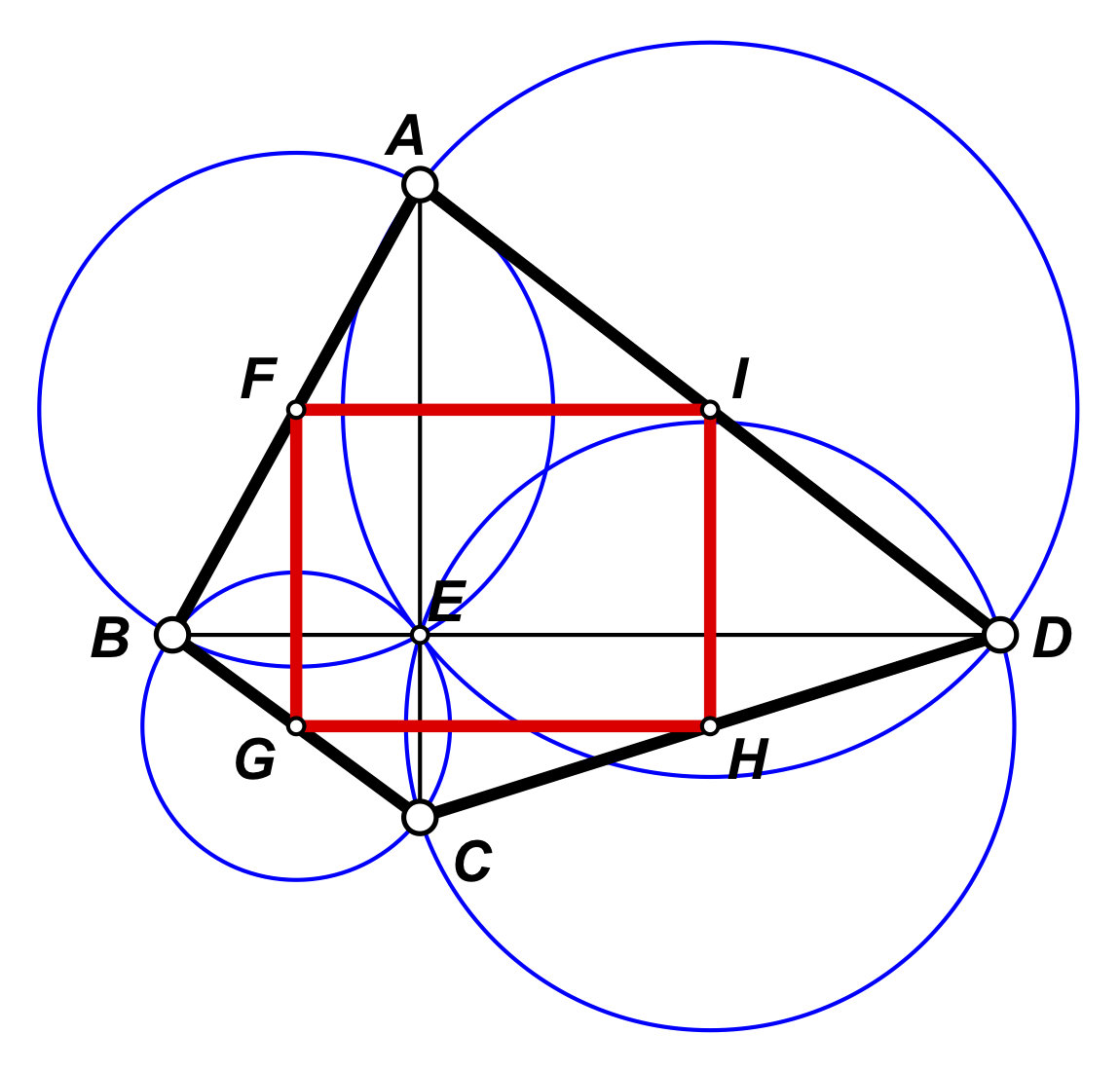}
\caption{Orthodiagonal quadrilateral: circumcenters $\implies$ rectangle}
\label{fig:dpOrthoCircumcenter}
\end{figure}

\begin{proof}
By Theorem \ref{thm:genCircumcenters}, $FGHI$ is a parallelogram.
From the proof of Theorem \ref{thm:genCircumcenters}, we see that $FI\perp AC$
and $FG\perp BD$. Since $BD\perp AC$, this implies $IF\perp FG$.
But a parallelogram having one angle a right angle must be a rectangle.
\end{proof}

We could also have noted that the circumcenter of a right triangle is the midpoint
of the hypotenuse and used that fact to prove that the sides of the central
quadrilateral are parallel to the diagonals of the reference quadrilateral.

The nine-point center has center function $\cos A+2\cos B\cos C$.
This is of the form referenced by Theorem \ref{thm:orthoRect} when $k=2$.
We thus have the following corollary.

\begin{corollary}
\label{thm:ortho9point}
If the reference quadrilateral is orthodiagonal, the central quadrilateral is a rectangle
when the chosen center is the nine-point center.
(Figure~\ref{fig:dpOrtho9}).
\end{corollary}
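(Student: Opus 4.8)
The plan is to give a short synthetic argument in the spirit of the proofs of Corollaries~\ref{cor:orthoCentroid} and~\ref{cor:orthoCircumcenter}, rather than quoting Theorem~\ref{thm:orthoRect}. The one preliminary fact I need is the behaviour of the nine-point center in a right triangle: since the nine-point center is the midpoint of the orthocenter and the circumcenter, and in a right triangle the orthocenter is the right-angle vertex while the circumcenter is the midpoint of the hypotenuse, the nine-point center of a right triangle is the midpoint of the segment joining the right-angle vertex to the midpoint of the hypotenuse.

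First I would use the orthodiagonal hypothesis to note that each quarter triangle $\triangle ABE$, $\triangle BCE$, $\triangle CDE$, $\triangle DAE$ has its right angle at $E$, and that in each case the hypotenuse is the side of $ABCD$ contained in that triangle. Let $W$, $X$, $Y$, $Z$ be the midpoints of $AB$, $BC$, $CD$, $DA$. By the preliminary fact, $F$ is the midpoint of $EW$, $G$ of $EX$, $H$ of $EY$, and $I$ of $EZ$. Hence $FGHI$ is the image of the Varignon parallelogram $WXYZ$ under the homothety with center $E$ and ratio $\tfrac12$, so $FGHI$ is directly similar to $WXYZ$.

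To finish, I would recall that $WX$ and $YZ$ are midsegments parallel to diagonal $AC$, while $XY$ and $ZW$ are midsegments parallel to diagonal $BD$; since $AC\perp BD$ the parallelogram $WXYZ$ has a right angle and is therefore a rectangle, and so is its homothetic image $FGHI$. As a bonus this shows the sides of the central quadrilateral are parallel to the diagonals of $ABCD$. There is no real obstacle here; the only points that need a word of care are checking that the orthocenter of each quarter triangle really is $E$ (so that the nine-point center lands where claimed) and observing that it is precisely the perpendicularity of the diagonals, funnelled through the Varignon rectangle, that promotes the conclusion from ``parallelogram'' — which already holds by Theorem~\ref{thm:general}, since $\cos A+2\cos B\cos C=2\bigl(\cos B\cos C+\tfrac12\cos A\bigr)$ — to ``rectangle.'' One could instead simply invoke Theorem~\ref{thm:orthoRect} with $k=2$, but the homothety argument is shorter and makes the shape of the central quadrilateral transparent.
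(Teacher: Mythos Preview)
Your argument is correct and is genuinely different from the paper's. You pin down each nine-point center explicitly as the midpoint of $E$ and the midpoint of the corresponding side (using orthocenter $=E$ and circumcenter $=$ midpoint of hypotenuse in a right triangle), and then observe that $FGHI$ is the homothetic image, center $E$ and ratio $\tfrac12$, of the Varignon parallelogram $WXYZ$, which is a rectangle precisely because $AC\perp BD$. The paper instead argues with the nine-point \emph{circles}: for two adjacent quarter triangles the nine-point circles both pass through $E$ and through the midpoint of the shared half-diagonal, so that half-diagonal is their common chord and the line of centers is perpendicular to it; combining the four such perpendicularities with $AC\perp BD$ forces $FGHI$ to be a rectangle. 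Your route is a bit more elementary (no circles, just midpoints and a homothety) and makes the parallelism of the sides of $FGHI$ with $AC$ and $BD$ immediate via Varignon; the paper's route has the virtue of echoing the common-chord argument used for circumcenters in Corollary~\ref{thm:genCircumcenters}.
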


\begin{figure}[h!t]
\centering
\includegraphics[width=0.3\linewidth]{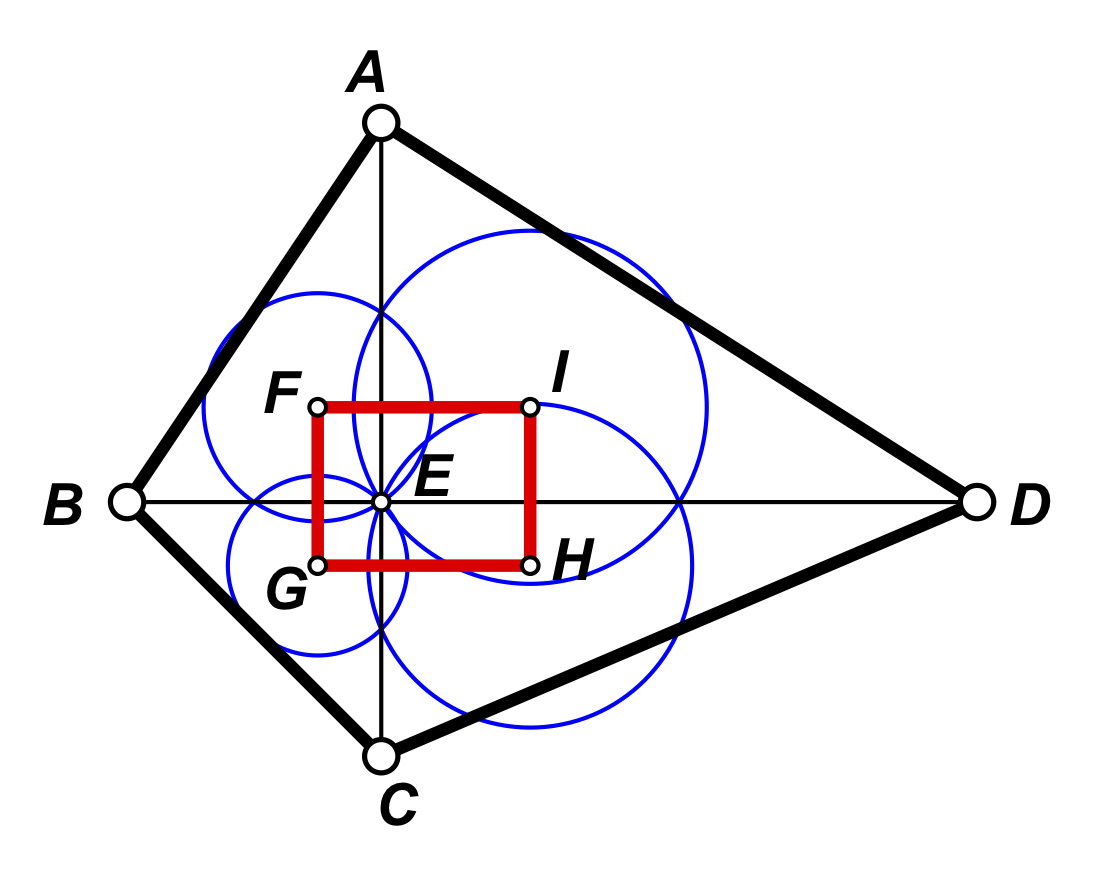}
\caption{Orthodiagonal quadrilateral: nine-point centers $\implies$ rectangle}
\label{fig:dpOrtho9}
\end{figure}

\begin{proof}
Since $\odot F$ is the nine-point circle of right triangle $AEB$, it passes through the
foot of the altitude from $B$ to $AE$, which is point $E$. It also passes through the midpoint
of $AE$. In the same way, $\odot I$ passes through $E$ and the midpoint
of $AE$. Thus, $AE$ is the common chord between these two circles which implies
that the line of centers, $FI$ is perpendicular to $AC$.
Similarly, $FG\perp BD$, $GH\perp AC$, and $HI\perp BD$.
Since $AC\perp BD$, this implies that $FG\perp GH$, $GH\perp HI$, $HI\perp IF$, and $IF\perp FG$,
making $FGHI$ a rectangle.
\end{proof}

\subsection{Hjelmslev Quadrilaterals}

\begin{theorem}
\label{thm:hj}
If the reference quadrilateral is a Hjelmslev quadrilateral, the central quadrilateral is orthodiagonal whenever the chosen center has center function of the form
$$\tan(2B)+\tan(2C)+k\tan(2A)$$
for some constant $k$.
\end{theorem}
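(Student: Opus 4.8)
The plan is to cut the statement down to three specific centers by a linearity argument, after recording some special geometry of Hjelmslev quadrilaterals. First, a Hjelmslev quadrilateral is cyclic: from $\angle A=\angle C=90\degrees$ we get $\angle A+\angle C=180\degrees$, so $A,B,C,D$ are concyclic, and since $\angle BAD=90\degrees$ the chord $BD$ is a diameter of the circumcircle. In the spirit of the coordinate set-ups used earlier in the paper, I would put the diagonal point $E$ at the origin with $BD$ along the $x$-axis, writing $B=(-\beta,0)$, $D=(\delta,0)$, and letting $AC$ make angle $\psi$ with the $x$-axis, so $A=a(\cos\psi,\sin\psi)$ and $C=-c(\cos\psi,\sin\psi)$. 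Imposing $\angle DAB=90\degrees$ and $\angle BCD=90\degrees$ shows that $a$ and $-c$ are the two roots of the common quadratic $X^2+(\beta-\delta)(\cos\psi)\,X-\beta\delta=0$; by Vieta's formulas $ac=\beta\delta$, i.e.\ $EA\cdot EC=EB\cdot ED$, which is just the intersecting-chords relation for that circle. So a Hjelmslev quadrilateral is carried by the parameters $\beta,\delta,\psi$ (one of which can be scaled away) together with $k$.

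Next I would do the angle chase. With $p=\angle ABD$ and $q=\angle DBC$, inscribed angles give that $\triangle ABE$ has angles $(90\degrees-q,\;p,\;90\degrees+q-p)$ at $(A,B,E)$ and $\triangle CDE$ has angles $(p,\;90\degrees-q,\;90\degrees+q-p)$ at $(C,D,E)$, while $\triangle BCE$ and $\triangle DAE$ are obtained from these by exchanging $p$ and $q$. Hence $\triangle ABE\sim\triangle DCE$ under $A\leftrightarrow D$, $B\leftrightarrow C$, $E\leftrightarrow E$, and $\triangle BCE\sim\triangle ADE$ under $B\leftrightarrow A$, $C\leftrightarrow D$, $E\leftrightarrow E$; the common ratio of each (namely $\delta/a=c/\beta$) is well defined precisely because $ac=\beta\delta$, each similarity is centered at $E$, and each is orientation-reversing (as one sees already for a square). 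Writing $\phi_1,\phi_2$ for these two fixed maps and using that a Kimberling triangle center is equivariant under every similarity and anti-similarity --- which is exactly what the symmetry of a center function in its last two arguments buys --- I get $H=\phi_1(F)$ and $I=\phi_2(G)$ for the center attached to \emph{any} center function.

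Now the reduction. For $g_k=\tan(2B)+\tan(2C)+k\tan(2A)$, the trilinears of the associated center in a fixed triangle are affine in $k$, so after passing to barycentrics (fixed side lengths) and then to Cartesian coordinates, $F=F(k)$ is a vector $\mathbf n_F(k)$ affine in $k$ divided by a scalar $d_F(k)$ affine in $k$, and likewise for $G,H,I$. Since $\phi_1$ is a fixed affine map and $H=\phi_1\circ F$, the point $H(k)$ inherits the denominator $d_F(k)$, so $H(k)-F(k)$ is $1/d_F(k)$ times a vector affine in $k$; similarly $I(k)-G(k)$ is $1/d_G(k)$ times a vector affine in $k$. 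Therefore $\bigl(H(k)-F(k)\bigr)\cdot\bigl(I(k)-G(k)\bigr)$ is a quadratic polynomial in $k$ and vanishes identically as soon as it vanishes for three values of $k$. I would take $k=0,1,\infty$. For $k=1$ the center function $\tan(2A)+\tan(2B)+\tan(2C)$ is cyclic, so the center is $X_1$ and $FH\perp GI$ already by Theorem~\ref{thm:general1}. For $k=\infty$ the center function is $\tan(2A)$, and for $k=0$ it is $\tan(2B)+\tan(2C)$, which after multiplying by the cyclic function $\cos(2A)\cos(2B)\cos(2C)$ is equivalent to the simpler form $\sin(4A)$. For these two centers I would compute $F,G,H,I$ from the coordinate set-up, convert each $\tan(2\theta)$ and $\sin(4\theta)$ into a rational function of $\beta,\delta,\psi$ using the double-angle formulas and $ac=\beta\delta$, and verify that the slopes of $FH$ and $GI$ multiply to $-1$ --- a finite symbolic check, best delegated to Mathematica.

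The hard part will be this last computation together with the bookkeeping behind the reduction: one must be certain that $F,G,H,I$ are genuinely projective-linear in $k$ with $F,H$ (respectively $G,I$) sharing a denominator --- which rests entirely on $H=\phi_1(F)$ and $I=\phi_2(G)$ --- and, once the doubled angles are made algebraic, the two remaining center functions give fairly heavy expressions whose simplification to the perpendicularity condition is where the effort concentrates. If one prefers to avoid the reduction, the fallback is to imitate the paper's other computational proofs directly: substitute $g_k$ into the coordinates of all four quarter triangles and collapse the product of the two slopes to $-1$ in a single pass.
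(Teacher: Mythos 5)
The paper prints no proof of this theorem; it is one of the results delegated wholesale to the Mathematica notebooks, where the template (as in Theorems \ref{thm:general}, \ref{thm:equi}, and \ref{thm:ortho6}) is to carry the parameter $k$ symbolically through the coordinates of $F$, $G$, $H$, $I$ and check that the slopes of $FH$ and $GI$ multiply to $-1$. Your route is genuinely different and, as far as I can verify, sound. The anti-similarities $\phi_1:\triangle ABE\to\triangle DCE$ and $\phi_2:\triangle BCE\to\triangle ADE$, each fixing $E$, exist for every cyclic quadrilateral (your angle chase and the relation $EA\cdot EC=EB\cdot ED$ check out), triangle centers are equivariant under anti-similarities, so $H=\phi_1(F)$ and $I=\phi_2(G)$ do share denominators with $F$ and $G$; hence the orthogonality condition is a quadratic polynomial in $k$, and killing it at $k=0$, at $k=1$ (where the center function is fully symmetric, so the center is $X_1$ and Theorem \ref{thm:general1} applies to \emph{any} convex quadrilateral), and at $k=\infty$ forces it to vanish identically. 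What this buys over the paper's implicit approach is that the Hjelmslev hypothesis is isolated in two concrete centers --- $\sin 4A$ (equivalently $\tan 2B+\tan 2C$, the center $X_{563}$ from the paper's raw data) and $\tan 2A$ --- rather than smeared across a computation with a free parameter, and one of the three test values comes for free from the general-quadrilateral incenter result. What it costs is the bookkeeping you flag yourself, and the honest accounting is that both routes still bottom out in a symbolic verification: by the paper's own standard, neither is more finished than the other, but your reduction makes the eventual computation smaller and exposes structure (the two $E$-centered anti-similarities) that the paper never mentions.
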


\begin{theorem}
\label{thm:hjPrasolov}
If the reference quadrilateral is a Hjelmslev quadrilateral, then the central quadrilateral is a trapezoid when the chosen center is the Prasolov point ($X_{68}$).
See Figure \ref{fig:dpHjelmslev-trapezoid}.
\end{theorem}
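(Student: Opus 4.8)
The plan is to mirror the coordinate proofs already used in the paper, exploiting the fact that the Hjelmslev condition $A=C=90^{\circ}$ is equivalent to saying that $A$ and $C$ lie on the circle with diameter $BD$; that is, a Hjelmslev quadrilateral is cyclic with $BD$ a diameter of its circumcircle. Accordingly I would set up coordinates adapted to this: either the Cartesian setup of Figure~\ref{fig:dpCoordinates} with the two right-angle conditions imposed on $a_x,a_y,d_x,d_y$, or --- more convenient --- a setup in which the circumcircle is the unit circle, $B=(-1,0)$, $D=(1,0)$, $A=\bigl(\tfrac{1-p^{2}}{1+p^{2}},\tfrac{2p}{1+p^{2}}\bigr)$, and $C=\bigl(\tfrac{1-q^{2}}{1+q^{2}},-\tfrac{2q}{1+q^{2}}\bigr)$, with $p,q>0$ chosen so that $ABCD$ is convex. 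The diagonal point $E=AC\cap BD$ then lies on the $x$-axis and has coordinates rational in $p$ and $q$.

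Next, for each quarter triangle $\triangle ABE$, $\triangle BCE$, $\triangle CDE$, $\triangle DAE$, I would write down its squared side lengths (rational in $p,q$), feed them through the Law of Cosines into a center function for $X_{68}$ --- taken from \cite{ETC} and rewritten, exactly as in the proof of Theorem~\ref{thm:general}, as a rational function of $a^{2},b^{2},c^{2}$ --- to obtain barycentric coordinates for the chosen center of that quarter triangle, and then apply the change-of-coordinates formula recorded in the proof of Theorem~\ref{thm:general} to pass to Cartesian coordinates for $F$, $G$, $H$, and $I$. Since the vertices of each quarter triangle and the barycentric weights are all rational in $p,q$, no square roots appear and $F,G,H,I$ come out as rational functions of $p$ and $q$.

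Finally I would compute the slopes of the four sides of the central quadrilateral and, after simplification in Mathematica, show that exactly one pair of opposite sides has equal slope. I expect the parallel pair to be $IF$ and $GH$: these join the two $X_{68}$'s inside the right triangle $\triangle ABD$ (cut into $\triangle ABE$ and $\triangle DAE$ by the cevian $AE$) and the two $X_{68}$'s inside the right triangle $\triangle CBD$ (cut into $\triangle BCE$ and $\triangle CDE$ by the cevian $CE$), the cevians $AE$ and $CE$ lying along the common line $AC$; it is precisely this right-angle structure that the Hjelmslev hypothesis supplies and that the triangles $\triangle ABC$ and $\triangle ACD$, relevant to the other pair of opposite sides, lack. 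The computation will confirm which pair it actually is; one should also check that the remaining pair of slopes is not identically equal, so that $FGHI$ is a genuine trapezoid and not a parallelogram, and bear in mind, as for Theorem~\ref{thm:cyclic110}, that the correctly ordered central quadrilateral may turn out to be $FHGI$.

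The only real obstacle is the bulk of the symbolic computation: $F,G,H,I$ are sizable rational expressions in $p,q$, and --- unlike the parallelogram theorems, where both pairs of sides come out parallel and a clean slope identity appears --- here we must certify a single parallelism together with a single non-parallelism, so coaxing Mathematica to collapse one slope difference to zero (and checking that the other does not vanish) is the delicate step, as is picking the correct parallel pair beforehand. A purely synthetic proof would presumably exploit the decomposition of the four quarter triangles into the right triangles $\triangle ABD$ and $\triangle CBD$, each cut by a cevian through the common point $E$ on $BD$, together with the special position of $X_{68}$ (a point of the Jerabek hyperbola) in a right triangle; I would leave the search for such an argument as an open question, in the spirit of the open questions already posed in the paper.
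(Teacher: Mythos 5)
Your plan is sound and takes essentially the same route as the paper, which states this theorem without an in-text proof and defers to a Mathematica notebook carrying out exactly this kind of coordinate verification; your rational parametrization of the Hjelmslev condition ($A$ and $C$ on the circle with diameter $BD$), together with the observation that the barycentrics of $X_{68}$ are rational in the squared side lengths, correctly keeps the whole computation radical-free. Your prediction of which pair of opposite sides is parallel is confirmed by the caption of Figure~\ref{fig:dpHjelmslev-trapezoid}, which asserts $FI\parallel HG\parallel BD$.
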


\begin{figure}[h!t]
\centering
\includegraphics[width=0.4\linewidth]{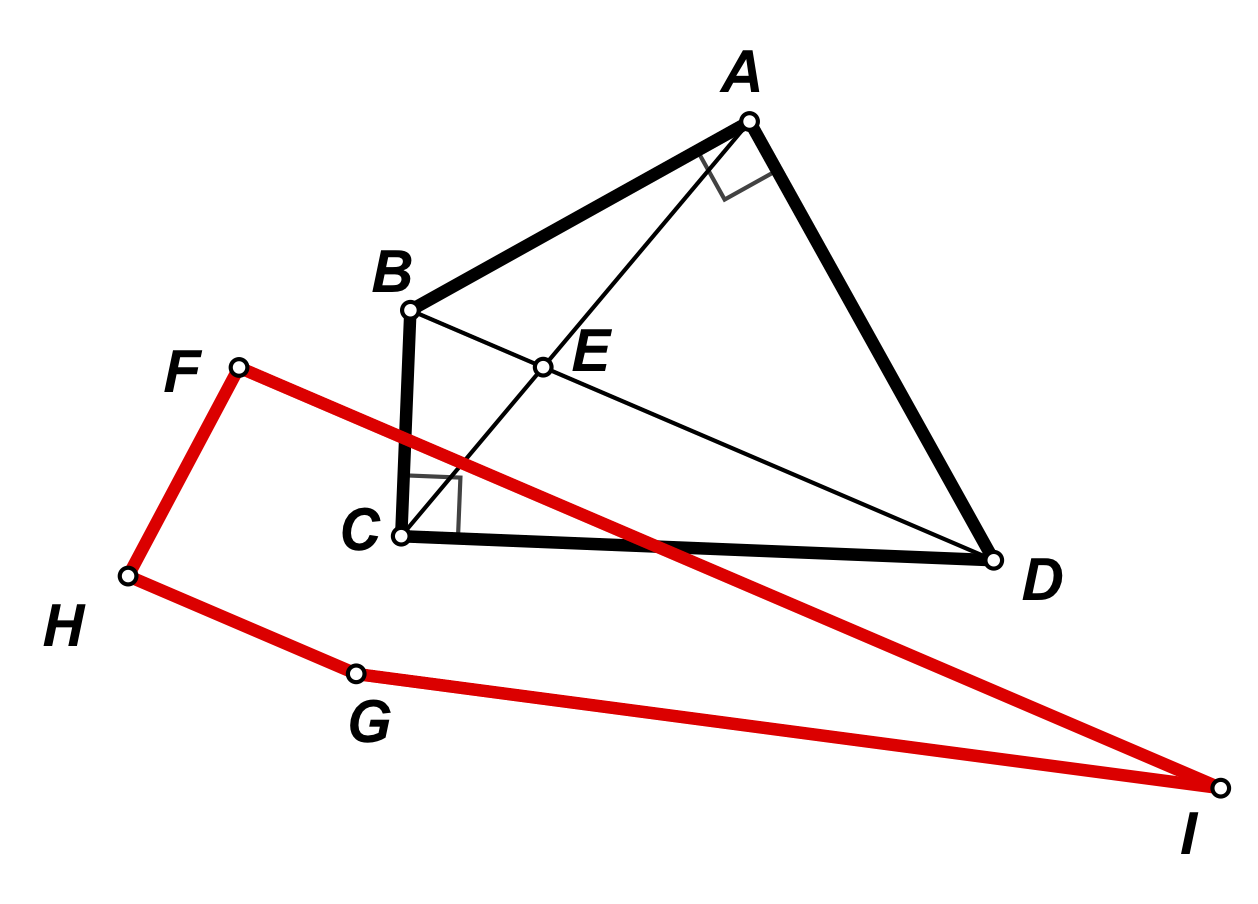}
\caption{Prasolov points $\implies$ $FI\parallel HG\parallel BD$}
\label{fig:dpHjelmslev-trapezoid}
\end{figure}

\subsection{Parallelograms}

\begin{lemma}[The Parallelogram Lemma]
\label{lemma:parallelogram}
Let $ABCD$ be a parallelogram and
let $E$ be the intersection of the diagonals.
Let $P$ be a triangle center of $\triangle BEC$ and
let $Q$ be the corresponding triangle center of $\triangle DEA$.
Then $PQ$ passes through $E$ and $PE=QE$.
(Figure \ref{fig:parallelogramLemma})
\end{lemma}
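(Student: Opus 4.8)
The plan is to exploit the central symmetry of a parallelogram. First I would recall the elementary fact that the diagonals of a parallelogram bisect each other, so that $E$ is simultaneously the midpoint of $AC$ and of $BD$. Let $\sigma$ denote the half-turn (point reflection) about $E$; it is an isometry that fixes $E$ and interchanges $A$ with $C$ and $B$ with $D$. Consequently $\sigma$ carries the triangle $\triangle BEC$ onto the triangle $\triangle DEA$.

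Next I would invoke the fact that a triangle center in the sense of Section \ref{section:centers} depends only on the triangle as an unlabeled figure and is equivariant under isometries: if $X$ is the center attached to a center function $f$ and $\sigma$ is any isometry, then $X(\sigma T)=\sigma\bigl(X(T)\bigr)$ for every triangle $T$. This is immediate from the homogeneity of $f$ together with its symmetry in the last two arguments, which guarantee that the trilinear description $f(a,b,c):f(b,c,a):f(c,a,b)$ names the same point regardless of how the vertices are ordered, and that the construction is purely metric and hence preserved by congruences. Applying this with $T=\triangle BEC$ gives $Q = X(\triangle DEA) = X(\sigma\,\triangle BEC) = \sigma\bigl(X(\triangle BEC)\bigr) = \sigma(P)$.

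Finally, since $\sigma$ is the half-turn about $E$, the relation $Q=\sigma(P)$ says precisely that $E$ is the midpoint of the segment $PQ$. In particular $P$, $E$, $Q$ are collinear and $PE=QE$, which is exactly the assertion of the lemma.

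The only delicate point is the one just flagged: one must be sure that ``the corresponding triangle center of $\triangle DEA$'' is literally the image $\sigma(P)$, i.e.\ that a triangle center is genuinely independent of the labeling of the vertices and is transported correctly by the orientation-preserving isometry $\sigma$. For the named centers used elsewhere in this paper this is transparent, and in general it is a direct consequence of Kimberling's definition; no computation is required. A coordinate verification is of course also available, but it is strictly more laborious than the symmetry argument above.
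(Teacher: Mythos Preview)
Your proof is correct and follows essentially the same approach as the paper: both arguments use the congruence carrying $\triangle BEC$ onto $\triangle DEA$ with $E$ as its fixed point, together with the equivariance of triangle centers under isometries. You are simply more explicit than the paper in naming this congruence as the half-turn about $E$ and in justifying why a Kimberling center is label-independent and isometry-equivariant.
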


\begin{figure}[h!t]
\centering
\includegraphics[width=0.7\linewidth]{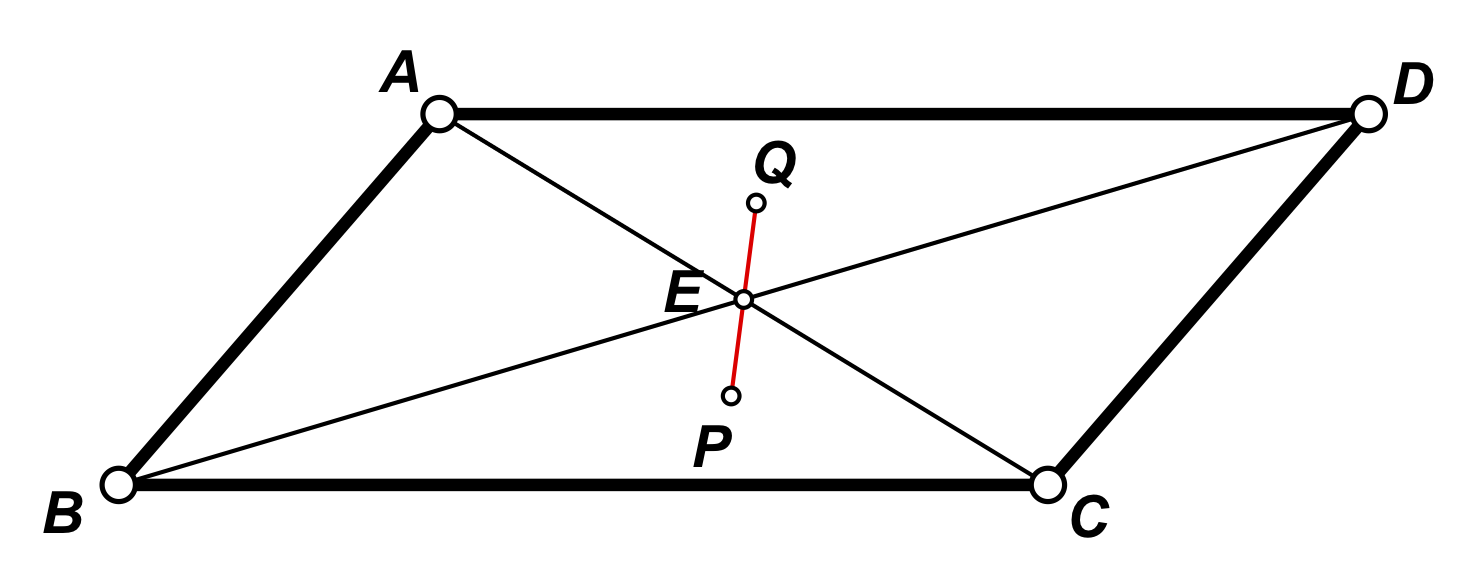}
\caption{}
\label{fig:parallelogramLemma}
\end{figure}

\begin{proof}
Note that triangles $BEC$ and $DEA$ are congruent.
Under the congruence transformation that maps $\triangle BEC$ into $\triangle DEA$, $P$ will get mapped into $Q$.
Since $E$ is the center of the congruence transformation, this means $PQ$ will pass through $E$. Congruence preserves lengths, so $PE=QE$.
\end{proof}

\begin{theorem}
\label{thm:arbCenterParallelogram}
For any triangle center,
if the reference quadrilateral is a parallelogram,
then the central quadrilateral is a parallelogram.
The sides of the central quadrilateral are parallel to the diagonals
of the reference quadrilateral. The two quadrilaterals have the same diagonal point.
(Figure \ref{fig:diagPtParallelogram})
\end{theorem}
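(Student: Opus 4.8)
The plan is to obtain the entire statement from the Parallelogram Lemma (Lemma~\ref{lemma:parallelogram}), applied once to each of the two pairs of opposite quarter triangles. Label the quarter triangles $T_1=\triangle ABE$, $T_2=\triangle BCE$, $T_3=\triangle CDE$, $T_4=\triangle DAE$, with chosen centers $F,G,H,I$ respectively, so that the central quadrilateral is $FGHI$. Since $ABCD$ is a parallelogram, $E$ is the common midpoint of its two diagonals, so the half-turn $\sigma$ about $E$ fixes $E$ and interchanges $A\leftrightarrow C$ and $B\leftrightarrow D$; hence $\sigma(T_1)=T_3$ and $\sigma(T_2)=T_4$. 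Because any isometry carries a triangle center to the corresponding center of the image triangle, it follows that $\sigma(F)=H$ and $\sigma(G)=I$.

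Now I apply the lemma twice. The pair $(T_2,T_4)=(\triangle BEC,\triangle DEA)$ is exactly the pair named in Lemma~\ref{lemma:parallelogram} for the parallelogram $ABCD$, so $GI$ passes through $E$ and $GE=IE$. For the other pair, relabel the vertices cyclically and regard the same figure as the parallelogram $DABC$, whose diagonal point is again $E$; then $(T_1,T_3)=(\triangle ABE,\triangle CDE)$ plays the role of the pair in the lemma, and it gives that $FH$ passes through $E$ with $FE=HE$. Hence the diagonals $FH$ and $GI$ of $FGHI$ bisect each other at $E$. A quadrilateral whose diagonals bisect each other is a parallelogram, and its diagonal point is $E$, the diagonal point of $ABCD$. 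This proves the first and third assertions.

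The remaining assertion --- that the sides of $FGHI$ are parallel to the diagonals of $ABCD$ --- is the part I expect to require the real work, since it is not delivered by the Parallelogram Lemma and is center-dependent. I would attack it in coordinates: place $E$ at the origin, so that $C=-A$, $D=-B$, and (as above) $H=-F$, $I=-G$; the side directions are then $G-F$ and $F+G$, and one wants $G-F\parallel A$ together with $F+G\parallel B$ (the directions of $AC$ and $BD$), or the reverse. For the centroid this is immediate, since $F=\tfrac13(A+B)$ and $G=\tfrac13(B-A)$ give $G-F=-\tfrac23A$. But one must be careful in general: for the circumcenter, Corollary~\ref{thm:genCircumcenters} already shows $FG\perp BD$, which is parallel to $AC$ only when $ABCD$ is a rhombus. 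So I would either carry the coordinate computation through center-by-center in Mathematica, in the spirit of the rest of the paper, or --- preferably --- isolate the precise hypothesis on the center function under which the side directions align with the diagonals and prove the statement in that form.
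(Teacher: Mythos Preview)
Your argument for the first and third assertions is exactly the paper's: apply the Parallelogram Lemma to the pair $(\triangle BEC,\triangle DEA)$ and then (by relabelling) to $(\triangle ABE,\triangle CDE)$, conclude that $E$ is the common midpoint of $FH$ and $GI$, hence $FGHI$ is a parallelogram with diagonal point $E$. The half-turn you introduce is precisely the congruence used inside the lemma, so invoking both is a bit redundant, but harmless.

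Your caution about the second assertion is well placed --- and in fact decisive. The paper's own proof says nothing about the side directions; it stops at ``diagonals bisect each other $\Rightarrow$ parallelogram.'' More to the point, the claim that the sides of $FGHI$ are parallel to the diagonals of $ABCD$ is \emph{false} for a general center. Your circumcenter observation nails it: by Corollary~\ref{thm:genCircumcenters} one has $FG\perp BD$, so $FG\parallel AC$ would force $AC\perp BD$, i.e.\ a rhombus. A concrete instance: with $E=(0,0)$, $A=(2,1)$, $B=(-1,1)$, $C=-A$, $D=-B$, the circumcenters are $F=(\tfrac12,\tfrac32)$ and $G=(-\tfrac76,-\tfrac16)$, so $FG$ has direction $(1,1)$, parallel to neither $AC$ nor $BD$. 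So you were right not to try to prove that clause in full generality; it is an overstatement in the theorem, and your proposed remedy --- isolate the hypothesis on the center that makes it true --- is the correct move rather than a mere stylistic preference.
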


\begin{figure}[h!t]
\centering
\includegraphics[width=0.7\linewidth]{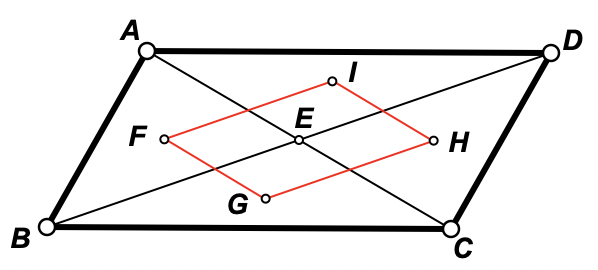}
\caption{parallelogram $\implies$ parallelogram}
\label{fig:diagPtParallelogram}
\end{figure}

\begin{proof}
By the Parallelogram Lemma, $E$ is the midpoint of $GI$.
Similarly, $E$ is the midpoint of $FH$.
Since the diagonals of quadrilateral $FGHI$ bisect each other, $FGHI$ must
be a parallelogram.
\end{proof}

\begin{theorem}
\label{thm:X46}
If the reference quadrilateral is a parallelogram, the central quadrilateral is a rhombus if the chosen center is the incenter ($X_1$) or one of the Vecten points ($X_{485}$ or $X_{486}$).
\end{theorem}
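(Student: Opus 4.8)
The plan is to prove, in each of the three cases, that the diagonals $FH$ and $GI$ of the central quadrilateral are perpendicular. This is enough: by Theorem~\ref{thm:arbCenterParallelogram} the quadrilateral $FGHI$ is already a parallelogram (with diagonal point $E$) for \emph{every} triangle center, and a parallelogram whose diagonals are perpendicular is a rhombus. The two diagonals are moreover easy to identify. Since $\triangle CDE$ is the image of $\triangle ABE$ under the point reflection $\sigma$ in $E$, and likewise $\triangle DAE=\sigma(\triangle BCE)$, and every triangle center is carried along by $\sigma$ (each Vecten point is produced by a construction that is manifestly equivariant under isometries), we get $H=\sigma(F)$ and $I=\sigma(G)$. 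Hence line $FH$ is line $EF$ and line $GI$ is line $EG$, and the theorem reduces to the single statement $EF\perp EG$, to be verified for $X_1$, for $X_{485}$, and for $X_{486}$.

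For the incenter ($X_1$) this is immediate from Theorem~\ref{thm:general1}: applied to the parallelogram regarded simply as a convex quadrilateral, it already gives that the central quadrilateral is orthodiagonal, in particular $FH\perp GI$. (Concretely, $EF$ bisects $\angle AEB$ and $EG$ bisects $\angle BEC$; since $A$, $E$, $C$ are collinear these angles are supplementary, so their bisectors, which share the ray $EB$, are perpendicular.) Together with Theorem~\ref{thm:arbCenterParallelogram}, this settles the case $X_1$.

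For the Vecten points I would switch to complex coordinates with $E=0$. Since $ABCD$ is a parallelogram, $C=-A$ and $D=-B$, so write $A=p$ and $B=q$, assuming after relabeling if necessary that $ABCD$ is positively oriented, so that the quarter triangles $\triangle ABE$ and $\triangle BCE$ are positively oriented in their natural vertex orders. Recall that the outer Vecten point of a triangle lies on each line joining a vertex to the center of the square erected \emph{externally} on the opposite side, and that the inner Vecten point is described the same way using \emph{internally} erected squares. Thus $F$ lies on the line through $E$ and the center $O_{AB}$ of the square on side $AB$ of $\triangle ABE$, while $G$ lies on the line through $E$ and the center $O_{BC}$ of the square on side $BC$ of $\triangle BCE$. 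A short computation gives, for the outer Vecten point, $O_{AB}=\tfrac12\bigl((1+i)p+(1-i)q\bigr)$ and $O_{BC}=\tfrac12\bigl((i-1)p+(i+1)q\bigr)$, so that $O_{BC}=i\,O_{AB}$; for the inner Vecten point the analogous computation yields $O_{BC}=-i\,O_{AB}$. In both cases line $EG$ is line $EF$ rotated a quarter turn about $E$, hence $EF\perp EG$, hence $FH\perp GI$, hence $FGHI$ is a rhombus.

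The algebra here is trivial once the set-up is fixed; the only points calling for care are keeping the orientations of $\triangle ABE$ and $\triangle BCE$ consistent (so that the external, resp.\ internal, square is the corresponding one in both triangles), and confirming that the relevant centers are distinct from $E$ so that the lines $EF$ and $EG$ actually exist. This last point holds for the incenter always, and for the Vecten points whenever no angle of a quarter triangle equals $135\degrees$ (for $X_{485}$) or $45\degrees$ (for $X_{486}$), which is exactly the condition for the corresponding Vecten point to degenerate onto a vertex and is the generic situation for a parallelogram. I expect this orientation-and-degeneracy bookkeeping to be the only mild obstacle; there is no heavy computation involved.
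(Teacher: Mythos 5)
Your argument is correct, and it is a genuinely different (and more illuminating) route than the paper's: Theorem~\ref{thm:X46} is one of the results the paper leaves to brute-force symbolic computation in the supplementary Mathematica notebooks, whereas you give a conceptual proof. Your reduction is exactly right: Lemma~\ref{lemma:parallelogram} (the Parallelogram Lemma) gives $H=\sigma(F)$ and $I=\sigma(G)$ for the point reflection $\sigma$ in $E$, so together with Theorem~\ref{thm:arbCenterParallelogram} the whole theorem collapses to $EF\perp EG$; the $X_1$ case is then literally Theorem~\ref{thm:general1} combined with Theorem~\ref{thm:arbCenterParallelogram}, and for the Vecten points your complex-number computation checks out --- with $E=0$, $A=p$, $B=q$, $C=-p$, the external square centers satisfy $O_{BC}=iO_{AB}$ (and $O_{BC}=-iO_{AB}$ for the internal squares), the orientations of $\triangle ABE$ and $\triangle BCE$ agree since both have signed area $\tfrac12\operatorname{Im}(\bar p q)$, and the defining concurrence of the Vecten point puts $F$ on line $EO_{AB}$ and $G$ on line $EO_{BC}$. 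What your approach buys is an actual explanation (the diagonals of the central quadrilateral are the lines $EF$ and $EG$, and these differ by a quarter turn), plus it makes visible that the same mechanism handles all three centers at once; what the paper's coordinate computation buys is uniformity with its other machine-verified theorems and freedom from the orientation and nondegeneracy bookkeeping you rightly flag (e.g.\ the outer Vecten point of a quarter triangle landing at $E$ when $\angle AEB=135\degrees$), which the paper, like you, treats as a negligible measure-zero case.
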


\subsection{Kites}

\begin{lemma}[The Isosceles Triangle Lemma]
\label{lemma:isosceles}
Let $ABC$ be an isosceles triangle (with $AB=AC$) and
let $D$ be the midpoint of $BC$.
Let $P$ be a triangle center of $\triangle ABD$ and
let $Q$ be the corresponding triangle center of $\triangle ACD$.
Then $PQ||BC$, $PQ\perp AD$, and $PQ$ is bisected by $AD$.
\end{lemma}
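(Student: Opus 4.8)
The plan is to exploit the mirror symmetry of the configuration, exactly as the Parallelogram Lemma exploited a point symmetry. Since $ABC$ is isosceles with $AB=AC$ and $D$ is the midpoint of $BC$, segment $AD$ is the perpendicular bisector of $BC$ and also the axis of symmetry of the whole figure: the reflection $\sigma$ in line $AD$ swaps $B\leftrightarrow C$ and fixes $A$ and $D$. Consequently $\sigma$ carries $\triangle ABD$ onto $\triangle ACD$, matching vertices $A\mapsto A$, $B\mapsto C$, $D\mapsto D$.

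The key step is to argue that $\sigma$ carries the triangle center $P$ of $\triangle ABD$ to the corresponding triangle center $Q$ of $\triangle ACD$. This is the heart of the matter: a triangle center, being defined by a center function evaluated symmetrically on the side lengths, is intrinsic to the triangle and is preserved by any isometry. More precisely, $\sigma$ is an orientation-reversing isometry, but the defining property of a center function $f(a,b,c)$ — symmetry in $b$ and $c$ — guarantees that the point built from $(x,y,z)=(f(a,b,c):f(b,c,a):f(c,a,b))$ is well-defined independent of the labeling orientation of the reference triangle, hence is fixed (as a construction) under reflection. I would phrase this as: "a triangle center commutes with isometries, including reflections, because its center function is symmetric in the second and third arguments." Therefore $\sigma(P)$ is the triangle center of $\sigma(\triangle ABD)=\triangle ACD$ corresponding to the same center function, which is exactly $Q$; so $\sigma(P)=Q$.

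Once $\sigma(P)=Q$ is established, the three claimed conclusions are immediate properties of reflection in the line $AD$. A point and its reflection lie on a common perpendicular to the axis, so $PQ\perp AD$; since $BC\perp AD$ as well (as $AD$ is the perpendicular bisector of $BC$), we get $PQ\parallel BC$. And the axis of a reflection bisects the segment joining any point to its image, so $AD$ bisects $PQ$. I would state these in one or two sentences and conclude.

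The main obstacle — really the only subtlety worth spelling out — is the claim that the triangle center construction is equivariant under a reflection. One must be careful because reflection reverses orientation, and some "centers" in the loose sense (e.g. ones depending on a choice of orientation) would not behave this way; but Kimberling's center functions are precisely designed so that $f(a,c,b)=f(a,b,c)$, which is exactly the condition making the construction insensitive to the orientation of the vertex labeling. I would therefore include a short explicit verification: if the center of $\triangle ABD$ has trilinears $\bigl(f(a,b,c):f(b,c,a):f(c,a,b)\bigr)$ where $a,b,d$ are the side lengths opposite $A,B,D$, then applying $\sigma$ relabels so that the side opposite $C$ equals the old side opposite $B$ and vice versa, and the symmetry of $f$ in its last two slots makes the resulting trilinears name the corresponding center of $\triangle ACD$. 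Everything else is routine.
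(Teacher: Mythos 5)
Your proposal is correct and follows essentially the same route as the paper: the paper's proof also observes that the congruence (reflection in $AD$) carrying $\triangle ABD$ to $\triangle ACD$ sends $P$ to $Q$, and then reads off the three conclusions from properties of reflection. Your added care in justifying that triangle centers are equivariant under orientation-reversing isometries -- via the symmetry of the center function in its last two arguments -- fills in a step the paper leaves implicit, but it is the same argument.
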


\begin{figure}[h!t]
\centering
\includegraphics[width=0.3\linewidth]{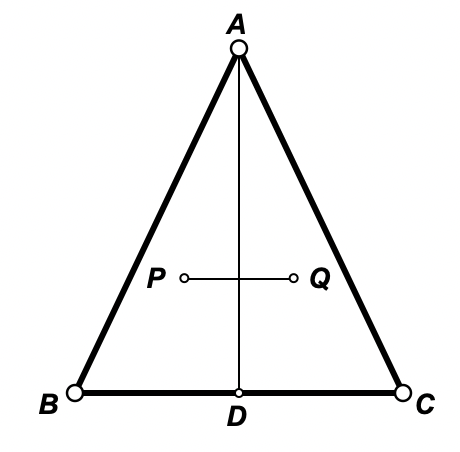}
\label{fig:isoscelesLemma}
\end{figure}

\begin{proof}
Note that $AD\perp BC$.
Under the congruence transformation that maps $\triangle ABD$ into $\triangle ACD$, $Q$ will be the reflection of $P$ about $AD$ (and $PQ$ is bisected by $AD$).
Thus $PQ\perp AD$ which implies $PQ||BC$.
\end{proof}

\begin{theorem}
\label{thm:arbCenterKite}
For any triangle center,
if the reference quadrilateral is a kite,
then the central quadrilateral is an isosceles trapezoid.
The parallel sides of the trapezoid are parallel to one of the diagonals
of the reference quadrilateral and are bisected by the other diagonal
(Figure \ref{fig:diagPtKite}).
\end{theorem}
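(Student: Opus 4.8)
The plan is to exploit the line of symmetry of the kite together with the equivariance of triangle centers under reflections, which reduces almost everything to the Isosceles Triangle Lemma (Lemma~\ref{lemma:isosceles}).

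First I would record the symmetry. In a kite $ABCD$ with $AB=BC$ and $CD=DA$, both $B$ and $D$ are equidistant from $A$ and $C$, so the line $BD$ is the perpendicular bisector of $AC$. Hence the diagonals are perpendicular, $E=AC\cap BD$ is the midpoint of $AC$, and the reflection $\sigma$ in line $BD$ fixes $B$, $D$, and $E$ while interchanging $A$ and $C$. Consequently $\sigma$ carries $\triangle ABE$ onto $\triangle CBE=\triangle BCE$ and $\triangle DAE$ onto $\triangle DCE=\triangle CDE$, with vertices matched in the natural way.

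Next I would apply the Isosceles Triangle Lemma twice. Triangle $ABC$ is isosceles with apex $B$, and $E$ is the midpoint of its base $AC$; applying Lemma~\ref{lemma:isosceles} (apex $B$, base $AC$, base-midpoint $E$) to the chosen center shows that $F$, the center of $\triangle ABE$, and $G$, the center of $\triangle BCE$, satisfy $FG\parallel AC$, $FG\perp BD$, and $FG$ bisected by $BD$; in particular $G=\sigma(F)$. Likewise $\triangle ACD$ is isosceles with apex $D$ and the same base-midpoint $E$, so $I$, the center of $\triangle DAE$, and $H$, the center of $\triangle CDE$, satisfy $IH\parallel AC$, $IH\perp BD$, $IH$ bisected by $BD$, and $H=\sigma(I)$. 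Since $FG$ and $HI$ are both parallel to $AC$, they are parallel to each other, so $FGHI$ is a trapezoid whose parallel sides are parallel to diagonal $AC$ and are bisected by diagonal $BD$.

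Finally, for the ``isosceles'' conclusion I would observe that $\sigma$ swaps $F\leftrightarrow G$ and $I\leftrightarrow H$, hence $\sigma$ maps segment $FI$ onto segment $GH$; as $\sigma$ is an isometry, $FI=GH$, so the two legs of the trapezoid are equal and $FGHI$ is an isosceles trapezoid with the stated properties. The only real subtlety is the justification that a triangle center is carried to the corresponding center of the image triangle under the orientation-reversing isometry $\sigma$; but this is precisely the fact already used inside the proof of Lemma~\ref{lemma:isosceles}, since a center function depends only on the side lengths of its triangle. As the paper does elsewhere, I would also note that in exceptional configurations the trapezoid can degenerate (e.g.\ if the chosen center happens to lie on line $BD$, then $F=G$ and $H=I$ and $FGHI$ collapses to a segment), which is accommodated by the degenerate ``quadrilaterals'' listed in Section~\ref{section:quadrilaterals}.
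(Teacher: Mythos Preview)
Your proof is correct and follows essentially the same route as the paper's: apply the Isosceles Triangle Lemma to the two isosceles triangles $ABC$ and $ACD$ to get $FG\parallel HI\parallel AC$ with both chords bisected by $BD$, then use the reflection symmetry in $BD$ to conclude $FI=GH$. The paper phrases the last step as ``$H,I$ and $F,G$ are equidistant from $BD$, hence $GH=FI$'', which is exactly your reflection argument in slightly different words; your explicit mention of the degenerate case is a nice addition.
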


\begin{figure}[h!t]
\centering
\includegraphics[width=0.4\linewidth]{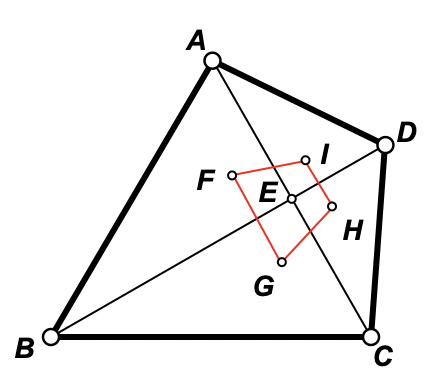}
\caption{kite $\implies$ isosceles trapezoid}
\label{fig:diagPtKite}
\end{figure}

\begin{proof}
Triangle  $DAC$ is isosceles, so by the Isosceles Triangle Lemma (Lemma \ref{lemma:isosceles}),
$HI||AC$. Similarly, $FG||AC$, so $FGHI$ is a trapezoid. Also by the Isosceles Triangle Lemma,
$ED$ bisects $HI$. From the fact that $H$ and $I$ are equidistant from $BD$,
and $F$ and $G$ are equidistant from $BD$, we can conclude that $GH=FI$ and the trapezoid is isosceles.
\end{proof}

\begin{theorem}
\label{thm:kiteVecten}
If the reference quadrilateral is a kite, the central quadrilateral is a square if the chosen center is the incenter ($X_1$) or one of the Vecten points ($X_{485}$ or $X_{486}$).
\end{theorem}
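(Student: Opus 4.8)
The plan is to reduce the claim to two numerical identities and verify them symbolically, in the style of the rest of the paper. By Theorem~\ref{thm:arbCenterKite} we already know that, for \emph{any} triangle center, the central quadrilateral $FGHI$ of a kite is an isosceles trapezoid whose parallel sides $FG$ and $HI$ are perpendicular to one diagonal of the kite (the axis of symmetry, call it $BD$) and are bisected by it. Hence, to conclude that $FGHI$ is a square it is enough to prove two further facts: (a) $FG$ and $HI$ are congruent, and (b) the diagonals $FH$ and $GI$ are perpendicular --- for an isosceles trapezoid whose two parallel sides are congruent is a rectangle, and a rectangle with perpendicular diagonals is a square.

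Fact~(b) I would establish synthetically for all three centers. A kite is orthodiagonal, so each quarter triangle $\triangle ABE$, $\triangle BCE$, $\triangle CDE$, $\triangle DAE$ is right-angled at the diagonal point $E$, with its legs along the two diagonals. For $X_1$ this is already contained in the proof of Theorem~\ref{thm:general1}: $F$, $E$, $H$ are collinear, $G$, $E$, $I$ are collinear, and the two lines are perpendicular. For the Vecten points I would first record a companion to Lemma~\ref{lemma:angleBisector}: in any right triangle the outer Vecten point lies on the internal bisector, and the inner Vecten point on the external bisector, of the right angle. (This is immediate from the definition --- the cevian from the right-angle vertex to the center of the square erected on the hypotenuse runs along that bisector.) Applying this in each quarter triangle, all of whose right angles are at $E$, puts $F$ and $H$ on one of the two perpendicular bisectors of the diagonals and $G$ and $I$ on the other, so $F$, $E$, $H$ and $G$, $E$, $I$ are collinear and $FH\perp GI$.

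For fact~(a) I would compute, as in the proof of Theorem~\ref{thm:ortho6}. Put $E$ at the origin with the diagonals along the coordinate axes; since $BD$ is the perpendicular bisector of $AC$, one may take $A=(-m,0)$, $C=(m,0)$, $B=(0,\beta)$, $D=(0,-\delta)$, which automatically makes $AB=BC$ and $CD=DA$. For the two Vecten cases I would build $F$, $G$, $H$, $I$ by the rotation-and-intersection recipe used in the proof of Theorem~\ref{thm:cyclic485}, erecting squares on the legs and the hypotenuse of each quarter triangle and intersecting the three cevians of the perspector, so that every coordinate remains a rational function of $m$, $\beta$, $\delta$ with no surds; then one expands $|FG|^2-|HI|^2$ in Mathematica and checks that it vanishes identically. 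For the incenter, introduce the hypotenuse lengths $h_1=AB$ and $h_2=CD$ as fresh symbols obeying $h_1^2=m^2+\beta^2$ and $h_2^2=m^2+\delta^2$; the incenters of the quarter triangles then have rational coordinates, and one confirms $|FG|=|HI|$ by reducing modulo those two relations.

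The step I expect to be the real obstacle is fact~(a). The trapezoidal shape and the orthogonality of $FH$ and $GI$ both come for free from symmetry and from the earlier theorems, but the equality $|FG|=|HI|$ is the genuine extra rigidity that singles out $X_1$, $X_{485}$ and $X_{486}$ from an arbitrary center, and it is not apparent from the figure. Once the coordinates are in hand the verification is routine but heavy, so, as with the other computer proofs here, the detailed symbolic computations would be deferred to the accompanying Mathematica notebooks --- one run for $X_1$ (using the Pythagorean relations above) and one for each of $X_{485}$ and $X_{486}$ (purely polynomial, via the square-erection construction).
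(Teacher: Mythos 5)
Your reduction is logically sound, and the synthetic half of it is correct: by Theorem \ref{thm:arbCenterKite} the central quadrilateral is an isosceles trapezoid symmetric about $BD$ with $FG\parallel HI\parallel AC$, and your observation that the outer (resp.\ inner) Vecten point of a right triangle lies on the internal (resp.\ external) bisector of the right angle is right, so for all three centers $F,E,H$ and $G,E,I$ are collinear on two perpendicular lines through $E$. Together with $|FG|=|HI|$ this would indeed force a square. Since the paper prints no argument for this theorem at all (it is one of the statements deferred to the Mathematica notebooks), your outline is more explicit than what the paper offers.

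The difficulty is exactly the step you flagged as the obstacle: fact~(a) is not an identity, so the Mathematica verification you defer to would return \emph{false} for a generic kite. In your own coordinates $A=(-m,0)$, $C=(m,0)$, $B=(0,\beta)$, $D=(0,-\delta)$, the incenters of the four right quarter triangles are $F=(-r_1,r_1)$, $G=(r_1,r_1)$, $H=(r_2,-r_2)$, $I=(-r_2,-r_2)$ with $r_1=\tfrac12\bigl(m+\beta-\sqrt{m^2+\beta^2}\bigr)$ and $r_2=\tfrac12\bigl(m+\delta-\sqrt{m^2+\delta^2}\bigr)$; hence $|FG|=2r_1$ and $|HI|=2r_2$, and since $t\mapsto t-\sqrt{m^2+t^2}$ is strictly increasing these agree only when $\beta=\delta$, i.e.\ only when the kite is a rhombus. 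The Vecten points behave the same way: the outer Vecten point of the quarter triangle with legs $m$ and $q$ lies at $\bigl(\pm mq/(2(m+q)),\pm mq/(2(m+q))\bigr)$, which is again not symmetric under swapping $\beta$ and $\delta$. What your argument actually establishes for every kite is that $FGHI$ is an isosceles trapezoid with perpendicular (hence also equal) diagonals --- precisely how Appendix \ref{appendix:diagonalPointData} records the $n=1,485,486$ cases for a kite (``equidiagonal orthodiagonal trapezoid''), while the ``square'' conclusion appears there only under the rhombus. So the statement as printed needs ``rhombus'' in place of ``kite'' (or a weaker conclusion), and no completion of your step~(a) can succeed for a general kite.
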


\textbf{Note.} There are many other centers for which the central quadrilateral is a square.

\subsection{Rhombi}

\begin{theorem}
\label{thm:arbCenterRhombus}
For any triangle center,
if the reference quadrilateral is a rhombus,
then the central quadrilateral is a rectangle.
The two quadrilaterals have the same diagonal point.
The sides of the rectangle are parallel to diagonals of the rhombus
and are bisected by them
(Figure \ref{fig:diagPtRhombus}).
\end{theorem}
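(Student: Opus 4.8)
The plan is to exploit the fact that a rhombus is simultaneously a parallelogram and a kite — indeed a kite in two ways, since both of its diagonals are axes of symmetry — so that the rectangle structure falls out of the results already proved for those two shapes, with essentially no new computation.

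First I would invoke Theorem \ref{thm:arbCenterParallelogram}. Since a rhombus $ABCD$ is in particular a parallelogram, the central quadrilateral $FGHI$ is a parallelogram, it has the same diagonal point $E$ as $ABCD$, and its sides are parallel to the diagonals $AC$ and $BD$ of $ABCD$. This already establishes the ``same diagonal point'' assertion and reduces the ``rectangle'' assertion to exhibiting a single right angle in $FGHI$. But in a rhombus the diagonals are perpendicular, $AC\perp BD$, and two adjacent sides of $FGHI$ are parallel to $AC$ and to $BD$ respectively, hence are perpendicular. A parallelogram with one right angle is a rectangle, so $FGHI$ is a rectangle whose sides are parallel to the diagonals of the rhombus.

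It remains to prove that the diagonals of the rhombus bisect the sides of this rectangle, and for this I would apply the Isosceles Triangle Lemma (Lemma \ref{lemma:isosceles}) four times. In a rhombus the diagonals bisect each other, so $E$ is the midpoint of $AC$ and of $BD$. Triangle $BAC$ is isosceles with apex $B$ (because $BA=BC$), base $AC$, and $E$ the midpoint of that base; its two halves are $\triangle ABE$ and $\triangle BCE$, whose selected centers are $F$ and $G$. The lemma gives that segment $FG$ is bisected by line $BE$, that is, by the diagonal $BD$ (and, consistently with the previous paragraph, $FG\parallel AC$). Applying the same lemma to the isosceles triangle $DAC$ (apex $D$) shows that $HI$ is bisected by $BD$; applying it to the isosceles triangles $ABD$ (apex $A$) and $CBD$ (apex $C$) shows that $FI$ and $GH$ are each bisected by $AC$. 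Thus every side of the rectangle is bisected by the diagonal of the rhombus to which it is not parallel, which completes the proof.

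I do not expect a genuine obstacle here: the argument is a direct appeal to Theorem \ref{thm:arbCenterParallelogram} together with four routine uses of Lemma \ref{lemma:isosceles}. The only point that needs a little care is the bookkeeping — keeping track of which quarter triangle carries which of the centers $F,G,H,I$, and verifying that in each isosceles-triangle application the two halves are matched as \emph{corresponding} triangle centers in the precise sense required by Lemma \ref{lemma:isosceles}. This is straightforward given the labelling conventions for the quarter triangles fixed in Section \ref{section:quarterTriangles}, so the whole proof stays purely synthetic.
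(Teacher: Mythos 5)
Your argument is correct and, unlike the paper's, actually proves every clause of the statement; but it takes a somewhat different route. The paper simply intersects two earlier results: a rhombus is a kite, so by Theorem \ref{thm:arbCenterKite} the central quadrilateral is an isosceles trapezoid; it is also a parallelogram, so by Theorem \ref{thm:arbCenterParallelogram} the central quadrilateral is a parallelogram; and a parallelogram that is also an isosceles trapezoid must be a rectangle (the ``bisected by the diagonals'' clause is left implicit there). You instead extract the right angle from the clause of Theorem \ref{thm:arbCenterParallelogram} asserting that the sides of $FGHI$ are parallel to the diagonals of the reference parallelogram. One warning about that step: this clause is stated but never proved in the paper, and it fails for a general parallelogram --- for circumcenters the common-chord argument of Corollary \ref{thm:genCircumcenters} makes the sides of $FGHI$ \emph{perpendicular} to the diagonals, and a numerical check with incenters in a non-rhombic parallelogram shows $FG$ parallel to neither diagonal --- so the parallelism you want is genuinely special to the rhombus. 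Fortunately your second paragraph re-derives exactly what is needed: the four applications of Lemma \ref{lemma:isosceles} to the isosceles triangles $BAC$, $DAC$, $ABD$, $CBD$ give $FG\parallel HI\parallel AC$, $FI\parallel GH\parallel BD$, and the bisection of each side by the diagonal it is not parallel to; combined with $AC\perp BD$ and with Theorem \ref{thm:arbCenterParallelogram} for the parallelogram structure and the common diagonal point, this yields the full statement. So the proof stands, provided you present the parallelism of the sides to the diagonals as a consequence of your four lemma applications rather than as a citation of Theorem \ref{thm:arbCenterParallelogram}.
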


\begin{figure}[h!t]
\centering
\includegraphics[width=0.4\linewidth]{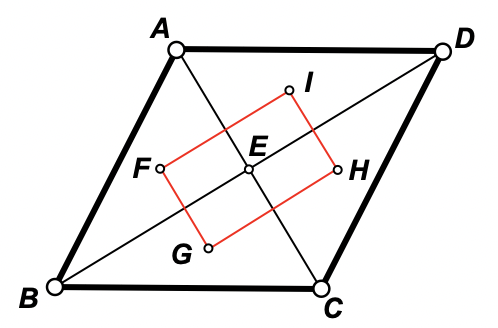}
\caption{rhombus $\implies$ rectangle}
\label{fig:diagPtRhombus}
\end{figure}

\begin{proof}
By Theorem \ref{thm:arbCenterKite}, the central quadrilateral must be an isosceles trapezoid.
By Theorem \ref{thm:arbCenterParallelogram}, the central quadrilateral must be a parallelogram.
But a quadrilateral that is both an isosceles trapezoid and a parallelogram must be a rectangle.
\end{proof}

\void{ 
\subsection{Trapezoids}

\begin{theorem}
\label{thm:arbCenterTrapezoid}
For any triangle center,\\
if the reference quadrilateral is a trapezoid,
then the central quadrilateral is an orthodiagonal quadrilateral.
The two quadrilaterals have the same diagonal point.
\end{theorem}

\begin{figure}[h!t]
\centering
\includegraphics[width=0.5\linewidth]{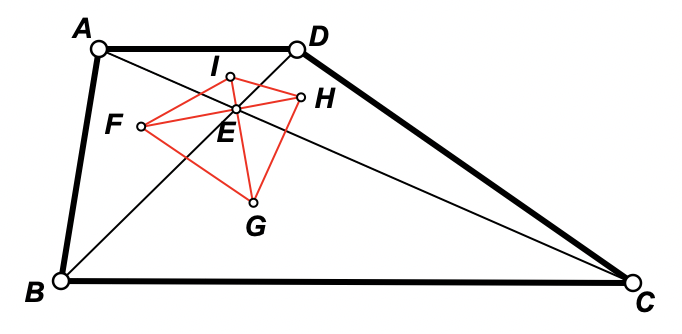}
\caption{trapezoid $\implies$ orthodiagonal}
\label{fig:diagPtTrapezoid}
\end{figure}

\begin{proof}
For a proof, see Appendix \ref{appendix:diagonalPointProofs} (pending).
\end{proof}
}

\subsection{Isosceles Trapezoids}

\void{
\begin{lemma}[The Trapezoid Lemma]
\label{lemma:trapezoid}
Let $ABCD$ be trapezoid (with $AD||BC$) and
let $E$ be the intersection of the diagonals.
Let $P$ be a triangle center of $\triangle BEC$ and
let $Q$ be the corresponding triangle center of $\triangle DEA$.
Then $PQ$ passes through $E$.
\end{lemma}

\begin{figure}[h!t]
\centering
\includegraphics[width=0.5\linewidth]{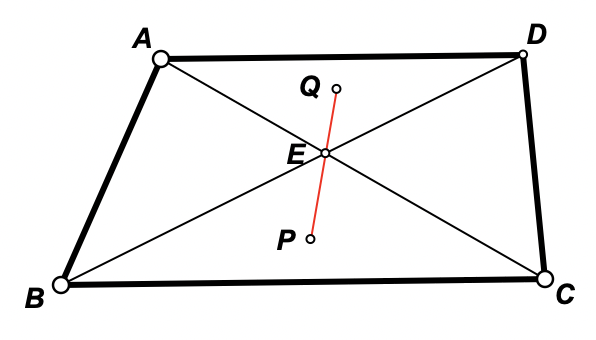}
\label{fig:trapezoidLemma}
\end{figure}

\begin{proof}
Note that triangles $BEC$ and $DEA$ are similar.
Under the similarity transformation that maps $\triangle BEC$ into $\triangle DEA$, $P$ will get mapped into $Q$.
Since $E$ is the center of the similarity, this means $PQ$ will pass through $E$.
\end{proof}

Do we need this lemma anywhere?
}

\begin{theorem}
\label{thm:isoTrap}
For any triangle center,
if the reference quadrilateral is an isosceles trapezoid,
then the central quadrilateral is a kite.
The two quadrilaterals have the same diagonal point.
One diagonal is parallel to the parallel sides of the trapezoid.
(Figure \ref{fig:diagPtIsoscelesTrapezoid})
\end{theorem}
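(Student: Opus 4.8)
The plan is to exploit the reflective symmetry that an isosceles trapezoid possesses. Take the reference quadrilateral as $ABCD$ with $AB\parallel CD$ (this is the labelling forced by $\angle A=\angle B$), and let $\ell$ be the common perpendicular bisector of $AB$ and $CD$; this is the axis of symmetry of the trapezoid, and the reflection $\sigma$ in $\ell$ satisfies $\sigma(A)=B$ and $\sigma(C)=D$. Consequently $\sigma$ interchanges the diagonals $AC$ and $BD$ and therefore fixes their intersection $E$; in particular $E\in\ell$.

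Next I would follow the four quarter triangles through $\sigma$. It sends $\triangle ABE$ to $\triangle BAE=\triangle ABE$ and $\triangle CDE$ to $\triangle DCE=\triangle CDE$, so each of these is fixed as a set; as in the proofs of Lemmas~\ref{lemma:parallelogram} and~\ref{lemma:isosceles}, an isometry carries the chosen center of a triangle to the corresponding center of its image, so $\sigma$ fixes $F$ and $H$, giving $F,H\in\ell$. (Equivalently, $\triangle ABE$ and $\triangle CDE$ are isosceles with apex $E$ and axis $\ell$, so any of their centers lies on $\ell$.) On the other hand $\sigma$ sends $\triangle BCE$ to $\triangle ADE=\triangle DAE$, so $\sigma(G)=I$: the points $G$ and $I$ are mirror images of each other in $\ell$.

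The stated shape now follows formally. Applying $\sigma$ to the sides of $FGHI$, with $\sigma$ fixing $F,H$ and swapping $G,I$, gives $|FG|=|FI|$ and $|HG|=|HI|$, so $FGHI$ is a kite whose axis of symmetry is the line $FH=\ell$. Its other diagonal $GI$ joins a point to its reflection in $\ell$, hence $GI\perp\ell$; since $AB,CD\perp\ell$ as well, $GI$ is parallel to the parallel sides of the trapezoid, as claimed. (When the chosen center lies on the Euler line this kite is in fact a rhombus, consistent with Corollaries~\ref{cor:equi-rhombusH}--\ref{cor:equi-rhombusO}, since an isosceles trapezoid is equidiagonal.)

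The one clause that does not drop out of the symmetry alone — and which I expect to be the crux — is that the two quadrilaterals share the same diagonal point. We get for free that $E$ lies on the diagonal $FH$ (as $E,F,H\in\ell$); what remains is to show $E$ also lies on $GI$, equivalently, since $GI\perp\ell$ and $E\in\ell$, that $E$ is the midpoint of $GI$. I would settle this in coordinates adapted to the symmetry: put $\ell$ on the $y$-axis, express $E$ and the center $G$ of $\triangle BCE$ in terms of the side lengths of that triangle, and verify that $G$ has the same second coordinate as $E$; then $I=\sigma(G)$ forces $E$ to bisect $GI$. This last computation is the only non-synthetic step, it is the one genuinely sensitive to which triangle center has been chosen, and it is therefore where I would expect any real difficulty — or the need for a supplementary hypothesis — to appear.
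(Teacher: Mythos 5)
Your argument for the kite property and for the clause about one diagonal being parallel to the parallel sides is correct and is essentially the paper's own proof: the paper likewise uses the axis of symmetry of the trapezoid, notes that the two quarter triangles it fixes are isosceles so that their centers lie on the axis, and that the centers of the other two are mirror images, whence the kite. (The only cosmetic difference is labelling: the paper's proof takes $AD\parallel BC$, so there it is $G$ and $I$ that lie on the axis and $F,H$ that are swapped; your $AB\parallel CD$ convention is the one consistent with the paper's table entry $A=B$, $C=D$.) Your suspicion about the remaining clause is well founded, and in fact that clause cannot be closed: the two quadrilaterals do \emph{not} in general have the same diagonal point, and the paper's proof silently omits this part of the statement. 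Concretely, take $A=(-2,1)$, $B=(2,1)$, $C=(1,-1)$, $D=(-1,-1)$ with the centroid as the chosen center. Then $E=(0,-1/3)$, while $F=(0,5/9)$, $G=(1,-1/9)$, $H=(0,-7/9)$, $I=(-1,-1/9)$, so the diagonals of $FGHI$ meet at $(0,-1/9)\neq E$. What is true in general is only that the axis diagonal of the kite (your $FH$) passes through $E$; the perpendicular diagonal $GI$ need not, so the coordinate verification you defer would fail except for special centers, exactly as you anticipated.
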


\begin{figure}[h!t]
\centering
\includegraphics[width=0.4\linewidth]{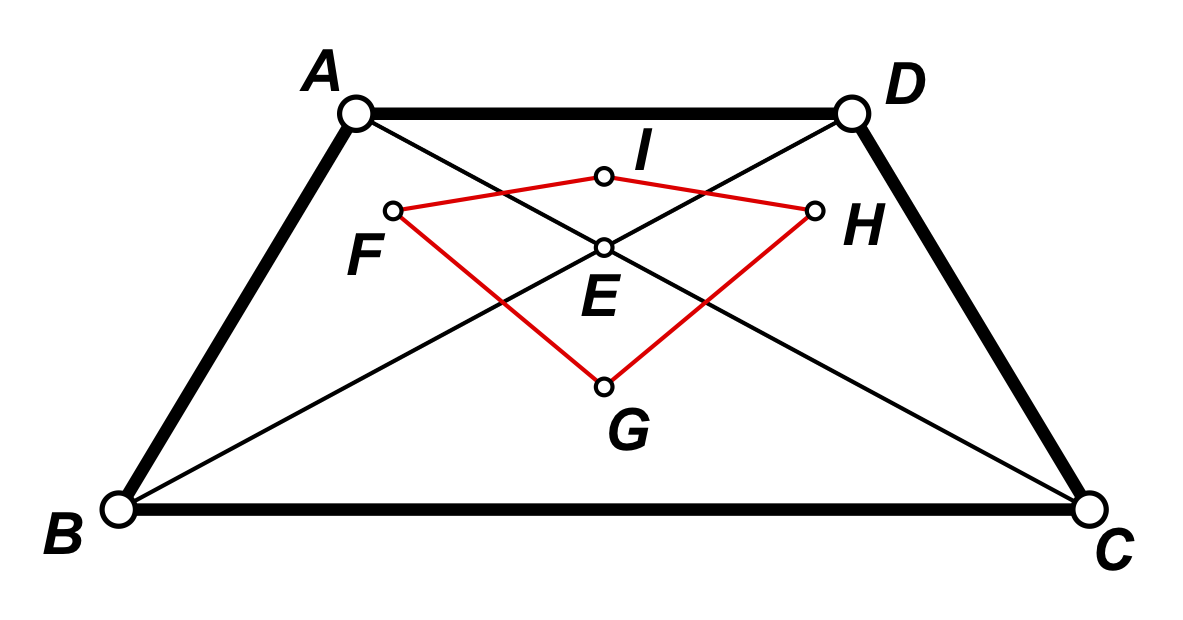}
\caption{isosceles trapezoid $\implies$ kite}
\label{fig:diagPtIsoscelesTrapezoid}
\end{figure}

\begin{proof}
Note that $\triangle AED$ and $\triangle BEC$ are isosceles triangles.
All triangle centers of an isosceles triangle lie on the altitude to the base.
We can conclude that $G$ and $I$ lie on the perpendicular bisector of $BC$
(which passes through $E$).
Now note that $\triangle AEB\cong \triangle DEC$.
One triangle is the mirror image of the other when reflected about the
perpendicular bisector of $BC$.
Under the reflection that maps $\triangle AEB$ into $\triangle DEC$, $F$ will get mapped into $H$. Thus, $GI$ is the perpendicular bisector of $FH$.
Hence, $FI=HI$ and $FG=HG$ which means that quadrilateral $FGHI$ is a kite.
\end{proof}

\subsection{Rectangles}

\begin{theorem}
\label{thm:arbCenterRectangle}
For any triangle center,
if the reference quadrilateral is a rectangle,
then the central quadrilateral is a rhombus.
The two quadrilaterals have the same diagonal point.
The sides of the rhombus are parallel to diagonals of the rectangle
and are bisected by them
(Figure \ref{fig:diagPtRectangle}).

\end{theorem}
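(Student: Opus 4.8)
The plan is to derive this result purely by combining two earlier ``for any triangle center'' theorems, exactly in the spirit of the proof of Theorem \ref{thm:arbCenterRhombus}. First I would observe that a rectangle $ABCD$ is simultaneously a parallelogram (opposite sides parallel) and an isosceles trapezoid (for instance with $AD\parallel BC$), so both Theorem \ref{thm:arbCenterParallelogram} and Theorem \ref{thm:isoTrap} apply to the same configuration and to the very same central quadrilateral $FGHI$. Theorem \ref{thm:arbCenterParallelogram} then tells us that $FGHI$ is a parallelogram, that its sides are parallel to the diagonals $AC$ and $BD$ of the rectangle, and that $FGHI$ and $ABCD$ share the diagonal point $E$; Theorem \ref{thm:isoTrap} tells us that $FGHI$ is a kite. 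A quadrilateral that is both a parallelogram and a kite has all four sides equal, hence is a rhombus. This already establishes the main assertion together with the ``same diagonal point'' and ``sides parallel to the diagonals'' clauses.

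It then remains only to check that the sides of the rhombus are bisected by the diagonals of the rectangle. Placing $E$ at the origin, central symmetry of the parallelogram gives $H=-F$ and $I=-G$, so the midpoint of side $FG$ is $\tfrac12(F+G)$, which is parallel to $F+G=-(H-G)$, i.e.\ parallel to the side $GH$. Since adjacent sides of a nondegenerate parallelogram are not parallel, one of $FG$, $GH$ is parallel to $AC$ and the other to $BD$; say $GH\parallel BD$ and $FG\parallel AC$. Then the line through $E$ in the direction of $BD$ — that is, the diagonal $BD$ itself — passes through the midpoint of $FG$, and by symmetry it also bisects $HI$; likewise $AC$ bisects $GH$ and $IF$. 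This is the only computation in the argument, and it is a two-line vector check.

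There is essentially no hard step here: the entire content is bookkeeping to confirm that a rectangle does lie under the hypotheses of Theorems \ref{thm:arbCenterParallelogram} and \ref{thm:isoTrap} (it does, by the inclusive definitions of Section \ref{section:quadrilaterals} and the hierarchy of Figure \ref{fig:quadShapes}), and to make sure the kite conclusion and the parallelogram conclusion refer to the same quadrilateral $FGHI$ with vertices in the same cyclic order — which they do, since a rectangle labeled $ABCD$ in order satisfies $AD\parallel BC$ with no relabeling needed. If one preferred an argument that does not invoke Theorem \ref{thm:isoTrap}, one could instead apply it twice, using each pair of opposite sides of the rectangle as the ``parallel sides,'' obtaining that $FGHI$ is a kite about each of its two diagonals, which forces it to be a rhombus directly; the ``bisected by the diagonals'' clause would then also drop out from the two symmetry axes produced.
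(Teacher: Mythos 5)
Your argument is correct, and your closing remark actually \emph{is} the paper's proof: the paper disposes of this theorem in two lines by viewing the rectangle as an isosceles trapezoid in two different ways, so that both $FGHI$ and $GHIF$ are kites by Theorem \ref{thm:isoTrap}, and a quadrilateral that is a kite about both of its diagonals is a rhombus. Your primary route is a slightly different combination: you invoke Theorem \ref{thm:arbCenterParallelogram} (a rectangle is a parallelogram) together with a single application of Theorem \ref{thm:isoTrap}, and use the fact that a parallelogram with two adjacent sides equal is a rhombus --- a deliberate mirror of the paper's proof of Theorem \ref{thm:arbCenterRhombus}, which combines the same two ``for any center'' results the other way around. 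Both reductions are sound; yours has the small advantage that the ``same diagonal point'' and ``sides parallel to the diagonals'' clauses come for free from the statement of Theorem \ref{thm:arbCenterParallelogram}, whereas the paper's double-kite argument leaves those clauses (and the ``bisected by the diagonals'' clause, which the paper does not address at all) to the reader. Your vector check that the diagonal of the rectangle through $E$ in the direction of $GH$ passes through the midpoint of $FG$ (since $H=-F$, $I=-G$ gives $F+G=-(H-G)$) is correct and fills that gap cleanly.
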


\begin{figure}[h!t]
\centering
\includegraphics[width=0.3\linewidth]{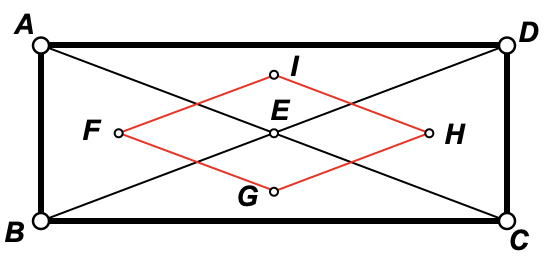}
\caption{rectangle $\implies$ rhombus}
\label{fig:diagPtRectangle}
\end{figure}

\begin{proof}
A rectangle can be considered to be an isosceles trapezoid in two different ways.
Thus, this result follows from Theorem \ref{thm:isoTrap}
since both quadrilaterals $FGHI$ and $GHIF$ are kites.
\end{proof}

\subsection{Squares}

\begin{theorem}
\label{thm:arbCenterSquare}
For any triangle center,
if the reference quadrilateral is a square,
then the central quadrilateral is a square
(Figure \ref{fig:diagPtSquare}).
\end{theorem}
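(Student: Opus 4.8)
The plan is to obtain this as an immediate consequence of the results already established for rhombi and for rectangles, using the fact that a square is simultaneously both.

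First I would apply Theorem \ref{thm:arbCenterRhombus}. A square is in particular a rhombus, so for any triangle center the central quadrilateral $FGHI$ (formed by the chosen centers of $\triangle ABE$, $\triangle BCE$, $\triangle CDE$, $\triangle DAE$ in that order) is a rectangle, and in fact $FGHI$ has the same diagonal point as the reference square with its sides parallel to the square's diagonals. Next I would apply Theorem \ref{thm:arbCenterRectangle}. A square is also a rectangle, so for the same triangle center the same labeled central quadrilateral $FGHI$ is a rhombus. A quadrilateral that is at once a rectangle (all four angles right) and a rhombus (all four sides equal) is a square, which gives the conclusion; if one wants the extra geometric data (common diagonal point, diagonals bisecting sides parallel to the square's diagonals), it is inherited from the rhombus-case application.

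The only point needing a word of care is that both cited theorems must be invoked for the identical configuration and the identical vertex correspondence $F,G,H,I$ — but this is automatic, since both are stated for the quarter-triangle setup of Section \ref{section:quarterTriangles} with the centers taken in the fixed counterclockwise order. Consequently there is essentially no obstacle: the argument is a two-line deduction from prior theorems. (An alternative, slightly longer route would combine Theorem \ref{thm:arbCenterKite} — viewing the square as a kite in two ways to force $FGHI$ to be an isosceles trapezoid in two ways, hence a rectangle — with Theorem \ref{thm:arbCenterParallelogram}; but the rhombus/rectangle pairing above is the shortest and I would present that one.)
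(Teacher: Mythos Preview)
Your proof is correct and is essentially identical to the paper's own argument: apply Theorem \ref{thm:arbCenterRhombus} (square is a rhombus, so the central quadrilateral is a rectangle) and Theorem \ref{thm:arbCenterRectangle} (square is a rectangle, so the central quadrilateral is a rhombus), then conclude that a quadrilateral which is both a rectangle and a rhombus is a square. The only difference is the order in which you cite the two theorems.
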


\begin{figure}[h!t]
\centering
\includegraphics[width=0.15\linewidth]{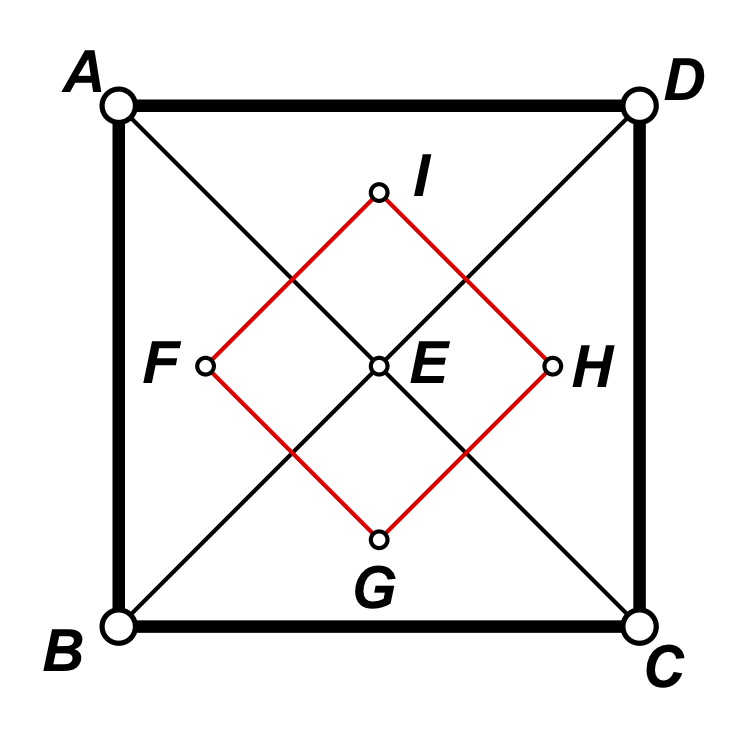}
\caption{square $\implies$ square}
\label{fig:diagPtSquare}
\end{figure}

\begin{proof}
By Theorem \ref{thm:arbCenterRectangle}, the central quadrilateral must be a rhombus.
By Theorem \ref{thm:arbCenterRhombus}, the central quadrilateral must be a rectangle.
But a quadrilateral that is both a rhombus and a rectangle must be a square.
\end{proof}

\begin{theorem}
\label{thm:X80Square}
Let $ABCD$ be a square with diagonal point $E$.
Let $F$ be the reflection of the incenter of $\triangle ABE$ about the Feuerbach point
of $\triangle ABE$ ($X_{80}$). Define $G$, $H$, and $I$ similarly using triangles
$\triangle BCE$, $\triangle CDE$, and $\triangle DAE$.
Then quadrilateral $FGHI$ is congruent to $ABCD$.
\end{theorem}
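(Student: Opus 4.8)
The one fact that makes this easy is that ``the reflection of the incenter in the Feuerbach point'' is itself a triangle center (it is $X_{80}$), so $F$, $G$, $H$, $I$ are obtained by placing a single triangle center in the four quarter triangles. Consequently Theorem \ref{thm:arbCenterSquare} already applies: since $ABCD$ is a square, $FGHI$ is a square, and (tracing the chain through Theorems \ref{thm:arbCenterRectangle} and \ref{thm:arbCenterRhombus}, both of which assert ``same diagonal point'') this square has the same center $E$ as $ABCD$. A square centered at $E$ is determined up to congruence by its circumradius, i.e.\ by the distance from $E$ to a vertex, so to finish it suffices to verify the single equality $EF = EA$.

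To compute $EF$ I would put $E$ at the origin and take $A=(1,1)$, $B=(-1,1)$, $C=(-1,-1)$, $D=(1,-1)$, so that $ABCD$ is a square of side $2$ with $EA=\sqrt2$. The quarter triangle $\triangle ABE$ is then the isosceles right triangle with the right angle at $E$, legs $EA=EB=\sqrt2$ lying along the lines $y=x$ and $y=-x$, and hypotenuse $AB$ on the line $y=1$. Its axis of symmetry is the $y$-axis, so every one of its triangle centers lies on the $y$-axis. A routine computation gives its incenter $X_1=(0,\,2-\sqrt2)$ with inradius $r=\sqrt2-1$, its orthocenter equal to $E=(0,0)$, its circumcenter $O=(0,1)$ (the midpoint of the hypotenuse), and hence its nine-point center $N=(0,\tfrac12)$ with nine-point radius $\tfrac12$.

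Next I would pin down the Feuerbach point $X_{11}$ of $\triangle ABE$. The nine-point circle is the circle of center $(0,\tfrac12)$ and radius $\tfrac12$, and the incircle is the circle of center $(0,\,2-\sqrt2)$ and radius $\sqrt2-1$; both are tangent to the line $y=1$ from below at the point $(0,1)$, so $(0,1)$ is their (unique) point of internal tangency, i.e.\ $X_{11}=(0,1)$. Therefore $F=X_{80}(\triangle ABE)$, being the reflection of $X_1$ in $X_{11}$, equals $2X_{11}-X_1=(0,\sqrt2)$, and so $EF=\sqrt2=EA$. Together with the first paragraph this shows $FGHI$ is a square centered at $E$ with the same circumradius as $ABCD$, hence $FGHI\cong ABCD$; in fact $FGHI$ is the image of $ABCD$ under a $45\degrees$ rotation about $E$. (If one prefers to avoid Theorem \ref{thm:arbCenterSquare}, one may instead note that the $90\degrees$, $180\degrees$, $270\degrees$ rotations about $E$ cyclically permute the four congruent quarter triangles, hence carry $F$ to $G$, $H$, $I$; this yields $FGHI$ with vertices $(0,\sqrt2)$, $(-\sqrt2,0)$, $(0,-\sqrt2)$, $(\sqrt2,0)$ directly.)

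The only point requiring any care is the identification $X_{11}=(0,1)$ for the isosceles right triangle, together with the sign convention $X_{80}=2X_{11}-X_1$; once the incircle and nine-point circle are written out explicitly their tangency at $(0,1)$ is immediate, and everything else is elementary arithmetic. So the ``main obstacle'' here is bookkeeping rather than a real difficulty.
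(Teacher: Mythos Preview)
Your argument is correct. The paper does not give an in-text proof of this theorem at all; it falls under the blanket clause ``if a theorem is stated without a proof, we refer the reader to the Mathematica notebooks,'' so the paper's own proof is a black-box symbolic computation. Your approach is therefore genuinely different and more transparent: you first invoke Theorem~\ref{thm:arbCenterSquare} (and the chain through Theorems~\ref{thm:arbCenterRhombus} and~\ref{thm:arbCenterRectangle}) to conclude that $FGHI$ is a square with the same diagonal point $E$, reducing the congruence claim to the single scalar equation $EF=EA$, and then verify that equation by an explicit hand computation of $X_{80}$ in the isosceles right triangle. All of your numerical steps check out: the incenter at $(0,2-\sqrt2)$, the nine-point circle of center $(0,\tfrac12)$ and radius $\tfrac12$, the common tangency at $(0,1)$ giving $X_{11}=(0,1)$, and hence $F=2X_{11}-X_1=(0,\sqrt2)$ with $EF=\sqrt2=EA$. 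The parenthetical alternative via the $90^\circ$ rotational symmetry of the square is also a clean way to bypass the appeal to Theorem~\ref{thm:arbCenterSquare}. Compared to the paper's Mathematica verification, your route is shorter, human-checkable, and isolates exactly why the side length is preserved (namely that $X_{80}$ of an isosceles right triangle sits at distance equal to a leg from the right-angle vertex).
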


The same result holds for the Bevan point ($X_{40}$) and its isogonal conjugate ($X_{84}$).
See Figure \ref{fig:diagPtCongruentSquare}.
This is also an example where the vertices of the reference quadrilateral and the central
quadrilateral lie on the same circle.

\begin{figure}[h!t]
\centering
\includegraphics[width=0.15\linewidth]{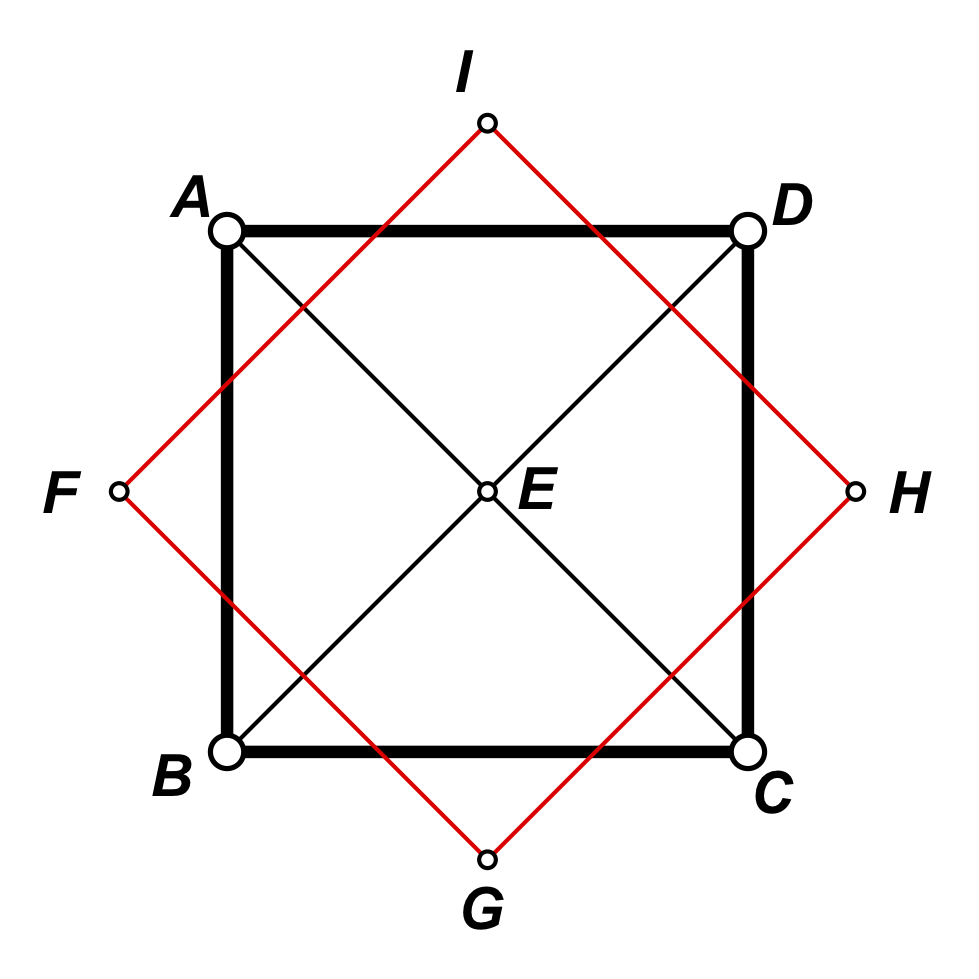}
\caption{square: Bevan points $\implies$ congruent square}
\label{fig:diagPtCongruentSquare}
\end{figure}


\section{Results for Half Triangles}
\label{section:halfTriangles}

In this configuration, the reference quadrilateral is named $ABCD$.
Each diagonal of the quadrilateral divides
the quadrilateral into two triangles. Since there are two diagonals,
a total of four triangles are formed.
The four triangles (numbered 1 to 4) are shown
in Figure \ref{fig:sideTriangles}.

\begin{figure}[h!t]
\centering
\includegraphics[width=0.6\linewidth]{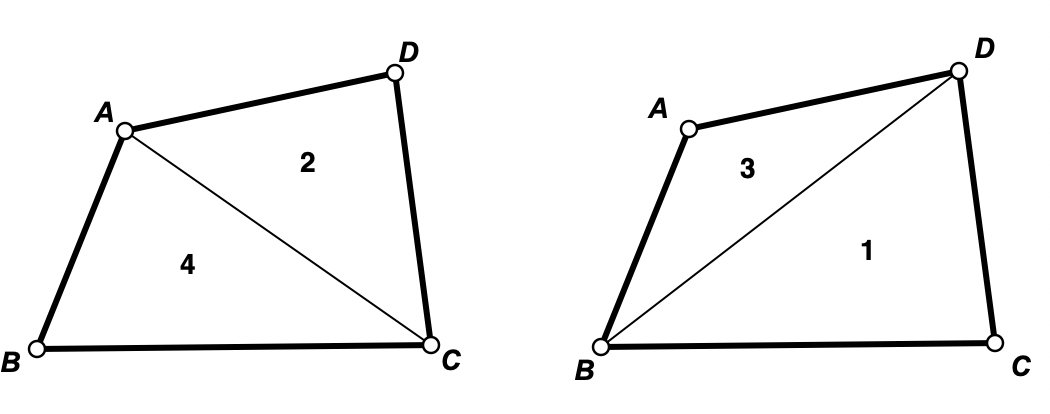}
\caption{Half Triangles}
\label{fig:sideTriangles}
\end{figure}

The triangles have been numbered so that triangle 1 is opposite vertex $A$,
triangle 2 is opposite vertex $B$, etc.
The four triangles are $\triangle BCD$, $\triangle ACD$, $\triangle ABD$, and $\triangle ABC$.
Triangle centers are selected in each triangle.
In order, their names are $E$, $F$, $G$, and $H$.

The raw data collected can be
found in Appendix \ref{appendix:halfTriangleData}.
Looking for patterns in the raw data, we found a number of theorems and
conjectures which are presented below.

\subsection{General Quadrilaterals}

\begin{conjecture}
\label{conjecture:halfGeneralCyclic}
If the reference quadrilateral is a general quadrilateral, $ABCD$, there is no center function
such that the central quadrilateral is a cyclic quadrilateral for all quadrilaterals $ABCD$.
\end{conjecture}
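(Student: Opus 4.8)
The statement is a non‑existence claim, so the natural line of attack is by contradiction: suppose $f$ is a center function such that, for every convex $ABCD$, the four points $E=X_f(\triangle BCD)$, $F=X_f(\triangle ACD)$, $G=X_f(\triangle ABD)$, $H=X_f(\triangle ABC)$ are concyclic, and derive enough constraints on $f$ to rule it out. First I would fix a coordinate framework exactly as in the proof of Theorem~\ref{thm:cyclic485}: take $\triangle ABC$ as a fixed reference triangle with side lengths $a,b,c$ and write $D=(u:v:w)$ with $u>0$, $v<0$, $w>0$. The distance formula gives the nine side lengths of $\triangle ABD,\triangle BCD,\triangle CAD$ as explicit functions of $a,b,c,u,v,w$; the Law of Cosines plus the change‑of‑coordinates formula then express $E,F,G$ (and the constant $H$) in Cartesian coordinates as algebraic functions of $(u,v,w)$. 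Concyclicity is the vanishing of the determinant
\[
\det\begin{pmatrix}
x_E^{2}+y_E^{2} & x_E & y_E & 1\\
x_F^{2}+y_F^{2} & x_F & y_F & 1\\
x_G^{2}+y_G^{2} & x_G & y_G & 1\\
x_H^{2}+y_H^{2} & x_H & y_H & 1
\end{pmatrix}=0,
\]
and the hypothesis forces this to hold on an open set of $(u,v,w)$, hence to be a polynomial identity $P(a,b,c,u,v,w)\equiv 0$ after clearing denominators. Equivalently — and this is the structural observation I would lean on — since $H$ is independent of $D$, the power of the fixed point $H$ with respect to the circumcircle of the moving triangle $EFG$ must vanish identically in $D$.

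The plan is then to feed $P\equiv 0$ special one‑parameter families of quadrilaterals. Symmetric families turn out to be uninformative: if $ABCD$ is an isosceles trapezoid, the mirror reflection across the common perpendicular bisector of the parallel sides pairs $E\leftrightarrow F$ and $G\leftrightarrow H$, so the four centers automatically lie on a circle symmetric about that axis, for \emph{any} center — the identity is satisfied for trivial reasons there. The informative families are the asymmetric ones and, in particular, the kites: taking $\triangle ABC$ isosceles with $AB=BC$ and letting $D$ run along the perpendicular bisector of $AC$ produces a one‑parameter family of kites for which concyclicity is a genuine (codimension‑one) condition, because two of the four centers land on the axis and the remaining mirror pair leaves no slack. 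Letting $D$ run over the circumcircle of $\triangle ABC$ gives cyclic reference quadrilaterals, where one can bring in the (conjectural) classification that only centers equivalent to the incenter $X_1$ yield a cyclic central quadrilateral — $X_1$ doing so because it produces a rectangle by Result~1. Differentiating the normalized determinant once and twice in the transverse (asymmetry) directions around a symmetric base point converts $P\equiv0$ into finitely many \emph{linear} relations among the values and first partial derivatives of $f$ at a finite list of (partly degenerate) triangle shapes. The target is to show these relations are incompatible with $f$ being a nonzero function that is homogeneous and symmetric in its last two arguments; if instead they merely cut the candidates down to a short list (conceivably just $X_1$ up to equivalence), one finishes by exhibiting a single explicit non‑cyclic quadrilateral — e.g. checking symbolically that the four incenters of the half‑triangles of a generic scalene quadrilateral are not concyclic — which contradicts the hypothesis for each survivor.

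The hard part is that center functions form an infinite‑dimensional space, so one cannot simply enumerate candidates; the whole argument stands or falls on the reduction of the analytic identity $P\equiv0$ to finitely many conditions on $f$ evaluated (with its derivatives) at finitely many triangle shapes, and then a clean proof that those conditions are unsatisfiable. A secondary difficulty is that the shortcut through the cyclic case presupposes a classification of the admissible centers for cyclic reference quadrilaterals, which seems to be of comparable depth; one alternative is to bypass it and instead impose $P\equiv0$ on two generic three‑parameter asymmetric families at once, using the $S_4$ relabelling symmetry of $\{A,B,C,D\}$ to keep the unknown ``local data'' of $f$ small. Tracking convexity and vertex‑ordering constraints, and — if one does not restrict to polynomial center functions — the square roots attached to centers such as $X_1$, are bookkeeping nuisances rather than real obstacles; a reasonable first milestone would be to prove the statement for polynomial center functions of bounded degree, where the reduction to finite data is immediate.
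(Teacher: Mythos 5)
The statement you are addressing is presented in the paper as a \emph{conjecture}, supported only by the computational search reported in Appendix~\ref{appendix:halfTriangleData} (``no results were found'' for the general quadrilateral among $X_1,\dots,X_{1000}$); the paper offers no proof, so your proposal must stand on its own. As a proof it does not yet stand. The decisive step --- converting the identity $P(a,b,c,u,v,w)\equiv 0$ into ``finitely many linear relations among the values and first partial derivatives of $f$ at a finite list of triangle shapes'' and then showing these are unsatisfiable --- is precisely the content of the conjecture, and it is asserted rather than performed. Moreover, it cannot work in the form you describe: finitely many conditions on $f$ and its derivatives at finitely many triangle shapes can never be incompatible with $f$ being a nonzero homogeneous function symmetric in its last two arguments, because such a function can be prescribed freely away from those shapes. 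What the hypothesis actually yields is a functional identity holding at every configuration, hence conditions on $f$ at a continuum of shapes, and some genuinely new idea is needed to show that no $f$ satisfies all of them. A codimension count alone will not do: in the quarter-triangle setting the (also codimension-one) parallelogram condition is satisfied by a full one-parameter family of centers (Theorem~\ref{thm:general}), so ``generic non-existence'' heuristics are unreliable here.

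Your proposed shortcut through the cyclic case also rests on a claim contradicted by the paper itself. You assume that for a cyclic reference quadrilateral only centers equivalent to $X_1$ give a cyclic central quadrilateral; but Theorem~\ref{thm:genCyclicTrig} shows that the entire Euler-line family $\cos B\cos C+k\cos A$ works, and Theorem~\ref{thm:genCyclicCommon} adds the Fermat, isodynamic, Tarry and Steiner points among others ($X_1$ is yet another example, via Result~1). So specializing $D$ to the circumcircle leaves an infinite set of survivors --- at minimum a one-parameter family plus sporadic centers --- each of which would still have to be eliminated by an explicit non-cyclic counterexample; and the classification you invoke as a stepping stone is itself posed as an open question in the paper. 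In short, your proposal is a reasonable research plan that correctly locates the difficulty (reducing an identity over an infinite-dimensional space of center functions to checkable conditions), but it contains a factual error in the cyclic specialization and does not supply the key reduction, so the statement remains, as in the paper, a conjecture.
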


\begin{conjecture}
\label{conjecture:halfGeneralRectangle}
If the reference quadrilateral is a general quadrilateral, $ABCD$, there is no center function
such that the central quadrilateral is a rectangle for all quadrilaterals $ABCD$.
\end{conjecture}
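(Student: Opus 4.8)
\emph{Plan of attack.} The plan is to argue by contradiction, using the characterization that a quadrilateral is a rectangle if and only if it is both cyclic and a parallelogram (a parallelogram inscribed in a circle has equal opposite angles that are also supplementary, hence right; and conversely a rectangle obviously has both properties). Suppose some center function $f$ made the central quadrilateral $EFGH$ a nondegenerate rectangle for every convex $ABCD$. Then, writing $E,F,G,H$ for the chosen centers of $\triangle BCD,\triangle ACD,\triangle ABD,\triangle ABC$ as in Section~\ref{section:halfTriangles}, we would have simultaneously the parallelogram identity
$$E+G=F+H\qquad\text{for all convex }ABCD$$
(the sum of the two chosen centers on diagonal $BD$ equals the sum of the two on $AC$), the equidiagonality $|EG|=|FH|$ (the two centers belonging to $BD$ are exactly as far apart as the two belonging to $AC$), and concyclicity of $E,F,G,H$. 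The strategy is to show that this system of requirements is rigid enough to force $f$ to be equivalent to the constant center function $1$, i.e.\ the incenter $X_1$, and then to refute that with a single explicit example.

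First I would set up Cartesian coordinates exactly as in the proof of Theorem~\ref{thm:general}: normalize $B=(0,0)$, $C=(1,0)$, let $A$ and $D$ vary, use the Law of Cosines to turn any trigonometric center function into a function of the side lengths, and thereby write $E,F,G,H$ as explicit rational functions of the coordinates of $A$ and $D$ and of the four values of $f$ on the four half triangles. The identities $E+G=F+H$ and $|EG|^2=|FH|^2$ then become polynomial identities that must hold on an open set of configurations, and I would mine them by specialization: (i) letting $D\to B$, so that $\triangle ACD\to\triangle ABC$ and $F\to H$ while the rectangle collapses to a segment, which constrains $f$ on families of triangles sharing an edge; (ii) restricting to cyclic $ABCD$, where all four half triangles share the circumcircle of $ABCD$ and the problem is most constrained --- one checks at once that the circumcenter, orthocenter, centroid and nine-point center all already violate $E+G=F+H$ for a generic cyclic quadrilateral, and the real work is to push this to the statement that within the cyclic family only $f\sim 1$ survives; and (iii) differentiating the identities along a one-parameter deformation through a symmetric configuration (an isosceles trapezoid or a kite) to extract further linear constraints on $f$. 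Combined, (i)--(iii) should pin $f$ down to the incenter center function up to equivalence.

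It then remains to exhibit one convex, non-cyclic quadrilateral $ABCD$ for which the incenters of $\triangle BCD,\triangle ACD,\triangle ABD,\triangle ABC$ do not form a rectangle; since those four incenters form a rectangle precisely when $ABCD$ is cyclic (the converse of Result~1), any explicit non-cyclic example works, and the verification is a routine high-precision numerical check confirmed symbolically in Mathematica. The main obstacle is the rigidity step, and specifically parts (ii) and (iii): a center function is an essentially arbitrary homogeneous function symmetric in its last two arguments, so ruling out \emph{all} of them requires either showing that $f$ must agree with the incenter center function on a Zariski-dense set of triangles --- forcing $f\sim 1$ --- or assembling a finite list of specializations whose combined constraints already admit no nonconstant solution. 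Everything else, the coordinate bookkeeping and the final symbolic check, is routine and best left to Mathematica, as with the other theorems in this paper.
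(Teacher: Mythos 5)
The paper does not prove this statement: it is explicitly a conjecture, supported only by the exhaustive numerical search reported in Appendix~\ref{appendix:halfTriangleData}, where for a general quadrilateral ``no results were found'' among $X_1,\dots,X_{1000}$. So the question is whether your proposal closes that gap, and it does not. Everything hinges on the rigidity step --- the claim that the parallelogram identity, equidiagonality, and concyclicity together force $f\sim 1$ --- and that step is only announced (``should pin $f$ down''), not argued. Concretely: (a) Kimberling's definition, as quoted in Section~\ref{section:centers}, requires only that $f$ be nonzero, homogeneous, and symmetric in its last two arguments; no continuity or differentiability is assumed, so your specializations (i) (a limit $D\to B$) and (iii) (differentiating along a one-parameter deformation) are not legitimate operations on a general center function. (b) Even restricting to polynomial $f$, you give no mechanism by which finitely many specializations could exclude every member of an infinite-dimensional space of admissible functions except the constants; this is precisely the hard part, and it is presumably the obstruction that kept the authors from upgrading the conjecture to a theorem. (c) The final refutation presupposes the outcome of the rigidity step: there is no a priori reason that the surviving candidate, if one exists, must be $X_1$ rather than some other family.

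Two smaller remarks. First, the statement would follow formally from Conjecture~\ref{conjecture:halfGeneralCyclic}, since every rectangle is cyclic --- but that conjecture is equally open, so this buys nothing. Second, your closing step is sound as far as it goes: to dispose of $X_1$ one only needs a single explicit non-cyclic quadrilateral whose half-triangle incenters fail to form a rectangle (you do not need the full converse of Result~1), and that is a routine verification. But it is the only executable part of the plan; as written, the proposal is a research programme rather than a proof.
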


\begin{theorem}
\label{thm:genSimilar}
If the reference quadrilateral is a general quadrilateral and the chosen center is $X_2$, then the central quadrilateral is similar to the reference quadrilateral. The ratio of similitude is 3
(Figure \ref{fig:halfGenSimilar}).
\end{theorem}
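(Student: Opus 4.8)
The plan is to work with position vectors and exhibit the central quadrilateral as a homothetic copy of the reference quadrilateral. Identify $A$, $B$, $C$, $D$ with position vectors $a$, $b$, $c$, $d$ taken from an arbitrary origin, and set $S=a+b+c+d$. According to Table~\ref{table:patterns} the center $X_2$ is the centroid, which for a triangle is the average of its three vertices; hence the chosen centers of the half triangles $\triangle BCD$, $\triangle ACD$, $\triangle ABD$, $\triangle ABC$ are
$$E=\tfrac13(b+c+d)=\tfrac13(S-a),\quad F=\tfrac13(S-b),\quad G=\tfrac13(S-c),\quad H=\tfrac13(S-d).$$

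First I would record these four formulas; they are the only computation in the proof. Next I would recognize the map $x\mapsto\tfrac13(S-x)$ as the homothety $h$ with ratio $-\tfrac13$ centered at the point $G_0=\tfrac14 S$, the centroid of the four vertices, since $\tfrac13(S-x)=G_0-\tfrac13\,(x-G_0)$. Then $h(A)=E$, $h(B)=F$, $h(C)=G$, $h(D)=H$, so $EFGH$ is the image of $ABCD$ under $h$. A homothety with ratio $-\tfrac13$ is orientation-preserving (it is a half-turn followed by a dilation by $\tfrac13$), hence a direct similarity of ratio $\tfrac13$; therefore $EFGH$, with vertices taken in that cyclic order, is similar to $ABCD$, with $ABCD$ three times as large, which is the claimed ratio of similitude~$3$. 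As a by-product one obtains that the center of similitude is the centroid of $\{A,B,C,D\}$ and that each side of $EFGH$ is parallel to the corresponding side of $ABCD$ but traversed in the opposite direction.

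There is no genuinely hard step here; the only points that deserve a sentence of care are (i) confirming that the entry for $X_2$ really names the centroid, so that ``the center is the average of the vertices'' is legitimate, and (ii) making sure the vertex correspondence is the stated one --- that the center $E$ of the triangle opposite $A$ is paired with $A$, and so on --- so that the similarity is between $EFGH$ and $ABCD$ read in matching order rather than some permutation of the labels. Both are settled at once by the explicit identity $E=\tfrac13(S-a)=h(A)$ and its cyclic analogues.
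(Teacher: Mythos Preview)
Your proof is correct and takes a genuinely different route from the paper's. The paper argues synthetically: it fixes the midpoint $M$ of $BC$, observes that $E$ lies on the median $DM$ with $DM/EM=3$ and $H$ lies on the median $AM$ with $AM/HM=3$, so by the converse of the basic proportionality theorem in $\triangle AMD$ one gets $HE\parallel AD$ and $AD/HE=3$; repeating this for the other three sides gives equal side ratios and equal angles, hence similarity. Your argument instead packages all four centroids at once via the single affine formula $E=\tfrac13(S-a)$, recognizes this as the homothety of ratio $-\tfrac13$ about the centroid $\tfrac14 S$ of the four vertices, and reads off similarity in one stroke. The payoff of your approach is that it is shorter, handles all four sides simultaneously, and immediately identifies the center of similitude and the fact that corresponding sides are antiparallel; the paper's approach has the advantage of being entirely synthetic, using nothing beyond the $2{:}1$ median property, and of making the side-by-side parallelism visible without any algebra.
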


\begin{figure}[h!t]
\centering
\includegraphics[width=0.35\linewidth]{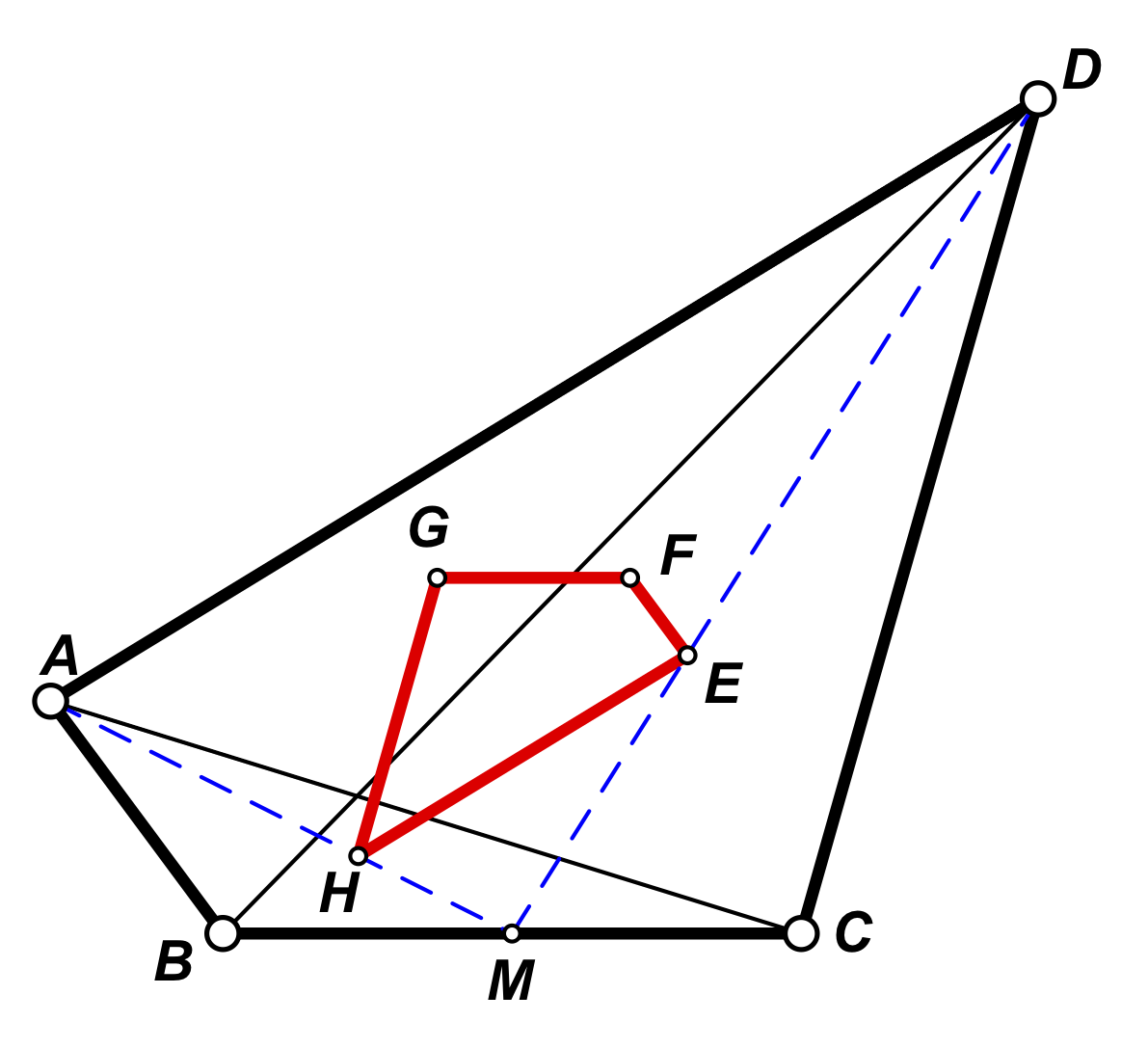}
\caption{centroids $\implies$ $EFGH\sim ABCD$}
\label{fig:halfGenSimilar}
\end{figure}

\begin{proof}
Let $M$ be the midpoint of $BC$. Then $AM$ and $DM$ are medians and $DM/EM=3$ and $AM/HM=3$.
Therefore, in $\triangle AMD$, we must have $HE\parallel AD$ and $AD/HE=3$.
Similarly, we have the same ratio and parallelism for $EF$, $FG$, and $GH$.
Thus, the sides of quadrilateral $ABCD$ are 3 times the corresponding sides of quadrilateral $EFGH$.
The parallelism implies that the angles of the two quadrilaterals are equal.
Since the sides are in proportion and the angles are equal, the two quadrilaterals must be similar.
\end{proof}

\begin{theorem}
\label{thm:halfGenArea}
If the reference quadrilateral is a general quadrilateral and the chosen center is $X_4$, then the central quadrilateral has the same area as the reference quadrilateral
(Figure \ref{fig:halfGenArea}).
\end{theorem}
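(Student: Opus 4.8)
The plan is to argue with plane vectors, using the identity
$[PQRS]=\tfrac12\bigl(\overrightarrow{PR}\times\overrightarrow{QS}\bigr)$ for the signed area of a labelled quadrilateral, where $\times$ is the scalar cross product $\vec u\times\vec v=u_1v_2-u_2v_1$. (This is just the shoelace sum rearranged, so it holds for every quadrilateral.) Here $E,F,G,H$ are the orthocenters of $\triangle BCD$, $\triangle ACD$, $\triangle ABD$, $\triangle ABC$, so $EG$ and $FH$ are the diagonals of $EFGH$. Thus it suffices to prove the \emph{signed} identity $\overrightarrow{EG}\times\overrightarrow{FH}=\overrightarrow{AC}\times\overrightarrow{BD}$; taking absolute values and halving then gives $[EFGH]=[ABCD]$.

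The first ingredient is the classical relation $H_{\triangle XYZ}=X+Y+Z-2\,O_{\triangle XYZ}$ between orthocenter and circumcenter. Subtracting, $\overrightarrow{EG}=(A-C)-2\bigl(O_{\triangle ABD}-O_{\triangle BCD}\bigr)$ and $\overrightarrow{FH}=(B-D)-2\bigl(O_{\triangle ABC}-O_{\triangle ACD}\bigr)$. The second ingredient: $O_{\triangle ABD}$ and $O_{\triangle BCD}$ both lie on the perpendicular bisector of the common side $BD$, so their difference is perpendicular to $\overrightarrow{BD}$; writing $J$ for the rotation by $90\degrees$, we may put $O_{\triangle ABD}-O_{\triangle BCD}=\mu\,J\overrightarrow{BD}$ and, symmetrically, $O_{\triangle ABC}-O_{\triangle ACD}=\nu\,J\overrightarrow{AC}$ for scalars $\mu,\nu$. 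Setting $\vec p=\overrightarrow{AC}$, $\vec q=\overrightarrow{BD}$, we then have $\overrightarrow{EG}=-\vec p-2\mu\,J\vec q$ and $\overrightarrow{FH}=-\vec q-2\nu\,J\vec p$. Using $\vec w\times J\vec w=|\vec w|^2$, $J\vec w\times\vec w=-|\vec w|^2$ and $J\vec u\times J\vec v=\vec u\times\vec v$, expanding the product gives
$$\overrightarrow{EG}\times\overrightarrow{FH}=(1-4\mu\nu)\,(\vec p\times\vec q)+2\nu\,|\vec p|^2-2\mu\,|\vec q|^2 .$$
So the whole theorem collapses to the scalar identity $2\nu\,|\vec p|^2-2\mu\,|\vec q|^2-4\mu\nu\,(\vec p\times\vec q)=0$.

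It remains to pin down $\mu$ and $\nu$ and verify that identity. Parametrizing the perpendicular bisector of $BD$ as $t\mapsto\tfrac12(B+D)+t\,J\overrightarrow{BD}$, the parameter value at $O_{\triangle ABD}$ is forced by the single further condition of being equidistant from $A$ and $B$, which yields a rational expression in the vertices; subtracting the analogous value for $O_{\triangle BCD}$ gives $\mu$, and the same recipe with roles of the two diagonals swapped gives $\nu$. Substituting these explicit rational expressions and simplifying shows the displayed scalar combination vanishes; one is left with $\overrightarrow{EG}\times\overrightarrow{FH}=\vec p\times\vec q$, hence $[EFGH]=[ABCD]$. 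I expect no conceptual obstacle: the steps above reduce an a priori tangled statement to a single polynomial identity in the coordinates of $A,B,C,D$, and, as elsewhere in this paper, that last simplification is cleanest to discharge with a computer algebra system. (Alternatively one can skip the reduction entirely: place $B=(0,0)$, $C=(1,0)$, $A=(a_x,a_y)$, $D=(d_x,d_y)$, compute the four orthocenters from the standard rational formula, and check $[EFGH]=[ABCD]$ directly via the shoelace formula in Mathematica.) A pleasant byproduct of the vector argument is the sharper fact that $EFGH$ and $ABCD$ have equal \emph{signed} areas, so the two quadrilaterals even inherit the same orientation.
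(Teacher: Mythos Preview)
Your approach is correct and considerably more illuminating than the paper's treatment. The paper gives no written proof of this theorem at all: it falls under the blanket remark that unproved statements are relegated to the accompanying Mathematica notebooks, and the authors immediately follow the statement with an open question asking for a purely geometric argument. So the paper's ``proof'' is a direct coordinate bash in the style of Theorem~\ref{thm:general}.

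Your route is genuinely different. The Euler-line identity $H_{\triangle XYZ}=X+Y+Z-2O_{\triangle XYZ}$, together with the observation that each pair of relevant circumcenters shares a perpendicular bisector (of $BD$, respectively of $AC$), collapses the whole question to the single scalar identity
\[
2\nu\,|\vec p|^{2}-2\mu\,|\vec q|^{2}-4\mu\nu\,(\vec p\times\vec q)=0.
\]
This is a real structural simplification over equating two four-term shoelace sums in raw coordinates, and it explains \emph{why} the orthocenter is special here: the $X+Y+Z$ part contributes exactly $A-C$ and $B-D$ to the diagonals of $EFGH$, and the circumcenter corrections lie along $J\vec q$ and $J\vec p$. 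The byproduct you note---that the \emph{signed} areas coincide, so $EFGH$ inherits the orientation of $ABCD$---is sharper than what the paper states.

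Two small remarks. First, you assert that the final identity ``simplifies to zero'' but do not carry this out; since the paper itself defers all such verifications to Mathematica this is in keeping with the surrounding standards, but be explicit that this last step is a symbolic check you are delegating, not something already done by hand. (For what it is worth, a quick numerical instance such as $A=(0,0)$, $B=(4,0)$, $C=(5,3)$, $D=(1,4)$ gives $\mu=17/104$, $\nu=1/6$, $|\vec p|^2=34$, $|\vec q|^2=25$, $\vec p\times\vec q=29$, and the combination indeed vanishes.) Second, your area formula $[PQRS]=\tfrac12\,\overrightarrow{PR}\times\overrightarrow{QS}$ is the shoelace sum rearranged, so it computes \emph{signed} area for any labelled quadrilateral; if $EFGH$ is non-convex or self-intersecting for some reference quadrilaterals, ``area'' in the theorem should be read in that sense. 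The paper does not address this point either.
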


\begin{figure}[h!t]
\centering
\includegraphics[width=0.4\linewidth]{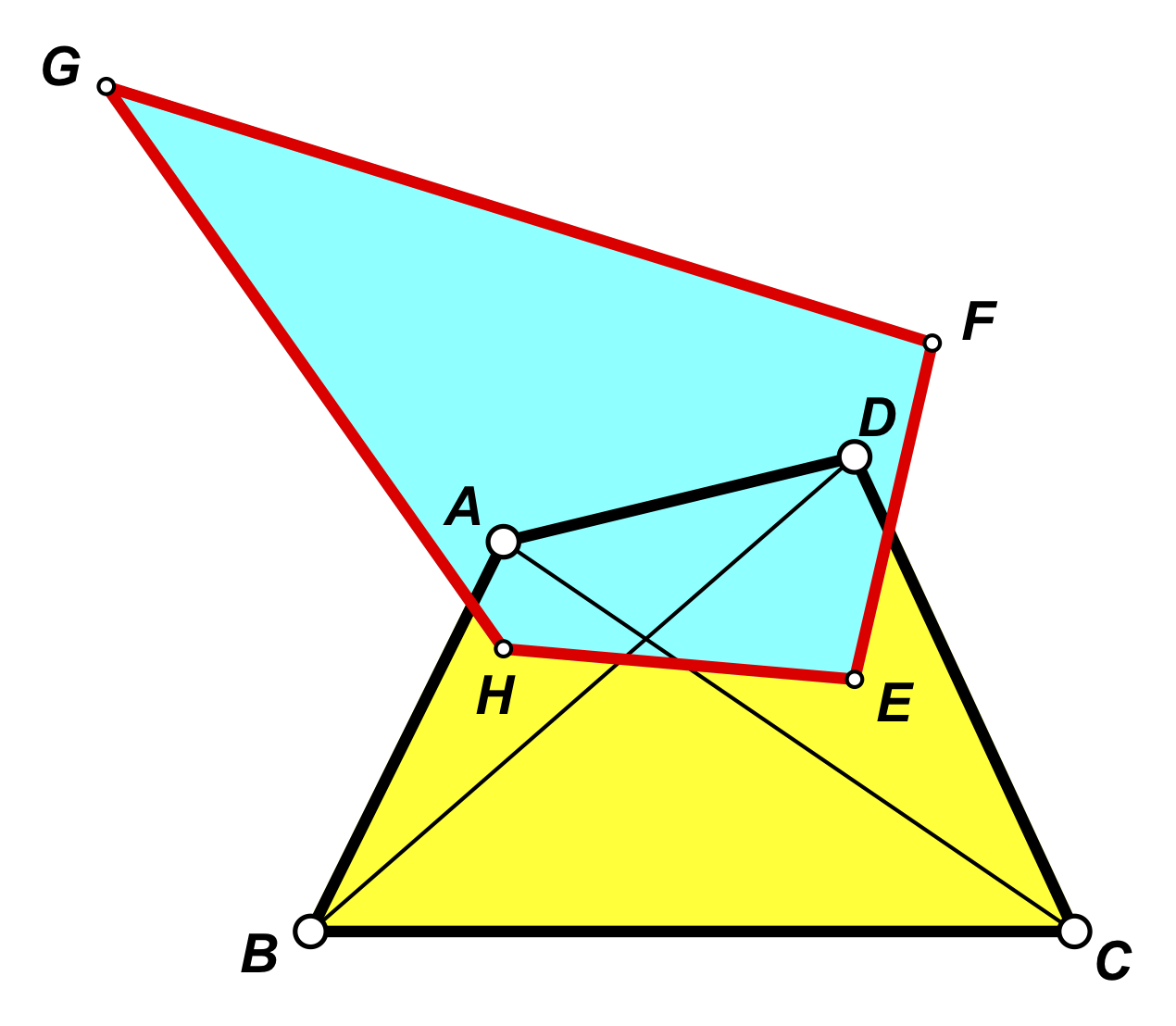}
\caption{orthocenters $\implies$ $[ABCD]=[EFGH]$}
\label{fig:halfGenArea}
\end{figure}

\begin{open}
Is there a purely geometric proof of Theorem \ref{thm:halfGenArea}?
(Figure~\ref{fig:halfGenArea})
\end{open}

\begin{conjecture}
\label{conjecture:genSimilar}
If the reference quadrilateral is a general quadrilateral, $ABCD$, then $X_2$ is the only center
such that the central quadrilateral is similar to the reference quadrilateral for all quadrilaterals $ABCD$.
\end{conjecture}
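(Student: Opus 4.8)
The plan is to argue by contradiction. Suppose $f$ is a center function, not equivalent to that of the centroid, for which the central quadrilateral $EFGH$ is similar to the reference quadrilateral $ABCD$ — with the natural vertex correspondence $A\leftrightarrow E$, $B\leftrightarrow F$, $C\leftrightarrow G$, $D\leftrightarrow H$ established in Theorem~\ref{thm:genSimilar} — for every convex $ABCD$. Since triangle centers are equivariant under similarities, I would first normalize by working in the complex plane with $A=0$ and $B=1$, letting $C$ and $D$ range over the open region of $\mathbb{C}^2$ on which $ABCD$ is convex. Then $E,F,G,H$ become explicit functions of $C,\bar C,D,\bar D$, real-analytic away from degeneracies, obtained by substituting the side lengths $|C-1|,|C|,1,\dots$ into the trilinear-to-barycentric formula for $f$; note that $H$ depends only on $C$ and $G$ only on $D$, while $E$ and $F$ depend on both.

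Because $A=0$ and $B=1$, an orientation-preserving similarity taking $ABCD$ to $EFGH$ must be $z\mapsto(F-E)z+E$, so the hypothesis is equivalent to the pair of functional identities
$$\frac{G-E}{F-E}\equiv C,\qquad \frac{H-E}{F-E}\equiv D,$$
holding identically in $C$ and $D$. A connectedness argument fixes the orientation: the space of convex quadrilaterals is connected, and for an $f$ satisfying the hypothesis $EFGH$ never degenerates, so the orientation type of the similarity is the same for all $ABCD$; the orientation-reversing possibility gives the analogous identities with $\bar C,\bar D$ in place of $C,D$ on the right, and I expect the same method below to show these admit no center function at all. The conjecture thus reduces to showing that the displayed identities force $f$ to be equivalent to $\cos B\cos C+\cos A$, the centroid's center function (the $k=1$ case of Theorem~\ref{thm:general}).

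To prove this I would carry out a jet computation at the square. When $ABCD$ is a square, its four half-triangles are congruent right isosceles triangles related by quarter-turns, so $EFGH$ is automatically a square for \emph{every} center function; the base point therefore yields no information, and one must differentiate. Expanding $\dfrac{G-E}{F-E}-C$ and $\dfrac{H-E}{F-E}-D$ as power series in the displacements of $C$ and $D$ away from their square values, and requiring every coefficient to vanish, produces, order by order, linear conditions on the derivatives of the chosen center's position at right isosceles and nearby triangles. Since two center functions whose centers agree on an open set of triangles are equivalent, these conditions progressively shrink the space of admissible $f$; the expectation — testable, in the spirit of this paper, by symbolic computation — is that already finitely many orders force $f$ to be equivalent to $\cos B\cos C+\cos A$, after which the precise similarity for $X_2$ (the central quadrilateral is the image of $ABCD$ under the homothety of ratio $-\tfrac13$ centered at the centroid of $\{A,B,C,D\}$, as one checks directly) leaves no further freedom.

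The main obstacle is the infinite-dimensionality of the space of center functions: one cannot just solve a finite system, and the crux is a finite-determinacy statement — that matching the centroid to some fixed finite order already forces equality — which is exactly what would upgrade the paper's usual "strong evidence" to a genuine proof. A structurally cleaner but less elementary route would start from the observation that the right-hand sides $C$ and $D$ of the two identities are holomorphic, which is very restrictive: it forces the conjugate dependencies in $E,F,G,H$ to cancel, and this should be enough — perhaps combined with a shear-deformation argument, using that among similarities only homotheties are conjugated to similarities of the same ratio by affine maps — to force the chosen center to be equivariant under all affine transformations. The centroid is the unique affine-equivariant triangle center (the equilateral triangle, on which all triangle centers coincide, pins this down), so the conjecture would follow; the gap is the passage from the quadrilateral-level similarity hypothesis to triangle-level affine equivariance, which does not appear to be immediate.
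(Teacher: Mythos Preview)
The paper does not prove this statement: it is explicitly labeled a \emph{Conjecture} and left open. There is therefore no paper proof to compare your proposal against.

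Your proposal is a strategy outline, not a proof, and you say so yourself. The two approaches you sketch both have real gaps, which you correctly identify. The jet computation at the square would produce an infinite family of linear conditions on the derivatives of the center function at right isosceles triangles; but center functions form an infinite-dimensional space (any homogeneous $f(a,b,c)$ symmetric in $b,c$ qualifies), and you offer no argument that finitely many orders already force equivalence to the centroid. Without such a finite-determinacy result the jet approach gives only ``strong evidence'' of the kind the paper already obtains numerically. The second approach---using that $C$ and $D$ appear holomorphically on the right-hand sides to force affine equivariance of the underlying triangle center---is more promising structurally, since the centroid is indeed the unique affine-equivariant triangle center; but the inference from the quadrilateral-level identity $\dfrac{G-E}{F-E}\equiv C$ to a triangle-level equivariance statement about $f$ is, as you note, not immediate, and you have not supplied it.

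One small correction: the statement concerns half triangles (Section~6), and your setup is for half triangles, but when you write ``the central quadrilateral is the image of $ABCD$ under the homothety of ratio $-\tfrac13$ centered at the centroid of $\{A,B,C,D\}$'' this is correct and worth keeping---it is a cleaner formulation than the paper's Theorem~\ref{thm:genSimilar} and makes the correspondence $A\leftrightarrow E$, etc., transparent. But it does not by itself advance the uniqueness claim. As it stands, the conjecture remains open and your proposal does not close it.
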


\begin{conjecture}
\label{conjecture:genArea}
If the reference quadrilateral is a general quadrilateral, $ABCD$, then $X_4$ is the only center
such that the central quadrilateral has the same area as the reference quadrilateral for all quadrilaterals $ABCD$.
\end{conjecture}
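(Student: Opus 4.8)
The plan is to treat this as a rigidity statement: assume a center function $f$ has the property that $[EFGH]=[ABCD]$ for every convex quadrilateral, and force $f$ to be equivalent to a center function for $X_4$; together with Theorem~\ref{thm:halfGenArea} this gives the ``only'' part. I would start from the diagonal form of the area, $2[PQRS]=|\vec{PR}\times\vec{QS}|$ for a convex quadrilateral $PQRS$. In the half-triangle configuration this reads $2[EFGH]=|\vec{EG}\times\vec{FH}|$, and the useful observation is that $E,G$ are the chosen centers of the two half-triangles $\triangle BCD$, $\triangle ABD$ sharing the diagonal $BD$, while $F,H$ are the centers of the two sharing $AC$. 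Writing each center as the barycentric combination of its triangle's vertices with weights $a\,f(a,b,c)$, $b\,f(b,c,a)$, $c\,f(c,a,b)$, I would expand $\vec{EG}$ and $\vec{FH}$ and reduce the requirement ``$[EFGH]=[ABCD]$ for all $ABCD$'' to one analytic identity in the parameters of the quadrilateral whose coefficients are built from the values of $f$ on the side lengths of the four triangles.

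The cyclic case should be done first because it exposes the mechanism. When $ABCD$ is cyclic with circumcenter $O$, all four half-triangles have circumcenter $O$, so for the orthocenter $\vec{OE}=\vec{OB}+\vec{OC}+\vec{OD}$ (and cyclically), giving $\vec{EG}=\vec{CA}$ and $\vec{FH}=\vec{DB}$ exactly, hence $[EFGH]=[ABCD]$ on the nose. For a general quadrilateral the four circumcenters differ, but $O_{ABD}-O_{BCD}$ lies on the perpendicular bisector of $BD$ and $O_{ABC}-O_{ACD}$ on that of $AC$, so $\vec{EG}=\vec{CA}-2w$ and $\vec{FH}=\vec{DB}-2w'$ with $w\perp BD$, $w'\perp AC$; expanding $\vec{EG}\times\vec{FH}$ and verifying that the correction terms cancel would yield a conceptual proof of Theorem~\ref{thm:halfGenArea} itself and, more to the point, identify exactly which feature of the weights produces the cancellation.

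For the uniqueness I would then feed structured families into the reduced identity to peel off constraints on $f$ one at a time: rectangles, kites and isosceles trapezoids, where symmetry forces $EFGH$ into a constrained shape and makes the area ratio a rational expression in one or two shape parameters and finitely many values of $f$; degenerations in which a half-triangle collapses to a segment, which constrain the boundary behaviour of $f$; and a first-order perturbation of a generic quadrilateral, which produces a functional or differential equation for $f$. Showing that the only solutions of that equation, up to multiplication by a cyclic function in $a,b,c$, are center functions for $X_4$ is the heart of the argument.

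The main obstacle is precisely this last step: center functions form an infinite-dimensional family, so no finite set of sample quadrilaterals can rule out an exotic solution by itself; one must show the area identity is genuinely \emph{equivalent} to a functional equation with a unique solution class, presumably via an analytic-continuation argument after enough Taylor data of $f$ near a degenerate configuration has been pinned down. A realistic fallback, and the version the paper's data actually supports, is the finite statement: for each $n$ from $1$ to $1000$ with $X_n\neq X_4$, produce a short explicit list of quadrilaterals on which $[EFGH]\neq[ABCD]$; a single symbolic computation of the area ratio at all these centers over a generic quadrilateral settles that form of the conjecture, the only delicate point being to choose test quadrilaterals whose area ratios separate $X_4$ from every other listed center.
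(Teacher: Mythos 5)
This statement is labeled a \emph{conjecture} in the paper, and the paper offers no proof of it (even the existence half, Theorem~\ref{thm:halfGenArea}, is only supported by computation, with a purely geometric proof posed as an open question). So the relevant comparison is not with a proof of the authors' but with the standard of proof itself, and against that standard your proposal has a genuine gap that you yourself identify: the entire uniqueness argument. You reduce the problem to ``derive a functional equation for $f$ from the area identity and show its only solutions, up to equivalence of center functions, are center functions for $X_4$,'' but you never write down that equation, never show that your chosen test families (rectangles, kites, isosceles trapezoids, degenerations, first-order perturbations) actually determine $f$ rather than merely constraining finitely many of its values, and never carry out the ``analytic continuation'' step that is supposed to close the argument. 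Since the space of center functions is infinite-dimensional and a center function need not be analytic or even continuous, there is no a priori reason that Taylor data near a degenerate configuration pins $f$ down; that is precisely where a proof would have to do real work, and it is absent.

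Two further points. First, your fallback (symbolically checking $X_n\neq X_4$ for $n\le 1000$ on explicit quadrilaterals) proves a strictly weaker statement than the conjecture, which quantifies over \emph{all} triangle centers, not the catalogued ones; it should not be presented as settling ``that form of the conjecture'' without flagging the change of statement. Second, your reduction $2[EFGH]=\lvert\vec{EG}\times\vec{FH}\rvert$ needs care because the central quadrilateral need not be convex (the paper itself exhibits non-convex central kites), so one must fix a convention of signed versus absolute area before the ``identity in the parameters of the quadrilateral'' is even well defined; different conventions could in principle admit different exotic solutions. Your computation in the cyclic case (orthocenters give $\vec{EG}=\vec{CA}$, $\vec{FH}=\vec{DB}$ when the circumcenter is the origin) is correct and is a nice observation, but in the general case the correction terms $w\perp BD$, $w'\perp AC$ do not obviously cancel in $\vec{EG}\times\vec{FH}$ --- $\vec{CA}\times w'$ has magnitude $\lvert\vec{CA}\rvert\,\lvert w'\rvert$ since $w'\perp AC$ --- so even the existence half of your outline still requires a nontrivial verification. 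As it stands the proposal is a reasonable research program, not a proof.
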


\subsection{Cyclic Quadrilaterals}

\begin{theorem}
\label{thm:halfCyclicJapanese}
If the reference quadrilateral is cyclic, then the central quadrilateral is a rectangle
when the chosen center is $X_1$.
\end{theorem}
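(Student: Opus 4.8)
This is Result~1 of the Introduction (the ``Japanese theorem'' for cyclic quadrilaterals); a purely synthetic proof can be found in \cite[p.~133]{Altshiller-Court}, but I would give a complex-number argument on the circumcircle, which fits the computational style of the paper and disposes of the sign bookkeeping cleanly. Since $X_1$ is the incenter, the claim is that the incenters $E,F,G,H$ of $\triangle BCD$, $\triangle ACD$, $\triangle ABD$, $\triangle ABC$ form a rectangle.

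Take the circumcircle of $ABCD$ to be the unit circle and write $A=a^2$, $B=b^2$, $C=c^2$, $D=d^2$, where $a,b,c,d$ are unit complex numbers chosen so that $a,b,c,d,-a,-b,-c,-d$ occur in this cyclic order; this is always possible and it forces $A,B,C,D$ to occur in cyclic order. The engine is the classical lemma that, for a triangle inscribed in the unit circle, the internal bisector from each vertex $X$ meets the circle again at the midpoint $M_X$ of the opposite arc, that $XM_X\perp M_YM_Z$ (a short arc computation), and hence that the incenter (the common point of the three bisectors) is the orthocenter of $\triangle M_XM_YM_Z$, which for a triangle inscribed in the unit circle is simply $M_X+M_Y+M_Z$. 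A short check of which of the two arcs over each chord faces the interior of the relevant triangle then gives
$$E=bc+cd-bd,\qquad F=ac+cd-ad,\qquad G=ab+bd-ad,\qquad H=ab+bc-ac,$$
and it is enough to have these four formulas correct, which one can confirm on any single instance such as a square. Now $E+G=F+H=ab+bc+cd-ad$, so $EFGH$ is a parallelogram whose diagonals $EG$ and $FH$ bisect each other. Moreover $F-E=(a-b)(c-d)$ and $G-F=(a+d)(b-c)$, and using $\bar a=1/a$ and the like one checks that $\dfrac{G-F}{F-E}=\dfrac{(a+d)(b-c)}{(a-b)(c-d)}$ equals minus its own complex conjugate, hence is purely imaginary; so two adjacent sides of the parallelogram are perpendicular, and $EFGH$ is a rectangle.

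The only delicate point is the branch bookkeeping for the arc-midpoints along the diagonals $AC$ and $BD$: the two arcs cut off by a diagonal have antipodal midpoints, so no single global choice of the square roots $a,b,c,d$ makes all of $ab,ac,ad,bc,bd,cd$ (rather than some of their negatives) the ``inward'' midpoints, and one must in each triangle take the midpoint of the arc actually facing that triangle; once the four displayed formulas are pinned down, the rest is routine. A synthetic alternative, if preferred: by the incenter--excenter lemma, for each side of $ABCD$ --- say $BC$ --- the incenters of the two half-triangles on that side ($\triangle ABC$ and $\triangle BCD$, that is, $H$ and $E$) both lie on the circle centered at the midpoint of arc $BC$ through $B$ and $C$; the four circles obtained in this way carry the four sides of $EFGH$ as chords, and chasing inscribed angles on them --- using equalities such as $\angle ABD=\angle ACD$ in the circumcircle --- shows that opposite sides of $EFGH$ are equal and adjacent sides perpendicular.
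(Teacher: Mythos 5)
Your argument is correct, and it is genuinely different from what the paper does: the paper's ``proof'' of this theorem is only a pointer --- it identifies the statement with Result~1 of the Introduction (the Japanese theorem) and cites Altshiller-Court, p.~133 --- whereas you supply a self-contained computation. Your computation checks out: with $A=a^2,\dots,D=d^2$ on the unit circle and the incenter written as the orthocenter of the arc-midpoint triangle, the four displayed formulas are right (they agree with the incenter of the $45$--$45$--$90$ half-triangles of the square, as you suggest verifying), $E+G=F+H$ gives the parallelogram, and the conjugation identity $\overline{(a+d)(b-c)/\bigl((a-b)(c-d)\bigr)}=-(a+d)(b-c)/\bigl((a-b)(c-d)\bigr)$ gives the right angle. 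What your route buys is a proof in the same computational style the paper itself uses for the de Longchamps case (Theorem \ref{thm:genCyclicDeLongchamps}), and one that generalizes readily to other centers expressible in the $a,b,c,d$ variables; what the paper's route buys is brevity and a reference to a classical synthetic argument. The one place you should be slightly more careful is the step ``it is enough to confirm the four formulas on a single instance'': that inference is valid, but only because the set of convex cyclic quadrilaterals with the ordering $a,b,c,d,-a,-b,-c,-d$ fixed is connected, the incenters vary continuously, and the finitely many candidate sign choices give distinct analytic functions --- so a single verification pins down the branch on the whole component. Stating that continuity/connectedness argument in one sentence would close the only gap.
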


\begin{proof}
This theorem is the same as Result 1 in the Introduction.
A proof can be found in \cite[p.~133]{Altshiller-Court}.
\end{proof}

\begin{theorem}
\label{thm:genCyclicRect}
If the reference quadrilateral is cyclic, then the central quadrilateral is a rectangle
when the center function is of the form
$$\cos B+\cos C+k\cos A-1$$
where $k$ is a constant.
\end{theorem}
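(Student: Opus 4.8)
The plan is to reduce the statement directly to Result~1 (Theorem~\ref{thm:halfCyclicJapanese}) by showing that, for a cyclic reference quadrilateral, the central quadrilateral built from a center of this form is a \emph{homothetic image} of the central quadrilateral built from the incenters. The engine is a one‑triangle lemma.

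First I would prove: if $T$ is a triangle inscribed in a circle with center $O$ and radius $R$, with incenter $I$, semiperimeter $s$, area $K$ and angles $\alpha_1,\alpha_2,\alpha_3$, and if $P_k$ is its triangle center with center function $\cos B+\cos C+k\cos A-1$, then $\overrightarrow{OP_k}=\tfrac1k\,\overrightarrow{OI}$. Indeed, using $\cos A+\cos B+\cos C=1+r/R$, the trilinear coordinates of $P_k$ rewrite as $(r/R+(k-1)\cos\alpha_1:\,r/R+(k-1)\cos\alpha_2:\,r/R+(k-1)\cos\alpha_3)$. Multiplying the $i$-th coordinate by the opposite side length $2R\sin\alpha_i$ gives barycentrics, with $i$-th coordinate proportional to $2r\sin\alpha_i+R(k-1)\sin 2\alpha_i$. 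Forming the corresponding affine combination of the position vectors $\overrightarrow{OV_i}$, the numerator splits as
$$2r\sum_i \sin\alpha_i\,\overrightarrow{OV_i}+R(k-1)\sum_i \sin 2\alpha_i\,\overrightarrow{OV_i}.$$
Now $(\sin 2\alpha_1:\sin 2\alpha_2:\sin 2\alpha_3)$ are barycentrics of the circumcenter of $T$, which \emph{is} $O$, so the second sum vanishes; and $(\sin\alpha_1:\sin\alpha_2:\sin\alpha_3)$ are barycentrics of $I$, so the first sum equals $\bigl(\sum_i\sin\alpha_i\bigr)\overrightarrow{OI}$. Dividing by the coordinate sum and simplifying with $\sum_i\sin\alpha_i=s/R$, $\sum_i\sin 2\alpha_i=2K/R^2$, and $rs=K$, everything collapses to $\overrightarrow{OP_k}=\tfrac1k\,\overrightarrow{OI}$.

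To finish: the four half‑triangles $\triangle BCD$, $\triangle ACD$, $\triangle ABD$, $\triangle ABC$ of a cyclic quadrilateral $ABCD$ share a single circumcircle, say with center $O$. Applying the lemma to each shows that the centers $E,F,G,H$ are the images of the incenters $I_1,I_2,I_3,I_4$ of these triangles under the one homothety $h_{O,1/k}$. By Result~1 (Theorem~\ref{thm:halfCyclicJapanese}), $I_1I_2I_3I_4$ is a rectangle, and a homothety maps rectangles to rectangles, so $EFGH$ is a rectangle. The argument needs $k\neq0$; the value $k=0$ gives center function $\cos B+\cos C-1$, whose barycentric coordinate sum vanishes, so that center lies at infinity and the statement degenerates (as does $k\to\infty$, the circumcenter, where the four centers all coincide with $O$).

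The only real obstacle is choosing the right frame: one must put the origin at the common circumcenter and notice that, once the center function is written in barycentrics relative to the circumcircle, it decomposes as an incenter part plus a circumcenter part — and the circumcenter part disappears precisely because the circumcenter is the chosen origin. After that, the simplification (with $rs=K$ doing the work) is two lines, and no symbolic computation is needed. As a byproduct, $k=-1$ is the Bevan point $X_{40}$, so the Bevan points of the four half‑triangles of a cyclic quadrilateral also form a rectangle, directly homothetic from $O$ to the Japanese rectangle.
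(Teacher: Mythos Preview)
Your argument is correct, and it is genuinely different from what the paper does. The paper offers no in-text proof of this theorem; it falls under the blanket remark that unproved statements are verified by Mathematica symbolic computation in the supplementary notebooks. Your approach replaces that computation with a transparent synthetic reduction.

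The key difference is your one-triangle lemma $\overrightarrow{OP_k}=\tfrac{1}{k}\,\overrightarrow{OI}$, which sharpens the paper's Theorem~\ref{thm:BevanLine} (these centers lie on the $OI$-line) to an exact position formula. Once you have that, the fact that the four half-triangles of a cyclic quadrilateral share a circumcenter lets one global homothety $h_{O,1/k}$ carry the Japanese rectangle of incenters onto $EFGH$, and you are done. This not only proves the theorem but \emph{explains} it: the whole family is a rescaling, from the common circumcenter, of Result~1. It also immediately recovers Corollary~\ref{thm:halfCyclicBevan} (the Bevan case $k=-1$, where $P_{-1}$ is the reflection of $I$ in $O$), answering Open Question~4 in passing. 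Your observation that $k=0$ puts the center at infinity, and that $k\to\infty$ collapses the rectangle to $O$, is a useful caveat the paper's statement does not mention.

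What the paper's computational route buys, by contrast, is uniformity: the same barycentric machinery handles this theorem alongside dozens of others without needing a tailored geometric idea for each. Your proof buys insight and economy for this specific result.
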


The Bevan point of a triangle has center function $\cos B+\cos C-\cos A-1$.
This gives us the following corollary.

\begin{corollary}
\label{thm:halfCyclicBevan}
Let $ABCD$ be a cyclic quadrilateral.
Let $E$, $F$, $G$, and $H$ be the Bevan points of triangles $\triangle BCD$, $\triangle CDA$, 
$\triangle DAB$,  and $\triangle ABC$, respectively. Then $EFGH$ is a rectangle.
(Figure \ref{fig:halfCyclicBevan})
\end{corollary}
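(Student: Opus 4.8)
The quickest proof is to note that this corollary is literally a special case of Theorem~\ref{thm:genCyclicRect}: the Bevan point ($X_{40}$) has center function $\cos B+\cos C-\cos A-1$, which is of the form $\cos B+\cos C+k\cos A-1$ with $k=-1$, so $EFGH$ is a rectangle. Since the text already records this, the plan I would actually pursue is a short, self-contained geometric argument that does not rely on the symbolic machinery behind Theorem~\ref{thm:genCyclicRect}.

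The first ingredient is the classical fact that the Bevan point of a triangle is the reflection of its incenter in its circumcenter; equivalently, the circumcenter is the midpoint of the segment joining the incenter and the Bevan point. (One-line justification: the incenter is the orthocenter of the excentral triangle, the Bevan point is its circumcenter, the circumcenter of $\triangle ABC$ is its nine-point center, and the nine-point center always bisects the segment from orthocenter to circumcenter.) The second ingredient is specific to the cyclic case: the four half-triangles $\triangle BCD$, $\triangle CDA$, $\triangle DAB$, $\triangle ABC$ are all inscribed in the circumcircle of $ABCD$, hence share a single circumcenter $O$. Writing $E'$, $F'$, $G'$, $H'$ for the incenters of these four triangles respectively, the first ingredient gives $E=2O-E'$, $F=2O-F'$, $G=2O-G'$, $H=2O-H'$; that is, $EFGH$ is the image of $E'F'G'H'$ under the half-turn about $O$.

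Finally, apply Theorem~\ref{thm:halfCyclicJapanese} (Result~1, the ``Japanese theorem'' for cyclic quadrilaterals): the incenters $E'$, $F'$, $G'$, $H'$ of the four half-triangles form a rectangle. A half-turn is an isometry, so it carries the rectangle $E'F'G'H'$ onto the congruent quadrilateral $EFGH$, which is therefore also a rectangle, with the same center $O$ and sides parallel to those of $E'F'G'H'$. The only piece with any content is the incenter--circumcenter--Bevan midpoint relation; the common circumcenter and the isometry argument are routine, so I anticipate no real obstacle.
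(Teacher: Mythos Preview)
Your first paragraph is exactly the paper's own derivation: the corollary is stated immediately after Theorem~\ref{thm:genCyclicRect} with the remark that the Bevan point has center function $\cos B+\cos C-\cos A-1$, and no further proof is given.

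Your second, geometric argument is genuinely different from anything in the paper, and in fact it answers the very next item in the paper, Open Question~4, which asks whether there is a purely geometric proof of this corollary. The paper does not observe that the four half triangles of a cyclic quadrilateral share a common circumcenter $O$, nor that consequently the passage from incenters to Bevan points is a single half-turn about $O$; combining this with the Japanese theorem (Theorem~\ref{thm:halfCyclicJapanese}) is an elegant reduction the authors did not record. The paper's route, by contrast, goes through the symbolic Theorem~\ref{thm:genCyclicRect}, whose proof is relegated to the Mathematica supplement. Your argument is both shorter and more illuminating, and it generalizes immediately: the same reasoning shows that for a cyclic reference quadrilateral, the central quadrilateral for any center of the form ``reflection of $X_1$ in a fixed point of the common circumcircle'' (or more generally any fixed affine image of the incenter with respect to $O$) is congruent to the incenter rectangle.
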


\begin{figure}[h!t]
\centering
\includegraphics[width=0.23\linewidth]{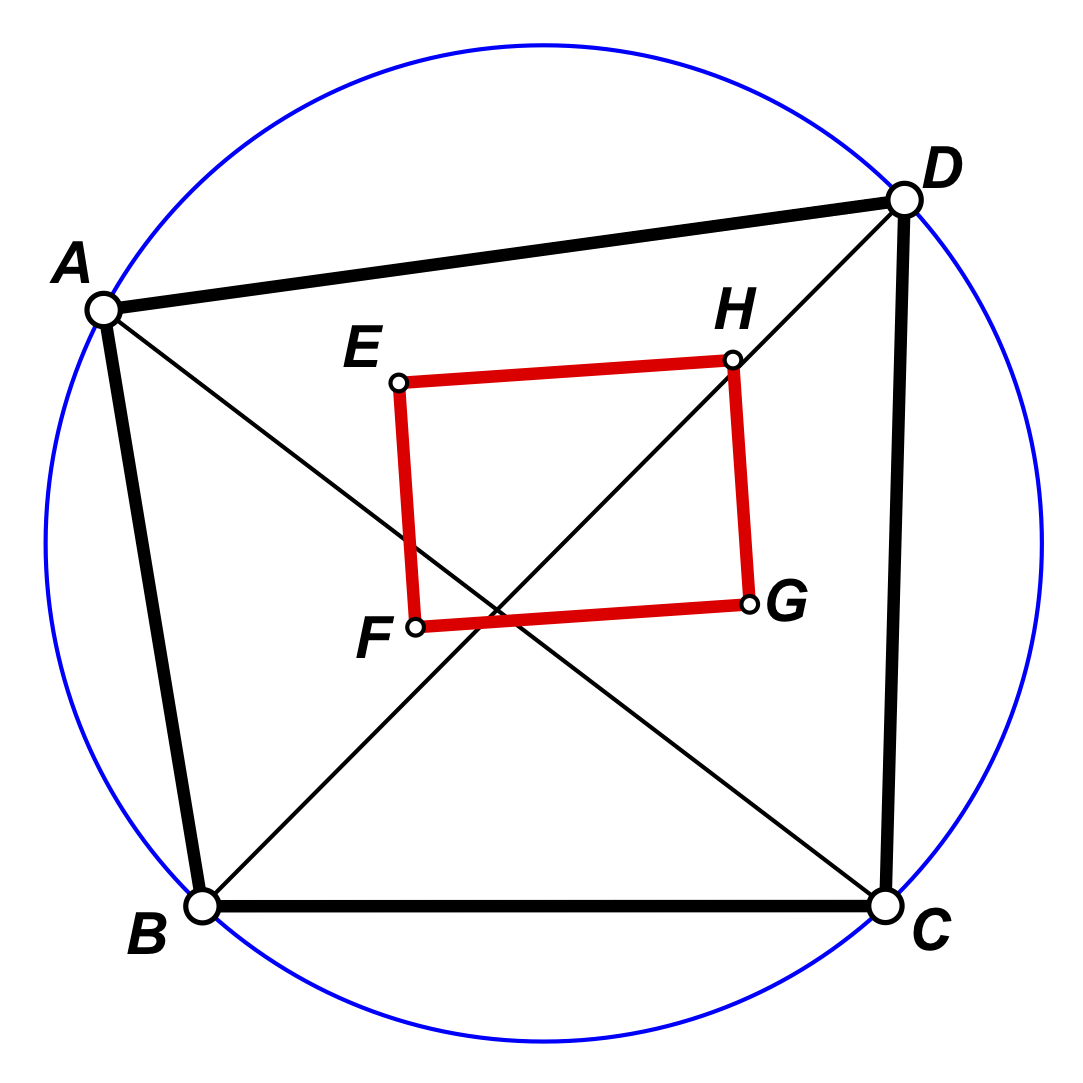}
\caption{Bevan points $\implies$ rectangle}
\label{fig:halfCyclicBevan}
\end{figure}

\begin{open}
Is there a purely geometric proof of Theorem \ref{thm:halfCyclicBevan}?
(Figure~\ref{fig:halfCyclicBevan})
\end{open}

\begin{theorem}
\label{thm:halfCyclic155}
If the reference quadrilateral is cyclic, then the central quadrilateral degenerates to a line segment
when the chosen center is $X_{155}$.
(Figure~\ref{fig:halfCyclic155})
\end{theorem}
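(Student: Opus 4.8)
The plan is to follow the coordinate recipe already used in the proof of Theorem~\ref{thm:cyclic485}. Work in barycentric coordinates with $\triangle ABC$ as the reference triangle, $A=(1:0:0)$, $B=(0:1:0)$, $C=(0:0:1)$, and put the fourth vertex on the circumcircle of $\triangle ABC$ via the rational parametrization
$$u=c^2+ta^2,\qquad v=-b^2t,\qquad w=t(c^2+ta^2),\qquad 0<t<1,$$
so that $D=(u:v:w)$ and $ABCD$ is a convex cyclic quadrilateral. The decisive structural feature of the half-triangle configuration is that the four component triangles $\triangle BCD$, $\triangle CDA$, $\triangle DAB$, $\triangle ABC$ are all inscribed in one and the same circle (Equation~\ref{eq:circle}); what varies from one component triangle to the next is only which three of $A,B,C,D$ play the role of its vertices.

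First I would take a center function $f(a,b,c)$ for $X_{155}$ from \cite{ETC} and, using the substitutions~(\ref{rules}), rewrite it as a rational function of the three side lengths of whichever triangle it is being applied to. For $\triangle ABC$ the resulting point $H$ is read off directly. For each of the other three component triangles I would compute the squared lengths of its three sides as rational functions of $a,b,c,t$ (those of $BC$, $CA$, $AB$ are $a^2,b^2,c^2$; those of $DA$, $DB$, $DC$ come from the barycentric distance formula applied to $D=(u:v:w)$), substitute them into $f$ in the correct cyclic order, and convert the resulting barycentric coordinates of that center (taken with respect to its own triangle) back into barycentric coordinates relative to the fixed reference triangle $\triangle ABC$ by writing $D$ itself as a combination of $A,B,C$. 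This yields explicit coordinates for $E,F,G,H$ as triples of polynomials in $a,b,c,t$.

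The claim is then the vanishing of the $3\times 3$ determinant whose rows are any three of $E,F,G,H$ — the line condition used in the proof of Theorem~\ref{thm:EulerLine}. I would have Mathematica clear denominators and verify that this determinant is identically zero as a polynomial in $a,b,c,t$, and, for completeness, check that the four points are not all coincident, so that the degenerate quadrilateral really is a segment. I expect the main obstacle to be the size of the algebra: the center function of $X_{155}$ is of fairly high degree, so after the side-length substitution and the change back to the common reference triangle each of $E,F,G,H$ is a ratio of large polynomials in $a,b,c,t$ and the determinant is enormous before cancellation. A secondary pitfall is getting the cyclic order of the arguments of $f$ right in each of the four triangles; the remark following Theorem~\ref{thm:cyclic110} (that the natural central quadrilateral is sometimes $FHGI$ rather than $FGHI$) is a reminder that the pairing of the four centers is part of the data, though here it only affects which points are the endpoints of the segment, not the collinearity.

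As a possibly shorter route I would first try complex numbers: place the common circumcircle as the unit circle with $A,B,C,D$ unimodular, express $X_{155}$ of a triangle $\{z_1,z_2,z_3\}$ as a symmetric rational function of $z_1,z_2,z_3$ using $\overline{z_i}=1/z_i$, and test whether the four points obtained by deleting $A$, $B$, $C$, $D$ in turn satisfy one common real-affine relation. Because the four triangles differ only by which vertex is omitted, the elementary-symmetric-function bookkeeping may collapse the collinearity test to a manageable identity.
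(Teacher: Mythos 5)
The paper gives no in-text proof of this theorem: it is one of the results deferred to the supplementary Mathematica notebooks, i.e.\ exactly the kind of exact symbolic verification you outline. Your setup (barycentric coordinates with $\triangle ABC$ as reference and the rational parametrization of $D$ on the circumcircle, followed by a determinant test for collinearity of the four centers) is the same machinery the paper uses for its other cyclic-quadrilateral computations, so your proposal is correct and matches the paper's approach.
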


\begin{figure}[h!t]
\centering
\includegraphics[width=0.4\linewidth]{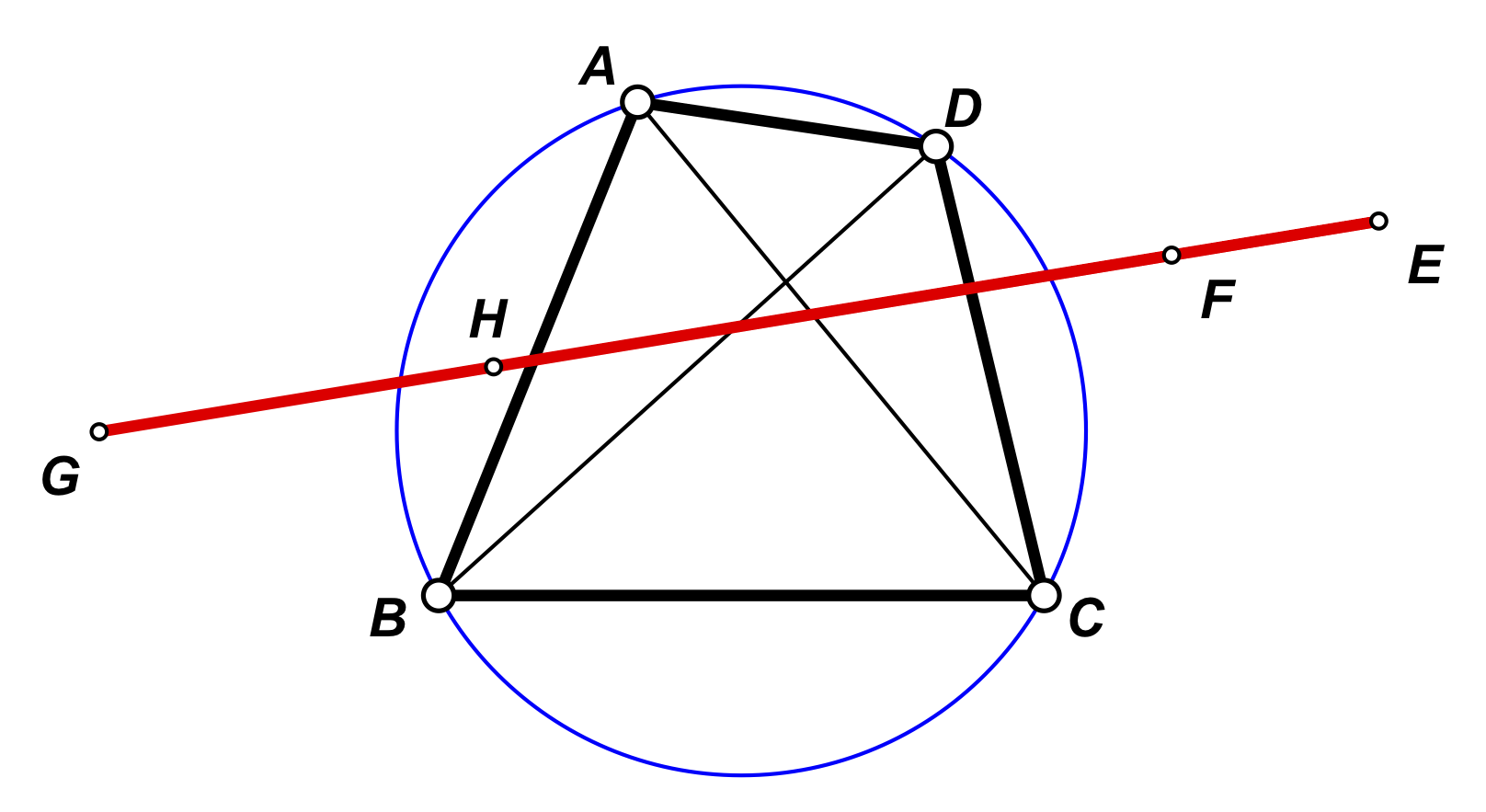}
\caption{$X_{155}$ points $\implies$ line}
\label{fig:halfCyclic155}
\end{figure}

\begin{theorem}
\label{thm:genCyclicTrig}
If the reference quadrilateral is cyclic, then the central quadrilateral is cyclic
when the center function is of the form
$$\cos B\cos C+k\cos A$$
where $k$ is a constant.
\end{theorem}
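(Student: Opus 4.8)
The plan is to work in Cartesian coordinates with the circumcenter $O$ of the cyclic quadrilateral $ABCD$ placed at the origin, so that $A,B,C,D$ all lie on a circle of radius $R$ about the origin. The key observation is that for a triangle inscribed in a circle centered at the origin, the triangle center with center function $\cos B\cos C+k\cos A$ sits at $\tfrac{1}{1+2k}$ times the vector sum of the three vertices. Granting this, triangle $BCD$ (which is inscribed in the same circumcircle) has its chosen center at $E=\tfrac{1}{1+2k}(B+C+D)$, and likewise $F=\tfrac{1}{1+2k}(A+C+D)$, $G=\tfrac{1}{1+2k}(A+B+D)$, $H=\tfrac{1}{1+2k}(A+B+C)$. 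Writing $S=A+B+C+D$, each of these equals $\tfrac{1}{1+2k}S-\tfrac{1}{1+2k}V$ for the corresponding vertex $V$, so $E,F,G,H$ are the images of $A,B,C,D$ under the map $z\mapsto \tfrac{1}{1+2k}S-\tfrac{1}{1+2k}z$. That map is a homothety (with real ratio $-\tfrac{1}{1+2k}$), hence carries the circumcircle of $ABCD$ onto a circle through $E,F,G,H$; therefore $EFGH$ is cyclic. (The degenerate value $k=-\tfrac12$, for which this center function represents a point at infinity, is excluded.)

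To establish the key observation, recall (as in the proof of Theorem \ref{thm:general}) that multiplying trilinears by the side lengths gives barycentrics, so the center with function $\cos B\cos C+k\cos A$ has unnormalized barycentrics $(a\cos B\cos C+ka\cos A:\cdots)$. Using the representatives $O=(a\cos A:b\cos B:c\cos C)$ and $H=(a\cos B\cos C:b\cos C\cos A:c\cos A\cos B)$, this center is the affine combination $\tfrac{\Sigma_H\,H+k\Sigma_O\,O}{\Sigma_H+k\Sigma_O}$, where $\Sigma_H=\sum a\cos B\cos C$ and $\Sigma_O=\sum a\cos A$ are the coordinate sums. The heart of the matter is the identity $\Sigma_O=2\Sigma_H$, equivalently $\sin A\cos B\cos C+\sin B\cos C\cos A+\sin C\cos A\cos B=\sin A\sin B\sin C$, which follows in two lines from $A+B+C=\pi$ (group two of the terms as $\cos C\,\sin(A+B)=\tfrac12\sin 2C$, then use $\cos C+\cos A\cos B=\sin A\sin B$); alternatively it can be read off from the known facts that the orthocenter corresponds to $k=0$ and the centroid to $k=1$. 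With $\Sigma_O=2\Sigma_H$ the affine combination collapses to $\tfrac{H+2kO}{1+2k}$, and placing $O$ at the origin makes this $\tfrac{1}{1+2k}H=\tfrac{1}{1+2k}(P_1+P_2+P_3)$, since the orthocenter of an inscribed triangle is the sum of its vertices (Euler line: with $O$ at the origin the centroid is $\tfrac13$ of the vertex sum, so the orthocenter is the vertex sum).

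The only genuine obstacle is the key observation, and inside it the identity $\Sigma_O=2\Sigma_H$; once that is in hand the remainder is a short coordinate-free computation. Everything can of course be verified symbolically in Mathematica along the lines of the other proofs in the paper, but the homothety argument above makes a hand proof feasible. A pleasant byproduct worth noting is that the central quadrilateral is not merely cyclic but similar to the reference quadrilateral, with ratio $1/|1+2k|$ (and for the orthocenter, $k=0$, it is congruent to $ABCD$).
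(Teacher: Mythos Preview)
Your argument is correct, and it is considerably cleaner than the paper's route. The paper does not print a proof of this theorem; it defers to the Mathematica notebooks (symbolic verification in barycentric coordinates along the lines of the proofs of Theorems \ref{thm:general} and \ref{thm:cyclic485}). The one closely related argument that does appear in the paper is the complex-number proof of Theorem \ref{thm:genCyclicDeLongchamps}, which handles the single case $k=-1$ by writing the de~Longchamps points as $-(b+c+d)$, etc., and checking a cross-ratio. Your proof is exactly the right generalization of that idea: you show that for \emph{every} $k$ the chosen center of an inscribed triangle is $\tfrac{1}{1+2k}$ times the vertex sum, and then the conclusion follows from a single homothety rather than a cross-ratio computation.

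What your approach buys is a uniform, coordinate-free explanation that simultaneously recovers Theorem \ref{thm:genSimilar} (centroid, $k=1$, ratio $\tfrac13$), the congruence remark after Theorem \ref{thm:genCyclicCommon} (orthocenter, $k=0$), and Theorem \ref{thm:genCyclicDeLongchamps} (ratio $1$ again), and in fact proves the stronger statement that $EFGH$ is \emph{similar} to $ABCD$ with ratio $1/|1+2k|$. The only nontrivial input is the identity $\Sigma_O=2\Sigma_H$, and your two-line derivation of it is fine. The paper's symbolic approach, by contrast, verifies concyclicity mechanically but does not expose this similarity.
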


\begin{theorem}
\label{thm:genCyclicCommon}
If the reference quadrilateral is cyclic, then the central quadrilateral is cyclic for the following common centers:
the centroid, the orthocenter, the nine-point center, the first and second Fermat points, the first and second isodynamic points, the de Longchamps point, the Far-Out point, the Tarry point, and the Steiner point.
\end{theorem}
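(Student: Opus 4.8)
The plan is to split the eleven centers into three families and handle each with its own tool: an Euler-line family ($X_2,X_4,X_5,X_{20},X_{23}$), a circumcircle family ($X_{98},X_{99}$), and a ``rotational'' family ($X_{13},X_{14},X_{15},X_{16}$). The one fact that drives all three cases is that when $ABCD$ is cyclic, each of the four half-triangles $\triangle BCD$, $\triangle CDA$, $\triangle DAB$, $\triangle ABC$ is inscribed in the same circle, the circumcircle $\Gamma$ of $ABCD$; I write $O$ and $R$ for its center and radius.

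\textbf{The Euler-line family.} I would place $O$ at the origin, so that $A,B,C,D$ have position vectors of length $R$, and set $\Sigma=A+B+C+D$. For the centroid, orthocenter, nine-point center, and de~Longchamps point, the corresponding center of $\triangle BCD$ is $\alpha(B+C+D)=\alpha(\Sigma-A)$ for a universal constant $\alpha=\tfrac13,\ 1,\ \tfrac12,\ -1$ respectively (using that the orthocenter of a triangle inscribed in a circle about the origin is the sum of its vertices). Hence $E,F,G,H$ is the image of $A,B,C,D$ under $P\mapsto\alpha\Sigma-\alpha P$, a homothety of ratio $-\alpha$ — a point reflection when $\alpha=1$, a translation when $\alpha=-1$ — so it carries $\Gamma$ to a circle and $EFGH$ is cyclic. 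For the Far-Out point there is one extra step: $X_{23}$ of any triangle is the inverse of that triangle's centroid in its circumcircle, so each of $E,F,G,H$ is the inverse in $\Gamma$ of one of the four (already concyclic) centroids, and inverting their circle in $\Gamma$ again gives a circle — or, in degenerate positions, a line, which the paper admits as a special shape. Alternatively, all five centers lie on the Euler line, so (since in trilinear coordinates a line is the projective span of any two of its points, and the Euler line is spanned by $X_3=(\cos A:\cos B:\cos C)$ and $X_4=(\cos B\cos C:\cos C\cos A:\cos A\cos B)$) each has a center function equivalent to $\cos B\cos C+k\cos A$, and Theorem~\ref{thm:genCyclicTrig} applies at once.

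\textbf{The circumcircle family.} The Steiner point $X_{99}$ and the Tarry point $X_{98}$ lie on the circumcircle of their triangle. Since each half-triangle has circumcircle $\Gamma$, all four chosen centers lie on $\Gamma$, so the central quadrilateral is inscribed in $\Gamma$ and is cyclic. (More generally, any center lying on the circumcircle gives a cyclic central quadrilateral in this configuration; $X_{110}$, Theorem~\ref{thm:cyclic110}, even yields an isosceles trapezoid.)

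\textbf{The rotational family, and the main obstacle.} The Fermat points $X_{13},X_{14}$ and isodynamic points $X_{15},X_{16}$ lie on neither the Euler line nor $\Gamma$, so I would argue by coordinates, following the proof of Theorem~\ref{thm:cyclic485}: take $\triangle ABC$ as the barycentric reference, put $D=(u:v:w)$ on $\Gamma$ via $u=c^2+ta^2$, $v=-b^2t$, $w=t(c^2+ta^2)$, build the Fermat (resp.\ isodynamic) point of each half-triangle from its $60\degrees$-rotation construction — erecting equilateral triangles on the sides and intersecting the resulting cevians, so that only rational operations appear — and verify the concyclicity of $E,F,G,H$ either by the intersecting-chords criterion or by substitution into the circle equation~\ref{eq:circle}. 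This is routine but heavy symbolic algebra, best left to Mathematica. Two shortcuts trim it: carrying a sign $\varepsilon=\pm1$ in the rotation angle does the two Fermat cases, and the two isodynamic cases, simultaneously; and since the second isodynamic point of each half-triangle is the inverse in $\Gamma$ of the first, the $X_{16}$ central circle is just the inverse in $\Gamma$ of the $X_{15}$ one and need not be recomputed. The real difficulty is exactly this last family — arranging the rotational constructions so the coordinates stay radical-free and the final concyclicity expression genuinely collapses; the other two families are plane geometry one can do by hand. Throughout I would allow the central ``circle'' to degenerate to a line or a point in exceptional configurations, as the paper already permits.
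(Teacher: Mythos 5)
Your proposal is correct in its essentials but follows a genuinely different, and in places more illuminating, route than the paper. The paper establishes this theorem largely by brute force: the general statement is delegated to barycentric computations in the supplementary Mathematica notebooks, with only a few conceptual remarks — the centroid case follows from Theorem \ref{thm:genSimilar}, the orthocenter case is noted to give a congruent quadrilateral with a citation to the literature, and the de~Longchamps case is done by the complex-number computation of Theorem \ref{thm:genCyclicDeLongchamps}; the existence of purely geometric proofs for the rest is explicitly left as an open question. You answer that question for seven of the eleven centers. Your map $P\mapsto\alpha\Sigma-\alpha P$ is exactly the paper's $X_{20}$ computation recognized as a similarity transformation and run uniformly over $X_2$, $X_4$, $X_5$, $X_{20}$; your observation that the Tarry and Steiner points of all four half triangles lie on the common circumcircle $\Gamma$ disposes of $X_{98}$ and $X_{99}$ in one line (and explains the appendix data for $X_{74}$, $X_{100}$, and other circumcircle centers); and the inversion arguments for $X_{23}$ and for the $X_{15}$/$X_{16}$ pairing are sound. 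For the Fermat and first isodynamic points you, like the paper, fall back on symbolic coordinates, so on the hardest quarter of the theorem the two treatments coincide.

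One caveat. Your ``alternative'' route for the Euler-line family — that each of the five centers has center function $\cos B\cos C+k\cos A$ because it lies on the Euler line, so Theorem \ref{thm:genCyclicTrig} applies — fails for $X_{23}$. That theorem needs $k$ to be the same constant in all four half triangles, which holds only for centers dividing $OH$ in a triangle-independent ratio; the far-out point, being the inverse of the centroid in the circumcircle, does not. Your primary inversion argument is what actually carries the $X_{23}$ case, so nothing is lost, but the shortcut should be restricted to $X_2$, $X_4$, $X_5$, $X_{20}$. (Also, Theorem \ref{thm:cyclic110} concerns quarter triangles, whose component triangles do not share a circumcircle, so it is not an instance of your circumcircle-family phenomenon.)
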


One case of Theorem \ref{thm:genCyclicCommon} is shown in Figure \ref{fig:halfCyclicX15}.

\begin{figure}[h!t]
\centering
\includegraphics[width=0.23\linewidth]{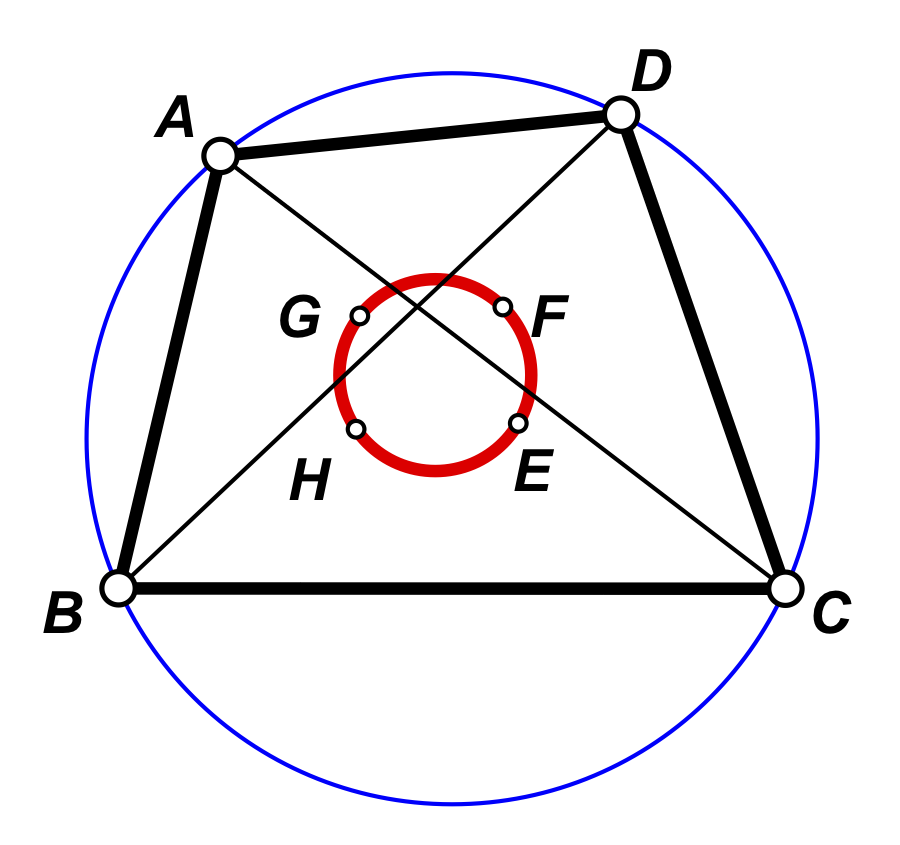}
\caption{1st isodynamic points $\implies$ circle}
\label{fig:halfCyclicX15}
\end{figure}

\textbf{Note.}
When the center is the orthocenter, the central quadrilateral is
congruent to the reference quadrilateral (Figure \ref{fig:halfCyclicX4}).
See \cite[p.~209]{Malcheski} for a proof.

\begin{figure}[h!t]
\centering
\includegraphics[width=0.2\linewidth]{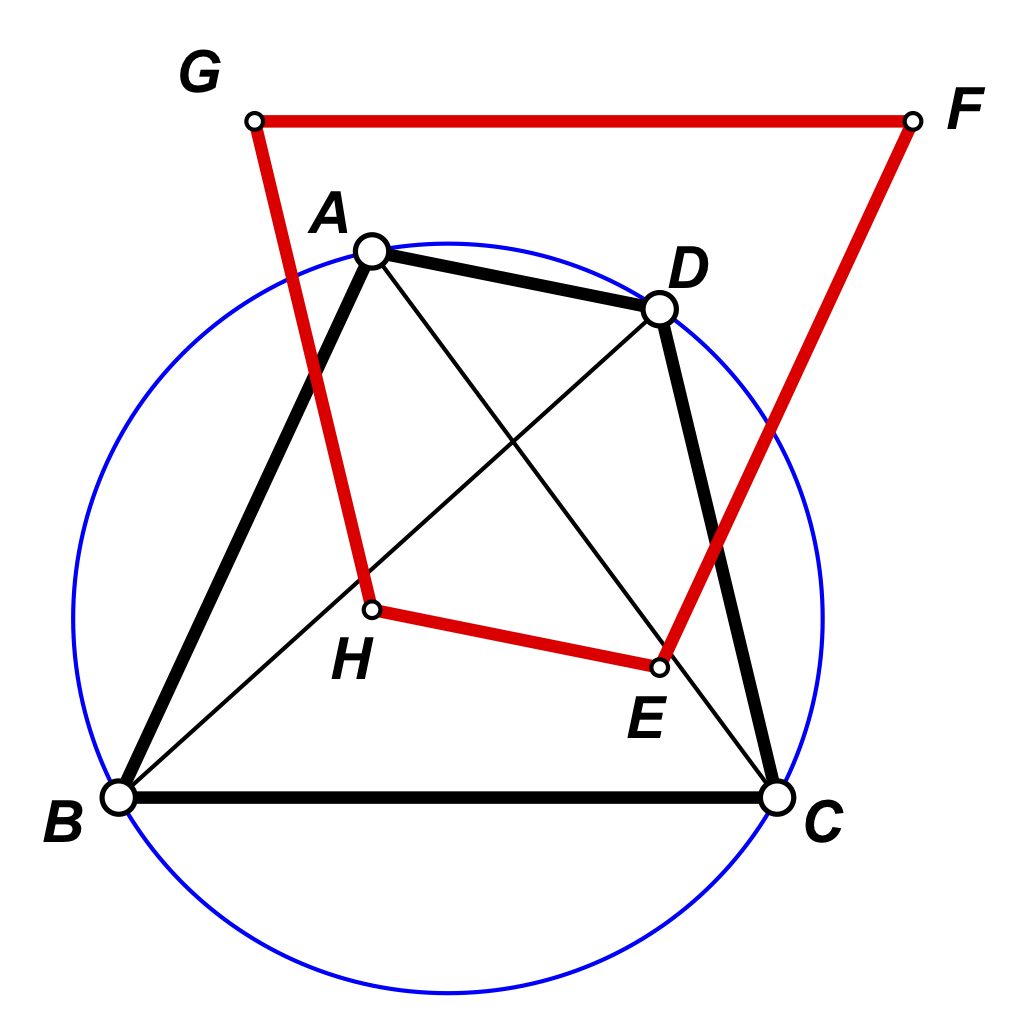}
\caption{orthocenters $\implies$ congruent quadrilaterals}
\label{fig:halfCyclicX4}
\end{figure}

\begin{open}
Are there purely geometric proofs for the results given in Theorem \ref{thm:genCyclicCommon}?
\end{open}

The centroid case follows from Theorem \ref{thm:genSimilar}.

Although Theorem \ref{thm:genCyclicCommon} can be proven using barycentric coordinates,
it is also possible to prove the results using complex numbers.
We give an example using $X_{20}$, the de Longchamps point.
We start with a Lemma from \cite[p.~162]{Malcheski}.

\begin{lemma}
The four complex numbers $p$, $q$, $r$, and $s$ are the consecutive vertices
of a cyclic quadrilateral (or are collinear) in the complex plane if and only~if
$$\frac{(p-s)(r-q)}{(p-q)(r-s)}\in \mathbb {R}.$$
\end{lemma}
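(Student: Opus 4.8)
The plan is to establish the classical criterion that four distinct complex numbers are concyclic or collinear precisely when their cross-ratio is real; the phrase ``consecutive vertices of a cyclic quadrilateral'' is then the geometric reading of this condition when $p$, $q$, $r$, $s$ happen to be in convex position.

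First I would rewrite the quantity in question as a ratio of two ``angle factors'':
$$\frac{(p-s)(r-q)}{(p-q)(r-s)}=\frac{(s-p)/(q-p)}{(s-r)/(q-r)},$$
the sign changes in numerator and denominator cancelling. Passing to arguments, the cross-ratio is real if and only if $\arg\frac{s-p}{q-p}$ and $\arg\frac{s-r}{q-r}$ are congruent modulo $\pi$; that is, if and only if the points $p$ and $r$ subtend the segment $qs$ under equal directed angles. By the inscribed-angle theorem, in its directed-angle formulation, this happens exactly when $p$, $q$, $r$, $s$ lie on a common circle or on a common line. That is the whole content of the lemma.

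Alternatively --- and this is the version I would actually write down, since it sidesteps any worry about orientation --- I would argue via Möbius transformations. Let $\phi(z)=\dfrac{(z-q)(r-s)}{(z-s)(r-q)}$, the unique Möbius transformation with $\phi(q)=0$, $\phi(r)=1$, $\phi(s)=\infty$. Because Möbius transformations carry circles and lines to circles and lines, $\phi$ maps the unique generalized circle through $q$, $r$, $s$ onto the extended real axis $\mathbb{R}\cup\{\infty\}$. Hence $p$ lies on that generalized circle if and only if $\phi(p)\in\mathbb{R}\cup\{\infty\}$. Since $\phi(p)$ is exactly the reciprocal of the displayed expression, and a nonzero complex number is real if and only if its reciprocal is real, the lemma follows, with the values $0$ and $\infty$ accounting for the degenerate coincidences $p=q$ and $p=s$.

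There is no serious obstacle here; the only care needed is bookkeeping. Specifically: (i) the ``mod $\pi$ versus mod $2\pi$'' distinction in the angle argument, which is precisely why the statement allows collinearity alongside concyclicity; (ii) the degenerate cases where two of $p$, $q$, $r$, $s$ coincide, handled by interpreting the cross-ratio in $\mathbb{C}\cup\{\infty\}$; and (iii) the fact that a real cross-ratio certifies concyclicity but not the cyclic order, so that to read off ``consecutive vertices of a cyclic quadrilateral'' one must additionally observe that four points in convex position lying on a circle are automatically its consecutive vertices. The one external input I would quote is the invariance of the cross-ratio under Möbius transformations together with the fact that such maps preserve the family of lines and circles.
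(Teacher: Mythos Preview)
Your argument is correct: both the directed-angle version of the inscribed-angle theorem and the M\"obius-transformation argument are standard, valid proofs of this classical cross-ratio criterion, and your bookkeeping remarks (i)--(iii) address exactly the points that need attention.

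There is nothing to compare against, however, because the paper does not prove this lemma at all. It is quoted verbatim as a known result from \cite[p.~162]{Malcheski} and then immediately applied in the proof of Theorem~\ref{thm:genCyclicDeLongchamps}. So you have supplied a proof where the paper supplies only a citation; either of your two approaches would be a perfectly acceptable replacement for that citation.
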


\begin{theorem}
\label{thm:genCyclicDeLongchamps}
If the reference quadrilateral is cyclic, then the central quadrilateral is cyclic
if the chosen center is the de Longchamps point.
\end{theorem}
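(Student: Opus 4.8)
The plan is to follow the complex-number approach signalled by the preceding Lemma, normalizing the circumcircle of $ABCD$ to be the unit circle. Write $a,b,c,d$ for the complex coordinates of $A,B,C,D$, so $|a|=|b|=|c|=|d|=1$, and since $ABCD$ is a convex cyclic quadrilateral these occur in cyclic order around the circle.

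The key ingredient is the standard fact that for a triangle inscribed in the unit circle with vertices $z_1,z_2,z_3$, the circumcenter is $0$ and the orthocenter is $z_1+z_2+z_3$; hence the de~Longchamps point $X_{20}$, being the reflection of the orthocenter in the circumcenter, is $-(z_1+z_2+z_3)$. (If one wishes to re-derive this, use the Euler-line relation $\vec{OX_{20}}=-\vec{OH}$ together with $H=z_1+z_2+z_3$, $O=0$.) Applying this to the four half triangles $\triangle BCD$, $\triangle ACD$, $\triangle ABD$, $\triangle ABC$ yields
$$E=-(b+c+d),\quad F=-(a+c+d),\quad G=-(a+b+d),\quad H=-(a+b+c).$$
Setting $\sigma=a+b+c+d$, these simplify to $E=a-\sigma$, $F=b-\sigma$, $G=c-\sigma$, $H=d-\sigma$. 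Thus $E,F,G,H$ are the images of $A,B,C,D$ under the single translation $z\mapsto z-\sigma$. A translation carries the circumcircle of $ABCD$ onto a congruent circle through $E,F,G,H$, and it preserves cyclic order, so $EFGH$ is a cyclic quadrilateral — in fact congruent to $ABCD$, in parallel with the note about the orthocenter case.

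If one prefers to invoke the Lemma literally rather than the translation remark, it suffices to observe that the $\sigma$'s cancel in every difference, so
$$\frac{(E-H)(G-F)}{(E-F)(G-H)}=\frac{(a-d)(c-b)}{(a-b)(c-d)},$$
and the right-hand side is real precisely because $A,B,C,D$ are concyclic, i.e. by the Lemma applied to the reference quadrilateral.

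There is essentially no computational obstacle here: the argument is a two-line substitution. The only points requiring care are recalling (or re-deriving) the complex formula for $X_{20}$ on the unit circle, and keeping the vertex-to-triangle correspondence straight so that the quadrilateral named $EFGH$ in the statement is exactly the one whose concyclicity has been verified; the convexity and cyclic-order claims then follow because a translation is an orientation-preserving isometry.
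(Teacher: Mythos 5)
Your proof is correct and follows essentially the same route as the paper's: complex coordinates with the circumcenter at the origin, the formula $-(z_1+z_2+z_3)$ for the de Longchamps point of each half triangle, and the reality of the cross ratio. Your extra observation that the four centers are the images of $A,B,C,D$ under the translation $z\mapsto z-(a+b+c+d)$ is a genuine strengthening the paper does not state: it shows the central quadrilateral is congruent to the reference quadrilateral, not merely cyclic.
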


\begin{proof}

Assume that the circumcircle $O$ of the quadrilateral $ABCD$ is the origin of the complex plane. 
Let $a,b,c,d$ be the complex coordinates (affixes) of the vertices $A$, $B$, $C$, $D$. 
Since $ABCD$ is cyclic we have that the cross ratio
\begin{equation*}
   (a,b,c,d)=\frac{(a-b) (c-d)}{(a-d) (c-b)}
\end{equation*}
is a real number.
The complex coordinates $f,g,h,i$ of the orthocenters $F,G,H,I$ of the half triangles $\triangle BCD$, $\triangle ACD$, $\triangle ABD$, $\triangle ABC$ are (\cite[p.~122]{Malcheski})
\begin{align*}
	f&=b+c+d\\
	g&=a+c+d\\
	h&=a+b+d\\
	i&=a+b+c.
\end{align*}
Since $X_{20}$, the de Longchamps point, is the reflection of $X_4$ in $X_3=0$, we have
\begin{align*}
	f&=-(b+c+d)\\
	g&=-(a+c+d)\\
	h&=-(a+b+d)\\
	i&=-(a+b+c).
\end{align*}
A little calculation shows that
  \begin{equation*}
   (f,g,h,i)=(a,b,c,d) \in \mathbb {R},
  \end{equation*}
so $f,g,h,i$ are concyclic.	
\end{proof}

\begin{open}
Characterize those centers that yield cyclic central quadrilaterals when the reference quadrilateral is cyclic.
\end{open}

\subsection{EqualProdOpp Quadrilaterals}

\begin{theorem}
\label{thm:halfEqualProdOpp}
If the reference quadrilateral is equalProdOpp, then the central quadrilateral is also equalProdOpp for the following centers: $X_3$, $X_5$, $X_{15}$.
\end{theorem}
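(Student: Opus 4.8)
The plan is to exploit the combinatorial structure of the half-triangle configuration: the four sides $EF$, $FG$, $GH$, $HE$ of the central quadrilateral each join the chosen center in two half-triangles that share a side of $ABCD$ — the shared sides are $CD$, $DA$, $AB$, $BC$, in that order. Thus ``$EFGH$ is equalProdOpp'' means $EF\cdot GH=FG\cdot HE$, and we must deduce this from $AB\cdot CD=BC\cdot DA$. First I would choose coordinates adapted to the diagonal point $P$: put $P$ at the origin with $AC$ and $BD$ through it meeting at angle $\phi$, and let $p_a,p_b,p_c,p_d$ be $PA,PB,PC,PD$, so that $A=(p_a,0)$, $C=(-p_c,0)$, $B=(p_b\cos\phi,p_b\sin\phi)$, $D=(-p_d\cos\phi,-p_d\sin\phi)$. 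Applying the Law of Cosines in the four triangles $APB,BPC,CPD,DPA$ (whose angles at $P$ are $\phi,180\degrees-\phi,\phi,180\degrees-\phi$) and factoring, one gets
$$AB^2\cdot CD^2-BC^2\cdot DA^2=AC\cdot BD\cdot\bigl[(p_a-p_c)(p_d-p_b)-2\cos\phi\,(p_ap_c+p_bp_d)\bigr],$$
so the equalProdOpp hypothesis becomes the single polynomial relation $(p_a-p_c)(p_d-p_b)=2\cos\phi\,(p_ap_c+p_bp_d)$; call it $(\star)$.

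For $X_3$ the argument then closes by hand. The circumcenters of two half-triangles flanking a common side both lie on the perpendicular bisector of that side, so, using signed distances to avoid case analysis, $EF$ is the difference of the two circumcenters' distances to line $CD$; by the extended Law of Sines this equals $\tfrac{2R_{BCD}R_{ACD}}{CD}\,\lvert\sin(\alpha_1-\beta_2)\rvert$, where $\alpha_1=\angle DAC$ and $\beta_2=\angle DBC$ are the inscribed angles subtending $CD$ in $\triangle ACD$ and $\triangle BCD$; the other three sides of $EFGH$ get analogous formulas. Forming $EF\cdot GH/(FG\cdot HE)$, all four circumradii cancel and one is left with $\tfrac{BC\cdot DA}{CD\cdot AB}$ — which is $1$ by hypothesis — times a ratio of four sines. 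The elementary relations at $P$, namely $\beta_2+\gamma_1=\phi=\delta_2+\alpha_1$ and $\alpha_2+\beta_1=180\degrees-\phi=\gamma_2+\delta_1$ (which give $\alpha_1-\beta_2=\gamma_1-\delta_2$ and $\alpha_2-\delta_1=\gamma_2-\beta_1$), collapse that ratio to $\sin^2(\alpha_1-\beta_2)/\sin^2(\alpha_2-\delta_1)$. Finally, expressing $\alpha_1,\beta_2,\alpha_2,\delta_1$ in the $P$-coordinates shows that $\tan(\alpha_1-\beta_2)$ and $\tan(\alpha_2-\delta_1)$ have the same numerator, $\sin\phi\,(p_bp_d-p_ap_c)$, and denominators $p_ap_b+p_cp_d+(p_ap_c+p_bp_d)\cos\phi$ and $p_ap_d+p_bp_c-(p_ap_c+p_bp_d)\cos\phi$; their difference is exactly $-[(p_a-p_c)(p_d-p_b)-2\cos\phi\,(p_ap_c+p_bp_d)]$, which vanishes by $(\star)$. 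Hence both factors are $1$ and $EFGH$ is equalProdOpp.

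For $X_5$ and $X_{15}$ I would compute directly in the same $P$-coordinate system. For the nine-point center one writes $X_5$ as the midpoint of $X_3$ and $X_4$ and reuses the circumcenter data together with the Euler identity $\vec H=\vec A+\vec B+\vec C-2\vec O$ for each half-triangle; for the first isodynamic point one inserts its known barycentric coordinates and applies the change-of-coordinates formula of Section~\ref{section:methodology}. In each case one writes the four vertices of $EFGH$ explicitly, forms $EF\cdot GH-FG\cdot HE$, and checks that $(\star)$ divides it — a routine symbolic verification of the kind carried out in the supplementary Mathematica notebooks. I expect the $X_{15}$ case to be the real obstacle: there is no perpendicular-bisector shortcut, the isodynamic-point coordinates involve the triangle's area and hence a square root, so the computation is genuinely heavier, and — as in Theorem~\ref{thm:ortho6} — it only collapses because the hypothesis enters precisely in the form $(\star)$. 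We remark that $X_2$ also sends an equalProdOpp quadrilateral to an equalProdOpp one, since by Theorem~\ref{thm:genSimilar} the two quadrilaterals are then similar; a conceptual description of all centers with this property seems to be open.
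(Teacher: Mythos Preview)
Your argument is sound, and for $X_3$ it is genuinely better than what the paper offers. The paper gives no inline proof here at all: Theorem~\ref{thm:halfEqualProdOpp} is one of the results relegated to the supplementary Mathematica notebooks, and immediately afterward the paper poses as an Open Question whether purely geometric proofs exist. Your treatment of the circumcenter case answers that question. The key steps --- that adjacent circumcenters lie on the perpendicular bisector of the shared side, that each side of $EFGH$ can be written as $\tfrac{2R_iR_j}{(\text{shared side})}\lvert\sin(\cdot)\rvert$, that the four circumradii cancel in the ratio $EF\cdot GH/(FG\cdot HE)$, and that the diagonal-point relations force $\alpha_1-\beta_2=\gamma_1-\delta_2$ and $\alpha_2-\delta_1=\gamma_2-\beta_1$ --- are all correct, and I checked your tangent computation: the two denominators differ by exactly $-[(p_a-p_c)(p_d-p_b)-2\cos\phi(p_ap_c+p_bp_d)]$, so $(\star)$ is precisely what makes the remaining sine ratio equal to $1$. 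This exposes where the equalProdOpp hypothesis enters in a way the paper's symbolic verification does not. For $X_5$ and $X_{15}$ you revert to the same coordinate-and-compute strategy the paper uses, which is appropriate; your warning that the area square root makes $X_{15}$ the heavy case is accurate, and your closing remark that $X_2$ also preserves equalProdOpp via Theorem~\ref{thm:genSimilar} is correct and a nice addition.
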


This theorem is illustrated in Figure \ref{fig:halfEqualProdOpp} using 1st isodynamic points ($X_{15}$).

\begin{figure}[h!t]
\centering
\includegraphics[width=0.3\linewidth]{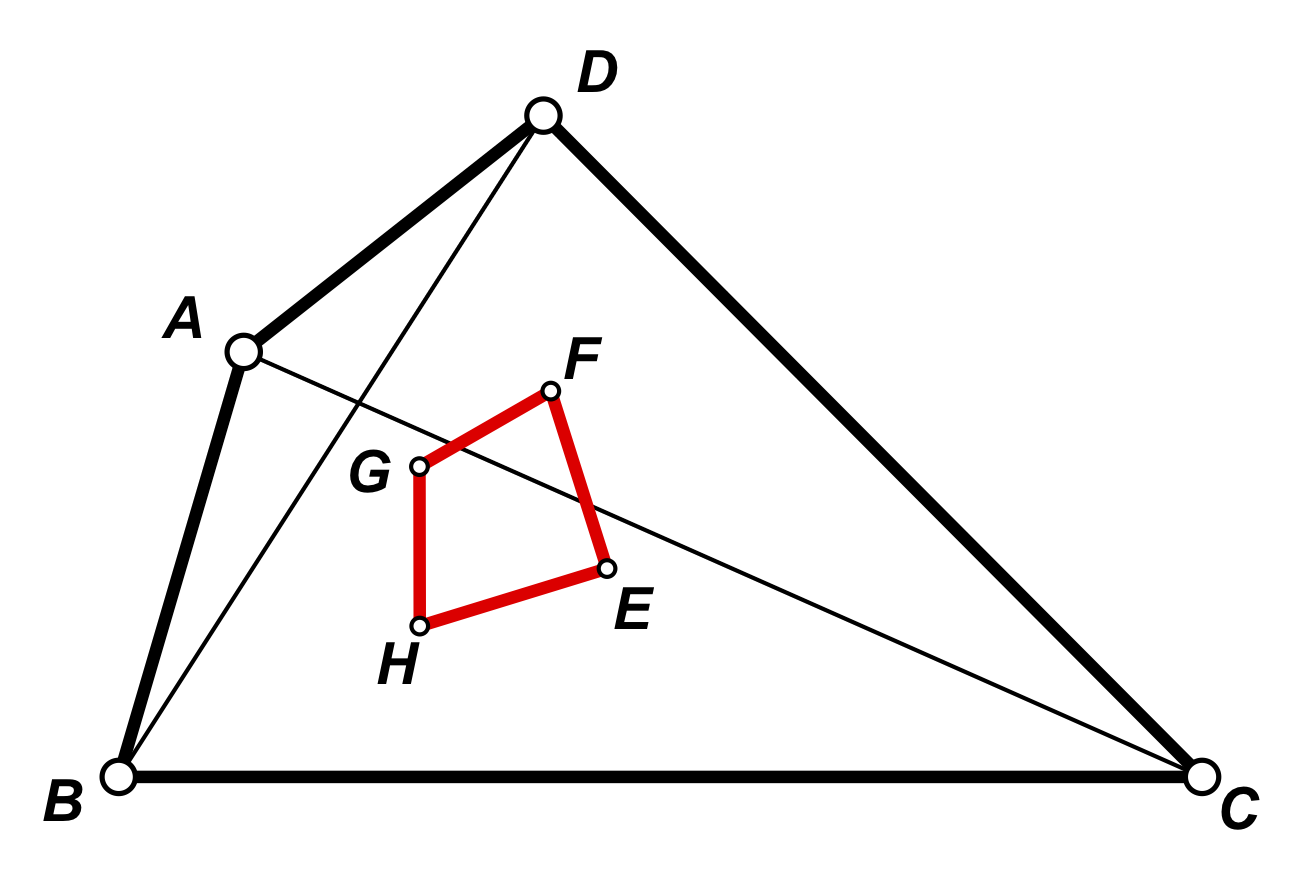}
\caption{$X_{15}$ centers, $AD\cdot BC=AB\cdot CD \implies EF\cdot GH=FG\cdot HE$}
\label{fig:halfEqualProdOpp}
\end{figure}

\begin{open}
Are there purely geometric proofs for the results given in Theorem \ref{thm:halfEqualProdOpp}?
\end{open}

\subsection{Orthodiagonal Quadrilaterals}

\begin{theorem}
\label{thm:halfOrtho}
If the reference quadrilateral is orthodiagonal, the central quadrilateral is orthodiagonal whenever the chosen center has center function of the form
$$\cos B\cos C+k\cos A$$
for some constant $k$.
In this case, the diagonals of $EFGH$ are parallel to the diagonals of $ABCD$.
\end{theorem}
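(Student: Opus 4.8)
The plan is to reuse the Cartesian set-up from the proof of Theorem~\ref{thm:ortho6}: put the diagonal point of $ABCD$ at the origin with diagonal $BD$ along the $x$-axis and diagonal $AC$ along the $y$-axis, scaling so that $D=(1,0)$, and write $A=(0,a_y)$, $C=(0,c_y)$, $B=(b_x,0)$ with the sign conditions that keep $ABCD$ convex. The stated conclusion about the diagonals of $EFGH$ tells us exactly what to aim for: it suffices to prove $EG\parallel AC$ and $FH\parallel BD$, since then $AC\perp BD$ forces $EG\perp FH$, so $EFGH$ is orthodiagonal and its diagonals are parallel to those of $ABCD$. Note the nice structure here: $E$ (from $\triangle BCD$) and $G$ (from $\triangle ABD$) come from the two half triangles sharing edge $BD$, while $F$ (from $\triangle ACD$) and $H$ (from $\triangle ABC$) come from the two sharing edge $AC$.

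The key ingredient is a formula for the center of a half triangle. Exactly as in the proof of Theorem~\ref{thm:general}, applying the Law of Cosines to $\cos B\cos C+k\cos A$ and clearing the (cyclic) denominator gives first barycentric coordinate $a^4(1-2k)+2a^2k(b^2+c^2)-(b^2-c^2)^2$, with the other two obtained cyclically. First I would record that the sum of these three barycentric coordinates collapses, via Heron's formula, to $(1+2k)\cdot 16K^2$, where $K$ is the area of the triangle (assume $k\neq-\tfrac12$, otherwise the center is the Euler infinity point and $EFGH$ degenerates). Now take any one of the four half triangles: two of its vertices, say at $(p,0)$ and $(q,0)$, lie on one coordinate axis while the third (the apex) is at $(0,h)$ on the other. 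Then $K=\tfrac12|p-q|\,|h|$, so the barycentric sum is $4(1+2k)(p-q)^2h^2$, whereas the Cartesian $x$-coordinate of the center is $\dfrac{P_B\,p+P_D\,q}{4(1+2k)(p-q)^2h^2}$, where $P_B,P_D$ are the barycentric coordinates of the two vertices on the $x$-axis (the apex contributes nothing, its $x$-coordinate being $0$). Writing $P_B$ and $P_D$ as polynomials in $h^2$, the leading coefficient — that of $(h^2)^2$ — is $(1-2k)+2k-1=0$, so each is affine in $h^2$; and a short computation shows the constant terms satisfy $P_B|_{h=0}\,p+P_D|_{h=0}\,q=0$. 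Hence $P_B\,p+P_D\,q$ is a constant multiple of $h^2$, this $h^2$ cancels against the denominator, and the $x$-coordinate of the center depends only on $\{p,q\}$ and $k$ — not on the apex height $h$.

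With this in hand the theorem follows quickly. Triangles $\triangle BCD$ (giving $E$) and $\triangle ABD$ (giving $G$) share their two ``$x$-axis'' vertices $B$ and $D$ and differ only in which apex ($C$ or $A$) they use, so by the previous step $E$ and $G$ have equal $x$-coordinates; thus $EG$ is vertical, i.e. parallel to line $AC$. By the mirror-image argument (the whole set-up is symmetric in the two axes), $\triangle ACD$ (giving $F$) and $\triangle ABC$ (giving $H$) share their two ``$y$-axis'' vertices $A$ and $C$, so $F$ and $H$ have equal $y$-coordinates and $FH\parallel BD$. Therefore $EG\perp FH$ and the diagonals of $EFGH$ are parallel to the diagonals of $ABCD$, as claimed. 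The one genuinely delicate part is the middle paragraph: correctly matching each barycentric coordinate to a vertex under the triangle's own $(a,b,c)$ labeling and verifying the two cancellations. These steps are routine but error-prone, so I would carry them out in Mathematica, as is done for the other coordinate proofs in this paper; the special cases $k=1$ (centroids, where the claim also follows from Theorem~\ref{thm:genSimilar}) and $k=0$ (orthocenters, which sit on the axes outright) serve as useful checks.
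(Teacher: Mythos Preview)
Your argument is correct, and in fact the key computation goes through cleanly: with your labeling one finds that for a triangle with base vertices $(p,0),(q,0)$ and apex $(0,h)$ the center with barycentric coordinates $a^4(1-2k)+2a^2k(b^2+c^2)-(b^2-c^2)^2$ (cyclically) has $x$-coordinate exactly $\dfrac{k(p+q)}{1+2k}$, independent of $h$. The two cancellations you flag as ``delicate'' are genuine but short: the $h^4$-coefficient of each base barycentric is $(1-2k)+2k-1=0$, and the constant terms of $pP_{B'}+qP_{C'}$ are $4p^2q^2(p-q)(k-1)-4p^2q^2(p-q)(k-1)=0$. Your sanity checks are also on the mark ($k=0$ is the orthocenter, so $E,G$ lie on the $y$-axis outright; $k=1$ is the centroid, where Theorem~\ref{thm:genSimilar} already gives similarity and hence parallel diagonals).

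The paper itself does not print a proof of this theorem; it falls under the blanket remark that such results are handled by direct symbolic verification in the accompanying Mathematica notebooks. Compared with that brute-force route, your argument is genuinely more informative: you isolate the single structural fact --- that for this one-parameter family of Euler-line centers the ``transverse'' coordinate of the center of a right-angled-base triangle depends only on the base and not on the apex --- from which both the orthodiagonality of $EFGH$ and the parallelism of its diagonals with those of $ABCD$ follow immediately by symmetry. The cost is that you must keep careful track of which side is opposite which vertex when writing down $P_{B'}$ and $P_{C'}$; the benefit is an explanation rather than a certificate, plus the explicit closed form $x=\dfrac{k(p+q)}{1+2k}$, which the Mathematica check would not surface.
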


\subsection{Trapezoids}

\begin{theorem}
\label{thm:halfTrap}
If the reference quadrilateral is a trapezoid, the central quadrilateral is a trapezoid whenever the chosen center has center function of the form
$$\cos B\cos C+k\cos A$$
for some constant $k$.
\end{theorem}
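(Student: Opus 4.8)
I would prove this with coordinates, in the spirit of the proof of Theorem~\ref{thm:general}. Since a trapezoid has one pair of opposite sides parallel, I may assume (relabelling the vertices if necessary) that $AD\parallel BC$, and place $B=(0,0)$, $C=(1,0)$, $A=(a_x,h)$, $D=(d_x,h)$ with $h>0$ and $a_x<d_x$, so that $ABCD$ is a convex trapezoid. Unlike the equidiagonal case of Theorem~\ref{thm:equi}, no auxiliary equation is needed, because the coordinate frame itself encodes ``$AD\parallel BC$''. For a triangle with side lengths $a,b,c$ opposite its vertices, the center with center function $\cos B\cos C+k\cos A$ has, by the computation in the proof of Theorem~\ref{thm:general}, barycentric coordinates
$$\bigl(\phi(a,b,c):\phi(b,c,a):\phi(c,a,b)\bigr),\qquad \phi(a,b,c)=a^4(1-2k)+2a^2k(b^2+c^2)-(b^2-c^2)^2.$$
Since $\phi$ is a polynomial in $a^2,b^2,c^2$, substituting the squared side lengths of the four half triangles gives rational (not irrational) functions of $a_x,d_x,h,k$.

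The reason a trapezoid should appear is structural. The half triangles $\triangle ABC$ (with center $H$) and $\triangle BCD$ (with center $E$) share the base $BC$, and their apexes $A$ and $D$ both lie on the line $y=h$; the half triangles $\triangle ACD$ (center $F$) and $\triangle ABD$ (center $G$) share the base $AD$, and their apexes $C$ and $B$ both lie on the line $y=0$. So $H$ and $E$ are the same triangle center of a triangle on base $BC$ as the apex slides along $y=h$, while $F$ and $G$ are the same center of a triangle on base $AD$ as the apex slides along $y=0$. For the centroid this at once gives $HE\parallel AD$ and $FG\parallel BC$, hence $HE\parallel FG$ (the trapezoid specialization of Theorem~\ref{thm:genSimilar}); for the circumcenter, $HE$ lies on the perpendicular bisector of $BC$ and $FG$ on the perpendicular bisector of $AD$, both vertical, so again $HE\parallel FG$; and a short direct computation disposes of the orthocenter. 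What must be shown is that $HE\parallel FG$ for every member of the family.

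So the plan is: (i) write the squared side lengths of $\triangle ABC$, $\triangle BCD$, $\triangle ACD$, $\triangle ABD$ as polynomials in $a_x,d_x,h$; (ii) apply the change-of-coordinates formula $P=(t_1V_1+t_2V_2+t_3V_3)/(t_1+t_2+t_3)$, with the $t_i$ the barycentrics above, to obtain the Cartesian coordinates of $E,F,G,H$ as rational functions of $a_x,d_x,h,k$; (iii) verify the single identity
$$(x_E-x_H)(y_F-y_G)-(y_E-y_H)(x_F-x_G)=0$$
identically in $a_x,d_x,h,k$; this says precisely that the sides $HE$ and $FG$ of $EFGH$ are parallel, and after clearing denominators it is a polynomial identity, best checked with Mathematica. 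Once it holds, $EFGH$ has a pair of parallel sides, i.e.\ it is a trapezoid. (I use this determinant rather than equating slopes so that the vertical cases, such as the circumcenter, are handled uniformly.)

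The main obstacle is step~(iii). The Cartesian coordinates of the four centers are rational functions of moderately high degree in $a_x,d_x,h$, carrying $k$ linearly through $\phi$, so the determinant after clearing denominators is a bulky polynomial whose identical vanishing is routine for a computer algebra system but not for hand computation. The structural reduction above explains why the result is true and lets one dispatch the centroid, circumcenter and orthocenter instantly, but I do not see a purely synthetic argument covering all $k$ simultaneously; the algebraic identity seems unavoidable. (If one prefers the alternative convention $AB\parallel CD$, the computation is entirely analogous.)
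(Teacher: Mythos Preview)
Your proposal is correct and follows essentially the same route as the paper, which gives no in-text proof for this theorem and defers to the Mathematica notebooks (i.e., a coordinate computation verifying the relevant polynomial identity). Your added structural observations for the centroid, circumcenter, and orthocenter are a nice bonus, and your use of the determinant condition rather than slopes to absorb the vertical circumcenter case is sensible, but the general-$k$ step is, as you say, the same symbolic verification the paper relies on.
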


This theorem is illustrated in Figure \ref{fig:halfDeLongChamps} using de Longchamps points ($X_{20}$),
which have center function $\cos B\cos C-\cos A$.

\begin{figure}[h!t]
\centering
\includegraphics[width=0.25\linewidth]{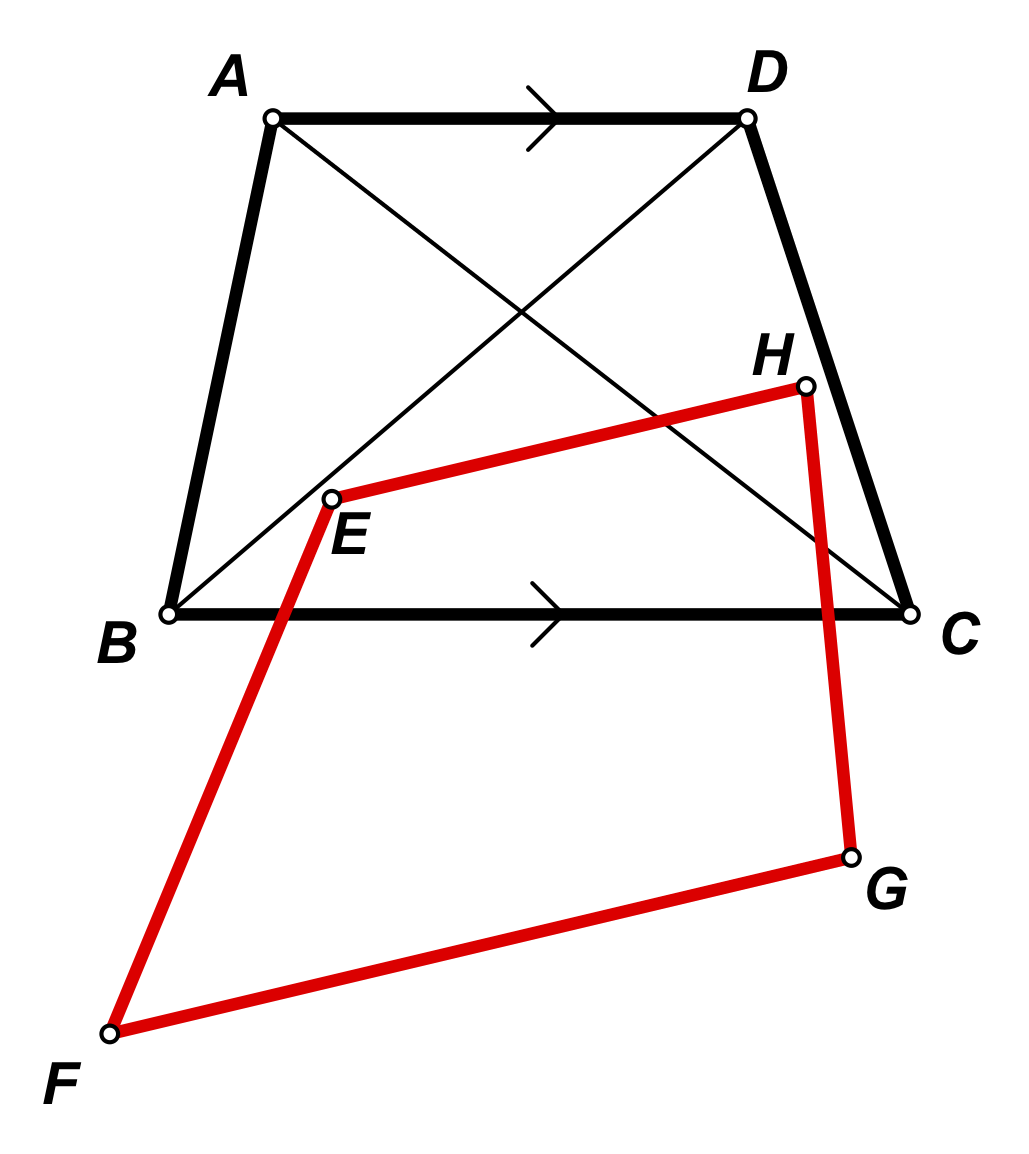}
\caption{de Longchamps points: $AD\parallel BC\implies EH\parallel FG$}
\label{fig:halfDeLongChamps}
\end{figure}

\subsection{Tangential Quadrilaterals}

\begin{theorem}
\label{thm:halfTangential}
If the reference quadrilateral is tangential, then the central quadrilateral is also tangential for the following centers: $X_3$, $X_5$.
\end{theorem}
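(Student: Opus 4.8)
The plan is to invoke Pitot's theorem: a convex quadrilateral is tangential if and only if its two sums of opposite sides are equal. So for each of the two centers it suffices to prove that the central quadrilateral $EFGH$ satisfies $EF+GH=FG+HE$.

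I would handle $X_3$ first, exploiting a structural fact. The circumcenters of $\triangle BCD$ and $\triangle ACD$ are both equidistant from $C$ and $D$, so $E$ and $F$ lie on the perpendicular bisector of $CD$; likewise $FG$, $GH$, $HE$ lie on the perpendicular bisectors of $DA$, $AB$, $BC$. Let $X$ be the intersection of the diagonals of $ABCD$, put $m=XA$, $n=XB$, $p=XC$, $q=XD$ (so $AC=m+p$, $BD=n+q$), and let $\varphi$ be the angle between the diagonals. Using that the distance from the circumcenter of a triangle to a side equals half that side times the cotangent of the opposite angle, and computing those cotangents in coordinates centered at $X$, the four cotangent differences all produce the common factor $\kappa=|nq-mp|/(2\lvert\sin\varphi\rvert)$, giving
$$EF=\kappa\,\frac{c}{pq},\qquad GH=\kappa\,\frac{a}{mn},\qquad FG=\kappa\,\frac{d}{qm},\qquad HE=\kappa\,\frac{b}{np},$$
where $a,b,c,d$ are the sides of $ABCD$ in order. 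Hence $EF+GH=FG+HE$ is equivalent, after clearing denominators, to $a\,pq+c\,mn=b\,mq+d\,np$. To prove this I would square it: the difference of the two squares is $\bigl(a^2p^2q^2+c^2m^2n^2-b^2m^2q^2-d^2n^2p^2\bigr)+2mnpq\,(ac-bd)$, and substituting the law of cosines for $\triangle XAB$, $\triangle XBC$, $\triangle XCD$, $\triangle XDA$ collapses the parenthesized part to $-2mnpq\,(AC\cdot BD\cos\varphi)$. On the other hand, squaring the tangential hypothesis $a+c=b+d$ and again using the law of cosines (which gives $(b^2+d^2)-(a^2+c^2)=2\,AC\cdot BD\cos\varphi$) yields exactly $ac-bd=AC\cdot BD\cos\varphi$. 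Thus the difference of squares vanishes; since both sides of $a\,pq+c\,mn=b\,mq+d\,np$ are positive, the identity holds and $EFGH$ is tangential. One must also check that $EFGH$ is convex in this cyclic order so that Pitot applies, which holds in the generic configuration.

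For $X_5$ I would use that the nine-point center of a triangle is the midpoint of its circumcenter $O$ and orthocenter, and that the orthocenter is the vertex sum minus $2O$ as position vectors, so the nine-point center is $\tfrac12(\text{vertex sum})-\tfrac12 O$. Hence the $X_5$-central quadrilateral is obtained from the circumcenter configuration above by translating each vertex by a fixed affine combination of $A,B,C,D$; its sides are no longer supported on the perpendicular bisectors of the sides of $ABCD$, so the clean common-factor phenomenon is lost. I would therefore compute the four side lengths of the $X_5$-central quadrilateral directly in the $X$-centered coordinates — each is the square root of a quadratic in $m,n,p,q,\cos\varphi$ — impose the relation $ac-bd=AC\cdot BD\cos\varphi$, and verify $EF+GH=FG+HE$ by symbolic elimination of radicals, as the paper does for its more computational theorems. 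The main obstacle in both cases is that Pitot's condition is a sum of square roots: for $X_3$ the perpendicular-bisector structure reduces it to a single tractable polynomial identity, but for $X_5$ one is left to clear several unrelated radicals, and throughout one must be careful with signs and with confirming the convexity of $EFGH$ in the stated order.
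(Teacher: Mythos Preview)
Your $X_3$ argument is correct and is in fact \emph{more} than what the paper supplies: the paper defers this theorem entirely to the supplementary Mathematica notebooks and then explicitly poses as an open question whether purely geometric proofs exist. Your perpendicular-bisector setup, the side-length formulas $EF=\kappa\,c/(pq)$ (etc.) with $\kappa=\lvert nq-mp\rvert/(2\lvert\sin\varphi\rvert)$, and the reduction of Pitot's condition to $a\,pq+c\,mn=b\,mq+d\,np$ all check out; the squaring step works exactly as you describe because the cosine terms in the law-of-cosines expansions cancel to give $-2mnpq\cdot AC\cdot BD\cos\varphi$, which is matched by $2mnpq(ac-bd)$ via the tangential hypothesis. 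So for $X_3$ you have genuinely answered the paper's open question with an elementary argument, whereas the paper only offers a machine verification. For $X_5$ your plan coincides with the paper's: once the perpendicular-bisector structure is lost (and your nine-point-center formula shows it is, since $N_E=\tfrac12(B+C+D)-\tfrac12 E$ mixes a vertex-dependent translation with a scaled circumcenter), there is no visible shortcut and one is left with the symbolic radical-clearing you describe, which is precisely what the Mathematica notebook does. The only loose ends you flag yourself: the convexity of $EFGH$ in the stated order (needed for the Pitot converse) should be argued, and the phrase ``translating each vertex by a fixed affine combination'' is slightly off, since the translation differs from vertex to vertex --- but you immediately note that the structure is lost, so this does not affect your conclusion.
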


This theorem is illustrated in Figure \ref{fig:halfTangential} using circumcenters.

\begin{figure}[h!t]
\centering
\includegraphics[width=0.3\linewidth]{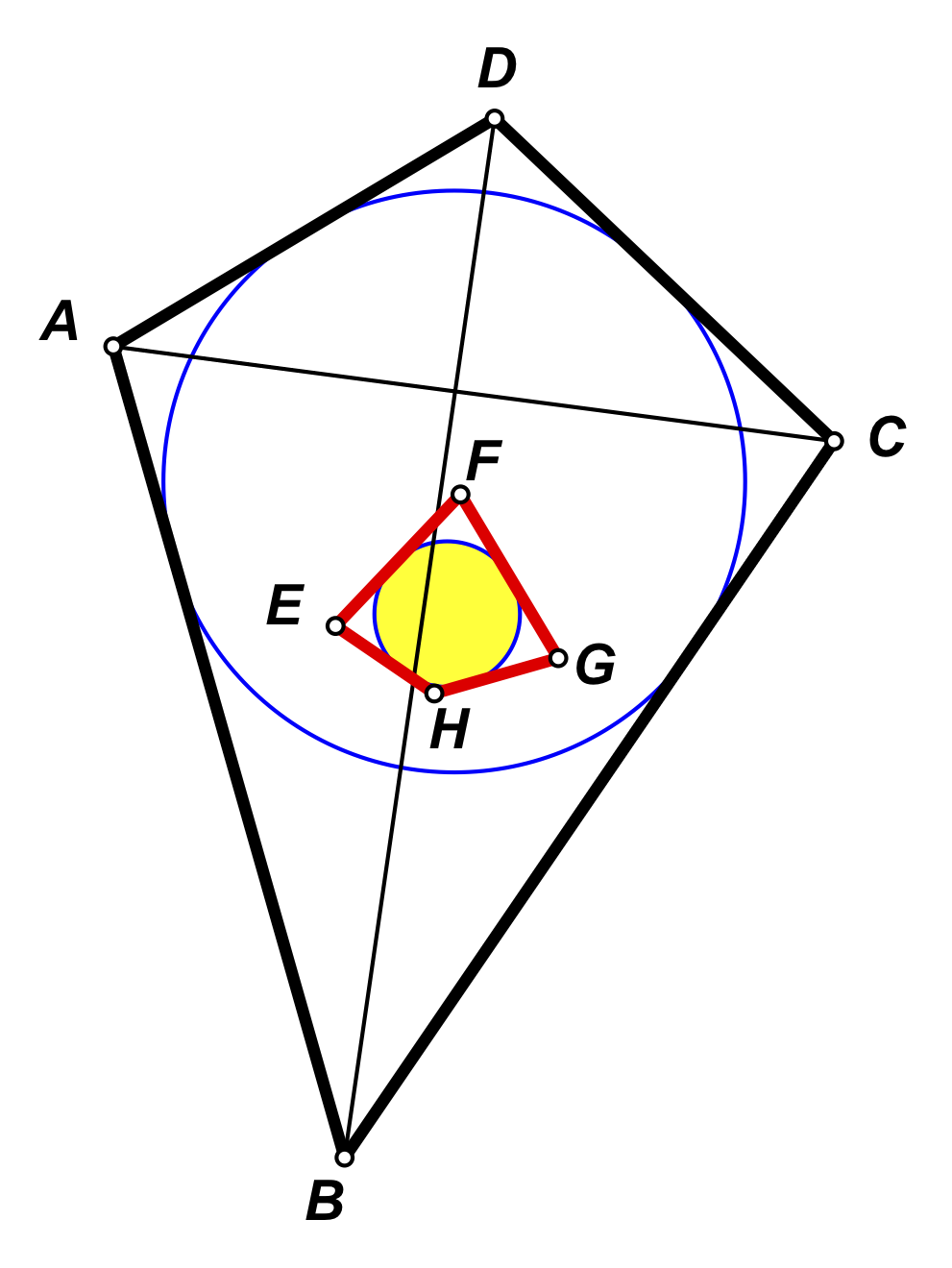}
\caption{Tangential quadrilateral: circumcenters $\implies$ tangential }
\label{fig:halfTangential}
\end{figure}
\begin{open}
Are there purely geometric proofs for the results given in Theorem \ref{thm:halfTangential}?
\end{open}

\subsection{Equidiagonal Orthodiagonal Quadrilaterals}

\begin{theorem}
\label{thm:halfEquiOrtho}
If the reference quadrilateral is equidiagonal and orthodiagonal, then the central quadrilateral is also equidiagonal and orthodiagonal for the following centers: $X_{51}$, $X_{373}$.
\end{theorem}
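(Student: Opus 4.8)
The plan is a direct symbolic computation in Cartesian coordinates, of exactly the type used in the proof of Theorem~\ref{thm:ortho6}. Observe first that neither $X_{51}$ (the centroid of the orthic triangle) nor $X_{373}$ has a center function of the form $\cos B\cos C+k\cos A$: for example $X_{51}$ has trilinears $(2\sqrt2:1:1)$ in the isosceles right triangle (apex angle $90\degrees$) but $(1:0:0)$ in the isosceles triangle with apex angle $120\degrees$, and no single value of $k$ reproduces both. Hence the problem is not covered by Theorem~\ref{thm:halfOrtho}, and since no earlier result in the paper addresses equidiagonality of a half-triangle central quadrilateral, both conclusions about $EFGH$ --- that $EG\perp FH$ and that $EG=FH$ --- must be established from scratch.

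First I would place $ABCD$ in the coordinate system of Figure~\ref{fig:orthoCoordinates}: diagonal point at the origin, the two diagonals along the coordinate axes (so orthodiagonality of $ABCD$ is automatic), $D=(1,0)$, $A=(0,a_y)$, $B=(b_x,0)$, $C=(0,c_y)$. The equidiagonal condition $AC=BD$ becomes $a_y-c_y=1-b_x$, which I would use to eliminate one coordinate, say $c_y=a_y+b_x-1$, so that the reference quadrilateral forms a two-parameter family in $a_y$ and $b_x$.

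Next, for each of the four half triangles $\triangle BCD$, $\triangle ACD$, $\triangle ABD$, $\triangle ABC$ I would read off its three side lengths from the vertex coordinates, substitute them into a center function for the chosen center --- for $X_{51}$ one may use the first barycentric coordinate $a^2\bigl(a^2(b^2+c^2)-(b^2-c^2)^2\bigr)$, and for $X_{373}$ the barycentrics given in \cite{ETC} --- and apply the barycentric-to-Cartesian change-of-coordinates formula recalled in the proof of Theorem~\ref{thm:general}. This produces $E$, $F$, $G$, $H$ as explicit rational functions of $a_y$ and $b_x$, and the two required properties then become polynomial identities in these two parameters: $EFGH$ is orthodiagonal exactly when the slopes of its diagonals $EG$ and $FH$ have product $-1$, and equidiagonal exactly when $|EG|^2=|FH|^2$. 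After clearing denominators I would have Mathematica verify both identities, then repeat the computation for the second center.

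The only real difficulty is the size of the intermediate expressions: the center functions --- that of $X_{373}$ in particular --- are fairly high-degree polynomials in $a,b,c$, so the coordinates of $E,F,G,H$ are unwieldy and the simplification of the two target identities is where the work lies, with no conceptual subtlety beyond careful bookkeeping; this is precisely the kind of step the paper relegates to its supplementary Mathematica notebooks. It would be natural, and is left open here, to identify the full family of centers --- presumably a line, by analogy with Theorems~\ref{thm:EulerLine}, \ref{thm:NagelLine}, and~\ref{thm:BevanLine} --- for which an equidiagonal orthodiagonal reference quadrilateral yields an equidiagonal orthodiagonal central quadrilateral, and to check whether orthodiagonality of $EFGH$ alone holds for a strictly larger set of centers.
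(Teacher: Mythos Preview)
Your proposal is correct and follows essentially the same route as the paper: the paper gives no in-text argument for this theorem and defers the verification to the supplementary Mathematica notebooks, which carry out precisely the kind of coordinate computation you describe. Your choice of the orthodiagonal coordinate frame with the equidiagonal constraint $c_y=a_y+b_x-1$ is apt, and since the barycentrics of both $X_{51}$ and $X_{373}$ are polynomials in the squared side lengths, the resulting identities are indeed polynomial in $a_y,b_x$ as you claim.
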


\begin{theorem}
\label{thm:halfEquiOrthoLine}
If the reference quadrilateral is equidiagonal and orthodiagonal, then the central quadrilateral degenerates to a line segment when the center is $X_{642}$.
\end{theorem}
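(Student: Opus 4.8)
The configuration here is the half-triangle one, so $E$, $F$, $G$, $H$ are the $X_{642}$ points of $\triangle BCD$, $\triangle ACD$, $\triangle ABD$, $\triangle ABC$ respectively, and the claim to prove is that these four points are collinear. The plan is a direct coordinate computation in the spirit of the proofs of Theorem \ref{thm:ortho6} and Theorem \ref{thm:halfEquiOrtho}.

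Since the reference quadrilateral is orthodiagonal, I would place the diagonal point at the origin with $AC$ along the $y$-axis and $BD$ along the $x$-axis; after scaling, take $A=(0,p)$, $B=(-s,0)$, $C=(0,-q)$, $D=(t,0)$ with $p,q,s,t>0$, so $ABCD$ is convex and counterclockwise. The equidiagonal condition $AC=BD$ becomes the single linear relation $p+q=s+t$, which I would impose by writing $t=p+q-s$ with $0<s<p+q$. The four outer sides are $AB=\sqrt{s^2+p^2}$, $BC=\sqrt{s^2+q^2}$, $CD=\sqrt{t^2+q^2}$, $DA=\sqrt{t^2+p^2}$, while the diagonals $p+q$ and $s+t$ are rational in the parameters, so each of the four half-triangles has exactly two irrational side lengths, drawn from the list $\{\sqrt{s^2+p^2},\sqrt{s^2+q^2},\sqrt{t^2+q^2},\sqrt{t^2+p^2}\}$.

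Next, for each half-triangle I would take the center function of $X_{642}$ from \cite{ETC} (in either its trigonometric form or its algebraic form in $a,b,c$), specialize it to the three side lengths of that triangle in the order induced by the vertex labeling fixed in Section~\ref{section:halfTriangles}, pass from trilinear to barycentric coordinates by multiplying by the opposite side lengths, and then apply the change-of-coordinates formula to obtain the Cartesian coordinates of $E$, $F$, $G$, $H$ as rational functions of $p,q,s$ and the four square roots above. Collinearity of the four points is then equivalent — once one checks $E\neq F$, so that the segment is genuinely nondegenerate rather than a single point — to the vanishing of the two determinants
\begin{equation*}
\det\begin{pmatrix} E_x & E_y & 1\\ F_x & F_y & 1\\ G_x & G_y & 1\end{pmatrix}=0,\qquad
\det\begin{pmatrix} E_x & E_y & 1\\ F_x & F_y & 1\\ H_x & H_y & 1\end{pmatrix}=0,
\end{equation*}
which I would verify in Mathematica, treating each $\sqrt{\,\cdot\,}$ as an indeterminate subject to its defining quadratic relation and reducing.

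The main obstacle is purely computational: the center function of $X_{642}$ is not one of the simple forms handled by Theorems \ref{thm:normal}--\ref{thm:normal2}, so after substitution the coordinates of $E$, $F$, $G$, $H$ are bulky expressions mixing four independent radicals, and getting the two collinearity determinants to collapse to $0$ under the lone constraint $t=p+q-s$ requires careful rationalization (or a Gr\"obner-basis reduction modulo the four relations $u_1^2=s^2+p^2$, etc.). A secondary point of care is keeping the vertex orderings consistent across the four triangles, since an inconsistent choice of the ``corresponding'' center would produce a spurious nonzero answer. Finally, it would be worth identifying the limiting line — I would guess it is a natural line of the quadrilateral, such as the Newton line through the midpoints of the two diagonals — because recognizing that line would let one replace the second determinant by a single direct check that each of the four centers lies on it.
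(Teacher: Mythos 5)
Your plan is essentially the paper's own approach: the authors give no textual proof of Theorem \ref{thm:halfEquiOrthoLine}, deferring to a supplementary Mathematica notebook that carries out exactly this kind of adapted coordinate setup (diagonals on the axes, equidiagonal condition imposed as a linear relation) followed by symbolic computation of the four centers and verification of collinearity. Your outline is sound, including the care about checking the four points do not all coincide; the only superfluous worry is about ``consistent vertex orderings,'' since a triangle center is independent of how the vertices are labeled (that is what the symmetry conditions on a center function guarantee), though mislabeling could of course still introduce implementation bugs.
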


\begin{theorem}
\label{thm:halfEquiOrthoOrtho}
If the reference quadrilateral is equidiagonal and orthodiagonal, then the central quadrilateral is orthodiagonal (but not equidiagonal) when the center is the inner Vecten point, $X_{486}$
(Figure \ref{fig:halfVecten}).
\end{theorem}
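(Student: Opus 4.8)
Since the inner Vecten point $X_{486}$ does not have a center function of the trigonometric form $\cos B\cos C+k\cos A$ covered by Theorem~\ref{thm:halfOrtho} --- its trilinear representation genuinely involves the area of the triangle, and in particular it is not on the Euler line --- this result does not follow from the earlier general theorems and appears to require both hypotheses on $ABCD$. The plan is therefore to verify it by a direct coordinate computation, in the spirit of the proof of Theorem~\ref{thm:cyclic485}.

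First I would use the orthodiagonal hypothesis to choose coordinates in which the diagonal point of $ABCD$ is the origin and the diagonals lie along the axes: $A=(0,p)$, $B=(-q,0)$, $C=(0,-r)$, $D=(s,0)$ with $p,q,r,s>0$ (the sign pattern that makes $ABCD$ convex). Orthodiagonality is then automatic, and the equidiagonal condition $AC=BD$ becomes the single relation $p+r=q+s$; after rescaling so that each diagonal has length $2$ one may write $p=1+u$, $r=1-u$, $q=1+v$, $s=1-v$ with $-1<u,v<1$, leaving a two-parameter family. Next, for each of the four half triangles $\triangle BCD$, $\triangle ACD$, $\triangle ABD$, $\triangle ABC$, I would locate the inner Vecten point by its construction --- erect squares internally on the three sides and intersect the three lines joining a vertex to the center of the square on the opposite side --- which, as in Theorem~\ref{thm:cyclic485}, needs only rotations through $90\degrees$ and intersections of lines, and so yields coordinates for $E,F,G,H$ that are rational in $u$ and $v$, with no radicals. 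Care is needed to orient all four triangles consistently (say, counterclockwise) so that ``inner'' selects the square on the interior side of every edge; an inconsistency would silently replace some inner Vecten points by outer ones and destroy the conclusion.

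Finally I would compute, via the slope formula, the slopes of the diagonals $EG$ and $FH$ of the central quadrilateral, form their product, and simplify it (with Mathematica) modulo $p+r=q+s$; the expectation is that it collapses to $-1$, giving $EG\perp FH$ and hence that $EFGH$ is orthodiagonal. For the final clause I would compute $|EG|^2-|FH|^2$ as a function of $u$ and $v$ and exhibit one admissible, asymmetric choice of parameters --- for instance $p=\tfrac{3}{2}$, $r=\tfrac{1}{2}$, $q=\tfrac{5}{4}$, $s=\tfrac{3}{4}$ --- for which this difference is nonzero, showing the central quadrilateral is not equidiagonal in general. I expect the only real obstacle to be the algebraic size of the expressions: the inner Vecten point of a scalene triangle has a fairly heavy closed form, so although the perpendicularity identity is elementary it is only comfortably checked by machine, exactly as for the other orthodiagonal theorems in this paper.
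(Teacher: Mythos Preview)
Your proposal is correct and mirrors the paper's own approach: the paper gives no inline proof of this theorem but defers to the supplementary Mathematica notebooks, i.e., to a direct symbolic coordinate verification, which is precisely what you outline. Your specific choices --- placing the diagonal point at the origin with the diagonals along the axes, imposing $p+r=q+s$ for equidiagonality, constructing each $X_{486}$ via $90\degrees$ rotations and line intersections to avoid radicals (as in Theorem~\ref{thm:cyclic485}), checking that the product of the slopes of $EG$ and $FH$ simplifies to $-1$, and exhibiting a single asymmetric instance where $|EG|\neq|FH|$ --- are all sound and in the same spirit as the paper's other computational proofs; your caution about consistent orientation of the four half triangles is well placed.
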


\begin{figure}[h!t]
\centering
\includegraphics[width=0.5\linewidth]{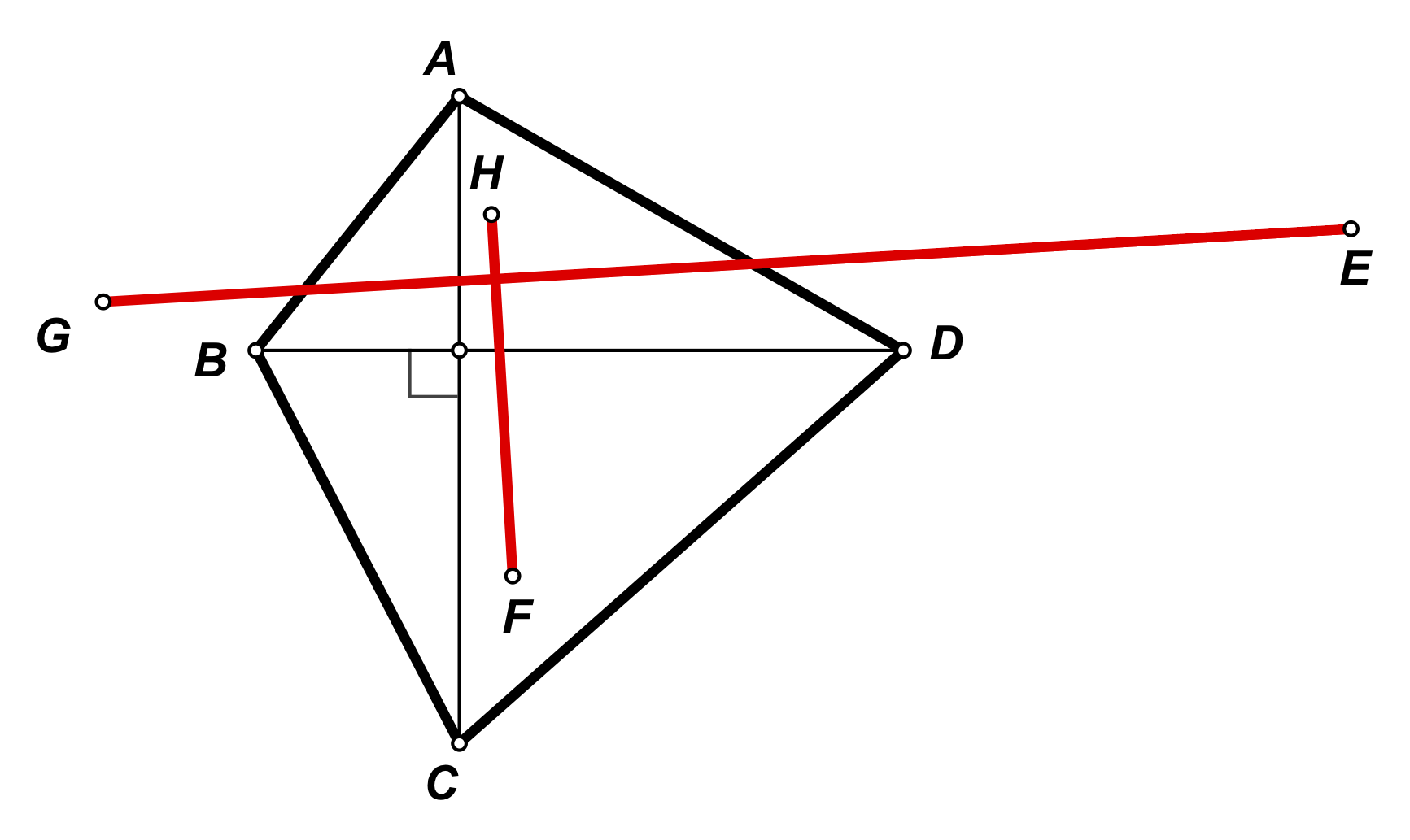}
\caption{inner Vecten pts., $AC=BD, AC\perp BD\implies FH\perp EG$}
\label{fig:halfVecten}
\end{figure}

\subsection{Isosceles Trapezoids}

\begin{theorem}
\label{thm:halfIsoTrap}
If the reference quadrilateral is an isosceles trapezoid, then the central quadrilateral is also an isosceles trapezoid
(Figure \ref{fig:halfTrap}).
\end{theorem}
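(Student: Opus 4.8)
The plan is to exploit the reflective symmetry that every isosceles trapezoid possesses, exactly as in the proof of the Isosceles Triangle Lemma (Lemma~\ref{lemma:isosceles}) and of Theorem~\ref{thm:isoTrap}. Relabeling the vertices cyclically if necessary, I would assume that the reference isosceles trapezoid $ABCD$ has $AB\parallel CD$ with legs $AD=BC$, so that its axis of symmetry $\ell$ is the common perpendicular bisector of $AB$ and of $CD$. Let $\sigma$ denote the reflection in $\ell$; then $\sigma$ interchanges $A\leftrightarrow B$ and $C\leftrightarrow D$.

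Next I would track how $\sigma$ permutes the four half triangles. Reflection in $\ell$ sends the vertex set $\{B,C,D\}$ to $\{A,D,C\}=\{A,C,D\}$, hence maps $\triangle BCD$ onto $\triangle ACD$; likewise it maps $\triangle ABD$ onto $\triangle ABC$. Since a triangle center is intrinsic to its triangle and is carried to the corresponding center of the image triangle by any isometry of the plane — the very principle already used in Lemmas~\ref{lemma:parallelogram} and~\ref{lemma:isosceles} — it follows that $\sigma(E)=F$ and $\sigma(G)=H$. Consequently $E$ and $F$ are mirror images in $\ell$, and so are $G$ and $H$. Therefore the segments $EF$ and $GH$ are both perpendicular to $\ell$ and are bisected by it, so that $EF\parallel GH$; and since $AB\perp\ell$ as well, in fact $EF\parallel AB$.

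Finally, because $\sigma$ carries $F$ to $E$ and $G$ to $H$, it maps the segment $FG$ onto the segment $EH$, and being an isometry it preserves lengths, so $FG=HE$. Thus the central quadrilateral $EFGH$ has one pair of opposite sides parallel ($EF\parallel GH$) and the other pair equal in length ($FG=HE$), and it is symmetric about $\ell$; that is, $EFGH$ is an isosceles trapezoid, with the same axis of symmetry as $ABCD$ and with its parallel sides parallel to those of $ABCD$.

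I do not anticipate a genuine obstacle: this is the same symmetry argument that recurs throughout Section~\ref{section:quarterTriangles}. The only points requiring care are bookkeeping — matching each half triangle with its mirror image under $\sigma$ so that the parallel pair and the equal pair turn out to be the correct pairs of sides of $EFGH$ — together with the standard remark that a triangle center does not depend on the labeling of the vertices of its triangle, so that an isometry of the plane takes each triangle center to the corresponding center of the image triangle. As elsewhere in the paper, the statement is to be read generically; in special positions (for instance when $ABCD$ is a rectangle) the central quadrilateral may acquire further symmetry or degenerate.
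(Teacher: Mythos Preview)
Your argument is correct and is essentially the same reflection-symmetry proof the paper gives: the paper lets the axis be the line through the midpoints of the two parallel sides, observes that this reflection swaps the half triangles in pairs (so the corresponding centers swap), and concludes that two opposite sides of $EFGH$ are perpendicular to the axis (hence parallel) while the other two are equal. The only difference is cosmetic: the paper labels the trapezoid with $AD\parallel BC$, so the reflection swaps $H\leftrightarrow E$ and $F\leftrightarrow G$, whereas your labeling $AB\parallel CD$ makes it swap $E\leftrightarrow F$ and $G\leftrightarrow H$; the content is identical.
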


\begin{figure}[h!t]
\centering
\includegraphics[width=0.45\linewidth]{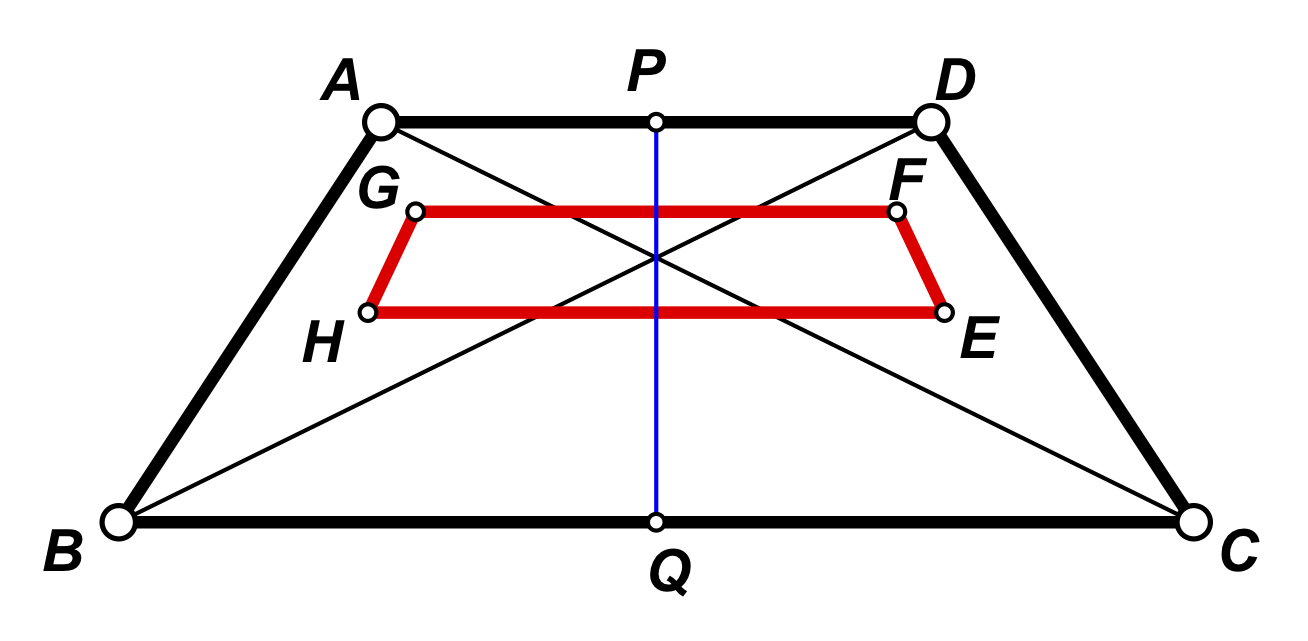}
\caption{isosceles trapezoid $\implies$ isosceles trapezoid}
\label{fig:halfTrap}
\end{figure}

\begin{proof}
Let $P$ and $Q$ be the midpoints of $AD$ and $BC$.
Note that triangles $\triangle ABC$ and $\triangle DCB$ are congruent.
One is the reflection of the other about $PQ$.
Under this reflection, $H$ maps to $E$, so $HE\perp PQ$ which means $HE\parallel BC$.
Similarly, $FG\parallel BC$. Thus, $EFGH$ is a trapezoid. Segment $EF$ maps to $HG$
under this reflection. Since reflection preserves distances, $EF=HG$.
Therefore, $EFGH$ is an isosceles trapezoid.
\end{proof}

\begin{theorem}
\label{thm:halfIsoTrapRect}
If the reference quadrilateral is an isosceles trapezoid, then the central quadrilateral is a rectangle for the following centers: $X_{40}$, $X_{165}$.
\end{theorem}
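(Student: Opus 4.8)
The plan is to deduce this theorem from Theorem \ref{thm:genCyclicRect}, which states that when the reference quadrilateral is cyclic and the chosen center has center function of the form $\cos B+\cos C+k\cos A-1$, the central quadrilateral is a rectangle. The entry point is the remark that an isosceles trapezoid is a cyclic quadrilateral, so it suffices to check that each of $X_{40}$ and $X_{165}$ has a center function of that shape, up to equivalence (i.e.\ up to multiplication by a cyclic function, which does not change the center).

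For $X_{40}$ this is immediate, and in fact this case is already contained in Corollary \ref{thm:halfCyclicBevan}: as recorded just before that corollary, the Bevan point has center function $\cos B+\cos C-\cos A-1$, which is the case $k=-1$.

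For $X_{165}$, the centroid of the excentral triangle, the first step is to produce a center function. Averaging the normalized barycentric coordinates of the three excenters $(-a:b:c)$, $(a:-b:c)$, $(a:b:-c)$, discarding the common symmetric denominator, and passing from barycentric to trilinear coordinates, one finds that $X_{165}$ has center function $$f(a,b,c)=(b+c-a)(c+a-b)+(b+c-a)(a+b-c)-(c+a-b)(a+b-c)=-3a^2+b^2+c^2+2ab+2ac-2bc.$$ The second step is to clear denominators in $\cos B+\cos C+k\cos A-1$ using the Law of Cosines (rule set (\ref{rules})) and compare the result with $f$: a short computation shows that for $k=-3$ the cleared polynomial equals $-(a+b+c)\,f(a,b,c)$. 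Since $a+b+c$ is a cyclic function, $f$ is equivalent to the center function $\cos B+\cos C-3\cos A-1$, which is of the required form with $k=-3$.

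Having placed both center functions in the family of Theorem \ref{thm:genCyclicRect}, and recalling that an isosceles trapezoid is cyclic, the conclusion follows at once. The only genuine work is the polynomial identity identifying $X_{165}$'s center function with $\cos B+\cos C-3\cos A-1$ up to a cyclic factor; this is the step most prone to slips, but it is purely mechanical and best confirmed with Mathematica. (An alternative, self-contained route avoids Theorem \ref{thm:genCyclicRect}: place the axis of symmetry of the isosceles trapezoid along the $y$-axis, so that reflection in that axis swaps the two pairs of opposite vertices of $ABCD$ and hence swaps $E\leftrightarrow H$ and $F\leftrightarrow G$; together with Theorem \ref{thm:halfIsoTrap} this already forces $EFGH$ to be an isosceles trapezoid symmetric about the $y$-axis, and it then remains only to verify, for the center functions of $X_{40}$ and $X_{165}$, that $E$ and $F$ have equal $x$-coordinates — equivalently, that $EFGH$ is also a parallelogram, hence a rectangle — a computation naturally relegated to the accompanying notebooks. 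The reduction to Theorem \ref{thm:genCyclicRect} is shorter, so I would present that.)
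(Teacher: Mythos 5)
Your reduction is sound and, as far as the text of the paper goes, it takes a different route: Theorem \ref{thm:halfIsoTrapRect} is stated without an in-text proof, so by the paper's convention its verification lives in the supplementary Mathematica notebooks as a direct coordinate computation. You instead observe that an isosceles trapezoid is cyclic and that both $X_{40}$ and $X_{165}$ belong to the one-parameter family $\cos B+\cos C+k\cos A-1$ of Theorem \ref{thm:genCyclicRect} (the Bevan-line family of Theorem \ref{thm:BevanLine}), with $k=-1$ and $k=-3$ respectively. I checked your key identity: your derivation of $X_{165}$'s trilinear $-3a^2+b^2+c^2+2ab+2ac-2bc$ from the excenters is correct, and clearing denominators in $\cos B+\cos C-3\cos A-1$ via the Law of Cosines gives $3a^3-b^3-c^3+a^2b+a^2c-3ab^2-3ac^2+b^2c+bc^2-2abc$, which is exactly $-(a+b+c)\bigl(-3a^2+b^2+c^2+2ab+2ac-2bc\bigr)$, so the two center functions are indeed equivalent. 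Your argument therefore establishes the theorem as a genuine corollary of Theorem \ref{thm:genCyclicRect}, which is more illuminating than an isolated computation: it also explains why $X_1$, $X_{40}$, $X_{165}$ are precisely the centers appearing in the raw data for cyclic reference quadrilaterals, since they are the $k=1$, $k=-1$, $k=-3$ members of that family. The only caveat is that your proof inherits whatever trust one places in Theorem \ref{thm:genCyclicRect}, itself certified only by the notebooks; your sketched self-contained alternative via the reflection symmetry of the isosceles trapezoid (Theorem \ref{thm:halfIsoTrap}) would remove that dependence, but it too ends in a computation, so presenting the reduction is the right choice.
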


\subsection{Cyclic Orthodiagonal Quadrilaterals}

\begin{theorem}
\label{thm:halfCyclicOrthoLine}
If the reference quadrilateral is cyclic and orthodiagonal, then the central quadrilateral degenerates to a line segment when the center is $X_{63}$. (Figure \ref{fig:halfCyclicOrthoLine})
\end{theorem}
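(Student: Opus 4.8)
The plan is to argue in the complex plane, in the spirit of the proof of Theorem~\ref{thm:genCyclicDeLongchamps}, after first using both hypotheses to cut the configuration down to a two‑parameter family. The key preliminary fact is the shape of $X_{63}$: it is the isogonal conjugate of the Clawson point $X_{19}$, so by \cite{ETC} it has trilinear coordinates $\cot A:\cot B:\cot C$, equivalently barycentric coordinates $\cos A:\cos B:\cos C$. The barycentric form is what makes the problem tractable, because $X_{63}$ of a triangle $PQR$ is simply the affine combination
$$X_{63}(PQR)=\frac{P\cos P+Q\cos Q+R\cos R}{\cos P+\cos Q+\cos R}$$
of its vertices (here $\cos P$ is the cosine of the angle at $P$): no side lengths, and hence no square roots, enter, and the denominator $\cos P+\cos Q+\cos R=1+r/R$ never vanishes.

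First I would place the circumcircle at the unit circle and let $2\beta,2\gamma,2\delta,2\alpha$ be the arcs $AB,BC,CD,DA$, so that $\alpha+\beta+\gamma+\delta=\pi$. Perpendicularity of the diagonals says the angle at which the chords cross, which equals half the sum of arcs $AB$ and $CD$, namely $\beta+\delta$, is $\tfrac{\pi}{2}$; hence also $\alpha+\gamma=\tfrac{\pi}{2}$, and taking $A=1$ this yields
$$A=1,\quad B=e^{2i\beta},\quad C=e^{2i(\beta+\gamma)},\quad D=-e^{2i\gamma},\quad \beta,\gamma\in(0,\tfrac{\pi}{2}),$$
for which $A\cdot C+B\cdot D=0$, the complex‑number form of $AC\perp BD$ for points on the unit circle. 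Now, by the inscribed‑angle theorem, every angle of every half‑triangle is a sum of consecutive half‑arcs, and under the constraint these half‑arcs are $\beta,\ \gamma,\ \tfrac{\pi}{2}-\beta,\ \tfrac{\pi}{2}-\gamma$; so the cosine weights for $X_{63}$ become explicit functions of $\beta$ and $\gamma$. (For instance $\triangle ABC$ has angles $\gamma,\ \pi-\beta-\gamma,\ \beta$ at $A,B,C$, so $H=X_{63}(\triangle ABC)=\dfrac{\cos\gamma-\cos(\beta+\gamma)\,e^{2i\beta}+\cos\beta\,e^{2i(\beta+\gamma)}}{\cos\gamma-\cos(\beta+\gamma)+\cos\beta}$, and $E,F,G$ are obtained the same way.) Having written $E,F,G,H$ as explicit points, I would prove collinearity by showing that $F$ and $G$ both lie on the line $EH$, i.e.\ that $\operatorname{Im}\bigl[(H-E)\overline{(F-E)}\bigr]=0$ and $\operatorname{Im}\bigl[(H-E)\overline{(G-E)}\bigr]=0$; after clearing the (real) denominators, each is a trigonometric identity in $\beta$ and $\gamma$, best confirmed with Mathematica.

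The main obstacle is the size of that final pair of identities: the four centers sit in four different barycentric frames, so nothing cancels before expansion, and one must fix a consistent cyclic order of the vertices in each half‑triangle, since that determines the signs of the cosine weights (several of which are negative). Two further points need checking: that collinearity survives on the degenerate strata of the family --- most visibly $\beta=\gamma$, where $ABCD$ is a cyclic kite and one verifies directly that all four centers land on the diagonal $BD$ --- and that the parametrization above genuinely sweeps out every convex cyclic orthodiagonal quadrilateral up to similarity. A synthetic proof identifying the common line (plausibly the line joining the circumcenter and the diagonal point of the reference quadrilateral) would be more illuminating, but I do not see one; the computation above is the dependable route and is consistent with the method used elsewhere in the paper.
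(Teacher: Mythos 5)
Your proposal is sound, and every load-bearing fact in it checks out: $X_{63}$ (isogonal conjugate of the Clawson point) does have barycentrics $\cos A:\cos B:\cos C$ with positive normalizer $1+r/R$, so each center really is a square-root-free cosine-weighted affine combination of the vertices; the arc condition $\beta+\delta=\alpha+\gamma=\tfrac{\pi}{2}$ (equivalently $AC+BD=0$ on the unit circle) is the correct encoding of orthodiagonality, and $(\beta,\gamma)\in(0,\tfrac{\pi}{2})^2$ does sweep out all convex cyclic orthodiagonal quadrilaterals up to similarity; and the $\operatorname{Im}$-of-a-ratio test is the right collinearity criterion. A numerical spot-check (e.g.\ $\beta=\tfrac{\pi}{4}$, $\gamma=\tfrac{\pi}{6}$) confirms the four centers are collinear, so the identities you propose to verify are true. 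The paper prints no proof of this theorem at all --- it defers to the supplementary Mathematica notebooks, whose stated method for cyclic reference quadrilaterals is barycentric coordinates with reference triangle $ABC$ and a rational parametrization of $D$ on the circumcircle (as in the proof of Theorem~\ref{thm:cyclic485}). Your route is of the same machine-verified character but uses a genuinely different and better-adapted frame: the unit-circle/arc parametrization consumes both hypotheses at once, reduces to two angular parameters, and turns the final check into a pair of trigonometric identities in $\beta,\gamma$ rather than a large polynomial identity in side lengths; the price is that you must track the signs of the cosine weights (the obtuse-angle weight is negative), which you correctly flag. One small correction to your closing aside: the common line is \emph{not} the line through the circumcenter and the diagonal point (in the example above it passes through neither), so the synthetic identification of the line remains open.
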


\begin{figure}[h!t]
\centering
\includegraphics[width=0.4\linewidth]{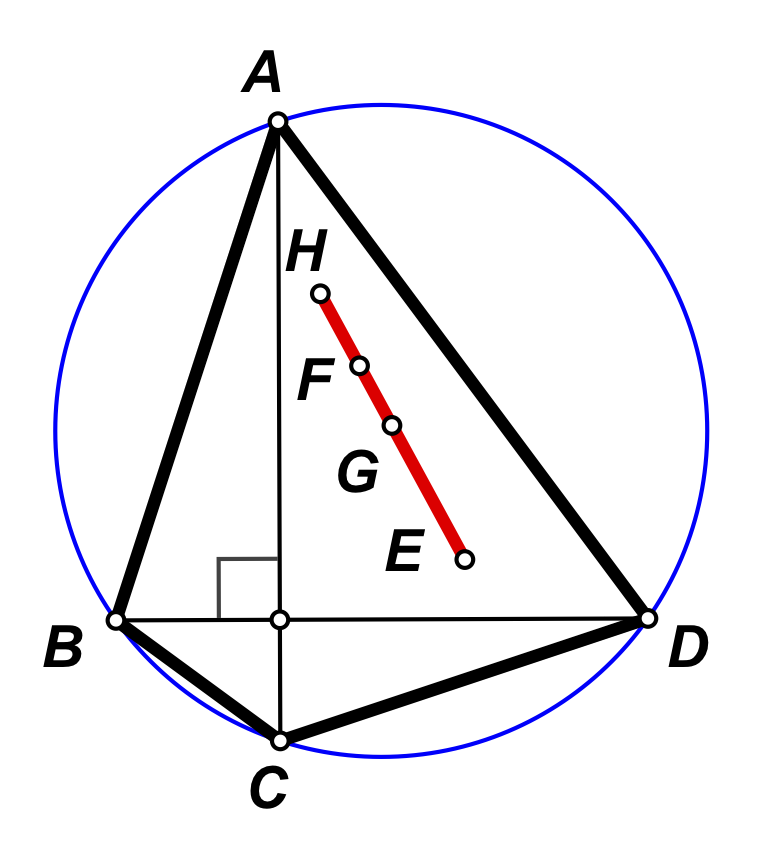}
\caption{$X_{63}$ points $\implies$ line}
\label{fig:halfCyclicOrthoLine}
\end{figure}

\subsection{Hjelmslev Quadrilaterals}

\begin{theorem}
\label{thm:halfHjPara}
If the reference quadrilateral is Hjelmslev, then the central quadrilateral is a parallelogram
when the center is $X_{53}$.
\end{theorem}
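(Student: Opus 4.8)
The plan is to prove this by a coordinate computation, in the spirit of the proof of Theorem~\ref{thm:general}. The starting point is that a Hjelmslev quadrilateral is automatically cyclic: from $\angle A=\angle C=90\degrees$ the opposite angles sum to $180\degrees$, so $ABCD$ has a circumcircle, and since $\angle BAD=90\degrees$ is inscribed in it, the diagonal $BD$ is a diameter. This makes it natural to take the circumcircle to be the unit circle centered at the origin, with $B=(-1,0)$ and $D=(1,0)$ on a horizontal diameter and with $A$, $C$ the two remaining vertices on the circle. To keep everything rational I would parametrize
$$A=\left(\frac{1-p^{2}}{1+p^{2}},\ \frac{2p}{1+p^{2}}\right),\qquad
C=\left(\frac{1-q^{2}}{1+q^{2}},\ \frac{2q}{1+q^{2}}\right),$$
with $p>0>q$ so that the vertices occur in the cyclic order $A,B,C,D$ and $ABCD$ is convex; this is a two-parameter family, matching the two shape degrees of freedom of a Hjelmslev quadrilateral.

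Next I would locate the four centers. The half-triangles $\triangle BCD$, $\triangle ACD$, $\triangle ABD$, $\triangle ABC$ are all inscribed in the unit circle, so by the distance formula their side lengths are rational in $p$ and $q$; substituting these into a center function for $X_{53}$ (see~\cite{ETC}) gives its barycentric coordinates in each triangle, and the change-of-coordinates formula from the proof of Theorem~\ref{thm:general} then produces the Cartesian coordinates of $E$, $F$, $G$, $H$. A useful simplification is available: $\triangle BCD$ and $\triangle ABD$ are right triangles, at $C$ and at $A$ respectively, and (one checks that) the first barycentric coordinate of $X_{53}$ carries the factor $b^{2}+c^{2}-a^{2}$, so $X_{53}$ of each of these two triangles lies on its hypotenuse; hence $E$ and $G$ both lie on the segment $BD$, i.e.\ on the $x$-axis, and only $F$ and $H$ then require the full change-of-coordinates computation. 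Alternatively, since all four triangles share the circumcircle, one could run the same computation with complex coordinates, as in the proof of Theorem~\ref{thm:genCyclicDeLongchamps}.

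Finally, writing the four central points as position vectors, I would verify the identity $E+G=F+H$. This says that the diagonals $EG$ and $FH$ of the central quadrilateral have a common midpoint, and a quadrilateral whose diagonals bisect each other is a parallelogram, which is the desired conclusion. The main obstacle is simply the bulk of the symbolic algebra: although a center function for $X_{53}$ is not itself complicated, composing it with the change-of-coordinates formula across four triangles yields cumbersome rational functions of $p$ and $q$, so the identity $E+G=F+H$ is best checked with Mathematica, as with the other coordinate proofs in this paper. It is worth stressing that the Hjelmslev hypothesis is used essentially — the right angles at $A$ and $C$ are exactly what force $E$ and $G$ onto the line $BD$ — so one should not expect the conclusion to follow from the earlier results about cyclic reference quadrilaterals alone.
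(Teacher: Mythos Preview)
Your overall plan—parametrize the Hjelmslev quadrilateral on its circumcircle with $BD$ as diameter, compute $X_{53}$ in each half triangle, and verify $E+G=F+H$ by computer algebra—is sound and matches the paper, which gives no in-text argument for this theorem and defers to its Mathematica supplement.

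However, your ``useful simplification'' rests on a factual slip. The first barycentric of $X_{53}$ does \emph{not} carry the factor $b^{2}+c^{2}-a^{2}$; it carries that quantity in the \emph{denominator}. From the trilinear $\tan A\cos(B-C)$ one obtains a first barycentric proportional to
\[
\frac{a^{2}bc\,\cos(B-C)}{b^{2}+c^{2}-a^{2}},
\]
so when the angle at $A$ is a right angle it is the \emph{other two} coordinates that vanish, and $X_{53}$ of a right triangle sits at the right-angle vertex, not on the hypotenuse. Consequently $E=X_{53}(\triangle BCD)=C$ and $G=X_{53}(\triangle ABD)=A$, and the identity you must check collapses to $F+H=A+C$, i.e.\ that $AC$ and $FH$ share a midpoint. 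This is an even cleaner reduction than the one you announced, but your closing remark about where the Hjelmslev hypothesis enters should be amended accordingly: the right angles at $A$ and $C$ pin $E$ and $G$ to those two vertices themselves, not to the line $BD$.
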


\begin{theorem}
\label{thm:halfHjLine}
If the reference quadrilateral is Hjelmslev, then the central quadrilateral degenerates to a
line segment when the center is $X_{97}$.
\end{theorem}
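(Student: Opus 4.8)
The plan is to exploit the fact that a Hjelmslev quadrilateral, having $\angle A=\angle C=90\degrees$, is automatically cyclic with the diagonal $BD$ as a diameter of its circumcircle: $\angle BAD=90\degrees$ puts $A$ on the circle with diameter $BD$, and $\angle BCD=90\degrees$ puts $C$ on that same circle. So I would set up Cartesian coordinates with that circle as the unit circle centered at $O$, taking $B=(-1,0)$, $D=(1,0)$, and placing $A$ and $C$ on the two arcs cut off by $BD$. To keep the coordinates rational I would parametrize $A=\bigl(\tfrac{1-p^2}{1+p^2},\tfrac{2p}{1+p^2}\bigr)$ and $C=\bigl(\tfrac{1-q^2}{1+q^2},\tfrac{2q}{1+q^2}\bigr)$ with $p>0>q$, so that $A$, $B$, $C$, $D$ occur in cyclic order. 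Note that two of the four half triangles, $\triangle ABD$ and $\triangle BCD$, are right triangles sharing the circumcenter $O$, while $\triangle ACD$ and $\triangle ABC$ are generic.

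Next I would bring in the center function for $X_{97}$ from \cite{ETC}, converting its trilinear form to barycentric by multiplying the first coordinate by $a$, as in the proof of Theorem \ref{thm:general}. For each of the four half triangles $\triangle BCD$, $\triangle ACD$, $\triangle ABD$, $\triangle ABC$ I would express the squared side lengths as functions of $p$ and $q$, substitute into the center function to obtain homogeneous barycentric coordinates for the selected center, and then apply the barycentric-to-Cartesian change-of-coordinates formula recalled in the proof of Theorem \ref{thm:general} to get the Cartesian coordinates of the four centers $E$, $F$, $G$, $H$. Any square roots that enter through the distance formula are single-valued on the chosen parameter ranges and cancel (or rationalize) once the barycentric triples are homogenized, so the four centers come out as explicit algebraic functions of $p$ and $q$.

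To finish, I would verify collinearity of $E$, $F$, $G$, $H$ by checking that the vectors $F-E$, $G-E$, $H-E$ are pairwise parallel, equivalently that
\[
\left|\begin{array}{ccc}E_x&E_y&1\\F_x&F_y&1\\G_x&G_y&1\end{array}\right|=0
\qquad\text{and}\qquad
\left|\begin{array}{ccc}E_x&E_y&1\\F_x&F_y&1\\H_x&H_y&1\end{array}\right|=0 .
\]
Each of these is an identity in $p$ and $q$ that can be confirmed with Mathematica. I would also point out that the Hjelmslev hypothesis is genuinely used: for a general cyclic quadrilateral these four $X_{97}$ centers are not collinear, so the condition that $BD$ is a diameter (equivalently $\angle A=\angle C=90\degrees$) must drive the simplification.

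The main obstacle is the size of the symbolic computation rather than any conceptual subtlety: the two non-right half triangles $\triangle ACD$ and $\triangle ABC$ produce the bulkiest side-length and $X_{97}$ expressions, so the determinants above start out as large rational functions, and the real work is getting them to collapse to $0$ identically. It would be pleasant to replace this by a synthetic argument — for instance, identifying the limiting line as a recognizable central line of the quadrilateral, perhaps one through $O$ — but I see no such shortcut a priori, so I would rely on the coordinate computation, in keeping with the methodology of Section \ref{section:methodology}.
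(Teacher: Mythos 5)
Your proposal is correct and is essentially the paper's own approach: the paper prints no proof of Theorem \ref{thm:halfHjLine}, deferring to a symbolic Mathematica verification, and your plan --- observing that $\angle A=\angle C=90^\circ$ forces $ABCD$ to be cyclic with $BD$ a diameter, rationally parametrizing $A$ and $C$ on that circle, computing the four $X_{97}$ centers via their barycentric coordinates, and checking collinearity by vanishing determinants --- is precisely such a verification. The only difference is cosmetic: elsewhere for cyclic reference quadrilaterals the paper works in barycentric coordinates with $\triangle ABC$ as reference and $D$ parametrized on the circumcircle, whereas you use Cartesian coordinates with the circumcircle as the unit circle; both reduce the theorem to a polynomial identity in two parameters to be confirmed by computer algebra.
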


\subsection{Kites}

\begin{theorem}
\label{thm:halfKite}
If the reference quadrilateral is a kite, then the central quadrilateral is a (not necessarily convex) kite (Figure \ref{fig:halfKite}).
\end{theorem}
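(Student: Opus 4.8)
The plan is to exploit the axis of symmetry of the kite together with the fact that triangle centers are equivariant under reflections. Write the kite as $ABCD$ with $AB=BC$ and $CD=DA$ (the algebraic condition $a=b$, $c=d$ from the table in Section~\ref{section:quadrilaterals}). Then $B$ and $D$ are both equidistant from $A$ and $C$, so line $BD$ is the axis of symmetry of the kite. Let $\sigma$ denote reflection in line $BD$; it fixes $B$ and $D$ and interchanges $A$ and $C$.

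Next I would track how $\sigma$ permutes the four half triangles and their chosen centers $E$, $F$, $G$, $H$, the centers of $\triangle BCD$, $\triangle ACD$, $\triangle ABD$, $\triangle ABC$, respectively. Applying $\sigma$: $\triangle ACD\mapsto\triangle CAD$ and $\triangle ABC\mapsto\triangle CBA$, each of which is the same unlabeled triangle as the original, so $\sigma(F)=F$ and $\sigma(H)=H$; that is, $F$ and $H$ both lie on line $BD$. On the other hand $\triangle BCD\mapsto\triangle BAD$ and $\triangle ABD\mapsto\triangle CBD$, so $\sigma$ interchanges $\triangle BCD$ with $\triangle ABD$ and hence $\sigma(E)=G$. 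The one point that needs care is the assertion that a triangle center commutes with a reflection and is insensitive to a relabeling of a triangle's vertices; this is exactly where the hypothesis that a center function is symmetric in its second and third arguments (Section~\ref{section:centers}) is used, and it is the same principle already invoked in Lemma~\ref{lemma:isosceles} and in the proof of Theorem~\ref{thm:halfIsoTrap}.

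Finally I would assemble the conclusion. Since $\sigma$ interchanges $E$ and $G$, line $BD$ is the perpendicular bisector of segment $EG$. Both $F$ and $H$ lie on line $BD$, so $FE=FG$ and $HE=HG$. In the central quadrilateral $EFGH$ — whose sides are $EF$, $FG$, $GH$, $HE$ and whose diagonals are $EG$ and $FH$ — these are precisely two pairs of adjacent equal sides, meeting at $F$ and at $H$, so $EFGH$ is a kite whose axis of symmetry lies along $FH$ (a subset of line $BD$). The central quadrilateral need not be convex: when $F$ and $H$ fall on the same side of line $EG$ one gets a dart rather than a convex kite, which is why the statement only claims a ``not necessarily convex'' kite.

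I do not expect a serious obstacle here; the only genuinely delicate point is the reflection-equivariance of triangle centers, which I would make explicit rather than leave implicit. I would also record the degenerate possibilities — $F=H$, or $E$ and $G$ themselves landing on line $BD$ — in which $EFGH$ collapses onto line $BD$; these are the usual degenerate cases and may be excluded or regarded as degenerate kites.
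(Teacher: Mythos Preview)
Your argument is correct and is essentially the paper's own proof: both exploit the kite's axis of symmetry, placing two of the four centers on that axis and swapping the other two by reflection, then conclude that two pairs of adjacent sides of $EFGH$ are equal. The only cosmetic difference is the labeling --- you take $AB=BC$, $CD=DA$ with axis $BD$ (so $F,H$ lie on the axis and $E\leftrightarrow G$), whereas the paper takes $AB=AD$, $CB=CD$ with axis $AC$ (so $E,G$ lie on the axis and $F\leftrightarrow H$); and where you invoke reflection-equivariance of triangle centers directly, the paper cites the Isosceles Triangle Lemma and the fact that a center of an isosceles triangle lies on its axis, which amount to the same thing.
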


\begin{figure}[h!t]
\centering
\includegraphics[width=0.25\linewidth]{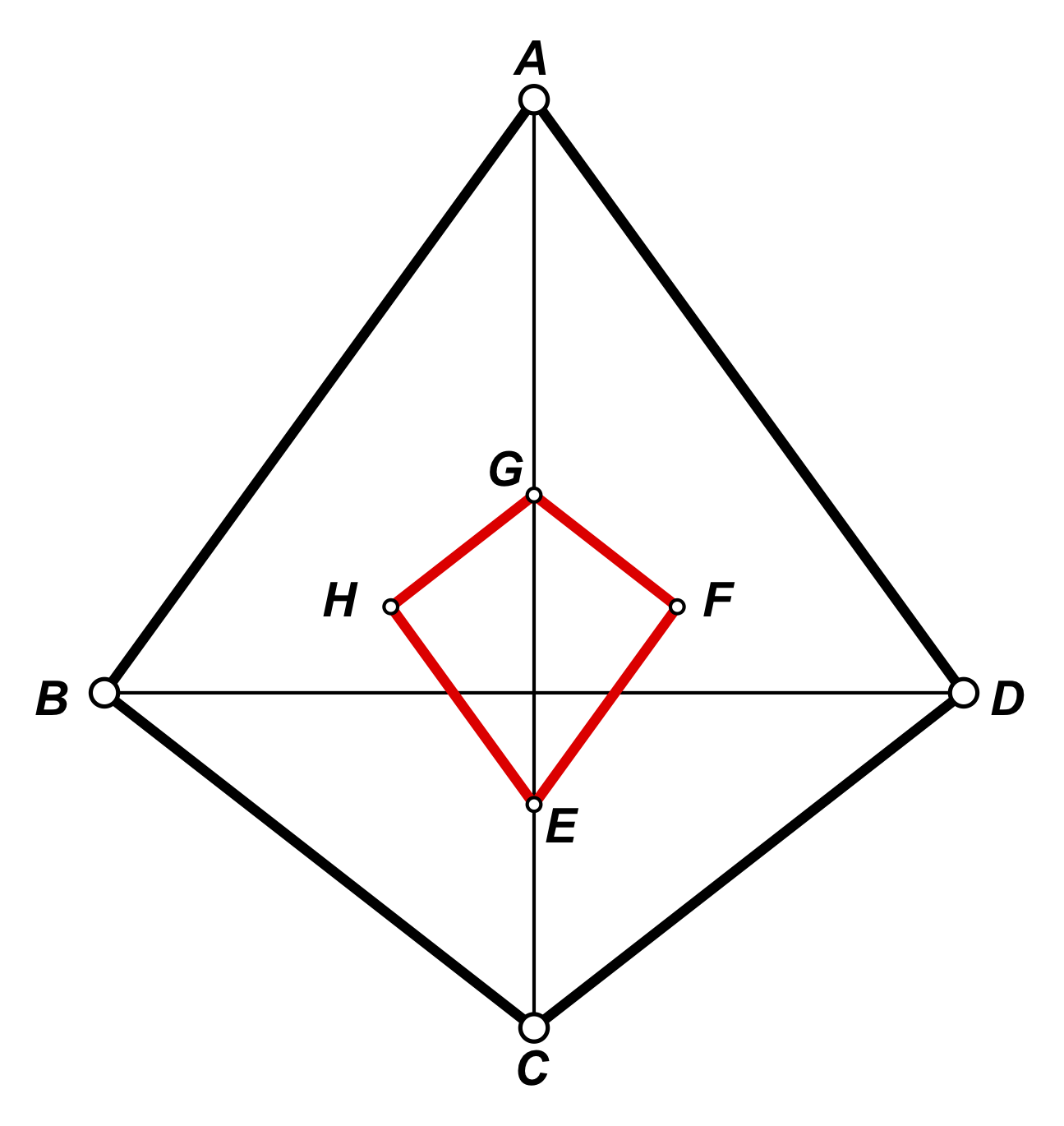}
\caption{kite $\implies$ kite}
\label{fig:halfKite}
\end{figure}

\begin{proof}
Label the kite so that $AB=AD$ and $CB=CD$ (Figure \ref{fig:halfKiteNagel}).
\begin{figure}[h!t]
\centering
\includegraphics[width=0.45\linewidth]{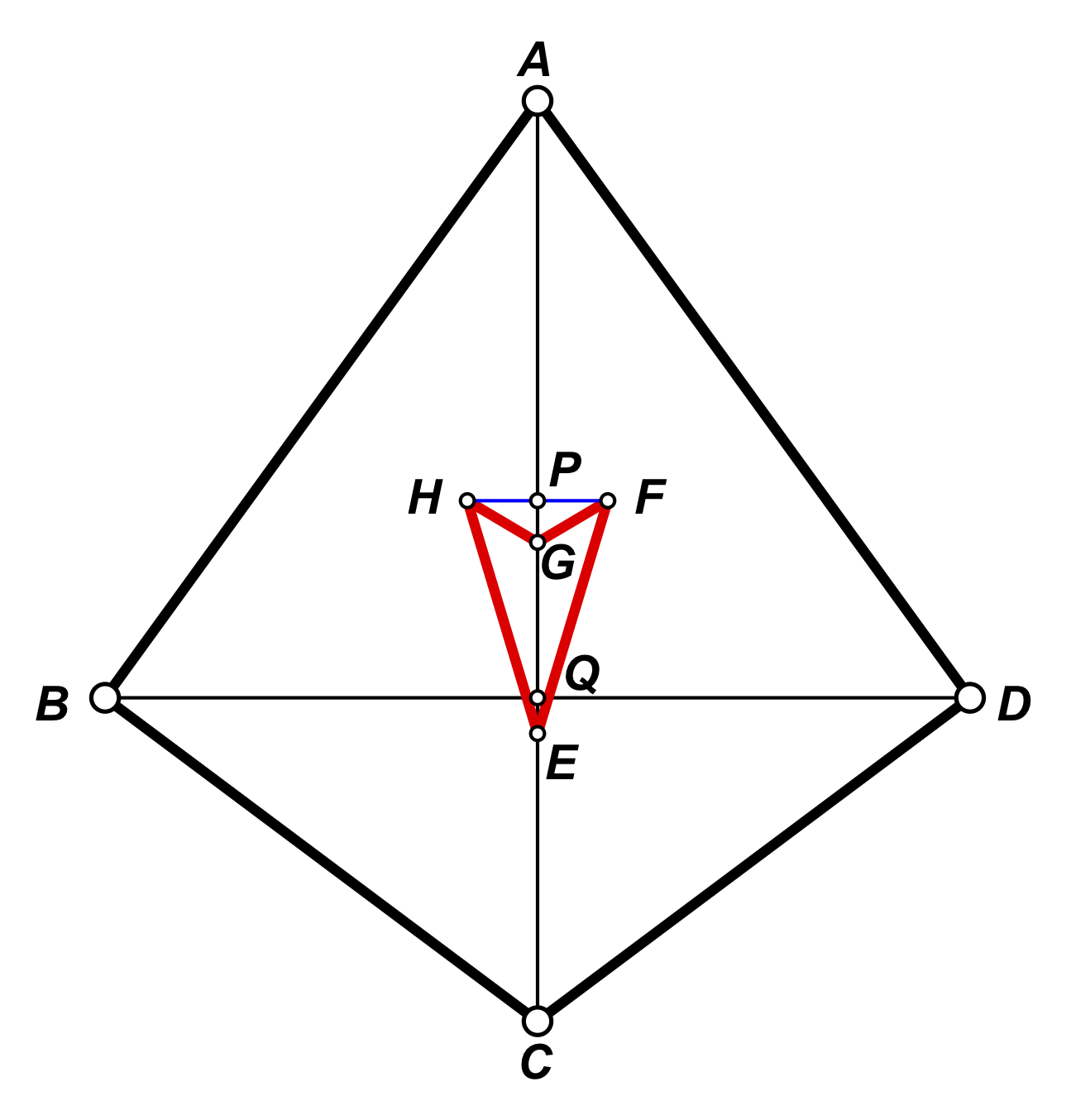}
\caption{central kite need not be convex}
\label{fig:halfKiteNagel}
\end{figure}

Let $AC$ meet $BD$ at $Q$.
The diagonals of a kite are perpendicular, so $AC\perp BD$.
A center of an isosceles triangle must lie on the altitude to the base, so centers $E$ and $G$
lie on $AC$.
Let $FH$ meet $AC$ at $P$.
Note that $P$ can lie anywhere on the line through $A$ and $C$, and is not necessarily
located as shown in Figure \ref{fig:halfKiteNagel}.
Note also that quadrilateral $EFGH$ could be convex as shown in Figure \ref{fig:halfKite}
or nonconvex as shown in Figure \ref{fig:halfKiteNagel}.

By the Isosceles Triangle Lemma (Lemma \ref{lemma:isosceles}),
$FH\perp AC$ and $PH=PF$. Thus $\triangle PEH\cong \triangle PEF$, so $EH=EF$.
Similarly, $\triangle PGH\cong \triangle PGF$, so $GH=GF$.
Thus, quadrilateral $EFGH$ is a kite.
\end{proof}

\subsection{Parallelograms}

\begin{theorem}
\label{thm:halfPara}
If the reference quadrilateral is a parallelogram, then the central quadrilateral is also a parallelogram (Figure \ref{fig:halfPara}).
\end{theorem}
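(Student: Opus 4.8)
The plan is to mimic the argument behind Theorem~\ref{thm:arbCenterParallelogram}, replacing the Parallelogram Lemma for quarter triangles by its analogue for half triangles. Let $O$ be the common midpoint of the diagonals $AC$ and $BD$ of the parallelogram $ABCD$, and let $\sigma$ denote the half-turn (rotation by $180\degrees$) about $O$. Since $ABCD$ is a parallelogram, $O$ bisects both diagonals, so $\sigma$ interchanges $A\leftrightarrow C$ and $B\leftrightarrow D$.

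Next I would track what $\sigma$ does to the four half triangles. Applying $\sigma$ to the vertex set $\{B,C,D\}$ gives $\{D,A,B\}$, so $\sigma$ carries $\triangle BCD$ onto $\triangle ABD$; similarly $\sigma$ carries $\triangle ACD$ onto $\triangle ABC$. Because $\sigma$ is an orientation-preserving isometry, and a (Kimberling) triangle center is a point determined by the triangle alone — independent of how its vertices are labelled, the center function being symmetric in its last two arguments — the chosen center is equivariant under $\sigma$. Hence $\sigma(E)=G$ and $\sigma(F)=H$.

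It then follows at once that $O$ is the midpoint of the segment $EG$ and also the midpoint of $FH$. Thus the diagonals $EG$ and $FH$ of the quadrilateral $EFGH$ bisect each other at $O$, which is precisely the condition for $EFGH$ to be a parallelogram; in particular $EFGH$ and $ABCD$ share the common diagonal point $O$.

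I do not anticipate a real obstacle here. The only point needing care is the justification that the triangle center transforms correctly under the half-turn — i.e.\ that it genuinely is a point of the triangle rather than of a labelled triangle — together with the observation that a half-turn is orientation-preserving, so even centers that behave differently under reflection cause no difficulty. Everything else is the elementary ``diagonals bisect each other'' characterization of parallelograms.
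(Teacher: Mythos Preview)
Your proof is correct and is essentially the same argument as the paper's: both use the half-turn about the diagonal point $O$ (the paper calls it the ``congruence transformation'' with center $P$) to show that $E\leftrightarrow G$ and $F\leftrightarrow H$, whence the diagonals of $EFGH$ bisect each other at $O$. Your write-up is slightly more explicit about why a Kimberling center is equivariant under this half-turn, which is a welcome clarification.
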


\begin{figure}[h!t]
\centering
\includegraphics[width=0.4\linewidth]{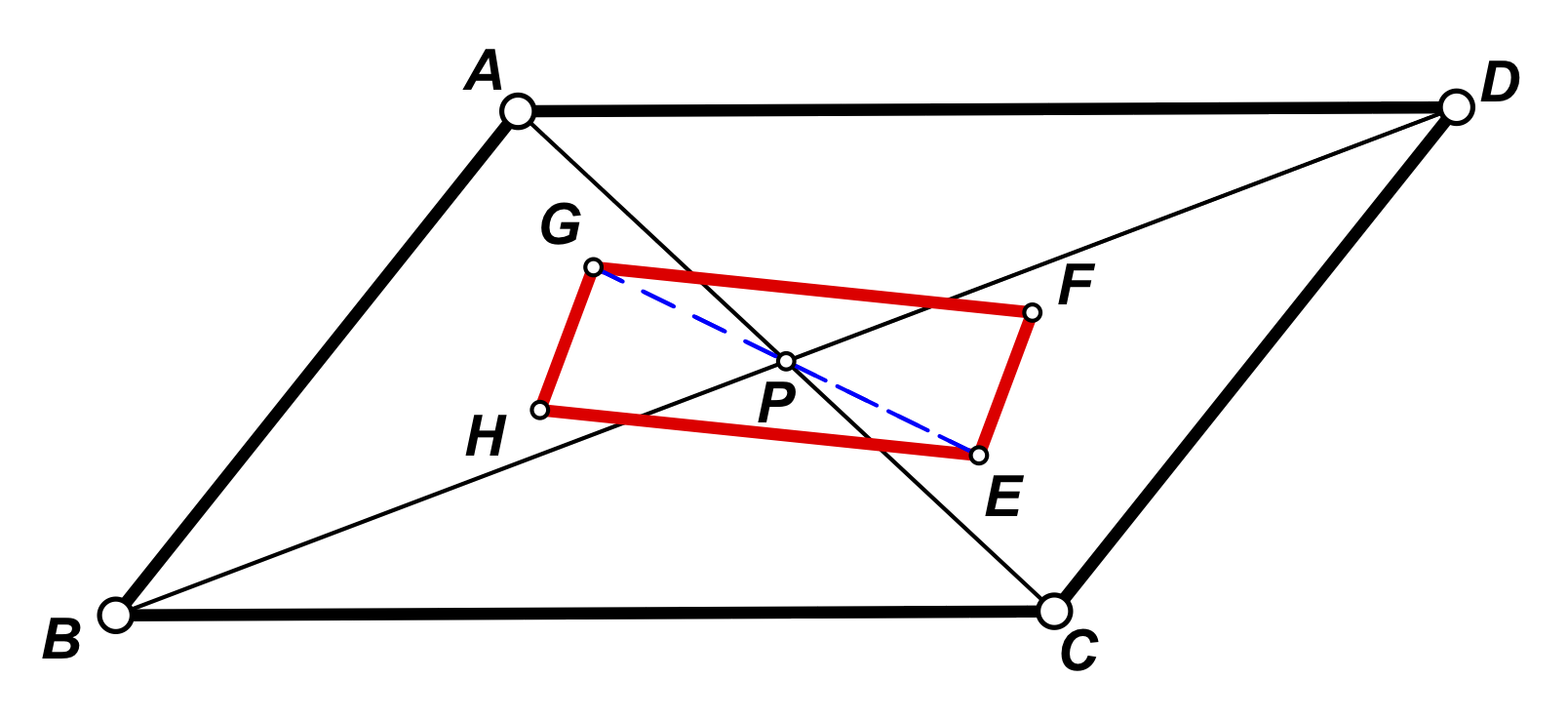}
\caption{parallelogram $\implies$ parallelogram}
\label{fig:halfPara}
\end{figure}

\begin{proof}
Let $P$ be the intersection of the diagonals of the reference quadrilateral $ABCD$.
Note that triangles $ABD$ and $CDB$ are congruent.
Under the congruence transformation that maps $\triangle ABD$ into $\triangle CDB$, $E$ will get mapped into $G$. Since
$P$ is the center of the congruence transformation, this means $EG$ will pass through
$P$. Congruence preserves lengths, so $EP=PG$ (Figure \ref{fig:halfPara}).
Similarly, $HP=PF$.
Since the diagonals of quadrilateral $EFGH$ bisect each other, this means that quadrilateral $EFGH$
is a parallelogram.
\end{proof}

\begin{theorem}
\label{thm:halfParaRhombus}
If the reference quadrilateral is a parallelogram, then the central quadrilateral is a rhombus
when the center is $X_{10}$, $X_{639}$, $X_{640}$, $X_{641}$, or $X_{642}$.
(Figure \ref{fig:halfParaRhombus} shows the case for $X_{10}$)
\end{theorem}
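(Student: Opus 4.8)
The starting point is Theorem~\ref{thm:halfPara}: for a parallelogram reference $ABCD$ the central quadrilateral $EFGH$ is already a parallelogram, so it suffices to prove that its diagonals $EG$ and $FH$ are perpendicular (a parallelogram with perpendicular diagonals is a rhombus). Here $E,F,G,H$ are the chosen centers of the half-triangles $\triangle BCD$, $\triangle ACD$, $\triangle ABD$, $\triangle ABC$. As in the proof of Theorem~\ref{thm:halfPara}, the half-turn about the common diagonal point $P$ of $ABCD$ carries $\triangle ABD$ to $\triangle BCD$ and $\triangle ABC$ to $\triangle ACD$, so $E=2P-G$ and $F=2P-H$; hence the vector from $E$ to $G$ is $2(G-P)$ and the vector from $F$ to $H$ is $2(H-P)$, and the whole theorem reduces to the single scalar identity
$$(G-P)\cdot(H-P)=0,$$
where $G$ and $H$ are the chosen center in the two half-triangles $\triangle ABD$ and $\triangle ABC$ (which share the side $AB$).

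For the computation I would place $A=(0,0)$, $B=(1,0)$, $D=(d_1,d_2)$, and $C=B+D=(d_1+1,d_2)$, so that $P=\tfrac12(B+D)$; the useful feature of this frame is that $P$ is the midpoint of the side $BD$ of $\triangle ABD$ and simultaneously the midpoint of the side $CA$ of $\triangle ABC$. If a center has barycentric coordinates $(x_A:x_B:x_D)$ with respect to $\triangle ABD$, then subtracting the midpoint of $BD$ gives
$$G-P=\frac{(x_B-x_A-x_D)\,B+(x_D-x_A-x_B)\,D}{2(x_A+x_B+x_D)},$$
and, using $C=B+D$, the analogous formula for a center $(x_A:x_B:x_C)$ of $\triangle ABC$ is
$$H-P=\frac{(x_B+x_C-x_A)\,B+(x_C-x_A-x_B)\,D}{2(x_A+x_B+x_C)}.$$
One then substitutes the ETC barycentric coordinates of each of $X_{10},X_{639},X_{640},X_{641},X_{642}$ and checks that the dot product vanishes identically.

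For $X_{10}$, the Spieker center, this can be seen directly and is worth recording. The Spieker center is the perimeter centroid, so $G$ is the average of the three side-midpoints of $\triangle ABD$ weighted by the opposite-side lengths; since the midpoint of $BD$ equals $P$, its weight drops out on subtracting $P$, and one obtains $G-P\propto |AD|\,B+|AB|\,D$, i.e.\ $G-P$ is parallel to $\widehat{AB}+\widehat{AD}$ (the internal bisector of $\angle A$ in $\triangle ABD$, where $\widehat{AB},\widehat{AD}$ denote the unit vectors along those sides). The same argument in $\triangle ABC$, where now the midpoint of $CA$ is $P$ and $C=B+D$, gives $H-P\propto |BC|\,B-|AB|\,D\propto\widehat{AB}-\widehat{AD}$. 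Since $\widehat{AB}$ and $\widehat{AD}$ are unit vectors, $(\widehat{AB}+\widehat{AD})\cdot(\widehat{AB}-\widehat{AD})=|\widehat{AB}|^{2}-|\widehat{AD}|^{2}=0$, so $EG\perp FH$ and $EFGH$ is a rhombus; in the coordinates above this is simply $(p+d_1,d_2)\cdot(p-d_1,-d_2)=p^{2}-(d_1^{2}+d_2^{2})=0$ with $p=|AD|$.

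The remaining four centers I would dispatch by the same coordinate reduction, carried out symbolically in Mathematica as with the other theorems in this paper. The one real obstacle is that the side lengths of the half-triangles are $1$, $\sqrt{d_1^{2}+d_2^{2}}$, $\sqrt{(d_1-1)^{2}+d_2^{2}}$, $\sqrt{(d_1+1)^{2}+d_2^{2}}$ (plus a further area radical if any of $X_{639}$--$X_{642}$ requires one), so a naive expansion of $(G-P)\cdot(H-P)$ is unmanageable; the standard remedy is to introduce each radical as a new variable subject to its defining quadratic relation and reduce the dot product modulo those relations (a Gr\"obner-basis reduction), whereupon it collapses to $0$. The only genuinely delicate point is to decide whether $X_{639}$--$X_{642}$ share the structural property exploited above for $X_{10}$ — namely that, in each of the two relevant half-triangles, the chosen center minus the appropriate side-midpoint lies on the bisector of $\angle A$ (respectively on the perpendicular to it through that midpoint) — in which case all five cases follow from a single parametric identity rather than five separate verifications.
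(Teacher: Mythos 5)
Your proposal is correct, and it takes a genuinely different (and in one respect stronger) route than the paper, which states Theorem~\ref{thm:halfParaRhombus} with no proof in the text and simply refers the reader to the supplementary Mathematica notebooks. Your reduction is sound: the half-turn about the diagonal point $P$ is exactly the symmetry used in the paper's proof of Theorem~\ref{thm:halfPara}, it gives $E=2P-G$ and $F=2P-H$, so the central parallelogram is a rhombus precisely when $(G-P)\cdot(H-P)=0$, and your barycentric-to-vector formulas for $G-P$ and $H-P$ in the frame $A=0$, $B=(1,0)$, $C=B+D$ check out (the key point being that $P$ is simultaneously the midpoint of side $BD$ of $\triangle ABD$ and of side $CA$ of $\triangle ABC$). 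Your explicit treatment of $X_{10}$ is complete and correct: from the Spieker barycentrics $(b+c:c+a:a+b)$ one gets $G-P\propto |AD|\,B+|AB|\,D$ and $H-P\propto |BC|\,B-|AB|\,D$, and the parallelogram relation $|BC|=|AD|$ kills both the cross terms and the difference of squares, giving a clean proof that $EG\perp FH$. This in fact answers the Open Question the paper poses immediately after the theorem (a geometric proof of the Spieker-center case), which is something the authors themselves do not supply. The only part you leave unexecuted is the verification for $X_{639}$--$X_{642}$: you reduce each to a single polynomial identity modulo the defining relations of the radicals, which is a legitimate and finite check, but you do not carry it out; since the paper offers nothing beyond ``see the Mathematica notebooks'' for these cases, this leaves you at parity with the paper there while strictly ahead on the $X_{10}$ case. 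Your closing question --- whether $X_{639}$--$X_{642}$ admit the same structural argument via the angle bisector at the shared vertex $A$ --- is the right one to pursue if a uniform synthetic proof is wanted.
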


\begin{figure}[h!t]
\centering
\includegraphics[width=0.35\linewidth]{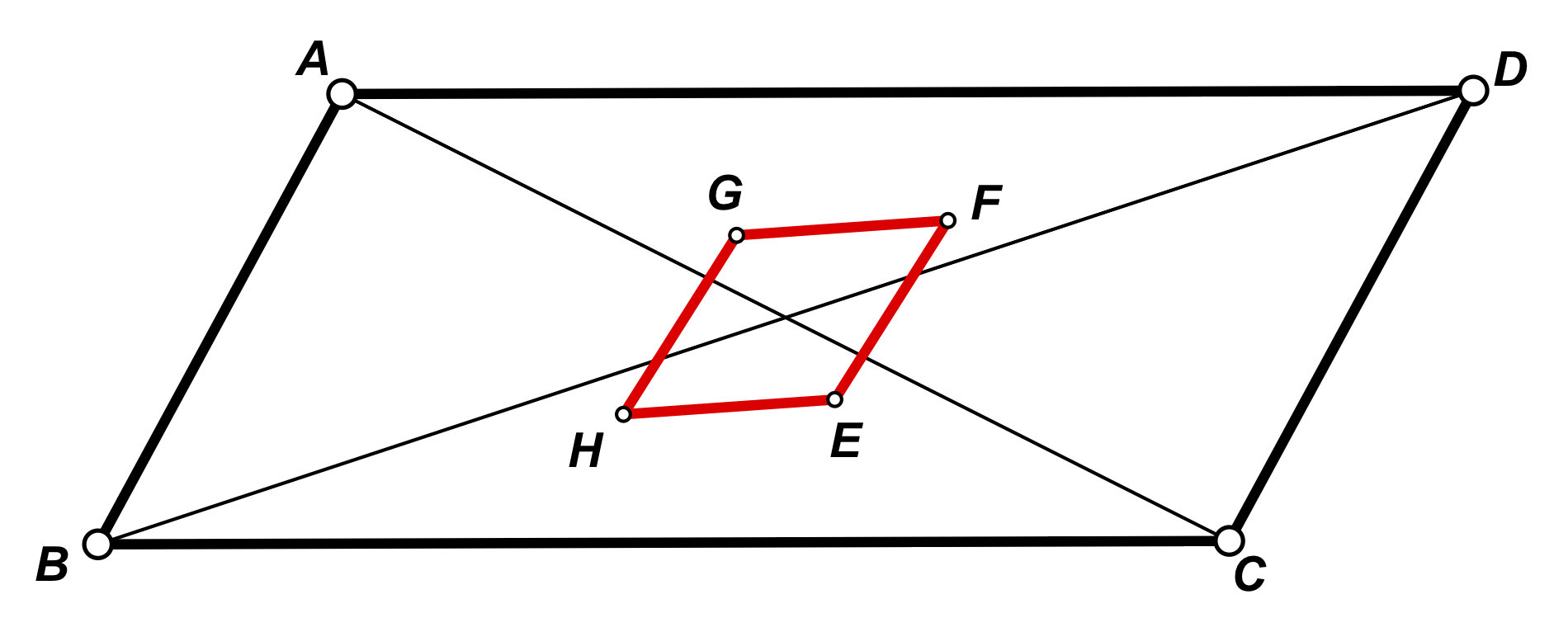}
\caption{parallelogram and $X_{10}$ points $\implies$ rhombus}
\label{fig:halfParaRhombus}
\end{figure}

\begin{open}
Is there a purely geometrical proof to the special case of Theorem \ref{thm:halfParaRhombus}
shown in Figure \ref{fig:halfParaRhombus} where the chosen center is the Spieker center?
\end{open}

\subsection{Rhombi}

\begin{theorem}
\label{thm:halfRhombus}
If the reference quadrilateral is a rhombus, then the central quadrilateral is also a rhombus
(Figure \ref{fig:halfRhombus}).
\end{theorem}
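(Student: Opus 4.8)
The plan is to avoid coordinates entirely and instead bootstrap from the two structural results already proved for half triangles, using the fact that a rhombus is simultaneously a parallelogram and a kite. Both Theorem~\ref{thm:halfPara} and Theorem~\ref{thm:halfKite} then apply to the same reference quadrilateral $ABCD$, and both speak about the same central quadrilateral $EFGH$ (the chosen centers of $\triangle BCD$, $\triangle ACD$, $\triangle ABD$, $\triangle ABC$, in that order), so the two conclusions can simply be superimposed.

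First I would invoke Theorem~\ref{thm:halfPara}: since a rhombus is a parallelogram, $EFGH$ is a parallelogram, hence $EF = GH$ and $FG = HE$. Next I would invoke Theorem~\ref{thm:halfKite}: with the natural cyclic labelling of a rhombus one has $AB = AD$ and $CB = CD$, which is precisely the set-up used in that theorem's proof, so $EFGH$ is a kite with $EH = EF$ and $GH = GF$. Feeding the kite equalities into the parallelogram equalities finishes the argument: from $EF = HE$ and $FG = HE$ we get $EF = FG$, and combining this with $FG = GH$ and $HE = EF$ forces $EF = FG = GH = HE$. An equilateral quadrilateral is a rhombus, so $EFGH$ is a rhombus. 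This is the exact half-triangle analogue of how Theorem~\ref{thm:arbCenterRhombus} was obtained from Theorems~\ref{thm:arbCenterKite} and~\ref{thm:arbCenterParallelogram} in the quarter-triangle section.

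The only point requiring a word of care is the ``not necessarily convex'' clause in Theorem~\ref{thm:halfKite}: a nonconvex kite can never be a parallelogram, so once both theorems apply the kite is forced to be of the convex type and the two descriptions of $EFGH$ genuinely coincide; equivalently, one argues purely in terms of side lengths, noting that a quadrilateral whose consecutive sides satisfy one of the kite patterns ($a=b,\ c=d$ or $a=d,\ b=c$) together with the parallelogram pattern ($a=c,\ b=d$) must have $a=b=c=d$. I do not expect any genuine obstacle here; everything is immediate from the cited theorems. As an alternative, one could give a self-contained proof by tracking how the two diagonal reflections and the central symmetry of the rhombus permute the four half triangles and hence act on $E, F, G, H$, which again yields that all four sides of $EFGH$ are equal.
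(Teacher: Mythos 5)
Your proof is correct, but it takes a different route from the paper's. The paper does not invoke Theorem~\ref{thm:halfKite} at all: it observes directly that each half triangle of a rhombus is isosceles, so every triangle center lies on the relevant axis of symmetry, forcing $E$ and $G$ onto $AC$ and $F$ and $H$ onto $BD$; hence the diagonals of $EFGH$ lie along the perpendicular diagonals of $ABCD$, and a parallelogram (via Theorem~\ref{thm:halfPara}) with perpendicular diagonals is a rhombus. Your argument instead superimposes the side-length conclusions of Theorems~\ref{thm:halfPara} and~\ref{thm:halfKite}: a parallelogram with a pair of equal adjacent sides ($EF=EH$ from the kite relation at vertex $E$) is a rhombus, and your remark that the parallelogram conclusion disposes of the nonconvex-kite possibility is the right way to handle that clause. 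Each approach buys something: the paper's yields the extra geometric fact that the diagonals of the central rhombus actually lie along the diagonals of the reference rhombus, which your purely combinatorial argument does not see; yours is the exact analogue of how the paper itself derives Theorem~\ref{thm:halfSquare} and Theorem~\ref{thm:arbCenterRhombus} from pairs of shape theorems, and incidentally your careful statement ``parallelogram $+$ kite $\Rightarrow$ rhombus'' is the corrected form of the slightly overstated claim ``parallelogram $+$ kite $\Rightarrow$ square'' used in the paper's proof of Theorem~\ref{thm:halfSquare}.
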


\begin{figure}[h!t]
\centering
\includegraphics[width=0.4\linewidth]{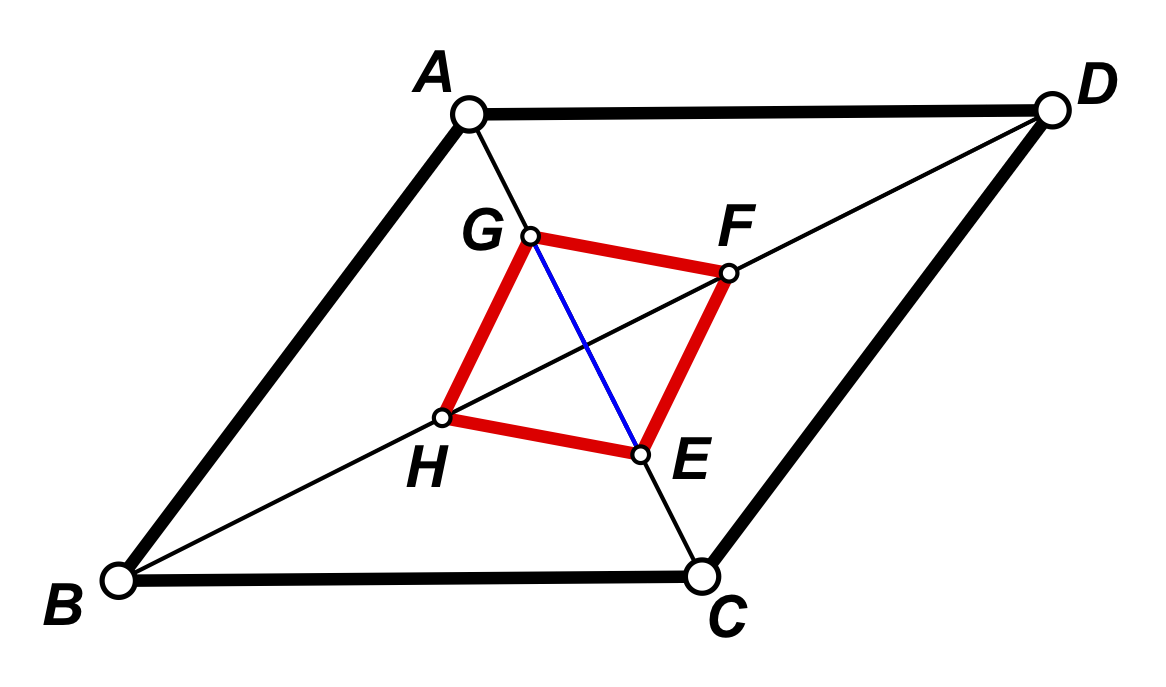}
\caption{rhombus $\implies$ rhombus}
\label{fig:halfRhombus}
\end{figure}

\begin{proof}
A center of an isosceles triangle must lie on the altitude to the base.
Thus, $E$ and $G$ lie on $AC$ and $F$ and $H$ lie on $BD$.
Thus, the diagonals of $EFGH$ lie along the diagonals of $ABCD$.
The diagonals of $ABCD$ are perpendicular, so the diagonals of $EFGH$ are also
perpendicular. This, combined with Theorem \ref{thm:halfPara}, implies that $EFGH$ is a rhombus.
\end{proof}

\begin{theorem}
\label{thm:halfRhombusSquare}
If the reference quadrilateral is a rhombus, then the central quadrilateral is a square
when the center is the Weill point ($X_{354}$).
\end{theorem}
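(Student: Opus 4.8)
The plan is to combine the rhombus structure already established in Theorem~\ref{thm:halfRhombus} with an explicit determination of the Weill point on the axis of symmetry of each (isosceles) half triangle. Recall that for a rhombus $ABCD$ we showed in Theorem~\ref{thm:halfRhombus} that $EFGH$ is a rhombus whose diagonals $EG$ and $FH$ lie along the diagonals $AC$ and $BD$ of $ABCD$. Since a rhombus with equal diagonals is a square, it suffices to prove $EG=FH$.

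First I would fix coordinates adapted to the rhombus: put its center at the origin $O$ with the diagonals along the axes, say $A=(p,0)$, $C=(-p,0)$, $B=(0,q)$, $D=(0,-q)$ with $p,q>0$, and write $\ell=\sqrt{p^2+q^2}$ for the common side length. Each half triangle is then isosceles: $\triangle BCD$ and $\triangle ABD$ have base $BD$ of length $2q$ and legs $\ell$, while $\triangle ACD$ and $\triangle ABC$ have base $AC$ of length $2p$ and legs $\ell$. Reflection in the line $BD$ interchanges $\triangle BCD$ and $\triangle ABD$, and since triangle centers are equivariant under isometries, it sends the Weill point $E$ of $\triangle BCD$ to the Weill point $G$ of $\triangle ABD$; as $E$ lies on line $AC$ (the axis of $\triangle BCD$), this gives $G=-E$ and $EG=2\,OE$. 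Likewise reflection in $AC$ interchanges $\triangle ACD$ and $\triangle ABC$, so $H=-F$ and $FH=2\,OF$. Thus the theorem reduces to the single identity $OE=OF$.

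Next I would locate the Weill point of an isosceles half triangle. The Weill point $X_{354}$ is the centroid of the contact (intouch) triangle, so one only needs the three touchpoints of the incircle. For $\triangle BCD$, with semiperimeter $s=\ell+q$, the tangent lengths are $q$ from $B$, $q$ from $D$, and $\ell-q$ from $C$; hence the incircle touches $BD$ at its midpoint $O=(0,0)$ and touches the two legs at $C+\tfrac{\ell-q}{\ell}(B-C)$ and $C+\tfrac{\ell-q}{\ell}(D-C)$. Averaging the three touchpoints and using $B+D=0$ collapses the expression to $E=\tfrac{2q}{3\ell}\,C=\bigl(-\tfrac{2pq}{3\ell},0\bigr)$, so $OE=\tfrac{2pq}{3\ell}$. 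The same computation applied to $\triangle ACD$ (base $AC$, apex $D$, semiperimeter $\ell+p$, tangent length $\ell-p$ from $D$) yields $F=\tfrac{2p}{3\ell}\,D=\bigl(0,-\tfrac{2pq}{3\ell}\bigr)$ and $OF=\tfrac{2pq}{3\ell}$. Hence $OE=OF$, so $EG=FH$ and $EFGH$ is a square. In fact this already exhibits $EFGH$ with vertices $(\pm m,0)$ and $(0,\pm m)$ where $m=\tfrac{2pq}{3\ell}$, so one could even bypass Theorem~\ref{thm:halfRhombus} entirely.

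I do not expect a serious obstacle. The only points requiring care are bookkeeping — matching each of the four numbered half triangles to the correct vertex of $EFGH$ so the two reflection symmetries genuinely produce the diagonals of the central quadrilateral — and recalling (and, if desired, verifying) the characterization of $X_{354}$ as the centroid of the contact triangle, after which the tangent-length computation is routine. A fully mechanical alternative — substituting the barycentric coordinates of $X_{354}$ into the half-triangle data and simplifying the diagonal lengths with Mathematica — is available as a fallback.
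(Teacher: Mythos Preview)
Your argument is correct. The tangent-length bookkeeping checks out: for $\triangle BCD$ the three touch points are $O$, $C+\tfrac{\ell-q}{\ell}(B-C)$, and $C+\tfrac{\ell-q}{\ell}(D-C)$, and since $B+D=0$ their centroid collapses to $\tfrac{2q}{3\ell}C$, symmetric in $p$ and $q$ with the analogous computation for $\triangle ACD$. The symmetry reductions $G=-E$, $H=-F$ are exactly as you say, since $E$ lies on $AC$ and the reflection across $BD$ negates the $x$-coordinate.

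The paper gives no written proof of this theorem; like most of the results here it is relegated to the Mathematica notebooks, where the barycentric center function $f(a,b,c)=a(b+c-a)\bigl[(a+b-c)(a-b+c)+2(b+c-a)^2\bigr]^{-1}$ (or an equivalent form) for $X_{354}$ is substituted into the half-triangle data and the diagonal lengths are simplified symbolically. Your approach is genuinely different and more transparent: by invoking the geometric description of the Weill point as the centroid of the contact triangle, you replace an opaque algebraic verification with a short computation that makes visible \emph{why} this particular center produces a square --- the distance $OE=\tfrac{2pq}{3\ell}$ is symmetric in the two half-diagonals $p$ and $q$. The Mathematica route, by contrast, would work uniformly for any center function without needing a special characterization, which is its own advantage when surveying many centers at once.
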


\subsection{Rectangles}

\begin{theorem}
\label{thm:halfRect}
If the reference quadrilateral is a rectangle, then the central quadrilateral is a rectangle
(Figure \ref{fig:halfRect}).
\end{theorem}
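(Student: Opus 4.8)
The plan is to obtain this theorem for free from the two ``arbitrary center'' results for half triangles that are already available, in exact analogy with the way Theorems~\ref{thm:arbCenterRhombus} and~\ref{thm:arbCenterSquare} were obtained in the quarter-triangle setting. The key observation is that a rectangle is at once a parallelogram and an isosceles trapezoid: with the cyclic labelling $ABCD$, a rectangle has $AD\parallel BC$ and $AB=CD$, which is precisely the configuration treated in the proof of Theorem~\ref{thm:halfIsoTrap} (there the roles of the parallel sides are played by $AD$ and $BC$), and it also satisfies $a=c$, $b=d$. Hence both Theorem~\ref{thm:halfPara} and Theorem~\ref{thm:halfIsoTrap} apply to the \emph{same} reference quadrilateral, and therefore to the \emph{same} four centers $E$, $F$, $G$, $H$ constructed in its four half triangles.

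I would then argue as follows. By Theorem~\ref{thm:halfPara}, $EFGH$ is a parallelogram, so its opposite angles are equal. By Theorem~\ref{thm:halfIsoTrap}, $EFGH$ is an isosceles trapezoid, hence a cyclic quadrilateral, so its opposite angles are supplementary. A quadrilateral whose opposite angles are both equal and supplementary has all four angles equal to $90^\circ$, so $EFGH$ is a rectangle. (Equivalently, one may simply cite the elementary fact, already used in the proof of Theorem~\ref{thm:arbCenterRhombus}, that a quadrilateral which is at once a parallelogram and an isosceles trapezoid must be a rectangle.)

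There is no real obstacle here beyond bookkeeping; the argument is purely a matter of combining two prior results. The only subtlety worth a remark is the usual degenerate case: for certain centers the points $E$, $F$, $G$, $H$ can become collinear, in which case ``rectangle'' is to be read as including the degenerate line segment already admitted in Section~\ref{section:quadrilaterals}; this is harmless, since Theorems~\ref{thm:halfPara} and~\ref{thm:halfIsoTrap} are themselves stated for arbitrary centers and already accommodate these degeneracies. If a self-contained synthetic proof is preferred, one can instead observe that the reflection in the common perpendicular bisector of the parallel sides $AD$ and $BC$ interchanges the half triangles $\triangle BCD\leftrightarrow\triangle ABC$ and $\triangle ACD\leftrightarrow\triangle ABD$, hence interchanges $E\leftrightarrow H$ and $F\leftrightarrow G$; this reflection therefore carries the parallelogram $EFGH$ (Theorem~\ref{thm:halfPara}) onto itself along an axis perpendicular to the sides $HE$ and $FG$, and a parallelogram admitting such a symmetry axis has a right angle, so it is a rectangle.
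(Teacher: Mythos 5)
Your argument is correct, and it reaches the conclusion by a different route than the paper. The paper proves Theorem~\ref{thm:halfRect} directly: it uses the reflections of the rectangle in the perpendicular bisectors of \emph{both} pairs of sides to show that $HE\parallel FG\parallel BC$ and $GH\parallel FE\parallel AB$, so every side of $EFGH$ is parallel to a side of $ABCD$ and adjacent sides are therefore perpendicular. You instead treat the theorem as a formal consequence of Theorems~\ref{thm:halfPara} and~\ref{thm:halfIsoTrap}, using the fact that a quadrilateral which is simultaneously a parallelogram and an isosceles trapezoid must be a rectangle --- exactly the ``combination'' style the paper itself uses for Theorems~\ref{thm:arbCenterRhombus}, \ref{thm:arbCenterSquare}, \ref{thm:arbCenterRectangle}, and \ref{thm:halfSquare}, but happens not to use here. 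Both proofs are sound; your modular version buys brevity and consistency with the surrounding results (and your closing synthetic remark, using one reflection to show $E\leftrightarrow H$, $F\leftrightarrow G$, is essentially half of the paper's own argument), while the paper's direct proof buys slightly more information, namely that the sides of the central rectangle are parallel to those of the reference rectangle. Two small points of care that you handle adequately but that are worth keeping explicit: the step ``isosceles trapezoid $\Rightarrow$ cyclic $\Rightarrow$ opposite angles supplementary'' relies on the isosceles trapezoid having a genuine symmetry axis (which the proof of Theorem~\ref{thm:halfIsoTrap} does provide, via the reflection $E\leftrightarrow H$, $F\leftrightarrow G$), not merely on ``one pair of parallel sides and equal legs,'' which a non-rectangular parallelogram would also satisfy; and, as you note, for some centers (e.g.\ $X_3$) the four points coincide, so the statement must be read as admitting the degenerate cases already allowed in Section~\ref{section:quadrilaterals}.
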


\begin{figure}[h!t]
\centering
\includegraphics[width=0.3\linewidth]{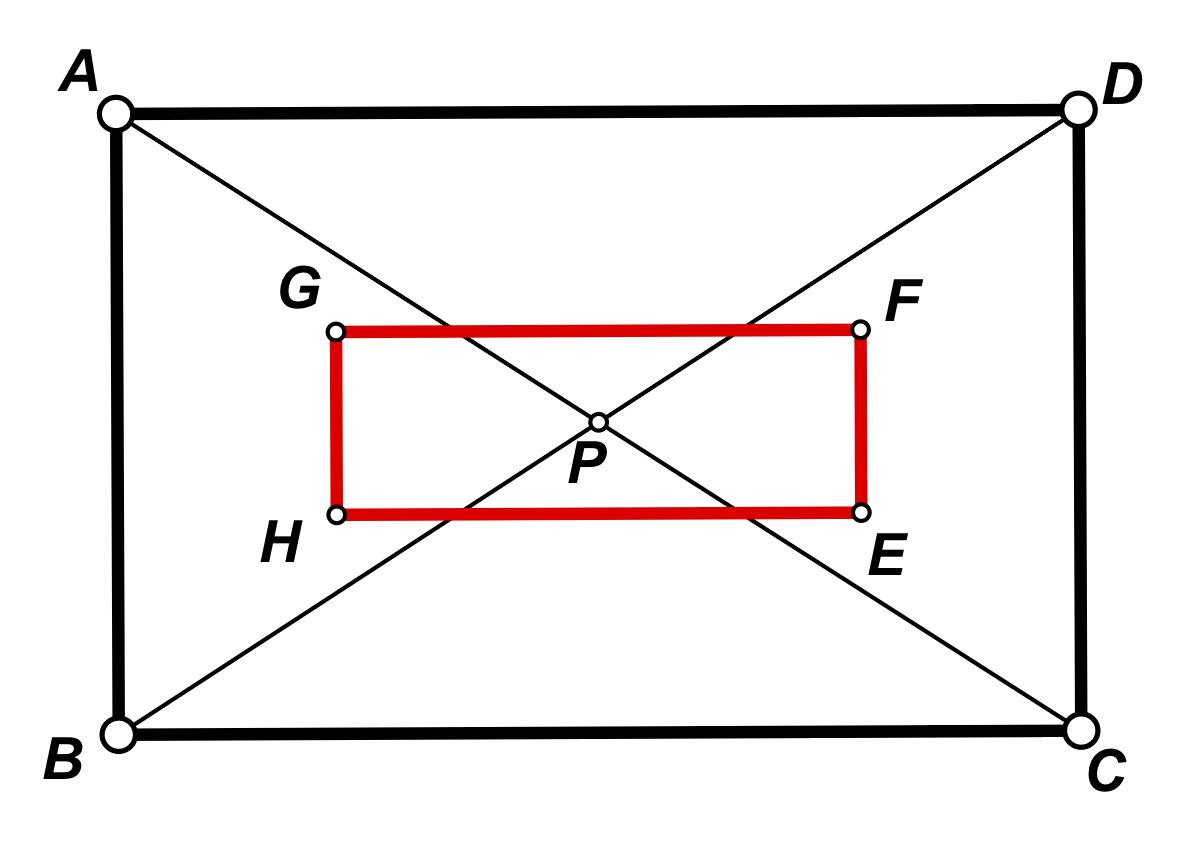}
\caption{rectangle $\implies$ rectangle}
\label{fig:halfRect}
\end{figure}

\begin{proof}
Note that triangles $\triangle ABC$ and $\triangle DCB$ are congruent.
Under the reflection that maps $\triangle ABC$ into $\triangle DCB$, $H$ will get mapped into $E$
and $HE$ will be parallel to $BC$.
Similarly, $GF\parallel AD$, $GH\parallel AB$, and $FE\parallel DC$.
Thus, $EFGH$ is a rectangle.
\end{proof}

\subsection{Squares}

\begin{theorem}
\label{thm:halfSquare}
If the reference quadrilateral is a square, then the central quadrilateral is a square
(Figure \ref{fig:halfSquare}).
\end{theorem}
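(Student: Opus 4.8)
The plan is to piggyback on the two structural results already proved for the half-triangle configuration. A square is simultaneously a rectangle (it is equiangular) and a rhombus (it is equilateral), so both Theorem \ref{thm:halfRect} and Theorem \ref{thm:halfRhombus} apply to it. Hence I would argue as follows: since the reference square is in particular a rectangle, Theorem \ref{thm:halfRect} gives that the central quadrilateral $EFGH$ is a rectangle; since the reference square is also in particular a rhombus, Theorem \ref{thm:halfRhombus} gives that $EFGH$ is a rhombus. A quadrilateral that is at once a rectangle and a rhombus has all four angles right and all four sides equal, hence is a square. This completes the argument, and it mirrors the proof of Theorem \ref{thm:arbCenterSquare} in the quarter-triangle section.

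The only points that need a moment's care are the verifications that the two cited theorems genuinely apply. The proof of Theorem \ref{thm:halfRhombus} relies on each of the four half triangles being isosceles, so that the chosen center lies on the altitude to the base, forcing two of $E,F,G,H$ onto diagonal $AC$ and the other two onto diagonal $BD$; in a square every one of the triangles $\triangle ABC$, $\triangle BCD$, $\triangle CDA$, $\triangle DAB$ is an isosceles right triangle, so the hypothesis holds verbatim and the diagonals of $EFGH$ lie along those of $ABCD$, which are perpendicular. The proof of Theorem \ref{thm:halfRect} uses only the congruences $\triangle ABC\cong\triangle DCB$ and their analogues, which hold in any rectangle and a fortiori in a square. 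So no new geometry is required.

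I do not anticipate a genuine obstacle: the statement is a formal consequence of the two preceding theorems together with the elementary fact that a quadrilateral which is both a rhombus and a rectangle is a square. If a self-contained argument were wanted instead, one could place the square with its diagonal point at the origin and its diagonals along the axes and compute the four centers from any center function; the $90\degrees$ rotation about the diagonal point carries the square to itself and cyclically permutes the four half triangles, hence cyclically permutes $E,F,G,H$, and a quadrilateral invariant under a $90\degrees$ rotation about its diagonal point must be a square. But the two-line deduction from Theorems \ref{thm:halfRect} and \ref{thm:halfRhombus} is cleaner, and that is the route I would take.
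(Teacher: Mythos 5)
Your proof is correct, but it takes a different route from the paper's. The paper deduces the result from Theorem \ref{thm:halfPara} (parallelogram $\implies$ parallelogram) together with Theorem \ref{thm:halfKite} (kite $\implies$ kite), closing with the claim that a quadrilateral which is both a parallelogram and a kite must be a square; you instead combine Theorem \ref{thm:halfRect} (rectangle $\implies$ rectangle) with Theorem \ref{thm:halfRhombus} (rhombus $\implies$ rhombus) and close with ``rectangle and rhombus $\implies$ square.'' Your closing inference is unimpeachable (all angles right and all sides equal), whereas the paper's, read literally, is not: a non-square rhombus is both a parallelogram and a kite under the paper's own definitions ($a=c$, $b=d$ together with $a=b$, $c=d$ only forces $a=b=c=d$). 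The paper's argument can be repaired by using the extra information in Theorem \ref{thm:halfKite} about which diagonal is the kite's axis, or by applying it with both labelings of the square as a kite, but your version avoids the issue entirely, and it exactly parallels the paper's own proof of the quarter-triangle analogue, Theorem \ref{thm:arbCenterSquare}. Your checks that the hypotheses of the two cited theorems hold for a square are fine, and your alternative self-contained argument (the $90\degrees$ rotation about the diagonal point cyclically permutes the four half triangles and hence the four centers) is also valid, subject to the degenerate case, acknowledged in the appendix, in which all four centers coincide.
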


\begin{figure}[h!t]
\centering
\includegraphics[width=0.25\linewidth]{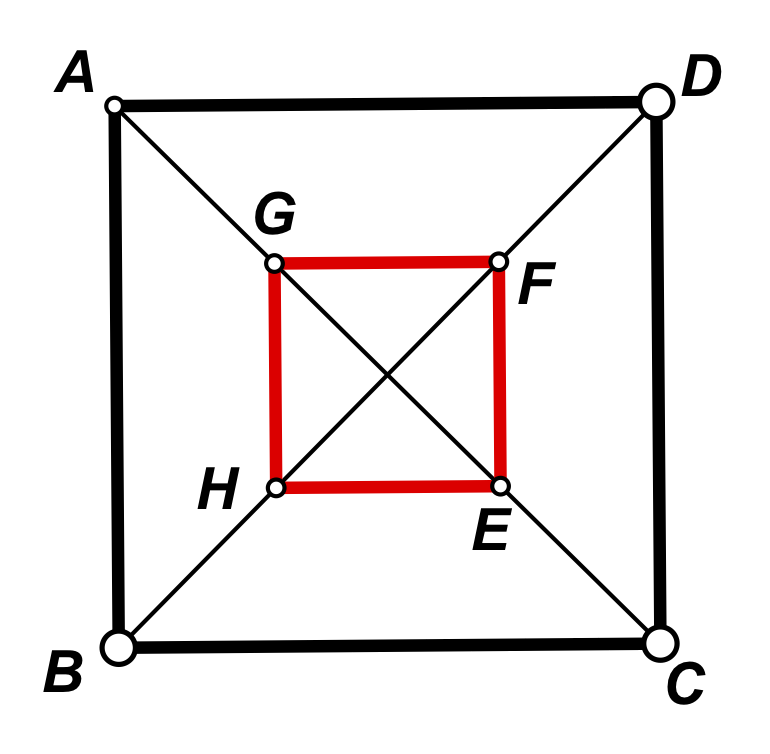}
\caption{square $\implies$ square}
\label{fig:halfSquare}
\end{figure}

\begin{proof}
By Theorem \ref{thm:halfPara}, $EFGH$ must be a parallelogram.
By Theorem \ref{thm:halfKite}, $EFGH$ must be a kite.
But any parallelogram that is also a kite must be a square.
\end{proof}

\void{

\section{Results for Center Point Triangles}
\label{section:centerResults}

More to be added...

\include{arbitraryPointResults}

\bigskip
\goodbreak
\textbf{Side Triangles}

In order, their names are $E$, $F$, $G$, $H$.

\bigskip
\goodbreak
\textbf{Center Point Triangles}

Quadrilaterals have a number of notable points associated with them,
such as the centroid, the Poncelet Point, and the Miquel Point.
The types of quadrilateral centers considered in this study can be found
in Appendix \ref{appendix:quadrilateralCenters}.

In this configuration, one such quadrilateral center will be selected
and called $E$. Lines are then drawn from $E$ to the vertices
of the reference quadrilateral. This forms four triangles (numbered 1 to 4) as shown
in Figure \ref{fig:centerPointTriangles}.

\begin{figure}[h!t]
\centering
\includegraphics[width=0.45\linewidth]{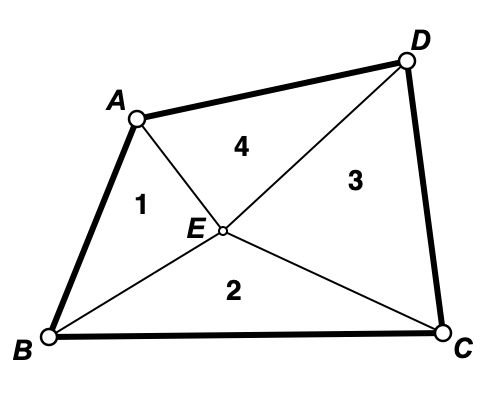}
\caption{Center Point Triangles}
\label{fig:centerPointTriangles}
\end{figure}

The triangles have been numbered in a counterclockwise order starting with side $AB$:
$\triangle ABE$, $\triangle BCE$, $\triangle CDE$, $\triangle ADE$.
Triangle centers are selected in each triangle.
In order, their names are $F$, $G$, $H$, $I$.

The configuration consisting of Diagonal Point Triangles is actually
a special case of this configuration in which the chosen quadrilateral center
is its diagonal point.

\bigskip
\goodbreak
\textbf{Arbitrary Point Triangles}

In this configuration, an arbitrary point $E$ is chosen inside the reference quadrilateral.
Lines are then drawn from $E$ to the vertices
of the reference quadrilateral. This forms four triangles (numbered 1 to 4) as shown
in Figure \ref{fig:arbitraryPointTriangles}.

\begin{figure}[h!t]
\centering
\includegraphics[width=0.45\linewidth]{centerPointTriangles.png}
\caption{Arbitrary Point Triangles}
\label{fig:arbitraryPointTriangles}
\end{figure}

The triangles have been numbered in a counterclockwise order starting with side $AB$:
$\triangle ABE$, $\triangle BCE$, $\triangle CDE$, $\triangle ADE$.
Triangle centers are selected in each triangle.
In order, their names are $F$, $G$, $H$, $I$.
}

\void{
\bigskip
\goodbreak
\textbf{Midpoint Triangles}

Do we want to include in our study four midpoint triangles?
Each triangle is formed by the midpoint of one side and the opposite side.

Experiments have shown these not to be useful.

\begin{figure}[h!t]
\centering
\includegraphics[width=0.5\linewidth]{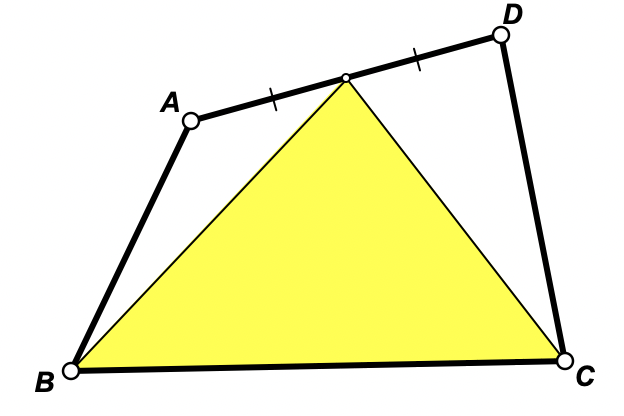}
\caption{Midpoint Triangles}
\label{fig:midpointTriangles}
\end{figure}
}

\void{
\bigskip
\goodbreak
\textbf{Quadrangles}

In this configuration, the four triangles are determined by taking the sides
of the triangle 3 at a time.

When the reference triangle is cyclic, the central triangle is a trapezoid
when the center function is of the form $(a^2-b^2-c^2)a^k$ or $(a^2-b^2-c^2)(a^k+b^k+c^k)$
or $(a^2-b^2-c^2)(bc)^k$.

Also for $(a^2+b^2-c^2)(a^2-b^2+c^2)a^k$ and $(a^2+b^2-c^2)(a^2-b^2+c^2)(a^k+b^k+c^k)$.
}

\section{Areas for future research}
\label{section:research}

There are many avenues for future investigation.

\textbf{Generalizing the center}.

Instead of placing a triangle center in each component triangle, there may be other
types of points that can be used. For example, consider the following result
which comes from \cite{RG9236}. See also \cite{Josefsson2017} for generalizations.

\begin{theorem}
\label{thm:9236}
Let $ABCD$ be an equidiagonal quadrilateral with diagonal point $E$.
Locate points $F$, $G$, $H$, and $I$ inside triangles $\triangle ABE$, $\triangle BCE$, $\triangle CDE$, and
$\triangle DAE$, so that triangles $\triangle ABF$, $\triangle BCG$, $\triangle CDH$, and $\triangle DAI$
are similar isosceles triangles. Then $FGHI$ is an orthodiagonal quadrilateral.
(Figure \ref{fig:dpSimilarIsoscelesTriangles})
\end{theorem}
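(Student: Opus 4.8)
The plan is to run a short complex-number computation, in the spirit of the proof of Theorem~\ref{thm:genCyclicDeLongchamps}. Identify the plane with $\mathbb{C}$, write $\mathrm{i}$ for the imaginary unit (to avoid a clash with the affix $i$ of $I$), and let $a,b,c,d$ be the affixes of $A,B,C,D$, taken in positive order. Since the four triangles $\triangle ABF$, $\triangle BCG$, $\triangle CDH$, $\triangle DAI$ are similar isosceles triangles with apexes $F,G,H,I$ interior to $ABCD$, and $A,B,C,D$ occur in cyclic order, "toward the interior" is the same rotational sense on all four sides; hence they are directly similar, and each apex sits on the perpendicular bisector of its base at one and the same signed ratio. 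So there is a single real constant $\lambda$ (a function of the common apex angle) with
\begin{align*}
f&=\tfrac12(a+b)+\lambda \mathrm{i}(b-a), & g&=\tfrac12(b+c)+\lambda \mathrm{i}(c-b),\\
h&=\tfrac12(c+d)+\lambda \mathrm{i}(d-c), & i&=\tfrac12(d+a)+\lambda \mathrm{i}(a-d).
\end{align*}
First I would pin down this uniform $\lambda$ carefully, since this is the one genuinely geometric point; the exact value of $\lambda$ will turn out to be irrelevant.

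Next I would pass to the diagonal vectors $p=c-a$ and $q=d-b$ of the reference quadrilateral and set $u=p+q$, $v=q-p$. A one-line substitution into the displayed formulas then gives the diagonal vectors of $FGHI$:
\begin{equation*}
h-f=\tfrac12 u+\lambda \mathrm{i} v,\qquad i-g=\tfrac12 v-\lambda \mathrm{i} u.
\end{equation*}
Now $FGHI$ is orthodiagonal precisely when $FH\perp GI$, i.e.\ when $(h-f)\overline{(i-g)}$ is purely imaginary, i.e.\ when $\operatorname{Re}\!\big((h-f)\overline{(i-g)}\big)=0$. Expanding the product, the contribution $\tfrac{\lambda\mathrm{i}}{2}(|u|^2+|v|^2)$ is purely imaginary and drops out, and since $\operatorname{Re}(v\bar u)=\operatorname{Re}(u\bar v)$ the real part collapses to $\big(\tfrac14-\lambda^2\big)\operatorname{Re}(u\bar v)$.

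Finally I would bring in the equidiagonal hypothesis through the identity
\begin{equation*}
\operatorname{Re}(u\bar v)=\operatorname{Re}\!\big((p+q)(\bar q-\bar p)\big)=\operatorname{Re}(p\bar q-q\bar p)+|q|^2-|p|^2=|q|^2-|p|^2,
\end{equation*}
valid because $p\bar q-q\bar p$ is purely imaginary. Since $|p|=AC$ and $|q|=BD$, the hypothesis $AC=BD$ forces $\operatorname{Re}(u\bar v)=0$, whence $\operatorname{Re}\!\big((h-f)\overline{(i-g)}\big)=0$ and $FH\perp GI$, which is exactly the assertion. I do not expect a real obstacle here: the algebra is self-contained and fits on a couple of lines. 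The only things needing care are (i) the uniform choice of $\lambda$ discussed above and (ii) reading "orthodiagonal" as $FH\perp GI$ even when $FGHI$ happens to be non-convex; a Mathematica cross-check in Cartesian coordinates (diagonal point at the origin, one diagonal on the $x$-axis, as in the proof of Theorem~\ref{thm:equi}) would give an independent verification. It is worth noting that the argument works for every admissible $\lambda$, so the shape of the isosceles triangles is immaterial, matching the statement.
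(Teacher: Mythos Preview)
Your argument is correct. The complex-number parametrisation of the apexes by a single real $\lambda$ is justified for a convex quadrilateral with vertices in positive order, the computation of $h-f$ and $i-g$ in terms of $u=p+q$ and $v=q-p$ checks out, and the reduction $\operatorname{Re}\bigl((h-f)\overline{(i-g)}\bigr)=(\tfrac14-\lambda^2)\operatorname{Re}(u\bar v)=(\tfrac14-\lambda^2)(|q|^2-|p|^2)$ is exactly right. The equidiagonal hypothesis $|p|=|q|$ then kills the real part regardless of $\lambda$, giving $FH\perp GI$.

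As for comparison: the paper does \emph{not} supply a proof of this theorem. It appears in the ``Areas for future research'' section as a quoted result from \cite{RG9236} (with generalisations in \cite{Josefsson2017}), offered as an illustration of what happens when one replaces triangle centers by other constructions. So your complex-number argument is a genuine addition rather than a duplication; it is short, self-contained, and in the same spirit as the paper's proof of Theorem~\ref{thm:genCyclicDeLongchamps}. If you wish to tighten the write-up, the only point deserving an extra sentence is the claim that the four isosceles triangles are \emph{directly} similar (same $\lambda$): this uses convexity of $ABCD$ and the hypothesis that each apex lies in the corresponding quarter triangle, hence on the interior side of its base.
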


\begin{figure}[h!t]
\centering
\includegraphics[width=0.34\linewidth]{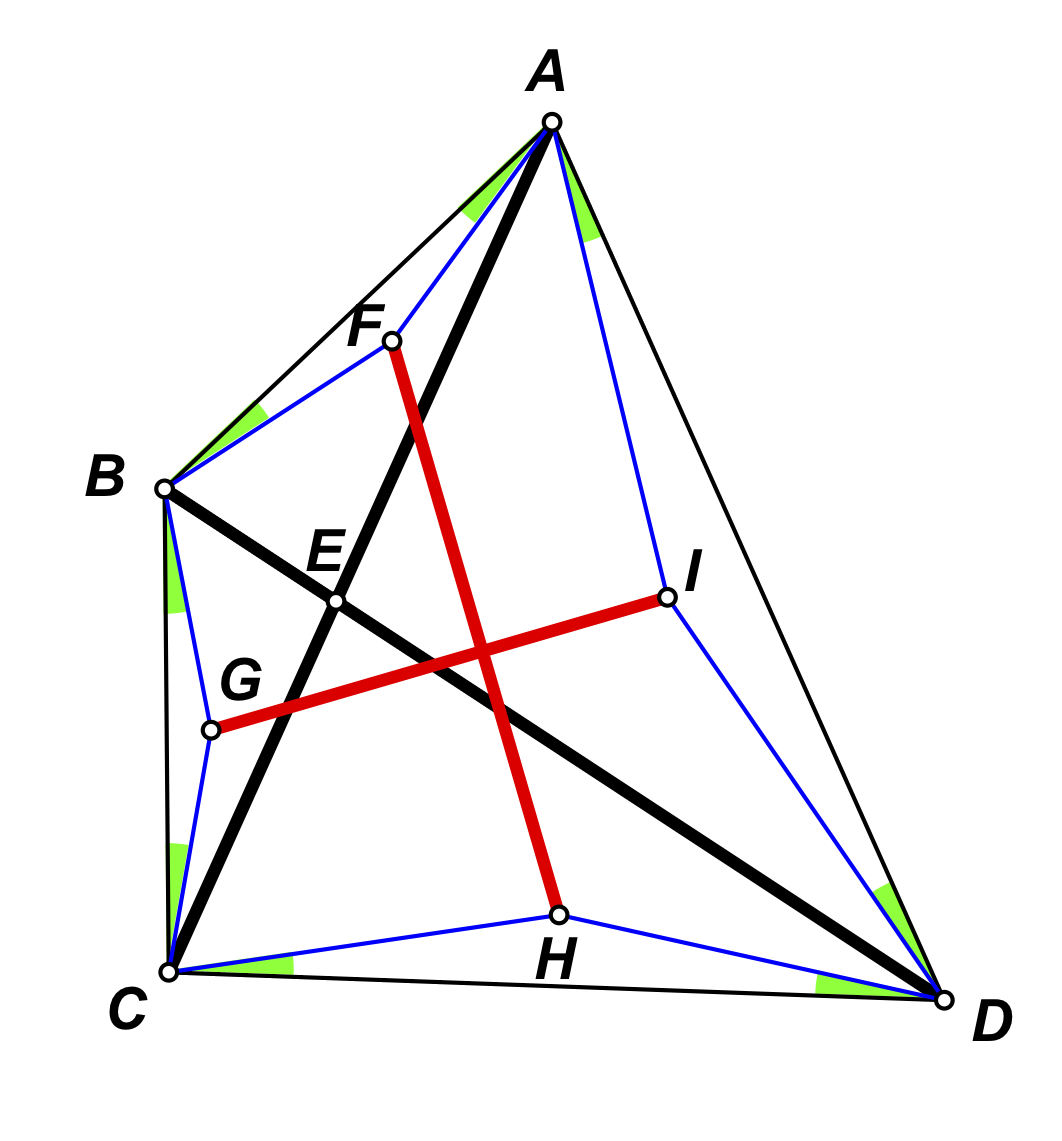}
\caption{$AC=BD \implies FH\perp GI$}
\label{fig:dpSimilarIsoscelesTriangles}
\end{figure}

\textbf{Using different component triangles}.

There are many ways of forming four component triangles from a given quadrilateral besides
using quarter triangles or half triangles. For example, if the quadrilateral is $ABCD$, we could
pick a point $P$ inside the quadrilateral and then consider the four component triangles
$\triangle ABP$, $\triangle BCP$, $\triangle CDP$, and $\triangle DAP$.

For example consider the following result due to Dao \cite{Dao}.

\begin{theorem}
\label{thm:DAO}
Let $ABCD$ be a tangential quadrilateral.
Let $E$ be a point inside the quadrilateral such that $AE=CE$ and $BE=DE$.
Let $F$, $G$, $H$, and $I$ be the incenters of triangles $\triangle ABE$, $\triangle BCE$, $\triangle CDE$, and
$\triangle DAE$, respectively. Then $FGHI$ is a cyclic quadrilateral.
(Figure \ref{fig:DaoQuasiCircumcenter})
\end{theorem}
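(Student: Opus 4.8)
The plan is to place $E$ at the origin, use the hypotheses $EA=EC$ and $EB=ED$ to express the four incenters in a highly symmetric polar form, and then reduce the concyclicity of $F,G,H,I$ to a single algebraic identity that is forced by the tangential relation $AB+CD=BC+DA$. Concretely: put $E$ at the origin and let the rays $EA,EB,EC,ED$ point along directions $0,\ 2\alpha,\ 2\alpha+2\beta,\ 2\alpha+2\beta+2\gamma$, so that $\angle AEB=2\alpha$, $\angle BEC=2\beta$, $\angle CED=2\gamma$, $\angle DEA=2\delta$ with $\alpha+\beta+\gamma+\delta=\pi$ (each $\theta_j\in\{\alpha,\beta,\gamma,\delta\}$ is less than $\pi/2$ because $E$ is interior). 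Write $m=EA=EC$ and $n=EB=ED$. By the Law of Cosines the sides $s_1=AB,\,s_2=BC,\,s_3=CD,\,s_4=DA$ satisfy $s_j^{2}=m^{2}+n^{2}-2mn\cos 2\theta_j$, and the tangential hypothesis is exactly $s_1+s_3=s_2+s_4$.

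Next I would locate the incenters. In any triangle the distance from a vertex to the incenter equals $\sqrt{pq\cdot\frac{p+q-\ell}{p+q+\ell}}$, where $p,q$ are the sides at that vertex and $\ell$ the opposite side (immediate from $EI=r/\sin\tfrac12\angle E$, $r=\mathrm{Area}/s$, and $\sin^2\tfrac12\angle E=(s-p)(s-q)/pq$). In each of $\triangle ABE,\triangle BCE,\triangle CDE,\triangle DAE$ the two sides meeting at $E$ have lengths $m$ and $n$, so the incenters $F=I_1,\,G=I_2,\,H=I_3,\,I=I_4$ satisfy
$$EI_j=\rho_j:=\sqrt{mn}\,\sqrt{\frac{m+n-s_j}{m+n+s_j}},\qquad j=1,2,3,4,$$
and $I_j$ lies on the bisector of the $j$th angle at $E$, i.e. at polar angle $\phi_1=\alpha$, $\phi_2=2\alpha+\beta$, $\phi_3=2\alpha+2\beta+\gamma$, $\phi_4=2\alpha+2\beta+2\gamma+\delta$. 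Thus $I_j=\rho_j(\cos\phi_j,\sin\phi_j)$, with all four $\rho_j$ given by one and the same function of the opposite side $s_j$ — this uniformity is what the hypotheses $EA=EC$, $EB=ED$ buy us.

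Now concyclicity. Four points $(\rho_j\cos\phi_j,\rho_j\sin\phi_j)$ lie on a common circle or line precisely when the linear system $\rho_j+d/\rho_j=X\cos\phi_j+Y\sin\phi_j$ is solvable in $(X,Y,d)$, equivalently when
$$\det\begin{pmatrix}
\rho_1 & \cos\phi_1 & \sin\phi_1 & 1/\rho_1\\
\rho_2 & \cos\phi_2 & \sin\phi_2 & 1/\rho_2\\
\rho_3 & \cos\phi_3 & \sin\phi_3 & 1/\rho_3\\
\rho_4 & \cos\phi_4 & \sin\phi_4 & 1/\rho_4
\end{pmatrix}=0$$
(the cross-ratio lemma stated earlier in the excerpt would do just as well). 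I would substitute the formulas above together with $s_j^{2}=m^{2}+n^{2}-2mn\cos 2\theta_j$ and show the determinant vanishes on the locus $s_1+s_3=s_2+s_4$. To clear the radicals, set $\rho_j=\sqrt{mn}\,\tan\psi_j$; then $\tan^2\psi_j=(m+n-s_j)/(m+n+s_j)$ gives $\cos 2\psi_j=s_j/(m+n)$, and combining this with the Law-of-Cosines relation yields the clean link $(m+n)\sin 2\psi_j=2\sqrt{mn}\,\cos\theta_j$. In the variables $\psi_j,\theta_j$ the tangential condition becomes $\cos 2\psi_1+\cos 2\psi_3=\cos 2\psi_2+\cos 2\psi_4$, and since $1/\rho_j=\cot\psi_j/\sqrt{mn}$ one is left with a determinantal identity in $\cos\phi_j,\sin\phi_j,\tan\psi_j,\cot\psi_j$ to be verified under this linear constraint and $\alpha+\beta+\gamma+\delta=\pi$. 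That final check is routine but bulky, so — in keeping with the rest of the paper — I would confirm it symbolically in Mathematica after a numerical sanity check.

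The main obstacle is exactly that last step: eliminating the square roots and exhibiting the concyclicity determinant as a consequence of the \emph{single} scalar relation $AB+CD=BC+DA$; the crux is choosing a parametrization (half-tangents of the $\theta_j$, or $e^{i\theta_j}$, plus the $\psi_j$-substitution) in which both that relation and the determinant become polynomial so the implication can be certified. A more satisfying alternative, which I would also pursue, is to guess the common circle outright — its center and radius ought to be expressible through $E$, $m$, $n$ and the incenter/inradius of the tangential quadrilateral $ABCD$ — and then just verify that each $I_j$ lies on it, which turns the theorem into four one-line checks. (Note that this configuration, with $E$ the ``quasicircumcenter'' satisfying $EA=EC$, $EB=ED$, is genuinely different from Result~2 of the Introduction, where $E$ is the diagonal point; the symmetric formula for $EI_j$ above is specific to the present hypotheses, so the theorem does not reduce to Result~2.)
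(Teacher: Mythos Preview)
The paper does not actually prove this theorem: it is stated in the ``Areas for future research'' section as a result due to Dao, with only a reference to a \texttt{mathoverflow} post and no argument whatsoever. So there is nothing to compare your proposal against; you are supplying a proof where the paper offers none.

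On its own merits, your setup is sound and the key formulas are correct. Placing $E$ at the origin and exploiting $EA=EC=m$, $EB=ED=n$ so that every $EI_j$ becomes $\sqrt{mn}\sqrt{(m+n-s_j)/(m+n+s_j)}$ is exactly the right observation, and your $\psi_j$ substitution (giving $\cos 2\psi_j=s_j/(m+n)$ and $(m+n)\sin 2\psi_j=2\sqrt{mn}\cos\theta_j$) is a clean way to rationalise the radicals. The tangential hypothesis then becomes the linear relation $\cos 2\psi_1+\cos 2\psi_3=\cos 2\psi_2+\cos 2\psi_4$, as you say.

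The honest gap is that you stop at the point where the real work begins: you have not shown that the $4\times 4$ concyclicity determinant vanishes on the locus cut out by that single linear relation together with $\alpha+\beta+\gamma+\delta=\pi$. Calling this ``routine but bulky'' and deferring to Mathematica is in the spirit of the paper, but as written it is a proof \emph{plan}, not a proof. If you want a self-contained argument, your second suggestion---guessing the common circle and verifying $I_j$ lies on it---is likely the shortest finish; the symmetry at $E$ strongly suggests the centre lies on the line through $E$ and the incentre of $ABCD$, which would cut the verification to a single scalar identity in the $\psi_j$ and $\theta_j$.
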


\begin{figure}[h!t]
\centering
\includegraphics[width=0.3\linewidth]{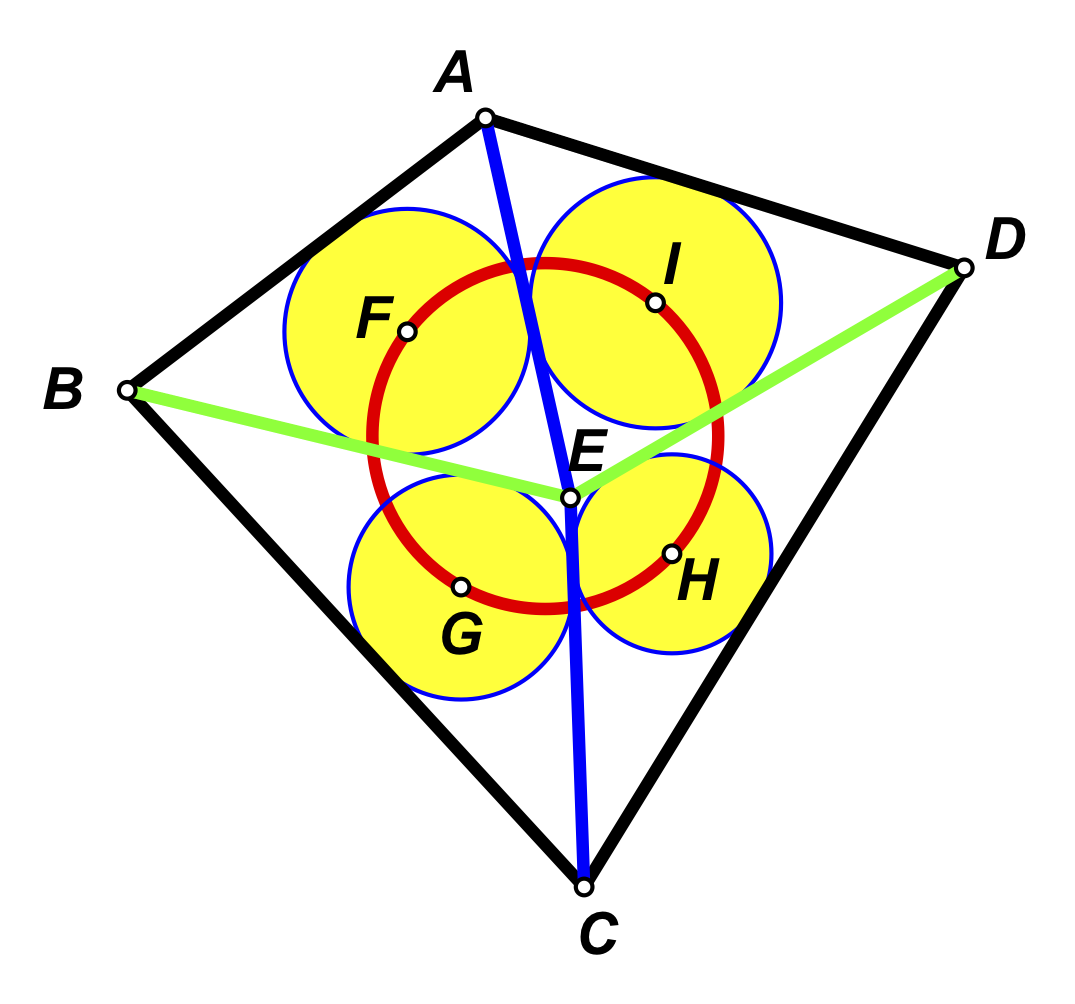}
\caption{$ABCD$ tangential, $AE$=$CE$, $BE$=$DE$ $\implies FGHI$ concyclic}
\label{fig:DaoQuasiCircumcenter}
\end{figure}

There are many other interesting points associated with a quadrilateral.
For example, we could specify that $E$ is the centroid, Poncelet point, or Steiner point
of the quadrilateral. If the quadrilateral is cyclic, we could specify that $E$
is the anticenter.

We could even ask if there are any properties when $E$ is an arbitrary point inside the quadrilateral. For example, we have the following result found by computer.

\begin{theorem}
\label{thm:arbRhombus}
Let $ABCD$ be an equidiagonal quadrilateral.
Let $E$ be an arbitrary point inside the quadrilateral.
Let $F$, $G$, $H$, and $I$ be the centroids of triangles $\triangle ABE$, $\triangle BCE$, $\triangle CDE$, and
$\triangle DAE$, respectively. Then $FGHI$ is a rhombus.
(Figure \ref{fig:arbPtRhombus})
\end{theorem}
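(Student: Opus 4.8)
The plan is to compute directly with position vectors, using the fact that the centroid of a triangle is the arithmetic mean of its three vertices; the point $E$ will cancel out of the computation entirely. Identify $A,B,C,D,E$ with their position vectors in the plane, so that
$$F=\tfrac13(A+B+E),\qquad G=\tfrac13(B+C+E),\qquad H=\tfrac13(C+D+E),\qquad I=\tfrac13(D+A+E).$$

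First I would form the four side vectors of the central quadrilateral. In each difference of consecutive centroids the summand $E$ --- together with one shared vertex --- cancels, leaving
$$G-F=\tfrac13(C-A),\qquad H-G=\tfrac13(D-B),\qquad I-H=\tfrac13(A-C),\qquad F-I=\tfrac13(B-D).$$
Thus $\vec{FG}=-\vec{HI}$ and $\vec{GH}=-\vec{IF}$, so $FGHI$ is a parallelogram whose sides are parallel to the diagonals of $ABCD$, with $FG=HI=\tfrac13\,AC$ and $GH=IF=\tfrac13\,BD$. (Because $E$ has disappeared, the central quadrilateral is independent of the choice of $E$ up to translation by $\tfrac13(E-E')$, which is why the hypothesis asks only that $E$ be an interior point.) Then the equidiagonal hypothesis $AC=BD$ gives $FG=GH$, and a parallelogram with two adjacent sides equal is a rhombus, which finishes the proof.

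I do not anticipate any real obstacle: the content of the argument is the displayed two-line vector identity, and the result also contains Corollary~\ref{cor:equi-rhombusM} as the special case in which $E$ is the diagonal point of $ABCD$. The one point deserving a remark is nondegeneracy --- for a convex quadrilateral the diagonals cross, hence are not parallel, so $FGHI$ is a genuine (nondegenerate) rhombus rather than a segment; for a nonconvex equidiagonal quadrilateral that degenerate case would have to be admitted.
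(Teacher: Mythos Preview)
Your proof is correct. The paper does not actually supply a proof of Theorem~\ref{thm:arbRhombus}; it is presented in the ``Areas for future research'' section merely as ``an example found by computer,'' with no accompanying argument. Your two-line vector computation is exactly the right way to handle it.

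The closest the paper comes is its proof of the special case you mention, Corollary~\ref{cor:equi-rhombusM} (where $E$ is the diagonal point). There the paper argues synthetically via the $2{:}1$ median ratio (building on Corollary~\ref{thm:genCentroids}) to obtain $FI=\tfrac13 BD$ and $FG=\tfrac13 AC$. Your position-vector approach subsumes that argument: it shows in one stroke that each side of $FGHI$ is $\tfrac13$ of a diagonal of $ABCD$, that $E$ cancels entirely, and hence that the result holds for \emph{any} point $E$, not just the diagonal point. So your route is both more general and more direct than anything the paper offers.
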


\begin{figure}[h!t]
\centering
\includegraphics[width=0.3\linewidth]{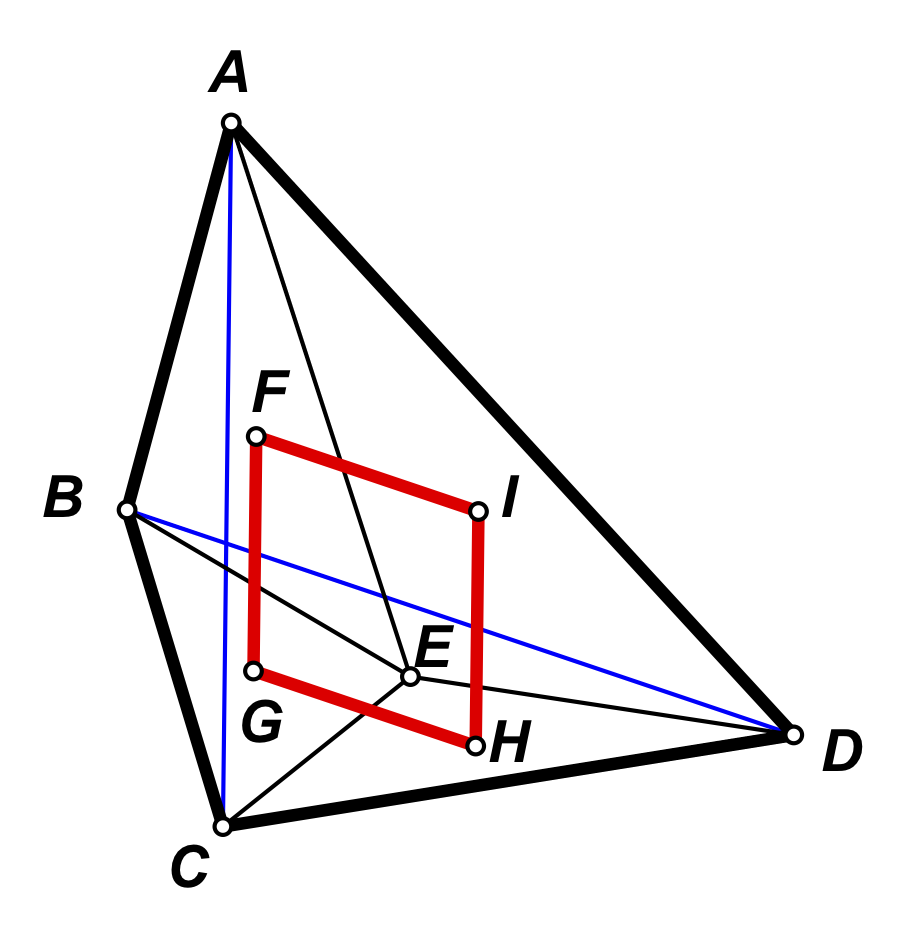}
\caption{Equidiagonal quadrilateral, centroids $\implies$ rhombus}
\label{fig:arbPtRhombus}
\end{figure}

Another possibility is to use the four triangles determined by the sides of the reference quadrilateral
taken three at a time.

\textbf{Using different centers}.

Instead of placing the same center in each of the four component triangles,
we might try placing different centers in each triangle, or different centers
in each pair of triangles. For example, the following result was found by computer.

\begin{theorem}
Let $ABCD$ be a tangential quadrilateral.
Let $E$ be the incenter of $\triangle BCD$.
Let $F$ be the centroid of $\triangle ACD$.
Let $G$ be the incenter of $\triangle ABD$.
Let $H$ be the centroid of $\triangle ABC$.
Then $EFGH$ is an orthodiagonal quadrilateral (Figure \ref{fig:twoCenters}).
\end{theorem}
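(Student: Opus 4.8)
The plan is to reduce the assertion to a one-line fact about tangent lengths, so no coordinates or symbolic computation should be needed here. First note that the diagonals of the central quadrilateral $EFGH$ are $EG$ and $FH$, so ``$EFGH$ is orthodiagonal'' means exactly $EG\perp FH$. Writing points as position vectors, $F=\tfrac13(A+C+D)$ and $H=\tfrac13(A+B+C)$, hence $H-F=\tfrac13(B-D)$. Thus $FH\parallel BD$, and it suffices to prove $EG\perp BD$.

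Next I would project the two incenters onto line $BD$. Since $E$ is the incenter of $\triangle BCD$, the foot of the perpendicular from $E$ to $BD$ is the point where the incircle of $\triangle BCD$ touches side $BD$; by the standard tangent-length formula this touch point lies at distance
\[
\tfrac12\bigl(|BC|+|CD|+|BD|\bigr)-|CD|=\tfrac12\bigl(|BD|+|BC|-|CD|\bigr)
\]
from $B$ along $BD$. Likewise, since $G$ is the incenter of $\triangle ABD$, the foot of the perpendicular from $G$ to $BD$ is the touch point of the incircle of $\triangle ABD$ with $BD$, at distance $\tfrac12\bigl(|BD|+|AB|-|AD|\bigr)$ from $B$. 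Both feet are interior points of segment $BD$, measured from $B$ in the same direction.

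Finally, these two feet coincide precisely when $|BC|-|CD|=|AB|-|AD|$, i.e.\ when $|AB|+|CD|=|BC|+|AD|$, which is exactly Pitot's theorem, the defining property of a tangential quadrilateral. Since $ABCD$ is tangential this holds, so $E$ and $G$ have the same orthogonal projection on $BD$; therefore $EG\perp BD$, and combined with $FH\parallel BD$ this gives $EG\perp FH$. (As a bonus for the figure: $E$ and $G$ lie on opposite sides of $BD$, so $EG$ actually meets $BD$ at their common tangency point, and $E$, $G$, that point are collinear on a perpendicular to $BD$.)

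I expect no real obstacle: the work is a short synthetic argument. The only things needing care are the orientation conventions when asserting the two tangency points are ``the same distance from $B$,'' and the trivial non-degeneracy remarks $F\neq H$ and $E\neq G$. The one conceptual step is spotting that the centroid diagonal $FH$ is parallel to $BD$, which collapses the whole problem onto the single diagonal $BD$, where the tangential hypothesis does all the work.
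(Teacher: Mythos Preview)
Your argument is correct and considerably cleaner than anything in the paper. The key observation that $H-F=\tfrac13(B-D)$ forces $FH\parallel BD$ is exactly right, and the tangent-length computation is the standard $s-$(opposite side) formula, so the two incircle touch points on $BD$ coincide precisely when $AB+CD=BC+AD$, which is Pitot. Since $A$ and $C$ lie on opposite sides of $BD$ in a convex quadrilateral, so do the incenters $G$ and $E$; hence $EG$ is a genuine segment perpendicular to $BD$, and $EG\perp FH$ follows.

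By contrast, the paper offers no proof at all for this theorem: it is presented in the ``Areas for future research'' section as an example ``found by computer,'' with the blanket remark earlier that unproved theorems are relegated to Mathematica notebooks in the supplementary material. So your approach is genuinely different---a short synthetic argument rather than symbolic verification---and it buys real insight: it shows \emph{why} the tangential hypothesis is the right one (Pitot is exactly the equality of the two tangency distances from $B$), and it makes clear that the centroid pair $F,H$ could be replaced by any centers whose difference is proportional to $B-D$. The only cosmetic point is that the final paragraph reads more like a sketch than a finished proof; tightening the prose and explicitly invoking convexity for $E\neq G$ would make it publication-ready.
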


\begin{figure}[h!t]
\centering
\includegraphics[width=0.4\linewidth]{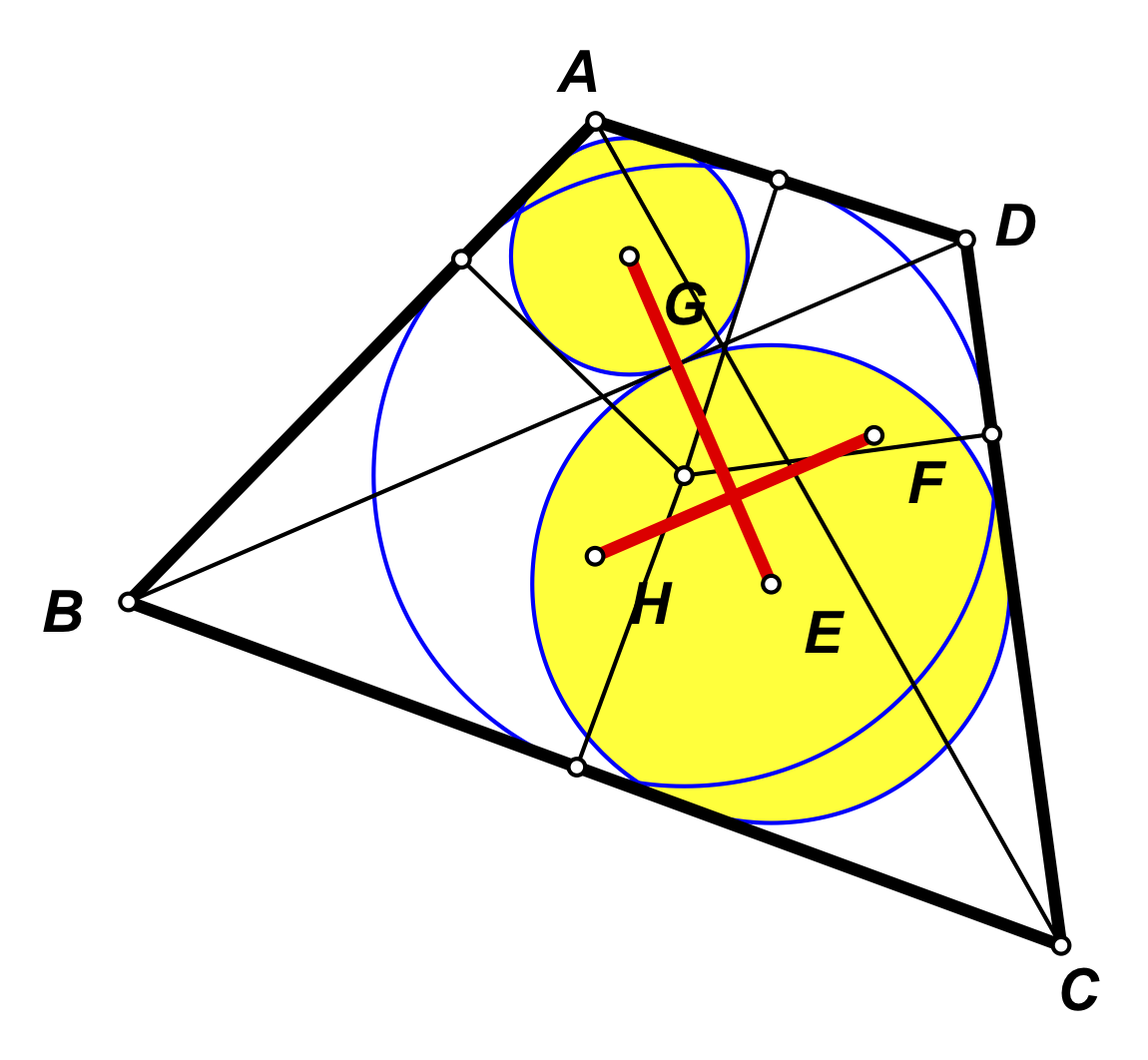}
\caption{Tangential quad., incenters/centroids $\implies EG\perp FH$}
\label{fig:twoCenters}
\end{figure}

\textbf{Checking for other properties}.

After we have placed points inside four component triangles, we can ask
other questions about the central quadrilateral besides asking about its shape.
We can compare the central quadrilateral with the reference quadrilateral and ask
questions like ``are they similar?'', ``are they congruent?'', ``do they have the
same centroid?'', ``are they homothetic?'', ``are they in perspective?'',
``do they have the same area?'', etc.

Here is an example found by computer.

\begin{theorem}
Let $ABCD$ be a rhombus with diagonal point $E$.
Let $F$, $G$, $H$ and $I$ be the Bevan point ($X_{40}$) of triangles
$\triangle ABE$, $\triangle BCE$, $\triangle CDE$, and $\triangle DAE$, respectively.
Then quadrilaterals $ABCD$ and $FGHI$ have the same area and perimeter
(Figure \ref{fig:dpBevanRhombus}).
\end{theorem}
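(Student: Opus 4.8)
The plan is to piggyback on Theorem~\ref{thm:arbCenterRhombus} and then pin down the dimensions of the resulting rectangle with one short coordinate computation. By Theorem~\ref{thm:arbCenterRhombus}, since $ABCD$ is a rhombus the central quadrilateral $FGHI$ is automatically a rectangle whose sides are parallel to (and bisected by) the diagonals of $ABCD$, and which shares its diagonal point with $ABCD$. So the whole problem reduces to computing the two side lengths $u,v$ of that rectangle and checking that $2(u+v)$ equals the perimeter of the rhombus and $uv$ equals its area.

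First I would fix coordinates with $E$ at the origin and the diagonals along the axes: $A=(-p,0)$, $C=(p,0)$, $B=(0,-q)$, $D=(0,q)$ with $p,q>0$. The common side of the rhombus is $s=\sqrt{p^2+q^2}$, so its perimeter is $4s$ and its area is $\tfrac12(2p)(2q)=2pq$. Each quarter triangle, for instance $\triangle ABE$, is a right triangle with the right angle at $E$, legs $p$ and $q$, and hypotenuse $s$. For such a triangle the circumcenter is the midpoint of the hypotenuse and the inradius is $r=(p+q-s)/2$, the same for all four quarter triangles. Using the standard fact that $X_{40}$ is the reflection of the incenter in the circumcenter, and that the incenter of $\triangle ABE$ is $(-r,-r)$ while its circumcenter is $(-p/2,-q/2)$, I get $F=(-p+r,\,-q+r)$.

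The other three Bevan points come for free: reflection in the $x$-axis, reflection in the $y$-axis, and the point reflection in $E$ are isometries that each map the rhombus to itself and permute the quarter triangles, hence carry Bevan points to the Bevan points of the image triangles. This gives $I=(-p+r,\,q-r)$, $G=(p-r,\,-q+r)$, and $H=(p-r,\,q-r)$. These four points form a rectangle with horizontal side $FG$ of length $2(p-r)=p-q+s$ and vertical side $GH$ of length $2(q-r)=q-p+s$; both are positive since $s>|p-q|$. Therefore the perimeter of $FGHI$ is $2\bigl[(p-q+s)+(q-p+s)\bigr]=4s$, equal to that of $ABCD$, and its area is $(p-q+s)(q-p+s)=s^2-(p-q)^2=(p^2+q^2)-(p^2-2pq+q^2)=2pq$, again equal to that of $ABCD$.

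There is no real obstacle here: the computation is short once one notices the quarter triangles are right triangles (the diagonals of a rhombus being perpendicular) and uses the incenter/circumcenter description of $X_{40}$. The only points needing care are bookkeeping which reflection sends which quarter triangle to which, and checking the two side lengths are genuinely positive; the one pleasant step is the difference-of-squares identity $(s-(p-q))(s+(p-q))=s^2-(p-q)^2$, which is exactly what makes the rectangle's area collapse to $2pq$.
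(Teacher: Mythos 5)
Your proof is correct, and it is a genuinely different route from the paper's: the paper presents this theorem as a computer-discovered result with the verification deferred to a Mathematica notebook, whereas you give a short, self-contained human proof. The key facts you rely on all check out: the quarter triangles of a rhombus are four congruent right triangles with the right angle at $E$; the inradius of a right triangle with legs $p,q$ and hypotenuse $s$ is $r=(p+q-s)/2$ and its circumcenter is the hypotenuse midpoint; and $X_{40}$ is the reflection of $X_1$ in $X_3$, so $F=2X_3-X_1=(-p+r,-q+r)$ in your coordinates. The symmetry argument transporting $F$ to $G$, $H$, $I$ is legitimate because triangle centers are equivariant under isometries, and the resulting rectangle has sides $s+(p-q)$ and $s-(p-q)$, giving perimeter $4s$ and area $s^2-(p-q)^2=2pq$ exactly as required. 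Note that your opening appeal to Theorem~\ref{thm:arbCenterRhombus} is actually redundant: your explicit coordinates for $F$, $G$, $H$, $I$ already exhibit the rectangle directly, so the argument stands on its own. What your approach buys is an explanation of \emph{why} the coincidence of area and perimeter happens (the difference-of-squares collapse), which the paper's numeric/symbolic verification does not provide; it also suggests the natural generalization that the same computation works for any center lying on the line $X_1X_3$ at a prescribed ratio, which could be explored further.
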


\begin{figure}[h!t]
\centering
\includegraphics[width=0.4\linewidth]{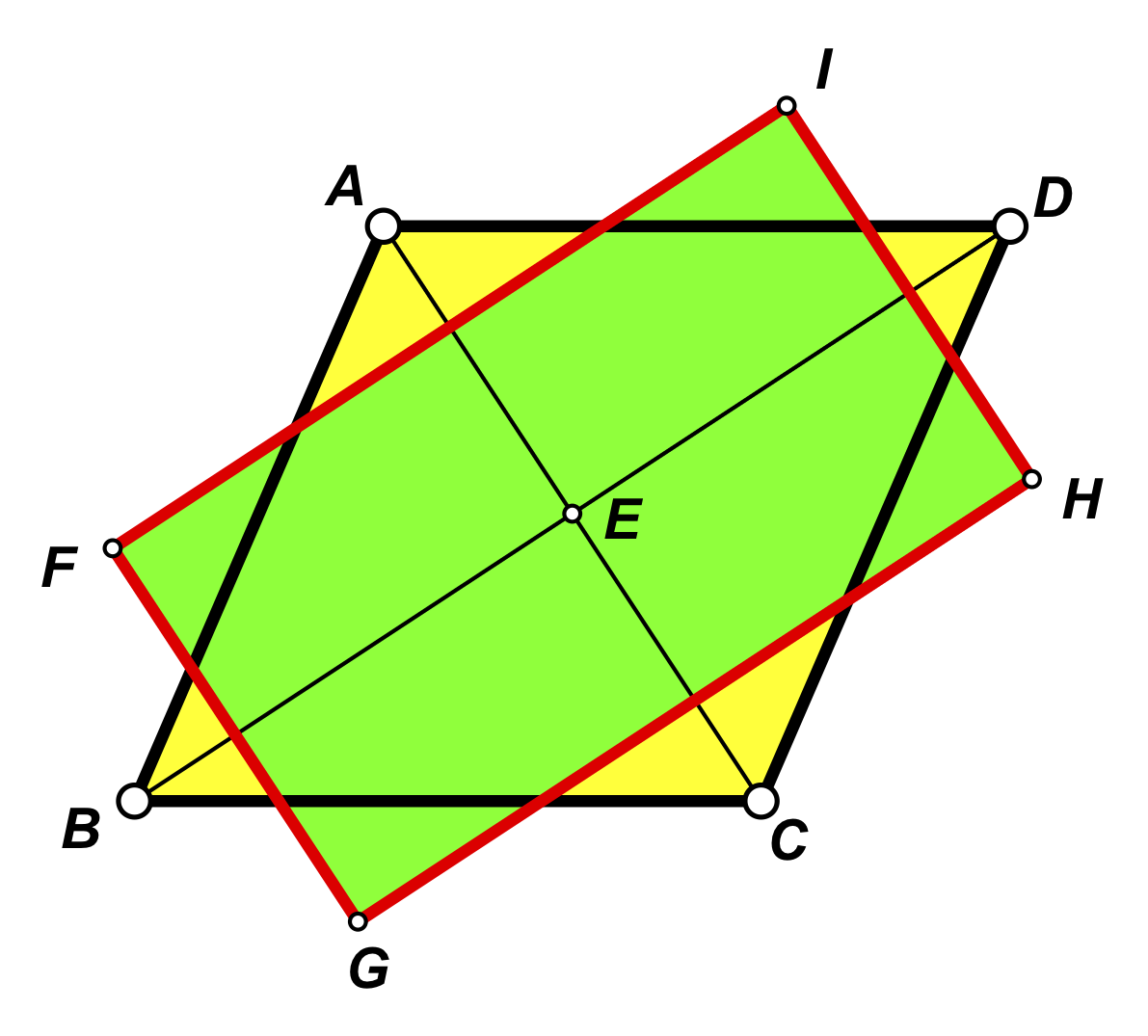}
\caption{Rhombus: Bevan points $\implies$ same area and perimeter}
\label{fig:dpBevanRhombus}
\end{figure}

We could also ask if a certain center of the reference quadrilateral coincides
with a center of the central quadrilateral. Here, ``center'' refers to some well-known
point associated with a quadrilateral, such as the centroid, Poncelet point, Steiner point,
anticenter, etc.

There are lots of interesting results still waiting to be discovered!

\newpage
\appendix

\void{
\section{Types of Triangle Centers Studied}
\label{appendix:triangleCenters}

The types of centers we might consider are:

\begin{enumerate}
\item Kimberling center $X_n$, for $1\leq n\leq 100$ (excluding points at infinity)
\item Arbitrary center (center function $f(a,g(b,c))$)
\item Major triangle center (center function $f(A)$)
\item The $r$-power points (center function $f(a,b,c)=a^r$)
\item The power centers (center function $f(a,b,c)=a^r g[b,c]$)
\item Triangle centers with center function $f(a,b,c)=a^r(b+c)$
\item Triangle centers with center function $f(a,b,c)=a^r(b^2+c^2)$
\item Triangle centers with center function $f(a,b,c)=a^r(b+c-a)$
\item Triangle centers with center function $f(a,b,c)=a^r(b+c-2a)$
\item Triangle centers with center function $f(a,b,c)=a^r(b^2+c^2-a^2)$
\item Triangle centers with center function $f(a,b,c)=a^r(b^3+c^3)$
\item Triangle centers with center function $f(a,b,c)=a^r(b^2+c^2+b c)$
\item Triangle centers with center function $f(a,b,c)=2a^r+b^r+c^r$
\item Triangle centers with center function $f(a,b,c)=(b^r+c^r)/a$
\item Triangle centers with center function $f(a,b,c)=(b^r+c^r-a^r)/a$
\item Triangle centers with center function $f(a,b,c)=(b^r+c^r+2a^r)/a$
\end{enumerate}

}


\section{Raw Data for Quarter Triangles}
\label{appendix:diagonalPointData}

\begin{theorem}
For a general quadrilateral,\\
(a) The central quadrilateral is orthodiagonal if $n=1$.\\
(b) The central quadrilateral is a parallelogram if $n=$2-5, 20, 140, 376, 381-382, 546-550, 631-632.

No additional results were obtained for the following types of quadrilaterals:
\begin{itemize}
\item APquad
\item bicentric
\item equalProdAdj
\item equalProdOpp
\item exbicentric
\item extangential
\item{harmonic}
\item Pythagorean
\item tangential trapezoid
\item trapezoid
\end{itemize}
\end{theorem}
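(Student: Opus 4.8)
The plan is to dispose of the three assertions separately, since each reduces to material already in hand. Assertion (a) is nothing more than Theorem \ref{thm:general1} restated: the incenter $X_1$ has center function $1$, and Theorem \ref{thm:general1} asserts precisely that for a general reference quadrilateral the central quadrilateral is orthodiagonal when the chosen center is $X_1$. So I would cite it directly; the angle-bisector argument given there (that $FEH$ and $IEG$ are straight lines meeting at $90\degrees$) needs no modification.

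For assertion (b), the plan is to check, one $n$ at a time, that $X_n$ admits a center function that is a constant multiple of either $\cos B\cos C+k\cos A$ or $\cos(B-C)+k\cos A$ for a suitable constant $k$. Reading off Table \ref{table:patterns}: $X_5$ gives $\cos(B-C)$ ($k=0$), $X_2$ gives $\cos A+\cos(B-C)$ ($k=1$), and $X_{140},X_{376},X_{549},X_{550},X_{381},X_{548},X_4$ are (after at most a scaling by a constant) further $\cos(B-C)+k\cos A$ entries, while $X_{20},X_{382},X_{546},X_{547},X_{631},X_{632}$ are constant multiples of $\cos B\cos C+k\cos A$. Since equivalent center functions determine the same triangle center and hence the same four points $F,G,H,I$, Theorem \ref{thm:general} (for the $\cos B\cos C+k\cos A$ entries) together with Theorem \ref{thm:general2} (whose proof rewrites $\cos(B-C)+k\cos A$ as $2(\cos B\cos C+k'\cos A)$ with $k'=(k+1)/2$) yields a parallelogram in every case. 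The one entry not literally of either form is $X_3$, the circumcenter, with center function $\cos A$; here I would invoke Corollary \ref{thm:genCircumcenters} directly (alternatively, observe that $\cos A+k\cos B\cos C$ traces the Euler line and, for $k\neq 0$, is a constant multiple of $\cos B\cos C+(1/k)\cos A$, the circumcenter being the limiting case).

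The final assertion --- that for the listed quadrilateral types (APquad, bicentric, equalProdAdj, equalProdOpp, exbicentric, extangential, harmonic, Pythagorean, tangential trapezoid, trapezoid) nothing is obtained beyond what already holds for a general quadrilateral --- is not a theorem to be proved but a report of the systematic search described in Section \ref{section:methodology}: GeometricExplorer placed each $X_n$, $1\le n\le 1000$, in the four quarter triangles of reference quadrilaterals of each of these types, varied the configuration, and detected no special shape of the central quadrilateral other than the orthodiagonal case ($n=1$) and the parallelogram cases of (a) and (b). I would simply record that this is the content of the raw data collected in this appendix, with the caveat --- already stated in Section \ref{section:methodology} --- that such negative evidence is numerical rather than a formal proof.

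The main obstacle, such as it is, is exactly that last point. Parts (a) and (b) are immediate corollaries of earlier theorems and require only bookkeeping against Table \ref{table:patterns}; but the ``no additional results'' clause has a different epistemological status, resting on the exhaustiveness of the machine search over the $1000$ centers and on the dynamic-variation heuristic used to guard against shapes that are special only for isolated configurations rather than for an entire family of quadrilaterals.
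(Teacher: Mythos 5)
Your proposal is correct and matches what the paper does: the appendix item is stated without its own proof, with part (a) being exactly Theorem \ref{thm:general1}, part (b) following from Theorems \ref{thm:general} and \ref{thm:general2} (plus Corollary \ref{thm:genCircumcenters} for the circumcenter, whose center function $\cos A$ is indeed only a limiting case of the family) via the bookkeeping in Table \ref{table:patterns}, and the ``no additional results'' clause being a report of the numerical search rather than a provable assertion. Your identification of the $X_3$ edge case and of the different epistemological status of the negative findings is exactly right.
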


\begin{theorem}
For a cyclic quadrilateral,\\
(excluding results for a general quadrilateral)\\
(a) The central quadrilateral is an isosceles trapezoid if $n=110$.\\
(b) The central quadrilateral is orthodiagonal if $n=$485-486.
\end{theorem}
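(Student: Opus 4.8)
The statement has two parts, and part (b) is exactly Theorem \ref{thm:cyclic485}, which was already established for both Vecten points $X_{485}$ and $X_{486}$ via barycentric coordinates, so nothing new is required there. Part (a) is Theorem \ref{thm:cyclic110}, and the plan is to prove it with the same barycentric machinery used for Theorem \ref{thm:cyclic485}, exploiting a feature special to $X_{110}$: its barycentric coordinates $a^2/(b^2-c^2):b^2/(c^2-a^2):c^2/(a^2-b^2)$ are \emph{rational} functions of the squared side lengths, so the whole computation stays within rational functions of the parameters and never introduces a square root.

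Concretely, I would reuse the coordinate setup from the proof of Theorem \ref{thm:cyclic485}: reference triangle $\triangle ABC$ with $A=(1:0:0)$, $B=(0:1:0)$, $C=(0:0:1)$, vertex $D=(u:v:w)$ on the circumcircle via $u=c^2+ta^2$, $v=-b^2 t$, $w=t(c^2+ta^2)$ with $0<t<1$, and $E=(u:0:w)$. For each quarter triangle I would compute its three squared side lengths with the barycentric distance formula (e.g. for $\triangle ABE$ the quantities $|AB|^2=c^2$, $|AE|^2$, $|BE|^2$, all rational in $a,b,c,t$), substitute them into the $X_{110}$ coordinate formula written relative to that sub-triangle, and then apply the (linear) change-of-coordinates formula, since $A,B,E$ and so on have known coordinates in the $\triangle ABC$ system, to express $F$, $G$, $H$, $I$ as points in the $\triangle ABC$ barycentric system. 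Signs must be tracked carefully here, because the $X_{110}$ center function is symmetric in its last two arguments only up to an overall sign; choosing this sign consistently (equivalently, tracking the correct vertex correspondence in each of the four sub-triangles) is what guarantees that $F,G,H,I$ really are the \emph{corresponding} centers.

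With $F,G,H,I$ in hand, and recalling that the central quadrilateral here is $FHGI$ rather than $FGHI$, I would verify the isosceles-trapezoid property in two steps: first that one pair of opposite sides of $FHGI$, say $FH\parallel GI$, consists of parallel lines (a vanishing determinant / equal-infinite-point condition in barycentrics); and then that $F,H,G,I$ are concyclic, either by substituting all four points into the general circle equation and checking solvability, or more cheaply by showing the diagonals $FG$ and $HI$ are equal in length via the barycentric distance formula. A convex quadrilateral that is simultaneously a trapezoid and cyclic is an isosceles trapezoid, which finishes (a); alternatively one may bypass concyclicity and show directly that the legs $HG$ and $IF$ are equal while $FH\parallel GI$.

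The only real obstacle is the bulk of the algebra: the coordinates of $F,G,H,I$ are sizeable rational functions of $a,b,c,t$, and confirming the parallelism together with the equal-diagonal (or concyclicity) identity needs polynomial simplification best delegated to Mathematica, just as in the proof of Theorem \ref{thm:cyclic485}. As a structural sanity check that an ``isosceles'' phenomenon should appear at all, note that for a cyclic quadrilateral the opposite quarter triangles $\triangle ABE$ and $\triangle DCE$ are oppositely similar with $E$ fixed, and likewise $\triangle BCE$ and $\triangle ADE$; hence $H$ is the image of $F$ under a dilative reflection centered at $E$ and $I$ is the image of $G$ under another such map, the two reflection axes being the mutually perpendicular bisectors of the angles at $E$ (compare Lemma \ref{lemma:angleBisector}) — a symmetry pattern consistent with, though not by itself sufficient for, the stated conclusion.
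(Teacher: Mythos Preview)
Your proposal is correct and follows the paper's own approach. The paper states Theorem~\ref{thm:cyclic110} (part (a)) without an in-text proof, deferring to the Mathematica notebooks, and your plan to reuse the barycentric setup from the proof of Theorem~\ref{thm:cyclic485}, substitute the rational $X_{110}$ coordinates for each quarter triangle, and then verify parallelism of one pair of sides together with equal legs (or concyclicity) is exactly the kind of symbolic computation those notebooks carry out; your observation that the central quadrilateral is $FHGI$ and your remark about the sign behaviour of the $X_{110}$ center function are both on point.
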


\begin{theorem}
For an equidiagonal quadrilateral,\\
(excluding results for a general quadrilateral)\\
(a) The central quadrilateral is orthodiagonal if $n=$8, 10, 40, 165, 355, 551, 946, 962.\\
(b) The central quadrilateral is a rhombus if $n=$2-5, 20, 140, 376, 381-382, 546-550, 631-632.
\end{theorem}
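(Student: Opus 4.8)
The statement is a catalogue, so the strategy is to reduce every listed $n$ to a template theorem already proved for equidiagonal quadrilaterals, resorting to a direct computation only when no template applies. Part (b) will come from Theorem \ref{thm:equi-rhombus} plus Table \ref{table:patterns}; part (a) from Theorems \ref{thm:equi}, \ref{thm:equiy}, \ref{thm:equirR}, \ref{thm:equirR2} plus center-function data from \cite{ETC}.

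\textbf{Part (b).} By Theorem \ref{thm:equi-rhombus} the central quadrilateral of an equidiagonal reference quadrilateral is a rhombus whenever the chosen center has a center function of the form $\cos B\cos C+k\cos A$, and the proof of Theorem \ref{thm:general2} shows that $\cos(B-C)+k\cos A$ is equivalent to $\cos B\cos C+\tfrac{k+1}{2}\cos A$, so centers of that trigonometric shape are covered too. It therefore suffices to exhibit, for each $n$ in the list $2$--$5$, $20$, $140$, $376$, $381$--$382$, $546$--$550$, $631$--$632$, a center function of $X_n$ of one of these two shapes. These are exactly the entries of Table \ref{table:patterns}, so the verification is one line per center; for $X_4$ and $X_{20}$ one first rewrites the tabulated form using $\cos(B-C)=2\cos B\cos C+\cos A$. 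The four most familiar cases $X_2,X_3,X_4,X_5$ are in addition precisely Corollaries \ref{cor:equi-rhombusH}, \ref{cor:equi-rhombusM}, \ref{cor:equi-rhombusO}.

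\textbf{Part (a).} Three of the eight centers are already isolated: $X_8$ is the Nagel point (Corollary \ref{thm:Nagel}), $X_{10}$ the Spieker center (Corollary \ref{thm:Spieker}), and $X_{40}$ the Bevan point (Corollary \ref{cor:equi-X40}). For the remaining five, $X_{165},X_{355},X_{551},X_{946},X_{962}$, the plan is to look up a center function in \cite{ETC} and, after applying the conversion rules (\ref{rules}), match it to one of the four proved templates: $\frac{b+c}{a}+k$ (Theorem \ref{thm:equi}), $\cos B+\cos C-1-k\cos A$ (Theorem \ref{thm:equiy}), $\frac{r}{R}+k\cos A$ (Theorem \ref{thm:equirR}), or $\frac{r}{R}+k\sin B\sin C$ (Theorem \ref{thm:equirR2}). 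Equivalently one can argue by incidence: the Nagel line with $X_1$ removed is exactly the locus of centers with center function $\frac{b+c}{a}+k$, and the $OI$ line with $X_3$ removed is exactly the locus of centers with center function $\cos B+\cos C-1-k\cos A$; so it is enough to confirm in \cite{ETC} that $X_{551}$ lies on the Nagel line (it is recorded there as the midpoint of $X_1$ and $X_8$), that $X_{165}$ lies on the $OI$ line (it is the centroid of the excentral triangle, whose Euler line is the $OI$ line), and that $X_{355},X_{946},X_{962}$ each lie on one of these two lines and differ from $X_1$ and $X_3$; Theorem \ref{thm:equi} or \ref{thm:equiy} then finishes. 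Any center that stubbornly fails to fit a template is handled exactly as in the proof of Theorem \ref{thm:equi}: place the reference quadrilateral in the coordinates of Figure \ref{fig:equicoordinates}, compute $F,G,H,I$ from the \cite{ETC} center function, and check with Mathematica that the slopes of $FH$ and $GI$ have product $-1$.

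\textbf{Main obstacle.} The real work is the form-matching for the less standard centers $X_{165},X_{355},X_{551},X_{946},X_{962}$: the center function supplied by \cite{ETC} may be in a trigonometric or algebraic guise whose equivalence to a template is not visible by inspection, and establishing it requires the messy substitutions of (\ref{rules}) and is best delegated to Mathematica, in keeping with the paper's methodology. A secondary point of care, as already flagged for $X_{110}$ after Theorem \ref{thm:cyclic110}, is to record the correct cyclic order of the four centers, since for some $n$ it is $FHGI$ rather than $FGHI$ that is orthodiagonal; this does not affect the truth of the statement but matters when the figures are drawn.
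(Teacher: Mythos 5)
The paper never actually proves this statement: it sits in Appendix \ref{appendix:diagonalPointData} as raw output of GeometricExplorer, backed only by the 15-digit numerical verification described in Section \ref{section:methodology} and by the Mathematica notebooks. Your proposal of deducing the catalogue from the template theorems in the body is therefore a genuinely different, more deductive route, and it works cleanly for part (b): every $n$ in that list appears in Table \ref{table:patterns} with a center function of the form $\cos B\cos C+k\cos A$ or $\cos(B-C)+k\cos A$, the latter reducing to the former exactly as in the proof of Theorem \ref{thm:general2}, so Theorem \ref{thm:equi-rhombus} finishes. For part (a) the same reduction disposes of $X_8$, $X_{10}$, $X_{40}$, $X_{165}$, and $X_{551}$, since those five really do lie on the Nagel line or the $OI$ line and your observation that the families $\tfrac{b+c}{a}+k$ and $\cos B+\cos C-1-k\cos A$ sweep out those lines (minus $X_1$, resp.\ $X_3$) is correct.

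The concrete gap is in your incidence claim for the remaining three centers. $X_{355}$ is the Fuhrmann center (midpoint of $X_4X_8$), $X_{946}$ is the midpoint of $X_1X_4$, and $X_{962}$ also lies on line $X_1X_4$; none of these is on the Nagel line $X_1X_2$ or on the $OI$ line for a generic triangle, so ETC will not confirm what you ask it to confirm, and none of the four proved templates (Theorems \ref{thm:equi}, \ref{thm:equiy}, \ref{thm:equirR}, \ref{thm:equirR2} --- note that $\tfrac{r}{R}+k\sin B\sin C$ is again just the Nagel line, since $\sin B\sin C\sim bc$ is a center function for $X_2$) applies to them. All eight listed centers do lie in the plane spanned by $X_1$, $X_3$, $X_4$, which suggests the missing template is a two-parameter family such as $\tfrac{r}{R}+p\cos A+q\cos B\cos C$, but the paper proves no such theorem. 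So for $X_{355}$, $X_{946}$, $X_{962}$ you are thrown onto your fallback --- the explicit coordinate computation of Figure \ref{fig:equicoordinates} with Mathematica --- not as an exceptional escape hatch but as the only available argument; once you accept that, your proposal is sound and matches the paper's actual level of justification.
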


\begin{theorem}
For an orthodiagonal quadrilateral,\\
(excluding results for a general quadrilateral)\\
(a) The central quadrilateral is orthodiagonal if $n=$46-48, 73, 90-91, 163, 223, 226, 282, 284, 336, 380, 388, 485-486, 493-494, 497, 563, 579-581, 610, 652, 656, 820, 822, 836, 920-921, 944-951.\\
(b) The central quadrilateral is equidiagonal if $n=$151.\\
(c) The central quadrilateral is cyclic if $n=$6, 51, 54, 67, 70, 74,
125, 130, 184-185, 217, 287-288, 296, 389, 578, 686, 973-974.\\
(d) The central quadrilateral is equidiagonal, orthodiagonal if $n=$102, 109, 117, 124.\\
(e) The central quadrilateral is a rectangle if $n=$2, 3, 5, 20 , 22-23, 95, 97,
122-123, 127, 131, 140, 175, 216, 233, 253, 268, 280, 339, 347, 376, 381-382, 401-402, 408,
417-418, 426, 440, 441, 454, 464-466, 546-550, 577, 631-632, 828, 852, 856, 858, 925.\\
(f) The central quadrilateral degenerates to a point for all centers that occur at the vertex
of the right angle in a right triangle.
\end{theorem}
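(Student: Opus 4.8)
The plan is to read this statement as a compilation and reduce each part either to a general criterion already proved in this section or to a single finite symbolic verification in a fixed coordinate frame. Part (f) is immediate: since the reference quadrilateral is orthodiagonal, each quarter triangle $\triangle ABE$, $\triangle BCE$, $\triangle CDE$, $\triangle DAE$ has its right angle at $E$, so any center that in every right triangle sits at the right-angle vertex produces $F=G=H=I=E$, and the central quadrilateral is the single point $E$. Part (b) is exactly Theorem \ref{thm:ortho151}; in part (c) the index $n=6$ is Theorem \ref{thm:ortho6}, while $n=389$ and $n=578$ are covered by Theorem \ref{thm:orthoCyclicTan}.

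For part (a) I would appeal to Theorems \ref{thm:normal} and \ref{thm:normalPlus} together with Lemma \ref{lemma:normal}: it suffices, for each listed $n$, to exhibit a polynomial center function of $X_n$ — obtained from a formula in \cite{ETC} by the rules (\ref{rules}) and, if needed, by clearing denominators with a cyclic factor as in the proof of Theorem \ref{thm:normal1} — that is normal, or is assembled from normal functions by adding constants, scaling, taking powers, and adding. Many of the indices ($X_{46}$, $X_{47}$, $X_{48}$, $X_{73}$, $X_{90}$, $X_{91}$, $X_{388}$, $X_{497}$, $X_{563}$, $X_{610}$, $X_{652}$, $X_{656}$, $X_{822}$, and their relatives) land directly on the list of Theorem \ref{thm:normal2}; for the remainder one checks by a short polynomial division that $a^2+b^2-c^2$ divides $f(a,b,c)-f(b,a,c)$ (or, invoking Theorem \ref{thm:normalPlus}, $f(a,b,c)+f(b,a,c)$). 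Part (e) is the same exercise against Theorem \ref{thm:orthoRect}: verify that each listed $X_n$ has a center function equivalent, after applying (\ref{rules}) and discarding cyclic factors, to $\cos A+k\cos B\cos C$ for a suitable constant $k$; the common cases $X_2$, $X_3$, $X_5$ are Corollaries \ref{cor:orthoCentroid}, \ref{cor:orthoCircumcenter}, and \ref{thm:ortho9point}.

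What remains — the part of (c) not already cited, together with (d) ($n=102,109,117,124$) — is handled by the coordinate method of the proof of Theorem \ref{thm:ortho6}: place $E$ at the origin with the diagonals along the axes and $D=(1,0)$, so the quarter triangles have explicit vertices in the four real parameters; compute $F$, $G$, $H$, $I$ from the $X_n$ center function by the change-of-coordinates formula; and test the algebraic condition of the target shape — $PF\cdot PH=PG\cdot PI$ at the diagonal point $P$ of the central quadrilateral for ``cyclic'', equality of its two diagonal lengths for ``equidiagonal'', and both conditions for (d). Each reduces to one polynomial identity in the four free parameters, to be confirmed with Mathematica.

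The main obstacle is that there is no unifying argument: the theorem is a catalog, and carrying it out is almost entirely bookkeeping — selecting, for each of the roughly one hundred indices, a usable center function and matching it to the correct family, which can require real trigonometric manipulation before the normal-form or $\cos A+k\cos B\cos C$ pattern becomes visible — plus a brute-force coordinate computation for the handful of centers that belong to no family. Note that the statement asserts only that the listed $n$ produce the stated shapes, not that they are the only such $n$, so no completeness argument is needed.
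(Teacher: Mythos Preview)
Your proposal is considerably more ambitious than the paper itself. This statement sits in Appendix~\ref{appendix:diagonalPointData}, which the paper explicitly labels ``Raw Data for Quarter Triangles'': these are the numerical observations produced by GeometricExplorer (to 15 digits of precision), and the paper states outright in Section~\ref{section:methodology} that this ``does not constitute a proof that the result is correct, but gives us compelling evidence for the validity of the result.'' The paper offers no proof of this appendix item; it instead mines the list for patterns and proves a handful of general theorems (Theorems \ref{thm:normal}--\ref{thm:orthoRect}) that cover \emph{some} of the indices. So you are not reproducing the paper's argument --- you are attempting to supply one where none exists.

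That said, your plan has a real gap in part~(e). You propose to reduce every listed $X_n$ to Theorem~\ref{thm:orthoRect}, i.e.\ to show each has a center function equivalent to $\cos A+k\cos B\cos C$. But that family is exactly the Euler-line family of Theorem~\ref{thm:general}, and many entries in~(e) are \emph{not} Euler-line points: $X_{175}$ (isoperimetric point), $X_{253}$, $X_{268}$, $X_{280}$, $X_{347}$, $X_{925}$, and others have center functions that cannot be massaged into the form $\cos A+k\cos B\cos C$. For these the verification you describe will simply fail, and you have not provided a fallback for~(e) --- your coordinate-method contingency is stated only for~(c) and~(d). The same risk is present in~(a): several of the listed centers (e.g.\ $X_{485}$, $X_{486}$, $X_{944}$--$X_{951}$) may not yield to the normality test of Theorems~\ref{thm:normal}/\ref{thm:normalPlus}, and you would again need direct coordinate verification. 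The fix is easy to state --- extend the brute-force symbolic check to every index not captured by a family --- but the honest summary is that the ``bookkeeping'' you mention is the entire content here, and for a nontrivial fraction of the roughly one hundred indices it is coordinate computation, not pattern-matching.
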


\begin{theorem}
\label{thm:diagonalPoint-isoTrap}
For an isosceles trapezoid,\\
(excluding results for an equidiagonal quadrilateral)\\
(excluding results for a cyclic quadrilateral)\\
(excluding results for trapezoid)\\
(a) The central quadrilateral is a kite if $n=$1, 6, 8-12, 14,15, 17-19, 21-22, 29, 33, 35, 37-42,45, 47-48
and many more.
\end{theorem}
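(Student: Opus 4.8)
The plan is to deduce part~(a) directly from Theorem~\ref{thm:isoTrap}. That theorem already establishes, \emph{for every} triangle center, that when the reference quadrilateral is an isosceles trapezoid the central quadrilateral built from the four quarter triangles $\triangle ABE$, $\triangle BCE$, $\triangle CDE$, $\triangle DAE$ is a kite, with one diagonal parallel to the parallel sides of the trapezoid. So the whole content of part~(a) is the observation that each index $n$ in the list names a Kimberling center $X_n$, and that $X_n$ \emph{is} a triangle center in the precise sense of Section~\ref{section:centers}: it is produced by a center function $f(a,b,c)$ homogeneous in $a,b,c$ and symmetric in $b$ and $c$. Once that is noted, Theorem~\ref{thm:isoTrap} applies verbatim, so the central quadrilateral is a kite for each such $n$; the clause ``and many more'' simply records the stronger fact that the conclusion holds for \emph{all} $n$ without exception.

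First I would recall the uniform mechanism behind Theorem~\ref{thm:isoTrap}, since that is what makes a single appeal legitimate for the whole list at once: the diagonals of an isosceles trapezoid meet at a point $E$ on the perpendicular bisector $\ell$ of the two parallel sides; triangles $\triangle AED$ and $\triangle BEC$ are isosceles with apex on $\ell$, so their selected centers $I$ and $G$ lie on $\ell$; and $\triangle AEB\cong\triangle DEC$ are reflections of one another across $\ell$, so reflection in $\ell$ interchanges the selected centers $F$ and $H$. Hence $\ell=GI$ is the perpendicular bisector of $FH$, giving $IF=IH$ and $GF=GH$, i.e.\ $FGHI$ is a kite. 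The only property of the chosen center used here is the $b\leftrightarrow c$ symmetry built into the definition of a triangle center, so nothing special about the individual centers $X_1, X_6, X_8,\ldots$ is needed; the verification is the same for every entry.

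I expect the only thing requiring care is bookkeeping rather than geometry: one should confirm that no index in the list is a point at infinity (which would have no location inside a quarter triangle) and that the quarter triangles occurring here are nondegenerate, which they are for a convex isosceles trapezoid; and in the borderline case where the isosceles trapezoid is actually a rectangle, ``kite'' must be read in the degenerate sense, the central quadrilateral then being a rhombus by Theorems~\ref{thm:arbCenterRectangle} and~\ref{thm:isoTrap}. I anticipate no genuine obstacle here: the list was generated by GeometricExplorer precisely by testing these centers numerically, and the geometric substance is already fully proved in Theorem~\ref{thm:isoTrap}, so the proof of part~(a) reduces to the single line ``apply Theorem~\ref{thm:isoTrap} to each $X_n$ in the list.''
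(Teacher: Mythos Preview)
Your proposal is correct and matches the paper's approach: the appendix statement is raw computer output and carries no separate proof in the paper; its justification is exactly the blanket Theorem~\ref{thm:isoTrap}, which shows the central quadrilateral is a kite for \emph{every} triangle center, so the listed values of $n$ are automatically covered. Your recap of the reflection argument behind Theorem~\ref{thm:isoTrap} is accurate, and the observation that the clause ``and many more'' reflects the fact that the conclusion holds for all $n$ is precisely the point.
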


\begin{theorem}
For an orthodiagonal trapezoid,\\
(excluding results for a trapezoid)\\
(excluding results for an orthodiagonal quadrilateral)\\
(a) The central quadrilateral is orthodiagonal if $n=1$, 48, 73, 223, 226, 282, 284, 336,
371, 380, 388, 485, 493, 497, 581, 610, 820, 836, 944-951.\\
(b) The central quadrilateral is an equidiagonal quadrilateral if $n=151$.\\
(c) The central quadrilateral is Hjelmslev if $n=$6, 51, 54, 67, 70, 74, 125, 130, 184-185,
217, 287-288, 296, 389, 578, 686, 973-974.\\
(d) The central quadrilateral is an equidiagonal, orthodiagonal quadrilateral if $n=$102, 124.
\end{theorem}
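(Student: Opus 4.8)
The plan is to treat this as a catalogue result whose four parts are verified case by case, consolidating entries into uniform patterns wherever possible. Since an orthodiagonal trapezoid is simultaneously orthodiagonal, a trapezoid, and a fortiori a general convex quadrilateral, every result already established for those classes applies verbatim: Theorem~\ref{thm:general1} accounts for $n=1$ in part~(a), Theorem~\ref{thm:ortho151} accounts for part~(b), and the ``normal center function'' machinery (Theorems~\ref{thm:normal}, \ref{thm:normalPlus}, \ref{thm:normal1}, \ref{thm:normal2} together with Lemma~\ref{lemma:normal}) accounts for most of the remaining entries of part~(a). So the first step is to strike off every entry already implied by a prior theorem and isolate the coincidences genuinely forced by the conjunction of orthogonality and the trapezoid condition; those are the only ones needing fresh work, and they are concentrated in parts~(c) and~(d).

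For the remaining cases I would reuse the coordinate system introduced for orthodiagonal quadrilaterals (Figure~\ref{fig:orthoCoordinates}): place the diagonal point $E$ at the origin, the diagonals $AC$ and $BD$ along the coordinate axes, and $D=(1,0)$, so that $A=(0,a_y)$, $B=(b_x,0)$, $C=(0,c_y)$. Imposing $AD\parallel BC$ is then a single polynomial relation among $a_y,b_x,c_y$, which I would use to eliminate one of the three, leaving a two-parameter family of orthodiagonal trapezoids. For each listed $n$ I would take a center function for $X_n$ in a polynomial or purely geometric form that avoids square roots wherever the center allows one (exactly as the Vecten points were handled in the proof of Theorem~\ref{thm:cyclic485}), compute the Cartesian coordinates of $F,G,H,I$ in the four quarter triangles $\triangle ABE$, $\triangle BCE$, $\triangle CDE$, $\triangle DAE$ by the change-of-coordinates formula, and then verify the asserted shape of $FGHI$ by an algebraic identity: for part~(a), $\vec{FH}\cdot\vec{GI}=0$; for part~(b), $|FH|^2=|GI|^2$; for part~(c), that two opposite angles of $FGHI$ are right, say $\angle IFG=\angle GHI=90\degrees$; for part~(d), the conjunction of the tests in~(a) and~(b). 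Before grinding each one out I would examine the list of center functions involved and try to guess a common trigonometric form, in the spirit of Table~\ref{table:patterns}, so that whole runs of consecutive indices can be dispatched by a single lemma rather than one computation apiece.

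The main obstacle is twofold. First is the profusion of cases: absent the consolidating lemmas, each entry is a heavy elimination/resultant computation, so the real work is to find those lemmas --- an orthodiagonal-trapezoid analogue of Theorem~\ref{thm:normal} characterising which center functions force $FH\perp GI$, and analogous divisibility/symmetry conditions behind the ``Hjelmslev'' conclusion of part~(c) and the ``equidiagonal orthodiagonal'' conclusion of part~(d) --- and I expect guessing the right condition to be the genuine mathematical content here. Second, centers built from angle bisectors (beginning with the incenter $X_1$) introduce radicals that obstruct a clean symbolic check; these must be handled either through the angle-bisector criterion already in the paper (Lemma~\ref{lemma:angleBisector}) or via a radical-free geometric construction of the center. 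Everything else is routine but tedious polynomial algebra best left to Mathematica, and for the cases not covered by a uniform lemma I would, consistent with the rest of the paper, refer the reader to the accompanying notebooks.
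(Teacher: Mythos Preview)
Your proposal is a reasonable blueprint for a rigorous symbolic verification, but it misreads the role this statement plays in the paper. This ``theorem'' sits in Appendix~\ref{appendix:diagonalPointData}, titled \emph{Raw Data for Quarter Triangles}: the appendix entries are not proved at all. They are the numerical observations output by GeometricExplorer (15-digit floating point, as described in Section~\ref{section:methodology}), recorded center-by-center for each quadrilateral type. The paper explicitly says this ``does not constitute a proof that the result is correct, but gives us compelling evidence for the validity of the result.'' So there is no ``paper's own proof'' to compare your proposal against; the paper simply reports the list.

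What you have written is therefore not wrong, but it is over-engineered relative to what the paper actually does here. Your plan --- reuse the orthodiagonal coordinate frame of Figure~\ref{fig:orthoCoordinates}, impose the parallelism constraint, and check each shape condition symbolically in Mathematica, consolidating via center-function patterns where possible --- is exactly the methodology the paper applies to the \emph{main-body} theorems (e.g.\ Theorems~\ref{thm:ortho6}, \ref{thm:equi}, \ref{thm:cyclic485}). Had the authors promoted this appendix entry to a main-body result, your approach is precisely what they would have done. The one substantive caution is that several of the listed centers (notably $X_{371}$, $X_{485}$, $X_{493}$) involve the area $S$ or other non-rational quantities, so your remark about needing radical-free geometric constructions is well taken; but again, the paper simply sidesteps all of this by presenting the list as data rather than as a proved theorem.
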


\begin{theorem}
For a parallelogram,\\
(excluding results for a trapezoid)\\
(excluding results for Pythagorean)\\
(excluding results for extangential)\\
(a) The central quadrilateral is a parallelogram for all n.
\end{theorem}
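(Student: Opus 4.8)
The plan is to deduce part~(a) directly from Theorem~\ref{thm:arbCenterParallelogram} together with the Parallelogram Lemma (Lemma~\ref{lemma:parallelogram}); in fact part~(a) is essentially Theorem~\ref{thm:arbCenterParallelogram} restated in the language of Kimberling centers ($X_n$ for $n=1,2,\dots$), so no genuinely new argument is needed. First I would recall the configuration: with $ABCD$ a parallelogram and $E$ the intersection of its diagonals, the quarter triangles are $\triangle ABE$, $\triangle BCE$, $\triangle CDE$, $\triangle DAE$, and $F,G,H,I$ denote the chosen center $X_n$ in each of these, in that order.

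Next I would observe that the point reflection $\sigma$ about $E$ is a symmetry of the parallelogram, sending $A\mapsto C$, $B\mapsto D$, and fixing $E$. Hence $\sigma$ carries $\triangle BCE$ onto $\triangle DAE$ and $\triangle ABE$ onto $\triangle CDE$. Since $\sigma$ is a congruence and $X_n$ is a well-defined triangle center, $\sigma$ sends the center $G$ of $\triangle BCE$ to the center $I$ of $\triangle DAE$, and the center $F$ of $\triangle ABE$ to the center $H$ of $\triangle CDE$; this is precisely the Parallelogram Lemma applied to the two pairs of opposite quarter triangles. Consequently $E$ is the common midpoint of $GI$ and of $FH$. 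A quadrilateral whose diagonals bisect each other is a parallelogram, so applying this to $FGHI$ with diagonals $FH$ and $GI$ meeting at their shared midpoint $E$ yields the claim, and also shows that $FGHI$ and $ABCD$ have the same diagonal point.

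There is no computational obstacle here; the only thing requiring care — and it is minor — is checking that $\sigma$ really matches the quarter triangles up in opposite pairs, after which the conclusion is immediate. I would also remark that the parenthetical exclusions (trapezoid, Pythagorean, extangential) are purely organizational: a parallelogram is a special case of each of those types, so the theorem is merely recording that, once the parallelogram results are in hand, those other headings contribute nothing further for this configuration.
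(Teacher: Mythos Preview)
Your proposal is correct and follows essentially the same route as the paper: the appendix statement is exactly Theorem~\ref{thm:arbCenterParallelogram}, whose proof in the paper uses the Parallelogram Lemma (the half-turn about $E$) to show that $E$ is the midpoint of both $FH$ and $GI$, whence $FGHI$ is a parallelogram. Your remark on the exclusions is also accurate---they are bookkeeping, not additional hypotheses.
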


\begin{theorem}
For a tangential quadrilateral,\\
(excluding results for a general quadrilateral)\\
(a) The central quadrilateral is cyclic orthodiagonal if $n=1$.
\end{theorem}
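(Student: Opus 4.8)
The plan is to recognize that this statement requires no fresh computation: it is precisely the conjunction of two facts already established in the paper, one supplying the ``orthodiagonal'' half and one supplying the ``cyclic'' half.

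First I would dispose of the orthodiagonality. A tangential quadrilateral, having an incircle, is in particular convex, hence a ``general quadrilateral'' in the sense of Section~\ref{section:quadrilaterals}. Thus Theorem~\ref{thm:general1} applies verbatim with the chosen center $X_1$: the incenter $F$ of $\triangle ABE$ lies on the bisector of $\angle AEB$ and the incenter $H$ of $\triangle CDE$ lies on the bisector of $\angle CED$, so $F$, $E$, $H$ are collinear; similarly $G$, $E$, $I$ are collinear; and $\angle FEG=\tfrac12\angle AEB+\tfrac12\angle BEC=90\degrees$. Hence $FH\perp GI$, so the central quadrilateral $FGHI$ is orthodiagonal, and in fact shares its diagonal point $E$ with $ABCD$.

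Next I would invoke the cyclic half. The concyclicity of $F$, $G$, $H$, $I$ — the incenters of the four quarter triangles of a tangential quadrilateral — is exactly Result~2 of the Introduction (the Bradley--Grinberg theorem), for which a geometric proof is Theorem~20 of \cite{Grinberg}. This needs only to be cited. Combining the two ingredients, $FGHI$ is simultaneously cyclic and orthodiagonal, i.e.\ a cyclic orthodiagonal quadrilateral, which is the assertion.

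The only thing demanding any care here is the logical bookkeeping rather than any geometry: one must note that ``tangential $\Rightarrow$ general convex'' so that Theorem~\ref{thm:general1} is legitimately available, and that the two cited results speak about the same four points $F,G,H,I$ built from the same quarter triangles $\triangle ABE$, $\triangle BCE$, $\triangle CDE$, $\triangle DAE$ — which they do. There is essentially no mathematical obstacle; whatever difficulty the result has was already overcome in the proofs of its two ingredient theorems (the angle chase behind Theorem~\ref{thm:general1} and Grinberg's incircle argument).
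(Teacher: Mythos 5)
Your proposal is correct and matches the paper's own derivation: the cyclic part is exactly the main-body theorem for tangential quadrilaterals with center $X_1$ (Result~2 of the Introduction, proved via Theorem~20 of \cite{Grinberg}), and the orthodiagonal part is Theorem~\ref{thm:general1}, which applies since a tangential quadrilateral is convex. Combining the two gives the stated ``cyclic orthodiagonal'' classification, just as the appendix entry intends.
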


\void{
\begin{theorem}
For a tangential trapezoid (tested up to 1000),\\
(excluding results for a trapezoid)\\
(excluding results for a tangential quadrilateral)\\
(a) The central quadrilateral is a bisect-diagonal quadrilateral if $n=$40.
\end{theorem}
}

\void{
\begin{theorem}
For a bicentric quadrilateral, (tested up to 1000)\\
(excluding results for a cyclic quadrilateral)\\
(excluding results for a tangential quadrilateral)\\
\end{theorem}

\begin{theorem}
For an exbicentric quadrilateral,\\
(excluding results for a cyclic quadrilateral)\\
(excluding results for a extangential quadrilateral)\\
\end{theorem}

}

\begin{theorem}
For a Hjelmslev quadrilateral,\\
(excluding results for a cyclic quadrilateral)\\
(excluding results for a Pythagorean quadrilateral)\\
(a) The central quadrilateral is orthodiagonal if $n=$48, 91, 563.\\ 
(b) The central quadrilateral is a trapezoid if $n=$68, 155, 317, 577. 
\end{theorem}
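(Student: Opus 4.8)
The geometric fact that makes everything tractable is that a Hjelmslev quadrilateral is cyclic with a distinguished diameter: since $\angle A=\angle C=90\degrees$ we have $\angle A+\angle C=180\degrees$, so $ABCD$ is cyclic, and the diagonal $BD$ subtends a right angle at both $A$ and $C$, hence is a diameter of the circumcircle. I would therefore normalize as follows: take the circumcircle to be the unit circle, set $B=(-1,0)$, $D=(1,0)$, $A=(\cos\alpha,\sin\alpha)$, $C=(\cos\gamma,\sin\gamma)$ with $\sin\alpha>0>\sin\gamma$ for convexity, and let $E$ be the intersection of line $AC$ with the $x$-axis. Then $E$ and the vertices of the quarter triangles $\triangle ABE,\triangle BCE,\triangle CDE,\triangle DAE$ all have closed-form coordinates in $\cos\alpha,\sin\alpha,\cos\gamma,\sin\gamma$, and each assertion becomes a polynomial identity in these quantities.

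\textbf{Part (a).} The case $n=563$ reduces to Theorem~\ref{thm:hj}: a center function for $X_{563}$ is $\sin 4A$, and since $\tan 2B+\tan 2C=-\sin 2A/(\cos 2B\cos 2C)$, the ratio $(\tan 2B+\tan 2C)/\sin 4A$ equals $-1/(2\cos 2A\cos 2B\cos 2C)$, a cyclic function of the angles; hence $\sin 4A$ is equivalent to $\tan 2B+\tan 2C$, i.e. the $k=0$ instance of Theorem~\ref{thm:hj}, so the central quadrilateral is orthodiagonal. For $n=48$ and $n=91$ I would first test whether their ETC center functions are likewise equivalent to some $\tan 2B+\tan 2C+k\tan 2A$; if not (which appears to be the case), I would prove those two directly: substitute the center function into the normalization above, compute $F,G,H,I$ by the change-of-coordinates formula, and check with a computer algebra system that the product of the slopes of $FH$ and $GI$ equals $-1$, exactly as in the proof of Theorem~\ref{thm:ortho6}.

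\textbf{Part (b).} The case $n=68$ is Theorem~\ref{thm:hjPrasolov}, and Figure~\ref{fig:dpHjelmslev-trapezoid} records that there the parallel pair is $FI\parallel HG$, both parallel to $BD$. For $n=155,317,577$ I would follow the methodology of Section~\ref{section:methodology}: collect center functions for $X_{68},X_{155},X_{317},X_{577}$ from \cite{ETC}, search for a common form $f(a,b,c)$ shared by all four, confirm the guessed form on a further center function of that shape with GeometricExplorer, and then prove the umbrella statement ``$ABCD$ Hjelmslev plus a center function of that form $\implies FI\parallel HG\parallel BD$''. In the normalization above this is the statement that $F$ and $I$ have equal $y$-coordinates and $G$ and $H$ have equal $y$-coordinates, i.e. two polynomial identities in $\cos\alpha,\sin\alpha,\cos\gamma,\sin\gamma$.

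The hard part is Part (b): unlike Part (a) there is no off-the-shelf theorem, so one must first isolate the right common center-function pattern, and the center functions of $X_{155}$, $X_{317}$, and especially $X_{577}$ are far less transparent than the Euler-line and Nagel-line families that governed the earlier quarter-triangle results; one must then grind out the resulting identity, which is unpleasant because the quarter-triangle vertices carry the irrational abscissa of $E$ and square-root side lengths, so the collapse to a vanishing polynomial is best left to Mathematica. In both parts one must also fix orientation conventions and exclude the degenerate configurations (the chosen center landing at a shared vertex, or $E$ coinciding with a vertex of $ABCD$).
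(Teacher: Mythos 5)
The paper offers no proof of this statement: it sits in Appendix~\ref{appendix:diagonalPointData} as raw output of the GeometricExplorer search, and the Methodology section is explicit that the 15-digit numerical check ``does not constitute a proof''; at best a symbolic verification lives in the supplementary Mathematica notebooks. Your proposal therefore cannot diverge from ``the paper's proof,'' and in spirit it coincides with the paper's general methodology (coordinatize, compute the four centers by the change-of-coordinates formula, verify a polynomial identity with a CAS). Within that, your one completed step is correct and is genuinely more than the paper records: from $\tan 2B+\tan 2C=\sin(2B+2C)/(\cos 2B\cos 2C)=-\sin 2A/(\cos 2B\cos 2C)$ the ratio $(\tan 2B+\tan 2C)/\sin 4A=-1/(2\cos 2A\cos 2B\cos 2C)$ is symmetric in the angles, so $X_{563}$ is indeed the $k=0$ member of the family in Theorem~\ref{thm:hj} --- though be aware that Theorem~\ref{thm:hj} is itself stated without proof, so you have reduced one unproved claim to another. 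Your normalization ($BD$ a diameter of the unit circumcircle, $E$ on the $x$-axis) is sound and is the natural Hjelmslev analogue of the paper's cyclic and orthodiagonal coordinate setups.

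The remaining six cases are left as ``run Mathematica,'' which is acceptable by the paper's own standard but is not yet a proof. Two concrete cautions. For $n=48$ and $n=91$ you are right that $\tan B+\tan C$ and $\sec 2A$ do not reduce to the $\tan 2B+\tan 2C+k\tan 2A$ family, so the direct computation is unavoidable there. For part (b), no single one-parameter family covering $X_{68}$, $X_{155}$, $X_{317}$, $X_{577}$ is exhibited anywhere in the paper (only Theorem~\ref{thm:hjPrasolov}, for $X_{68}$ alone), so your plan should allow for the possibility that the pattern-hunting step fails and the four centers must be verified separately; you should also confirm in each case which pair of opposite sides is the parallel pair, since Figure~\ref{fig:dpHjelmslev-trapezoid} establishes $FI\parallel HG\parallel BD$ only for the Prasolov point.
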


\begin{theorem}
For a kite,\\
(excluding results for an orthodiagonal quadrilateral)\\
(excluding results for a tangential quadrilateral)\\
(excluding results for an extangential quadrilateral)\\
(excluding results for a Pythagorean quadrilateral)\\
(excluding results for an equalProdOpp quadrilateral)\\
(excluding results for an equalProdAdj quadrilateral)\\
(a) The central quadrilateral is an equidiagonal orthodiagonal trapezoid if $n=$1, 46-48, 73,
90-91, 102, 109, 117, 124, 163, 223, 226, 282, 284, 336, 380, 388, 485-486, 493-494, 497,
563-564, 579-581, 610, 652, 656, 820, 822, 836, 920-921, 944-951.\\
(b) The central quadrilateral is an isosceles trapezoid if $n$=6-18, and many more.
(c) The central quadrilateral is a rectangle if $n=$2, 3, 5, 20, 22-23, 95, 97, 122-123, 127,
131, 140, 175, 216, 233, 253, 268, 280, 339, 347, 376, 381-382, 401-402, 408, 417-418,
426, 440-441, 454, 464-466, 546-550, 577, 631-632, 828, 852, 856, 858, 925.
\end{theorem}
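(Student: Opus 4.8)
The plan is to reduce every assertion to results already proved, exploiting the fact that a kite belongs simultaneously to several of the studied classes. Fix the labelling so that $AB=AD$ and $CB=CD$; then the diagonal $AC$ is an axis of symmetry of $ABCD$, the reflection $\sigma$ in line $AC$ fixes $A$, $C$ and the diagonal point $E$, and it interchanges the quarter triangles $\triangle ABE\leftrightarrow\triangle DAE$ and $\triangle BCE\leftrightarrow\triangle CDE$. Hence, for \emph{any} triangle center, $\sigma$ carries $F$ to $I$ and $G$ to $H$, so the segments $FI$ and $GH$ are perpendicular to $AC$ and bisected by it; by the Isosceles Triangle Lemma (Lemma~\ref{lemma:isosceles}), applied exactly as in the proof of Theorem~\ref{thm:arbCenterKite}, $FGHI$ is an isosceles trapezoid with $FI\parallel GH$. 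This already establishes the isosceles-trapezoid conclusion of part~(b) for every $n$; what is left in (b) is the \emph{negative} assertion that, for the listed indices, $FGHI$ is no more special than a generic isosceles trapezoid.

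Next I would use that a kite satisfies $a^2+c^2=b^2+d^2$ (orthodiagonal), $a+c=b+d$ (tangential) and $ac=bd$ (equalProdOpp), so every theorem proved for orthodiagonal reference quadrilaterals applies here verbatim. For each $n$ in the list of part~(a) other than $1$, $102$, $109$, $117$, $124$, the central quadrilateral $FGHI$ is orthodiagonal: for a large subset of these $n$ (namely $46$--$48$, $73$, $90$--$91$, $223$, $388$, $497$, $563$, $610$, $652$, $656$, $822$) this is the ``normal center function'' machinery, Theorems~\ref{thm:normal}, \ref{thm:normal1}, \ref{thm:normal2} and~\ref{thm:normalPlus}, and for the remaining indices it is the raw-data theorem for orthodiagonal reference quadrilaterals stated above. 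Combined with the first paragraph --- an isosceles trapezoid being automatically equidiagonal --- this gives an equidiagonal orthodiagonal trapezoid. For $n=1$ the orthodiagonality of $FGHI$ comes instead from Theorem~\ref{thm:general1} (a kite is in particular a general quadrilateral), and for $n=102$, $109$, $117$, $124$ the orthodiagonal data already record that $FGHI$ is simultaneously equidiagonal and orthodiagonal; in either case the trapezoid property is then adjoined. Part~(c) is handled the same way: for the indices listed there, Theorem~\ref{thm:orthoRect} (for center functions of the form $\cos A+k\cos B\cos C$, e.g.\ $X_2$, $X_3$, $X_5$, $X_{20}$, $X_{140}$) and the supplementary orthodiagonal-rectangle computations for the rest show that $FGHI$ is a rectangle, a conclusion consistent with, and sharper than, the isosceles-trapezoid property.

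One feature that the statement silently builds in, and that the write-up must make explicit, is that the four quarter triangles of a kite are \emph{right} triangles with the right angle at $E$, since $AC\perp BD$. Hence any $X_n$ that, in a right triangle, coincides with the right-angle vertex --- most notably the orthocenter $X_4$ --- collapses to $E$ in all four triangles, so $FGHI$ degenerates to the single point $E$; this is precisely why such indices (for example $n=4$) appear in none of the three lists, and the argument for the surviving indices should be accompanied by a check that the chosen center lies neither at that vertex nor at infinity for a generic kite.

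I expect the main obstacle to be not any individual deduction but the \emph{exactness} of the three index lists: that every $n$ in list (a) yields the full ``equidiagonal orthodiagonal trapezoid'' rather than merely an isosceles trapezoid, that list (c) is exactly the set of $n\le 1000$ that force a rectangle, and that the $n$ in list (b) force nothing stronger. The positive halves follow, as sketched above, from the structural lemmas together with the orthodiagonal results; the negative halves --- a particular central quadrilateral failing to be orthodiagonal, or failing to be a rectangle --- are what the GeometricExplorer search established numerically and what the supplementary Mathematica notebooks would confirm symbolically, center function by center function, using the coordinate model of Figure~\ref{fig:orthoCoordinates} specialized to a kite.
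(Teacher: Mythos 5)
This appendix entry is stated in the paper as raw computational data: the lists are what GeometricExplorer reported numerically for $n\le 1000$, and the paper offers no derivation beyond that (the Methodology section is explicit that such numerical checks ``do not constitute a proof''). Your proposal therefore takes a genuinely different, and for the positive assertions more informative, route: you observe that every kite is orthodiagonal, so the orthodiagonal-reference results (Theorems~\ref{thm:normal}, \ref{thm:normal1}, \ref{thm:normal2}, \ref{thm:normalPlus}, \ref{thm:orthoRect} and the corresponding raw-data lists) apply verbatim, and you combine them with Theorem~\ref{thm:arbCenterKite} (kite $\implies$ isosceles trapezoid for \emph{every} center), using the fact that an isosceles trapezoid is automatically equidiagonal, to upgrade ``orthodiagonal'' to ``equidiagonal orthodiagonal trapezoid'' in part~(a) and to obtain part~(c) directly. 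That synthesis is sound, and your remark about centers sitting at the right-angle vertex (explaining the absence of $n=4$) matches item~(f) of the paper's orthodiagonal data. What the paper's approach ``buys'' is uniformity --- one numerical test per $(n,\hbox{shape})$ pair --- while yours buys actual proofs for the indices whose center functions are covered by the normal-center-function machinery.

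Two caveats. First, your reduction does not remove the computational dependency, it only relocates it: for most indices in lists~(a) and~(c) the orthodiagonality or rectangle property of the central quadrilateral is itself only recorded in the (numerically obtained) raw-data theorem for orthodiagonal reference quadrilaterals, not proved by Theorems~\ref{thm:normal}--\ref{thm:normalPlus}; and the exactness of the lists (the negative claims) remains purely computational, as you acknowledge. Second, there is a small mismatch you do not address: $n=564$ appears in the kite list~(a) but not in the orthodiagonal raw-data list, so your case analysis (normal center functions, the orthodiagonal raw data, $n=1$ via Theorem~\ref{thm:general1}, and $n=102,109,117,124$ via the equidiagonal-orthodiagonal entry) leaves that index uncovered. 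The appeals to the tangential and equalProdOpp properties of a kite are true but unused; only orthodiagonality and the reflective symmetry enter your argument.
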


\void{
Note that an equidiagonal orthodiagonal trapezoid must be isosceles, so, in this case, the central quadrilateral is always an isosceles trapezoid.
}

\void{
\begin{theorem}
For a right kite (tested up to 1000),\\
(excluding results for a kite)\\
(excluding results for a Hjelmslev quadrilateral)\\
(excluding results for a bicentric quadrilateral)\\
(excluding results for an extangential quadrilateral)\\
(a) The central quadrilateral is an isosceles trapezoid if $n$=7-18, and many more.\\
(b) The central quadrilateral is a bicentric trapezoid if $n$=6, 51, 54, 65, 67, 70, 74,
125, 130, 151,184-185, 217, 287-288, 290, 296,... .\\
(c) The central quadrilateral is a rectangle FHIG if $n$=52, 110, 113, 129, 193,
247, 651, 677, 850.\\
(d) The central quadrilateral is an equidiagonal orthodiagonal trapezoid if $n$=1, 46-48,
73, 90-91, 102, 109, 117, 124, 163, 223, 226, 282, 294, 336, 371-372, 380, 388,... .\\
(e) The central quadrilateral is a parallelogram if $n$=52, 110, 113,
129, 193, 247, 651, 677, 850.\\
(f) The central quadrilateral is a rectangle FGHI if $n$=2-3, 5, 20-23, 61-62, 95, 97, 122-123, 127, 131, 140, 175, 199, 216, 233, 237, 253, 268, 280,... .\\
(g) The central quadrilateral is a line segment if $n$=96, 157, 230-231,
571, 647, 650, 676.\\
(h) The central quadrilateral is triangular if $n$=14, 16, 18, 36, 44, 46, 47, 49, 50,... .
\end{theorem}
}

\begin{theorem}
For a rhombus,\\
(excluding results for a parallelogram)\\
(excluding results for a tangential trapezoid)\\
(excluding results for a kite)\\
(excluding results for an orthogonal trapezoid)\\
(a) The central quadrilateral is a square if $n=$1, 46-48, 73, 90-91,
102, 109, 117, 124, 163, 223, 226, 282, 284, 336, 380, 388, 485-486, 493-494, 497,
563, 564, 579-581, 610, 652, 656, 820, 822, 836, 920-921, 944-951.\\
(b) The central quadrilateral is a rectangle if $n$=2-3, 5-18, 20-23, 31-32, 35-52, 54-63, 65, 67, 69-89, 94-101, and many more.
\end{theorem}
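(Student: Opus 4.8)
The plan is to reduce both parts to one coordinate computation and then feed the result into the orthodiagonal machinery of Theorems \ref{thm:normal} and \ref{thm:normalPlus}. Place the rhombus $ABCD$ so that its diagonal point $E$ is the origin and its (perpendicular, mutually bisecting) diagonals lie along the axes: $A=(-p,0)$, $B=(0,-q)$, $C=(p,0)$, $D=(0,q)$ with $p,q>0$. Then each quarter triangle is a right triangle with its right angle at $E$ and legs $p$ and $q$; write $h=\sqrt{p^2+q^2}$. First I would compute $F=X_n$ of $\triangle ABE$ by the change-of-coordinates formula used in the proof of Theorem \ref{thm:general}: taking the vertices in the order $(E,A,B)$, the opposite sides are $h,q,p$, so $F$ has barycentrics $h\,f(h,q,p):q\,f(q,p,h):p\,f(p,h,q)$, and since $E=(0,0)$, $A=(-p,0)$, $B=(0,-q)$,
\[
F=\left(\frac{-pq\,f(q,p,h)}{D},\ \frac{-pq\,f(p,q,h)}{D}\right),
\]
where $D$ is the sum of the three weights and we have used the symmetry of $f$ in its last two arguments. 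By Theorem \ref{thm:arbCenterRhombus} (or by rerunning the computation in the other three triangles, using the reflective symmetry of the rhombus in its diagonals) the remaining centers are $G=(-F_x,F_y)$, $H=(-F_x,-F_y)$, $I=(F_x,-F_y)$. Thus $FGHI$ is automatically an axis-aligned rectangle centered at $E$ — which is part (b) — and it collapses to the single point $E$ exactly when $f(q,p,h)=f(p,q,h)=0$ on $p^2+q^2=h^2$, i.e.\ when $X_n$ sits at the right-angle vertex (as for $X_4$, which is absent from list (b)).

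The rectangle $FGHI$ has adjacent sides of lengths $2|F_x|$ and $2|F_y|$, so it is a square if and only if $|f(q,p,h)|=|f(p,q,h)|$ for all $p,q>0$ with $h^2=p^2+q^2$; by the symmetry of $f$ in its last two slots this is precisely the condition $|f(x,y,z)|=|f(y,x,z)|$ whenever $x^2+y^2=z^2$, which by Lemma \ref{lemma:angleBisector} (and the external-bisector counterpart behind Theorem \ref{thm:normalPlus}) says that $X_n$ lies on the internal or external bisector of the right angle in a right triangle. Equivalently, for a polynomial center function this is hypothesis (\ref{cond:perp}) of Theorem \ref{thm:normal} (``$f$ is normal'') or hypothesis (\ref{cond:perp2}) of Theorem \ref{thm:normalPlus}. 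It then remains to verify, index by index, that every $n$ in list (a) has a center function of one of these two types — replacing trigonometric forms by equivalent polynomials via the rules (\ref{rules}), as in the proof of Theorem \ref{thm:normal1} — while every $n$ in list (b) does not, so its rectangle is genuinely not a square and, unless both quantities vanish, not degenerate. Much of list (a) is already covered by the table following Theorem \ref{thm:normal2} together with Lemma \ref{lemma:normal}, and $X_1$ is immediate since its center function is the constant $1$.

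The one real obstacle is that there is no uniform argument across the long explicit lists: the substance of the proof is the finite bookkeeping matching each Kimberling index to a center function with the correct symmetry, plus disposing of the handful of exceptional entries (such as $X_{102}$, $X_{109}$, $X_{117}$, $X_{124}$, $X_{564}$), which can be settled either from the finer raw-data results for orthodiagonal quadrilaterals and kites or by a direct symbolic check in Mathematica. One must also take some care that the ``normal'' framework, stated for polynomial $f$, is invoked only after passing to an equivalent polynomial center function.
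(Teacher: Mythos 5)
Your approach is correct, and it is genuinely different from what the paper does for this statement: the appendix entries are presented only as raw numerical output of GeometricExplorer (which the paper itself says ``does not constitute a proof''), whereas you reduce the claim to a checkable algebraic criterion. Your coordinate computation is right: with reference order $(E,A,B)$ the barycentrics are $h\,f(h,q,p):q\,f(q,p,h):p\,f(p,h,q)$, the reflective symmetry of the rhombus in its axes forces $G,H,I$ to be the reflections of $F$, and the central quadrilateral is an axis-aligned rectangle centred at $E$ --- recovering Theorem \ref{thm:arbCenterRhombus}, which the paper instead gets synthetically by intersecting the kite and parallelogram cases. Your square criterion $|f(q,p,h)|=|f(p,q,h)|$ on $p^2+q^2=h^2$ is exactly the internal/external bisector condition of Lemma \ref{lemma:angleBisector} and its variant, hence hypothesis (\ref{cond:perp}) or (\ref{cond:perp2}); since the cone $x^2+y^2=z^2$ is irreducible, a polynomial $f$ with $f(x,y,z)^2=f(y,x,z)^2$ on it satisfies one of the two factor conditions identically there, so the equivalence is legitimate, and most of list (a) is then covered by the table after Theorem \ref{thm:normal2} together with Lemma \ref{lemma:normal}. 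What your route buys is an actual proof template where the paper has only numerics; what it does not remove is the finite per-index bookkeeping for both lists, which you candidly flag and which is where the real content of the stated index sets lies.

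Two small corrections. First, your degeneracy remark is imprecise: the rectangle already collapses to a \emph{segment} when just one of $f(q,p,h)$, $f(p,q,h)$ vanishes identically on the cone (both must be nonzero to get a genuine rectangle for list (b)); collapse to the point $E$ requires both to vanish. Second, citing Theorem \ref{thm:arbCenterRhombus} to obtain $G=(-F_x,F_y)$, etc., would be circular in context; keep the justification you give in parentheses (isometry-equivariance of triangle centers under the reflections in the diagonals), which is the same principle the paper uses in the Parallelogram and Isosceles Triangle Lemmas.
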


\begin{theorem}
For an equidiagonal orthodiagonal quadrilateral,\\
(excluding results for an equidiagonal quadrilateral)\\
(excluding results for an orthodiagonal quadrilateral)\\
(a) The central quadrilateral is orthodiagonal if $n=$1, 8, 10, 40, 46-48, 63,
73, 84, 90-91, 145, 163, 165, 189, 197, 223, 226-227, 255, 271, 282-284,
307, 336, 355, 380, 388, 485, 488, 493, 497, 551, 581,
610, 637, 639, 641, 820, 836, 944-951, 958, 962, 993.\\
(b) The central quadrilateral is a square if $n=$2, 3, 5, 20,22-23, 95, 97, 122-123, 127, 131, 140, 175, 216, 233, 253, 268, 280, 339, 347, 376, 381-382,
401-402, 408, 417-418, 426, 440-441, 454, 464-466, 546-550,
577, 631-632, 828, 852, 856, 858, 925.\\
(c) The central quadrilateral is cyclic if $n=$6, 51, 54, 67, 70, 74, 125, 130,
184-185, 217, 287-288, 296, 389, 578, 686, 973-974.\\
(d) The central quadrilateral is equidiagonal, orthodiagonal if $n=$102, 124, 151.
\end{theorem}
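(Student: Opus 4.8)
The plan is to derive this theorem as a corollary of the results already established separately for equidiagonal quadrilaterals and for orthodiagonal quadrilaterals: a quadrilateral that is both equidiagonal and orthodiagonal lies in both classes, so every conclusion proved under either hypothesis applies simultaneously, and the content of each part is the observation that two simultaneous conclusions combine into a stronger shape.

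Part (b) is the model case. For any center function of the form $\cos B\cos C+k\cos A$ the central quadrilateral is a rhombus by Theorem~\ref{thm:equi-rhombus}; rewriting the same family as $\cos A+k'\cos B\cos C$ (the two families agree up to scaling, exactly as in the remark following Theorem~\ref{thm:general2}), Theorem~\ref{thm:orthoRect} shows it is also a rectangle; a quadrilateral that is both a rhombus and a rectangle is a square. The list of $n$ in (b) is exactly the set of Kimberling centers with a center function of this form --- the same list that appears in Table~\ref{table:patterns} and in the parallelogram results for general quadrilaterals --- so (b) reduces to that finite catalogue lookup.

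For (a) I would sort the listed centers by the earlier theorem that applies: $X_1$ by Theorem~\ref{thm:general1}; centers with a normal center function, or one of the trigonometric forms collected in Theorems~\ref{thm:normal1} and~\ref{thm:normal2}, by Theorem~\ref{thm:normal} (or Theorem~\ref{thm:normalPlus} for the external-bisector variant); and centers lying on the Nagel line, the Bevan line, or carrying an $r/R$-type center function by Theorems~\ref{thm:equi}, \ref{thm:equiy}, \ref{thm:equirR}, and~\ref{thm:equirR2}. The union of these families must then be checked against the Encyclopedia of Triangle Centers to confirm it contains each listed $n$. Part (c) transfers directly from the orthodiagonal case: for the values of $n$ appearing in Theorems~\ref{thm:ortho6} and~\ref{thm:orthoCyclicTan} and in the raw-data theorem for orthodiagonal quadrilaterals the central quadrilateral is cyclic, and an equidiagonal orthodiagonal quadrilateral is in particular orthodiagonal. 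Part (d) combines $X_{151}$, whose central quadrilateral is equidiagonal by Theorem~\ref{thm:ortho151} and orthodiagonal because its center function is also normal, with $n=102$ and $n=124$, which already occur as equidiagonal orthodiagonal quadrilaterals in the raw-data orthodiagonal theorem.

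The main obstacle is completeness and bookkeeping rather than any single hard deduction. The named theorems each cover an infinite family of center functions, but the assertion is that the displayed finite lists are exactly the $n\le 1000$ with the stated behavior; that the lists are exhaustive rests on the GeometricExplorer survey, not on a closed-form argument, and any listed center not visibly belonging to one of the named families is justified by a direct symbolic computation in the supplementary Mathematica notebooks, following the convention stated after Theorem~\ref{thm:general}. I would therefore present the proof as the combination arguments above for the centers covered by named families, together with a pointer to the notebooks for the residual entries and for the exhaustiveness of the lists.
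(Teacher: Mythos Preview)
Your combination strategy (rhombus $\cap$ rectangle $=$ square, etc.) is sound in principle, but there is a concrete factual error that undermines part (b), and it propagates. You assert that the list in (b) is ``exactly the set of Kimberling centers with a center function of this form --- the same list that appears in Table~\ref{table:patterns}.'' It is not. Table~\ref{table:patterns} contains only the Euler-line centers $n=2,3,4,5,20,140,376,381,382,546$--$550,631,632$, whereas list (b) contains $n=22,23,95,97,122,123,127,131,175,216,\dots$ and dozens of others that do \emph{not} have center function $\cos B\cos C+k\cos A$. Conversely, $n=4$ is in Table~\ref{table:patterns} but is absent from (b) (because the orthocenter of a right triangle sits at the right-angle vertex, so the central quadrilateral degenerates to a point). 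So Theorem~\ref{thm:equi-rhombus} gives you ``rhombus'' only for the short Euler-line list, and your rhombus-plus-rectangle argument establishes ``square'' only for the intersection of that list with the orthodiagonal rectangle list --- not for the full list in (b). The bulk of (b) is simply not reachable from the named theorems.

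More broadly, you should recognize what this statement actually is in the paper: it sits in Appendix~\ref{appendix:diagonalPointData}, ``Raw Data for Quarter Triangles.'' These appendix theorems are the output of the numerical sweep described in Section~\ref{section:methodology}; the paper offers no proof for them and does not claim one. The Methodology section is explicit that the $15$-digit numerical check ``does not constitute a proof.'' So there is no ``paper's own proof'' to match; the correct response is either to say so, or --- if you want to supply what the paper does not --- to acknowledge that your combination argument covers only a proper subset of each list, with the remainder (and the exhaustiveness claim) resting on the numerical survey and the supplementary notebooks, exactly as you say at the end. Your final paragraph has the right attitude; the error is in the middle, where you overclaim coverage.
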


\begin{theorem}
For an equidiagonal orthodiagonal kite,\\
(excluding results for an equidiagonal orthodiagonal quadrilateral)\\
(excluding results for a kite)\\
(a) The central quadrilateral is an equidiagonal, orthodiagonal trapezoid if $n=$1, 8, 10, 40, 46-48,
63, 73, 84, 90-91, 102, 124, 145, 151, 165, 189, 197, 223, 226-227, 255, 271, 282-284,
307, 336, 355, 380, 388, 485, 488, 493, 497, 551, 581,
610, 637, 639, 641, 820, 836, 944-951, 958, 962, 993.\\
(b) The central quadrilateral is a square if $n=$2, 3, 5, 20, 22-23, 95, 97, 122-123, 127, 131, 140,
175, 216, 233, 253, 268, 280, 339, 347, 376, 381-382, 401-402, 408, 417-418,
426, 440-441, 454, 464-466, 546-550, 577, 631-632, 828, 852, 856, 858, 925.\\
(c) The central quadrilateral is an isosceles trapezoid if $n=$6-7, 9, 11-18, 21,
31-32, and many more values (almost all n).\\
(d) The central quadrilateral is a rectangle if $n=$224, 487, 914.\\
(e) The central quadrilateral is a line segment  if $n=$109, 117, 663.
\end{theorem}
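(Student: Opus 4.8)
The plan is to derive this statement from class membership together with the computational methodology of Section~\ref{section:methodology}, rather than by a single uniform computation. An equidiagonal orthodiagonal kite is at once a kite, an equidiagonal quadrilateral, an orthodiagonal quadrilateral, and an equidiagonal orthodiagonal quadrilateral, so every shape result already established for those classes is inherited; parts (a)--(e) just record the sharper shape obtained by intersecting those results, together with a few values of $n$ settled directly by symbolic computation.

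Part (c) is essentially free: by Theorem~\ref{thm:arbCenterKite} the central quadrilateral of a kite is an isosceles trapezoid for \emph{every} triangle center, so for each $n$ not appearing in (a), (b), (d), or (e) the central quadrilateral is exactly an isosceles trapezoid. For part (a), at the listed values of $n$ the reference quadrilateral being both equidiagonal and orthodiagonal forces the central quadrilateral to be orthodiagonal (or equidiagonal orthodiagonal): the structured sublists are covered by Theorems~\ref{thm:equi}, \ref{thm:equiy}, \ref{thm:equirR}, \ref{thm:equirR2} on the equidiagonal side and by Theorems~\ref{thm:normal}, \ref{thm:normal1}, \ref{thm:normal2}, \ref{thm:ortho151} on the orthodiagonal side, while the rest come from the equidiagonal orthodiagonal raw data. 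Combining this with the isosceles-trapezoid conclusion of Theorem~\ref{thm:arbCenterKite}, and the elementary fact that every isosceles trapezoid has equal diagonals (hence is equidiagonal), one obtains an equidiagonal orthodiagonal trapezoid.

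For part (b), the key observation is that for the centers of Theorem~\ref{thm:general} (center function $\cos B\cos C+k\cos A$; these are the $n$ listed in Table~\ref{table:patterns}) the equidiagonal hypothesis makes the central quadrilateral a rhombus by Theorem~\ref{thm:equi-rhombus}, while Theorem~\ref{thm:arbCenterKite} makes it an isosceles trapezoid; a rhombus that is also an isosceles trapezoid must be a rectangle, hence — being still a rhombus — a square. (Equivalently, combine Theorem~\ref{thm:orthoRect}, which gives a rectangle, with Theorem~\ref{thm:equi-rhombus}, which gives a rhombus.) The remaining values of $n$ in (b), together with the isolated findings in (d) and (e), are not covered by a pattern theorem; they are consistent with Theorem~\ref{thm:arbCenterKite} (a rectangle and a degenerate segment are limiting isosceles trapezoids) and would be verified by the exact symbolic computation of Section~\ref{section:methodology}, as recorded in the supplementary Mathematica notebooks.

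The main obstacle is that no single closed-form argument covers all the listed $n$ at once. The structured sublists collapse cleanly onto the cited theorems, but the complete lists — and especially the correct partition of the exceptional $n$ among (a)--(e) and the cases implicitly excluded because they are already governed by the kite, equidiagonal, orthodiagonal, or equidiagonal orthodiagonal theorems (for instance centers that sit at the right-angle vertex, for which the central quadrilateral collapses to a point) — depend on the case-by-case computation. Carrying that bookkeeping out accurately is the real work of the proof.
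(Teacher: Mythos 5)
Your proposal is sound in outline, but you should know that the paper itself offers no proof of this statement at all: it sits in Appendix~\ref{appendix:diagonalPointData} as raw output of the GeometricExplorer search, and Section~\ref{section:methodology} is explicit that the 15-digit numerical verification behind these lists ``does not constitute a proof.'' Your route is therefore genuinely different and, for the structured sublists, strictly more deductive than what the paper does. The skeleton is correct: Theorem~\ref{thm:arbCenterKite} gives an isosceles trapezoid for every $n$ (so part~(c) is free, and every sharper conclusion is a refinement of it); an isosceles trapezoid is automatically equidiagonal, so adjoining orthodiagonality from Theorems~\ref{thm:equi}, \ref{thm:equiy}, \ref{thm:normal1}, \ref{thm:normal2} (and the $X_1$ case) yields the equidiagonal orthodiagonal trapezoid of part~(a); and for the Euler-line centers, combining the rhombus of Theorem~\ref{thm:equi-rhombus} with the rectangle of Theorem~\ref{thm:orthoRect} (or with the isosceles-trapezoid conclusion, since a parallelogram that is an isosceles trapezoid is a rectangle) forces a square, covering the leading block of part~(b). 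What your approach buys is an explanation of \emph{why} those sublists occur; what it cannot recover --- and you say so honestly --- are the complete lists, the exceptional $n$ in (b), (d), (e), and the correct partition among the cases, which rest entirely on the computation. Two small corrections: the centers of Table~\ref{table:patterns} include $n=4$, which is deliberately absent from list~(b) because the orthocenter of each right-angled quarter triangle is the diagonal point itself, so the central quadrilateral degenerates to a point (you gesture at this but should not equate list~(b)'s leading block with the whole table); and Theorem~\ref{thm:ortho151} yields an \emph{equidiagonal} central quadrilateral, not an orthodiagonal one, so citing it for part~(a) does not by itself place $n=151$ there --- that value, like $102$ and $124$, still needs the computational verification.
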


\begin{theorem}
For an equidiagonal orthodiagonal trapezoid,\\
(excluding results for an equidiagonal orthodiagonal quadrilateral)\\
(excluding results for an orthogonal trapezoid)\\
(excluding results for an isosceles trapezoid)\\
(a) The central quadrilateral is an equiOrthoKite if $n=$102, 124, 151, 638, 640.\\
(b) The central quadrilateral is a kite if $n=$1, 7-17, 21, 31, 35-43, 44-45, 48,
55, 57-58, 60-63, 65, 69, 71-73, 75-83, 85, 86, 88-89, 94, 98, 103-106, and many more.\\
(c) The central quadrilateral is bicentric, exbicentric, harmonic, and Hjelmslev if $n=$6, 51, 54, 67, 70, 74, 125,
130, 184-185, 217, 287-288, 296, 389, 578, 973-974.\\
(d) The central quadrilateral is a square if $n=$2, 3, 5, 20, 22-23, 95, 97, 122-123, 127, 140,
175, 216, 233, 253, 268, 280, 339, 347, 376, 381-382,
401-402, 408, 417-418, 426, 440-441, 464-466, 546-550, 577, 631-632, 828, 852, 856, 858.
\end{theorem}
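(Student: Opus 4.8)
The plan is to reduce most of this statement to results already established, since an equidiagonal orthodiagonal trapezoid is simultaneously (i) an isosceles trapezoid — every equidiagonal trapezoid is isosceles — (ii) an orthodiagonal trapezoid, and (iii) an equidiagonal orthodiagonal quadrilateral; hence every conclusion already recorded in this appendix for those three classes transfers verbatim. In particular, Theorem~\ref{thm:isoTrap}, sharpened by Theorem~\ref{thm:diagonalPoint-isoTrap}(a), shows that the central quadrilateral $FGHI$ is a kite for \emph{every} center, and this is the fact underlying part~(b). The remaining work is to intersect this ``kite'' conclusion with the sharper shape conclusions coming from classes (ii) and (iii).

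For that intersection I would use only elementary facts about kites, all immediate from the algebraic conditions in the table of quadrilateral types: in a kite the opposite sides are equal in pairs, so a kite automatically satisfies $a+c=b+d$, $a+b=c+d$, $ac=bd$, and $ab=cd$; thus every kite is orthodiagonal, tangential, extangential, equalProdOpp, and equalProdAdj, and a \emph{cyclic} kite is a right kite, which moreover has two opposite right angles. Combining: for $n=102,124,151$, where the earlier tables for orthodiagonal trapezoids and for equidiagonal orthodiagonal quadrilaterals already give an equidiagonal central quadrilateral, ``equidiagonal $+$ kite'' gives an equidiagonal orthodiagonal kite, which is part~(a); for the $n$ where those tables give a cyclic central quadrilateral, ``cyclic $+$ kite'' is a right kite, hence simultaneously bicentric, exbicentric, harmonic, and Hjelmslev, which is part~(c); and the square cases of part~(d) are read straight off the equidiagonal orthodiagonal quadrilateral table (consistent with ``kite'', a square being a kite).

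What is genuinely new is the handful of centers that occur only in the present list — principally $n=638,640$ in part~(a) — together with the inherently open-ended problem of identifying exactly which $n$ fall in the ``kite but nothing sharper'' case of part~(b). For those I would fix the one-parameter Cartesian model forced by the definition: place the diagonal point $E$ at the origin with the diagonals along the axes, so that $A=(-(1+s),0)$, $B=(0,-(1-s))$, $C=(1-s,0)$, $D=(0,1+s)$ — the conditions $ab=cd$ (one pair of parallel sides) and $a+c=b+d$ (equal diagonals), after scaling, force exactly this symmetric form with single parameter $s$. Then compute $F,G,H,I$ for the relevant center function by the change-of-coordinates formula and verify the claimed property (equal diagonals for (a); the converse intersecting-chords condition $PF\cdot PH=PG\cdot PI$ for cyclicity; and so on) by symbolic computation, exactly as in the proofs of Theorem~\ref{thm:equi} and Theorem~\ref{thm:ortho6}. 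The main obstacle is precisely that this is a raw-data statement: a fully complete proof would need either a per-center verification over all listed $n$ or a structural characterization of which center functions produce which shape — the latter is left open in the paper — so the defensible result is that the structured sub-cases (a), (c), (d) and the bulk of (b) follow rigorously as above, while the few isolated values are certified by the numerical and symbolic computations described.
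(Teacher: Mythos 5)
Your proposal is correct in substance, but it takes a genuinely different route from the paper: this statement sits in the ``Raw Data'' appendix, and the paper offers no derivation for it at all --- it is a compilation of numerical observations from GeometricExplorer, which the Methodology section explicitly says ``does not constitute a proof.'' You instead deduce most of the statement from results proved elsewhere in the paper, which is strictly more than the paper does. Your key reduction is sound: an equidiagonal trapezoid is isosceles, so Theorem~\ref{thm:isoTrap} already forces the central quadrilateral to be a kite for \emph{every} center (covering part~(b) and more), and intersecting ``kite'' with the equidiagonal/cyclic/square conclusions recorded for the classes of equidiagonal orthodiagonal quadrilaterals and orthodiagonal trapezoids yields parts~(a), (c), (d) for the centers common to those tables. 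What your approach buys is an explanation of \emph{why} the lists in this theorem are essentially unions and refinements of lists appearing earlier; what it cannot buy --- and you correctly concede this --- is the handful of genuinely new entries ($n=638,640$ in part~(a), the $n$ in part~(d) that drop out of the equidiagonal-orthodiagonal list, and the exact membership of part~(b)), which can only be certified by the same per-center computation the paper relies on throughout. Your one-parameter model $A=(-(1+s),0)$, $B=(0,-(1-s))$, $C=(1-s,0)$, $D=(0,1+s)$ is in fact a complete parametrization of equidiagonal orthodiagonal trapezoids up to similarity, so the proposed symbolic verification is the right tool for those residual cases.

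Two small slips worth fixing. First, the identities you attribute to every kite are labeling-dependent: with the paper's convention $a=b$, $c=d$, the conditions $a+b=c+d$ and $ab=cd$ reduce to $a=c$ and fail for a non-rhombic kite; the geometric facts you actually need (a kite is tangential, has an excircle, and satisfies the equal-products conditions under a cyclic relabeling, and a cyclic kite is a right kite with two opposite right angles) are all true, but should be stated that way. Second, the parenthetical justifications ``$ab=cd$ (one pair of parallel sides)'' and ``$a+c=b+d$ (equal diagonals)'' misquote the table --- those are the equalProdAdj and tangential conditions respectively --- though the coordinate model they are meant to justify is correct as written.
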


\void{
\begin{theorem}
For a harmonic quadrilateral,\\
(excluding results for a cyclic quadrilateral)\\
(excluding results for an equalProdOpp quadrilateral)\\
\end{theorem}
}

\void{
\begin{theorem}
For a diamond (tested up to 20),\\
(excluding results for a rhombus)\\
(a) The central quadrilateral is a square if the center if $n=1$.\\
(b) The central quadrilateral is a rectangle if $n$=2-3, 5-15, 17, 20.
\end{theorem}
}

\begin{theorem}
For a rectangle,\\
(excluding results for a harmonic quadrilateral)\\
(excluding results for a Hjelmslev quadrilateral\\
(excluding results for an isosceles trapezoid)\\
(excluding results for a parallelogram)\\
(a) The central quadrilateral is a rhombus for all triangle centers. 
\end{theorem}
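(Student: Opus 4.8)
The plan is to observe that part (a) is precisely the statement of Theorem \ref{thm:arbCenterRectangle} (specialized to the quarter–triangle configuration), so it suffices to recall, and slightly unpack, the argument behind that theorem. The key structural fact is that a rectangle $ABCD$ is an isosceles trapezoid in two essentially different ways: once with $BC\parallel AD$ (legs $AB$, $CD$) and once with $AB\parallel CD$ (legs $BC$, $DA$). In each case the relevant legs are congruent and the perpendicular bisector of the pair of parallel sides is a genuine axis of reflective symmetry of the rectangle, so Theorem \ref{thm:isoTrap} really does apply.

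First I would apply Theorem \ref{thm:isoTrap} to the first viewing. Since every triangle center of an isosceles triangle lies on the altitude to its base, the centers $G$ of $\triangle BCE$ and $I$ of $\triangle DAE$ both lie on the symmetry axis $\ell$, which is the perpendicular bisector of $BC$; moreover reflection in $\ell$ swaps $\triangle ABE$ with $\triangle CDE$ and hence sends $F$ to $H$. Thus $GI=\ell$ is the perpendicular bisector of $FH$, giving $GF=GH$ and $IF=IH$, i.e. $FGHI$ is a kite. Then I would run the same argument on the second viewing — which amounts to cyclically relabeling the rectangle so that $AB$ and $CD$ become the parallel sides of the trapezoid, thereby permuting the four quarter triangles and the four centers accordingly — to obtain the complementary kite relations $FG=FI$ and $HG=HI$.

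Chaining the two conclusions gives $GF=GH=FI=IH$, so all four sides $FG,GH,HI,IF$ of the central quadrilateral are equal and $FGHI$ is a rhombus; since the triangle center was arbitrary throughout, this holds for every $X_n$. (Equivalently: one viewing shows $FGHI$ is a kite, Theorem \ref{thm:arbCenterParallelogram} shows it is a parallelogram, and a parallelogram that is also a kite is a rhombus.) I do not expect a genuine obstacle, since all the geometric substance is already packaged in Theorem \ref{thm:isoTrap}; the only point demanding care is the index bookkeeping in the second application — correctly tracking which center plays which role after the cyclic relabeling so that the two kite conditions combine as claimed.
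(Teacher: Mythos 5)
Your proposal is correct and matches the paper's own argument: the paper proves this as Theorem \ref{thm:arbCenterRectangle} by observing that a rectangle is an isosceles trapezoid in two different ways, so that both $FGHI$ and $GHIF$ are kites by Theorem \ref{thm:isoTrap}, and the two kite conditions together force all four sides of the central quadrilateral to be equal. Your unpacking of the relabeling and your alternative finish via Theorem \ref{thm:arbCenterParallelogram} are both consistent with what the paper does.
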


\begin{theorem}
For a square,\\
(excluding results for an equidiagonal orthogonal trapezoid)\\
(excluding results for an equidiagonal orthogonal kite)\\
(excluding results for a cyclic orthodiagonal quadrilateral)\\
(excluding results for a bicentric trapezoid)\\
(excluding results for a rhombus)\\
(excluding results for a rectangle)\\
(excluding results for an exbicentric quadrilateral)\\
(a) The central quadrilateral is a square for all triangle centers. 
\end{theorem}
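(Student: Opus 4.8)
The plan is to recognize that this appendix theorem merely restates Theorem~\ref{thm:arbCenterSquare} in the quarter-triangle language, so only part~(a) needs an argument: if $ABCD$ is a square with diagonal point $E$, if $\triangle ABE$, $\triangle BCE$, $\triangle CDE$, $\triangle DAE$ are its quarter triangles, and if a fixed triangle center is placed in each to give $F$, $G$, $H$, $I$, then $FGHI$ is a square. I would give the quick symmetry proof.

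First I would let $\rho$ be the rotation through $90\degrees$ about $E$ that cyclically sends $A\mapsto B\mapsto C\mapsto D\mapsto A$; such a $\rho$ exists precisely because $ABCD$ is a square, and $\rho(E)=E$. Hence $\rho$ permutes the quarter triangles cyclically: $\triangle ABE\mapsto\triangle BCE\mapsto\triangle CDE\mapsto\triangle DAE\mapsto\triangle ABE$. Next I would invoke the equivariance of a triangle center under isometries --- the same principle used for congruence transformations in the proof of the Parallelogram Lemma (Lemma~\ref{lemma:parallelogram}) --- to conclude that $\rho$ carries the chosen center of each quarter triangle to the chosen center of its image, i.e.\ $\rho(F)=G$, $\rho(G)=H$, $\rho(H)=I$, $\rho(I)=F$. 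Therefore $\rho$ maps $FGHI$ onto itself, so $FGHI$ is a polygon with $4$-fold rotational symmetry about $E$, which forces it to be a square centered at $E$ (degenerating to the single point $E$ when $F=E$, a case already admitted as a degenerate square).

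As an alternative route using only results stated earlier, I would note that a square is simultaneously a rectangle and a rhombus: Theorem~\ref{thm:arbCenterRectangle}, applied with $ABCD$ regarded as a rectangle, shows that $FGHI$ (in its canonical labelling) is a rhombus, while Theorem~\ref{thm:arbCenterRhombus}, applied with $ABCD$ regarded as a rhombus, shows that the same quadrilateral $FGHI$ is a rectangle; a quadrilateral that is both a rhombus and a rectangle is a square. Either argument finishes part~(a).

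There is no real obstacle here --- the statement is genuinely a corollary --- so the only things I would be careful about are bookkeeping: that both appeals in the alternative argument refer to the identical labelling $FGHI$ rather than to a cyclic shift of it (they do, since Theorems~\ref{thm:arbCenterRectangle} and~\ref{thm:arbCenterRhombus} are stated for the canonical labelling of the quarter-triangle construction), that the chosen center function is assumed to be defined on an isosceles right triangle (otherwise the construction is empty), and that the total degeneracy $F=G=H=I=E$ is to be read as the point degeneracy permitted among the admissible quadrilateral shapes.
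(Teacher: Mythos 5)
Your proposal is correct. Your ``alternative route'' is in fact precisely the paper's own proof of the corresponding main-text result (Theorem~\ref{thm:arbCenterSquare}): apply Theorem~\ref{thm:arbCenterRectangle} to the square viewed as a rectangle to get a rhombus, apply Theorem~\ref{thm:arbCenterRhombus} to the square viewed as a rhombus to get a rectangle, and conclude that a quadrilateral which is both must be a square. Your primary route --- letting $\rho$ be the $90\degrees$ rotation about the diagonal point $E$ that cyclically permutes the vertices, invoking equivariance of triangle centers under isometries (the same principle underlying the Parallelogram Lemma, Lemma~\ref{lemma:parallelogram}) to get $\rho(F)=G$, $\rho(G)=H$, $\rho(H)=I$, $\rho(I)=F$, and concluding that $FGHI$ has $4$-fold rotational symmetry --- is a genuinely different, self-contained argument. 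It buys a bit more than the paper's two-line corollary: it exhibits $E$ as the common center of both squares and handles the point-degenerate case $F=E$ transparently, at the cost of explicitly invoking center equivariance rather than reusing the earlier kite/parallelogram machinery. Your bookkeeping remarks (consistent labelling of $FGHI$ in both appeals, and the admissibility of the point degeneracy) are apt and do not reveal any gap.
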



\section{Raw Data for Half Triangles}
\label{appendix:halfTriangleData}

\begin{theorem}
For a general quadrilateral,\\
no results were found.
\end{theorem}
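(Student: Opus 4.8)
The statement is a negative, exhaustive one: among the catalogued triangle centers $X_1,\dots,X_{1000}$, none makes the half-triangle central quadrilateral $EFGH$ (the chosen centers of $\triangle BCD$, $\triangle ACD$, $\triangle ABD$, $\triangle ABC$) fall into any of the $28$-odd special shapes of Section \ref{section:quadrilaterals}, nor degenerate to a segment or a point, for \emph{every} general convex quadrilateral $ABCD$; metric coincidences of a different flavor (the centroid giving a similar quadrilateral, Theorem \ref{thm:genSimilar}, or the orthocenter giving equal area, Theorem \ref{thm:halfGenArea}) are not ``results'' in this sense. The plan is therefore not to find one clever argument but to reduce the claim to a finite list of algebraic non-identities and falsify each one at a single point. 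First I would place $ABCD$ in coordinates as in Figure \ref{fig:dpCoordinates}, say $B=(0,0)$, $C=(1,0)$, $A=(a_x,a_y)$, $D=(d_x,d_y)$, so the configuration depends on the four free parameters $a_x,a_y,d_x,d_y$.

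Next, for a fixed center function $f(a,b,c)$ and each of the four half triangles, I would use the Law of Cosines to rewrite $f$ in side lengths, pass to barycentric and then Cartesian coordinates via the change-of-coordinates formula, obtaining $E,F,G,H$ as rational functions of $a_x,a_y,d_x,d_y$ — exactly the bookkeeping already carried out in the proof of Theorem \ref{thm:general}, only applied to half triangles. Each target shape, including the degenerate point and segment cases, is cut out inside coordinate space by one or more polynomial conditions on the coordinates of $E,F,G,H$: e.g. ``cyclic'' by the vanishing of the $4\times4$ concyclicity determinant, ``parallelogram'' by two midpoint/collinearity equations, ``orthodiagonal'' by one dot-product equation, ``degenerate to a point'' by $E=F=G=H$, and so on. Hence ``$EFGH$ has shape $S$ for all $ABCD$'' becomes a finite system of polynomial identities in the four parameters.

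To refute all of these at once I would fix one generic rational quadrilateral in sufficiently general position, say $A=(\tfrac13,\tfrac54)$, $D=(\tfrac74,\tfrac95)$ (any choice making all four half triangles scalene and non-right), and for each $n\in\{1,\dots,1000\}$ evaluate the coordinates of $E,F,G,H$ and then each shape polynomial. Since every special shape on the list is a proper, positive-codimension subvariety of the space of convex quadrilaterals, a nonzero value at this one point certifies that the corresponding identity fails, so $X_n$ yields no universal result of type $S$; the generic condition ``convex'' is always present and is not counted. Sweeping over all $n$ and all $S$ is precisely the GeometricExplorer search of Section \ref{section:methodology}, and to upgrade it to a proof one reruns the evaluation symbolically (or over a large prime field) so the non-vanishing is exact rather than floating-point.

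The main obstacle is not mathematical depth but completeness and care: one must make sure the list of shapes and their polynomial encodings is exactly the list the theorem intends, that every $X_n$ with $n\le 1000$ is actually defined and finite on each half triangle of the chosen $ABCD$ (centers at infinity or those undefined on special triangles must be handled separately, which the scalene generic choice above arranges), and that the single-point non-vanishing of Step 3 is confirmed exactly to exclude a near-miss. It should also be stressed that the statement is only about $n\le 1000$, hence a finite verification rather than a theorem about all triangle centers; the genuinely hard and still-open form — that \emph{no} center function of any shape can force a special central quadrilateral here — is the content of Conjectures \ref{conjecture:halfGeneralCyclic} and \ref{conjecture:halfGeneralRectangle}.
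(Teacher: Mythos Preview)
The paper offers no proof of this statement. It appears in Appendix~\ref{appendix:halfTriangleData}, explicitly titled ``Raw Data for Half Triangles,'' and is simply a report of what the numerical sweep with GeometricExplorer (15-digit floating point; see Section~\ref{section:methodology}) turned up for half triangles on a general quadrilateral---namely, nothing. The paper's methodology section itself says this ``does not constitute a proof that the result is correct, but gives us compelling evidence.''

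Your proposal is therefore not an alternative to the paper's argument but an upgrade of it: you outline how to replace the floating-point sweep by an exact symbolic (or finite-field) evaluation at one fixed rational test quadrilateral, so that each nonzero shape polynomial becomes a certified falsification of the corresponding identity rather than mere numerical evidence. That plan is sound, and it is in the same spirit as the Mathematica verifications the paper carries out for its \emph{positive} results. One point worth making explicit: if for some $n$ and some shape $S$ the relevant polynomial happens to vanish at your chosen test point by accident, a single evaluation tells you nothing, and you must pass to a second generic quadrilateral to separate a coincidental zero from a genuine identity. You gesture at this with ``any choice making all four half triangles scalene and non-right,'' but the fallback protocol should be stated. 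You are also right to flag that the statement concerns only $X_1,\dots,X_{1000}$; the unrestricted version is exactly what the paper leaves open as Conjectures~\ref{conjecture:halfGeneralCyclic} and~\ref{conjecture:halfGeneralRectangle}.
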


\begin{theorem}
For a cyclic quadrilateral,\\
(excluding results for a general quadrilateral)\\
(a) The central quadrilateral is a rectangle for $n=$1, 40, 165.\\
(b) The central quadrilateral is a line segment for $n=$155.\\
(c) $EFGH$ is cyclic for $n=$2, 4, 5, 13-16, 20, 23, 26, 36, 74, 80, 98-100\\
and a large number of other $n$.\\
Missing: 6-12, 17-19, 21, 22, 24, 25, 27-29...
\end{theorem}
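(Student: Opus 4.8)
The statement is a catalogue, so the plan is a case-by-case verification, organized by which earlier result applies; parts (a) and (b) and the named centers in (c) are already covered in the paper, and the remaining enumerated centers in (c) are handled by one uniform computation.

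For part (a): the case $n=1$ is exactly Theorem~\ref{thm:halfCyclicJapanese} (Result~1 of the Introduction). For $n=40$ the Bevan point has center function $\cos B+\cos C-\cos A-1$, which is of the form $\cos B+\cos C+k\cos A-1$ with $k=-1$, so this is Corollary~\ref{thm:halfCyclicBevan}, itself a special case of Theorem~\ref{thm:genCyclicRect}. For $n=165$ I would look up a center function for $X_{165}$ in \cite{ETC}; I expect it to be equivalent (i.e.\ equal up to a cyclic-function multiple) to one of the form $\cos B+\cos C+k\cos A-1$, after which Theorem~\ref{thm:genCyclicRect} applies verbatim; if it is not of that shape, the fallback is the direct barycentric computation described below. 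Part (b), $n=155$, is Theorem~\ref{thm:halfCyclic155}.

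For part (c), the indices $n=2,4,5,20$ together with the first and second Fermat points ($X_{13},X_{14}$), the first and second isodynamic points ($X_{15},X_{16}$), the Far-Out point ($X_{23}$), the Tarry point ($X_{98}$) and the Steiner point ($X_{99}$) are precisely the list in Theorem~\ref{thm:genCyclicCommon}, with $X_{20}$ also admitting the explicit complex-number proof of Theorem~\ref{thm:genCyclicDeLongchamps} and $X_2$ following already from Theorem~\ref{thm:genSimilar} (a quadrilateral similar to a cyclic one is cyclic). More generally, any listed center whose center function is equivalent to $\cos B\cos C+k\cos A$ for some constant $k$ is covered by Theorem~\ref{thm:genCyclicTrig}, so the first step is to screen the remaining indices ($n=26,36,74,80,100,\dots$) against this family and the $\cos B+\cos C+k\cos A-1$ family. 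For any center surviving that screening, the plan is a single computation copied from the proof of Theorem~\ref{thm:cyclic485}: set up barycentrics on $\triangle ABC$, put $D=(u:v:w)$ on the circumcircle via $u=c^2+ta^2$, $v=-b^2t$, $w=t(c^2+ta^2)$ (which satisfies Equation~\ref{eq:circle}), replace all trigonometric quantities in the center function by rational functions of $a,b,c$ using the rules~(\ref{rules}), compute the four centers $E,F,G,H$ of the half triangles $\triangle BCD,\triangle ACD,\triangle ABD,\triangle ABC$ by the change-of-coordinates formula, pass to complex affixes with the circumcircle as the unit circle, and apply the complex cross-ratio criterion stated just before Theorem~\ref{thm:genCyclicDeLongchamps}: $EFGH$ is concyclic exactly when $\dfrac{(e-h)(g-f)}{(e-f)(g-h)}\in\mathbb{R}$. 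For the listed centers that lie on the reference circumcircle itself ($X_{74},X_{98},X_{99},X_{100}$) this should shortcut considerably, but I would still run the general routine as a check.

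The main obstacle is twofold. First, there is no clean characterization of exactly which center functions force the half-triangle central quadrilateral to be cyclic for every cyclic reference quadrilateral --- this is the content of the Open Question posed after Theorem~\ref{thm:genCyclicDeLongchamps} --- so the ``and a large number of other $n$'' and ``Missing'' clauses are empirical observations from the computer search, and the honest scope of a proof is the explicitly enumerated list. Second, among those enumerated centers the genuinely laborious ones are the centers (such as $X_{80}$, the reflection of the incenter in the Feuerbach point) whose center functions are not linear in $\cos A$ over the spans $\{1,\cos B\cos C\}$ or $\{1,\cos A,\cos B+\cos C\}$: there the cross-ratio, after clearing denominators, becomes a large rational function of $a,b,c,t$ whose imaginary part must be shown to vanish identically, and that simplification is best delegated to Mathematica, exactly as the paper does elsewhere.
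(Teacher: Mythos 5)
You have correctly identified that this appendix entry is the raw output of the numerical search described in Section~\ref{section:methodology} --- the paper itself offers no proof here, only 15-digit floating-point verification --- and your proposal runs the logic in the reverse direction, certifying the enumerated entries from the main-body theorems. The reductions check out: part (a) in fact reduces \emph{entirely} to Theorem~\ref{thm:genCyclicRect}, since $\cos B+\cos C+k\cos A-1=\tfrac{r}{R}+(k-1)\cos A$, so $k=1$ yields the cyclic function $\tfrac{r}{R}$ and hence $X_1$, $k=-1$ yields the Bevan point $X_{40}$, and $k=-3$ yields $X_{165}$ (the centroid of the excentral triangle lies on the Euler line of that triangle, which is the $OI$ line of the reference triangle, and a short computation with the rules~(\ref{rules}) produces the constant $k$); part (b) is Theorem~\ref{thm:halfCyclic155}; and the named centers of part (c) are exactly those of Theorem~\ref{thm:genCyclicCommon}. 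For $X_{74}$, $X_{98}$, $X_{99}$, $X_{100}$ your ``shortcut'' is actually a complete one-line proof: the four half triangles of a cyclic quadrilateral share the circumcircle of $ABCD$, so a center that always lies on the circumcircle of its triangle places all four points on that single circle. One caution: your screening will leave more survivors than you may expect, because lying on the Euler line for every triangle is weaker than having a center function $\cos B\cos C+k\cos A$ with $k$ a \emph{constant}; $X_{26}$ is on the Euler line yet does not appear in Table~\ref{table:patterns} or in the quarter-triangle parallelogram list for general quadrilaterals, so it is not of constant-$k$ form and must join $X_{36}$ and $X_{80}$ in the cross-ratio computation. Finally, as you say, the clauses ``and a large number of other $n$'' and ``Missing:\ \dots'' are not provable assertions, and the theorems you lean on (\ref{thm:genCyclicRect}, \ref{thm:halfCyclic155}, \ref{thm:genCyclicCommon}) are themselves established only in the supplementary Mathematica notebooks, so your argument inherits that dependence.
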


\begin{theorem}
For an equalProdOpp quadrilateral,\\
(excluding results for a general quadrilateral)\\
(a) The central quadrilateral is equalProd for $n=$2, 3, 5, 15.
\end{theorem}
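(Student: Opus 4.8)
The plan is to read ``equalProd'' as the assertion that the two products of opposite sides of the central quadrilateral agree, namely $EF\cdot GH=FG\cdot HE$, which is exactly the equal-product relation recorded in Theorem~\ref{thm:halfEqualProdOpp}. Recall that in the half-triangle configuration $E,F,G,H$ are the chosen centers of $\triangle BCD$, $\triangle ACD$, $\triangle ABD$, $\triangle ABC$, and that the hypothesis ``$ABCD$ is equalProdOpp'' means $AB\cdot CD=BC\cdot DA$. Since a general quadrilateral yields no special central shape, all four values $n=2,3,5,15$ are genuine entries here, and I would split the proof into the similarity case $n=2$ and the three remaining centers $n=3,5,15$ (the latter being precisely Theorem~\ref{thm:halfEqualProdOpp}).

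For $n=2$ (the centroid) I would simply invoke Theorem~\ref{thm:genSimilar}: there the central quadrilateral is \emph{directly} similar to the reference one, and tracing that argument gives $EF\parallel AB$, $FG\parallel BC$, $GH\parallel CD$, $HE\parallel DA$, with each central side one third of the corresponding reference side. A direct similarity sends opposite sides to opposite sides, so $EF\cdot GH=\tfrac19\,AB\cdot CD$ and $FG\cdot HE=\tfrac19\,BC\cdot DA$; the hypothesis $AB\cdot CD=BC\cdot DA$ then forces $EF\cdot GH=FG\cdot HE$. This disposes of $n=2$ with no computation.

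For $n=3$ (the circumcenter) I would exploit a perpendicular-bisector structure. Because adjacent half-triangles share a side, the four circumcenters satisfy $EF\perp CD$, $FG\perp DA$, $GH\perp AB$, $HE\perp BC$, each side of $EFGH$ lying on the perpendicular bisector of a side of $ABCD$. Expressing the distance from a circumcenter to its chord through the inscribed angle subtending that chord gives clean lengths, for instance
\[
EF=\tfrac12\,CD\,\bigl|\cot\angle CBD-\cot\angle CAD\bigr|,
\]
and analogously for the other three sides. Parametrizing by the diagonal point $P=AC\cap BD$, the segment lengths $PA,PB,PC,PD$, and the angle between the diagonals, the eight apex angles obey the linear relations coming from the four quarter triangles together with the law of sines. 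In this notation the two opposite-side products reduce to multiples of $AB\cdot CD$ and of $BC\cdot DA$ respectively, so after imposing the reference equality the claim $EF\cdot GH=FG\cdot HE$ collapses to a single trigonometric identity among the apex angles of the quarter triangles (of the shape $\sin^2 x=\sin^2 y$), which I would verify directly from the angle relations.

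For $n=5$ and $n=15$ no such shortcut is available, since the nine-point center and the first isodynamic point do not lie on the relevant perpendicular bisectors; here I would follow the paper's standard method, placing the centers via their center functions ($\cos(B-C)$ for $X_5$ and $\sin(A+60\degrees)$ for $X_{15}$), computing the vertices of $EFGH$ by the change-of-coordinates formula, and checking $EF\cdot GH=FG\cdot HE$ under the constraint $AB\cdot CD=BC\cdot DA$ by symbolic algebra. \textbf{The main obstacle} is precisely this last verification for $n=3,5,15$: one must show that the factor surviving after the prefactors $AB\cdot CD$ and $BC\cdot DA$ have been matched is itself an equality \emph{forced} by the reference condition rather than an identity (for a non-equalProdOpp quadrilateral it genuinely fails). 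For the circumcenter this residual is the transparent identity above, but for $X_5$ and $X_{15}$ it is an unstructured rational-trigonometric relation best certified in Mathematica, exactly as in the notebooks supporting Theorem~\ref{thm:halfEqualProdOpp}.
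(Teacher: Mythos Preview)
The paper does not give a proof of this statement in the text. It sits in Appendix~B, the \emph{raw data} appendix; as explained in Section~\ref{section:methodology}, those entries are numerical observations from GeometricExplorer, not proved results. The main-body counterpart is Theorem~\ref{thm:halfEqualProdOpp} (covering only $n=3,5,15$), which is stated without argument and deferred to the supplementary Mathematica notebooks; the paper even poses finding geometric proofs as an Open Question immediately afterwards. So there is no textual proof to compare against---the paper's ``proof'' is entirely computer algebra.

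Your proposal is therefore strictly more ambitious than what the paper does. The $n=2$ case is handled completely and correctly: Theorem~\ref{thm:genSimilar} gives a direct $1{:}3$ similarity $EFGH\sim ABCD$ carrying opposite sides to opposite sides, so $EF\cdot GH=\tfrac19\,AB\cdot CD=\tfrac19\,BC\cdot DA=FG\cdot HE$. This argument is not written out anywhere in the paper. For $n=5$ and $n=15$ you fall back on symbolic computation, which is exactly the paper's approach.

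The gap is in the $n=3$ sketch. The perpendicularity observations $EF\perp CD$, $FG\perp DA$, $GH\perp AB$, $HE\perp BC$ and the formula $EF=\tfrac12\,CD\,\lvert\cot\angle CBD-\cot\angle CAD\rvert$ are correct, and they do factor $EF\cdot GH$ and $FG\cdot HE$ as $\tfrac14\,AB\cdot CD$ and $\tfrac14\,BC\cdot DA$ times certain cotangent expressions. But you stop at ``collapses to a single trigonometric identity \dots\ which I would verify directly'', and that verification is the entire content. In fact the residual identity appears to hold for \emph{every} convex quadrilateral (a quick numerical check on a non-equalProdOpp example gives $EF\cdot GH:FG\cdot HE=AB\cdot CD:BC\cdot DA$ on the nose), so the equalProdOpp hypothesis is not needed to match the trigonometric factors---it is needed only to match the side-product prefactors. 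Proving that identity from the four angle-sum relations at the diagonal point, together with the sine-rule closure relation $\sin a_1\sin b_2\sin c_2\sin d_2=\sin a_2\sin b_1\sin c_1\sin d_1$, would complete a genuine geometric proof for $n=3$ and answer part of the paper's Open Question; as written, your proposal for $n=3$ is a plan rather than a proof.
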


\begin{theorem}
For an equalProdAdj quadrilateral,\\
(excluding results for a general quadrilateral)\\
no new results were found.
\end{theorem}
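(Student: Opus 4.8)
The final statement is a \emph{negative} claim: once one removes what is already known for an arbitrary convex quadrilateral — by the general-quadrilateral entry of this appendix, the central quadrilateral is orthodiagonal exactly when $n=1$ and a parallelogram for $n$ in the list $\{2\text{--}5,20,140,376,381\text{--}382,546\text{--}550,631\text{--}632\}$ — no center $X_n$ with $1\le n\le 1000$ forces the central quadrilateral of an \emph{equalProdAdj} reference quadrilateral into any of the distinguished shapes of Section~\ref{section:quadrilaterals} (nor into a line segment or a point). So there is no positive identity to establish; rather, one must certify that the computer search is complete, i.e. that for each remaining $n$ there really is an equalProdAdj quadrilateral whose central quadrilateral is as unstructured as possible.

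The plan is to reuse the Cartesian setup of the proof of Theorem~\ref{thm:general} (Figure~\ref{fig:dpCoordinates}): put $B=(0,0)$, $C=(1,0)$, $A=(a_x,a_y)$, $D=(d_x,d_y)$, and compute $E$ from the diagonals. With $a=AB$, $b=BC=1$, $c=CD$, $d=DA$, the condition $ab=cd$ becomes the single polynomial equation
\[
a_x^2+a_y^2=\bigl((d_x-1)^2+d_y^2\bigr)\bigl((a_x-d_x)^2+(a_y-d_y)^2\bigr),
\]
so equalProdAdj quadrilaterals form a codimension-one family inside the space of placed quadrilaterals. For each center I would take a center function from \cite{ETC}, eliminate the angles via the Law of Cosines, and obtain $F,G,H,I$ in barycentric and then Cartesian coordinates exactly as in Theorem~\ref{thm:general}. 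Each shape on the checklist is then a polynomial predicate in these coordinates — perpendicularity of the diagonals (product of slopes $=-1$), equality of the diagonals, vanishing of the $4\times4$ concyclicity determinant, coincidence of the midpoints of the diagonals (parallelogram), the kite and isosceles-trapezoid symmetry conditions, collinearity of the four points, coincidence of the four points, and so on — and the task is to check, for every $n$ outside the general lists, that at least one such predicate does \emph{not} vanish identically on the hypersurface above.

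In practice the cleanest route is to solve the constraint for one coordinate (say $d_y$) and substitute a single \emph{generic} rational equalProdAdj quadrilateral — choosing $a_x,a_y,d_x$ so that the figure is convex, so that no quarter triangle is right or isosceles, and so that the square root in the constraint comes out rational — and then evaluate every shape predicate for every $n\le 1000$ by exact rational arithmetic. Because a shape that held for \emph{all} equalProdAdj quadrilaterals would in particular hold at this one, any $n$ that fails every test at the chosen point is eliminated; the assertion to be confirmed is that exactly the centers already covered by the general case survive, and those are not new. To guard against an accidental coincidence at a single sample, I would repeat the evaluation at two or three independent generic points, and for the handful of $n$ that survive the first pass fall back to a genuine symbolic computation on the hypersurface.

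The obstacle here is not conceptual but one of scale and bookkeeping: this is a finite but large verification (roughly $1000$ centers times a couple of dozen predicates), and one must be careful with the centers that are points at infinity, with the centers that are undefined on a right or isosceles quarter triangle (which occur on special equalProdAdj configurations and so must be dodged by the choice of sample), and with the two degenerate ``shapes'' (segment and point) that also belong to the checklist. As with the other raw-data theorems of this appendix, the complete verification is best left to Mathematica and is included in the supplementary notebooks.
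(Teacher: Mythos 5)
The statement you are proving sits in Appendix \ref{appendix:halfTriangleData}, the raw data for \emph{half} triangles: the four component triangles are $\triangle BCD$, $\triangle ACD$, $\triangle ABD$, $\triangle ABC$, each cut off by a single diagonal, with centers named $E$, $F$, $G$, $H$. Your entire setup is for the \emph{quarter}-triangle configuration: you introduce the diagonal point $E$, compute centers of $\triangle ABE$, $\triangle BCE$, $\triangle CDE$, $\triangle DAE$, and you cite as the baseline to be excluded the quarter-triangle facts that the central quadrilateral is orthodiagonal for $n=1$ and a parallelogram for the Euler-line list. None of that applies here. For half triangles the general-quadrilateral entry of this appendix reads ``no results were found,'' so the parenthetical exclusion is essentially vacuous and the claim is that for an equalProdAdj reference quadrilateral \emph{nothing at all} on the shape checklist occurs for any $X_n$ with $n\le 1000$. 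A verification built on the quarter triangles would be certifying a different statement (the quarter-triangle appendix records its equalProdAdj non-result separately, inside its general-quadrilateral theorem).

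That said, your underlying strategy is sound and is, if anything, more rigorous than what the paper supplies: these raw-data theorems carry no proof at all — they report a floating-point search by GeometricExplorer, which the authors explicitly say ``does not constitute a proof.'' Falsifying each universal shape predicate by exact rational evaluation at one or more generic equalProdAdj quadrilaterals is a legitimate way to establish the negative claim, since any property holding for all equalProdAdj quadrilaterals must hold at the sample. To repair the argument you need only redo it in the correct configuration: keep $B=(0,0)$, $C=(1,0)$ and the constraint $ab=cd$, but compute the chosen center in each of $\triangle BCD$, $\triangle ACD$, $\triangle ABD$, $\triangle ABC$ (no diagonal point is involved), and then test the shape predicates on the resulting quadrilateral $EFGH$. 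Your caveats about centers at infinity, centers undefined on special sample triangles, and the two degenerate shapes remain apt.
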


\begin{theorem}
For an equidiagonal quadrilateral,\\
(excluding results for a general quadrilateral)\\
(a) The central quadrilateral is equidiagonal for $n=$2.
\end{theorem}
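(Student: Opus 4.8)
The plan is to use position vectors, since the centroid $X_2$ of a triangle is simply the average of its three vertices, which makes everything completely explicit. Write $A$, $B$, $C$, $D$ for the position vectors of the vertices of the reference quadrilateral. The half triangles are $\triangle BCD$, $\triangle ACD$, $\triangle ABD$, $\triangle ABC$, so the four chosen centers (the centroids) are
$$E=\tfrac13(B+C+D),\quad F=\tfrac13(A+C+D),\quad G=\tfrac13(A+B+D),\quad H=\tfrac13(A+B+C).$$

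First I would compute the two diagonals of the central quadrilateral $EFGH$ directly:
$$E-G=\tfrac13(C-A),\qquad F-H=\tfrac13(D-B),$$
so that $EG=\tfrac13\,AC$ and $FH=\tfrac13\,BD$. Since the reference quadrilateral is equidiagonal, $AC=BD$, hence $EG=FH$; that is, $EFGH$ is equidiagonal. I would also note, for completeness, that $EG$ and $FH$ genuinely are the diagonals of $EFGH$ and not a pair of opposite sides: by Theorem~\ref{thm:genSimilar} the central quadrilateral is similar to $ABCD$ with $E$, $F$, $G$, $H$ corresponding to $A$, $B$, $C$, $D$, so the four centers occur in the correct cyclic order.

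In fact the statement also drops out of Theorem~\ref{thm:genSimilar} in one line: a similarity scales every segment by the same factor, hence sends a quadrilateral with equal diagonals to one with equal diagonals, independently of the ratio of similitude (which happens to be $3$). I would present the short vector computation as the main proof, with this as an alternative remark.

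There is essentially no obstacle here; the only point requiring care is to respect the labelling convention for half triangles (triangle $i$ opposite vertex $i$, with centers named $E$, $F$, $G$, $H$ in that cyclic order) so that $EG$ and $FH$ really are the diagonals of $EFGH$. Everything else is the two-line calculation above.
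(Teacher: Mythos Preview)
Your proof is correct. The paper does not include an in-text proof of this particular appendix statement (the raw-data theorems are relegated to the supplementary Mathematica notebooks), but your argument is exactly in the spirit of the paper's Theorem~\ref{thm:genSimilar}: that theorem already shows $EFGH$ is similar to $ABCD$ with ratio $3$, from which equidiagonality is immediate, as you note. Your direct vector computation $E-G=\tfrac13(C-A)$, $F-H=\tfrac13(D-B)$ is essentially a coordinate-free rederivation of the relevant part of that similarity, and is arguably cleaner than the median argument the paper gives for Theorem~\ref{thm:genSimilar}. Nothing further is needed.
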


\begin{theorem}
For an extangential quadrilateral,\\
(excluding results for general quadrilateral)\\
(a) The central quadrilateral is extangential for $n=$2, 3, 5.
\end{theorem}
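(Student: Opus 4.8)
The plan is to handle the three centers separately, since only $n=2$ admits a short synthetic argument while $n=3$ and $n=5$ appear to need computation. For $n=2$ (centroids) the result is immediate from Theorem~\ref{thm:genSimilar}: there the central quadrilateral $EFGH$ is shown to be similar to $ABCD$ with ratio of similitude $3$, and tracing the homotheties in that proof (each centered at a midpoint of a side of $ABCD$ with ratio $\tfrac13$) one sees the correspondence is $E\leftrightarrow A$, $F\leftrightarrow B$, $G\leftrightarrow C$, $H\leftrightarrow D$, so that the sides of $EFGH$ in order are $a/3,\ b/3,\ c/3,\ d/3$. Since $ABCD$ is extangential, $a+b=c+d$, hence $\tfrac{a}{3}+\tfrac{b}{3}=\tfrac{c}{3}+\tfrac{d}{3}$; and $EFGH$, being similar to the convex quadrilateral $ABCD$, is convex, so this is exactly the condition for $EFGH$ to be extangential.

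For $n=3$ (circumcenters) and $n=5$ (nine-point centers) I would argue by coordinates, as in several proofs already in the paper. One parametrizes an extangential quadrilateral either by placing $B$ at the origin and $C$ at $(1,0)$ and imposing $AB+BC=CD+DA$ algebraically (solving the constraint for one coordinate of $A$), or, more cleanly, by placing the excircle at the origin as the unit circle, writing the four side-lines as tangents $x\cos\phi_i+y\sin\phi_i=1$ and taking the vertices as intersections of consecutive lines, with the tangent points arranged in the order that makes the configuration extangential rather than tangential — this builds the relation $a+b=c+d$ into the parametrization and carries no separate radical constraint. With $ABCD$ so described, compute the circumcenters $E,F,G,H$ of the half-triangles $\triangle BCD,\ \triangle ACD,\ \triangle ABD,\ \triangle ABC$; it is useful that $E,F$ lie on the perpendicular bisector of $CD$, $F,G$ on that of $DA$, $G,H$ on that of $AB$, and $H,E$ on that of $BC$, so each circumcenter is an intersection of two readily written perpendicular bisectors and no square roots enter through the centers. (In particular $EF\perp CD$ and $HE\perp BC$ force the interior angle of $EFGH$ at $E$ to equal $\angle BCD$, and similarly at the other three vertices, so $EFGH$ has the same four angles as $ABCD$ in cyclically shifted order.) Then form the four side lengths of $EFGH$ and verify the Pitot-type identity characterizing an extangential quadrilateral. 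For $n=5$ one can either repeat the computation directly, or use $X_5=\tfrac12(X_3+X_4)$ to write each nine-point center as the midpoint of the corresponding circumcenter and orthocenter, reducing to the circumcenter and orthocenter quadrilaterals. All of this is routine for Mathematica, and the notebooks are included with the supplementary material.

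The main obstacle is bookkeeping and expression size, not any conceptual point. First, one must pin down which adjacent-sum relation among the sides of $EFGH$ is the extangential condition: the cyclic labeling of $EFGH$ is shifted relative to that of $ABCD$ (already visible in the $n=2$ case, and confirmed by the equal-angle observation above for circumcenters), so the identity to verify is one of $|EF|+|FG|=|GH|+|HE|$ or $|FG|+|GH|=|HE|+|EF|$, and choosing the wrong one leads nowhere. Second, although the circumcenters themselves are radical-free, the side lengths of $EFGH$ are not, so the final statement is an equation among square roots; keeping it within reach of a symbolic simplifier is precisely why one prefers the excircle–tangent-line parametrization and works, wherever possible, with squared side lengths rather than isolating individual radicals.
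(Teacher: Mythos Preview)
Your plan is sound and essentially matches what the paper does. This statement sits in the ``Raw Data'' appendix; the paper gives no in-text proof and defers verification to the Mathematica notebooks, i.e.\ exactly the coordinate-computation approach you outline for $n=3$ and $n=5$. Your handling of $n=2$ via Theorem~\ref{thm:genSimilar} is in fact cleaner than a bare computation: once $EFGH$ is similar to $ABCD$, the property ``has an excircle'' transfers automatically under the similarity, so you need not even track the individual side-length correspondences.

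Two small cautions. First, the shortcut you float for $n=5$, writing each nine-point center as $\tfrac12(X_3+X_4)$ and ``reducing to the circumcenter and orthocenter quadrilaterals,'' does not actually reduce anything: extangentiality is not preserved under taking vertexwise midpoints of two quadrilaterals, so knowing the $X_3$- and $X_4$-quadrilaterals separately tells you nothing about the $X_5$-quadrilateral. You already note the direct computation as the alternative, and that is what must be done. Second, your aside that the interior angle of $EFGH$ at $E$ equals $\angle BCD$ is correct up to supplement (since $HE\perp BC$ and $EF\perp CD$), but it does not by itself help with the side-length identity $a+b=c+d$, so treat it as a sanity check rather than a step in the argument.
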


\begin{theorem}
For an orthodiagonal quadrilateral,\\
(excluding results for a general quadrilateral)\\
(a) The central quadrilateral is orthodiagonal for $n=$2-5, 20, 51,
140, 376, 381, 382, 546-550, 631-632.
\end{theorem}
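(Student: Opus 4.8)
The plan is to split the listed indices into those covered by Theorem~\ref{thm:halfOrtho} and the single exceptional value $n=51$.

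For $n\in\{2,3,4,5,20,140,376,381,382,546,547,548,549,550,631,632\}$, Table~\ref{table:patterns} exhibits a center function for $X_n$ that, after using $\cos(B-C)=2\cos B\cos C+\cos A$, is a linear combination $\beta\cos B\cos C+\alpha\cos A$. When $\beta\neq 0$ this is proportional to $\cos B\cos C+k\cos A$ with $k=\alpha/\beta$, so part~(a) follows at once from Theorem~\ref{thm:halfOrtho}. The only value with $\beta=0$ is $n=3$ (center function $\cos A$, the circumcenter); here I would argue directly: the circumcenters of $\triangle BCD$ and $\triangle ABD$ are each equidistant from $B$ and $D$, so $E$ and $G$ lie on the perpendicular bisector of $BD$, giving $EG\perp BD$; likewise $FH\perp AC$; and $AC\perp BD$, so $EG\perp FH$ (and, as for the rest of this group, the diagonals of $EFGH$ come out parallel to those of $ABCD$).

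It remains to treat $n=51$, which is genuinely outside the scope of Theorem~\ref{thm:halfOrtho}: $X_{51}$, the centroid of the orthic triangle, does not lie on the Euler line, and its center function is equivalent to $\sin^2 A\cos(B-C)$ --- its first barycentric coordinate being $a^2\bigl(a^2(b^2+c^2)-(b^2-c^2)^2\bigr)$, which is not a linear combination of $\cos B\cos C$ and $\cos A$. For this center I would set up the coordinates used in the proof of Theorem~\ref{thm:ortho6} (diagonal point at the origin, the diagonals along the axes, $D=(1,0)$), use the Law of Cosines and the change-of-coordinates formula to get explicit Cartesian coordinates for the half-triangle centers $E,F,G,H$, compute the slopes of $EG$ and $FH$, and check that their product simplifies to $-1$. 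This is a finite symbolic computation, best handed to Mathematica; note that, in contrast to the centroid/circumcenter/orthocenter, one should not expect the diagonals of $EFGH$ to be parallel to those of $ABCD$ in this case.

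The main obstacle is precisely the case $n=51$. For the centroid, circumcenter and orthocenter the two half-triangles sharing a common diagonal of $ABCD$ are related in a way that visibly puts the two associated centers on a line parallel to the other diagonal; for $X_{51}$ no such relation is apparent and there is no evident synthetic reason for $EG\perp FH$, while the side lengths of the half-triangles enter through square roots that do not cancel as cleanly, so the perpendicularity must be extracted by computation. A more satisfying route --- and the harder one --- would be to isolate a single closed-form condition on the center function, in the spirit of the ``normal center function'' criterion of Theorem~\ref{thm:normal} but adapted to the half-triangle configuration, having both the Euler-line family and $\sin^2 A\cos(B-C)$ among its instances; proving such a criterion would settle the whole statement uniformly.
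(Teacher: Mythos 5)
Your approach is essentially the paper's own: this appendix item is raw computational data whose only supporting argument in the text is Theorem~\ref{thm:halfOrtho} for the $\cos B\cos C+k\cos A$ family (with the actual verification deferred to the Mathematica notebooks), which is exactly how you dispose of the indices in Table~\ref{table:patterns}. Your observation that $n=51$ (center function equivalent to $\sin^2A\cos(B-C)$, not on the Euler line) and the degenerate case $n=3$ (where $\beta=0$) fall outside that theorem is correct, and your proposed coordinate computation for $X_{51}$ is precisely the standard of proof the paper itself applies to such cases, so the plan is sound.
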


\begin{theorem}
For a Pythagorean quadrilateral,\\
(excluding results for a general quadrilateral)\\
(a) The central quadrilateral is Pythagorean for $n=$2.
\end{theorem}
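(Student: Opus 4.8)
The plan is to derive this immediately from Theorem~\ref{thm:genSimilar}. That theorem, applied to the half-triangle configuration, already tells us that when the chosen center is $X_2$ the central quadrilateral $EFGH$ is similar to the reference quadrilateral $ABCD$, with ratio of similitude $3$. Inspecting its proof, the similarity respects cyclic order: each side of $EFGH$ joins the centroids of the two half-triangles that share one side of $ABCD$, and the homothety centered at the midpoint of that shared side with ratio $\tfrac13$ carries the side of $EFGH$ onto that side of $ABCD$. Consequently, writing the sides of $ABCD$ in cyclic order as $a,b,c,d$, the sides of $EFGH$ in cyclic order are $\tfrac{a}{3},\tfrac{b}{3},\tfrac{c}{3},\tfrac{d}{3}$, and adjacent sides of $EFGH$ correspond to adjacent sides of $ABCD$.

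Next I would observe that being Pythagorean is a similarity-invariant shape property. By the defining condition in Section~\ref{section:quadrilaterals}, $ABCD$ is Pythagorean exactly when its sides, taken in cyclic order, satisfy $a^2+b^2=c^2+d^2$; this condition is homogeneous of degree two in the side lengths and depends only on their cyclic order, both of which a similarity preserves. Hence if $ABCD$ is Pythagorean then
\[
EF^2+FG^2=\tfrac19\bigl(a^2+b^2\bigr)=\tfrac19\bigl(c^2+d^2\bigr)=GH^2+HE^2,
\]
so $EFGH$ is Pythagorean as well, which is the assertion.

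I do not expect a genuine obstacle here; the only thing that needs care is the bookkeeping that matches the labelled centers $E,F,G,H$ (attached respectively to $\triangle BCD$, $\triangle ACD$, $\triangle ABD$, $\triangle ABC$) to the correct side correspondence, so that the ``adjacent sides'' of $EFGH$ really do pair up with the ``adjacent sides'' of $ABCD$ — but this is exactly the parallelism data ($EF\parallel AB$, $FG\parallel BC$, $GH\parallel CD$, $HE\parallel DA$) recorded in the proof of Theorem~\ref{thm:genSimilar}. Should one wish to avoid invoking similarity, the same identity drops out of computing the four centroids in barycentric (or Cartesian) coordinates and simplifying $EF^2+FG^2-GH^2-HE^2$ to $\tfrac19\bigl(a^2+b^2-c^2-d^2\bigr)$, but that is strictly more computation for no extra insight.
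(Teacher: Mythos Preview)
Your argument is correct in substance: once you have Theorem~\ref{thm:genSimilar}, the Pythagorean condition $a^2+b^2=c^2+d^2$ is homogeneous of degree two and respects cyclic labelling, so it transfers from $ABCD$ to the similar quadrilateral $EFGH$. The paper itself gives no explicit proof for this appendix item (the raw-data theorems are deferred to the Mathematica notebooks), so your route via Theorem~\ref{thm:genSimilar} is in fact cleaner than what the paper records.

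One descriptive slip to fix: you write that the homothety ``centered at the midpoint of that shared side with ratio $\tfrac13$ carries the side of $EFGH$ onto that side of $ABCD$.'' That is not what happens. If $E$ and $F$ are the centroids of $\triangle BCD$ and $\triangle ACD$, the shared side is $CD$, and the homothety at its midpoint with ratio $\tfrac13$ sends $B\mapsto E$ and $A\mapsto F$; hence it carries $AB$ to $EF$, not $EF$ to $CD$. So the side of $EFGH$ is parallel to the side of $ABCD$ \emph{opposite} the shared one, not to the shared one itself. Your later list $EF\parallel AB$, $FG\parallel BC$, $GH\parallel CD$, $HE\parallel DA$ is correct and is what your computation actually uses; just align the prose with it.
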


\begin{theorem}
For a trapezoid,\\
(excluding results for a general quadrilateral)\\
(a) The central quadrilateral is a trapezoid for $n=$2-5, 20, 140, 376, 381, 382,
546-550, 631-632.
\end{theorem}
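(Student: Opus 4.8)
The plan is to read off part~(a) from the half-triangle theorems already established for trapezoids, so that no new computation is needed; note first that, since no half-triangle result holds for an arbitrary convex quadrilateral, the ``excluding results for a general quadrilateral'' clause removes nothing and every listed value is genuinely new.

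The key observation is that the index set $\{2,3,4,5,20,140,376,381,382,546,547,548,549,550,631,632\}$ is exactly the list of Kimberling centers recorded in Table~\ref{table:patterns}. For each of these sixteen centers, \cite{ETC} (and Table~\ref{table:patterns}) supplies a center function that is either already of the form $\cos B\cos C+k\cos A$ or of the form $\cos(B-C)+k\cos A$; and by the identities $2\cos B\cos C=\cos(B-C)+\cos(B+C)$ and $\cos(B+C)=-\cos A$ used in the proof of Theorem~\ref{thm:general2}, a center function of the second type is equivalent to one of the first type. The circumcenter $X_3$, with center function $\cos A$, is the degenerate member of this pencil (the limit $k\to\infty$ after rescaling) and is treated as being of the stated form, just as in the discussion of special cases following Theorem~\ref{thm:general}. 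Consequently each of the sixteen centers has a center function of the form $\cos B\cos C+k\cos A$ for some constant $k$, and Theorem~\ref{thm:halfTrap} then gives that the central quadrilateral built from the half triangles $\triangle BCD$, $\triangle ACD$, $\triangle ABD$, $\triangle ABC$ is a trapezoid. (For $n=2$ one may instead cite Theorem~\ref{thm:genSimilar}: the central quadrilateral is similar to $ABCD$ and hence is a trapezoid.) This proves~(a).

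All the real content therefore sits inside Theorem~\ref{thm:halfTrap}, and the main obstacle is its proof. A direct argument would place a trapezoid $ABCD$ with $AD\parallel BC$ on convenient Cartesian axes, use the Law of Cosines to convert $\cos B\cos C+k\cos A$ into barycentric coordinates in each of the four half triangles, pass to Cartesian coordinates by the change-of-coordinates formula, and verify that the slope of $EH$ equals the slope of $FG$ (cf.\ Figure~\ref{fig:halfDeLongChamps}). Unlike the quarter-triangle configuration, where all four triangles share the diagonal point, the four half triangles have no common vertex, so no single coordinate reduction simplifies all four centers simultaneously; the resulting slope identity is a substantial symbolic computation that genuinely uses the trapezoid hypothesis $A+B=C+D$ and is best left to a computer algebra system, as in the supplementary notebooks. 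Finally, the assertion that these $n$ are the \emph{only} indices with $n\le 1000$ giving a trapezoid is supported only by the numerical survey of Section~\ref{section:methodology}, not by a proof.
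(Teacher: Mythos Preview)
Your proposal is correct and aligns with the paper's own treatment. This statement sits in the \emph{Raw Data} appendix, where the paper offers no analytic proof at all---it is simply the output of the numerical survey by GeometricExplorer; the paper's analytical explanation of the pattern is precisely Theorem~\ref{thm:halfTrap}, which you correctly invoke after matching the sixteen indices to Table~\ref{table:patterns} and reducing the $\cos(B-C)+k\cos A$ forms via the identity from Theorem~\ref{thm:general2}.
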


\begin{theorem}
For a tangential quadrilateral,\\
(excluding results for a general quadrilateral)\\
(a) The central quadrilateral is tangential for $n=$2, 3, 5.
\end{theorem}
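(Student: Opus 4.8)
The plan is to treat the three centers $X_2$, $X_3$, $X_5$ separately, since the first follows from an earlier theorem while the other two are best settled by computation. \textbf{Case $n=2$.} By Theorem~\ref{thm:genSimilar}, when the chosen center is the centroid the central quadrilateral $EFGH$ is similar to $ABCD$ (with ratio $3$). A similarity scales all four sides by the same factor, hence preserves the relation ``sum of one pair of opposite sides equals sum of the other pair'', which is exactly the tangential criterion $a+c=b+d$. So $EFGH$ is tangential because $ABCD$ is.

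\textbf{Case $n=3$.} Here I would first exploit the perpendicular-bisector structure. Since $E$ and $F$ are the circumcenters of $\triangle BCD$ and $\triangle ACD$, both lie on the perpendicular bisector of $CD$; likewise the sides $FG$, $GH$, $HE$ of the central quadrilateral lie on the perpendicular bisectors of $DA$, $AB$, $BC$ (and the diagonals $EG$, $FH$ on those of $BD$, $AC$). Thus $EFGH$ is precisely the quadrilateral bounded, in cyclic order, by the four perpendicular bisectors of the sides of $ABCD$, and each of its sides is perpendicular to the corresponding side of $ABCD$. Applying the extended law of sines in each half-triangle gives side lengths such as
\[EF=\tfrac12\,CD\,\bigl|\cot\angle DBC-\cot\angle CAD\bigr|,\]
together with three cyclic analogues, the angles being those the diagonals make with the sides. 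Pitot's criterion $EF+GH=FG+HE$ then becomes a trigonometric identity in those angles. To verify it I would introduce a convenient parametrization of a tangential quadrilateral: take the incircle to be the unit circle centred at the origin, let the four tangency points be at angles $2\varphi_1,\dots,2\varphi_4$, and let the vertices be the intersections of consecutive tangent lines, $V_i=\sec(\varphi_{i+1}-\varphi_i)\bigl(\cos(\varphi_i+\varphi_{i+1}),\sin(\varphi_i+\varphi_{i+1})\bigr)$; this builds tangentiality in with no further side constraint, after which the identity reduces to a routine symbolic check in Mathematica.

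\textbf{Case $n=5$.} Use that the nine-point center of a triangle is the circumcenter of its medial triangle; equivalently $E=\tfrac12\bigl(B+C+D-O_{BCD}\bigr)$, where $O_{BCD}$ is the circumcenter of $\triangle BCD$, and similarly for $F$, $G$, $H$. With the same parametrization one computes $E,F,G,H$, then the four side lengths of $EFGH$, and checks Pitot's relation symbolically. (The neat ``sides perpendicular to sides'' structure of the $n=3$ case does not carry over here, so I expect this case to be pure CAS.)

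The main obstacle, in both the $n=3$ and $n=5$ cases, is the final step: showing that the Pitot expression — a difference of sums of four lengths, square-root-laden or, after the $\cot$ reduction, trigonometric — vanishes identically on the tangential family. For $n=3$ I would first hunt for an honest trigonometric proof, using that the diagonals of $ABCD$ meet at a point $P$ with $\angle APB=\angle CPD$ and $\angle BPC=\angle DPA$ together with the law of sines in the four sub-triangles $\triangle APB,\dots,\triangle DPA$; if no clean identity emerges, brute-force simplification in the tangency-angle coordinates should finish it. A minor additional point is sign bookkeeping: one must confirm that $EFGH$ is convex (so that Pitot is the correct criterion) and choose the right sign in each $\bigl|\cot-\cot\bigr|$, i.e. verify that the sums of the appropriate \emph{signed} perpendicular distances agree.
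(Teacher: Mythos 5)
The paper itself offers no proof of this statement: it appears only as raw computational data in Appendix~\ref{appendix:halfTriangleData}, and the corresponding result in the main text (Theorem~\ref{thm:halfTangential}, which lists only $X_3$ and $X_5$) is stated without proof, accompanied by an explicit open question asking for geometric proofs. Your proposal therefore supplies genuinely more than the paper does. Your $n=2$ case is complete and correct: by Theorem~\ref{thm:genSimilar} the centroid quadrilateral is the image of $ABCD$ under a homothety of ratio $-\tfrac13$ about the vertex centroid, so the incircle of $ABCD$ maps to an incircle of $EFGH$; nothing more is needed. Your structural observation for $n=3$ --- that consecutive circumcenters $E,F$ lie on the perpendicular bisector of the shared side $CD$, so that $EFGH$ is exactly the quadrilateral cut out by the four perpendicular bisectors of the sides of $ABCD$, with the explicit lengths $EF=\tfrac12\,CD\,\lvert\cot\angle DBC-\cot\angle CAD\rvert$ --- is a real insight absent from the paper, and reducing the Pitot identity to the tangency-angle parametrization of the incircle is exactly the kind of symbolic verification the authors themselves rely on (their Mathematica notebooks). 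The $n=5$ reduction via $N=\tfrac12(B+C+D-O_{BCD})$ is likewise sound.

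One caveat deserves more than the passing mention you give it. The Pitot equality $EF+GH=FG+HE$ characterizes tangential quadrilaterals only for \emph{convex} ones, and for $n=3$ the central quadrilateral can degenerate: if $ABCD$ is bicentric, all four circumcenters coincide with the circumcenter of $ABCD$ and $EFGH$ collapses to a point (the paper explicitly admits such degenerate central quadrilaterals). So near the bicentric locus the perpendicular-bisector quadrilateral is small and its convexity and vertex ordering must be checked before Pitot can be invoked; your ``sign bookkeeping'' remark is where the real work hides, and an unsigned identity $\sum\pm\tfrac12\,\text{(side)}\cot(\cdot)=0$ verified symbolically does not by itself certify tangentiality without that convexity check. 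With that point addressed, the plan is sound.
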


\begin{theorem}
For an APquad,\\
(excluding results for an extangential quadrilateral)\\
(a) The central quadrilateral is an APquad for $n=$2.
\end{theorem}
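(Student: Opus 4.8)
The plan is to deduce this immediately from Theorem~\ref{thm:genSimilar}, which already states that in the half-triangle configuration with chosen center $X_2$ the central quadrilateral $EFGH$ is similar to the reference quadrilateral $ABCD$ with ratio of similitude $3$. Being an APquad means precisely that the four side lengths, read in cyclic order, form an arithmetic progression, and this property is inherited by any figure obtained from $ABCD$ by scaling all lengths by a common factor. So, granting Theorem~\ref{thm:genSimilar}, the whole content of the statement reduces to checking that the side of $EFGH$ matched to side $AB$ of $ABCD$ is $EF$ (and cyclically for the others), so that the common difference merely gets divided by $3$ rather than the progression being reshuffled.

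First I would record the explicit correspondence with a short vector computation that in fact reproves the part of Theorem~\ref{thm:genSimilar} that is needed. Writing the vertices as position vectors, the centroids of the four half triangles (opposite $A$, $B$, $C$, $D$ respectively) are
\[
E=\tfrac13(B+C+D),\quad F=\tfrac13(A+C+D),\quad G=\tfrac13(A+B+D),\quad H=\tfrac13(A+B+C),
\]
so $F-E=\tfrac13(A-B)$, $G-F=\tfrac13(B-C)$, $H-G=\tfrac13(C-D)$ and $E-H=\tfrac13(D-A)$; hence $EF=\tfrac13AB$, $FG=\tfrac13BC$, $GH=\tfrac13CD$ and $HE=\tfrac13DA$. (Equivalently, $EFGH$ is the image of $ABCD$ under the homothety of ratio $-\tfrac13$ centered at the vertex centroid $\tfrac14(A+B+C+D)$, which also shows that $EFGH$ is again a convex quadrilateral.)

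Then I would finish as follows. With the paper's notation $a=AB$, $b=BC$, $c=CD$, $d=DA$, the hypothesis that $ABCD$ is an APquad is $d-c=c-b=b-a$. Dividing through by $3$ gives $\tfrac d3-\tfrac c3=\tfrac c3-\tfrac b3=\tfrac b3-\tfrac a3$, i.e. the sides $EF=\tfrac a3$, $FG=\tfrac b3$, $GH=\tfrac c3$, $HE=\tfrac d3$ of $EFGH$ are in arithmetic progression in cyclic order, so $EFGH$ is an APquad. The exclusion of ``results for an extangential quadrilateral'' in the statement is vacuous here: writing $e=b-a$ for the common difference, one has $a+b=2a+e$ and $c+d=2a+5e$, which are equal only when $e=0$ (the degenerate rhombus case), so a genuine APquad is never extangential.

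There is essentially no hard step; the only point one must not overlook is the orientation question already flagged. Even if one prefers to regard the similarity as orientation-reversing (which it is, the homothety ratio being negative), an arithmetic progression read backwards is still an arithmetic progression, so the conclusion is untouched.
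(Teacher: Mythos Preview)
Your argument is correct. The appendix statement is presented in the paper as raw computer data with no accompanying proof (the paper's blanket convention is that unproved theorems are covered by the supplementary Mathematica notebooks), so your derivation from Theorem~\ref{thm:genSimilar} is exactly the intended human-readable justification; indeed, your vector computation $F-E=\tfrac13(A-B)$, etc., is precisely the content of the paper's own proof of Theorem~\ref{thm:genSimilar}, just specialized to extract the side-length correspondence you need.

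One minor remark on your closing paragraph: your observation that a nondegenerate APquad is never extangential is correct arithmetic, but it actually shows that the paper's parenthetical ``(excluding results for an extangential quadrilateral)'' is a bookkeeping artifact rather than a genuine exclusion---the appendix mechanically lists, for each shape, which parent node in Figure~\ref{fig:quadShapes} the new results should be compared against, and here no inherited result is being subtracted. So your proof stands on its own regardless of how one interprets that clause.
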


\begin{theorem}
For an equidiagonal orthodiagonal quadrilateral,\\
(excluding results for an equidiagonal quadrilateral)\\
(excluding results for an orthodiagonal quadrilateral)\\
(a) The central quadrilateral is an equidiagonal orthodiagonal quadrilateral for $n=$2, 51, 373.\\
(b) The central quadrilateral is a line segment for $n=$642.\\
(c) The central quadrilateral is orthodiagonal for $n=$486.
\end{theorem}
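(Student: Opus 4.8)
The statement bundles four independent assertions about the central quadrilateral $EFGH$ obtained by placing $X_n$ in the half triangles $\triangle BCD$, $\triangle ACD$, $\triangle ABD$, $\triangle ABC$ of an equidiagonal orthodiagonal reference quadrilateral $ABCD$, and the plan is to reduce each listed index to a result already in the paper. Three of the four cases are immediate: $n=51$ and $n=373$ in part~(a) are exactly Theorem~\ref{thm:halfEquiOrtho}, the case $n=642$ in part~(b) is exactly Theorem~\ref{thm:halfEquiOrthoLine}, and the case $n=486$ in part~(c) is exactly Theorem~\ref{thm:halfEquiOrthoOrtho} (which even gives the sharper conclusion that $EFGH$ fails to be equidiagonal). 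So the only remaining case to argue is $n=2$ of part~(a).

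For $n=2$ I would appeal to Theorem~\ref{thm:genSimilar}: when the chosen center is the centroid, $EFGH$ is similar to $ABCD$ with ratio of similitude $3$, the correspondence being $E\leftrightarrow A$, $F\leftrightarrow B$, $G\leftrightarrow C$, $H\leftrightarrow D$. A similarity of the plane takes lines to lines, scales all lengths by a fixed factor, and preserves angles, so it carries the diagonal $AC$ onto $EG$ and $BD$ onto $FH$; hence $|AC|=|BD|$ gives $|EG|=|FH|$ and $AC\perp BD$ gives $EG\perp FH$, so $EFGH$ is equidiagonal and orthodiagonal. (This case is in fact already a formal consequence of the two raw-data theorems for equidiagonal and for orthodiagonal reference quadrilaterals, each of which lists $n=2$; it is recorded here only to keep the list complete.) Collecting the four cases finishes the proof.

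The substance of the statement therefore resides in the three theorems cited above, and their proofs (in the supplementary notebooks) all follow the paper's standard coordinate method: place the diagonal point of $ABCD$ at the origin with the diagonals along the axes, as in the orthodiagonal setup with $D=(1,0)$, and impose $AC=BD$, solving for one of the remaining coordinates and keeping the geometrically correct root; then rewrite the center function of $X_n$ through the Law of Cosines, compute the Cartesian coordinates of $E$, $F$, $G$, $H$ from the change-of-coordinates formula in each half triangle, and test the relevant property of the diagonals $EG$ and $FH$ — product of slopes equal to $-1$ for orthodiagonality, equality of the two squared diagonal lengths for equidiagonality, and collinearity of the four points (vanishing of two $2\times2$ determinants) for degeneration to a line segment. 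The one real obstacle is the sheer size of these symbolic computations: the center functions of $X_{51}$, $X_{373}$, and especially $X_{642}$ are far bulkier than those of the Euler-line centers, so the intermediate expressions balloon and the final simplifications must be left to a computer algebra system.
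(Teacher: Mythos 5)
Your reduction is correct and matches how the paper itself handles this appendix entry: the statement is presented there only as raw numerical data, with the actual (computer) proofs living in Theorems \ref{thm:halfEquiOrtho}, \ref{thm:halfEquiOrthoLine}, and \ref{thm:halfEquiOrthoOrtho} and their supplementary Mathematica notebooks, exactly the three results you cite for $n=51$, $373$, $642$, and $486$. Your handling of the one case those theorems omit, $n=2$, via the ratio-$3$ similarity of Theorem \ref{thm:genSimilar} (a similarity carries $AC$, $BD$ to $EG$, $FH$, so it preserves both equidiagonality and orthodiagonality) is sound and is in fact cleaner than anything the paper offers for that case.
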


\begin{theorem}
For an orthodiagonal trapezoid,\\
(excluding results for a trapezoid)\\
(excluding results for an orthodiagonal quadrilateral)\\
(a) The central quadrilateral is orthodiagonal for $n=$2-5, 20, 140, 376, 381, 382,
546-550, 631-632.\\
(b) The central quadrilateral is orthodiagonal for $n=$51-53, 143.
\end{theorem}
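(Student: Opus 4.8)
Part~(a) needs no genuinely new argument. The values $n = 2,3,4,5,20,140,376,381,382,546,547,548,549,550,631,632$ are precisely the centers listed in Table~\ref{table:patterns}, and every center function appearing there is equivalent to one of the form $\cos B\cos C + k\cos A$: those written as $\cos A$ or $\cos A \pm \cos B\cos C$ or $m\cos A + \ell\cos B\cos C$ already have this shape, and for the ones written with $\cos(B-C)$ one uses the identity $\cos(B-C)+k\cos A = 2\bigl(\cos B\cos C+\tfrac{k+1}{2}\cos A\bigr)$ from the proof of Theorem~\ref{thm:general2}. An orthodiagonal trapezoid is in particular an orthodiagonal quadrilateral, so Theorem~\ref{thm:halfOrtho} applies directly and $EFGH$ is orthodiagonal. (By Theorem~\ref{thm:halfTrap} it is also a trapezoid, which is why these values are flagged as ``excluded for a trapezoid''.)

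For part~(b), the case $n=51$ is already contained in the half-triangle result for orthodiagonal quadrilaterals and is restated only to keep the index range contiguous; the new content is $n\in\{52,53,143\}$. Since $X_{52}$, $X_{53}$, $X_{143}$ are not among the centers covered by Theorem~\ref{thm:halfOrtho} or Theorem~\ref{thm:halfTrap}, the trapezoid hypothesis and the orthodiagonality hypothesis must be used together, so a direct computation is needed. The plan is the coordinate method used elsewhere in the paper. Put the diagonal point at the origin with the two perpendicular diagonals along the axes, $A=(p,0)$, $C=(-r,0)$, $B=(0,q)$, $D=(0,-s)$ with $p,q,r,s>0$; then orthodiagonality holds automatically. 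Requiring one pair of opposite sides to be parallel gives (after relabelling so that the parallel pair is $DA,BC$) the single relation $pq=rs$, and rescaling then leaves two free parameters.

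Next, write down the side lengths of the four half-triangles $\triangle BCD$, $\triangle ACD$, $\triangle ABD$, $\triangle ABC$ (only the quantities $p+r$, $q+s$, $\sqrt{p^2+q^2}$, $\sqrt{q^2+r^2}$, $\sqrt{r^2+s^2}$, $\sqrt{s^2+p^2}$ occur), substitute them into the center function of $X_n$, and apply the change-of-coordinates formula to obtain the Cartesian coordinates of $E$, $F$, $G$, $H$. Then form the slopes of the diagonals $EG$ and $FH$ of the central quadrilateral and check, with Mathematica, that their product simplifies to $-1$ once $pq=rs$ is imposed (with $p,q,r,s>0$ guaranteeing nondegeneracy). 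This gives $EG\perp FH$, i.e.\ $EFGH$ is orthodiagonal, for the chosen labelling of the parallel sides; the case $AB\parallel CD$ is symmetric.

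The main obstacle is twofold. First, one needs the correct center functions for $X_{52}$, $X_{53}$, $X_{143}$; because these are orthic-triangle centers, a convenient route is to apply the substitution $A\mapsto 180\degrees-2A$, $B\mapsto 180\degrees-2B$, $C\mapsto 180\degrees-2C$ to the center functions of the relevant centers of the orthic triangle and re-homogenise, which yields functions in $\cos 2A,\cos 2B,\cos 2C$ (equivalently in $\tan A,\tan B,\tan C$); the precise identifications of these four centers should be read off from \cite{ETC}. Second, the symbolic simplification of the slope product, while routine, is heavy. A further, more conceptual task---and the one most in keeping with the style of this paper---is to isolate the common algebraic form of the center functions of $X_{51},X_{52},X_{53},X_{143}$ that forces the central quadrilateral of an orthodiagonal trapezoid to be orthodiagonal while failing for a general orthodiagonal quadrilateral and for a general trapezoid; this would promote the list in~(b) to a clean ``center function of the form\ldots'' theorem.
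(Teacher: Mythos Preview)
This statement sits in Appendix~B, ``Raw Data for Half Triangles.'' As the paper explains in Section~\ref{section:methodology}, those appendix items are numerical findings from GeometricExplorer, not proven results; the paper offers no proof here at all, and any symbolic verification would live only in the supplementary Mathematica notebooks. So there is no paper-proof to compare against.

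That said, your analysis is essentially on target. For part~(a) you correctly observe that every listed $n$ has a center function equivalent to $\cos B\cos C + k\cos A$ (via Table~\ref{table:patterns} and the identity from Theorem~\ref{thm:general2}), so Theorem~\ref{thm:halfOrtho} already gives orthodiagonality from the orthodiagonal hypothesis alone. You are also right that this makes part~(a) redundant with the orthodiagonal-quadrilateral data despite the ``excluding'' header; that appears to be a bookkeeping slip in the raw output rather than a new phenomenon. For part~(b) your diagnosis that $n=51$ is inherited and that $n=52,53,143$ genuinely require both hypotheses matches the data, and your coordinate setup (diagonals on the axes, parallelism giving $pq=rs$) is the natural one. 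What remains is only a plan, however: you have not supplied the center functions for $X_{52},X_{53},X_{143}$, nor carried out the slope-product simplification, so part~(b) is an outline rather than a proof. Since the paper itself never claims more than numerical evidence for this item, your write-up already exceeds what the paper provides; completing the Mathematica computation for the three outstanding centers would close the gap.
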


\begin{theorem}
For an isosceles trapezoid,\\
(excluding results for an equidiagonal quadrilateral)\\
(excluding results for a cyclic quadrilateral)\\
(excluding results for trapezoid)\\
(a) The central quadrilateral is a rectangle for $n=$1, 40, 165.\\
(b) The central quadrilateral appears to be an isosceles trapezoid for just about
all other $n$.
\end{theorem}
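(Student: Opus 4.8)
The plan is to observe that this statement is, up to one short trigonometric identity, a corollary of results already established, so I would organize it around citing those results rather than computing anew.

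For part (b), note that every isosceles trapezoid is a reference quadrilateral to which Theorem~\ref{thm:halfIsoTrap} applies, and that theorem asserts that for \emph{any} triangle center the half-triangle central quadrilateral of an isosceles trapezoid is again an isosceles trapezoid. So part (b) in fact holds for all $n$; the hedge ``just about all other $n$'' only records that for the three centers in part (a) the isosceles trapezoid specializes to a rectangle, and that for a few further centers it degenerates further (for instance $X_{155}$, for which the central quadrilateral collapses to a segment by Theorem~\ref{thm:halfCyclic155}, an isosceles trapezoid being cyclic).

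For part (a), I would use that an isosceles trapezoid is a cyclic quadrilateral, so all the half-triangle theorems proved for cyclic reference quadrilaterals are available. The case $n=1$ is exactly Theorem~\ref{thm:halfCyclicJapanese} (Result~1 of the Introduction). For $n=40$ and $n=165$ I would invoke Theorem~\ref{thm:genCyclicRect}, which yields a rectangle whenever the chosen center has a center function of the form $\cos B+\cos C+k\cos A-1$. The Bevan point $X_{40}$ has center function $\cos B+\cos C-\cos A-1$, the case $k=-1$, so $n=40$ is immediate. For $n=165$ I would start from the description of $X_{165}$ as the centroid of the excentral triangle, i.e.\ the point with trilinears $r_B+r_C-r_A:r_C+r_A-r_B:r_A+r_B-r_C$, hence with center function equivalent to $\tan\tfrac B2+\tan\tfrac C2-\tan\tfrac A2$; converting half angles to whole angles via $\tan\tfrac A2=(1-\cos A)/\sin A$ (and the identity $\cos A+\cos B+\cos C=1+r/R$) one finds this center function is equivalent to $\cos B+\cos C-3\cos A-1$, the case $k=-3$, after which Theorem~\ref{thm:genCyclicRect} finishes the job. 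Equivalently, the $n=40,165$ cases are precisely Theorem~\ref{thm:halfIsoTrapRect}.

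The only step demanding any care is this half-angle-to-whole-angle reduction for $X_{165}$: it is routine but easy to botch, so I would carry it out symbolically, as the paper does elsewhere. There is no genuine geometric obstacle. The one thing that cannot honestly be made into a clean proof is the implicit completeness of the list in (a) --- that $n=1,40,165$ are the only values up to $1000$ producing a rectangle --- since that rests on the numerical search of Section~\ref{section:methodology}; I would therefore present (a) as a verified list and leave the ``only if'' direction as a conjecture, in keeping with the other conjectures in the paper.
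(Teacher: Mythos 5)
Your proposal is correct, and it organizes the argument differently from the paper. The statement you are proving is one of the raw-data entries in Appendix \ref{appendix:halfTriangleData}: the paper offers no proof there at all, the entry being a report of the numerical search described in Section \ref{section:methodology}, with the rigorous versions living in the main text as Theorem \ref{thm:halfIsoTrap} (proved geometrically by reflection) and Theorem \ref{thm:halfIsoTrapRect} (stated without proof, deferred to the Mathematica notebooks). Your route for part (b) coincides with the paper's: Theorem \ref{thm:halfIsoTrap} gives an isosceles trapezoid for every center, and the ``just about all other $n$'' hedge merely reflects that the search reports the most specific shape found. For part (a) you do something the paper does not spell out: you reduce all three centers to the cyclic-quadrilateral rectangle theorem, Theorem \ref{thm:genCyclicRect}, since every isosceles trapezoid is cyclic. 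The $n=1$ and $n=40$ cases are indeed immediate (the latter is already Corollary \ref{thm:halfCyclicBevan}), and your center-function identification for $X_{165}$ checks out: with $s-a=r\cot\frac A2$ one finds the first trilinear of the excentral centroid is proportional to $(s-a)(s-b)+(s-a)(s-c)-(s-b)(s-c)=\frac14\bigl(b^2+c^2-3a^2+2ab+2ac-2bc\bigr)$, and multiplying $\cos B+\cos C-3\cos A-1$ by the symmetric factor $2abc$ yields exactly $-(a+b+c)$ times this polynomial, so the two center functions are equivalent and $X_{165}$ is the case $k=-3$. (Note also that $k=1$ gives $\cos A+\cos B+\cos C-1=r/R$, a symmetric function, so even the $n=1$ case is subsumed by Theorem \ref{thm:genCyclicRect}.) What your approach buys is a uniform explanation of why exactly these three indices appear, at the cost of resting on Theorem \ref{thm:genCyclicRect}, which the paper itself only proves by computer; and you are right that the completeness of the list in (a) cannot be proved this way and should remain a statement about the search up to $n=1000$.
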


\begin{theorem}
For a cyclic orthodiagonal quadrilateral,\\
(excluding results for a cyclic quadrilateral)\\
(excluding results for an orthodiagonal quadrilateral)\\
(a) The central quadrilateral is an isosceles trapezoid for $n=$26.\\
(b) The central quadrilateral is an isosceles trapezoid for $n=$53, 143, 373.\\
(c) The central quadrilateral is a line segment for $n=$63.\\
(d) The central quadrilateral is a trapezoid for $n=$577.
\end{theorem}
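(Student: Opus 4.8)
The plan is to verify the four claims by a direct coordinate computation that exploits the fact that the reference quadrilateral lies on a circle. I would place the circumcircle of $ABCD$ at the unit circle in the complex plane, writing $a,b,c,d$ for the affixes of $A,B,C,D$ with $|a|=|b|=|c|=|d|=1$; then, as in the proof of Theorem~\ref{thm:genCyclicDeLongchamps}, the half triangles $\triangle BCD$, $\triangle ACD$, $\triangle ABD$, $\triangle ABC$ are all inscribed in the unit circle. The extra hypothesis that $ABCD$ is orthodiagonal translates, after using $\bar z=1/z$ on the unit circle, into the single clean relation $ac+bd=0$: perpendicularity of $AC$ and $BD$ says $(c-a)\overline{(d-b)}$ is purely imaginary, and substituting conjugates reduces this to $(c-a)(b-d)(ac+bd)=0$. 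So I would parametrize a cyclic orthodiagonal quadrilateral by choosing $a,b,c$ freely on the unit circle and setting $d=-ac/b$, which automatically has modulus $1$.

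Next I would obtain complex-coordinate formulas for the four centers $E,F,G,H$ in terms of $a,b,c,d$. For a center whose ETC barycentric coordinates are (after clearing denominators) polynomial in $a^2,b^2,c^2$, this is immediate: $|z_i-z_j|^2=-(z_i-z_j)^2/(z_iz_j)$ is rational in the vertices, so the change-of-coordinates formula from Section~\ref{section:methodology} presents the center as a rational function of the triangle's vertices, which on the unit circle is rational in those vertices. Several of the centers occurring here are of this type. For the remaining ones, whose barycentrics genuinely involve $a,b,c$ to odd degree, I would pass to the square-root parametrization of the circumcircle, writing each vertex as the square of a unit complex number; the side lengths, hence every triangle center, then become Laurent-polynomial in the new parameters, at the cost of fixing once and for all a consistent branch for each square root (and re-expressing $d=-ac/b$ accordingly).

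With $E,F,G,H$ in hand I would check the asserted shapes as polynomial identities. For the trapezoid claim (d) and the trapezoid aspect of (a),(b), I would verify that the appropriate pair of opposite sides of $EFGH$ is parallel, i.e.\ that the relevant quotient such as $(e-h)/(f-g)$ is real, equivalently that $(e-h)\overline{(f-g)}-\overline{(e-h)}(f-g)$ vanishes identically in the parameters; the correct parallel pair is read off from the numerical data and then confirmed. For the isosceles-trapezoid claims I would, in addition, establish that $EFGH$ is cyclic --- conveniently tested by the real cross-ratio criterion $\frac{(e-h)(g-f)}{(e-f)(g-h)}\in\mathbb{R}$ from the Lemma preceding Theorem~\ref{thm:genCyclicDeLongchamps} --- since a cyclic trapezoid is automatically isosceles; alternatively one checks that the two legs are equal in length. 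For the line-segment claim (c) ($n=63$) I would show that $e,f,g,h$ are collinear, e.g.\ that both $(f-e)/(g-e)$ and $(h-e)/(g-e)$ are real. After clearing denominators, each reduction is a polynomial identity to be discharged symbolically, exactly as for the other unproved theorems in this paper.

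The main obstacle is bookkeeping rather than insight: producing correct, mutually consistent complex formulas for the less familiar centers $X_{26}$, $X_{53}$, $X_{63}$, $X_{143}$, $X_{373}$, $X_{577}$, and in particular choosing the square-root branches coherently for all four half triangles simultaneously, since a sign slip in one triangle silently breaks the identity. A secondary subtlety is pinning down the exact combinatorics of each central quadrilateral --- which vertices play the roles of $E,F,G,H$ and which pair of sides is the parallel one --- since, as the note after Theorem~\ref{thm:cyclic110} illustrates, the natural labelling can differ from $EFGH$ from one center to the next; this must be settled by the numerical data before the symbolic verification.
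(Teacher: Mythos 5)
The paper never proves this statement: it sits in Appendix~\ref{appendix:halfTriangleData} as raw output of GeometricExplorer, i.e.\ a numerical check to 15 digits which the authors themselves say in Section~\ref{section:methodology} ``does not constitute a proof.'' So your proposal is not an alternative to the paper's argument but a genuine proof strategy where the paper offers only evidence, and the strategy is sound. Your reduction of orthodiagonality to $ac+bd=0$ is correct (on the unit circle $(c-a)\overline{(d-b)}+\overline{(c-a)}(d-b)=(c-a)(b-d)(ac+bd)/(abcd)$), all four half triangles do share the circumcircle, and the shape tests you list --- real ratio for parallelism, the real cross-ratio criterion for concyclicity, ``cyclic trapezoid $\Rightarrow$ isosceles,'' two real ratios for collinearity --- are the right ones. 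What this buys over the paper is an actual symbolic certificate rather than floating-point evidence; what it costs is that the whole argument remains a blueprint until the polynomial identities for $X_{26}$, $X_{53}$, $X_{63}$, $X_{143}$, $X_{373}$, $X_{577}$ are really discharged. Two points need care when you do so. For the odd-degree centers the square-root substitution must be made compatibly with the constraint $d=-ac/b$: writing $a=\alpha^2$, $b=\beta^2$, $c=\gamma^2$ forces $\delta^2=-\alpha^2\gamma^2/\beta^2$, so $\delta=\pm i\alpha\gamma/\beta$, and an incoherent sign choice across the four half triangles will make a true identity fail. And for the isosceles-trapezoid items you should also verify nondegeneracy (that exactly one pair of opposite sides is parallel), since ``cyclic and trapezoid implies isosceles'' presupposes the figure is not a parallelogram; the unexplained split between items (a) and (b), like the relabelling noted after Theorem~\ref{thm:cyclic110}, almost certainly records a different vertex ordering of the central quadrilateral and must be fixed from the numerical data before the symbolic check.
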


\begin{theorem}
For a tangential trapezoid,\\
(excluding results for a tangential quadrilateral)\\
(excluding results for a trapezoid)\\
(a) The central quadrilateral is a tangential trapezoid for $n=$2, 3, 5.
\end{theorem}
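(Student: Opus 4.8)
The plan is to derive this statement as the common refinement of two facts already recorded in this section, one concerning trapezoids and one concerning tangential quadrilaterals. By definition a \emph{tangential trapezoid} is exactly a quadrilateral that both possesses an incircle and has a pair of parallel opposite sides, so it is enough to prove, for each of $n=2,3,5$, that the central quadrilateral $EFGH$ is a trapezoid \emph{and} that $EFGH$ is tangential; intersecting the index sets for those two properties then produces precisely $\{2,3,5\}$.

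First I would dispose of the trapezoid half. The centroid $X_2$, the circumcenter $X_3$, and the nine-point center $X_5$ all lie on the Euler line, so each has a center function of the form $\cos B\cos C+k\cos A$ in the sense used in Theorem~\ref{thm:EulerLine} (with $k=1$ for $X_2$, $k=\tfrac12$ for $X_5$, and $X_3$ the limiting member $\cos A$ of this pencil). Since the reference quadrilateral $ABCD$ is a trapezoid, Theorem~\ref{thm:halfTrap} then gives that $EFGH$ is a trapezoid for each of these $n$; if one prefers to sidestep the degenerate circumcenter case, the relation $EH\parallel FG$ when $AD\parallel BC$ can instead simply be quoted from the trapezoid data.

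Next I would handle the tangential half. For $n=3$ and $n=5$ this is literally Theorem~\ref{thm:halfTangential}. For $n=2$ it comes instead from Theorem~\ref{thm:genSimilar}: the centroid central quadrilateral is similar to $ABCD$ with ratio of similitude $3$, and a quadrilateral similar to one that has an incircle again has an incircle (the incircle is scaled by the same factor). Combining the two halves, for $n=2,3,5$ the quadrilateral $EFGH$ simultaneously has a pair of parallel sides and an incircle, hence is a tangential trapezoid, as claimed.

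The mathematical substance here really sits inside the theorems being invoked --- Theorems~\ref{thm:halfTrap}, \ref{thm:halfTangential} and \ref{thm:genSimilar} --- so the only thing that needs care in assembling this corollary is the bookkeeping: confirming that all three of $X_2$, $X_3$, $X_5$ fall under the hypotheses of the trapezoid theorem (equivalently, that they are Euler-line centers), and that the centroid case of tangency must be supplied by the similarity result rather than by Theorem~\ref{thm:halfTangential}, which lists only $X_3$ and $X_5$. It is also worth recording once, to forestall degeneracy worries, that for a convex tangential trapezoid the four half triangles $\triangle BCD$, $\triangle ACD$, $\triangle ABD$, $\triangle ABC$ are nondegenerate and $EFGH$ is an honest convex quadrilateral, so no new computation is required beyond what already underlies the cited results. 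A purely coordinate proof, starting from the Pitot relation $a+c=b+d$ together with the parallel-side condition, is also available, but it is strictly more laborious than this combination argument.
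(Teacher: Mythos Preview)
Your derivation is correct, and it supplies something the paper does not: this item sits in Appendix~\ref{appendix:halfTriangleData} as raw computer data, with no written proof in the text (and at most a symbolic verification in the supplementary Mathematica notebooks). The paper simply records that GeometricExplorer observed the central quadrilateral to be a tangential trapezoid for $n=2,3,5$; you instead reduce the claim to the intersection of the trapezoid raw data (Theorem~\ref{thm:halfTrap}, covering $n=2$--$5,20,\dots$) with the tangential raw data (Theorem~\ref{thm:halfTangential} for $n=3,5$, and Theorem~\ref{thm:genSimilar} for $n=2$), which is exactly $\{2,3,5\}$. That is a genuinely different route --- a logical derivation from earlier statements rather than an independent numerical observation --- and it has the advantage of explaining \emph{why} the list is precisely $\{2,3,5\}$: it is the intersection of the two parent index sets.

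Two small points worth tightening. First, your treatment of $X_3$ as the ``limiting member $\cos A$'' of the pencil $\cos B\cos C + k\cos A$ is informal; strictly, $\cos A$ is not of that form for any finite $k$, though it is the projective limit $k\to\infty$. Your fallback --- quoting the trapezoid raw data directly for $n=3$ --- is the cleaner move, and you were right to include it. Second, the step ``tangential and trapezoid $\Rightarrow$ tangential trapezoid'' does need the central quadrilateral to be convex (so that the Pitot condition actually yields an incircle); you assert this but do not argue it. For $n=2$ convexity is immediate from the similarity in Theorem~\ref{thm:genSimilar}; for $n=3,5$ it would be cleanest to note that $X_3$ and $X_5$ lie on segment $X_2X_4$ and inherit convexity from the centroid and orthocenter central quadrilaterals, or simply to absorb it into the Mathematica verification underlying Theorem~\ref{thm:halfTangential}.
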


\begin{theorem}
For a tangential trapezoid, ?????\\
(excluding results for a general quadrilateral)\\
(a) The central quadrilateral is orthodiagonal for $n=$2-5, 20, 140, 376, 381, 382,
546-550, 631-632.\\
(b) The central quadrilateral is cyclic for $n=$2-5, 20, 140, 376, 381, 382,
\end{theorem}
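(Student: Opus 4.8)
The indices listed in (a) and (b) are precisely the Euler-line indices already tabulated in Table~\ref{table:patterns}, so by Theorems~\ref{thm:general} and~\ref{thm:EulerLine} it would suffice to prove both claims for a single generic center function $f_k(a,b,c)=\cos B\cos C+k\cos A$ with $k$ a free parameter, and then invoke Theorem~\ref{thm:lines} to absorb the trigonometric variants such as $\cos(B-C)+k\cos A$. The plan is the standard coordinate computation of the paper. A tangential trapezoid has a clean model: take the incircle to be the unit circle and the parallel sides on the lines $y=1$ and $y=-1$, with $A=(-a,1)$, $B=(-1/a,-1)$, $C=(c,-1)$, $D=(1/c,1)$; tangency of the legs to the circle forces $AB=a+1/a$ and $CD=c+1/c$, while $BC=c+1/a$ and $DA=a+1/c$, so all four sides of $ABCD$ are rational in the parameters $a,c>0$.

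Next I would build the four half-triangles $\triangle BCD$, $\triangle ACD$, $\triangle ABD$, $\triangle ABC$ and their centers $E,F,G,H$. Each such triangle has two rational sides together with one diagonal, $BD$ or $AC$, whose \emph{square} is still rational ($BD^2=(1/a+1/c)^2+4$, $AC^2=(a+c)^2+4$). Since the first barycentric coordinate attached to $f_k$ is $a^4(1-2k)+2a^2k(b^2+c^2)-(b^2-c^2)^2$, a polynomial in $a^2,b^2,c^2$, the barycentric coordinates of $E,F,G,H$ are rational in $(a,c,k)$, hence so are their Cartesian coordinates via the change-of-coordinates formula — no surds survive. Part (a) then reduces to checking that the product of the slopes of $EG$ and $FH$ equals $-1$, and part (b) to checking that the $4\times4$ concyclicity determinant of $E,F,G,H$ vanishes (equivalently $PE\cdot PG=PF\cdot PH$ at $P=EG\cap FH$, as in the proof of Theorem~\ref{thm:ortho6}). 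Both are polynomial identities in $a,c,k$ to be confirmed symbolically.

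The main obstacle, however, is not the algebra: the statement as printed appears to be a spurious entry and is in fact false already at $n=2$. When $k=1$ the center is the centroid $X_2$, and Theorem~\ref{thm:genSimilar} says the central quadrilateral of the half-triangles is then similar to the reference quadrilateral with ratio $3$; for a tangential trapezoid this makes $EFGH$ again a (generically non-orthodiagonal, non-cyclic) tangential trapezoid, contradicting both (a) and (b). This is consistent with the ``?????'' in the statement, the truncated index list in (b), and the fact that the displayed index set is copied verbatim from the orthodiagonal-trapezoid entry; the genuinely provable companion is the earlier result that for a tangential trapezoid the central quadrilateral is a tangential trapezoid for $n=2,3,5$ (which follows from Theorems~\ref{thm:genSimilar}, \ref{thm:halfTrap}, and~\ref{thm:halfTangential}). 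So the honest outcome is that the computation above can be set up exactly as described, but it will \emph{fail} to verify the identities unless the reference class is corrected — for instance to an orthodiagonal trapezoid, for which Theorem~\ref{thm:halfOrtho} already yields (a) — or the conflicting indices are removed.
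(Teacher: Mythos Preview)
Your diagnosis is correct, and in fact there is nothing in the paper to compare your argument to: this ``theorem'' sits in the raw-data appendix, carries no proof, and the ``?????'' and the truncated list in (b) mark it as an editorial artifact. The genuine tangential-trapezoid entry appears immediately above it (central quadrilateral is a tangential trapezoid for $n=2,3,5$), so this second block is a leftover duplicate whose content was evidently pasted from another quadrilateral class and never cleaned up.

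Your refutation is the right one and is airtight. For $n=2$ the paper's own Theorem~\ref{thm:genSimilar} forces the half-triangle central quadrilateral to be similar to the reference quadrilateral; a generic tangential trapezoid is neither orthodiagonal nor cyclic, so neither is its centroidal image, killing both (a) and (b) at once. Your coordinate model with the incircle as the unit circle and parallel sides on $y=\pm1$ is a perfectly good setup, and your observation that the barycentric polynomial for $f_k$ involves only squared side lengths (so the diagonals enter rationally) is exactly what one would want if the statement were salvageable --- but as you note, the symbolic check would simply fail. The only thing I would add is that you need not speculate about which class the data were copied from: the paper gives no indication, and the point is moot since the entry is not a claim the authors are actually making.
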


\begin{theorem}
For a kite,\\
(excluding results for an orthodiagonal quadrilateral)\\
(excluding results for a tangential quadrilateral)\\
(excluding results for an extangential quadrilateral)\\
(excluding results for a Pythagorean quadrilateral)\\
(excluding results for an equalProdOpp quadrilateral)\\
(excluding results for an equalProdAdj quadrilateral)\\
(a) The central quadrilateral is a kite for all $n$.
\end{theorem}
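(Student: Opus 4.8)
This assertion is the half-triangle version of Theorem~\ref{thm:halfKite}, and the plan is to run the mirror-symmetry argument behind that theorem, being explicit that it goes through for \emph{every} center $X_n$ at once. First I would fix the labelling of the kite so that $AB=AD$ and $CB=CD$. Then both $A$ and $C$ are equidistant from $B$ and from $D$, so the line $AC$ is simultaneously the perpendicular bisector of $BD$ and the axis of reflective symmetry of $ABCD$. Let $\sigma$ denote the reflection in $AC$; it fixes $A$ and $C$ and interchanges $B$ and $D$.

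Next I would sort the four half triangles according to their behaviour under $\sigma$. The triangle $\triangle BCD$ is isosceles with apex $C$, and $\triangle ABD$ is isosceles with apex $A$; in both cases the axis of symmetry of the triangle is the perpendicular bisector of $BD$, which is the line $AC$. Since every triangle center of an isosceles triangle lies on its axis of symmetry, the centers $E$ (of $\triangle BCD$) and $G$ (of $\triangle ABD$) lie on $AC$. The remaining two half triangles are swapped by $\sigma$: it carries $\triangle ACD$ onto $\triangle ACB$ and back. Using the fact, already invoked in the proof of Lemma~\ref{lemma:isosceles}, that a triangle center commutes with every isometry, $\sigma$ sends the center $F$ of $\triangle ACD$ to the center $H$ of $\triangle ABC$; hence $FH\perp AC$ and $AC$ bisects $FH$. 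Because $E$ and $G$ lie on $AC$, which is the perpendicular bisector of $FH$, we obtain $EF=EH$ and $GF=GH$, so the central quadrilateral $EFGH$ has two pairs of adjacent equal sides — a kite, with its symmetry diagonal $EG$ along $AC$.

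The one step that needs real care is the equivariance claim, since $\sigma$ is an \emph{indirect} isometry: one should remark that Kimberling's construction of a triangle center (trilinear coordinates formed from distances to the sidelines, with the center function symmetric in its last two arguments) is invariant under reflections as well as under orientation-preserving isometries, so that $\sigma$ indeed maps the $X_n$ of a triangle to the $X_n$ of its mirror image. Everything else is routine: the kite may be nonconvex (compare Figure~\ref{fig:halfKiteNagel}), and it may degenerate — $F$ may coincide with $H$, or $EFGH$ may collapse to a segment — for particular centers or particular kites, and for the centers at which $X_n$ of a half triangle is undefined or lies at infinity the statement is to be read in the obvious degenerate sense. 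None of this obstructs the argument, and in fact no machinery beyond Theorem~\ref{thm:halfKite} is required.
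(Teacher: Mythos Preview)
Your proposal is correct and follows essentially the same approach as the paper's proof of Theorem~\ref{thm:halfKite}: both place $E$ and $G$ on the axis $AC$ because $\triangle BCD$ and $\triangle ABD$ are isosceles, and both use the reflection across $AC$ (the paper via the Isosceles Triangle Lemma, you via the explicit isometry $\sigma$) to show that $AC$ is the perpendicular bisector of $FH$, whence $EF=EH$ and $GF=GH$. Your version is in fact slightly more careful than the paper's, since you flag the equivariance of Kimberling centers under indirect isometries and note the possible degeneracies, points the paper glosses over.
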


\begin{theorem}
For a bicentric quadrilateral,\\
(excluding results for a tangential quadrilateral)\\
(excluding results for cyclic quadrilateral)\\
(a) The central quadrilateral is bicentric for $n=$2, 4, 5, 20, 140, 376, 381, 382,
546-550, 631, 632.
\end{theorem}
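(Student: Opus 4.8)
The values of $n$ in the theorem are exactly the entries of Table~\ref{table:patterns} apart from $n=3$; equivalently, they are the $n$ for which $X_n$ has center function equivalent to $\cos B\cos C+k\cos A$ for some constant $k$ with $2k+1\ne 0$ (the needed equivalences are those in the proof of Theorem~\ref{thm:general2} and the remarks around it). The circumcenter $X_3$, with center function $\cos A$, is not of this shape and is correctly absent: for a cyclic reference its four half triangles share a circumcircle, so $E=F=G=H$ collapses to a single point. So the plan is to prove the one statement: \emph{if $ABCD$ is bicentric and the chosen center has center function $\cos B\cos C+k\cos A$ with $2k+1\ne 0$, then the half-triangle central quadrilateral $EFGH$ is bicentric.} I would do this by exhibiting $EFGH$ as a directly similar copy of $ABCD$; since having a circumscribed circle and having an inscribed circle are both similarity-invariant, $EFGH$ will then be bicentric because $ABCD$ is.

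First I would put the circumcircle of $ABCD$ at the origin of the complex plane, with affixes $a,b,c,d$ of $A,B,C,D$. Since $B,C,D$ lie on this circle it is also the circumcircle of $\triangle BCD$, so the four half triangles share the circumcenter $0$, and the orthocenter of $\triangle BCD$ is $b+c+d$; hence its Euler line is $\{t(b+c+d):t\in\mathbb{R}\}$. By Theorem~\ref{thm:EulerLine} the chosen center lies on that line, so $E=t_k(b+c+d)$ for some scalar $t_k$. The crux is that $t_k$ depends only on $k$, not on the triangle: expanding the barycentric numerator of the center as (centroid numerator) $+\,(k-1)\,$(circumcenter numerator) and using the identity $\sum_{\mathrm{cyc}}a\cos A=\tfrac23\sum_{\mathrm{cyc}}a\sin B\sin C$ (both sides equal $4R\sin A\sin B\sin C$), with the circumcenter contribution dropping out because the circumcenter is $0$, yields $E=\dfrac{b+c+d}{2k+1}$, and likewise $F=\dfrac{a+c+d}{2k+1}$, $G=\dfrac{a+b+d}{2k+1}$, $H=\dfrac{a+b+c}{2k+1}$. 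Sanity checks: $k=1$ (centroid) gives ratio $\tfrac13$, matching Theorem~\ref{thm:genSimilar}; $k=0$ (orthocenter) gives ratio $-1$, matching the stated congruence; and $k\to\infty$ recovers the circumcenter.

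Writing $s=a+b+c+d$ gives $E=\dfrac{s-a}{2k+1}$ and similarly for $F,G,H$, so the affine map $z\mapsto\dfrac{s-z}{2k+1}$ sends $A,B,C,D$ to $E,F,G,H$. Its linear part is multiplication by the real scalar $-\tfrac{1}{2k+1}$, so it is a similarity (a dilation of ratio $-\tfrac{1}{2k+1}$, hence orientation-preserving), and therefore $EFGH$ is directly similar to $ABCD$; being bicentric, $ABCD$ passes this on to $EFGH$, which proves part~(a). The hard part is the second step — pinning down that the Euler-line parameter is exactly $\tfrac{1}{2k+1}$ uniformly over all four half triangles — which one either extracts from the barycentric computation above or, in the spirit of the rest of the paper, confirms by a direct coordinate computation in Mathematica. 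Two minor points also warrant a remark: that $2k+1\ne0$ for every listed $n$, so that $EFGH$ is a genuine convex quadrilateral and not a point, and that being convex and bicentric is indeed a similarity invariant.
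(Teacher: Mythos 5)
Your proof is correct, but it is worth saying up front that the paper does not prove this statement at all: it is an entry in the raw-data appendix, supported only by numerical verification in GeometricExplorer, with the nearest proved results being Theorem \ref{thm:genCyclicDeLongchamps} (a complex-number proof for $X_{20}$ alone) and the unproved Theorems \ref{thm:genCyclicTrig} and \ref{thm:genCyclicCommon}. What you have done is promote the $X_{20}$ argument to the whole family: your key formula, that the center with center function $\cos B\cos C+k\cos A$ of a triangle inscribed in a circle centered at the origin has affix $\frac{1}{2k+1}$ times the sum of the vertices, checks out. Writing $\cos B\cos C+k\cos A=\sin B\sin C+(k-1)\cos A$, the barycentric weight $a\sin B\sin C=2R\sin A\sin B\sin C$ is symmetric (centroid direction) while the $a\cos A=R\sin 2A$ part points at the circumcenter, and the weights sum to $2R\sin A\sin B\sin C\,(2k+1)$, giving exactly $P=\frac{A+B+C+2(k-1)O}{2k+1}$, i.e.\ $\frac{A+B+C}{2k+1}$ when $O=0$. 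The induced map $z\mapsto\frac{s-z}{2k+1}$ is a direct similarity carrying $ABCD$ to $EFGH$ in order, and bicentricity is similarity-invariant, so the listed cases follow once one confirms from Table \ref{table:patterns} that each listed $n$ has $2k+1\ne0$ (the excluded value $k=-\tfrac12$ is the Euler infinity point, not in the list). Your argument buys considerably more than the statement: it simultaneously proves Theorem \ref{thm:genCyclicTrig}, the Euler-line cases of Theorem \ref{thm:genCyclicCommon}, the cyclic case of Theorem \ref{thm:genSimilar} with the correct ratio, the orthocenter congruence, and the corresponding raw-data entries for harmonic, Hjelmslev, exbicentric, and bicentric-trapezoid reference quadrilaterals, since similarity preserves all of those classes. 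Two caveats: the opening assertion that the listed $n$ are \emph{exactly} the centers equivalent to $\cos B\cos C+k\cos A$ with $2k+1\ne0$ is an unverified completeness claim you do not need (drop it or flag it as empirical); and the similarity argument is special to cyclic reference quadrilaterals, since only then do the four half triangles share a circumcenter that can be placed at the origin.
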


\begin{theorem}
For a harmonic quadrilateral,\\
(excluding results for a cyclic quadrilateral)\\
(excluding results for an equalProdOpp quadrilateral)\\
(a) The central quadrilateral is harmonic for $n=$2, 4, 5, 15, 16, 20, 23, 125, 140, 186,
265, 376, 381, 382, 546-550, 631, 632.
\end{theorem}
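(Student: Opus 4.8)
The plan is to unfold ``harmonic'' into its two constituent properties---cyclic, and equal products of opposite sides (equalProdOpp)---and verify both for the central quadrilateral $EFGH$. I would work in the complex plane, taking the circumcircle of the harmonic reference quadrilateral $ABCD$ to be the unit circle; write $a,b,c,d$ for the affixes of $A,B,C,D$ and put $\sigma=a+b+c+d$. Two facts are used repeatedly: the concyclicity criterion preceding Theorem~\ref{thm:genCyclicDeLongchamps} (four affixes are concyclic or collinear iff their cross ratio is real), and the remark that a \emph{cyclic} quadrilateral $PQRS$ is harmonic exactly when the cross ratio of $p,q,r,s$ equals $-1$ (its modulus is the ratio of the products of opposite sides, and for a convex cyclic quadrilateral the cross ratio is negative). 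There is also a structural principle special to this configuration: since $ABCD$ is cyclic, the four half triangles $\triangle BCD,\triangle ACD,\triangle ABD,\triangle ABC$ are inscribed in the \emph{same} circle, so the single inversion $z\mapsto 1/\bar z$ in that circle carries the central quadrilateral of any center $P$ onto the central quadrilateral of the center ``$P$ inverted in the circumcircle''; since inversion maps circles to circles and conjugates cross ratios, it sends a harmonic central quadrilateral to a harmonic one.

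For $n\in\{2,4,5,20,140,376,381,382,546,547,548,549,550,631,632\}$---the values occurring in Table~\ref{table:patterns} other than $n=3$---the center function is equivalent to $\cos B\cos C+k\cos A$, so the chosen center lies on the Euler line of each half triangle (Theorem~\ref{thm:EulerLine}). An Euler line passes through the circumcenter (here the origin) and the centroid, and for a triangle inscribed in the unit circle the orthocenter is the sum of its vertex affixes; hence every Euler-line point of such a triangle is a real multiple $\mu$ of that vertex sum, with $\mu$ determined by $k$ alone. Thus $E=\mu(\sigma-a)$, $F=\mu(\sigma-b)$, $G=\mu(\sigma-c)$, $H=\mu(\sigma-d)$, so $EFGH$ is the image of $ABCD$ under the similarity $z\mapsto\mu(\sigma-z)$; as $\mu\neq0$ for finite $k$ this is nondegenerate, and harmonicity is a similarity invariant, so $EFGH$ is harmonic. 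The exceptional value $\mu=0$ is the circumcenter, center function $\cos A$, namely $X_3$: then $E,F,G,H$ all coincide with the center of the circumcircle of $ABCD$ and the central quadrilateral collapses to a point---which is exactly why $n=3$ is omitted. (For the orthocenter, $\mu=1$, the similarity is a point reflection, so $EFGH\cong ABCD$.)

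The remaining six values follow from the structural principle together with results already in the paper. For $X_{15}$ the central quadrilateral is cyclic by Theorem~\ref{thm:genCyclicCommon} (first isodynamic point) and equalProdOpp by Theorem~\ref{thm:halfEqualProdOpp}, hence harmonic. Since $X_{16}$, $X_{23}$, and $X_{186}$ are the inverses in the circumcircle of $X_{15}$, $X_2$, and $X_4$ respectively, their central quadrilaterals are the inversions (in the common circumcircle) of the harmonic central quadrilaterals for $X_{15}$, $X_2$, $X_4$, hence are harmonic; for $X_{23}$ one can also check directly, using that the Far-Out point of $\triangle BCD$ has affix $\tfrac{3bcd}{bc+cd+db}$, that the cross ratio of $E,F,G,H$ collapses to $\tfrac{(b-a)(d-c)}{(d-a)(b-c)}=(a,b,c,d)$ identically (and $X_{186}$ of $\triangle BCD$ has affix $\tfrac{bcd}{bc+cd+db}$, one third of it). Finally $X_{125}$ and $X_{265}$ are not covered by the earlier theorems; for these I would follow the coordinate method of Theorems~\ref{thm:cyclic485} and~\ref{thm:ortho6}: parametrize a cyclic $ABCD$ in barycentrics, impose the harmonic relation $ac=bd$ on the sides, compute the four centers, verify concyclicity via the circle equation~\eqref{eq:circle}, and verify $EF^2\cdot GH^2=FG^2\cdot HE^2$ symbolically (squaring to avoid radicals). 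If $X_{265}$ is the circumcircle-inverse of $X_{125}$---which I expect---then only $X_{125}$ needs this direct verification.

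The hard part is this last group: $X_{125}$ and $X_{265}$ have no apparent center-function pattern in common with the rest, so their treatment seems to require a genuine, if finite, symbolic computation that only the full harmonic hypothesis makes succeed. More conceptually, the piece missing from the paper is a structural characterization---parallel to Theorem~\ref{thm:normal} for orthodiagonal references---of the center functions $f$ for which an equalProdOpp reference quadrilateral yields an equalProdOpp central quadrilateral. Such a criterion would absorb $X_{15}$ and $X_{125}$ (and, via the inversion principle, $X_{16},X_{23},X_{186},X_{265}$) into one statement, and the theorem would then follow from it together with the cyclic results of Theorems~\ref{thm:genCyclicTrig} and~\ref{thm:genCyclicCommon}.
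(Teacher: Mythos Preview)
The paper offers no proof of this statement: it appears in Appendix~\ref{appendix:halfTriangleData} as \emph{raw data}, i.e.\ numerical evidence from GeometricExplorer at fifteen-digit precision (see Section~\ref{section:methodology}). So you are not competing with a proof in the paper; you are supplying one where the paper has only a computer observation.

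Your argument for the fifteen Euler-line values $n\in\{2,4,5,20,140,376,381,382,546\text{--}550,631,632\}$ is complete and is the right idea: because the four half triangles of a cyclic quadrilateral share the unit circumcircle, any fixed Euler-line point has affix $\mu(\sigma-\,\cdot\,)$ in each, and $z\mapsto\mu(\sigma-z)$ is a similarity carrying $ABCD$ to $EFGH$, hence preserving harmonicity. This generalizes the paper's complex-number proof of Theorem~\ref{thm:genCyclicDeLongchamps} from one Euler-line point to all of them, and simultaneously explains the omission of $n=3$. The inversion principle you state is also correct and is not made explicit anywhere in the paper: since all four half triangles have the same circumcircle, inversion in it intertwines the central quadrilaterals of circumcircle-inverse centers, and inversion conjugates cross ratios, so it preserves the value $-1$. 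The pairings $X_{15}\leftrightarrow X_{16}$, $X_{2}\leftrightarrow X_{23}$, $X_{4}\leftrightarrow X_{186}$ are indeed circumcircle inversions, so those three follow.

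The genuine gap is $X_{125}$ and $X_{265}$. Your hope that $X_{265}$ is the circumcircle-inverse of $X_{125}$ is not correct: $X_{265}$ is the reflection of $X_3$ in $X_{125}$ (equivalently the antipode of $X_{74}$), not its inverse, so the inversion principle does not reduce $X_{265}$ to $X_{125}$. Both therefore require the direct symbolic verification you outline. That is a legitimate plan, but as written the proof is incomplete for exactly these two values; everything else is in good shape and goes well beyond what the paper itself establishes.
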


\begin{theorem}
For a parallelogram,\\
(excluding results for Pythagorean)\\
(excluding results for extangential)\\
(a) The central quadrilateral is a rhombus for $n=$10, 639-642.\\
(b) The central quadrilateral appears to be a parallelogram for all other $n$.
\end{theorem}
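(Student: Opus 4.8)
Part (b) is not really new: by Theorem \ref{thm:halfPara} the central quadrilateral of a parallelogram is a parallelogram for \emph{every} triangle center, so in particular for every $n$. Recall why: the half-turn about the diagonal point $P$ of $ABCD$ interchanges $\triangle BCD\leftrightarrow\triangle ABD$ and $\triangle ACD\leftrightarrow\triangle ABC$, hence interchanges the corresponding centers $E\leftrightarrow G$ and $F\leftrightarrow H$; thus $P$ is the common midpoint of the segments $EG$ and $FH$, which makes $EFGH$ a parallelogram whose diagonal point is $P$. In particular $\vec{PG}=-\vec{PE}$ and $\vec{PH}=-\vec{PF}$.

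For part (a), note that a parallelogram whose diagonals are perpendicular is a rhombus. Since the diagonals of $EFGH$ are $EG$ and $FH$ and both pass through $P$ along the directions $\vec{PE}$ and $\vec{PF}$, the whole statement reduces to one perpendicularity: the central quadrilateral is a rhombus exactly when $\vec{PE}\perp\vec{PF}$, i.e.\ when the chosen center of $\triangle BCD$ and the corresponding center of $\triangle ACD$ are seen from the diagonal point $P$ in perpendicular directions. So the plan is to verify $\vec{PE}\cdot\vec{PF}=0$ for the centers $X_{10}$ and $X_{639}$--$X_{642}$, and nothing needs to be checked separately for $G$ and $H$ because of the $A=-C$, $B=-D$ symmetry.

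Concretely, I would set up Cartesian coordinates with $P$ at the origin, $A=(-1,0)$, $C=(1,0)$ (so diagonal $AC$ lies on the $x$-axis), $B=(p,q)$, and $D=(-p,-q)$ with $q\neq0$; these points form a generic parallelogram with diagonal point $P$. For each of $\triangle BCD$ and $\triangle ACD$, compute the three side lengths in terms of $p,q$, convert the chosen center function from trigonometric to algebraic form via the Law of Cosines (exactly as in the proof of Theorem \ref{thm:general}), pass to barycentrics, and use the change-of-coordinates formula to get the Cartesian coordinates of $E$ and $F$. Then compute the slopes of $PE$ and $PF$ and confirm their product is $-1$, i.e.\ $\vec{PE}\cdot\vec{PF}\equiv0$ in $p,q$. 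The side lengths introduce square roots, so this is best delegated to Mathematica, as elsewhere in the paper.

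The main obstacle is organisational rather than conceptual: doing the computation five separate times is unappealing, so I would first pull the center functions of $X_{10}$ (the Spieker center, $f(a,b,c)=(b+c)/a$) and of $X_{639}$--$X_{642}$ from \cite{ETC} and look for a common shape — plausibly a one- or two-parameter family in the spirit of $\cos B\cos C+k\cos A$ in Theorem \ref{thm:general} — and prove $\vec{PE}\perp\vec{PF}$ once for that family, which then subsumes all five centers. A genuinely synthetic argument producing the perpendicularity directly (even only for the Spieker center, where $E$ and $F$ are the incenters of the medial triangles of $\triangle BCD$ and $\triangle ACD$) would be the elegant goal, but as the Open Question following Theorem \ref{thm:halfParaRhombus} records, none is presently known; so I expect the honest proof here to be the coordinate computation above.
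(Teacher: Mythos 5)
Your treatment of part (b) is exactly the paper's: this appendix entry is the raw-data counterpart of Theorem \ref{thm:halfPara}, whose proof is precisely the half-turn argument you reproduce (the congruence $\triangle ABD\cong\triangle CDB$ centered at the diagonal point forces $P$ to bisect both $EG$ and $FH$). Note that this makes part (b) true for \emph{all} $n$, not just the ``other'' ones, which you correctly observe. For part (a) the paper in fact offers no proof at all: the corresponding Theorem \ref{thm:halfParaRhombus} is one of the results stated without proof and deferred to the Mathematica supplementary material, and the Open Question that follows it asks for a synthetic proof even in the Spieker case. So your contribution here is genuinely additional. The reduction is the valuable step: since $G=-E$ and $H=-F$ relative to $P$, the parallelogram $EFGH$ is a rhombus if and only if $\vec{PE}\perp\vec{PF}$, so the whole of part (a) collapses to a single dot-product identity in the two parameters $(p,q)$ of your normalized parallelogram, checked once per center (or once per family, if $X_{10}$ and $X_{639}$--$X_{642}$ can be subsumed under a common center-function form). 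That is a cleaner target than verifying ``rhombus'' directly, and it is entirely consistent with the paper's coordinate-plus-Mathematica methodology. Two small cautions: you should not assume in advance that the five centers fit one linear family --- $X_{10}$ has center function $(b+c)/a$ while $X_{639}$--$X_{642}$ involve the area $S$ (they are complements of Vecten-related points), so you may well end up doing the computation twice or five times, which costs nothing but tidiness; and for the statement ``rhombus'' to be literal you should note that $E,F,G,H$ are distinct and non-collinear for a generic parallelogram, which the computation will show since $\vec{PE}$ and $\vec{PF}$ come out nonzero and non-parallel.
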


\begin{theorem}
For a Hjelmslev quadrilateral,\\
(excluding results for a cyclic quadrilateral)\\
(excluding results for a Pythagorean quadrilateral)\\
(a) The central quadrilateral is Hjelmslev for $n=$2, 4, 5, 20, 102, 109, 140, 376,
381, 382, 546-550, 631, 632, 930.\\
(b) The central quadrilateral is a parallelogram for $n=$53.\\
(c) The central quadrilateral is a line segment for $n=$97.
\end{theorem}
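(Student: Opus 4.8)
The plan is to use two structural facts about a Hjelmslev reference quadrilateral $ABCD$. First, a convex quadrilateral with $A=C=90\degrees$ has opposite angles summing to $180\degrees$, hence is cyclic, and the inscribed right angle at $A$ forces the diagonal $BD$ to be a diameter of the circumcircle $\omega$. Consequently all four half triangles $\triangle BCD$, $\triangle ACD$, $\triangle ABD$, $\triangle ABC$ are inscribed in the single circle $\omega$, and two of them, $\triangle BCD$ and $\triangle ABD$, are right triangles whose common hypotenuse $BD$ is a diameter of $\omega$. I would place $\omega$ at the origin of the complex plane and scale it to the unit circle, exactly as in the proof of Theorem~\ref{thm:genCyclicDeLongchamps}; then the circumcenter of every half triangle is $0$ and its orthocenter is the sum of its three vertex affixes. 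Parts (b) and (c) are precisely Theorems~\ref{thm:halfHjPara} and~\ref{thm:halfHjLine} (both proved by the same coordinate computation), so I focus on part (a).

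For part (a), split the list of $n$ into two groups. The first group, $n\in\{2,4,5,20,140,376,381,382,546,547,548,549,550,631,632\}$, consists of exactly those centers whose center function is equivalent to $\cos B\cos C+k\cos A$ (compare Table~\ref{table:patterns} and Theorem~\ref{thm:general2}); by Theorem~\ref{thm:EulerLine} each such center lies on the Euler line of every triangle, at a nonzero real parameter $\lambda=\lambda(k)$ depending only on $k$. In the normalization above the Euler line of $\triangle BCD$ passes through $0$ (its circumcenter) and $b+c+d$ (its orthocenter), so the chosen center of $\triangle BCD$ is $E=\lambda(b+c+d)$; writing $\sigma=a+b+c+d$ this equals $\lambda(\sigma-a)$, and likewise $F=\lambda(\sigma-b)$, $G=\lambda(\sigma-c)$, $H=\lambda(\sigma-d)$. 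Hence $EFGH$ is the image of $ABCD$, vertex by vertex in the order $A\mapsto E$, $B\mapsto F$, $C\mapsto G$, $D\mapsto H$, under the direct similarity $z\mapsto\lambda(\sigma-z)$, so $EFGH$ is directly similar to $ABCD$ and therefore Hjelmslev: the right angles at the opposite vertices $A,C$ of $ABCD$ go to right angles at the opposite vertices $E,G$ of $EFGH$. This is consistent with Theorem~\ref{thm:genSimilar} ($\lambda=\tfrac13$ for $X_2$) and with the fact that $X_4$ gives a congruent central quadrilateral ($\lambda=1$, so $z\mapsto\sigma-z$ is an isometry).

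The second group is $n\in\{102,109,930\}$, each of which lies on the circumcircle of the reference triangle (classical for $X_{102}$ and $X_{109}$, the isogonal conjugates of the infinite points $X_{514}$ and $X_{518}$). Since all four half triangles share $\omega$, this gives $E,F,G,H\in\omega$, so $EFGH$ is cyclic; and a cyclic quadrilateral is Hjelmslev exactly when one of its diagonals is a diameter of its circumcircle (then the two opposite vertices see it at a right angle). So the task reduces to showing that the relevant diagonal of $EFGH$ is a diameter of $\omega$. I would verify this by substituting the explicit trilinears of $X_{102}$, $X_{109}$, $X_{930}$ into the two right triangles $\triangle BCD$, $\triangle ABD$ — where, $BD$ being a diameter, a circumcircle center collapses to a distinguished point of $\omega$ — and into $\triangle ACD$, $\triangle ABC$, and then checking that one pair of opposite vertices of $EFGH$ is antipodal on $\omega$. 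This last check is the main obstacle: the three center functions are unrelated, so each requires a separate computation, and one must also see why the analogous mechanism fails for the other circumcircle centers (e.g.\ $X_{74}$ and $X_{98}$--$X_{100}$) that are absent from the list. As with the $X_{53}$ and $X_{97}$ computations behind Theorems~\ref{thm:halfHjPara} and~\ref{thm:halfHjLine}, in practice this is discharged by the symbolic computation in the supplementary Mathematica notebooks.
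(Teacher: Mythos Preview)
The paper does not give a proof of this statement in the text: it sits in Appendix~B as raw numerical output from GeometricExplorer, with any formal verification relegated to the supplementary Mathematica notebooks. So there is no paper proof to compare against; your proposal is already more than the paper supplies.

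Your treatment of the Euler-line block $\{2,4,5,20,140,376,381,382,546\text{--}550,631,632\}$ is correct and is a genuine conceptual argument the paper lacks. The one point you assert without justification --- that a center with function $\cos B\cos C+k\cos A$ lies at a \emph{fixed} affine position $\lambda(k)$ on the Euler line, not merely on that line --- is true and easy to pin down: with the circumcircle normalized to the unit circle one computes $S_O=\sum a\cos A=4R\sin A\sin B\sin C$ and $S_H=\sum a\cos B\cos C=2R\sin A\sin B\sin C$, so $S_O/S_H=2$ and the center has affix $\sigma/(1+2k)$. Your similarity $z\mapsto\lambda(\sigma-z)$ then carries the two opposite right angles of $ABCD$ to opposite right angles of $EFGH$, which is exactly the Hjelmslev condition.

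For $\{102,109,930\}$ your reduction is sound: all three are circumcircle points, the four half triangles share $\omega$, and a cyclic quadrilateral is Hjelmslev precisely when one diagonal is a diameter. You are right that the remaining antipodality check, and the explanation of why other circumcircle centers (e.g.\ $X_{74},X_{98},\dots$) fail, is a per-center computation on the same footing as the paper's Mathematica verifications of Theorems~\ref{thm:halfHjPara} and~\ref{thm:halfHjLine}. One small slip: the infinite points isogonally conjugate to $X_{102}$ and $X_{109}$ are not $X_{514}$ and $X_{518}$ (those are different ETC entries); the claim that $X_{102},X_{109}$ lie on the circumcircle is nonetheless standard and your argument is unaffected.
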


\begin{theorem}
For an exbicentric quadrilateral,\\
(excluding results for an extangential quadrilateral)\\
(excluding results for a cyclic quadrilateral)\\
(a) The central quadrilateral is exBicentric for $n=$2, 4, 5, 20, 140, 376,
381, 382, 546-550, 631, 632.
\end{theorem}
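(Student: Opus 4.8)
The plan is to reduce the whole statement to a single observation: for a \emph{cyclic} reference quadrilateral, each of the listed centers produces a central quadrilateral that is \emph{directly similar} to $ABCD$, and ``exbicentric'' (= cyclic and extangential) is a similarity‑invariant property of a quadrilateral. The values in the list are exactly those appearing in Table~\ref{table:patterns} other than $n=3$, and by that table each of them has a center function equivalent to $\cos B\cos C+k\cos A$ for some constant $k$ (the entries written there in the form $p\cos(B-C)+q\cos A$ are rewritten via $\cos(B-C)=2\cos B\cos C+\cos A$, exactly as in the proof of Theorem~\ref{thm:general2}); in every case the resulting $k$ satisfies $2k+1\neq0$, while $X_3$ (center function $\cos A$, with no $\cos B\cos C$ term) is the degenerate exception. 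By Theorem~\ref{thm:EulerLine} such a center lies on the Euler line of every triangle.

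To carry this out, follow the complex‑number setup in the proof of Theorem~\ref{thm:genCyclicDeLongchamps}: put the circumcircle of $ABCD$ at the origin and let $a,b,c,d$ be the affixes of $A,B,C,D$. Since $ABCD$ is cyclic, each of the four half‑triangles $\triangle BCD,\triangle ACD,\triangle ABD,\triangle ABC$ is inscribed in this same circle, so each has circumcenter $0$ and orthocenter equal to the sum of the affixes of its vertices. A short computation (it is the computation behind the factor $2k+1$ in the denominator in the proof of Theorem~\ref{thm:general}) shows that, in these coordinates, the center with center function $\cos B\cos C+k\cos A$ of a triangle with vertex affixes $z_1,z_2,z_3$ is $(z_1+z_2+z_3)/(2k+1)$. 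Writing $\lambda=1/(2k+1)$, we get
$$E=\lambda(b+c+d),\quad F=\lambda(a+c+d),\quad G=\lambda(a+b+d),\quad H=\lambda(a+b+c).$$
Putting $\sigma=a+b+c+d$, this says $E=\lambda\sigma-\lambda a$, $F=\lambda\sigma-\lambda b$, $G=\lambda\sigma-\lambda c$, $H=\lambda\sigma-\lambda d$; hence $E,F,G,H$ are the images of $A,B,C,D$ under the map $z\mapsto-\lambda z+\lambda\sigma$, which is a direct similarity of the plane (a homothety of ratio $-\lambda\neq0$). Therefore $EFGH$ is directly similar to $ABCD$, with $EF,FG,GH,HE$ corresponding to $AB,BC,CD,DA$. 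A similarity preserves equality of angles and of side‑length ratios, and carries a circle to a circle and an excircle to an excircle; so $EFGH$, like $ABCD$, is both cyclic and extangential, i.e.\ exbicentric. (The cyclic half is also Theorem~\ref{thm:genCyclicTrig}, but the similarity yields everything at once. For $n=3$ one has $\lambda=0$: all of $E,F,G,H$ collapse to the common circumcenter and the central ``quadrilateral'' is a point, which is precisely why $3$ is absent from the list.)

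I expect no serious obstacle; the only real work is bookkeeping. One must confirm, entry by entry in Table~\ref{table:patterns}, that each listed $n$ genuinely has a center function equivalent to $\cos B\cos C+k\cos A$ with $2k+1\neq0$, and verify the normalization constant $2k+1$ in the circumcenter‑at‑origin formula for the center — both routine. It is worth noting that this single argument proves, word for word, the companion raw‑data theorems carrying the identical list of $n$ for \emph{bicentric}, \emph{harmonic}, and \emph{Hjelmslev} reference quadrilaterals, since each of those shapes is ``cyclic together with one further similarity‑invariant condition'' and is therefore inherited by $EFGH\sim ABCD$.
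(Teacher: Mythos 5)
Your proposal is correct, and it supplies something the paper does not: this statement sits in the raw‑data appendix and is reported there purely as computer output, with no proof given. The closest the paper comes is the complex‑number argument of Theorem \ref{thm:genCyclicDeLongchamps}, which treats only the de Longchamps point and only establishes that the central quadrilateral is cyclic; the general form $\cos B\cos C+k\cos A$ is handled elsewhere (Theorem \ref{thm:genCyclicTrig}) by symbolic computation in the supplementary notebooks. What you do differently is push the complex‑number computation to the whole Euler‑line pencil at once: since all four half triangles of a cyclic quadrilateral share the circumcircle, placing its center at the origin makes each chosen center equal to $\lambda$ times the sum of the vertex affixes, so $E,F,G,H$ are the images of $A,B,C,D$ under the single homothety $z\mapsto-\lambda z+\lambda\sigma$. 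That one observation proves $EFGH\sim ABCD$ and hence inherits \emph{every} similarity‑invariant shape property — exbicentric, bicentric, harmonic, Hjelmslev, cyclic trapezoid — in one stroke, unifying several appendix theorems that the paper only verifies numerically, and it explains cleanly why $n=3$ ($\lambda=0$) is absent. Your formula $\lambda=1/(2k+1)$ checks out (it sends $k=0,1,\tfrac12,\infty$ to $H$, $G$, $N$, $O$ respectively, and a projective parametrization of a line is determined by three points). The only bookkeeping you rightly flag is confirming that each listed $n$ really is an Euler‑line center with $2k+1\neq0$; note in passing that the paper's Table \ref{table:patterns} entry for $X_{376}$ appears to carry a sign inconsistency with the usual description of that point as the reflection of $X_2$ in $X_3$, but this does not affect your argument, which needs only that the center equals a fixed nonzero real multiple of $z_1+z_2+z_3$ when the circumcenter is at the origin.
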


\begin{theorem}
For a bicentric trapezoid,\\
(excluding results for an isosceles quadrilateral)\\
(excluding results for an tangential quadrilateral)\\
(excluding results for a bicentric quadrilateral)\\
(a) The central quadrilateral is a bicentric trapezoid for $n=$2, 4, 5, 20, 140,
376, 381, 382, 546-550, 631, 632.\\
(b) The central quadrilateral appears to be an isosceles trapezoid for just about
all other $n$.
\end{theorem}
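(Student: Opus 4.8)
The plan is to dispose of part~(b) in a line and to prove part~(a) through the stronger statement that the central quadrilateral is \emph{similar} to the reference quadrilateral.

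\textbf{Part (b).} A bicentric trapezoid is in particular an isosceles trapezoid, so Theorem~\ref{thm:halfIsoTrap} already says that for \emph{every} triangle center the central quadrilateral of an isosceles trapezoid is again an isosceles trapezoid; ``just about all other $n$'' only hedges against the finitely many centers for which $EFGH$ collapses to a segment or a point. So there is nothing further to do here.

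\textbf{Part (a).} The values of $n$ in the statement are exactly the Kimberling centers $X_n$ with $n\le 1000$ and $n\neq 3$ whose center function has the form $\cos B\cos C+k\cos A$ (see Table~\ref{table:patterns} and the surrounding discussion), so it suffices to prove: if $ABCD$ is a bicentric trapezoid and the chosen center has center function $\cos B\cos C+k\cos A$ with $k$ constant and $1+2k\neq 0$, then $EFGH$ is a bicentric trapezoid. The first step is to locate this center cleanly. It lies on the Euler line (Theorem~\ref{thm:EulerLine}); starting from the first barycentric coordinate $a^{4}-(b^{2}-c^{2})^{2}+2ka^{2}(b^{2}+c^{2}-a^{2})$ computed in the proof of Theorem~\ref{thm:general}, one recognizes $\bigl(a^{4}-(b^{2}-c^{2})^{2}:\cdots\bigr)$ as a representative of the orthocenter and $\bigl(a^{2}(b^{2}+c^{2}-a^{2}):\cdots\bigr)$ as a representative of the circumcenter, and normalizing by means of $\sum a^{2}(b^{2}+c^{2}-a^{2})=\sum\bigl(a^{4}-(b^{2}-c^{2})^{2}\bigr)=16K^{2}$ shows that in Cartesian coordinates the center is
$$P_{k}=\frac{\mathcal H+2k\,O}{1+2k},$$
where $\mathcal H,O$ are the orthocenter and circumcenter of the triangle; crucially the weights depend on $k$ alone. (Checks: $k=1$ is the centroid $\tfrac13(\mathcal H+2O)$, $k=0$ is $\mathcal H$, $k=\tfrac12$ is the nine-point center $\tfrac12(\mathcal H+O)$.)

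The second step uses that a bicentric trapezoid is cyclic. Then the four half-triangles $\triangle BCD,\triangle ACD,\triangle ABD,\triangle ABC$ all share the circumcircle of $ABCD$, hence have a common circumcenter $O$, and if $O$ is taken as the origin the orthocenter of each is the sum of its vertices (exactly as exploited in the proof of Theorem~\ref{thm:genCyclicDeLongchamps}). Substituting into the formula above, with $\Sigma:=A+B+C+D$, gives
$$E=\frac{\Sigma-A}{1+2k},\qquad F=\frac{\Sigma-B}{1+2k},\qquad G=\frac{\Sigma-C}{1+2k},\qquad H=\frac{\Sigma-D}{1+2k},$$
so $EFGH$ is the image of $ABCD$ under $z\mapsto(\Sigma-z)/(1+2k)$, a similarity of the plane (ratio $1/|1+2k|$). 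A similarity sends a bicentric trapezoid to a bicentric trapezoid, so $EFGH$ is one, with $EH\parallel FG$ and $EF=GH$, in agreement with Theorem~\ref{thm:halfIsoTrap}. The circumcenter $X_{3}$ is the ``$k\to\infty$'' case, for which $1/(1+2k)\to 0$ and $EFGH$ degenerates to the single point $O$; that is why $n=3$ is absent from part~(a).

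The one genuinely computational step is the first — pinning down $P_{k}=(\mathcal H+2kO)/(1+2k)$ with weights \emph{independent of the triangle}; everything after that is formal. A secondary bookkeeping matter is checking that the list of $n$ in the statement is precisely the set of $X_n$ ($n\le 1000$, $n\neq 3$) with center function $\cos B\cos C+k\cos A$, but this has in effect been done in the earlier sections. Finally, note that the same argument reproves and sharpens several earlier ``cyclic $\Rightarrow$ cyclic'' results for Euler-line centers (e.g.\ Theorems~\ref{thm:genCyclicTrig} and~\ref{thm:genCyclicDeLongchamps}): for a cyclic reference quadrilateral the central quadrilateral from any Euler-line center is similar to it.
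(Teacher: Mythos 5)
Your argument is essentially correct, and it supplies something the paper does not: this statement sits in Appendix B as raw numerical output of GeometricExplorer, with no proof offered beyond floating-point evidence. The paper's nearest proved relatives are Theorem \ref{thm:genCyclicTrig} (cyclic $\Rightarrow$ cyclic for center function $\cos B\cos C+k\cos A$, delegated to Mathematica in barycentrics) and Theorem \ref{thm:genCyclicDeLongchamps} (complex numbers, but only for $X_{20}$). What you do differently is push the complex-number computation of Theorem \ref{thm:genCyclicDeLongchamps} to the whole family at once: the identity $P_k=(\mathcal H+2kO)/(1+2k)$ with weights independent of the triangle is correct (your normalization $\sum a^2(b^2+c^2-a^2)=\sum\bigl(a^4-(b^2-c^2)^2\bigr)=16K^2$ checks out, as do the sanity checks $k=0,\tfrac12,1$), and since the four half triangles of a cyclic quadrilateral share the circumcircle of $ABCD$, you get $E=(\Sigma-A)/(1+2k)$ and its companions, i.e.\ $EFGH$ is the image of $ABCD$ under a homothety of ratio $-1/(1+2k)$. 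That is strictly stronger than the stated claim and simultaneously explains the ratio $3$ of Theorem \ref{thm:genSimilar} in the cyclic case, the congruence remark after Theorem \ref{thm:genCyclicCommon} (the case $k=0$), the absence of $n=3$ from the list (degeneration to the single point $O$), and every appendix item of the form ``cyclic-and-$\mathcal P$ implies cyclic-and-$\mathcal P$ for $n=2,4,5,20,140,\dots$''. Two caveats keep this from being a complete proof of the literal statement: the identification of the listed $n$ with exactly the catalogued centers of the form $\cos B\cos C+k\cos A$ ($k$ constant, $n\le 1000$) is a finite bookkeeping check against \cite{ETC} that you assert rather than perform; and part (b) is an inherently empirical ``appears to be'' claim about the remaining centers, for which your citation of Theorem \ref{thm:halfIsoTrap} (a bicentric trapezoid is cyclic, hence an isosceles trapezoid) is the correct and complete disposal.
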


\begin{theorem}
For a rhombus,\\
(excluding results for a parallelogram)\\
(excluding results for a tangential trapezoid)\\
(excluding results for a kite)\\
(excluding results for an orthogonal trapezoid)\\
(a) The central quadrilateral is a rhombus for all $n$.\\
(b) The central quadrilateral is a square for $n=$354.
\end{theorem}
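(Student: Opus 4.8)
Part~(a) requires nothing new: it is exactly Theorem~\ref{thm:halfRhombus}. (A rhombus is a parallelogram, so by Theorem~\ref{thm:halfPara} the central quadrilateral $EFGH$ is a parallelogram; and since each of the four half triangles is isosceles, the chosen center lies on the altitude to its base, forcing the diagonals of $EFGH$ onto the perpendicular diagonals $AC$, $BD$ of $ABCD$ — a parallelogram with perpendicular diagonals is a rhombus.) So I would simply cite Theorem~\ref{thm:halfRhombus} for part~(a) and put the work into part~(b).

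For part~(b), the plan is to combine part~(a) with a computation of the two diagonals of $EFGH$ when the chosen center is $X_{354}$. Let $\ell$ be the side length of the rhombus $ABCD$ and let $O$ be its diagonal point, so $AC\perp BD$ and $O$ bisects both diagonals. The four half triangles are $\triangle BCD$, $\triangle ACD$, $\triangle ABD$, $\triangle ABC$; each is isosceles with both of its equal sides equal to $\ell$, and each has area $T=\tfrac12[ABCD]$, since each is cut off by a diagonal and the two vertices not on that diagonal are reflections of one another across it. I would use the fact that $X_{354}$, the Weill point, is the centroid of the contact (intouch) triangle.

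The key lemma I would establish is: \emph{in an isosceles triangle with legs of length $\ell$ and area $T$, the centroid of the contact triangle lies on the axis of symmetry, at perpendicular distance $\tfrac{2T}{3\ell}$ from the base.} The proof is short — the tangent length from a base vertex equals half the base, so the two touch points on the legs sit at height $T/\ell$ above the base while the third touch point is the base midpoint, at height $0$; averaging the three heights gives $\tfrac{2T}{3\ell}$. Applying this to the four half triangles: the axis of symmetry of $\triangle BCD$ and of $\triangle ABD$ is the diagonal $AC$, and their common base is $BD$, so $E$ and $G$ lie on $AC$ with $OE=OG=\tfrac{2T}{3\ell}$ on opposite sides of $O$; symmetrically $F$ and $H$ lie on $BD$ with $OF=OH=\tfrac{2T}{3\ell}$ on opposite sides of $O$. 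Hence the diagonals $EG$ and $FH$ of the central quadrilateral are equal (both of length $\tfrac{4T}{3\ell}$), perpendicular (they lie along $AC\perp BD$), and bisect each other at $O$ — so $EFGH$ is a square.

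The main obstacle is not conceptual but bookkeeping: pinning down the constant $\tfrac{2T}{3\ell}$, and more importantly verifying the sidedness claims (that $E$ and $G$, and likewise $F$ and $H$, really fall on opposite sides of $O$ — which follows from the contact-triangle centroid being interior, hence on the apex side of the base). As a fallback, part~(b) can be done by the coordinate method used elsewhere in the paper: place $A=(-p,0)$, $C=(p,0)$, $B=(0,-q)$, $D=(0,q)$, take $a(b+c)-(b-c)^2$ as a center function for $X_{354}$, apply the change-of-coordinates formula, and check that the Weill points satisfy $x_E=y_F=\tfrac{2pq}{3\sqrt{p^2+q^2}}$; then again $EFGH$ has equal, perpendicular, mutually bisecting diagonals and is a square.
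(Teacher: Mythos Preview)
Your treatment of part~(a) is exactly the paper's own: Theorem~\ref{thm:halfRhombus} is proved precisely by noting that each half triangle is isosceles, so the center lies on its axis of symmetry, forcing the diagonals of $EFGH$ onto $AC$ and $BD$; combined with Theorem~\ref{thm:halfPara} this gives a rhombus.

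For part~(b), your route is genuinely different from the paper's and in fact sharper. The paper does not print a proof of Theorem~\ref{thm:halfRhombusSquare}; following its standing convention, that result is relegated to the Mathematica notebooks, i.e.\ a symbolic coordinate verification. Your argument replaces that black-box computation with a short synthetic one: identify $X_{354}$ as the centroid of the contact triangle, compute that in an isosceles triangle with legs $\ell$ and area $T$ this centroid sits at height $2T/(3\ell)$ above the base (the two leg touch points are at height $T/\ell$, the base touch point at height $0$), and then observe that all four half triangles of a rhombus share the same $\ell$ and the same $T=\tfrac12[ABCD]$. This immediately gives $OE=OG=OF=OH=2T/(3\ell)$ along the perpendicular diagonals, so $EFGH$ is a square. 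The computation and the sidedness claims are correct (each Weill point lies inside its triangle, hence on the apex side of the base, and opposite half triangles have apices on opposite sides of $O$). What your approach buys is an explicit reason the Weill point is special --- the constant $2T/(3\ell)$ depends only on data common to the four half triangles --- whereas the paper's Mathematica verification confirms the fact without explaining it. Your coordinate fallback is also fine and is essentially what the supplementary notebooks would do.
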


\begin{theorem}
For an equidiagonal orthodiagonal kite,\\
(excluding results for an equidiagonal orthodiagonal quadrilateral)\\
(excluding results for a kite)\\
(a) The central quadrilateral is a bicentric exbicentric harmonic Hjelmslev quadrilateral for $n=$620.\\
(b) The central quadrilateral is an equidiagonal orthodiagonal kite for $n=$2, 51, 373, 638.\\
(c) The central quadrilateral is a line segment for $n=$615.
\end{theorem}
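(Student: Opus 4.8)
The plan is to follow the symmetric coordinate approach used throughout Section~\ref{section:halfTriangles}. Place the intersection of the diagonals at the origin, let the axis of symmetry $AC$ lie along the $x$-axis and $BD$ along the $y$-axis, so that $A=(a,0)$, $C=(-c,0)$, $B=(0,h)$, $D=(0,-h)$ with $a,c,h>0$. Any such quadrilateral is automatically an orthodiagonal kite; imposing $a+c=2h$ makes it equidiagonal, and after scaling we may take $h=1$ and $c=2-a$, leaving a single parameter $a$ (with $a\neq 1$ excluding the square, which is covered by Theorem~\ref{thm:halfSquare}). The reflection in the $x$-axis fixes $\triangle BCD$ and $\triangle ABD$ and interchanges $\triangle ACD$ with $\triangle ABC$, so for every choice of center we have $E=(e,0)$, $G=(g,0)$, $H=(u,-v)$ where $F=(u,v)$. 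Hence $EFGH$ is always a kite with symmetry axis $EG$ and with $FH$ lying on the line $x=u$; in particular it is automatically a kite and automatically orthodiagonal, in agreement with Theorem~\ref{thm:halfKite}.

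In this normal form each of the three assertions becomes a single polynomial condition. For part~(b), that $EFGH$ be an equidiagonal orthodiagonal kite reduces to $|EG|=|FH|$, i.e.\ $(e-g)^2=4v^2$; here $n=2$ is already covered by Theorem~\ref{thm:genSimilar} (the central quadrilateral is similar to $ABCD$), and $n=51$ and $n=373$ by Theorem~\ref{thm:halfEquiOrtho} combined with Theorem~\ref{thm:halfKite}, so only $n=638$ requires new work. For part~(c), degeneration to a line segment reduces to $v=0$, that is, to showing that the $X_{615}$ center of $\triangle ABC$ lies on line $AC$ whenever $a+c=2h$. For part~(a) I would first record the elementary fact that any kite, with side lengths $m,n,n,m$ in order, satisfies $m+n=n+m$ (so it is both tangential and extangential) and $m\cdot n=n\cdot m$ (so it is equalProdOpp), and that a cyclic kite has its two right angles at the two vertices off the symmetry axis, which are opposite, so it is Hjelmslev. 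Consequently all four properties asserted in~(a) collapse to the single statement that $EFGH$ is cyclic, which for a kite is equivalent to $\angle EFG=90\degrees$, i.e.\ in coordinates to $(e-u)(g-u)+v^2=0$.

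To obtain $e$, $g$, $u$, $v$ for $n=620,638,615$, I would take the center function $f(a,b,c)$ of $X_n$ from \cite{ETC}, form the trilinears $f(a,b,c):f(b,c,a):f(c,a,b)$ and convert them to barycentric coordinates, compute the three side lengths of the relevant half triangle from the vertex coordinates above, and apply the change-of-coordinates formula quoted in Section~\ref{section:methodology}. By the reflection symmetry it suffices to do this for $\triangle BCD$ (yielding $E$), $\triangle ABD$ (yielding $G$), and $\triangle ABC$ (yielding $H$, hence $F$ by reflection). Substituting $h=1$, $c=2-a$ and simplifying with Mathematica should reduce each of the three target identities $(e-g)^2-4v^2$, $v$, and $(e-u)(g-u)+v^2$ to $0$.

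The main obstacle is entirely computational. The side lengths of the half triangles involve three distinct radicals ($\sqrt{a^2+1}$, $\sqrt{c^2+1}$, and $a+c$), and the center functions of $X_{620}$, $X_{638}$ and especially $X_{615}$ are not short, so the closed forms for $u$ and $v$ are bulky and the final simplifications are delicate; one must impose the equidiagonal relation only at the end and keep track of radical signs so that the computed points lie in the correct triangles. As with the other results in Appendix~\ref{appendix:halfTriangleData}, the verification is routine in principle and is best carried out in the supplementary Mathematica notebooks.
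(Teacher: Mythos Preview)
The paper gives no proof of this statement. It appears in Appendix~B as raw numerical output from GeometricExplorer, and the methodology section says explicitly that such data ``does not constitute a proof that the result is correct, but gives us compelling evidence for the validity of the result.'' The appendix theorems are not accompanied by arguments in the text, and at most are backed by the supplementary Mathematica notebooks for exact verification. So there is nothing in the paper to compare your proposal against; you are supplying something the paper does not.

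That said, your plan is sound and entirely in the spirit of the paper's proved results. The coordinate model (origin at the diagonal point, symmetry axis along the $x$-axis, $B$ and $D$ symmetric in it, equidiagonality forcing $a+c=2h$) is correct, and the reflection argument showing $E,G$ lie on the axis while $F,H$ are mirror images is exactly Theorem~\ref{thm:halfKite} specialized to this setup. Your reduction of part~(b) to $(e-g)^2=4v^2$, of part~(c) to $v=0$, and of part~(a) to the single cyclicity condition $(e-u)(g-u)+v^2=0$ is right; in particular the observation that a kite with consecutive sides $m,n,n,m$ is automatically tangential, extangential, and equalProdOpp, and that a cyclic kite has its two equal opposite angles at $F$ and $H$ summing to $\pi$ and hence each a right angle (so Hjelmslev), is a genuine simplification not recorded in the paper. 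Your bookkeeping for part~(b), reducing to the single new case $n=638$ via Theorems~\ref{thm:genSimilar}, \ref{thm:halfEquiOrtho}, and \ref{thm:halfKite}, is also correct. The remaining work is, as you say, a symbolic computation for three specific center functions, which is precisely what the paper defers to Mathematica elsewhere.
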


\begin{theorem}
For an equidiagonal orthodiagonal trapezoid,\\
(excluding results for an equidiagonal orthodiagonal quadrilateral)\\
(excluding results for an orthogonal trapezoid)\\
(excluding results for an isosceles trapezoid)\\
(a) The central quadrilateral is a bicentric trapezoid for $n=$157.\\
(b) The central quadrilateral is cyclic for $n=$70, 565, 657, 770.\\
(c) The central quadrilateral is an equidiagonal orthodiagonal trapezoid for
$n=$2, 4, 5, 20, 51-53, 68, 91, 96, 135, 137, 140, 143, 373, 376, 381, 382, 389,
486, 546-550, 568, 571, 631, 632, 847, 925.\\
(d) The central quadrilateral is a rectangle for $n=$1, 26, 40, 131, 165, 577, 578.\\
(e) The central quadrilateral is a  trapezoid for $n=$70, 565, 657, 770.\\
(f) The central quadrilateral is a line segment for $n=$63, 136, 494, 642.\\
(g) The central quadrilateral appears to be an isosceles trapezoid for most other $n$.
\end{theorem}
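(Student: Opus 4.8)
The hypothesis is extremely rigid, and exploiting that rigidity is the whole game. A trapezoid whose diagonals are equal is automatically an isosceles trapezoid, and an isosceles trapezoid is cyclic and (again) equidiagonal; so an equidiagonal orthodiagonal trapezoid is simultaneously a trapezoid, an isosceles trapezoid, a cyclic quadrilateral, an equidiagonal quadrilateral, an orthodiagonal quadrilateral, and an equidiagonal orthodiagonal quadrilateral. Up to similarity it is a \emph{one-parameter} family. I would model it by putting the diagonal point $E$ at the origin, $AC$ on the $x$-axis and $BD$ on the $y$-axis (so orthodiagonality is free), and writing $A=(-p,0)$, $B=(0,p-1)$, $C=(1-p,0)$, $D=(0,p)$ with $0<p<1$. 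A direct check shows this is convex, that $AC$ and $BD$ are equal and perpendicular and meet at $E$, and that $DA\parallel BC$; and as $p$ varies one gets every equidiagonal orthodiagonal trapezoid up to similarity. Hence every point in the half-triangle configuration ($E,F,G,H$ the chosen centers of $\triangle BCD,\triangle ACD,\triangle ABD,\triangle ABC$) becomes an explicit function of the single parameter $p$, rational once the radicals coming from side lengths are cleared.

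The bulk of the statement then follows by specialization of results already established for half triangles. Theorem~\ref{thm:halfIsoTrap} (isosceles trapezoid $\Rightarrow$ isosceles trapezoid) is the baseline: for \emph{every} center the central quadrilateral $EFGH$ is an isosceles trapezoid, which is exactly part (g) (and, since every isosceles trapezoid is cyclic and equidiagonal, it already carries the bulk of (b) and the ``equidiagonal'' content of (c), modulo checking non-degeneracy). For the centers with center function $\cos B\cos C+k\cos A$ — namely $n=2,4,5,20,140,376,381,382,546$--$550,631,632$ — Theorem~\ref{thm:halfOrtho} additionally gives that $EFGH$ is orthodiagonal, upgrading the conclusion to ``equidiagonal orthodiagonal trapezoid'' and covering most of (c); $n=51,373$ come from Theorem~\ref{thm:halfEquiOrtho}, $n=486$ from (the strengthened-in-this-family form of) Theorem~\ref{thm:halfEquiOrthoOrtho}, $n=642$ in (f) from Theorem~\ref{thm:halfEquiOrthoLine}, and $n=1,40,165$ in (d) from Theorems~\ref{thm:halfCyclicJapanese}, \ref{thm:genCyclicRect} and~\ref{thm:halfIsoTrapRect}. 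Entries already accounted for by the three excluded categories (``equidiagonal orthodiagonal quadrilateral'', ``orthodiagonal trapezoid'', ``isosceles trapezoid'') are, by the statement's own convention, not reproved here.

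That leaves a finite residual list of genuinely new centers — $n=157$ in (a); $n=70,565,657,770$ in (b)/(e); $n=26,131,577,578$ in (d); $n=63,136,494$ in (f); and the members of (c) such as $52,53,68,91,96,135,137,143,389,568,571,847,925$ not covered above — which I would dispatch one center at a time in the $p$-model. For each $X_n$ take a center function from \cite{ETC}, eliminate the trigonometric quantities with the Law of Cosines (the rule set~(\ref{rules})) to obtain a function of $a,b,c$, pass to barycentrics by multiplying by $a$ and clearing denominators, apply the change-of-coordinates formula to each of $\triangle BCD,\triangle ACD,\triangle ABD,\triangle ABC$, and so get $E,F,G,H$ as functions of $p$. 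Then test the asserted shape with the matching algebraic certificate: vanishing of the relevant $2\times2$ determinants (all four points collinear) for ``line segment''; concyclicity via the intersecting-chords identity $PE\cdot PG=PF\cdot PH$ at $P=EG\cap FH$, exactly as in the proof of Theorem~\ref{thm:ortho6}, for ``cyclic''; perpendicularity plus equality of adjacent sides for ``rectangle''; and concyclicity together with an equal-tangent-length condition ($|EF|+|GH|=|FG|+|HE|$, or the $|EF|-|FG|+|GH|-|HE|=0$ variant for the exterior circle) for ``bicentric trapezoid''. Each check collapses to verifying a one-variable polynomial identity, which Mathematica does mechanically.

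The main obstacle is not any single computation but the combination of scale and degeneracy. One must track, for each center in the long lists, exactly which sub-family result it inherits so that the statement is read correctly and nothing is double-counted or dropped; one must handle the centers that land at a vertex, on a diagonal, or on the symmetry axis of the trapezoid, where some of $E,F,G,H$ coincide and the ``quadrilateral'' collapses (this is precisely the source of the line-segment cases in (f) and forces care about when Theorem~\ref{thm:halfIsoTrap}'s conclusion is the non-degenerate ``cyclic isosceles trapezoid'' asserted in (b)); and one must resist over-claiming in (g), where — unlike (a)–(f) — only numerical evidence is in hand and no if-and-only-if characterization of ``isosceles but no more'' is established. Packaging the finite verification as a single Mathematica notebook, organized around the reductions above, is the practical route and is the one the supplementary material is expected to follow.
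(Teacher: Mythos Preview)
The paper does not prove this statement at all. It appears in Appendix~B, ``Raw Data for Half Triangles,'' and the Methodology section (Section~\ref{section:methodology}) is explicit that these appendix ``theorems'' are the output of GeometricExplorer running numerically to $15$ digits: ``This does not constitute a proof that the result is correct, but gives us compelling evidence for the validity of the result.'' No Mathematica notebook is referenced for this particular item, and the clause ``appears to be'' in part~(g) signals that even the authors regard this as empirical data rather than a proved claim. So there is nothing in the paper to compare your argument against: you are supplying what the paper simply does not attempt.

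Your strategy is sound and considerably more than the paper offers. The one-parameter Cartesian model is correct (equidiagonal trapezoid $\Rightarrow$ isosceles trapezoid $\Rightarrow$ cyclic, so the family really is one-dimensional up to similarity), and the plan of inheriting most entries from Theorems~\ref{thm:halfIsoTrap}, \ref{thm:halfOrtho}, \ref{thm:halfEquiOrtho}, \ref{thm:halfEquiOrthoOrtho}, \ref{thm:halfEquiOrthoLine}, \ref{thm:genCyclicRect}, \ref{thm:halfIsoTrapRect} and then mopping up a finite residual list by symbolic computation in the $p$-model is exactly the right shape. Two small points to tighten. First, there is a notational collision: in the half-triangle setting the letter $E$ denotes the center of $\triangle BCD$, not the diagonal point, so your sentence ``putting the diagonal point $E$ at the origin'' should use a different name for the origin. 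Second, you can shorten the residual list further: $n=63$ in (f) is already covered by Theorem~\ref{thm:halfCyclicOrthoLine} (cyclic orthodiagonal $\Rightarrow$ line for $X_{63}$), since your trapezoid is both cyclic and orthodiagonal; and the ``exclusion'' clauses in the statement are about the \emph{conclusion} being strictly stronger than in the parent class, so when you invoke, say, Theorem~\ref{thm:halfEquiOrtho} for $n=51,373$ you should also note that Theorem~\ref{thm:halfIsoTrap} simultaneously supplies the ``trapezoid'' part, which is what makes the entry non-redundant here.
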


\begin{theorem}
For a rectangle,\\
(excluding results for a harmonic quadrilateral)\\
(excluding results for a Hjelmslev quadrilateral\\
(excluding results for an isosceles trapezoid)\\
(excluding results for a parallelogram)\\
(a) The central quadrilateral is a point for $n=$3, 97, 122, 123, 127, 131, 216, 268, 339,
408, 417, 418, 426, 440, 441, 454, 464-466, 577, 828, 852, 856.\\
(b) The central quadrilateral is a square for $n=$10, 117, 124, 197, 227, 355,
639-642, 958, 993.\\
(c) The central quadrilateral appears to be rectangle for most other $n$.
\end{theorem}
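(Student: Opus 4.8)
The key observation is purely about the configuration. If $ABCD$ is a rectangle with center $O$, then each of the four half triangles $\triangle BCD$, $\triangle ACD$, $\triangle ABD$, $\triangle ABC$ is a right triangle whose right angle sits at a vertex of the rectangle and whose hypotenuse is a diagonal of the rectangle; since the two diagonals meet at $O$ and are bisected there, all four half triangles have the same circumcenter, namely $O$, and moreover the order-$4$ symmetry group of the rectangle permutes the four half triangles (swapping the two opposite to a pair of opposite vertices). Part (c) then needs no work: by Theorem \ref{thm:halfRect} the central quadrilateral $EFGH$ is a rectangle for \emph{every} triangle center, so parts (a) and (b) are exactly the questions of when this rectangle degenerates to a point and when it is a square.

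\textbf{Part (a).}
Place the rectangle with $A=(-p,-q)$, $B=(p,-q)$, $C=(p,q)$, $D=(-p,q)$, so $O$ is the origin and the sides are parallel to the axes. Then $\triangle BCD$, $\triangle ABD$, $\triangle ACD$ are the images of $\triangle ABC$ under reflection in the $x$-axis, reflection in the $y$-axis, and the half-turn about $O$, respectively. Hence, writing $X_n(\triangle ABC)=(h_1,h_2)$, we get $E=(h_1,-h_2)$, $G=(-h_1,h_2)$, $F=(-h_1,-h_2)$. Thus $EFGH$ collapses to a single point precisely when $h_1=h_2=0$ for all $p,q$, i.e. precisely when $X_n$ reduces to the midpoint of the hypotenuse — the circumcenter — on every right-angled triangle (any right triangle arises as a half triangle for a suitable rectangle). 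The values of $n\le 1000$ with this property are exactly those in list (a); this is the finite verification ``substitute $a^2=b^2+c^2$ into a center function for $X_n$ and check it is proportional to a center function for $X_3$,'' which one reads off from the center functions in \cite{ETC}.

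\textbf{Part (b).}
In the same coordinates $EFGH$ is a rectangle with side lengths $2|h_1|$ and $2|h_2|$, so it is a square exactly when $h_1^2=h_2^2$ identically in $p,q$ — geometrically, when the segment from the circumcenter $O$ to $X_n$ is parallel to an internal or external bisector of the right angle of the half triangle. For $n=10$ and $n=639,640,641,642$ there is a shortcut: a rectangle is a parallelogram, so by Theorem \ref{thm:halfParaRhombus} the central quadrilateral is a rhombus, and being simultaneously a rectangle by Theorem \ref{thm:halfRect} it is a square. For the remaining values $n=117,124,197,227,355,958,993$ one substitutes a center function for $X_n$, evaluated on a right triangle with legs $2p$ and $2q$, into the formula for $(h_1,h_2)$ and checks the polynomial identity $h_1^2=h_2^2$ in $p,q$; this is routine in Mathematica, and one should also look for the structural reason that these seven centers satisfy the angle-bisector condition above.

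\textbf{Main obstacle.}
The genuinely hard part is not any single computation but the \emph{completeness} of the two finite lists: certifying that no other $X_n$ with $n\le 1000$ yields a point or a square is inherently a center-by-center check, which is precisely what GeometricExplorer performed numerically and what the accompanying Mathematica notebooks confirm symbolically for the listed cases. A secondary difficulty is to give a uniform geometric explanation of why exactly the seven ``extra'' centers in (b) force the square; absent such an explanation, (b) rests on the symbolic verification.
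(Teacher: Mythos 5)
Your framework is correct, and it is worth saying up front that it gives strictly more than the paper does: this statement sits in Appendix~\ref{appendix:halfTriangleData} as raw output of the GeometricExplorer search, and the paper offers no proof of it at all beyond the numerical evidence described in Section~\ref{section:methodology}. Your reduction via the Klein four-group of the rectangle is sound: with $A=(-p,-q)$, $B=(p,-q)$, $C=(p,q)$, $D=(-p,q)$ the three nonidentity symmetries carry $\triangle ABC$ onto $\triangle BCD$, $\triangle ABD$, $\triangle ACD$, so $E=(h_1,-h_2)$, $G=(-h_1,h_2)$, $F=(-h_1,-h_2)$ where $H=(h_1,h_2)$, and $EFGH$ is an axis-parallel $2|h_1|\times 2|h_2|$ rectangle. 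This recovers (c) (consistently with the paper's reflection proof of Theorem~\ref{thm:halfRect}), turns (a) into ``$X_n$ coincides with $X_3$ on every right triangle,'' and turns (b) into ``$OX_n$ is parallel to an internal or external bisector of the right angle''; your Spieker-center check and the shortcut through Theorems~\ref{thm:halfParaRhombus} and~\ref{thm:halfRect} for $n=10$, $639$--$642$ both fit this cleanly. What each approach buys: the paper's bare numerical sweep certifies nothing but covers all $n\le 1000$ uniformly, while your argument proves the structural statements exactly and isolates the only remaining content as a finite, one-right-triangle-per-center symbolic identity. That residue is a genuine gap in the sense that neither you nor the paper verifies membership or completeness of the two lists (and a list-completeness claim can never follow from your reduction alone --- it must be checked center by center against the functions in \cite{ETC}); but since the paper itself rests (a) and (b) entirely on 15-digit numerics, your proposal is, if anything, a strengthening rather than a weaker substitute. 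One small bonus of your setup worth recording: it shows the only possible degenerations are a point ($h_1=h_2=0$) or a segment parallel to a side ($h_1h_2=0$, not both zero), which explains why no ``line segment'' clause appears in the theorem.
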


\begin{theorem}
For a square,\\
(excluding results for an equidiagonal orthogonal trapezoid,
an equidiagonal orthogonal kite,
a cyclic orthodiagonal quadrilateral,
a bicentric trapezoid,
an exbicentric quadrilateral,
a rhombus,
and a rectangle)\\
(a) The central quadrilateral is a point for $n=$3, 11, 48, 49, 63, 69, 71-73, 77, 78, 97,
115, 116, 122-125, 127, 130, 137, 184, 185,
201, 212, 216, 217, 219, 222, 228, 244-246, 248, 255, 265, 268,
271, 283, 287, 293, 295, 296, 304-307, 326, 328, 332, 336-339, 
343, 345, 348, 394, 408, 417, 418, 426, 440, 441, 464-466, 
488, 499, 577, 591, 603, 606, 615, 640, 682, 748, 820, 828, 836, 
852, 856, 865-868, 895, 974.\\
(b) The central quadrilateral appears to be a square all other $n$.
\end{theorem}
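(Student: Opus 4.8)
The plan is to use the rotational symmetry of the square. Set up the reference square $ABCD$ with center $O$, and let $\rho$ denote the rotation by $90\degrees$ about $O$; then $\rho$ sends $A\mapsto B\mapsto C\mapsto D\mapsto A$ and hence permutes the four half-triangles cyclically, $\triangle ABC\mapsto\triangle BCD\mapsto\triangle CDA\mapsto\triangle DAB\mapsto\triangle ABC$. A triangle center is preserved by every isometry, so $\rho$ carries the center $H$ of $\triangle ABC$ to the center $E$ of $\triangle BCD$, then to the center $F$ of $\triangle CDA$, then to the center $G$ of $\triangle DAB$, and back to $H$. Thus $E,F,G,H$ form a single $\rho$-orbit; either they are the four vertices of a genuine square with center $O$, or the orbit has size one and all four coincide with the only fixed point $O$ of $\rho$. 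This recovers part (b) (compare Theorem~\ref{thm:halfSquare}) and reduces part (a) to deciding exactly when the orbit collapses.

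The next step is to characterize that collapse. Since $E=\rho(H)$ and $F=\rho(E)$, the four points coincide if and only if one of them, say $E$, is fixed by $\rho$, i.e.\ if and only if $E=O$. Each half-triangle of the square is an isosceles right triangle; in $\triangle BCD$ the right angle is at $C$, the hypotenuse is $BD$, and the midpoint of $BD$ is exactly $O$. But the midpoint of the hypotenuse of a right triangle is its circumcenter, so $E=O$ if and only if the chosen center $X_n$, evaluated on a $45\degrees$--$45\degrees$--$90\degrees$ triangle, coincides with the circumcenter $X_3$. Concretely, with side lengths $a=\sqrt2,\ b=c=1$, any center function $f$ gives the point $\bigl(f(\sqrt2,1,1):f(1,1,\sqrt2):f(1,\sqrt2,1)\bigr)$, which by symmetry of $f$ in its last two arguments equals $\bigl(f(\sqrt2,1,1):f(1,1,\sqrt2):f(1,1,\sqrt2)\bigr)$, automatically lying on the axis of symmetry of the triangle. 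This equals the circumcenter $\cos A:\cos B:\cos C=0:1:1$ precisely when $f(\sqrt2,1,1)=0$ (with $f(1,1,\sqrt2)\neq0$, so the point is finite and well defined). Hence the central quadrilateral degenerates to a point if and only if the center function of $X_n$ vanishes at $(\sqrt2,1,1)$.

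Finally, the explicit list in part (a) is obtained by running this single vanishing test over the center functions of $X_1,\dots,X_{1000}$ from \cite{ETC}; this is exactly the kind of symbolic computation described in Section~\ref{section:methodology}. The only point needing extra care is the handful of indices for which $X_n$ is undefined or lies at infinity on a $45\degrees$--$45\degrees$--$90\degrees$ triangle: for those, the four ``center'' points fail to be ordinary finite points and the central quadrilateral degenerates in some other way, so those $n$ belong to neither part of the statement. Once they are set aside, every remaining $n$ not in the list satisfies $f(\sqrt2,1,1)\neq0$ and therefore, by the orbit argument, yields a non-degenerate square, which is part (b).

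\textbf{Main obstacle.} The geometry is easy — one rotation does everything — so the substantive work is the exhaustive symbolic verification that the listed indices are \emph{exactly} those $n\le1000$ whose center function vanishes at $(\sqrt2,1,1)$, together with the case analysis of the small number of centers that are not ordinary finite points of an isosceles right triangle.
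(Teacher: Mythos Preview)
The paper does not actually prove this statement; it sits in Appendix~\ref{appendix:halfTriangleData} as raw numerical output from GeometricExplorer, with no argument attached. The nearest thing is Theorem~\ref{thm:halfSquare} in the main text, which addresses part~(b) only, by combining Theorems~\ref{thm:halfPara} and~\ref{thm:halfKite}. Your rotational-symmetry argument is correct and cleaner: a single $90\degrees$ rotation about the center of the square does all the work, and it handles the degenerate case in part~(a) in the same breath, which the paper's route does not touch at all. Your reduction of part~(a) to the single vanishing condition $f(\sqrt2,1,1)=0$ --- equivalently, that $X_n$ coincides with the circumcenter $X_3$ on an isosceles right triangle --- is the real content here and goes beyond what the paper offers: the paper merely records the list of indices, whereas you explain exactly why those indices and no others appear. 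One small refinement to your ``undefined'' caveat: if $f(1,1,\sqrt2)=0$ but $f(\sqrt2,1,1)\neq0$, the center lands at the right-angle vertex, so the four centers are the four vertices $B,C,D,A$ of the original square and the central quadrilateral is still a (genuine) square --- that case belongs to part~(b), not to the exceptional bin. The truly exceptional cases are only those $n$ for which both values vanish (center undefined on this triangle) or the normalized barycentrics sum to zero (center at infinity).
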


\void{

\section{Proofs for Diagonal Point Triangles}
\label{appendix:diagonalPointProofs}

This appendix is obsolete and will be removed.

\begin{theorem}
\label{thm:arbCenter}
For any triangle center, if the reference quadrilateral
is a rhombus, then the central quadrilateral is a rectangle.
\end{theorem}

\begin{proof}
Let the (Cartesian) coordinates for $A$ be $(a_x,a_y)$. The coordinates for the other vertices are named similarly as show in Figure \ref{fig:diagonalPointCoords}.

\begin{figure}[h!t]
\centering
\includegraphics[width=0.5\linewidth]{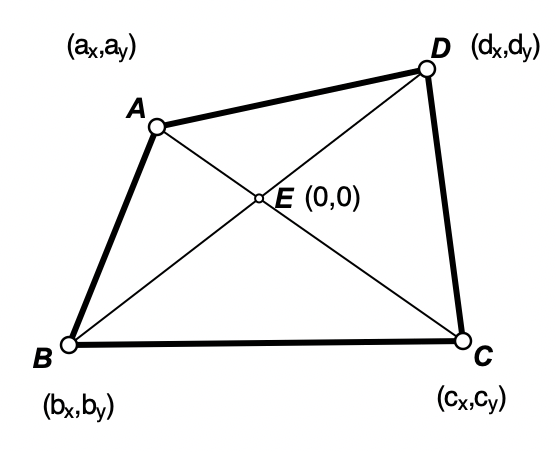}
\caption{Coordinate Setup}
\label{fig:diagonalPointCoords}
\end{figure}

Without loss of generality, use a coordinate system where $E$, the point of intersection
of the diagonals is at the origin. The condition that this imposes on the coordinates is
$$d_yb_x=b_yd_x\text{ and }a_yc_x=c_ya_x.$$

Now suppose that the center function is given by
$$f(a,g(b,c))$$
where $g$ is a symmetric function, that is $g(b,c)=g(c,b)$. This represents, the
first trilinear coordinate for the given center. The first barycentric coordinate is
therefore 
$$af(a,g(b,c))$$
and the second and third barycentric coordinates are obtained from this
by a cyclic permutation of the variables ($a\rightarrow b$, $b\rightarrow c$, $c\rightarrow a$).

Let $F$, $G$, $H$, and $I$ be the centers corresponding to scenter function $f$
in the four triangles $ABE$, $BCE$, $CDE$, and $DAE$, as shown in Figure
\ref{fig:diagonalPointCenters}.

\begin{figure}[h!t]
\centering
\includegraphics[width=0.4\linewidth]{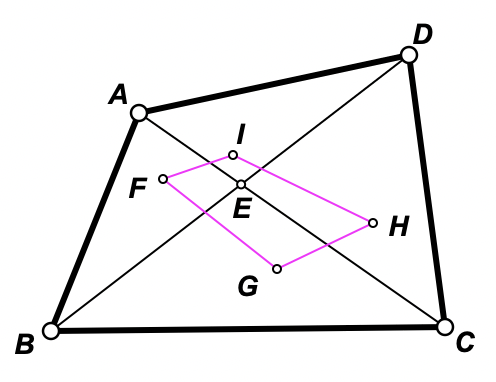}
\caption{Center setup}
\label{fig:diagonalPointCenters}
\end{figure}

The coordinates for points $F$, $G$, $H$, and $I$ can be found from the coordinates
of the vertices and the lengths of the sides using the barycentric coordinates $(B_1:B_2:B_3)$ for the center, normalized so that $B_1+B_2+B_3=1$.
For $\triangle ABE$, we can let
$a=BE$, $b=AE$, and $c=AB$. Then the coordinates of $F$ are
$$F=B_1A+B_2B+B_3E.$$
Writing this in terms of $f$, we have $F=(F_x,F_y)$ where
$$F_x=b_x\cdot AE\cdot f(AE,g(AB,BE))+a_x\cdot BE\cdot f(BE,g(AB,AB))$$
and
$$F_y=by\cdot AE\cdot f(AE,g(AB,BE))+ay\cdot BE\cdot f(BE,g(AB,AB)).$$

Similar expressions can be found for the coordinates of $G$, $H$, and $I$.

Now suppose that $ABCD$ is a rhombus as shown in Figure \ref{fig:diagonalPointRhombus}.

\begin{figure}[h!t]
\centering
\includegraphics[width=0.4\linewidth]{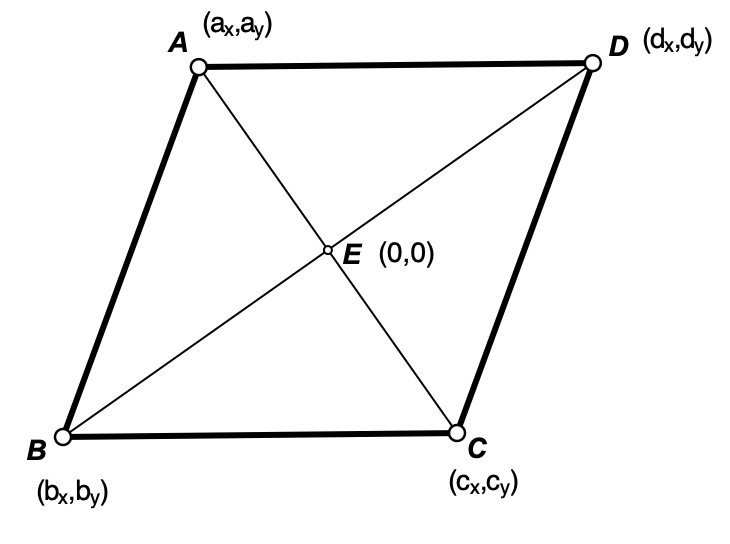}
\caption{Situation where reference quadrilateral is a rhombus}
\label{fig:diagonalPointRhombus}
\end{figure}

The fact that $ABCD$ is a rhombus means that
$$AB=BC=CD=DA.$$

The lengths $FG$, $GH$, $HI$, and $IF$ can be found from the coordinates of
$F$, $G$, $H$, and $I$ using the distance formula in Cartesian coordinates.

Substituting $BC\rightarrow AB$, $CD\rightarrow AB$, $DA\rightarrow AB$ into
the formulas for the lengths of $FG$, $GH$, $HI$, and $IF$, and after using the
symmetry of $g$, we find that $FG=HI$ and $GH=IF$.

Since the diagonals of a rhombus are perpendicular, we must have
$$a_xb_x+a_yb_y=0.$$
After further subsitituing $b_y\rightarrow -a_xb_x/a_y$ into the expressions
for $FH$ and $GI$, we find that $FH=GI$.
Thus, $FHGI$ is a rhombus with equal diagonals, which means that $FGHI$
is a rectangle, as claimed.
\end{proof}

}

\newpage
\goodbreak

\end{document}